\tikzset{math3d/.style=
    {x= {(-0.353cm,-0.353cm)}, z={(0cm,1cm)},y={(1cm,0cm)}}}
\apptocmd{\sloppy}{\hbadness 10000\relax}{}{}
\apptocmd{\sloppy}{\vbadness 10000\relax}{}{}
\definecolor{Chocolat}{rgb}{0.36, 0.2, 0.09}
\definecolor{BleuTresFonce}{rgb}{0.215, 0.215, 0.36}
\definecolor{BleuMinuit}{RGB}{0, 51, 102}
\newtheorem{lemma}{Lemma}[section]
\newtheorem{theorem}[lemma]{Theorem}
\newtheorem{corollary}[lemma]{Corollary}
\newtheorem{proposition}[lemma]{Proposition}
\newtheorem{conjecture}[lemma]{Conjecture}
\newtheorem*{ThmIntroRep}{\cref{thm:isomorphism of models}} 
\newtheorem*{thmfillhorns}{\cref{thm:KanExt}}
\newtheorem*{thmBerglund}{\cref{thm:Berglund}}
\newtheorem*{thmDR}{\cref{thm:HoInvariance}}
\newtheorem*{ThmIntroRHT}{\cref{thm: rationalization}}
\theoremstyle{definition}
\newtheorem{definition}[lemma]{Definition}
\newtheorem{remark}[lemma]{\sc Remark}
\newtheorem{example}[lemma]{\sc Example}
\definecolor{red}{RGB}{230,97,0}
\definecolor{blue}{RGB}{93,58,155}
\newcommand{\ch}{\mathsf{Ch}}
\newcommand{\sSe}{\mathsf{sSet}}
\newcommand{\sLialg}{\ensuremath{\mathcal{sL}_\infty\text{-}\,\mathsf{alg}}}
\newcommand{\isLialg}{\ensuremath{\infty\text{-}\,\mathcal{sL}_\infty\text{-}\,\mathsf{alg}}}
\newcommand{\sLiealg}{\ensuremath{\mathcal{s}\lie\,\text{-}\,\mathsf{alg}}}
\def\cD{\Delta}
\newcommand{\ucomnalg}{\ensuremath{\mathrm{uCom}_{\leqslant 0}\text{-}\,\mathsf{alg}}}
\newcommand{\fnQsp}{\ensuremath{\mathsf{ft}\mathbb{Q}\text{-}\mathsf{ho}(\mathsf{sSet})}}
\newcommand{\fQalg}{\ensuremath{\mathsf{ft}\text{-}\mathsf{ho}(\mathrm{uCom}_{\leqslant 0}\text{-}\,\mathsf{alg})}}
\newcommand{\Ran}{\operatorname{Ran}}
\newcommand{\ho}{\operatorname{ho}}
\newcommand{\sLi}{\ensuremath{\mathcal{sL}_\infty}}
\newcommand{\sLie}{\ensuremath{\mathcal{s}\mathrm{Lie}}}
\newcommand{\com}{\ensuremath{\mathrm{Com}}}
\newcommand{\Cobar}{\ensuremath{\Omega}}
\newcommand{\hatCobar}{\ensuremath{\widehat{\Omega}}}
\renewcommand{\Bar}{\ensuremath{\mathrm{B}}}
\newcommand{\lie}{\ensuremath{\mathrm{Lie}}}
\newcommand{\g}{\ensuremath{\mathfrak{g}}}
\newcommand{\wsLi}{\ensuremath{\widehat{\mathcal{sL}_\infty}}}
\newcommand{\R}{\ensuremath{\mathrm{R}}}
\newcommand{\Rh}{\ensuremath{\mathrm{R^h}}}
\renewcommand{\L}{\ensuremath{\mathscr{L}}}
\newcommand{\Li}{\mathfrak{L}}
\newcommand{\APL}{\mathrm{A_{PL}}}
\newcommand{\CPL}{\mathrm{C_{PL}}}
\def\QQ{\mathbb{Q}}
\newcommand{\antishriek}{\text{\raisebox{\depth}{\textexclamdown}}}
\newcommand{\RT}{\mathrm{RT}}
\newcommand{\LRT}{\mathrm{LRT}}
\newcommand{\Sy}{\mathbb{S}}
\renewcommand{\d}{\ensuremath{\mathrm{d}}}
\newcommand{\PP}{\ensuremath{\mathrm{P}}}
\def\Ho#1#2{\Lambda^{#2}_{#1}}
\def\De#1{\Delta^{#1}}
\newcommand{\NN}{\mathbb{N}}
\newcommand{\RR}{\mathbb{R}}
\def\BCH{\mathrm{BCH}}
\newcommand{\wPT}{\ensuremath{\mathrm{wPT}}}
\newcommand{\PaRT}{\ensuremath{\mathrm{PaRT}}}
\newcommand{\PaPT}{\ensuremath{\mathrm{PaPT}}}
\newcommand{\PaPRT}{\ensuremath{\mathrm{PaPRT}}}
\def\rmC{\mathrm{C}}
\def\hot{\widehat{\otimes}} 
\def\whk{\widehat{k}}
\def\colim{\mathop{\mathrm{colim}}}
\newcommand{\coloneq}{\coloneqq}
\newcommand{\F}{\ensuremath{\mathrm{F}}}
\newcommand{\h}{\ensuremath{\mathfrak{h}}}
\newcommand{\C}{\ensuremath{\mathscr{C}}}
\renewcommand{\P}{\ensuremath{\mathscr{P}}}
\newcommand{\VdL}{\ensuremath{\mathrm{VdL}}}
\newcommand{\End}{\ensuremath{\mathrm{End}}}
\newcommand{\eend}{\ensuremath{\mathrm{end}}}
\newcommand{\T}{\ensuremath{\mathcal{T}}}
\newcommand{\Tw}{\ensuremath{\mathrm{Tw}}}
\newcommand{\Hom}{\ensuremath{\mathrm{Hom}}}
\renewcommand{\S}{\ensuremath{\mathbb{S}}}
\renewcommand{\k}{\ensuremath{\mathbb{K}}}
\newcommand{\id}{\ensuremath{\mathrm{id}}}
\newcommand{\MC}{\ensuremath{\mathrm{MC}}}
\newcommand{\mc}{\ensuremath{\mathfrak{mc}}}
\newcommand{\mclie}{\ensuremath{\overline{\mathfrak{mc}}}}
\newcommand{\B}{\mathcal{B}}
\newcommand{\ad}{\operatorname{ad}}
\newcommand{\PT}{\ensuremath{\mathrm{PT}}}
\newcommand{\whiteleaf}{\mathord{\mathpalette\nicoud@YESNO\relax}}
\newcommand{\blackleaf}{\mathord{\mathpalette\nicoud@YESNO{\nicoud@path{\fillpath}}}}
\newcommand{\nicoud@YESNO}[2]{%
	\begingroup
	\settoheight{\unitlength}{$#1X$}%
	\begin{picture}(0.7,1)
	\linethickness{\variable@rule{#1}}%
	\roundcap\roundjoin
	\nicoud@path{\strokepath}
	#2
	\Line(0.35,0)(0.35,0.5)
	\end{picture}%
	\endgroup
}
\newcommand{\nicoud@path}[1]{%
	\moveto(0.1,0.5)
	\lineto(0.1,1)\lineto(0.6,1)\lineto(0.6,0.5)
	\closepath
	#1
}
\newcommand{\variable@rule}[1]{%
	\fontdimen8  
	\ifx#1\displaystyle\textfont3\else
	\ifx#1\textstyle\textfont3\else
	\ifx#1\scriptstyle\scriptfont3\else
	\scriptscriptfont3\relax
	\fi\fi\fi
}
\author{Daniel Robert-Nicoud and Bruno Vallette}
\title{Higher Lie theory}
\date{\today}
\address{Daniel Robert-Nicoud}
\email{\href{mailto:daniel.robertnicoud@gmail.com}{daniel.robertnicoud@gmail.com}}
\address{Bruno Vallette, Universit\'e Sorbonne Paris Nord, Laboratoire de G\'eom\'etrie, Analyse et Applications, LAGA, CNRS, UMR 7539, F-93430, Villetaneuse, France}
\email{\href{mailto:vallette@math.univ-paris13.fr}{vallette@math.univ-paris13.fr}}
\thanks{
2020 \emph{Mathematics Subject Classification.} 18M70, 18N40, 18N50, 18N60, 22E60, 55P62.\\ 
\indent$\,\! $  The first author was supported by grants from R\'egion Ile-de-France, the second author is supported by the Institut Universitaire de France.}
\keywords{Lie theory, Baker--Campbell--Hausdorff formula, deformation theory, homotopy Lie algebras, $\infty$-groupoids, Maurer--Cartan spaces, operadic calculus, rational homotopy theory.}
\begin{document}
	
\begin{abstract}
	We present a novel approach to the problem of integrating homotopy Lie algebras by representing the Maurer--Cartan space functor with a universal cosimplicial object. 
	This recovers Getzler's original functor but allows us to prove the existence of additional, previously unknown, structures and properties. Namely, we introduce a well-behaved left adjoint functor, we establish functoriality with respect to infinity-morphisms, and we construct a coherent hierarchy of higher Baker--Campbell--Hausdorff formulas. Thanks to these tools, we are able to establish the most important results of higher Lie theory. We use the recent developments of the operadic calculus, which leads us to explicit tree-wise formulas at all stage. We conclude by applying this theory to rational homotopy theory: the left adjoint functor is shown to provide us with homotopy Lie algebra models for topological spaces which faithfully capture their rational homotopy type.
\end{abstract}

\maketitle

\makeatletter
\def\@tocline#1#2#3#4#5#6#7{\relax
	\ifnum #1>\c@tocdepth 
	\else
	\par \addpenalty\@secpenalty\addvspace{#2}%
	\begingroup \hyphenpenalty\@M
	\@ifempty{#4}{%
		\@tempdima\csname r@tocindent\number#1\endcsname\relax
	}{%
		\@tempdima#4\relax
	}%
	\parindent\z@ \leftskip#3\relax \advance\leftskip\@tempdima\relax
	\rightskip\@pnumwidth plus4em \parfillskip-\@pnumwidth
	#5\leavevmode\hskip-\@tempdima
	\ifcase #1
	\or\or \hskip 1em \or \hskip 2em \else \hskip 3em \fi%
	#6\nobreak\relax
	\hfill\hbox to\@pnumwidth{\@tocpagenum{#7}}\par
	\nobreak
	\endgroup
	\fi}
\makeatother

\setcounter{tocdepth}{2}
\tableofcontents

\section*{Introduction}

\subsubsection*{\bf Higher Lie theory} At the center of classical Lie theory lies Lie's third theorem, which tells us how to integrate a finite dimensional real Lie algebra in order to obtain a simply connected real Lie group, providing an equivalence between the respective categories. The Baker--Campbell-Hausdorff (BCH) formula plays a fundamental role, inducing the group structure. In this paper, we  study \emph{higher Lie theory}, i.e.\ the extension of this theory to the derived world of differential graded Lie algebras and --- further still --- to homotopy Lie algebras.

\medskip

Replacing classical Lie algebras by their derived versions is mandatory in deformation theory, whose fundamental theorem, due to Pridham \cite{pri10} and Lurie \cite{lur11}, shows that every deformation problem is controlled by a differential graded Lie algebra  in characteristic zero, formalizing a heuristic principle that dates back to Deligne, see  \cite{Toen17}. 
In this domain, given an underling ``space'' together with a type of structure, one would like to classify all the possible structures present on that space up to some equivalence relations.
For example, one can study the classification of associative algebras structures on a chain complex up to isomorphisms \cite{Gerstenhaber64}, the classification of complex structures on Riemannian manifolds up to diffeomorphisms \cite{KodairaSpencer58}, or the classification of Poisson structures on manifolds up to diffeomorphisms
 \cite{Kontsevich03}. In each of these cases, there is a differential graded Lie algebra whose  Maurer--Cartan elements are in one-to-one correspondence with structures and whose action of the gauge group, obtained via the BCH formula, models the equivalence relation of interest. 
 
\medskip

This deformation theoretical information is contained in the \emph{Deligne groupoid}, whose points are the Maurer--Cartan elements of the differential graded Lie algebra and whose morphisms are the gauge equivalences. However, in the world of homotopy we are interested in comparing these equivalences, and then the equivalences between equivalences, and so on. This raises the  question of finding a Deligne ``$\infty$-groupoid'' which would faithfully encode this higher data. The picture that comes to mind is that of a topological space with points related by paths subject to homotopies and then homotopies of homotopies, etc. Indeed, according to Grothendieck's homotopy hypothesis, this is exactly the type of object that one should be looking for in order to get a suitable notion of an $\infty$-groupoid.

\medskip

The study of the rational homotopy theory of spaces is yet another domain 
where differential graded Lie algebras play a key role: as shown by Quillen \cite{Quillen69} with homotopical methods, they faithfully model the rational homotopy type. Using geometrical methods, Sullivan \cite{Sullivan77} settled a Koszul dual version of rational homotopy theory with quasi-free differential graded commutative algebras as models. Although the notion was not yet defined at the time, such a Sullivan model is equivalent to the structure of a \emph{homotopy Lie algebra} on the dual of its generators. This latter notion, which generalizes that of a differential graded Lie algebra by relaxing relations up to homotopy, has come to play a ubiquitous role in recent years.

\medskip

In the world of homotopy theory, where one considers objects up to quasi-isomorphisms, the notions of a differential graded Lie algebra and of a homotopy Lie algebra are equivalent. For instance, the Lurie--Pridham theorem can equally be stated using homotopy Lie algebras as it is formulated in terms of an equivalence of  $\infty$-categories. 
But in the algebraic world, where one works up to isomorphisms, this does not hold true and the notion of a homotopy Lie algebra is mandatory to encode some deformation problems in the way explained above. For instance, this is the case for the deformation theory of algebra structures over non-Koszul operads or of morphisms between algebras
 \cite{MerkulovVallette09I, MerkulovVallette09II}.
 Homotopy Lie algebras also give rise to a natural notion of ``morphisms up to homotopy'' --- commonly called $\infty$-morphisms --- that also play a very important role in deformation theory, see the deformation quantization of Poisson manifolds by Kontsevich \cite{Kontsevich03}. If one wants to do deformation theory using homotopy Lie algebras, the main issue is to coin the right generalization of the gauge group: what is the nature of the object that integrates homotopy Lie algebras?

\subsubsection*{\bf Present results} This article lies at the cornerstone of Lie theory, deformation theory, and rational homotopy theory. The entry door is the aforementioned quest for a deformation $\infty$-groupoid naturally associated to homotopy Lie algebras. 
A first answer to this question was given by
Hinich \cite{Hinich97bis} with the construction
\[
\MC_\bullet(\g)\coloneqq\MC\left(\g \, \widehat{\otimes}\, \Omega_\bullet\right),
\]
where $\Omega_\bullet$ stands for the simplicial commutative algebra of polynomial differential forms on the geometrical simplices of \cite{Sullivan77}. This object is a very good candidate: it forms a Kan complex, i.e.\ the simplicial incarnation of an $\infty$-groupoid, and its set of $0$-simplices corresponds to the set of Maurer--Cartan element of $\g$~. However, the set of $1$-simplices is too big to correspond to the gauge equivalences in the differential graded Lie case, although it encodes an equivalent equivalence relation.

\medskip

In his pioneering work, Getzler \cite{Getzler09} completely solved this problem by introducing a homotopically equivalent Kan sub-complex $\gamma_\bullet(\g)\subset\MC_\bullet(\g)$ constructed as follows. He considered the simplicial contraction
\[
\hbox{
	\begin{tikzpicture}
	\def\upshift{0.075}
	\def\downshift{0.075}
	\pgfmathsetmacro{\midshift}{0.005}
	
	\node[left] (x) at (0, 0) {$\Omega_\bullet$};
	\node[right=1.5 cm of x] (y) {$\mathrm{C}_\bullet$~,};
	
	\draw[-{To[left]}] ($(x.east) + (0.1, \upshift)$) -- node[above]{\mbox{\tiny{$p_\bullet$}}} ($(y.west) + (-0.1, \upshift)$);
	\draw[-{To[left]}] ($(y.west) + (-0.1, -\downshift)$) -- node[below]{\mbox{\tiny{$i_\bullet$}}} ($(x.east) + (0.1, -\downshift)$);
	
	\draw[->] ($(x.south west) + (0, 0.1)$) to [out=-160,in=160,looseness=5] node[left]{\mbox{\tiny{$h_\bullet$}}} ($(x.north west) - (0, 0.1)$);
	
	\end{tikzpicture}}
\]
introduced by Dupont in his simplicial version of the de Rham theorem \cite{Dupont76}, where $\rmC_n$ is the normalized chain complex of the geometric $n$-simplex, and defined
\[
\gamma_\bullet(\g)\coloneqq \MC_\bullet(\g)\cap \ker h_\bullet\ .
\]
Restricting to the kernel of $h_\bullet$ can be interpreted as the physicist's notion of a ``gauge condition''. Applied to a differential graded Lie algebra $\g$~, the Kan complex $\gamma_\bullet(\g)$ is nothing but the nerve
of its gauge group, that is the one associated to the BCH formula, see \cite{Bandiera14}.

\medskip

Since its introduction, Getzler's $\infty$-groupoid was rarely studied or used directly, contrarily to Hinich's version, see e.g.\ \cite{Henriques08, ber15, DolgushevRogers15, Bandiera17}. The reason for this lies in the complicated form of its intrinsic definition and the hard combinatorics generated by Dupont's contraction. This is unfortunate since, in the authors' opinion, this object should be central in both deformation theory and rational homotopy theory. Driven by this, the starting goal of the this article is to present an alternative definition of Getzler's $\infty$-groupoid with which it is easier to work and for which we can give explicit formulas.

\medskip

In order to do that, we start by using the operadic calculus \cite{LodayVallette12} in the form of the homotopy transfer theorem for a new type of homotopy commutative algebras applied to the Dupont contraction. This produces a   cosimplicial (complete shifted) homotopy Lie algebra 
\[
\mc^\bullet\coloneqq \widehat{\sLi}\left( \mathrm{C}^\bullet\right)
\]
which is freely generated by the normalized chain complex of the geometric simplices. Then an idea going back to Kan \cite{Kan58bis} tells us how to use it to obtain a couple of adjoint functors
\[
\hbox{
	\begin{tikzpicture}
	\def\upshift{0.185}
	\def\downshift{0.15}
	\pgfmathsetmacro{\midshift}{0.005}
	
	\node[left] (x) at (0, 0) {$\Li\ \ :\ \ \sSe$};
	\node[right] (y) at (2, 0) {$\sLialg\ \ :\ \ \R\ .$};
	
	\draw[-{To[left]}] ($(x.east) + (0.1, \upshift)$) -- ($(y.west) + (-0.1, \upshift)$);
	\draw[-{To[left]}] ($(y.west) + (-0.1, -\downshift)$) -- ($(x.east) + (0.1, -\downshift)$);
	
	\node at ($(x.east)!0.5!(y.west) + (0, \midshift)$) {\scalebox{0.8}{$\perp$}};
	\end{tikzpicture}}
\]
Our first result says that the right adjoint functor, called the \emph{integration functor}, corepresented by $\mc^\bullet$, 
\[
\mathrm{R}(\g)\coloneqq\Hom_{\sLialg}(\mc^\bullet, \g)
\]
recovers Getzler's construction.

\begin{ThmIntroRep}
	Getzler's functor is naturally isomorphic to the integration functor:
	\[
	\mathrm{R}(\g)\cong\gamma_\bullet(\g)\ .
	\]
\end{ThmIntroRep}

This new description has many advantages. For example, Hinich's space $\MC_\bullet(\g)$ is functorial with respect to $\infty$-morphisms, while Getzler's space was known to be functorial with respect to strict morphisms only. With the present new construction, we are able to prove functoriality of $\R$ with respect of a refined version of $\infty$-morphisms, which we dub $\infty_\pi$-morphisms. Moreover, this approach automatically gives us a left adjoint functor $\Li$ which provides us with explicit homotopy Lie algebra models for spaces. We dedicate the second part of the article to the study of this functor.

\medskip

Recall that the Kan property for a simplicial set is defined by the existence of horn fillers.
In the case of Getzler's functor, the set of horn fillers admits the following elegant description.

\begin{thmfillhorns}[{\cite{Getzler09}}]
	Given a horn $\Ho{k}{n} \to \R(\g)$~, the set of horn fillers is canonically and naturally in bijection with the set of  elements of degree $n$ of the (complete shifted)  $\mathcal{L}_\infty$-algebra $\g$~:
	\[
	\left\{\vcenter{\hbox{
			\begin{tikzpicture}
			\node (a) at (0, 0) {$\Ho{k}{n}$};
			\node (b) at (2, 0) {$\R(\g)$};
			\node (c) at (0, -1.5) {$\De{n}$};
			
			\draw[->] (a) to (b);
			\draw[right hook->] (a) to node[below=-0.05cm, sloped]{$\sim$} (c);
			\draw[->, dashed] (c) to (b);
			\end{tikzpicture}
	}}\right\} 
	\cong \g_n\ .
	\]
\end{thmfillhorns}

The Kan property is immediate, since $0\in\g_n$ so that a filler always exists. 
But this statement tells us more: $0$ provides us with a canonical way of filling horns, so that we actually get a \emph{canonical algebraic $\infty$-groupoid}. This latter notion \cite{Nickolaus11} is defined by a structure (the canonical horn fillers) and not by a property (existence); as such, it behaves much better than generic Kan complexes with respect to algebraic properties. This represents a \emph{change of paradigm} which makes Getzler's construction leave homotopy theory and enter the world of algebra.

\medskip

Although the aforementioned result is not new, we give it a new and \emph{constructive} proof using the left adjoint functor $\Li$~, leading to \emph{explicit formulas} for the horn fillers. Using this, we give another proof, after \cite{Bandiera14}, that the classical BCH formula is obtained by filling a $2$-simplex.
\[
\begin{tikzpicture}

\coordinate (v0) at (210:1.5);
\coordinate (v1) at (90:1.5);
\coordinate (v2) at (-30:1.5);

\draw[line width=1] (v0)--(v1)--(v2);
\draw[dashed, line width=1]  (v0)--(v1)--(v2)--cycle;

\begin{scope}[decoration={
	markings,
	mark=at position 0.55 with {\arrow{>}}},
line width=1
]

\path[postaction={decorate}] (v0) -- (v1);
\path[postaction={decorate}] (v1) -- (v2);
\path[postaction={decorate}] (v0) -- (v2);

\end{scope}


\node at ($(v0) + (30:-0.3)$) {$0$};
\node at ($(v1) + (0,0.3)$) {$0$};
\node at ($(v2) + (-30:0.3)$) {$0$};

\node at ($(v0)!0.5!(v1) + (-30:-0.4)$) {$x$};
\node at ($(v1)!0.5!(v2) + (30:0.4)$) {$y$};
\node at ($(v0)!0.5!(v2) + (0,-0.4)$) {$\BCH(x,y)$};

\node at (0,0) {$0$};

\end{tikzpicture}
\]
This requires us to give a new universal characterization of the BCH formula which is of independent interest (\cref{prop:uniqueness of classical BCH}) and provides us with a new expression for it as a sum of labeled trees. The real new interest lies in the fillers for the higher dimensional horns: they allow us to introduce a whole hierarchy of \emph{higher Baker--Campbell--Hausdorff products} for homotopy Lie algebras. Such a notion was just sketched at the very end of \cite{Getzler09}; we give it here the central position it deserves and describe these products with closed formulas.

\medskip

These higher BCH products provide us with a powerful new tool to study the homotopy theory of Maurer--Cartan spaces. 
We study their properties and apply them to get a short new proof of  Berglund's Hurewicz type theorem, which we show to be functorial with respect to $\infty_\pi$-morphisms.

\begin{thmBerglund}[{\cite{ber15}}]
	For each $n\geqslant1$~, there is a group isomorphism
	\begin{align*}
	\mathcal{B}_n\ :\ {H}_n(\g^\alpha){}&\ \stackrel{\cong}{\longrightarrow}\ \pi_n\left(\R(\g), \alpha\right)
	\end{align*}
	which is natural with respect to $\infty_\pi$-morphisms. The group structure on the right-hand side is given by the Baker--Campbell--Hausdorff formula, for $n=1$~,  and by the sum,  for $n\geqslant 2$~.
\end{thmBerglund}

This result shows how amenable is the deformation $\infty$-groupoid: its weak homotopy type can be computed via the homology groups of the original homotopy Lie algebra.

\medskip

We establish another structural result in this domain:  the homotopy invariance property of Maurer--Cartan spaces. This  is the end point of a long series of generalizations starting from Goldman--Millson \cite{gm88}, although the result therein is attributed to Schlessinger--Stasheff \cite{SS12} by the authors, and reaching a modern formulation with Dolgushev--Rogers \cite{DolgushevRogers15} on the level of Hinich's functor.

\begin{thmDR}
	Any filtered $\infty_\pi$-quasi-isomorphism $f : \g\stackrel{\sim}{\rightsquigarrow} \h$ of (complete shifted) $\mathcal{L}_\infty$-algebras induces a weak equivalence of simplicial sets $\R(f):\R(\g)\stackrel{\sim}{\to} \R(\h)$~.
\end{thmDR}

Finally, the higher Lie theory developed in this article admits a salient application in rational homotopy theory: the adjoint functors $\Li$ and $\R$ directly relate homotopy Lie algebras and simplicial sets, i.e.\ spaces, faithfully  preserving their rational homotopy type. Since the homotopy Lie algebra functor $\Li$ creates an ``artificial'' new point, corresponding to the canonical Maurer--Cartan element $0$~, we get rid of it by working with pointed spaces and ``reduced'' versions
\[
\hbox{
	\begin{tikzpicture}
	\def\upshift{0.185}
	\def\downshift{0.15}
	\pgfmathsetmacro{\midshift}{0.005}
	
	\node[left] (x) at (0, 0) {$\widetilde{\Li}\ \ :\ \ \sSe_*$};
	\node[right] (y) at (2, 0) {$\sLialg\ \ :\ \ \widetilde{\R}$};
	
	\draw[-{To[left]}] ($(x.east) + (0.1, \upshift)$) -- ($(y.west) + (-0.1, \upshift)$);
	\draw[-{To[left]}] ($(y.west) + (-0.1, -\downshift)$) -- ($(x.east) + (0.1, -\downshift)$);
	
	\node at ($(x.east)!0.5!(y.west) + (0, \midshift)$) {\scalebox{0.8}{$\perp$}};
	\end{tikzpicture}}
\]
of the original pair of functors. 

\begin{ThmIntroRHT}
For $X_\bullet$ a pointed connected finite simplicial set, the unit 
\[
X_\bullet \longrightarrow \widetilde{\R}\widetilde{\Li}(X_\bullet)
\]
of the $\widetilde{\Li}\dashv\widetilde{\R}$-adjunction is homotopically equivalent to the $\QQ$-completion of Bousfield--Kan. In particular, it a rationalization when  $X_\bullet$ is nilpotent.
\end{ThmIntroRHT}

This simplifies the Lie side of rational homotopy theory: recall that Quillen's original construction, associating a Lie algebra to a space, is done via the composite of many consecutive adjunctions.  A similar approach --- which was a source of inspiration for the present work --- was developed by Buijs--F\'elix--Tanr\'e--Murillo \cite{BFMT18}, but with a construction working only for differential graded Lie algebras.
The present proof of the aforementioned result relies on Sullivan's side of rational homotopy theory, after Bousfield--Guggenheim \cite{BousfieldGugenheim} and linear dualization, so it still carries with it a finiteness assumption. However, the way the present theory is established makes us believe that it should hold in a wider context. 

\subsubsection*{\bf ``Discourse on the method''}
A few words are in order about the methods and the range of applications. 
First, one might think that working with homotopy Lie algebras is much more difficult than working with differential graded Lie algebras due to their more complex definition. This is not the case: working with homotopy Lie algebras is \textit{easier and produces much cleaner formulas}. Conceptually, this comes from the fact that this higher notion is encoded by a (quasi)-free operad whereas its classical counterpart is governed by an intricate operadic quotient. In other words, the free Lie algebra over a chain complex is a rather involved object, due to the Jacobi relation, whereas the free homotopy Lie algebra is a simple construction, with a canonical basis given by  rooted trees. This is the key property to obtain explicit formulas for our constructions.

\medskip

Second, for the first time in higher Lie theory, we systematically apply the operadic calculus, including its most recent developments such as \cite{rn17tensor, Wierstra19, rnw17}. 
In order to achieve some of the present results, we need to improve the operadic calculus even further, see e.g.\ \cref{thm:convolution algebras and HTT} which establishes the compatibility of the homotopy transfer theorem with tensor products, providing results of independent interest. The main surprise for the authors was to realize that the suitable homotopy commutative structure one should consider on the normalized cochain level (Dupont forms) is the one governed by the ``unusual'' bar-cobar construction $\Omega \B \com$ of the operad encoding commutative algebras  instead of the more ``economical'' Koszul model $\Omega \com^{\antishriek}$~.

\medskip

Last but not least, we provide \emph{closed explicit formulas for all the notions and results present in this paper}. Since these are coming from a joint application of the operadic calculus, fixed-point equations or formal differential equations, cf.\ \cref{sec:app}, these formulas are all given in terms of trees (rooted, planar, partitioned, etc.). 
This brings us close to the original work of Cayley \cite{Cayley}, numerical analysis (B-series) \cite{Butcher, HLW10}, and to the more recent work in renormalization theory of Bruned--Hairer--Zambotti \cite{BHZ19}, associated to pre-Lie algebra structures. 
This is not a surprise since the totalization of an operad produces a pre-Lie algebra. For example, we fully describe in \cref{prop:Diffmc1} the universal (complete shifted) homotopy Lie algebra $\mc^1$, which represents gauges between Maurer--Cartan elements, and which is shown to recover the Lawrence--Sullivan differential graded Lie algebra model of the interval. We also completely characterize the notion of gauge equivalences for $\infty$-morphisms of homotopy Lie algebras in \cref{prop:GaugeInfMor}.

\subsubsection*{\bf Structure of the paper}
Our goal was to make this paper accessible to all researchers with basic knowledge of operad theory and homotopy theory. In this optic, \cref{sect:recollections} is dedicated to the recollection of the main concepts and structures underlying all of the other constructions in this work. Namely, we remind the reader what a complete shifted homotopy Lie algebra is, the basic notions of simplicial sets and algebraic $\infty$-groupoids, and a non-trivial amount of operadic calculus. The core of the article begins with \cref{sect:integration of Loo}, where we construct the universal homotopy Lie algebra representing Maurer--Cartan elements and we discuss the integration of homotopy Lie algebras to Maurer--Cartan spaces, including relations with the notions already existing in the literature. In \cref{sec:Func}, we introduce a refined notion of homotopy morphisms, called $\infty_\pi$-morphisms, with respect to which the Maurer--Cartan spaces are functorial.
\cref{sec:gauges} deals with the notion of gauges between Maurer--Cartan elements: we describe the universal complete shifted homotopy algebra $\mc^1$ as well as the notion of gauge equivalence between $\infty$-morphisms. 
Horn fillers as well as classical and higher Baker--Hausdorff products are the subject of \cref{sec:HighBCH}. 
\cref{sect:homotopy theory} treats the homotopical properties of Maurer--Cartan spaces: Berglund's Hurewicz type theorem, homotopy invariance of the integration functor, and model category structures. 
In \cref{sec: rational models}, we apply the theory developed in the previous sections to rational homotopy theory in order to recover the Bousfield--Kan $\QQ$-completion as the unit of our adjunction. 
Finally, \cref{sec:app} provides us with formulas for the solutions of formal fixed-point equations and formal differential equations.

\subsubsection*{\bf Conventions}
Throughout the article, we work over a field $\k$ of characteristic $0$~, except for \cref{sec: rational models}, where we specialize to the field of rational numbers $\k=\mathbb{Q}$~. 
We work with chain complexes under the homological degree convention, where the differentials have degree $-1$~; cochain complexes are viewed as chain complexes with opposite degree convention. 
We denote the (degree wise, arity wise) linear duals by $V^\vee$. 
The symbol $s$ denotes a formal element of degree $1$ and is used to suspend chain complexes under the convention $sV\coloneqq V\otimes\k s$~, that is  $(sV)_n\cong V_{n-1}$~. The symbol $s^{-1}$ denotes the (formal) dual of $s$ and receives degree $-1$~. 
The symbol $\S_m$ stands for the symmetric group on $m$ elements.
We denote by  $[n]$  the set of natural numbers up to $n$~, i.e.\ $[n]\coloneqq\{0, 1, \ldots, n\}$~. When $n$ is clear from the context, we simply denote
$
\widehat{k}\coloneqq [n]\backslash\{k\}$~.

\subsubsection*{\bf Acknowledgements}
We would like to express our appreciation to Benjamin Enriquez for sharing with us the arguments of \cref{lemma:Center}. It is a pleasure to thank 
Anton Alekseev, 
Alexander Berglund, 
Urtzi Buijs, 
Damien Calaque, 
Vladimir Dotsenko, 
Ezra Getzler,
Geoffroy Horel,
Aniceto Murillo,
Victor Roca Lucio, 
Pavol Severa, and
Felix Wierstra
for interesting discussions.

\section{Recollections}\label{sect:recollections}

Classical Lie theory consists of an equivalence between the categories of real Lie algebras and of simply connected real Lie groups. Its ``up to homotopy'' version, \emph{higher Lie theory}, revolves around a pair of Quillen adjoint functors between the categories of complete shifted homotopy Lie algebras and algebraic $\infty$-groupoids. We start this section by recalling the definitions and main properties of these notions, after which we provide the reader with some recollections on the operadic calculus, which is the main toolbox in the present work. Additionally, this section contains two news statements in the theory of operads which are of general interest and might be fruitfully used elsewhere. Namely, these are an extension of the properties of the bar and cobar constructions for algebras and coalgebras in the complete setting and the compatibility of the homotopy transfer theorem with tensor products. 

\subsection{Complete shifted homotopy Lie algebras}\label{subsec:sLooalg}

\begin{definition}[Complete chain complexes]
	A \emph{complete chain complex}\index{complete!chain complex} $(V, d, \mathrm{F})$ is a chain complex, i.e.\ a $\mathbb{Z}$-graded vector space equipped with a degree $-1$ differential $d$ that squares to zero, endowed with a degree-wise filtration 
	\[
	V_n=\F_0 V_n \supseteq \F_1 V_n \supseteq \F_2 V_n \supseteq \cdots \supseteq \F_k V_n \supseteq \F_{k+1}V_n \supseteq \cdots
	\]
	of sub-vector spaces that are preserved by the differential, i.e.\ $d(\F_k V_n)\subseteq \F_k V_n$~, and such that the associated topology is complete, i.e. $V_n\cong \lim_{k\to\infty} V_n/\F_k  V_n$ through the canonical map.
\end{definition}

A morphism of complete chain complexes is a chain map preserving the respective filtrations. The resulting category is denoted by $\ch$~. Any chain complex $V$ can be viewed as a complete chain complex endowed with the discrete filtration $V=\F_0 V \supseteq \F_1  V= \F_2 V= \F_3 V=  \cdots=0$~. 

\medskip

Equipped with the complete tensor product defined by $V\widehat{\otimes}W\coloneqq \widehat{V\otimes W}$~, where $\widehat{\ }$ denotes the completion functor, the category $\ch$ becomes a bicomplete closed symmetric monoidal category, see \cite[Section~1.2]{DotsenkoShadrinVallette18} for instance. The internal hom's are denoted by $\hom(V,W)\coloneqq \F_0 \Hom(V,W)$ and is made up of maps which respect the filtrations, its is equipped with the complete filtration given by
\[
\F_k \hom(V, W)\coloneqq\left\{
f :V \to W\ | \ f(\F_n V)\subset \F_{n+k} W\ , \ \forall n\in \NN
\right\}\ .
\]

\begin{definition}[Complete $\sLi$-algebra]
	A \emph{complete shifted $\L_\infty$-algebra structure}\index{complete!homotopy Lie algebra}\index{homotopy Lie algebra} (or a \emph{complete $\sLi$-algebra structure}) on a complete chain complex $\g$ is a collection $\ell_m\in\hom\big(\g^{\odot m}, \g\big)$ of symmetric maps of degree $-1$ for $m\geqslant2$ which respect the filtration and which satisfy 
	\[
	\partial\left(\ell_m\right)+\sum_{\substack{p+q=m\\ 2\leqslant p,q \leqslant m}}
	\sum_{\sigma\in \mathrm{Sh}_{p,q}^{-1}}
	(\ell_{p+1}\circ_{1} \ell_q)^{\sigma}=0\ ,
	\]
	for any $m\geqslant2$~, where $ \mathrm{Sh}_{p,q}^{-1}$ denotes the set of the inverses of $(p,q)$-shuffles and where $\partial\left(\ell_m\right)=d_\g\ell_m +\ell_m d_{\g^{\odot m}}$~. Here, the notation $\g^{\odot m}$ stands for the space of coinvariants $(\g^{\otimes m})_{\Sy_m}$~.
\end{definition}

In the rest of the article, we will sometimes us the convention $\ell_1\coloneqq d_{\g}$ to simplify some formulas. By a slight abuse of notation, we will usually use the same letter $\g$ for the underlying chain complex and the full data of complete $\sLi$-algebra.

\medskip

In the present paper, unless otherwise specified, we restrict ourselves to the case $\F_0 \g= \F_1 \g$ for complete $\sLi$-algebras. This induces no loss of generality: if one is confronted with a complete $\sLi$-algebra with $\F_0 \g\supset \F_1 \g$ then it is always possible to restrict to $\F_1\g$~, which forms a complete $\sLi$-algebra by itself. 

\medskip

These algebras are actually algebras over an operad, denoted by $\sLi$~, in the symmetric mo\-noi\-dal category of complete chain complexes, see \cref{subsec:Op}. As such, they are equipped with \emph{strict morphisms}, which are maps of degree $0$ commuting strictly with the structure operations. The resulting category is denoted by $\sLialg$~. This operadic interpretation allows us to apply to complete $\sLi$-algebras all the classical operadic constructions and results of \cite{LodayVallette12}, such as the following.
 
\begin{proposition}\label{prop:CatCocomp}
	The category of complete $\sLi$-algebras is locally small, complete, and cocomplete. 
\end{proposition}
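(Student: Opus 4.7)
The plan is to exploit the operadic perspective just set up: complete $\sLi$-algebras are algebras over the operad $\sLi$ in the bicomplete closed symmetric monoidal category $\ch$, fitting into a free-forgetful adjunction $\widehat{\sLi}(-)\dashv U$ with $U:\sLialg\to\ch$. Local smallness is then immediate, since a morphism of $\sLi$-algebras is in particular a degree-zero filtration-preserving chain map, hence lies in the set $\hom(\g,\h)_0$.

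For completeness I would show that $U$ creates small limits. Given a diagram $D:I\to \sLialg$, form $L\coloneqq \lim_i U(D_i)$ in $\ch$, which exists by bicompleteness. The operations $\ell_m^{D_i}:D_i^{\odot m}\to D_i$ are natural in $i$, so via the canonical map $L^{\odot m}\to \lim_i D_i^{\odot m}$ and the universal property of $L$ they assemble into operations $\ell_m^L: L^{\odot m}\to L$ satisfying the $\sLi$-relations, because these relations hold in each $D_i$ and the projections $L\to D_i$ are jointly monic. A routine check confirms that any cone in $\sLialg$ factors uniquely through $L$ by algebra maps, so $L$ is the limit in $\sLialg$.

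For cocompleteness, the main point, I would apply the standard monadic recipe: it suffices to build small coproducts and coequalizers. Coproducts come from the tree description of $\widehat{\sLi}(-)$, which in particular makes the free functor send direct sums to free-algebra coproducts; general coproducts are then obtained by presenting each factor through a free resolution. Coequalizers of a pair $f,g:\g\rightrightarrows\h$ are constructed from the canonical reflexive pair $\widehat{\sLi}U\widehat{\sLi}U\h\rightrightarrows \widehat{\sLi}U\h$ coming from the bar resolution, using that $\widehat{\sLi}$ preserves reflexive coequalizers. This last preservation rests on two facts supplied by the setup: each $V\widehat{\otimes}(-)$ is a left adjoint by the closed symmetric monoidal structure on $\ch$ and thus preserves all colimits, and finite symmetric group coinvariants are exact in characteristic zero. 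Together they imply that the Schur-type functor $\widehat{\sLi}(-)$, arity-wise a coinvariant of an iterated complete tensor product, preserves reflexive coequalizers; from here all small colimits follow by standard monadic arguments. The main obstacle will be making the interaction between the completion inherent in $\widehat{\otimes}$ and colimits precise, a point which the closed monoidal structure of $\ch$ is exactly designed to resolve.
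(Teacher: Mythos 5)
Your argument is correct and is essentially the paper's approach: the paper views complete $\sLi$-algebras as algebras over an operad in the bicomplete closed symmetric monoidal category of complete chain complexes and simply cites \cite[Lemma~4 and Theorem~1]{DotsenkoShadrinVallette18} for exactly the monadic facts you spell out (creation of limits by the forgetful functor, preservation of reflexive coequalizers by the completed Schur functor via the closed monoidal structure and exactness of $\Sy_m$-coinvariants in characteristic zero). Your sketch of the coequalizer step is slightly imprecise — the canonical reflexive pair $\widehat{\sLi}U\widehat{\sLi}U\h\rightrightarrows\widehat{\sLi}U\h$ presents $\h$ itself rather than the coequalizer of an arbitrary pair $f,g$, which one handles by first reducing to reflexive pairs — but this is standard bookkeeping and not a gap in the method.
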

 
\begin{proof}
	All the arguments of \cite[Lemma~4]{DotsenkoShadrinVallette18} hold true for complete chain complexes whose filtration satisfies $\F_0=\F_1$~. Therefore, the category of complete chain complexes is locally small, complete, and cocomplete, and so is the category of complete $\sLi$-algebras as a category of algebras over an operad, by \cite[Theorem~1]{DotsenkoShadrinVallette18}.
\end{proof}
 
\begin{remark} 
	Under the desuspension isomorphism of the underlying chain complexes: $\g \mapsto s^{-1}\g$~, the category of complete $\sLi$-algebras is isomorphic to the ``classical'' category of complete $\L_\infty$-algebras. In many instances, $\sLi$-algebras appear more naturally than their unsifted analogues and they have the great advantage of having a much simpler sign convention.
\end{remark}

For $m\geqslant2$~, we denote by $\RT_m$ the set of \emph{rooted trees}\index{trees!rooted}\index{$\RT$} with $m$ indexed leaves. These are graph theoretical trees with $m+1$ leaves, of which one is marked (the \emph{root} of the tree) and the others are numbered by unique labels from $1$ to $m$~. Each vertex of the tree is required to have at least two incoming edges and a unique outgoing edge. Homeomorphic trees that give the identity on the labels are identified. We do not require that these trees can be embedded into $2$-dimensional space in such a way that the leaves are in the order defined by their labels; as such, we can view these trees as objects living in $3$-dimensional space. The rooted tree with a single vertex and $m$ leaves is called the $m$-corolla. In general, the number of leaves of the tree is called the \emph{arity} of the tree.
The set $\RT_1\coloneqq\{|\}$ consists only of the trivial rooted tree. We denote the number of vertices of a rooted tree $\tau\in\RT$ by $|\tau|$~. We denote by $\overline{\RT}$ the set of \emph{reduced} rooted trees\index{trees!reduced rooted}, that is all rooted trees with the exception of the trivial one.

\begin{proposition}\label{prop:FreesLi}
Let $V$ be a chain complex with basis $\{v_i\}_{i\in I}$~. The \emph{free complete $\sLi$-algebra}\index{free complete homotopy Lie algebra} generated by $V$ is isomorphic to
\[
\wsLi(V)\cong \prod_{m\geqslant 1}
\k \RT_m \otimes_{\Sy_m} V ^{\otimes m}  
\ ,
\]
which is made up of series, indexed by  $m\geqslant 1$~, of linear combinations of rooted trees $\tau$~, of degree $-|\tau|$~, with $n$ leaves labeled by elements $v_i$~, such as
\[
\tau\left(v_{i_1}, v_{i_2}, v_{i_3}, v_{i_4}, v_{i_5}, v_{i_6}\right)=
\vcenter{\hbox{
		\begin{tikzpicture}
		\def\scale{0.6};
		\pgfmathsetmacro{\diagcm}{sqrt(2)};
		
		\def\xangle{35};
		\pgfmathsetmacro{\xcm}{1/sin(\xangle)};
		
		\coordinate (r) at (0,0);
		\coordinate (v11) at ($(r) + (0,\scale*1)$);
		\coordinate (v21) at ($(v11) + (180-\xangle:\scale*\xcm)$);
		\coordinate (v22) at ($(v11) + (\xangle:\scale*\xcm)$);
		\coordinate (v31) at ($(v22) + (45:\scale*\diagcm)$);
		\coordinate (l1) at ($(v21) + (135:\scale*\diagcm)$);
		\coordinate (l2) at ($(v21) + (0,\scale*1)$);
		\coordinate (l3) at ($(v21) + (45:\scale*\diagcm)$);
		\coordinate (l4) at ($(v22) + (135:\scale*\diagcm)$);
		\coordinate (l5) at ($(v31) + (135:\scale*\diagcm)$);
		\coordinate (l6) at ($(v31) + (45:\scale*\diagcm)$);
		
		\draw[thick] (r) to (v11);
		\draw[thick] (v11) to (v21);
		\draw[thick] (v11) to (v22);
		\draw[thick] (v21) to (l1);
		\draw[thick] (v21) to (l2);
		\draw[thick] (v21) to (l3);
		\draw[thick] (v22) to (l4);
		\draw[thick] (v22) to (v31);
		\draw[thick] (v31) to (l5);
		\draw[thick] (v31) to (l6);
		
		\node[above] at (l1) {$v_{i_1}$};
		\node[above] at (l2) {$v_{i_3}$};
		\node[above] at (l3) {$v_{i_4}$};
		\node[above] at (l4) {$v_{i_5}$};
		\node[above] at (l5) {$v_{i_2}$};
		\node[above] at (l6) {$v_{i_6}$};
		\end{tikzpicture}}}
\text{for}\quad \tau=
\vcenter{\hbox{
		\begin{tikzpicture}
		\def\scale{0.75};
		\pgfmathsetmacro{\diagcm}{sqrt(2)};
		
		\def\xangle{35};
		\pgfmathsetmacro{\xcm}{1/sin(\xangle)};
		
		\coordinate (r) at (0,0);
		\coordinate (v11) at ($(r) + (0,\scale*1)$);
		\coordinate (v21) at ($(v11) + (180-\xangle:\scale*\xcm)$);
		\coordinate (v22) at ($(v11) + (\xangle:\scale*\xcm)$);
		\coordinate (v31) at ($(v22) + (45:\scale*\diagcm)$);
		\coordinate (l1) at ($(v21) + (135:\scale*\diagcm)$);
		\coordinate (l2) at ($(v21) + (0,\scale*1)$);
		\coordinate (l3) at ($(v21) + (45:\scale*\diagcm)$);
		\coordinate (l4) at ($(v22) + (135:\scale*\diagcm)$);
		\coordinate (l5) at ($(v31) + (135:\scale*\diagcm)$);
		\coordinate (l6) at ($(v31) + (45:\scale*\diagcm)$);
		
		\draw[thick] (r) to (v11);
		\draw[thick] (v11) to (v21);
		\draw[thick] (v11) to (v22);
		\draw[thick] (v21) to (l1);
		\draw[thick] (v21) to (l2);
		\draw[thick] (v21) to (l3);
		\draw[thick] (v22) to (l4);
		\draw[thick] (v22) to (v31);
		\draw[thick] (v31) to (l5);
		\draw[thick] (v31) to (l6);
		
		\node[above] at (l1) {$1$};
		\node[above] at (l2) {$3$};
		\node[above] at (l3) {$4$};
		\node[above] at (l4) {$5$};
		\node[above] at (l5) {$2$};
		\node[above] at (l6) {$6$};
		\end{tikzpicture}}}
,
\]
and where the complete filtration is given by 
\[
\F_k \wsLi(V)\coloneqq \prod_{m\geqslant k} \k\RT_m\otimes_{\Sy_m} V^{\otimes m}.
\]
The differential is the sum of the internal differential on $V$ and the operation on trees that sums over every vertex of a tree and splits its corollas into two in all possible ways. 
The structure operations $\ell_m$ amount to grafting the $m$ rooted trees given as arguments by their root to the $m$ leaves of a single $m$-corolla.
\end{proposition}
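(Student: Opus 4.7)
The plan is to deduce this description from the operadic interpretation of complete $\sLi$-algebras, together with the standard bar-cobar presentation of the operad $\sLi$. In the classical (non-complete) setting, the free $\P$-algebra on a chain complex $V$ over any operad $\P$ is given by $\P(V)=\bigoplus_{m\geq 0}\P(m)\otimes_{\Sy_m} V^{\otimes m}$, with differential induced by those of $\P$ and $V$, and structure operations given by operadic composition. So the entire statement reduces to identifying the underlying $\Sy$-module of the operad $\sLi$, describing its differential, and adapting the construction to the complete context.

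The first step is to recall that $\sLi$ is (up to operadic suspension) the cobar construction $\Omega(\com^{\antishriek})$ of the Koszul dual cooperad of $\com$, and that as a \emph{graded} operad (forgetting the internal differential) it is free on a single generator in each arity $m\geq 2$ of degree $-1$, corresponding to the $m$-ary operation $\ell_m$. The underlying graded $\Sy$-module of a free operad on corollas of arity $\geq 2$ is well known to have a canonical basis indexed by the set $\RT_m$ of rooted trees with $m$ labeled leaves (each internal vertex having at least two inputs), each such tree $\tau$ carrying degree $-|\tau|$. Thus, as a graded $\Sy$-module,
\[
\sLi(m)\cong \k\,\RT_m\ ,
\]
and the classical formula for the free algebra gives $\sLi(V)\cong\bigoplus_{m\geq 1}\k\RT_m\otimes_{\Sy_m}V^{\otimes m}$ as a graded object, with the $m$-corolla acting by grafting the $m$ input arguments onto its leaves, which is exactly the prescription for $\ell_m$.

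The second step is to transfer this to the complete setting. Because the complete $\sLi$-operad lives in $\ch$, the coproduct in the category of complete $\sLi$-algebras is the completion of the underlying coproduct with respect to the arity filtration
\[
\F_k \sLi(V)\coloneqq \bigoplus_{m\geq k}\k\RT_m\otimes_{\Sy_m}V^{\otimes m}\ ,
\]
which is preserved by the differential and by each operation $\ell_m$ (since $\ell_m$ increases the number of leaves). By \cref{prop:CatCocomp} the free complete $\sLi$-algebra on $V$ exists and is characterized by a universal property in $\sLialg$; the completion $\widehat{\sLi}(V)\coloneqq\prod_{m\geq 1}\k\RT_m\otimes_{\Sy_m}V^{\otimes m}$ of the above object inherits from $\sLi(V)$ a complete $\sLi$-algebra structure, together with a canonical map $V\hookrightarrow\widehat{\sLi}(V)$, and one checks directly from the universal property of completion in $\ch$ that any morphism of complete $\sLi$-algebras $V\to\g$ extends uniquely to $\widehat{\sLi}(V)\to\g$~.

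It remains to identify the differential of $\sLi$ with the vertex-splitting map. Under the presentation $\sLi\cong\Omega(\com^{\antishriek})$, the cobar differential is dual to the (partial) decomposition of the shifted cocommutative cooperad, which partitions the inputs of a corolla into two nonempty subsets in all possible ways; after dualization and passage to free trees, this becomes precisely the operation that sums, over each internal vertex $v$ of a tree $\tau$, over all ways of splitting $v$ into two adjacent vertices with the incoming edges of $v$ partitioned in all possible (nontrivial) ways. The expected obstacle here is purely bookkeeping: one must track the operadic suspension and be careful with the signs arising from the shifted cocommutative cooperad in order to recognize the resulting operations as exactly the $\sLi$-structure of \cref{subsec:sLooalg}, but this is a standard computation of the bar-cobar construction in the Koszul framework of \cite{LodayVallette12} and requires no additional ideas.
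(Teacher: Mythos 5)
Your proposal is correct in substance and follows essentially the same route as the paper, whose proof is simply the content of \cref{ex:OpsLI}: identify $\sLi=\Cobar\,\com^\vee$ as a quasi-free operad whose underlying graded $\Sy$-module has the rooted trees $\RT_m$ as basis with each vertex in degree $-1$, observe that the cobar differential $d_2$ is vertex splitting dual to the decomposition of $\com^\vee$, and then apply the formula $\widehat{\P}(V)=\prod_m\P(m)\otimes_{\Sy_m}V^{\otimes m}$ for the free complete algebra over an operad recalled in \cref{subsec:Op}. One slip deserves correction: you identify $\sLi$ ``up to operadic suspension'' with $\Cobar(\com^{\antishriek})$, but that operad is $\com_\infty$, a resolution of $\com$ itself, not of $\lie$; the correct identification is $\sLi=\Cobar(\com^\vee)$ on the nose (equivalently, $\mathscr{L}_\infty=\Cobar(\lie^{\antishriek})$ up to suspension). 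Had you literally worked with $\com^{\antishriek}$, its arity-$m$ component sits in degree $m-1$ and carries the signature representation, so the generators of the cobar construction would have degree $m-2$ and the $\Sy_m$-action on trees would acquire signs, contradicting both the degree count $-|\tau|$ and the plain permutation of leaf labels asserted in the statement. Since you immediately revert to the correct description (one generator of degree $-1$ with trivial $\Sy_m$-action in each arity $m\geqslant 2$), the remainder of your argument, including the completion step and the identification of the differential with vertex splitting, goes through as in the paper.
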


\begin{proof}
	This result can be proved by hand, but we will not do so; a proof using operadic calculus is given in \cref{ex:OpsLI}.
\end{proof}

\begin{definition}[Maurer--Cartan elements]
A \emph{Maurer--Cartan element}\index{Maurer--Cartan!element} of an $\sLi$-algebra $\g$ is a degree $0$ element $\alpha\in\g_0$ satisfying the \emph{Maurer--Cartan equation}\index{Maurer--Cartan!equation}:
\[
d(\alpha)+\sum_{m\geqslant 2} {\textstyle \frac{1}{m!}}\ell_m(\alpha, \ldots, \alpha)=0 \ .
\]
We denote the set of all Maurer--Cartan elements of $\g$ by $\MC(\g)$~.
\end{definition}

\begin{proposition}[Twisting procedure]\label{prop:TwiProc}\index{twisting!of a homotopy Lie algebra}
Given a Maurer--Cartan element $\alpha$ of a complete $\sLi$-algebra $\g$~, the differential and operations 
\[
d^\alpha\coloneqq \sum_{k\geqslant 0} {\textstyle \frac{1}{k!}} \ell_{k+1}\big(\alpha^k, -\big) \qquad\text{and} \qquad 
\ell_m^\alpha\coloneqq \sum_{k\geqslant 0} {\textstyle \frac{1}{k!}} \ell_{k+m}\big(\alpha^k, -,  \ldots,  -\big)\qquad\text{for }m\geqslant2
\]
define a complete $\sLi$-algebra structure with the same filtration. This new structure is called the complete $\sLi$-algebra $\g$ \emph{twisted} by the Maurer--Cartan element $\alpha$ and it is denoted by $\g^\alpha$~.
\end{proposition}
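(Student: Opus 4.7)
I would argue the proposition in two parts: first, convergence of the defining series together with preservation of the filtration, and second, verification of the $\sLi$-relations for the new structure operations.

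For the first part, the standing assumption $\F_0 \g = \F_1 \g$ places $\alpha$ in $\F_1 \g$. Since each $\ell_n$ lies in $\F_0 \hom(\g^{\odot n}, \g)$ and the complete tensor product satisfies $\F_{p_1} \g \,\widehat{\otimes}\, \cdots \,\widehat{\otimes}\, \F_{p_n} \g \subset \F_{p_1 + \cdots + p_n} \g^{\odot n}$, plugging in $k$ copies of $\alpha$ together with $x_i \in \F_{p_i} \g$ yields an element of $\F_{k + p_1 + \cdots + p_m} \g$. The series $\sum_{k \geq 0} \tfrac{1}{k!} \ell_{k+m}(\alpha^k, x_1, \ldots, x_m)$ is therefore Cauchy in the complete topology and converges; the same bound shows that $\ell_m^\alpha$ lies in $\F_0 \hom(\g^{\odot m}, \g)$, so the twisted structure maps respect the filtration and produce a complete chain complex with the same underlying filtration.

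For the $\sLi$-relations, the conceptually cleanest route is through the cofree cocommutative coalgebra. A complete $\sLi$-algebra structure on $\g$ is equivalent to a degree $-1$ square-zero coderivation $D$ on the completed cofree cocommutative coalgebra $\widehat{S^c}(\g)$. A degree $0$ element $\alpha \in \g_0$ determines the coalgebra automorphism $\Phi_\alpha$ which translates by the group-like element $\exp(\alpha)$, and a suitable conjugate $D^\alpha$ of $D$ by $\Phi_\alpha$ remains a square-zero coderivation of degree $-1$. Inspection of its Taylor coefficients shows that $D^\alpha$ corresponds precisely to the family $\{\ell_m^\alpha\}_{m \geq 0}$ with $\ell_0^\alpha \coloneqq d(\alpha) + \sum_{k \geq 2} \tfrac{1}{k!} \ell_k(\alpha^k)$; the Maurer--Cartan hypothesis on $\alpha$ is exactly the vanishing $\ell_0^\alpha = 0$, so $D^\alpha$ restricts to an uncurved $\sLi$-structure on $\g$ with structure operations $\ell_m^\alpha$ for $m \geq 1$, as required.

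The main obstacle is making the coalgebraic picture rigorous in the complete filtered setting: one must check that $\Phi_\alpha$ makes sense on $\widehat{S^c}(\g)$ and identify its Taylor coefficients with the stated formulas, which rely on the same filtration estimate as in the first step. An equivalent, more pedestrian alternative is to expand $\partial(\ell_m^\alpha) + \sum_{p+q=m} \sum_\sigma (\ell_{p+1}^\alpha \circ_1 \ell_q^\alpha)^\sigma$ directly, sort the resulting double sum by the total number $K$ of $\alpha$-arguments, and recognize the coefficient of each $K$-tuple as $\tfrac{1}{K!}$ times the original $\sLi$-relation applied to $\alpha^K \otimes x_1 \otimes \cdots \otimes x_m$, which vanishes, with the MC equation used to eliminate the residual terms coming from the corner cases $q = 1$ and $p = 1$. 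Either route leads to the same conclusion, but the coderivation viewpoint packages the bookkeeping into a single line.
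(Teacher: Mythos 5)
Your proposal is correct and matches what the paper does: the paper's own proof simply declares the result a ``straightforward computation'' and cites Dotsenko--Shadrin--Vallette for a conceptual proof via a gauge group action, which is exactly your coderivation argument (conjugating the square-zero coderivation on the completed cofree cocommutative coalgebra by translation by $\exp(\alpha)$, with the Maurer--Cartan equation killing the curvature term $\ell_0^\alpha$), while your ``pedestrian alternative'' is the straightforward computation it alludes to. Your opening observation that the standing hypothesis $\F_0\g=\F_1\g$ forces $\alpha\in\F_1\g$ and hence guarantees convergence of the defining series is the right justification for the well-definedness, which the paper leaves implicit.
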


\begin{proof}
This well-known result can be shown by a straightforward computation. We refer the interested reader to \cite[Chapter~3, Proposition~5.1]{DotsenkoShadrinVallette18} for a more conceptual proof by means of a gauge group action. 
\end{proof}

These considerations show that the tangent space of the Maurer--Cartan ``set'' at a point $\alpha$ is given by the kernel of the twisted differential:
\[
\mathrm{T}_\alpha\MC(\g)=\ker d^\alpha\ .
\]
This legitimates the following definition.

\begin{definition}[Gauge equivalence]\label{def:Gauge}
	Given a Maurer--Cartan element $\alpha\in\MC(\g)$ and a degree $1$ element $\lambda\in\g_1$ in a complete $\sLi$-algebra, we define the differential equation
	\[
	\frac{d}{dt}\gamma(t) = d^{\gamma(t)}(\lambda)
	\]
	with initial condition $\gamma(0) = \alpha$~. We define the \emph{gauge action}\index{gauge!action} of $\lambda$ on $\alpha$ by
	\[
	\lambda\cdot\alpha\coloneqq\gamma(1)\ .
	\]
	We say that two Maurer--Cartan elements $\alpha, \beta\in\MC(\g)$ are \emph{gauge equivalent}\index{gauge!equivalence} if there exists $\lambda\in\g_1$ such that $\lambda\cdot\alpha = \beta$~. The element $\lambda$ is called a \emph{gauge} from $\alpha$ to $\beta$ in this case.
\end{definition}

Since we are in the complete setting, the differential equation underlying the notion of gauges always admits a solution. Since the twisted differential squares to zero, we have
\[
d^{x}(\lambda)\in \mathrm{T}_{x}\MC(\g)
\]
for any $x\in\MC(\g)$ and $t\in\k$~, and it follows that  $\gamma(t)\in \MC(\g)$ if the path starts in the Maurer--Cartan set. It is well known  that the gauge equivalence relation is an equivalence relation; this is a consequence of \cref{lem:mc1} and \cref{thm:KanExt}. We denote it by $\sim$ and we write
\[
\mathcal{MC}(\g)\coloneqq \MC(\g)/\sim
\]
for the moduli space of Maurer--Cartan elements\index{Maurer--Cartan!moduli space} up to gauge equivalence.

\medskip

The differential equation of \cref{def:Gauge} can be solved explicitly using the methods of \cref{App:FormDE} as follows. The set $\PT$ of \emph{planar rooted trees}\index{trees!planar rooted}\index{$\PT$} consists of graph theoretical trees with one marked leaf (the \emph{root} of the tree) together with a specified embedding into $2$-dimensional space. This embedding induces a unique labeling of the leaves by going around the tree in clockwise direction starting from the root. The trivial tree $|$ is included in $\PT$~. The number of vertices of a planar rooted tree is denoted by $|\tau|$~. All vertices of a planar rooted tree have exactly one outgoing edge, but we allow for vertices with zero or one incoming edges.

\medskip

The decomposition of a planar rooted tree $\tau = \mathrm{c}_m\circ(\tau_1,\ldots,\tau_m)$ as the grafting of $m$ planar rooted trees $\tau_1,\ldots,\tau_m$ along the leaves of 
the root corolla $\mathrm{c}_m$ allows us to recursively define the coefficients $C(\tau)\in \mathbb{N}$ by
\[
C(\, |\, ) \coloneqq 1\ ,\qquad C\left(\mathrm{c}_m\right) \coloneqq m!\ ,\qquad C(\tau) \coloneqq m! |\tau|\prod_{i=1}^m C(\tau_i)\ ,
\]
and the elements $\tau^\lambda(\alpha)\in \g_0$ by
\[
|^\lambda(\alpha)\coloneqq \alpha\ ,\qquad \mathrm{c}_m^\lambda(\alpha)\coloneqq\ell_{m+1}(\alpha, \ldots, \alpha, \lambda)\ ,\qquad \tau^\lambda(\alpha) \coloneqq \ell_{m+1}\left(\tau_1^\lambda(\alpha),\ldots, \tau_m^\lambda(\alpha), \lambda \right).
\]

\begin{example}
For the planar rooted tree 
\[
\tau\coloneqq \vcenter{\hbox{
	\begin{tikzpicture}
		\def\scale{0.75}
		\pgfmathsetmacro{\diagcm}{sqrt(2)};
		
		\coordinate (r) at (0,0);
		
		\coordinate (v11) at ($(r) + (0,\scale*0.6)$);
		
		\coordinate (v21) at ($(v11) + (135:\scale*\diagcm)$);
		\coordinate (v22) at ($(v11) + (45:\scale*\diagcm)$);
		
		\coordinate (v31) at ($(v21) + (0,\scale*1.75)$);
		
		\coordinate (l1) at ($(v21) + (135:\scale*\diagcm)$);
		\coordinate (l2) at ($(v21) + (45:\scale*\diagcm)$);
		\coordinate (l3) at ($(v22) + (0,\scale*1)$);
		
		\node at (v11) {$\bullet$};
		\node at (v21) {$\bullet$};
		\node at (v22) {$\bullet$};
		\node at (v31) {$\bullet$};
		
		\draw[very thick] (r) to (v11);
		\draw[very thick] (v11) to (v21);
		\draw[very thick] (v11) to (v22);
		\draw[very thick] (v21) to (v31);
		\draw[very thick] (v21) to (l1);
		\draw[very thick] (v21) to (l2);
		\draw[very thick] (v22) to (l3);
	\end{tikzpicture}}}
\]
the coefficient $C(\tau)$ is equal to $96$ and the element $\tau^\lambda(\alpha)$ is equal to 
\[
\tau^\lambda(\alpha) = \ell_3(\ell_4(\alpha, d(\lambda), \alpha, \lambda), \ell_2(\alpha, \lambda), \lambda)\ ,
\] 
which corresponds graphically to 
\[\tau^\lambda(\alpha)= \vcenter{\hbox{
\begin{tikzpicture}
	\def\scale{1}
	\pgfmathsetmacro{\diagcm}{sqrt(2)};
	\pgfmathsetmacro{\xcm}{1/sin(60)};
	
	\def\angle{40}
	\pgfmathsetmacro{\ycm}{1/sin(\angle)};
	
	\def\anglebis{25}
	\pgfmathsetmacro{\zcm}{1/sin(\anglebis)};
	
	\coordinate (r) at (0,0);
	
	\coordinate (v11) at ($(r) + (0,\scale*0.6)$);
	
	\coordinate (v21) at ($(v11) + (135:\scale*\diagcm)$);
	\coordinate (v22) at ($(v11) + (45:\scale*\diagcm)$);
	
	\coordinate (v31) at ($(v21) + (0,\scale*1.75)$);
	
	\coordinate (l1) at ($(v21) + (120:\scale*\xcm)$);
	\coordinate (l2) at ($(v21) + (60:\scale*\xcm)$);
	\coordinate (l3) at ($(v22) + (0,\scale*1)$);
	
	\coordinate (lam1) at ($(v31) + (0,\scale*1)$);
	\coordinate (lam2) at ($(v21) + (\angle:\scale*\ycm)$);
	\coordinate (lam3) at ($(v22) + (\angle:\scale*\ycm)$);
	\coordinate (lam4) at ($(v11) + (\anglebis:\scale*\zcm)$);
	
	\node at (v11) {$\bullet$};
	\node at (v21) {$\bullet$};
	\node at (v22) {$\bullet$};
	\node at (v31) {$\bullet$};
	
	\draw[very thick] (r) to (v11);
	\draw[very thick] (v11) to (v21);
	\draw[very thick] (v11) to (v22);
	\draw[very thick] (v21) to (v31);
	\draw[very thick] (v21) to (l1);
	\draw[very thick] (v21) to (l2);
	\draw[very thick] (v22) to (l3);
	
	\draw[thin] (v31) to (lam1);
	\draw[thin] (v21) to (lam2);
	\draw[thin] (v22) to (lam3);
	\draw[thin] (v11) to (lam4);
	
	\node[above] at (l1) {$\scriptstyle{\alpha}$};
	\node[above] at (l2) {$\scriptstyle{\alpha}$};
	\node[above] at (l3) {$\scriptstyle{\alpha}$};
	
	\node[above] at (lam1) {$\scriptstyle{\lambda}$};
	\node[above] at (lam2) {$\scriptstyle{\lambda}$};
	\node[above] at (lam3) {$\scriptstyle{\lambda}$};
	\node[above] at (lam4) {$\scriptstyle{\lambda}$};
	
	\node[left] at (v11) {$\scriptstyle{\ell_3}$};
	\node[left] at (v21) {$\scriptstyle{\ell_4}$};
	\node[left] at (v22) {$\scriptstyle{\ell_2}$};
	\node[left] at (v31) {$\scriptstyle{\ell_1 = d}$};
\end{tikzpicture}}}
\]
\end{example}

The resulting formula is similar (and gives the same output) to the one of \cite[Proposition~5.9]{Getzler09}.

\begin{proposition}\label{prop:gaugeformula}
	Let $\g$ be a complete $\sLi$-algebra, let $\alpha\in\MC(\g)$~, and let $\lambda\in\g_1$~. Then
	\[
	\lambda\cdot\alpha = \sum_{\tau\in \PT}{\textstyle \frac{1}{C(\tau)}}\tau^\lambda(\alpha)
\ .\]
\end{proposition}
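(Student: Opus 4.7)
The plan is to solve the defining initial value problem $\frac{d}{dt}\gamma(t) = d^{\gamma(t)}(\lambda)$, $\gamma(0) = \alpha$, by an explicit ansatz, and then invoke the uniqueness of solutions provided by \cref{App:FormDE}. Concretely, I would set
\[
\gamma(t) \coloneqq \sum_{\tau\in\PT} \frac{t^{|\tau|}}{C(\tau)}\,\tau^\lambda(\alpha),
\]
verify that this formal series is well-defined and satisfies the equation, and then evaluate at $t=1$ to obtain the claimed formula for $\lambda\cdot\alpha$.

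First, I would address convergence in the complete topology of $\g_0$. Since $\F_0 \g = \F_1 \g$ by assumption, both $\alpha$ and $\lambda$ lie in $\F_1\g$; combined with the filtration-preservation of every operation $\ell_m \in \hom(\g^{\odot m},\g)$, an easy induction on the structure of $\tau$ shows that $\tau^\lambda(\alpha)$ sits in a filtration level that grows with $|\tau|$, which ensures convergence. A matching degree count --- each vertex contributes one $\lambda$ of degree $+1$ and one structure operation of degree $-1$, while the leaves contribute the degree-$0$ element $\alpha$ --- confirms $\gamma(t) \in \g_0$, while at $t=0$ only the trivial tree survives, giving $\gamma(0)=\alpha$.

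The heart of the argument is the verification of the differential equation itself. Differentiating term-by-term produces $\sum_{\tau\neq |}\frac{|\tau|\,t^{|\tau|-1}}{C(\tau)}\,\tau^\lambda(\alpha)$. On the other hand, expanding
\[
d^{\gamma(t)}(\lambda)=\sum_{m\geqslant 0}{\textstyle\frac{1}{m!}}\ell_{m+1}\big(\gamma(t),\ldots,\gamma(t),\lambda\big)
\]
and reorganizing the resulting sum according to the unique decomposition $\tau = \mathrm{c}_m\circ(\tau_1,\ldots,\tau_m)$ of each non-trivial planar rooted tree, together with the recursive definition of $\tau^\lambda(\alpha)$ and the identity $|\tau| = 1 + \sum_i |\tau_i|$, yields $\sum_{\tau\neq |}\frac{t^{|\tau|-1}}{m!\,\prod_i C(\tau_i)}\,\tau^\lambda(\alpha)$, where $m$ denotes the arity of the root of $\tau$. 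The two sums coincide term-by-term precisely when $C(\tau) = m!\,|\tau|\,\prod_i C(\tau_i)$, which is nothing but the recursive definition of $C(\tau)$. This combinatorial matching is the main (and only) subtlety of the proof; together with the uniqueness statement for formal differential equations of \cref{App:FormDE}, it completes the argument by evaluation at $t=1$.
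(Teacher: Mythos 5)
Your proof is correct and is essentially the paper's argument: the paper simply cites \cref{prop:formula for formal ODE} with $f_m^0 = \frac{1}{m!}\ell_{m+1}(-,\ldots,-,\lambda)$ and $f_m^k=0$ for $k\geqslant 1$, whereas you have inlined the proof of that appendix lemma in this special case (the same ansatz, the same term-by-term verification via the root decomposition $\tau=\mathrm{c}_m\circ(\tau_1,\ldots,\tau_m)$, and the same uniqueness statement), with the recursion $C(\tau)=m!\,|\tau|\prod_i C(\tau_i)$ absorbing both the exponential weight and the $\frac{1}{m!}$ factors. Your remark on convergence via completeness matches the caveat the paper makes about the finiteness hypothesis of the appendix lemma not being satisfied here.
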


\begin{proof}
	This is an application of \cref{prop:formula for formal ODE} to the family of functions 
	\[
	f_{m}^{0}(x_1,\ldots,x_m)\coloneqq{\textstyle \frac{1}{m!}}\, \ell_{m+1}(x_1,\ldots,x_m,\lambda)\ , \quad \text{for} \quad m\geqslant 0\ ,
	\]
	and $f^k_m=0$~, for $k\geqslant 1$~. Notice that even if the finiteness assumption of \cref{prop:formula for formal ODE} is not satisfied here, the completeness property of the underling chain complex ensures that the resulting formula makes sense. 
\end{proof}

\subsection{Algebraic \texorpdfstring{$\infty$}{infinity}-groupoids}

\begin{definition}[Simplex category]
The \emph{simplex category}\index{simplex category}, denoted by $\cD$~, is the category whose objects are the totally ordered sets $[n]\coloneqq\{0<  \cdots< n\}$~, for  $n\in \NN$~, and whose morphisms are the (non-strictly) order-preserving maps.
\end{definition}

\begin{definition}[Simplicial sets]
The category of \emph{simplicial sets}\index{simplicial set}, denoted by $\sSe$~, is the category of presheaves $\sSe\coloneqq\mathsf{Fun(}\cD^\mathsf{op}, \mathsf{Set)}$ on the simplex category. 
\end{definition}

Presheaves on the opposite category are called \emph{cosimplicial sets}\index{cosimplicial set}. To avoid confusion, we denote simplicial sets by $X_\bullet$ and cosimplicial sets by $X^\bullet$~, respectively. We use the classical convention $X_n\coloneqq X([n])$ for the \emph{set of $n$-simplices}. A simplicial set is equivalent to a collection of sets $\{X_n\}_n\geqslant0$ together with \emph{face} and \emph{degeneracy maps} $d_{n, i}:X_n\to X_{n-1}$ and $s_{n, i}:X_n\to X_{n+1}$ for $0\leqslant i\leqslant n$ satisfying certain relations called the \emph{simplicial identities}, see e.g.\ \cite[Section~I.1.]{GoerssJardine09}.

\begin{example}
For $n\in \NN$~, the \emph{standard $n$-simplex} is the simplicial set 
$\De{n}\coloneqq\Hom_{\cD}(-,[n])$ represented by $[n]$~. This combinatorial object encodes the cellular decomposition of the \emph{geometric $n$-simplex}, which is the convex hull of the unit vectors in $\RR^{n+1}$~:
\[
|\De{n}|\coloneqq\left\{
(t_0, \ldots, t_n) \in [0,1]^{n+1}\, | \, t_0+\cdots+t_n=1
\right\} \ .
\]
Taken all together, the geometric simplices form a cosimplicial topological space $|\De{\bullet}|$~. 
\end{example}

\begin{definition}
	The \emph{$k{\text{th}}$ horn $\Ho{k}{n}$ of dimension $n$}\index{horn}, for $n\geqslant 2$ and $0\leqslant k \leqslant n$~, is the union of all of the faces of $\Delta^n$ except for the $k$th one.
\end{definition}

In other words, the horn $\Ho{k}{n}$ is obtained from the standard $n$-simplex by removing the top dimensional face and the  facet opposite to the $k^{\text{th}}$ vertex. We call an \emph{$n$-horn} in a simplicial set $X_\bullet$ a morphism of simplicial sets 
$\Ho{k}{n}\to X_\bullet$ and a \emph{horn filler}\index{horn!filler} a morphism of simplicial sets $\De{n}\to X_\bullet$ which lifts the given horn:
\[
\begin{tikzpicture}
\node (a) at (0, 0) {$\Ho{k}{n}$};
\node (b) at (1.75, 0) {$X_\bullet$};
\node (c) at (0, -1.5) {$\De{n}$};

\draw[->] (a) to (b);
\draw[right hook->] (a) to node[below=-0.05cm, sloped]{$\sim$} (c);
\draw[->, dashed] (c) to (b);
\end{tikzpicture}
\]

\begin{definition}[Kan complex]
A \emph{Kan complex}\index{Kan complex} is simplicial set $X_\bullet$ in which any horn $\Ho{k}{n}\to X_\bullet$~, for $n\geqslant 2$ and $0\leqslant k \leqslant n$~, admits a filler.
\end{definition}

A classical and very important example of Kan complexes is the following.

\begin{proposition}
The simplicial set $\Hom_{\mathsf{Top}}(|\De{\bullet}|, X)$ of singular chains associated to any topological space $X$ is a Kan complex.
\end{proposition}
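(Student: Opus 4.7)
The plan is to use the classical adjunction between geometric realization and the singular complex functor, which turns the combinatorial horn-filling problem into a topological extension problem that can be solved explicitly via a retraction.

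First, I would observe that $\Hom_{\mathsf{Top}}(|\De{\bullet}|, X)$ is the standard singular simplicial set $\mathrm{Sing}(X)$, and that geometric realization $|{-}|:\sSe\to\mathsf{Top}$ is left adjoint to this functor. Under this adjunction, specifying a horn $\Ho{k}{n}\to \Hom_{\mathsf{Top}}(|\De{\bullet}|,X)$ is equivalent to specifying a continuous map $f:|\Ho{k}{n}|\to X$, and similarly a filler $\De{n}\to \Hom_{\mathsf{Top}}(|\De{\bullet}|,X)$ corresponds to a continuous extension $\tilde f:|\De{n}|\to X$ restricting to $f$ along the inclusion $|\Ho{k}{n}|\hookrightarrow|\De{n}|$. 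So the statement reduces to showing that every such $f$ admits a continuous extension $\tilde f$ to the full geometric simplex.

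Second, I would produce this extension by constructing an explicit continuous retraction $r:|\De{n}|\to|\Ho{k}{n}|$ of the inclusion. Geometrically, pick a point $p$ strictly inside the missing open face (for instance, the barycenter of the face opposite the $k$th vertex, translated slightly toward the $k$th vertex if $k$ corresponds to a facet—there is a standard choice: the point lying above the barycenter of $|\De{n}|$ on the line through the $k$th vertex, placed outside the simplex). Then radial projection from $p$ sends each point of $|\De{n}|$ to the unique point where the ray from $p$ first hits $|\Ho{k}{n}|$; since $|\Ho{k}{n}|$ is the complement in $\partial|\De{n}|$ of one open face that avoids $p$, this radial projection is well-defined and continuous, and it restricts to the identity on $|\Ho{k}{n}|$. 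The composite $\tilde f \coloneqq f\circ r$ is then the desired continuous extension.

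Finally, translating $\tilde f$ back through the adjunction produces the required map of simplicial sets $\De{n}\to \Hom_{\mathsf{Top}}(|\De{\bullet}|,X)$ lifting the original horn. Since $n\geqslant 2$ and $0\leqslant k\leqslant n$ were arbitrary, this proves that $\Hom_{\mathsf{Top}}(|\De{\bullet}|,X)$ is a Kan complex.

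The only mildly delicate point is giving the continuous retraction $r:|\De{n}|\to|\Ho{k}{n}|$ cleanly; everything else is formal manipulation of the adjunction. There is no genuine obstacle—this is a classical folklore argument, and the retraction can be written out in barycentric coordinates if one wishes to be fully explicit.
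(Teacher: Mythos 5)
Your proof is correct and is precisely the classical argument: the paper does not prove this statement itself but cites \cite[Lemma~3.3]{GoerssJardine09}, whose proof is exactly your reduction via the adjunction $|{-}|\dashv\mathrm{Sing}$ to the fact that $|\Ho{k}{n}|$ is a retract of $|\De{n}|$. The only imprecision is in your description of where to place the projection point $p$ (it should be taken outside the simplex, beyond the missing face, so that every ray from $p$ meets $|\Ho{k}{n}|$ exactly once), but you flag this yourself and the standard barycentric-coordinate formula settles it.
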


\begin{proof}
We refer to \cite[Lemma~3.3]{GoerssJardine09} for a proof of this classical result. 
\end{proof}

Moreover, the singular chains functor is faithful (but not full) and Kan complexes are known to share the same homotopy theory than compactly generated Hausdorff spaces, notably after the works of D.\ Quillen \cite{Quillen67}. For this reason, one often refers to Kan complexes simply as \emph{spaces}. 

\medskip

In the context of $\infty$-categories, see e.g.\ \cite{Lurie09}, Kan complexes play the role of the higher analogue of categories where every morphism is invertible, i.e.\ groupoids. Thus, one also refers to Kan complexes as \emph{$\infty$-groupoids}\index{infinity-groupoids}.

\medskip

The notion of a Kan complex is  defined by a \emph{property}; one make it into a \emph{structure}, which is actually an algebra over a monad, by the following definition due to T.\ Nickolaus \cite{Nickolaus11}, called \emph{algebraic Kan complex} in \textit{loc.\ cit.} 

\begin{definition}[Algebraic $\infty$-groupoid]
An \emph{algebraic $\infty$-groupoid}\index{algebraic infinity-groupoid} is a simplicial set $X_\bullet$ with a given filler for each $\Ho{k}{n}\to X_\bullet$~, for $n\geqslant 2$ and $0\leqslant k \leqslant n$~.
\end{definition}

\begin{proposition}\label{prop:NerveGroupoid}
The nerve 
\[\mathrm{N}\mathsf{G}_n\coloneqq\left\{\vcenter{\hbox{
\begin{tikzcd}
c_0 \arrow[r, "f_1"]& 
c_1 \arrow[r, "f_2"]&
\cdots \arrow[r, "f_n"] &
c_n\end{tikzcd}
}}\right\}
\]
of a groupoid $\mathsf{G}$~,  where the face and degeneracy maps are given respectively by
\begin{align*}
	d_i(f_1, \ldots, f_n)\coloneqq{}&\begin{cases}
		(f_2, \ldots, f_n) &\text{for}\ i=0\ , \\
		(f_1, \ldots, f_{i+1}f_i, \ldots, f_n) & \text{for}\ 1\leqslant i\leqslant n-1\ , \\
		(f_1, \ldots, f_{n-1})&\text{for}\ i=n\ ,
	\end{cases}\\
	s_i(f_1, \ldots, f_n)\coloneqq{}&(f_1, \ldots, f_i, \mathrm{id}, f_{i+1}, \ldots, f_n)
\end{align*}
is an algebraic $\infty$-groupoid. 
\end{proposition}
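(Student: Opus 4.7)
The plan is to first verify the simplicial identities for the face and degeneracy maps, and then exhibit a canonical filler for every horn. The simplicial identities $d_i d_j = d_{j-1} d_i$ for $i < j$, together with the analogous identities involving degeneracies, reduce to the associativity of composition and the unital property of identity morphisms in $\mathsf{G}$; I would carry this out directly by a short case analysis on the positions of the indices relative to the places where compositions are inserted.

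For the canonical horn filler, the key observation is that a horn $\varphi : \Ho{k}{n} \to \mathrm{N}\mathsf{G}$ with $n \geqslant 2$ assigns to every edge $[i, j]$ of $\De{n}$ contained in $\Ho{k}{n}$ a morphism $e_{ij} : \varphi(i) \to \varphi(j)$, read off from the $1$-skeleton of any face of the horn that contains the edge. An edge $[i, j]$ is a face of $d_l \De{n}$ precisely when $l \neq i, j$, so it lies in $\Ho{k}{n}$ unless $\{0, \ldots, n\} \setminus \{i, j\} = \{k\}$, which occurs only when $n = 2$ and $\{i, j\}$ is the complement of $\{k\}$ in $\{0, 1, 2\}$. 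I would then define the canonical filler as the $n$-simplex $y = (f_1, \ldots, f_n)$ with $f_i \coloneq e_{i-1, i}$; for the two outer horns in dimension $2$, namely $\Ho{0}{2}$ and $\Ho{2}{2}$, I would use the invertibility of morphisms in $\mathsf{G}$ to define the missing morphism as $f_2 \coloneq e_{02} e_{01}^{-1}$ and $f_1 \coloneq e_{12}^{-1} e_{02}$ respectively.

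It remains to check that $d_j y = x_j$ for each $j \neq k$, where $x_j$ denotes the given $(n-1)$-simplex of the horn on the face $d_j \De{n}$. The individual edges in the two $(n-1)$-simplices agree by construction of the $e_{ij}$; the non-trivial point is that the composition $f_{j+1} f_j$ appearing in $d_j y$ coincides with the corresponding edge $e_{j-1, j+1}$ of $x_j$. For inner horns, this follows directly from the relation recorded by the $2$-simplex on vertices $\{j-1, j, j+1\}$ sitting inside some face of $\Ho{k}{n}$. The main obstacle is the case of outer horns in dimension $n \geqslant 3$, where this $2$-simplex may fail to be contained in any face of the horn; there the required relation must be derived by chasing compatibilities between the available faces and cancelling a common factor using invertibility, which is precisely where the groupoid hypothesis becomes essential. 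Uniqueness of the filler is then immediate, since each entry is forced to equal the corresponding edge $e_{i-1, i}$.
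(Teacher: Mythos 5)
Your proposal is correct and amounts to a detailed verification of exactly the fact the paper invokes: its entire proof is the one-line observation that every horn in the nerve of a groupoid admits a \emph{unique} filler, so the algebraic structure is canonical. You supply the standard proof of that uniqueness (reading off the principal edges, with invertibility entering only for the outer horns in dimensions $2$ and $3$), and your identification of where the groupoid hypothesis is genuinely needed is accurate; the remaining sketched step for $\Ho{0}{3}$ and $\Ho{3}{3}$ is the routine cancellation you describe.
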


\begin{proof}
In this case, there is no choice: every horn admits exactly one filler. 
\end{proof}

We refer the reader to \cite{GoerssJardine09} for more details on simplicial sets and their homotopy theory.

\subsection{Operadic calculus}\label{subsec:Op}

\begin{definition}[$\Sy$-modules]
An \emph{$\S$-module}\index{symmetric module} $\P$ is a collection $\{\P(m)\}_{m\geqslant 0}$ of right $\S_m$-modules. If $p\in\P(m)$~, we say that $p$ has \emph{arity}\index{arity} $m$~.
\end{definition}

The following assignment  defines a functor, called the \emph{Schur functor}\index{Schur functor}, from the category of $\Sy$-modules to the category of endofunctors of vector spaces:
\[
\P(V)\coloneqq \bigoplus_{m\geqslant 0} \P(m)\otimes_{\Sy_m} V^{\otimes m}\ .
\]
The category of $\Sy$-modules admits a monoidal product $\circ$ which reflects the composition of Schur functors, that is $(\P\circ \mathcal{Q})(V)\cong \P(\mathcal{Q}(V))$~, see \cite[Corollary~5.1.4]{LodayVallette12} for a detailed formula.
The unit for this monoidal product is the $\Sy$-module $\mathcal{I}$ concentrated in arity $1$~, where $\mathcal{I}(1)=\k\id$~. 
 The category of vector spaces can be viewed as the sub-category 
of $\Sy$-modules concentrated in arity $0$~; under this interpretation,  we get $\P \circ V =\P(V)$~. 

\begin{definition}[Operads and cooperads]
In the monoidal category of $\Sy$-modules with $\circ$~, a monoid is called an \emph{operad} and a comonoid is called a \emph{cooperad}. 
\end{definition}

\begin{example}
Given any chain complex $V$~, the $\Sy$-module 
$\End_V\coloneqq \left\{\Hom(V^{\otimes m}, V)\right\}_{m\geqslant 0}$
given by all of the multilinear operations on $V$ carries an operad structure given by the usual composite of functions. This operad is called the \emph{endomorphism operad of $V$}\index{endomorphism operad}. In the filtered case, the endomorphism operad made up of the internal hom's is denoted by $\eend_V\coloneqq \left\{\hom(V^{\otimes m}, V)\right\}_{m\geqslant 0}$~.
\end{example}

The data of an operad is thus equivalent to a monad structure on the associated Schur functor, and dually for a cooperad. 

\begin{definition}[Algebras over an operad and coalgebras over a cooperad]
An \emph{algebra over an operad} is an algebra over the associated Schur monad.
A \emph{coalgebra over a cooperad} is a coalgebra over the associated Schur comonad.
\end{definition}

An immediate consequence of this definition is the fact that the free algebra\index{free!algebra} on a chain complex is obtained simply by applying the Schur functor to it. Dually, the cofree coalgebra is also obtained by an application of the Schur functor.

\medskip

An equivalent definition of a $\P$-algebra structure on a chain complex $A$ is a morphism of operads $\P\to \End_A$~.

\begin{example}
We consider the $\Sy$-module $\com$ given in arity $m\geqslant1$ by the trivial representation of the symmetric group $\Sy_m$~, that is $\com(m)\coloneqq \k \mu_m$ where the generating element $\mu_m$ is viewed as a corolla with $m$ symmetric leaves. We endow it with the operad structure defined by $(\mu_k; \mu_{i_1}, \ldots, \mu_{i_k})\mapsto \mu_{i_1+\cdots +i_k}$~. The associated Schur monad is the monad of polynomials without constant term
\[
\com(V)=\bigoplus_{m\geqslant 1} (V^{\otimes m})_{\Sy_m}=
\bigoplus_{m\geqslant 1} V^{\odot m}
\ .
\]
Thus, the category of $\com$-algebras is the category of commutative algebras (without unit).
The arity-wise linear dual $\com^\vee$  forms a cooperad whose category of coalgebras is the category of  cocommutative coalgebras (without counit).
\end{example}

The above definitions extend \emph{mutatis mutandis} to the underlying category of complete chain complexes, see \cite[Chapter~1, Section~2]{DotsenkoShadrinVallette18}. 
For instance, given a differential graded operad $\P$~, the free complete $\P$-algebra\index{free!complete algebra} on a chain complex $V$ is given by 
\[
\widehat{\P}(V)\coloneqq\P\, \widehat{\circ}\, V= \prod_{m\geqslant 0} \P(m)\otimes_{\Sy_m} V^{\otimes m}\ .
\]
In the present article, we will only consider differential graded (co)operads but viewed as discrete complete and whose underlying $\Sy$-module satisfies $\mathcal{M}(0)=0$ and $\mathcal{M}(1)=\k\id$~. For instance, they will all be canonically (co)augmented. 

\medskip

Given a cooperad $\C$ and an operad $\P$~, the mapping $\Sy$-module $\{\Hom(\C(m), \P(m))\}_{m\geqslant 0}$ is naturally endowed with a canonical operad structure denoted by $\Hom(\C, \P)$ and called the \emph{convolution operad}. It further induces a Lie algebra structure on the totalization 
\[
\Hom_{\Sy}(\C, \P)\coloneqq \prod_{m\geqslant 1} \Hom_{\Sy_m}(\C(m), \P(m))\ .\]
We refer the reader to \cite[Section~6.4]{LodayVallette12} for more details.

\begin{definition}[Operadic twisting morphism]
A Maurer--Cartan element in the Lie  algebra 
\[
\big(\Hom_{\Sy}(\C, \P), \partial, [\, , ]\big)\ ,
\]
which vanishes respectively on the augmentation of $\P$ and on the coaugmentation of $\C$~,  
is called an \emph{operadic twisting morphism}\index{twisting morphism}. The set of all such operadic twisting morphisms is denoted by $\Tw(\C,\P)$~. 
\end{definition}

The bifunctor of twisting morphisms is (co)represented by the bar the cobar construction respectively. Recall that the \emph{bar construction}\index{bar construction!of an operad} of an augmented operad $\P$ is defined by the cofree cooperad on the suspension of the augmentation ideal of $\P$~: 
\[
\Bar \P\coloneqq \left(\T^c\big(s \overline{\P}\big), d_1+d_2 
\right)
\]
where $d_1$ is the unique coderivation which extends the internal differential of $\P$ and where 
$d_2$ is the unique coderivation which extends the infinitesimal composition maps of $\P$~.
The \emph{cobar construction}\index{cobar construction!of an operad} of a coaugmented cooperad $\C$ is 
defined by the free operad on the desuspension of the coaugmentation coideal of $\C$~: 
\[\Cobar \C\coloneqq \left(\T\big(s^{-1} \overline{\C}\big), d_1+d_2 
\right)\]
where $d_1$ is the unique derivation which extends the internal differential of $\C$ and where 
$d_2$ is the unique derivation which extends the infinitesimal decomposition maps of $\C$~.
We refer the reader to \cite[Section~6.5]{LodayVallette12} for more details. 

\begin{proposition}[{\cite[Theorem~6.5.7]{LodayVallette12}}]\label{prop:Rosetta}
There exist natural bijections
\[\Hom_{\mathsf{Op}}(\Omega \C, \P)\cong \Tw(\C, \P)\cong \Hom_{\mathsf{coOp}}(\C, \Bar \P) \ .\]
\end{proposition}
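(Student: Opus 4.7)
The plan is to establish each of the two bijections independently by using the universal properties of the free operad and the cofree cooperad that underlie $\Omega \C$ and $\Bar \P$ respectively, and then to translate compatibility with the internal differentials into the Maurer--Cartan equation in $\Hom_\Sy(\C, \P)$.

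For the first bijection, I will start from the fact that $\Omega \C = (\T(s^{-1}\overline{\C}), d_1 + d_2)$ is quasi-free as a graded operad, cogenerated by the $\Sy$-module $s^{-1}\overline{\C}$. The universal property of the free operad therefore provides a natural bijection between graded operad morphisms $\Omega \C \to \P$ and graded $\Sy$-module morphisms $s^{-1}\overline{\C} \to \P$. Desuspending and extending by zero on the coaugmentation $\k \id \subseteq \C$ gives a degree $-1$ element $\alpha \in \Hom_\Sy(\C, \P)$ vanishing on the coaugmentation. Since any such graded operad morphism is determined by its restriction to generators, I can read off the conditions for compatibility with the total differential of $\Omega \C$ on the generating $\Sy$-module: compatibility with the piece $d_1$ (extending the internal differential of $\C$) gives $\partial(\alpha) = 0$ in the convolution dg Lie algebra, while compatibility with the piece $d_2$ (extending the infinitesimal decomposition of $\C$) gives $\tfrac{1}{2}[\alpha, \alpha] = 0$ for the convolution Lie bracket. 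Summing the two yields precisely the Maurer--Cartan equation, and hence $\alpha \in \Tw(\C, \P)$. The construction is manifestly reversible: any twisting morphism determines by shift and extension a map of graded $\Sy$-modules $s^{-1}\overline{\C} \to \P$, which extends uniquely to a graded operad morphism, and the computation above shows that this morphism commutes with the differentials.

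For the second bijection, I will run the Koszul dual argument. The bar construction $\Bar \P = (\T^c(s\overline{\P}), d_1 + d_2)$ is quasi-cofree as a graded cooperad, cogenerated by $s\overline{\P}$; here conilpotency, ensured by the standing assumption that $\P(0) = 0$ and $\P(1) = \k\id$, is what makes the universal property of the cofree cooperad applicable. This universal property gives a natural bijection between graded cooperad morphisms $\C \to \Bar \P$ and graded $\Sy$-module morphisms $\overline{\C} \to s\overline{\P}$, i.e., after shifting, degree $-1$ maps $\alpha: \C \to \P$ vanishing on the (co)augmentations. The same bookkeeping as before, now applied to the coderivation characterized by its corestriction to cogenerators, translates compatibility with $d_1$ and $d_2$ on the target into $\partial(\alpha) = 0$ and $\tfrac{1}{2}[\alpha, \alpha] = 0$, hence into the Maurer--Cartan equation. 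Naturality of both bijections in $\C$ and $\P$ is immediate from the functorial nature of the universal properties used.

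The main obstacle I expect is the careful sign-bookkeeping in the two translations between (co)derivations and the convolution Lie bracket, and the verification that the differential and the bracket one reads off from the quasi-freeness of $\Omega \C$ (resp.\ quasi-cofreeness of $\Bar \P$) are indeed the differential and the bracket of the convolution dg Lie algebra $\Hom_\Sy(\C, \P)$. The rest is formal: once the correct identification of the differential and bracket is in place, both bijections are simultaneous consequences of the (co)universal property and the fact that (co)derivations of (co)free (co)operads are determined by their (co)restriction to (co)generators.
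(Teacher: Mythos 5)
The paper gives no proof of this proposition, delegating it to the cited \cite[Theorem~6.5.7]{LodayVallette12}; your argument is the standard proof of that result (quasi-freeness of $\Omega\C$ and conilpotent quasi-cofreeness of $\Bar\P$ reduce (co)operad morphisms to their (co)restrictions to (co)generators, and compatibility with the differentials becomes the Maurer--Cartan equation in the convolution algebra), and it is correct. One wording caution: commuting with the total differential of $\Omega\C$ does not impose $\partial(\alpha)=0$ and $\tfrac{1}{2}[\alpha,\alpha]=0$ separately --- the $d_1$-part contributes the summand $\partial(\alpha)$ and the $d_2$-part the summand $\tfrac{1}{2}[\alpha,\alpha]$, and only their sum is required to vanish, which is the single Maurer--Cartan equation that your final sentence correctly records.
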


The bar-cobar adjunction gives rise to the universal operadic twisting morphisms 
\[\iota: \C\to \Cobar \C \qquad \text{and}\qquad \pi : \Bar\P \to \P  \ , \]
associated respectively to the unit and the counit of adjunction 
\[\upsilon: \C \to \Bar \Cobar \C   \qquad \text{and} \qquad  \varepsilon: \Cobar \Bar \P \to \P\ .\]
Recall from \cite[Theorem~6.6.3]{LodayVallette12} that these latter two maps are quasi-isomorphisms. 

\medskip

\cref{prop:Rosetta}  shows that the data of an $\Omega \C$-algebra structure on $A$ is equivalent to an operadic twisting morphism $\C \to \End_A$~. 

\begin{example}\label{ex:OpsLI}
The cobar construction of the cooperad $\com^\vee$ produces the operad encoding shifted homotopy Lie algebras: 
\[\sLi\coloneqq \Cobar \com^\vee=
\Big(\T\Big(s^{-1} \overline{\com}^\vee\Big), d_2\Big)\ .\]
Since the free operad is made up of rooted trees, the set $\RT$ provides us with a basis of the operad $\sLi$~, where vertices carry degree $-1$~. The form of the operad structure on $\com$ given above shows that the differential $d_2$ amounts to the operation summing summing over each vertex of a tree and all possible splittings of the respective corolla into two. 
These facts together with the form of the free complete algebra over an operad prove \cref{prop:FreesLi}.
\end{example}

The operad encoding shifted homotopy Lie algebras actually plays the following universal role with respect to operadic twisting morphisms. 

\begin{lemma}[{\cite[Section~7]{Wierstra19} and \cite[Theorem~3.1]{rn17tensor}}]\label{lem:sLiTW}
There are natural bijections 
\[
\Tw(\C, \P)\cong \Hom_{\mathsf{Op}}(\sLi, \Hom(\C,\P))\cong \Hom_{\mathsf{Op}}\Big(\sLi, \P\otimes\C^\vee\Big)\ , 
\]
where $\C$ is required to be arity-wise finite dimensional for the second bijection to hold, and where $\otimes$  stands for the arity-wise tensor product of operads.
\end{lemma}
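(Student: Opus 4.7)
The plan is to derive the first bijection from \cref{prop:Rosetta} applied to the convolution operad $\Hom(\C,\P)$, and to derive the second from a straightforward finite-dimensional linear duality. Both bijections should be formally natural in $\C$ and $\P$.

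For the first bijection, I would instantiate \cref{prop:Rosetta} at the operad $\Hom(\C,\P)$, obtaining
\[
\Hom_{\mathsf{Op}}(\sLi, \Hom(\C,\P)) = \Hom_{\mathsf{Op}}(\Cobar \com^\vee, \Hom(\C,\P)) \;\cong\; \Tw(\com^\vee, \Hom(\C,\P)),
\]
where the right-hand side consists of Maurer--Cartan elements of the convolution Lie algebra $\Hom_\Sy(\com^\vee, \Hom(\C,\P))$ that vanish on the coaugmentation. It then remains to produce a natural bijection $\Tw(\com^\vee, \Hom(\C,\P)) \cong \Tw(\C,\P)$, which I would deduce from an isomorphism of differential graded Lie algebras $\Hom_\Sy(\com^\vee, \Hom(\C,\P)) \cong \Hom_\Sy(\C,\P)$. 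Since $\com^\vee(m) = \k\mu_m^\vee$ sits in degree $0$ and carries the trivial $\Sy_m$-representation, arity-wise evaluation at the generator $\mu_m^\vee$ identifies $\Hom_{\Sy_m}(\com^\vee(m), \Hom(\C(m),\P(m)))$ with $\Hom_{\Sy_m}(\C(m),\P(m))$ as chain complexes.

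The main technical obstacle is to verify that this degree-wise identification intertwines the two Lie brackets. The bracket on the left comes from the convolution operad structure on $\Hom(\com^\vee, \Hom(\C,\P))$ and thus involves both the infinitesimal decomposition of $\com^\vee$ and the partial compositions of $\Hom(\C,\P)$; the bracket on the right comes directly from the partial compositions of $\Hom(\C,\P)$ together with the infinitesimal decomposition of $\C$ and the infinitesimal composition of $\P$. I would match them by unwinding definitions: because the cooperadic decomposition of $\com^\vee$ pairs trivially with itself, it contributes only an indexing of partial compositions and no nontrivial combinatorial or sign data. With this compatibility in hand, Maurer--Cartan elements correspond and the vanishing-on-(co)augmentation conditions match, giving the first bijection.

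For the second bijection, I would use that when each $\C(m)$ is finite dimensional, the canonical map
\[
\P(m) \otimes \C(m)^\vee \longrightarrow \Hom(\C(m), \P(m))
\]
is an isomorphism of differential graded $\Sy_m$-modules. By definition of the arity-wise tensor product of operads, these assemble into an isomorphism of operads $\P \otimes \C^\vee \cong \Hom(\C,\P)$, since the structure on $\P \otimes \C^\vee$ is precisely what one obtains by combining the operad composition of $\P$ with the dualized cooperadic decomposition of $\C$ --- exactly the data defining the convolution operad. Applying the functor $\Hom_{\mathsf{Op}}(\sLi, -)$ then yields the second bijection immediately. The only delicate point in the whole argument is the bracket-matching in the first step; once it is established, the remainder is formal manipulation of the bar-cobar adjunction and finite-dimensional duality.
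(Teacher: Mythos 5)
Your argument is correct, and it is essentially the standard one: the paper itself gives no proof of this lemma, deferring to the cited references \cite{Wierstra19} and \cite{rn17tensor}, where the proof proceeds exactly as you describe --- apply the bar--cobar adjunction (\cref{prop:Rosetta}) to the convolution operad, identify the convolution Lie algebras $\Hom_\Sy(\com^\vee,\Hom(\C,\P))\cong\Hom_\Sy(\C,\P)$ using that $\com^\vee(m)$ is the one-dimensional trivial representation in degree $0$, and identify $\Hom(\C,\P)$ with the Hadamard product $\P\otimes\C^\vee$ under the arity-wise finite-dimensionality hypothesis. You correctly isolate the only genuinely non-formal point (matching the two pre-Lie/Lie structures under the evaluation-at-$\mu_m^\vee$ isomorphism), and your explanation of why it works --- the infinitesimal decomposition of $\com^\vee$ contributes no extra combinatorial data beyond indexing the partial composites --- is the right one.
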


Given any $\C$-coalgebra $C$ and any complete $\P$-algebra $A$~, one can endow the complete mapping space $\hom(C,A)$ with a canonical complete $\Hom(\C, \P)$-algebra structure. Then \cref{lem:sLiTW} above implies the following.

\begin{definition}[Convolution algebra]\label{def:convolution algebra}
Given any operadic twisting morphism $\alpha : \C \to \P$~, \cref{lem:sLiTW} endows 
 $\hom(C,A)$ with a complete $\sLi$-algebra structure called the \emph{convolution algebra}\index{convolution algebra} of $C$ and $A$~, and denoted by $\hom^\alpha(C,A)$~.
 \end{definition}

An analogous result holds for the tensor product: the $\sLi$-algebra structure produced by \cref{lem:sLiTW} on the complete tensor product of a complete $\P$-algebra $A$ and a complete $\C^\vee$-algebra $B$ is denoted by 
$A\, \widehat{\otimes}^\alpha B$~. 
Under finite dimensionality hypotheses on the cooperad $\C$ and a $\C$-coalgebra $C$~, the linear duality isomorphism induces an isomorphism of $\sLi$-algebras:
\[
\hom^\alpha(C,A)\cong A\, \widehat{\otimes}^\alpha C^\vee~.
\]
Notice that since $C$ is discrete and since $\F_0 A =\F_1 A$~, the induced filtration on the mapping space satisfies 
$\F_0 \hom(C, A)=\F_1 \hom(C, A)$~. 

\begin{definition}[Twisting morphism with respect to $\alpha$]\label{def:alpha-oo-morphism}
	A Maurer--Cartan element in the convolution $\sLi$-algebra is called a \emph{twisting morphism with respect to $\alpha$}\index{twisting morphism!of algebras}
	; their set is denoted by 
	\[
	\Tw_\alpha(C,A)\coloneqq \MC(\hom^\alpha(C,A))\ .
	\]
\end{definition}

One can again this bifunctor gives rise to bar and cobar constructions. 
The \emph{bar construction with respect to $\alpha$}\index{bar construction!of an algebra}\index{relative!bar construction} of a complete $\P$-algebra $A$ is 
defined by the conilpotent cofree $\C$-coalgebra on $A$ after forgetting its filtration,
\[\Bar_\alpha A\coloneqq \big({\C}(A),  d_1+d_2 \big)\ , \]
where $d_1$ is the unique coderivation which extends the internal differential $d$ of $A$ and where 
$d_2$ is the unique coderivation which extends the following composite
\[
{\C}(A) \xrightarrow{\alpha(\id_A)} \P(A)\subseteq
\widehat{\P}(A)
 \xrightarrow{\gamma}
A\ .
\]
The \emph{complete cobar construction with respect to $\alpha$}\index{cobar construction!of an algebra}\index{relative!cobar construction} of a $\C$-coalgebra $C$ is 
defined by the  free complete $\P$-algebra on $C$
\[
\hatCobar_\alpha C\coloneqq \big(\widehat{\P}(C),  d_1+d_2 \big)
\]
equipped with the trivial and complete filtration $C=\F_0\supseteq \F_1 C= \F_2 C= \cdots=0$~, 
where $d_1$ is the unique derivation which extends the internal differential $d$ of $C$ and where 
$-d_2$ is the unique derivation which extends the following composite
\[ 
C \xrightarrow{\delta} 
\widehat{\C}(C) \xrightarrow{\alpha(\id_C)}
\widehat{\P}(C)
\ .\]
This is a direct generalization of \cite[Section~11.2.5]{LodayVallette12} to the complete case. 
We denote simply 
by $\C\text{-}\,\mathsf{coalg}$ the category of $\C$-coalgebras 
and 
by $\P\text{-}\,\mathsf{alg}$ the category of complete $\P$-algebras, and we have the two functors
\[
\hbox{
	\begin{tikzpicture}
	\def\upshift{0.075}
	\def\downshift{0.075}
	\pgfmathsetmacro{\midshift}{0.005}
	
	\node[left] (x) at (0, 0) {$\hatCobar_\alpha\ \ :\ \ \C\text{-}\mathsf{coalg}$};
	\node[right] (y) at (1.5, 0) {$\P\text{-}\mathsf{alg}\ \ :\ \ \Bar_\alpha\ .$};
	
	\draw[-{To[left]}] ($(x.east) + (0.1, \upshift)$) -- ($(y.west) + (-0.1, \upshift)$);
	\draw[-{To[left]}] ($(y.west) + (-0.1, -\downshift)$) -- ($(x.east) + (0.1, -\downshift)$);
	\end{tikzpicture}}
\]
Unlike the non-complete case, these functors are \emph{not} adjoint.

\begin{proposition}\label{thm:MC elements of convolution algebras}
Let $\alpha : \C \to \P$ be an operadic twisting morphism. There exist a natural bijection
\[
	\hom_{\P\text{-}\mathsf{alg}}\Big(\widehat{\Cobar}_\alpha C,A\Big)\cong \Tw_\alpha(C,A)\ .
\]
\end{proposition}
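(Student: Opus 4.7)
The strategy is to adapt the classical Rosetta stone of Loday--Vallette (\cref{prop:Rosetta}) and of \cref{lem:sLiTW} to the complete setting, where completeness of $A$ ensures that all relevant series converge. The plan is to match data on each side of the claimed bijection, and then identify the differential condition on one side with the Maurer--Cartan equation on the other.

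First, I invoke the universal property of the free complete $\P$-algebra. Since $\widehat{\Cobar}_\alpha C$ has underlying complete $\P$-algebra $\widehat{\P}(C)$, a morphism of \emph{graded} complete $\P$-algebras $\phi : \widehat{\P}(C) \to A$ is uniquely determined by its restriction $\bar\phi : C \to A$ to the generators, via the structural composite
\[
\widehat{\P}(C) \xrightarrow{\widehat{\P}(\bar\phi)} \widehat{\P}(A) \xrightarrow{\gamma_A} A\ ,
\]
where convergence is guaranteed by the completeness of $A$ together with the filtration by arity on $\widehat{\P}(C)$. This provides a bijection between graded complete $\P$-algebra maps and degree-zero filtration-preserving linear maps $\bar\phi \in \hom(C,A)_0$.

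Next, I impose the chain map condition $d_A \circ \phi = \phi \circ (d_1 + d_2)$. Since both sides are derivations relative to the action of $\widehat{\P}$, equality can be tested on the generators $C$. On $C$, $d_1$ restricts to the internal differential $d_C$, so this contribution yields the standard differential $\partial(\bar\phi) = d_A \bar\phi - \bar\phi d_C$ of the mapping space. The contribution of $-d_2$ is given by its defining composite
\[
C \xrightarrow{\delta} \widehat{\C}(C) \xrightarrow{\alpha(\id_C)} \widehat{\P}(C)\ ,
\]
which after post-composition with $\phi$ becomes $\gamma_A \circ \widehat{\P}(\bar\phi) \circ \alpha(\id_C) \circ \delta$. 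Altogether, $\phi$ is a morphism of complete dg $\P$-algebras if and only if $\bar\phi$ satisfies
\[
\partial(\bar\phi) \;=\; \gamma_A \circ \widehat{\P}(\bar\phi) \circ \alpha(\id_C) \circ \delta \ .
\]

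The heart of the proof is then to identify this last equation with the Maurer--Cartan equation for $\bar\phi$ in the convolution $\sLi$-algebra $\hom^\alpha(C,A)$ of \cref{def:convolution algebra}. By \cref{lem:sLiTW}, the twisting morphism $\alpha$ corresponds to an operad morphism $\sLi \to \Hom(\C,\P)$, whose image of the generating $m$-ary operation provides, after evaluation on the coproducts of $C$ and the structure maps of $A$, the bracket $\ell_m$ on $\hom^\alpha(C,A)$. Summing the contributions arity by arity, the right-hand side above unfolds precisely into $-\sum_{m \geqslant 2}\tfrac{1}{m!}\ell_m(\bar\phi,\ldots,\bar\phi)$, where the $m!$ absorbs the symmetries of the free construction. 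This yields the required bijection with $\Tw_\alpha(C,A) = \MC(\hom^\alpha(C,A))$. Naturality in both $C$ and $A$ is automatic from the functoriality of each construction.

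The main obstacle is the bookkeeping in this final identification: tracking the signs coming from the suspensions implicit in the $\sLi$-operations, the symmetrizations introduced by the free functors $\widehat{\P}$ and the cofree functor producing $\delta$, and the equivariance needed to match the arity-graded pieces of $d_2$ with the coherent hierarchy of brackets $\ell_m$. Once this combinatorial match is achieved, the rest is formal.
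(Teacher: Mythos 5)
Your proposal is correct and is essentially the standard argument: use the universal property of the free complete $\P$-algebra $\widehat{\P}(C)$ to reduce morphisms to their restrictions on generators, test the chain-map condition against the derivations $d_1$ and $d_2$, and identify the resulting equation on $\bar\phi$ with the Maurer--Cartan equation in $\hom^\alpha(C,A)$, with completeness of $A$ (together with the standing hypothesis $\F_1 A=A$) guaranteeing convergence and filtration-preservation. The paper's own proof does not carry out this computation but simply cites the discrete nilpotent case from the reference \cite{rnw17} and observes that those computations go through verbatim in the complete setting, so your write-up is the unpacked version of the same argument rather than a genuinely different route.
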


\begin{proof}
The discrete case, when the $\C$-coalgebra $C$ is nilpotent, was settled in \cite[Theorem~2.7]{rnw17}, where it is proved that the Maurer--Cartan equation associated to the convolution algebra $\hom^\alpha(C,A)$ is equal to Maurer--Cartan equation given in \cite[Section~11.1.1]{LodayVallette12}. All the computations performed in \emph{loc.\ cit.} hold in the complete case as well. 
\end{proof}

\begin{remark}
In order to get a right adjoint to the complete cobar construction, one would need to refine the definition of the bar construction. This is more subtle since it requires to consider a non-trivial pullback of the completion of $\C(A)$ in order to get the cofree coalgebra, see \cite{Smith03, Anel14}. We will not go into such a refinement here since we do  not need such a construction.

\medskip

In the context of algebras over cooperads and coalgebras over operads, a similar generalization of the classical bar-cobar adjunction to the complete case was developed systematically in \cite[Section~8]{LGL18}.
\end{remark}

\begin{proposition}\label{CobarColimits}
	The complete cobar construction $\hatCobar_\alpha$ preserves colimits. 
\end{proposition}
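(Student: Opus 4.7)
The plan is to reduce the problem to checking colimit preservation at the level of underlying graded complete $\P$-algebras, where the functor factors as a composite of left adjoints, and then to invoke the naturality of the cobar differential.

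First, I would use the standard fact that the forgetful functor from complete dg $\P$-algebras to complete graded $\P$-algebras creates colimits: given a diagram, one takes its colimit in the graded category and equips it with the unique differential making every coprojection a chain map (this differential exists and squares to zero pointwise by naturality of the differentials in the diagram). This reduces the task to establishing two things: that the underlying graded complete $\P$-algebra $\widehat{\P}(C^\sharp)$ of $\hatCobar_\alpha C$ depends on $C$ in a way that commutes with colimits, and that the differential $d_1+d_2$ is natural in $C$.

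For the first point, I would factor the assignment $C \mapsto \widehat{\P}(C^\sharp)$, where $C^\sharp$ denotes the underlying graded vector space of $C$, as the composition of three left adjoints: the forgetful functor from $\C$-coalgebras to chain complexes (left adjoint to the cofree $\C$-coalgebra functor $V\mapsto \C(V)$), the inclusion of chain complexes into complete chain complexes via the trivial filtration $\F_0=V$, $\F_1=0$ (left adjoint to the functor that forgets the filtration), and the free complete $\P$-algebra functor $\widehat{\P}$ (left adjoint to the forgetful functor to complete chain complexes). As a composition of left adjoints, this functor preserves all colimits.

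For the second point, both summands of the differential of $\hatCobar_\alpha C$ are induced from operations on $C$ that are natural with respect to $\C$-coalgebra morphisms: the derivation $d_1$ extends the internal differential of $C$, and the derivation $d_2$ extends a composite involving only the coproduct $\delta$ of $C$, both of which are preserved by coalgebra morphisms by definition. Consequently, the canonical comparison morphism $\colim_i \hatCobar_\alpha C_i \to \hatCobar_\alpha\bigl(\colim_i C_i\bigr)$, which is already an isomorphism of graded complete $\P$-algebras by the preceding paragraph, is compatible with the two differentials by naturality, and is therefore an isomorphism in $\P\text{-}\mathsf{alg}$. The only potential subtlety, in my view, lies in carefully justifying that colimits in the category of complete dg $\P$-algebras are indeed created at the graded level in the complete setting; this is precisely what allows the argument to proceed without exhibiting a right adjoint to $\hatCobar_\alpha$, whose construction would require the refined bar construction alluded to in the preceding remark.
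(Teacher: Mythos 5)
Your proof is correct, but it takes a genuinely different route from the paper's. The paper argues by corepresentability: by \cref{thm:MC elements of convolution algebras}, a morphism $\hatCobar_\alpha C\to A$ is the same as a Maurer--Cartan element of the convolution algebra $\hom^\alpha(C,A)$, equivalently a morphism $\mc^0\to\hom^\alpha(C,A)$ of complete $\sLi$-algebras (\cref{prop:mc0}); since $\hom^\alpha(-,A)$ turns colimits of coalgebras into limits of algebras (checked on underlying chain complexes), one gets $\Hom\big(\hatCobar_\alpha(\colim D),A\big)\cong\lim\Hom\big(\hatCobar_\alpha D,A\big)$ naturally in $A$ and concludes by the universal property --- never opening up the cobar construction and, in particular, never having to discuss its differential. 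Your argument is instead internal: you identify the underlying graded object of $\hatCobar_\alpha C$ as a composite of three left adjoints applied to $C$ and then transport the differential by naturality. This buys you independence from the Maurer--Cartan characterization of morphisms out of the cobar construction, but it shifts the whole burden onto the lemma you flag at the end, namely that colimits of complete dg $\P$-algebras are created on underlying complete graded $\P$-algebras. That lemma does hold --- a square-zero degree $-1$ derivation of $A$ is a $\P$-algebra map $A\to A\otimes\k[\epsilon]/(\epsilon^2)$ over $A$, so the differentials of the diagram induce one on the graded colimit by its universal property, and the completion step is blind to the differential --- but it is genuinely the load-bearing step of your proof and would need to be written out in the complete filtered setting; the paper's route is designed precisely to sidestep it, compensating for the absence of a right adjoint to $\hatCobar_\alpha$ by the pointwise corepresentability of $\Hom\big(\hatCobar_\alpha C,-\big)$.
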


\begin{proof}
	For this proof, let $D:\mathsf{S}\to\C\text{-}\mathsf{coalg}$ be a diagram of $\C$-coalgebras and let $A$ be a complete $\P$-algebra.
	
	\medskip
	
	First we show that
	\[
	\hom^\alpha\left(\colim_\mathsf{S}D, A\right)\cong\lim_\mathsf{S}\hom^\alpha(D, A)\ .
	\]
	The statement is obviously true at the level of chain complexes, since the chain complex of a colimit of coalgebras corresponds with the one of the colimit of the underlying chain complexes, and the same is true for limits of algebras. The canonical inclusions $i_s:D(s)\to\colim_\mathsf{S}D$ induce algebra morphisms
	\[
	i_s^* : \hom^\alpha\left(\colim_\mathsf{S}D, A\right)\longrightarrow\hom^\alpha\left(D(s), A\right)
	\]
	for each $s\in\mathsf{S}$~. By universal property, there exists a unique morphism of algebras
	\[
	\hom^\alpha\left(\colim_\mathsf{S}D, A\right)\longrightarrow\lim_\mathsf{S}\hom^\alpha(D, A)\ ,
	\]
	but since it must agree with the isomorphism of chain complexes, it is an isomorphism, proving the claim.
	
	\medskip
	
	Any morphism of complete $\P$-algebras $\hatCobar_\alpha D(s) \to A$ is equivalent to a Maurer--Cartan element in
	$\MC\left(\hom^\alpha(D(s), A)
	\right)$
	by \cref{thm:MC elements of convolution algebras}. So it corresponds to a morphism of $\sLi$-algebras 
	$\mc^0\to \hom^\alpha(D(s), A)$~, where $\mc^0$ is the quasi-free $\sLi$-algebra on a Maurer--Cartan element, see \cref{prop:mc0}. 
	By the universal property of limits and what we said above, there exists a unique morphism of $\sLi$-algebras 
	\[\mc^0\to \lim_{\mathrm{S}} \hom^\alpha(D, A)\cong  \hom^\alpha\left(\colim_{\mathrm{S}}\mathrm{D}, A\right), \]
	which uniquely corresponds to a morphism $\hatCobar_\alpha\left(\colim_{\mathrm{S}} D\right)\to A$~. Thus, $\hatCobar_\alpha\left(\colim_{\mathrm{S}} D\right)$ satisfies the universal property of colimits, implying that
	\[
	\colim_{\mathrm{S}}\hatCobar_\alpha D\cong
	\hatCobar_\alpha\left(\colim_{\mathrm{S}} D\right).\]
\end{proof}

Let us establish the following homological property, which generalizes the discrete case, since we will need it later on. 

\begin{proposition}\label{prop:EpsiResFonc}
Let $\alpha : \C \to \P$ be an operadic Koszul morphism. 
The map 
\[
\epsilon_A \ \colon\  \hatCobar_\alpha \Bar_\alpha A \stackrel{\sim}{\longrightarrow} A 
\]
defined on the underlying space by 
\[
\widehat{\P}(\C(A))\twoheadrightarrow \widehat{\P}(A) \xrightarrow{\gamma_A} A
\]
is a  quasi-isomorphism which is natural in the category of complete $\P$-algebras such that $\F_1 A=A$~. 
\end{proposition}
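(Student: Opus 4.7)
The map $\epsilon_A$ is manifestly natural in the category of complete $\P$-algebras, as it is defined as the composite of the functorial quotient $\widehat{\P}(\C(A))\twoheadrightarrow \widehat{\P}(A)$ (induced by the cooperadic counit $\C\to \mathcal{I}$) with the structure map $\gamma_A:\widehat{\P}(A)\to A$. The real content of the statement is the quasi-isomorphism claim, which I would prove by reducing to the classical discrete version (Loday--Vallette, Theorem~11.3.3) via an associated-graded argument.

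The idea is to transfer the complete filtration $\F_\bullet A$ of $A$ to a filtration on $\hatCobar_\alpha \Bar_\alpha A=\widehat{\P}(\C(A))$ by assigning to each tree element the total $\F$-filtration degree of the $A$-factors appearing in its leaves. Because $\widehat{\P}$ is completed and $\F_0 A=\F_1 A=A$, the resulting filtration on $\hatCobar_\alpha \Bar_\alpha A$ is Hausdorff and complete. A short inspection shows that every component of the differential respects this filtration: the internal differential $d_A$ preserves it by the hypothesis on $A$, while the bar and cobar differentials $d_2^{\Bar}$ and $d_2^{\Omega}$ only involve the structure maps $\alpha$, $\gamma_A$, and the cooperadic coproduct of $\C$, each of which respects the filtration. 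Thus $\epsilon_A$ is a morphism of complete filtered chain complexes when $A$ is endowed with its own filtration.

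On the associated gradeds, the standard identification $\mathrm{gr}(\hatCobar_\alpha \Bar_\alpha A)\cong \Cobar_\alpha \Bar_\alpha(\mathrm{gr}\,A)$ holds, using that $\mathrm{gr}$ commutes with the Schur functors $\P$ and $\C$ over a field of characteristic zero. Under this identification, $\mathrm{gr}\,\epsilon_A$ becomes the classical bar-cobar resolution map for the weight-graded $\P$-algebra $\mathrm{gr}\,A$, which is a quasi-isomorphism by the Koszul hypothesis on $\alpha$ and Loday--Vallette Theorem~11.3.3. A standard spectral-sequence convergence argument then upgrades this graded quasi-isomorphism to a genuine quasi-isomorphism $\epsilon_A$, using the completeness of the filtrations on both sides. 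The main obstacle will be to carefully verify that $\mathrm{gr}$ commutes with the completed functor $\widehat{\P}$ in this setting: this is precisely where the hypothesis $\F_1 A=A$ intervenes, by ensuring that only finitely many filtration levels contribute to each associated-graded piece and hence that the graded analysis faithfully reflects the filtered one.
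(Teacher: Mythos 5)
Your argument is correct, but it takes a genuinely different route from the paper's. Both proofs filter $\hatCobar_\alpha \Bar_\alpha A \cong \prod_{p\geqslant 1}(\P\circ\C)(p)\otimes_{\Sy_p}A^{\otimes p}$ and conclude with the Eilenberg--Moore comparison theorem, but the filtrations differ: you take the total $\F$-degree of the $A$-factors, so that \emph{every} component of the differential (including the bar part $d_2$, which merges $A$-factors via $\gamma_A$) survives to the associated graded, and $E^1$ is computed by quoting the discrete counit theorem for the weight-graded algebra $\mathrm{gr}\,A$. The paper instead uses the arity-shifted filtration $\mathcal{F}_k=\prod_p(\P\circ\C)(p)\otimes_{\Sy_p}\F_{k+p}(A^{\otimes p})$, which forces the bar part of the differential to \emph{raise} filtration degree (it strictly decreases arity while preserving total $A$-degree, using $\C(1)=\k\,\id$); the $E^0$ differential is then just $d_A$ plus the differential of $\P\circ_\alpha\C$, and $E^1\cong H(A)$ follows directly from Maschke, the operadic K\"unneth theorem, and the acyclicity of $\P\circ_\alpha\C$ in positive arity. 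Your reduction is cleaner in that it black-boxes the classical theorem; the paper's is more self-contained, invoking only the definition of a Koszul morphism. Your identification of the role of $\F_1A=A$ is exactly right (in graded degree $k$ only arities $p\leqslant k$ contribute, so the completed product becomes a finite sum and $\mathrm{gr}$ commutes with $\widehat{\P}$); the only points you should spell out in a final write-up are the completeness and exhaustiveness of your filtration (needed for complete convergence) and the observation that the weight-graded counit for $\mathrm{gr}\,A$ agrees arity-wise with the completed one, so the discrete theorem genuinely applies.
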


\begin{proof}
Notice first that the map $\epsilon_A$ is a morphism of complete $\P$-algebras. It is defined by the same formula as the counit of the classical bar-cobar adjunction associated to $\alpha$~, see \cite[Section~11.3.2]{LodayVallette12}; so it is a morphism of $\P$-algebras. The complete filtration on the left-hand side is given by the arity, that is 
\[\F_k \widehat{\P}(\C(A))= \prod_{m\geqslant k} \P(m)\otimes_{\Sy_m} (\C(A))^{\otimes m}\ .\]
The condition $\F_1 A=A$ ensures that the image 
of any element in $\P(m)\otimes_{\Sy_m} A^{\otimes m}$ 
under the $\P$-algebra structure map $\gamma_A$  lives in $\F_m A$~. This shows that
\[
\epsilon_A\big(\F_k \widehat{\P}(\C(A))\big)\subset \F_k A\ .
\]
Since $\C(0)=0$ by assumption, the underlying space of the left-hand side is isomorphic to 
\begin{equation}\label{eq:isoCounitAdj}\tag{$\ast$}
\widehat{\P}(\C(A))\cong \prod_{m\geqslant 1} \P(m)\otimes_{\Sy_m} (\C(A))^{\otimes m}
\cong \prod_{p\geqslant 1}\left(\P\circ \C\right)(p)\otimes_{\Sy_p} A^{\otimes p}
\ .
\end{equation}
On it , we consider the descending filtration $\mathcal{F}$
defined by 
\[
\mathcal{F}_k\coloneqq \prod_{p\geqslant 1}\left(\P\circ \C\right)(p)\otimes_{\Sy_p} \F_{k+p}\left(A^{\otimes p}\right).
\]
The condition $\F_1 A=A$ ensures that $\mathcal{F}_0=\widehat{\P}(\C(A))$~; it is straightforward to see that  this filtration is complete. The summand of the differential coming from $d_A$ and the summand of the differential coming from the part $d_2$ of the complete cobar construction both send $\mathcal{F}_k$ to $\mathcal{F}_k$~.The summand of the differential coming from the part $d_2$ of the differential of the bar construction $\Bar_\alpha A$ sends $\mathcal{F}_k$ to $\mathcal{F}_{k+1}$ since $\C(0)=0$ and $\C(1)=\id$ by assumption. Therefore, the zeroth page $E^0$ of the associated spectral sequence is made up of the same underlying space but equipped with a differential which is the sum of the one coming from $d_A$ and the 
one corresponding to the differential of $\P\circ_\alpha \C$~. 
Equipped with this differential, each term 
$\left(\P\circ \C\right)(p)\otimes_{\Sy_p} A^{\otimes p}$ forms a chain sub-complex of $E^0$, which is acyclic, for $p>1$~, by Masche and operadic K\"unneth theorems \cite[Proposition 6.2.3]{LodayVallette12}, and whose homology is isomorphic to $H(A)$~, for $p=1$~, 
since $\alpha$ is a Koszul morphism. 
Using the isomorphism~\eqref{eq:isoCounitAdj}, we conclude that $E^1\cong H(A)$~.

\medskip

Again, the condition $\F_1 A=A$ ensures that the map $\epsilon_A$ sends $\mathcal{F}_k$ onto $\F_k A$~. The arguments mentioned above show that one gets an isomorphism on the level of the first page $E^1$~. Since both  filtrations are exhaustive and complete, the Eilenberg--Moore Comparison \cite[Theorem~5.5.11]{WeibelBook} implies that $\epsilon_A$ is a quasi-isomorphism. 
 \end{proof}

\begin{remark}
Applying the composite functor $\hatCobar_\alpha\Bar_\alpha$~, when $\alpha$ a Koszul morphism, one recovers a version of the \emph{homotopy completion} of Harper--Hess \cite{HarperHess13}. 
In the present context, \cref{prop:EpsiResFonc} implies that any complete $\P$-algebra $A$ satisfying $\F_1 A=A$ is \emph{homotopy complete} according to \cite[Definition~5.13]{CPRNW19}.
\end{remark}

The bar constructions allow us to define higher notions of morphisms. In the case of homotopy Lie algebras, they happen to play a crucial role in deformation theory, see \cite{Kontsevich03} for a seminal example.

\begin{definition}[$\infty_\alpha$-morphism]\label{def:InfMor}
Let $\alpha : \C \to \P$ be an operadic twisting morphism. An \emph{$\infty_\alpha$-morphism}\index{infinity-morphism!relative to a twisting morphism} $f : A \rightsquigarrow B$ between two complete $\P$-algebras $A$ and $B$ is a morphism of $\C$-coalgebras 
$f : \Bar_\alpha A \to \Bar_\alpha B$ between the bar constructions such that 
\[
f\left(\C(p)\otimes_{\Sy_p} \F_k\left(A^{\otimes p}\right)\right)\subset 
\bigoplus_{m\geqslant 1}\C(m)\otimes_{\Sy_m} \F_k\left(B^{\otimes m}\right)
\]
for any $k,p \geqslant 1$~. Composition is given by the usual composition of morphisms of coalgebras.
\end{definition}

Such an $\infty_\alpha$-morphism amounts to a degree $0$ map $f : {\C}(A) \to B$ satisfying the relations coming from imposing commutation with respective differentials.
The class of strict morphisms $f : A \to B$  of $\P$-algebras is equal to the class $\infty_\alpha$-morphisms of the form 
\[
{\C}(A) \xrightarrow{\varepsilon_{\C}(A)} A \xrightarrow{f} B\ , 
 \]
where $\varepsilon_{\C}$ stands for the counit of the cooperad $\C$~.

\medskip

The category of complete $\P$-algebras with the $\infty_\alpha$-morphisms as morphisms is denoted by
$\infty_\alpha\text{-}\,\P\text{-}\,\mathsf{alg}$~.

\medskip

An $\infty_\alpha$-morphism 
is called an \emph{$\infty_\alpha$-isomorphism} (respectively an \emph{$\infty_\alpha$-quasi-isomorphism}) 
when its first component 
\[
A\cong \mathcal{I}(A) \xrightarrow{\eta(A)} {\C}(A) \xrightarrow{f} B\ ,
\]
where $\eta$ stands for the coaugmentation of the cooperad $\C$~, 
is an isomorphism (respectively a quasi-isomorphism). 
The refined notions of $\infty_\alpha$-morphisms carry properties that strict morphisms lack. 
The class of $\infty_\alpha$-isomorphism is the class of invertible $\infty_\alpha$-morphisms: the computations of 
\cite[Theorem~10.4.1]{LodayVallette12} extend \emph{mutatis mutandis}. The $\infty_\alpha$-quasi-isomorphisms are homotopy invertible, see \cite[Theorem~10.4.4]{LodayVallette12}. 

\medskip

For $\Cobar \C$-algebras, we often consider the universal twisting morphism $\iota : \C \to \Cobar \C$ and the associated notion of an  $\infty_\iota$-morphism, which are simply called \emph{$\infty$-morphisms}\index{infinity-morphism}.

\begin{example}\label{ex:Loomorph}
In the case of the cooperad $\com^\vee$, an $\infty$-morphism between two complete $\sLi$-algebras $\g$ and $\h$ 
amounts to a collection of symmetric maps $f_m : \g^{\hat{\odot} m} \to \h$  of degree $0$~, for any $m\geqslant 1$~, preserving the respective filtrations, and satisfying the  relations
\[
\partial(f_m)=\sum_{\substack{p+q=m\\ q\geqslant 2}}
\sum_{\sigma\in \mathrm{Sh}_{p,q}^{-1}}
 (f_{p+1}\circ_{1} \ell_q)^{\sigma}-
 \sum_{\substack{i_1+\cdots+i_k=m\\ k\geqslant 2}}
\sum_{\sigma\in \mathrm{Sh}_{i_1, \ldots, i_k}^{-1}}
 \mathcal{k}_k\circ (f_{i_1}, \ldots, f_{i_k})^{\sigma}\ ,
\]
where $\{\ell_m\}_{m\geqslant 2}$ are the structural operations of $\g$ and 
$\{\mathcal{k}_m\}_{m\geqslant 2}$ the ones of $\h$~, 
see \cite[Section~10.2.6]{LodayVallette12} for more details. 
This notion plays a key role in the proof of the deformation quantization of Poisson manifolds by M.\ Kontsevich \cite{Kontsevich03}. In the literature, often one refers to this notion as ``filtered $\sLi$-morphism,'' making explicit the operad and the fact that they need to respect the filtrations.
A strict morphism of complete $\sLi$-algebras is equivalent to an $\infty$-morphism with trivial components $f_m=0$~, for $m\geqslant 2$~. 
The category of complete $\sLi$-algebras with their $\infty$-morphisms is denoted by $\isLialg$~.
A fundamental property of $\infty$-morphisms of complete $\sLi$-algebras is that the ``image'' 
\[
f(\alpha)\coloneqq \sum_{m\geqslant 1} {\textstyle \frac{1}{m!}} f_m(\alpha, \ldots, \alpha)\in \MC(\h)
\]
of  any Maurer--Cartan element $\alpha\in \MC(\g)$ is again a Maurer--Cartan element of $\h$~. This defined the action of $\MC : \isLialg \to \textsf{Set}$ on morphisms, making it into a functor.
\end{example}

The two dual constructions $\hom^\alpha(C,A)$ and $A\, \widehat{\otimes}^\alpha C^\vee$ are functorial 
with respect to $\infty_\alpha$-mor\-phi\-sms 
on the left-hand side and on the right-hand side separately, see \cite{rnw17}; we will recall such structures in the body of the text when needed.

\medskip

We now briefly recall the \emph{homotopy transfer theorem} --- a centerpiece of the homotopy theory of operadic algebras --- before studying its compatibility with the twisted tensor product. 

\begin{theorem}[Homotopy transfer theorem, {see \cite[Section~10.3]{LodayVallette12}}]\label{thm:HTT}\index{homotopy transfer theorem}
Given a contraction of complete chain complexes
\[
\hbox{
	\begin{tikzpicture}
	\def\upshift{0.075}
	\def\downshift{0.075}
	\pgfmathsetmacro{\midshift}{0.005}
	
	\node[left] (x) at (0, 0) {$X$};
	\node[right=1.5 cm of x] (y) {$Y$};
	
	\draw[-{To[left]}] ($(x.east) + (0.1, \upshift)$) -- node[above]{\mbox{\tiny{$p$}}} ($(y.west) + (-0.1, \upshift)$);
	\draw[-{To[left]}] ($(y.west) + (-0.1, -\downshift)$) -- node[below]{\mbox{\tiny{$i$}}} ($(x.east) + (0.1, -\downshift)$);
	
	\draw[->] ($(x.south west) + (0, 0.1)$) to [out=-160,in=160,looseness=5] node[left]{\mbox{\tiny{$h$}}} ($(x.north west) - (0, 0.1)$);
	
	\end{tikzpicture}}
\]
and a complete $\Cobar \C$-algebra structure on $X$~, there is a complete $\Cobar \C$-algebra structure on $Y$ such that the maps $i$ and $p$ extends to $\infty$-quasi-isomorphisms $i_\infty:Y\rightsquigarrow X$ and $p_\infty:X\rightsquigarrow Y$~. 
\end{theorem}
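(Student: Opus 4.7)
The plan is to reformulate the data operadically and then construct the transferred structure through an explicit tree formula, following the strategy of van der Laan. A complete $\Cobar\C$-algebra structure on $X$ is the same datum as a morphism of operads $\Cobar\C \to \eend_X$, which by \cref{prop:Rosetta} corresponds to a twisting morphism $\alpha_X : \C \to \eend_X$. The goal is then to build (i) a twisting morphism $\alpha_Y : \C \to \eend_Y$ giving the transferred $\Cobar\C$-algebra structure, and (ii) morphisms of $\C$-coalgebras $\Bar_\iota Y \to \Bar_\iota X$ and $\Bar_\iota X \to \Bar_\iota Y$ extending $i$ and $p$ respectively, which by \cref{def:InfMor} are exactly the sought-after $\infty$-morphisms.

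For (i), I would use the fact that $\Cobar\C = \T(s^{-1}\overline{\C})$ is a free operad, so producing $\alpha_Y$ reduces to specifying its value on each $c \in \C(n)$. The formula is a sum over planar rooted trees $\tau$ with $n$ leaves: for each such $\tau$, one applies an iterated decomposition $\Delta^\tau$ of $c$ determined by the shape of $\tau$, labels the resulting element of $\C^{\otimes |\tau|}$ by operations via $\alpha_X$, decorates every internal edge of $\tau$ with the homotopy $h : X \to X$, the root with $p$, and the leaves with $i$, and finally composes. Schematically,
\[
\alpha_Y(c) \;\coloneqq\; \sum_{\tau} \; p \circ \tau\bigl(\alpha_X; h, h, \ldots, h\bigr) \circ i^{\otimes n}\ .
\]
Each sum is finite for a fixed arity $n$ since the set of planar rooted trees with $n$ leaves whose vertices all have valence $\geqslant 2$ is finite, so completeness of $Y$ is automatic. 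For (ii), one writes analogous tree formulas, the only difference being that the root of each tree is decorated by $i$ (respectively by the identity on $X$) for $i_\infty$ (respectively $p_\infty$), giving maps $\C(A) \to X$ in each case, and hence morphisms of cofree coalgebras $\C(Y) \to \C(X)$ and $\C(X) \to \C(Y)$.

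What remains is to check that $\alpha_Y$ satisfies the Maurer--Cartan equation in $\Hom_\Sy(\C,\eend_Y)$, and that the proposed extensions commute with the respective codifferentials on the bar constructions. Both verifications are tree-wise manipulations: one inserts $dh + hd = ip - \id_X$ on each internal edge, combines this with the twisting morphism equation satisfied by $\alpha_X$, and checks that contributions cancel telescopically or reorganize themselves into the left-hand side of the equation to be proved. The contraction side conditions (when assumed) streamline this, but the result holds even without them since only $dh + hd = ip - \id_X$ is needed at this stage. Finally, the first Taylor coefficients of $i_\infty$ and $p_\infty$ are $i$ and $p$ themselves, which are quasi-isomorphisms, so these are $\infty$-quasi-isomorphisms.

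The main obstacle I anticipate is purely combinatorial: keeping careful track of signs and of which trees give non-trivial contributions in the differential computation, especially because the compatibility hinges on a subtle cancellation between the internal differential of $\Cobar\C$ (which expands each vertex into two along a splitting) and the homotopy identity applied at internal edges. A clean way to handle this is to work in the convolution operad $\Hom(\C,\eend_X)$ and to interpret the transferred twisting morphism as the image of $\alpha_X$ under a Van der Laan-type morphism of pre-Lie algebras induced by the contraction, so that the Maurer--Cartan equation for $\alpha_Y$ follows formally from that of $\alpha_X$ together with the contraction identities, without further combinatorial input.
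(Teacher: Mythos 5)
Your overall strategy is exactly the one the paper itself records: the theorem is recalled from Loday--Vallette, and the formula displayed immediately after it in the text,
$\varphi_Y=\VdL_h\circ\T(s\varphi_X)\circ\Delta_\C^{\mathrm{mon}}$,
is precisely your tree-wise sum (monadic decomposition of $\C$, operations of $X$ at the vertices, $h$ on internal edges, $i$ at the leaves, $p$ at the root), with the Maurer--Cartan equation for $\alpha_Y$ deduced from that of $\alpha_X$ via the pre-Lie/Van der Laan formalism. So the construction of the transferred structure and the finiteness/completeness remarks are fine and match the source.

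There is, however, one concrete error in your treatment of the extensions. For $i_\infty\colon Y\rightsquigarrow X$ the components are maps $\C(n)\otimes Y^{\otimes n}\to X$; the tree composite with vertices labeled by operations of $X$ already outputs an element of $X$, so decorating the root by $i\colon Y\to X$ does not typecheck. The correct (and standard) formula replaces the root label $p$ of the transferred structure by the homotopy $h$, which is what makes the arity-one component equal to $i$ and the $\infty$-morphism equations close up against $dh+hd=ip-\id_X$. Dually, for $p_\infty\colon X\rightsquigarrow Y$ the components must land in $Y$, so a root labeled by $\id_X$ cannot be right either; the root must carry $p$, and the extra homotopies are then distributed over the inputs rather than at the root, which is why Loday--Vallette (and the paper, via the reference to the functorial treatment in \cite{HLV19}) handle $p_\infty$ by a genuinely different formula rather than a one-symbol substitution. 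As written, your candidate extensions would fail the compatibility with the bar-construction codifferentials that you correctly identify as the remaining verification; with the root decorations corrected, the rest of your argument goes through as in the cited reference.
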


 If we denote by 
\[
\varphi_X:\C\longrightarrow\End_X
\]
 the algebra structure of $X$ as an operadic twisting morphism,  then  an explicit formula for such a transferred structure on $Y$ is given by
\[
\varphi_Y\coloneqq\left(\C\xrightarrow{\Delta_\C^\mathrm{mon}}\T(\C)\xrightarrow{\T(s\varphi_X)}\T(s\End_X)\xrightarrow{\VdL_h}\End_Y\right)\ ,
\]
where $\Delta_\C^\mathrm{mon}$ denotes the comonadic definition of the cooperad $\C$ and where
$\VdL_h$ is the \emph{Van der Laan} map associated to the contraction $h$~. This latter map  
amounts to composing the rooted trees with the leaves labeled by $i$~, the internal edges by $h$ and the root by $p$~.
For an more conceptual approach to the homotopy transfer theorem, we refer the reader to \cite{HLV19}, where e.g.\ the functorial property with respect to morphisms of cooperads is established. 
 
\medskip

Let us finish this section with the following compatibility between the tensor product and the homotopy transfer theorem. 
We consider an arity-wise finite dimensional operad $\P$ and the universal operadic twisting morphism 
$\pi : \Bar \P \to \P$~. We write $\C\coloneqq\P^\vee$ for the dual cooperad. Given a contraction as above with $X$ equipped with a complete $\Omega \C$-algebra structure and a complete $\P$-algebra $A$~. 
There are two natural ways to construct a $\sLi$-algebra structure on $A\, \hot\,  Y$~:
\begin{enumerate}
	\item one can first apply the homotopy transfer theorem \ref{thm:HTT} to endow $Y$ with a complete $\Cobar\C$-algebra structure and then  consider the complete $\sLi$-algebra $A\, \widehat{\otimes}^\pi Y$~, or
	\item one can consider the induced contraction
	\[
	\hbox{
		\begin{tikzpicture}
		\def\upshift{0.075}
		\def\downshift{0.075}
		\pgfmathsetmacro{\midshift}{0.005}
		
		\node[left] (x) at (0, 0) {$A\,\hot \,X$};
		\node[right=1.5 cm of x] (y) {$A\,\hot \,Y$};
		
		\draw[-{To[left]}] ($(x.east) + (0.1, \upshift)$) -- node[above]{\mbox{\tiny{$\id\,\hot\, p$}}} ($(y.west) + (-0.1, \upshift)$);
		\draw[-{To[left]}] ($(y.west) + (-0.1, -\downshift)$) -- node[below]{\mbox{\tiny{$\id\,\hot\, i$}}} ($(x.east) + (0.1, -\downshift)$);
		
		\draw[->] ($(x.south west) + (0, 0.1)$) to [out=-160,in=160,looseness=5] node[left]{\mbox{\tiny{$\id\,\hot\, h$}}} ($(x.north west) - (0, 0.1)$);
		
		\end{tikzpicture}}
	\]
	and  transfer the complete $\sLi$-algebra structure of $A\,\hot^\pi X$ to obtain a complete $\sLi$-algebra structure on $A\,\hot\, Y$~.
\end{enumerate}

\begin{proposition}\label{thm:convolution algebras and HTT}
	The two complete $\sLi$-algebra structures on $A\,\hot\, Y$ described above are equal.
\end{proposition}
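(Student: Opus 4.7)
The plan is to view both structures as operad morphisms $\sLi \to \End_{A \hot Y}$ and show they coincide. By \cref{lem:sLiTW} applied to the universal twisting morphism $\pi: \Bar\P \to \P$, every $\sLi$-algebra structure produced by the twisted tensor product construction out of a $\P$-algebra $A$ and an $\Omega\C$-algebra $Z$ factors canonically as
\[
\sLi \longrightarrow \P \otimes \Omega\C \longrightarrow \End_A \otimes \End_Z \longrightarrow \End_{A \hot Z},
\]
where the first arrow is the universal one and the second is determined by $\varphi_A : \P \to \End_A$ and $\varphi_Z : \Omega\C \to \End_Z$. It is therefore enough to show that both methods yield the same map $\Omega\C \to \End_Y$; for method~(1) this is by construction the HTT-transferred structure $\varphi_Y^{\mathrm{HTT}}$.

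For method~(2), the HTT formula applied to the induced contraction reads
\[
\varphi_{A \hot Y}^{(2)} \;=\; \VdL_{\id \hot h} \circ \T(s\, \varphi_{A \hot X}) \circ \Delta_{\com^\vee}^{\mathrm{mon}},
\]
where the twisting morphism $\varphi_{A \hot X} : \com^\vee \to \End_{A \hot X}$ already factors as $\com^\vee \to \P \otimes \C^\vee \to \End_A \otimes \End_X \to \End_{A \hot X}$ by method~(1) applied to $X$. The crucial technical observation is the factorization of the Van der Laan map: since $\id \hot i$, $\id \hot h$ and $\id \hot p$ all act as the identity on the $A$-component, applying $\VdL_{\id \hot h}$ to a decorated tree whose vertices are labeled by tensor-factored operations $\alpha \otimes \xi \in \End_A \otimes \End_X$ yields, on the $A$-factor, the pure operadic composition of the $\alpha$'s along the tree, and on the $X$-factor, the classical Van der Laan output $\VdL_h$ applied to the tree of $\xi$'s.

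Plugging this decomposition into the HTT formula above and using the compatibility of $\Delta_{\com^\vee}^{\mathrm{mon}}$ with the tensor factorization of $\varphi_{A \hot X}$, one rewrites $\varphi_{A \hot Y}^{(2)}$ as the composite $\sLi \to \P \otimes \Omega\C \to \End_A \otimes \End_Y \to \End_{A \hot Y}$ in which the first arrow is the universal map, the $\P \to \End_A$ component reassembles into $\varphi_A$ (since the free composition of $\alpha$-factors along trees is exactly how the operad structure of $\P$ acts), and the $\Omega\C \to \End_Y$ component is precisely $\varphi_Y^{\mathrm{HTT}}$ by the very formula defining the HTT transfer. This identifies the two operad morphisms. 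The main obstacle is the careful bookkeeping of this factorization: one must verify that, after extracting the tensor factors and tracking the signs coming from the operadic suspension, the reassembly on the $A$-side reproduces the image of the universal morphism $\sLi \to \P \otimes \Omega\C$ evaluated at each generator, and that the $X$-side is assembled from the same underlying trees that define $\varphi_Y^{\mathrm{HTT}}$.
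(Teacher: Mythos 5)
Your proposal is correct and follows essentially the same route as the paper's proof: both arguments factor the two structures through the universal map $\sLi\to\P\otimes\Cobar\C$ of \cref{lem:sLiTW}, use the key factorization of the Van der Laan map for the contraction $\id\,\hot\,h$ as operadic composition on the $A$-factor tensored with $\VdL_h$ on the $X$-factor, and reduce everything to the reassembly identity on the $A$-side. The paper makes your final "bookkeeping" step precise by identifying both sides of that identity with the composition map $\gamma_\P\colon\T(\P)\to\P$ viewed as an element of $\P\otimes\T(\P)^\vee$, which is exactly the justification your sketch gestures at.
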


\begin{proof}
	We compare the two structures explicitly. Let us recall that the morphism of operads of \cref{lem:sLiTW} 
	\[
	\mathsf{M}_\pi:\sLi\longrightarrow\P \otimes \Cobar\C
	\]
associated to the operadic twisting morphism $\pi$ is given by 
	\[
	\mathsf{M}_\pi\big(s^{-1}\mu_m^\vee\big) = \sum_{i\in I(m)}(-1)^{c_i}p_i\otimes s^{-1}c_i\ ,
	\]
	where $\mu_m\in \com(m)$ is the canonical generator, $\{p_i\}_{i\in I(m)}$ is a basis of $\P(m)$~, and $c_i\coloneqq p_i^\vee$ is the dual basis
	of $\C(m)=\P(m)^\vee$. Notice that $(-1)^{|c_i|}s^{-1}c_i = (sp_i)^\vee$. 
	We encode the complete $\P$-algebra structure of $A$ by the morphism of operads
	\[
	\rho_A:\P\longrightarrow\eend_A\ .
	\]
	We encode the $\Cobar\C$-algebra structures on $X$ in terms of the operadic twisting morphism 
	\[
	\varphi_X:\C\longrightarrow\eend_X\ ,
	\]
	and similarly for the other algebras, like the one on $Y$ obtained by the homotopy transfer theorem and the complete $\sLi$-algebra structure on $A\hot X$~. Given two $\S$-modules $M$ and $N$~, the map
	\[
	\Phi:\T\left(M \otimes N\right)\longrightarrow\T(M)\otimes\T(N)
	\]
	is given by sending a tree with vertices indexed by pure tensors in $M{\otimes} N$ to the tensor product of two copies of the underlying tree with the vertices of the first one indexed by the respective elements of $M$ and the vertices of the second one 
	indexed by the respective elements of $N$~, all multiplied by the appropriate Koszul sign, see \cite[Section~3.1]{Vallette08}.
	
	\medskip	
	
		The first complete $\sLi$-algebra structure is given by 
	\begin{align*}
	\ell_m =&\ \left(\rho_A\otimes\varphi_Y\right)s\mathsf{M}_\pi s^{-1}\big(\mu_m^\vee\big)\\
	=&\ \left(\rho_A\otimes\VdL_h\T(s\varphi_X)\Delta_\C^\mathrm{mon}\right)s\mathsf{M}_\pi s^{-1}\big(\mu_m^\vee\big)\\
	=&\ \left(\rho_A\otimes\VdL_h\T(s\varphi_X)\right)\left(\id_\P\otimes\Delta_\C^\mathrm{mon}\right)s\mathsf{M}_\pi s^{-1}\big(\mu_m^\vee\big)\ ,
	\end{align*}
	where in the third line we see $s\mathsf{M}_\pi s^{-1}$ as a map $\com^\vee\to\P\otimes\C$~, given by
	\[
	s\mathsf{M}_\pi s^{-1}\big(\mu_m^\vee\big) = \sum_{i\in I(m)}p_i\otimes c_i\  .
	\]
	The second complete $\sLi$-algebra structure on $A\hot Y$ is given by
	\begin{align*}
	\ell_m =&\ \VdL_{1 \hot h}\T(s\varphi_{A\hot^\pi X})\Delta_{\com^\vee}^\mathrm{mon}\big(\mu_m^\vee\big)\\
	=&\ \left(\gamma_{\eend_A}\otimes\VdL_h\right)\Phi\T(s\varphi_{X\hot^\pi A})\Delta_{\com^\vee}^\mathrm{mon}\big(\mu_m^\vee\big)\\
	=&\ \left(\gamma_{\eend_A}\otimes\VdL_h\right)\Phi\T\left((\rho_A\otimes s\varphi_X)s\mathsf{M}_\pi s^{-1}\right)\Delta_{\com^\vee}^\mathrm{mon}\big(\mu_m^\vee\big)\\
	=&\ \left(\gamma_{\eend_A}\otimes\VdL_h\right)\left(\T(\rho_A)\otimes\T(s\varphi_X)\right)\Phi\T(s\mathsf{M}_\pi s^{-1})\Delta_{\com^\vee}^\mathrm{mon}\big(\mu_m^\vee\big)\\
	=&\ \left(\rho_A\otimes\VdL_h\T(s\varphi_X)\right)\left(\gamma_\P\otimes\id_{\T(\C)}\right)\Phi\T(s\mathsf{M}_\pi s^{-1})\Delta_{\com^\vee}^\mathrm{mon}\big(\mu_m^\vee\big)\ ,
	\end{align*}
where, in the last line, we used the fact that
	\[
	\gamma_{\eend_A}\T(\rho_A) = \rho_A\gamma_\P\ .
	\]
	Therefore, to conclude we need to show that
\begin{equation}\label{eq:Interm}
	\left(\gamma_\P\otimes\id_{\T(\C)}\right)\Phi\T(s\mathsf{M}_\pi s^{-1})\Delta_{\com^\vee}^\mathrm{mon}\big(\mu_m^\vee\big) = \left(\id_\P\otimes\Delta_\C^\mathrm{mon}\right)s\mathsf{M}_\pi s^{-1}\big(\mu_m^\vee\big)\ .
\end{equation}
We introduce the following notation
	\[
	\Delta_{\com^\vee}^\mathrm{mon}(\mu_m^\vee) = \sum_{\tau\in\RT_m}\tau\left(v\mapsto\mu_{|v|}^\vee\right),
	\]
where the right-hand side represents the sum of rooted trees with vertices labeled by the elements $\mu_m^\vee$ of according arity.	With a similar notation, the left-hand side of \cref{eq:Interm}  is equal to
	\[
	\sum_{\tau\in\RT_m}(-1)^\epsilon\sum_{\substack{i_v\in I(|v|)\\ \text{ for }v\in V_\tau}}\gamma_\P(\tau(v\mapsto p_{i_v}))\otimes\tau(v\mapsto c_{i_v})\ ,
	\]
	where $\epsilon$ is the Koszul sign coming from $\Phi$~. At the same time, the right-hand side of \cref{eq:Interm} is given by
	\[
	\sum_{i\in I(m)}p_i\otimes\Delta_\C^\mathrm{mon}(c_i)\ .
	\]
	Both are expression for the map
	\[
	\gamma_\P:\T(\P)\longrightarrow\P
	\]
	seen as an element of $\P \otimes\T(\P)^\vee$, and thus they are equal.
\end{proof}

\begin{remark}
Similarly, there are two natural ways to extend the chain maps $\id \,\hot\, i$ and $\id\, \hot\, p$ into $\infty$-quasi-isomorphisms. 
With the same method as above, one can prove that these two extensions are equal.  We refer the reader to \cite[Chapter~9]{rnPHD} for more details. 
\end{remark}
\section{Integration theory of complete shifted homotopy Lie algebras}\label{sect:integration of Loo}

In this section, we introduce the \emph{universal Maurer--Cartan algebra}, which is a cosimplicial complete $\sLi$-algebra that encodes all of the properties of Maurer--Cartan elements: the Maurer--Cartan equation, but also the gauge equivalence relation and its higher homotopy properties. We then use it to define a functor $\R$ that integrates complete $\sLi$-algebras into simplicial sets, and to introduce a left adjoint functor $\Li$ which provides us with complete $\sLi$-algebra models for simplicial sets. We show that our newly defined functor generalizes the Dold--Kan adjunction and study its relation with the Maurer--Cartan spaces of Deligne--Hinich and of Getzler, proving that they are all equivalent.

\subsection{Adjunctions with simplicial sets}
We start with the following key observation, which goes back to D.\ M.\ Kan.

\begin{lemma}[{\cite[Section~3]{Kan58bis}}]\label{lem:AdjCosimpliObj}
Let $\mathsf{C}$ be a locally small cocomplete category. 
The data of a pair of adjoint functors whose left adjoint has domain in simplicial sets
\[
\hbox{
	\begin{tikzpicture}
	\def\upshift{0.185}
	\def\downshift{0.15}
	\pgfmathsetmacro{\midshift}{0.005}
	
	\node[left] (x) at (0, 0) {$\Li\ \ :\ \ \sSe$};
	\node[right] (y) at (2, 0) {$\mathsf{C}\ \ :\ \ \mathrm{R}$};
	
	\draw[-{To[left]}] ($(x.east) + (0.1, \upshift)$) -- ($(y.west) + (-0.1, \upshift)$);
	\draw[-{To[left]}] ($(y.west) + (-0.1, -\downshift)$) -- ($(x.east) + (0.1, -\downshift)$);
	
	\node at ($(x.east)!0.5!(y.west) + (0, \midshift)$) {\scalebox{0.8}{$\perp$}};
	\end{tikzpicture}}
\]
is equivalent to the data of a cosimplicial object
$\mathrm{C}^\bullet : \De{} \to \mathsf{C}$
in $\mathsf{C}$ under the restriction
\[
\vcenter{\hbox{
\begin{tikzcd}
\mathrm{C}^\bullet=\mathrm{L}\mathrm{Y}\ : \ \cD
\arrow[r, "\mathrm{Y}"]
&
\sSe \arrow[r, "\mathrm{L}"]
&
\mathsf{C}
\end{tikzcd}
}}
\]
of the left adjoint functor to  the Yoneda embedding $\mathrm{Y}$~. 
\end{lemma}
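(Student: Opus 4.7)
The plan is to prove both directions of the equivalence using the fundamental fact that every simplicial set is canonically a colimit of standard simplices, together with the universal property of such a colimit extension.

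First, in the direction from adjunctions to cosimplicial objects, given an adjunction $\mathrm{L} \dashv \mathrm{R}$, the assignment $\mathrm{C}^\bullet \coloneqq \mathrm{L}\mathrm{Y}$ is manifestly a functor $\Delta \to \mathsf{C}$, so the construction is immediate. The interesting direction is the converse: starting from a cosimplicial object $\mathrm{C}^\bullet$, I need to build an adjunction. The key tool is the classical formula expressing any simplicial set $X_\bullet$ as the colimit of its category of elements,
\[
X_\bullet \cong \colim_{\Delta^n \to X_\bullet} \Delta^n \; .
\]
Since $\mathsf{C}$ is cocomplete, I can define the candidate left adjoint by the left Kan extension formula
\[
\mathrm{L}(X_\bullet) \coloneqq \colim_{\Delta^n \to X_\bullet} \mathrm{C}^n \; ,
\]
which on representables recovers $\mathrm{L}(\Delta^n) = \mathrm{C}^n$, as required. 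The candidate right adjoint is defined, using that $\mathsf{C}$ is locally small, by
\[
\mathrm{R}(c)_n \coloneqq \Hom_{\mathsf{C}}(\mathrm{C}^n, c) \; ,
\]
with the simplicial structure induced by the cosimplicial structure of $\mathrm{C}^\bullet$.

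The core step is then to verify the adjunction isomorphism $\Hom_{\mathsf{C}}(\mathrm{L}(X_\bullet), c) \cong \Hom_{\sSe}(X_\bullet, \mathrm{R}(c))$ naturally in $X_\bullet$ and $c$. For $X_\bullet = \Delta^n$ representable, this reduces by the Yoneda lemma to
\[
\Hom_{\mathsf{C}}(\mathrm{C}^n, c) \cong \mathrm{R}(c)_n \; ,
\]
which holds by definition. For general $X_\bullet$, the isomorphism follows by writing $X_\bullet$ as a colimit of representables and using that $\mathrm{L}$ commutes with colimits (by construction) while $\Hom_{\mathsf{C}}(-, c)$ sends colimits to limits, and similarly $\Hom_{\sSe}(-, \mathrm{R}(c))$ sends colimits to limits; the Yoneda case then propagates to the colimit.

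Finally, I would check that the two constructions are inverse to each other. Starting from a cosimplicial object $\mathrm{C}^\bullet$, building $\mathrm{L}$ as above and restricting along Yoneda recovers $\mathrm{L}(\Delta^n) = \mathrm{C}^n$ by construction, so $\mathrm{L}\mathrm{Y} \cong \mathrm{C}^\bullet$. In the other direction, starting from an adjunction $\mathrm{L} \dashv \mathrm{R}$, the functor $\mathrm{L}$ automatically preserves colimits (as a left adjoint), so it is determined up to natural isomorphism by its restriction to representables via the colimit formula above, which is precisely the $\mathrm{L}'$ reconstructed from $\mathrm{C}^\bullet = \mathrm{L}\mathrm{Y}$; the corresponding right adjoints then agree by uniqueness of adjoints. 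The main subtlety, and the point to be careful about, is not any deep obstacle but rather the bookkeeping of naturality in the Yoneda argument for a non-representable $X_\bullet$, which is a standard colimit-exchange computation.
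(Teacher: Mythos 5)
Your proposal is correct and follows essentially the same route as the paper: both directions rest on the density theorem (every simplicial set is the colimit of its category of elements), the definition of the right adjoint as $\Hom_{\mathsf{C}}(\mathrm{C}^\bullet,-)$, and the left Kan extension along the Yoneda embedding as the left adjoint. The paper's proof is terser — it simply invokes the left Kan extension and the uniqueness of a colimit-preserving extension of a functor on $\cD$ — whereas you spell out the hom-set isomorphism and the round-trip checks, but the content is the same.
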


\begin{proof}
Any cosimplicial object $\mathrm{C}^\bullet : \cD \to \mathsf{C}$ induces a functor
\begin{align*}
	\mathrm{R}\ :\ \mathsf{C}{}&\ \longrightarrow\ \sSe\\
	c{}&\ \longmapsto\ \Hom_{\mathsf{C}}(\mathrm{C}^\bullet, c)
\end{align*}
which admits a left adjoint given by the following left Kan extension
\[
\vcenter{\hbox{
	\begin{tikzpicture}
		\node (a) at (0,0){$\cD$};
		\node (b) at (3.5,0){$\mathsf{C}$};
		\node (c) at (1.75,-1.5){$\sSe$};
		
		\draw[->] (a) to node[above]{$\rmC^\bullet$} (b);
		\draw[right hook->] (a) to node[below left]{$\mathrm{Y}$} (c);
		\draw[->] (c) to node[below right]{$\mathrm{Lan}_\mathrm{Y} \rmC^\bullet$} (b);
	\end{tikzpicture}
}}
\]
Conversely, notice that left adjoint functors preserve colimits and that there is a unique  colimit preserving functor $\mathrm{L} : \sSe \to \mathsf{C}$ with prescribed restriction to $\cD$~, by the density theorem: any simplicial set $X_\bullet$ is isomorphic to the colimit 
$$X_\bullet\cong \mathop{\rm colim}_{\mathsf{E}(X_\bullet)} \mathrm{Y}\circ \Pi $$
over its category of elements, where 
  $\Pi : \mathsf{E}(X_\bullet) \to \cD$ is the canonical projection which sends $x\in X_n$ onto $[n]$~. 
\end{proof}

This fact shows that if one wants to introduce a functor from a category $\mathsf{C}$ to simplicial sets, an efficient way to do so is to consider a cosimplicial object in $\mathsf{C}$~. 

\begin{example}\index{Dold--Kan adjunction}
Let $\mathsf{C}=\ch$ be the category of (complete) chain complexes. We consider the cosimplicial chain complex
\[\mathrm{C}^\bullet\coloneqq C(\De{\bullet})\]
given by the cellular chain complexes $C(\De{n})$ of the geometric $n$-simplices $|\De{n}|$~, or equivalently the normalized chain complex of the standard $n$-simplices $\De{n}$~ (with the discrete filtration). The associated adjunction
\[
\hbox{
	\begin{tikzpicture}
	\def\upshift{0.185}
	\def\downshift{0.15}
	\pgfmathsetmacro{\midshift}{0.005}
	
	\node[left] (x) at (0, 0) {$\mathrm{L}_{\mathrm{DK}}\ \ :\ \ \sSe$};
	\node[right] (y) at (2, 0) {$\ch\ \ :\ \ \mathrm{R}_{\mathrm{DK}}$};
	
	\draw[-{To[left]}] ($(x.east) + (0.1, \upshift)$) -- ($(y.west) + (-0.1, \upshift)$);
	\draw[-{To[left]}] ($(y.west) + (-0.1, -\downshift)$) -- ($(x.east) + (0.1, -\downshift)$);
	
	\node at ($(x.east)!0.5!(y.west) + (0, \midshift)$) {\scalebox{0.8}{$\perp$}};
	\end{tikzpicture}}
\]
is nothing but the one involved in the Dold--Kan correspondence, giving an equivalence of categories between simplicial abelian groups and non-negatively gra\-ded chain complexes of abelian groups (without filtrations). 
\end{example}

\begin{example}
In the category $\mathsf{C}=\mathsf{Top}$ of  topological spaces, we  consider the cosimplicial object $|\De{\bullet}|$ built from the geometric $n$-simplices $|\De{n}|$ . The associated adjunction
\[
\hbox{
	\begin{tikzpicture}
	\def\upshift{0.185}
	\def\downshift{0.15}
	\pgfmathsetmacro{\midshift}{0.005}
	
	\node[left] (x) at (0, 0) {$|\ \  |\ \ :\ \ \sSe$};
	\node[right] (y) at (2, 0) {$\mathsf{Top}\ \ :\ \ \mathrm{Sing}$};
	
	\draw[-{To[left]}] ($(x.east) + (0.1, \upshift)$) -- ($(y.west) + (-0.1, \upshift)$);
	\draw[-{To[left]}] ($(y.west) + (-0.1, -\downshift)$) -- ($(x.east) + (0.1, -\downshift)$);
	
	\node at ($(x.east)!0.5!(y.west) + (0, \midshift)$) {\scalebox{0.8}{$\perp$}};
	\end{tikzpicture}}
\]
is the classical adjunction given by geometric realization and the singular chain functor. 
\end{example}

\begin{example}
In the case of the category $\mathsf{C}=\mathsf{Cat}$ of small categories, the  cosimplicial object
\begin{align*}
\mathsf{C}^\bullet\ :\ \cD{}&\ \longrightarrow\ \mathsf{Cat}\\
[n]{}&\ \longmapsto\ \mathsf{Cat}[n]:=\{0\to1\to \cdots \to n\}\ ,
\end{align*}
where $\mathsf{Cat}[n]$ is the category associated to the totally ordered set $[n]$~,  
induces the nerve functor
$\mathrm{N} \mathsf{D}:=\Hom_{\mathsf{Cat}}(\mathsf{C}^\bullet, \mathsf{D})$~, mentioned in \cref{prop:NerveGroupoid}. 
\end{example}

\subsection{The universal Maurer--Cartan algebra}\label{subsec:MCcosimpsLI}

In the present paper, we focus on the category $\mathsf{C}=\sLialg$ of complete $\sLi$-algebras. To define  a suitable cosimplicial object in this category, we consider the following ingredients involved in Dupont's simplicial version of the de Rham Theorem. 

\begin{proposition}[\cite{Dupont76}]\label{prop:DupontContr}
There exists a simplicial contraction\index{Dupont contraction}
\[
\hbox{
	\begin{tikzpicture}
	\def\upshift{0.075}
	\def\downshift{0.075}
	\pgfmathsetmacro{\midshift}{0.005}
	
	\node[left] (x) at (0, 0) {$\Omega_\bullet$};
	\node[right=1.5 cm of x] (y) {$\mathrm{C}_\bullet$};
	
	\draw[-{To[left]}] ($(x.east) + (0.1, \upshift)$) -- node[above]{\mbox{\tiny{$p_\bullet$}}} ($(y.west) + (-0.1, \upshift)$);
	\draw[-{To[left]}] ($(y.west) + (-0.1, -\downshift)$) -- node[below]{\mbox{\tiny{$i_\bullet$}}} ($(x.east) + (0.1, -\downshift)$);
	
	\draw[->] ($(x.south west) + (0, 0.1)$) to [out=-160,in=160,looseness=5] node[left]{\mbox{\tiny{$h_\bullet$}}} ($(x.north west) - (0, 0.1)$);
	
	\end{tikzpicture}}
\]
between the simplicial commutative algebras of piece-wise polynomial differential forms on the geometric $n$-simplices 
\[
\Omega_n\coloneqq \frac{\k[t_0, \ldots, t_n, d t_0, \ldots, d t_n]}{(t_0+\cdots+t_n-1, d t_0+\cdots+ d t_n)}
\]
and the simplicial chain complex made up of the linear dual of the cellular chain complexes of the geometric $n$-simplices 
\[\mathrm{C}_\bullet\coloneqq C(\De{\bullet})^\vee\ ,\]
with dual basis denoted by  $\omega_I$~, for non-empty sub-sets $I=\{i_0,\ldots, i_k\}\subset[n]$~, of degree $|\omega_I|=-|I|+1=-k$~.
\end{proposition}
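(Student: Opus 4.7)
My plan is to establish this result, due to Dupont, by writing down each of the three maps explicitly on the piecewise polynomial de Rham algebra of the geometric simplex, then verifying the contraction identities degree-wise and checking compatibility with simplicial face and degeneracy maps.

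First I would define the inclusion $i_\bullet$ by means of the Whitney elementary forms: for a non-empty subset $I=\{i_0<\cdots<i_k\}\subset[n]$, set
\[
i_n(\omega_I)\coloneq k!\sum_{j=0}^{k}(-1)^j\, t_{i_j}\, dt_{i_0}\wedge\cdots\wedge\widehat{dt_{i_j}}\wedge\cdots\wedge dt_{i_k}\in\Omega_n\ .
\]
One checks directly that these elements are closed under the simplicial face and degeneracy operators induced from $\Omega_\bullet$, so that $i_\bullet$ is indeed a map of simplicial chain complexes. Next I would define the projection $p_\bullet$ by fiber integration: $p_n(\omega)$ evaluated on the dual basis element $\omega_I^\vee$ is $\int_{[I]}\omega$, where $[I]$ denotes the face of $|\Delta^n|$ spanned by the vertices labeled by $I$. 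Stokes' theorem together with the compatibility of integration with restriction to faces and projection along degeneracies ensures that $p_\bullet$ is a simplicial chain map. The identity $p_n\,i_n=\mathrm{id}$ reduces, by a short direct computation, to $\int_{[I]}i_n(\omega_I)=1$.

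For the homotopy $h_\bullet$, I would follow Dupont's original construction based on the affine contractions $\phi_i:[0,1]\times|\Delta^n|\to|\Delta^n|$ of the simplex onto each vertex $v_i$. Using the corresponding fiber integrals $h^i$, one assembles
\[
h_n\coloneq\sum_{k=0}^{n-1}(-1)^k\sum_{0\leqslant i_0<\cdots<i_k\leqslant n}i_n(\omega_{\{i_0,\ldots,i_k\}})\cdot h^{i_0}\circ\cdots\circ h^{i_k}\ ,
\]
which restricts from the smooth de Rham complex to the piecewise polynomial subalgebra because the $h^i$ preserve polynomial coefficients. The verification splits into several parts: first, a Cartan-type magic formula for each $h^i$ shows that each $\phi_i^*$ gives a homotopy between evaluation at $v_i$ and the identity; second, combining these via the above alternating sum produces the sought chain homotopy $i_n p_n-\mathrm{id}=d h_n+h_n d$; third, a careful bookkeeping on the alternating structure yields the side conditions $h_n^2=0$, $h_n\,i_n=0$, and $p_n\,h_n=0$ that promote the homotopy equivalence to a contraction. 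Finally, simpliciality of $h_\bullet$ is reduced to the compatibility of the $h^i$ with the coface and codegeneracy maps of $\Omega_\bullet$.

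The main obstacle will be the construction and verification of $h_\bullet$: the chain homotopy identity and the three side conditions must all hold simultaneously and compatibly across the whole simplicial direction, which is what distinguishes a contraction from a mere deformation retract and is what makes this structure usable as an input to the homotopy transfer theorem later in the paper. The alternating-sum form of $h_n$ above is precisely engineered to cancel cross-terms in $h^i\circ h^j$ so that all these identities hold on the nose; carrying out this cancellation is the computational heart of the proof, and it is best done by induction on $n$, using the fact that $h^i$ already satisfies the homotopy identity with respect to the simpler projection $\omega\mapsto\omega|_{v_i}$.
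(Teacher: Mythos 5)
The paper offers no proof of this statement: it is recalled as a classical result of Dupont, with the explicit formulas for the three maps deferred to the references. Your proposal therefore cannot be compared against an argument in the paper itself; what it does is reconstruct the standard proof from the cited source, and as an outline it is essentially correct. The inclusion via Whitney elementary forms is exactly the identification the authors use later (in the proof of their description of $\mc^0$, where $\omega_{i_0\cdots i_k}$ is realized inside $\Omega_n$ by the same formula), the projection by integration over faces is the right dual map, and the homotopy assembled from the vertex contractions $\phi_i$ is Dupont's operator.

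Two small caveats. First, reducing $p_n i_n=\mathrm{id}$ to $\int_{[I]}i_n(\omega_I)=1$ silently uses that $i_n(\omega_I)$ restricts to zero on every other face of the same dimension (each summand contains a factor $t_{i_j}$ or $dt_{i_j}$ with $i_j\notin J$, which vanishes on $[J]$); this should be said. Second, the precise form of $h_n$ matters: the usual convention is $h_n=\sum_{k=0}^{n-1}\sum_{i_0<\cdots<i_k}i_n(\omega_{i_0\cdots i_k})\,h^{i_k}\circ\cdots\circ h^{i_0}$ with the composite in decreasing order and no extra factor $(-1)^k$ (the signs being carried by $\omega_I$); with your ordering and sign you would need to re-verify the cancellations, and in any case the three side conditions $h_n^2=0$, $h_n i_n=0$, $p_n h_n=0$ are not in Dupont's original paper but were checked later (by Getzler), so this is genuinely the part of the proof that cannot be waved through and would need to be written out in full for the proposal to be complete.
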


Let us recall that a contraction amounts to the data of two chain maps $i_n : \mathrm{C}_n \to \Omega_n$~, $p_n : \Omega_n\to \mathrm{C}_n$~, and a degree $1$ linear map $h_n : \Omega_n \to \Omega_n$~,  satisfying the relations 
\[
p_ni_n=\id_{\mathrm{C}_n}, \quad 
i_np_n -\id_{\Omega_n}=d_n h_n+h_nd_n, \quad 
h_n^2=0\ , \quad p_nh_n=0\ , \quad \text{and} \quad h_ni_n=0 \ , 
\]
where $d_n$ stands for the differential map of $\Omega_n$~. In a simplicial contraction, the above three collections of maps  are required to commute with the respective simplicial maps.

\medskip

For a full definition of the maps involved in the Dupont contraction we refer the reader to the paper \cite{lunardon18}. A Python package for computations using these objects has been developed and made freely available by the first author, see \cite{RN20}.

\begin{remark}
Since we work with the homological degree convention, the elements $d t_i$ have all degree $-1$ and the chain complex $C(\De{n})^\vee$ is concentrated in degree range $-n$ to $0$ with differential of degree $-1$~. 
\end{remark}

For each $n\in \NN$~, the chain complex $\Omega_n$ carries a commutative algebra structure, i.e. 
an algebra structure over the operad $\com$~. This latter operad admits a canonical cofibrant replacement by means of the bar-cobar construction $\Cobar \Bar \com \xrightarrow{\sim} \com$~. By pulling back, each $\Omega_n$ 
is endowed with a $\Cobar \Bar \com$-algebra structure that we can transfer onto 
$C(\De{n})^\vee$ using the homotopy transfer theorem --- \cref{thm:HTT}.
Since all the $C(\De{n})^\vee$ are degree-wise finite dimensional  and since $\Cobar \Bar \com$ is degree-wise and arity-wise finite dimensional, every  cellular chain complex $\mathrm{C}^n=C(\De{n})$ admits a $\Bar \Cobar \com^\vee\cong \left(
\Cobar \Bar \com\right)^\vee$-coalgebra structure. The images $\hatCobar_\pi \mathrm{C}^n$ of these coalgebras under the complete cobar construction  associated to the the operadic twisting morphism 
\[
\pi \colon\Bar \Cobar \com^\vee \to \Cobar \com^\vee\cong \sLi
\]
produces the wanted object. 

\begin{proposition}\label{prop:Simpli}
The collection $\hatCobar_\pi \mathrm{C}^\bullet$ forms a cosimplicial complete $\sLi$-algebra. 
\end{proposition}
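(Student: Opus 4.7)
My plan is to establish the cosimplicial structure on the coalgebra side $\mathrm{C}^\bullet$ and then transport it through the functor $\hatCobar_\pi$. Equivalently, I will show that the simplicial chain complexes $\mathrm{C}_\bullet = (\mathrm{C}^\bullet)^\vee$ assemble into a simplicial $\Cobar\Bar\com$-algebra in which the simplicial operators are strict algebra morphisms. Term-wise finite dimensionality of each $\mathrm{C}_n$ will then allow me to dualize into a cosimplicial $\Bar\Cobar\com^\vee$-coalgebra structure on $\mathrm{C}^\bullet$, and postcomposition with $\hatCobar_\pi$ will produce the desired cosimplicial object in $\sLialg$.

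Each $\Omega_n$ is a commutative algebra and therefore a $\Cobar\Bar\com$-algebra via pullback along the resolution $\Cobar\Bar\com \twoheadrightarrow \com$; the simpliciality of $\Omega_\bullet$ as a simplicial commutative algebra ensures that every simplicial operator $f^*_\Omega : \Omega_m \to \Omega_n$ is already a strict $\Cobar\Bar\com$-algebra morphism. Applying \cref{thm:HTT} degree-wise endows each $\mathrm{C}_n$ with a $\Cobar\Bar\com$-algebra structure given explicitly by the Van der Laan formula
\[
\varphi_{\mathrm{C}_n} = \VdL_{h_n} \circ \T(s\varphi_{\Omega_n}) \circ \Delta_{\Bar\com}^{\mathrm{mon}}\ .
\]

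The heart of the argument is the verification that each simplicial operator $f^*_\mathrm{C} : \mathrm{C}_m \to \mathrm{C}_n$ is itself a strict morphism of these transferred structures. Here I exploit the simpliciality of the Dupont contraction (\cref{prop:DupontContr}): the collection $f^*_\bullet$ commutes with $i_\bullet$, $p_\bullet$, and $h_\bullet$, and by the previous paragraph $f^*_\Omega$ intertwines the $\varphi_{\Omega_\bullet}$ on the nose. Substituting these commutation relations into the Van der Laan formula yields at once the identity $f^*_\mathrm{C} \circ \varphi_{\mathrm{C}_m} = \varphi_{\mathrm{C}_n} \circ \T(f^*_\mathrm{C})$; conceptually this is the naturality of the homotopy transfer theorem with respect to morphisms of contractions, as developed in \cite{HLV19}. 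I expect this verification to be the main technical point, but it is essentially formal given the toolkit recalled in \cref{subsec:Op}.

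It remains to dualize and to apply the complete cobar construction. The arity-wise (in fact term-wise) finite dimensionality of $\mathrm{C}_n$ allows me to pass from the simplicial $\Cobar\Bar\com$-algebra $\mathrm{C}_\bullet$ to a cosimplicial $\Bar\Cobar\com^\vee$-coalgebra $\mathrm{C}^\bullet$. The functor $\hatCobar_\pi$ is manifestly functorial on $\Bar\Cobar\com^\vee$-coalgebras: a coalgebra morphism $f : C \to C'$ extends uniquely to a morphism of the underlying free complete $\sLi$-algebras, and both summands $d_1$ (internal differential) and $d_2$ (built from the coalgebra structure and $\pi$) are natural in $f$ by construction. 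Postcomposing the cosimplicial coalgebra diagram $\mathrm{C}^\bullet$ with $\hatCobar_\pi$ therefore produces the cosimplicial complete $\sLi$-algebra $\hatCobar_\pi \mathrm{C}^\bullet$, concluding the proof.
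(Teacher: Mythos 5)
Your proposal is correct and follows essentially the same route as the paper: view $\Omega_\bullet$ as a simplicial $\Cobar\Bar\com$-algebra, use the simpliciality of the Dupont contraction together with the explicit (Van der Laan) formula of the homotopy transfer theorem to see that the simplicial operators strictly preserve the transferred structure on $\mathrm{C}_\bullet$, then dualize and apply $\hatCobar_\pi$. The only difference is one of presentation: you spell out the Van der Laan formula and the functoriality of the cobar construction, which the paper leaves as "straightforward to check."
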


\begin{proof}
It remains to study the behavior of the original simplicial maps. It is immediate that $\Omega_\bullet$ becomes a simplicial $\Cobar \Bar \com$-algebra. Since Dupont's contraction is simplicial, it is straightforward to check from the explicit formula 
of the transferred structure in the homotopy transfer theorem that the simplicial chain maps of the simplicial chain complex $\mathrm{C}_\bullet$ respect strictly the transferred $\Cobar \Bar \com$-algebra structure operations. Linear dualization produces a cosimplicial conilpotent $\Bar \Cobar \com^\vee$-coalgebra structure on $\mathrm{C}^\bullet$~, and finally the cobar functor gives a cosimplicial complete $\sLi$-algebra structure on $\hatCobar_\pi \mathrm{C}^\bullet$~. 
\end{proof}

\begin{definition}[The Maurer--Cartan cosimplicial $\sLi$-algebra]\label{def:MCcosimp}
	The cosimplicial complete $\sLi$-algebra
	\[
	\mc^\bullet\coloneqq \hatCobar_\pi \mathrm{C}^\bullet\ .
	\]
	is called the \emph{universal Maurer--Cartan algebra}\index{universal Maurer--Cartan algebra}.
\end{definition}

We will now make this new object a bit more explicit by studying the first cosimplicial degree $\mc^0$. First, recall that the underlying construction of the cobar functor $\hatCobar_\pi$ is the free complete $\sLi$-algebra. 
We denote by $a_I:=\omega_I^\vee$ the cellular basis on $C(\De{n})$~.
Forgetting the differential, the Maurer--Cartan $\sLi$-algebra $\mc^n$
is isomorphic to 
\[
\widehat{\sLi}\left(\{a_I\}_{I\subseteq[n]}\right),
\]
whose elements can be represented as sums of rooted trees with leaves labeled by the $\{a_I\}_{I\subseteq[n]}$~, as we have proved in \cref{prop:FreesLi}. The proof of the next proposition will provide the reader with more details about the differential.

\begin{proposition}\label{prop:mc0}
The complete $\sLi$-algebra $\mc^0$ is quasi-free on one Maurer--Cartan element
\[
\mc^0\cong\widehat{\sLi}(a_0)
\]
with differential given by
\[
\d(a_0)=-\sum_{m\geqslant 2} {\textstyle \frac{1}{m!}}\ell_m(a_0, \ldots, a_0)\ .
\]
\end{proposition}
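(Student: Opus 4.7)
The plan is to unpack the construction $\mc^0 = \hatCobar_\pi \mathrm{C}^0$ at the basepoint $n=0$ and compute each ingredient concretely. First I would identify the transferred coalgebra structure on $\mathrm{C}^0$. At $n=0$, we have $\Omega_0 = \k$ concentrated in degree zero with $\mathrm{C}_0 = \k$ and $a_0 \in \mathrm{C}^0 = \k$, and the Dupont contraction is trivial: $i_0 = p_0 = \id$ and $h_0 = 0$. Since $h_0 = 0$, only rooted trees with no internal edges survive in the Van der Laan formula, so the transferred $\Cobar\Bar\com$-algebra structure on $\mathrm{C}_0$ coincides with the one pulled back from the standard non-unital commutative algebra structure on $\Omega_0 = \k$: only the generating corollas $\mu_n \in \com(n)$ (pulled back along $\Cobar\Bar\com \to \com$) act, with $\mu_n(1, \ldots, 1) = 1$. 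Dualizing, the coaction on $\mathrm{C}^0 = \k \cdot a_0$ factors through the canonical injection $\com^\vee \hookrightarrow \Bar\Cobar\com^\vee$ dual to the bar-cobar counit, and takes the form $\delta(a_0) = a_0 + \sum_{n \geq 2} \widetilde{\mu_n^\vee} \otimes_{\Sy_n} a_0^{\otimes n}$, where $\widetilde{\mu_n^\vee}$ denotes the image of $\mu_n^\vee$.

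Second, I would apply the definition of the complete cobar construction to compute the differential. The underlying complete $\sLi$-algebra of $\mc^0$ is $\widehat{\sLi}(\mathrm{C}^0) = \widehat{\sLi}(a_0)$, quasi-free on the single degree-zero generator $a_0$. Its differential splits as $d_1 + d_2$; the first summand vanishes because $\mathrm{C}^0$ has trivial internal differential. The defining formula yields $-d_2(a_0) = \pi(\id) \circ \delta(a_0) = \sum_{n \geq 2} \pi\bigl(\widetilde{\mu_n^\vee}\bigr) \otimes_{\Sy_n} a_0^{\otimes n}$. The composite $\com^\vee \hookrightarrow \Bar\Cobar\com^\vee \xrightarrow{\pi} \sLi = \Cobar\com^\vee$ sends $\mu_n^\vee$ to the generator $s^{-1}\mu_n^\vee \in \sLi(n)$: indeed, $\widetilde{\mu_n^\vee}$ sits in the corolla piece $s\overline{\sLi}(n) \subset \Bar\sLi = \T^c(s\overline{\sLi})$ at the cogenerator $s(s^{-1}\mu_n^\vee)$, while $\pi$ is the projection onto $s\overline{\sLi}$ followed by desuspension. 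Identifying $s^{-1}\mu_n^\vee \otimes_{\Sy_n} a_0^{\otimes n}$ with $\tfrac{1}{n!}\,\ell_n(a_0, \ldots, a_0)$ --- the factor $\tfrac{1}{n!}$ being the standard normalization arising when passing between the symmetric operation on coinvariants and the same operation applied to $n$ copies of a single element --- then yields the claimed Maurer--Cartan identity.

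The main obstacle I anticipate is the careful bookkeeping of signs, suspensions, and the $\tfrac{1}{n!}$-normalization in passing between $\Sy_n$-invariants and coinvariants. A cleaner conceptual alternative would be to argue via \cref{thm:MC elements of convolution algebras}: for any complete $\sLi$-algebra $A$, there is a natural bijection $\hom_{\sLialg}(\mc^0, A) \cong \MC(\hom^\pi(\mathrm{C}^0, A))$, and one could check separately that $\hom^\pi(\mathrm{C}^0, A) \cong A$ as complete $\sLi$-algebras (using that $\mathrm{C}^0$ is essentially the initial/trivial $\Bar\Cobar\com^\vee$-coalgebra). This would yield $\hom_{\sLialg}(\mc^0, A) \cong \MC(A)$ naturally in $A$, and the Yoneda lemma then identifies $\mc^0$ with the unique representing object for the Maurer--Cartan functor on $\sLialg$, which is exactly the quasi-free $\sLi$-algebra on a single Maurer--Cartan generator with the claimed differential.
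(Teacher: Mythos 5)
Your argument is essentially the paper's own proof: both exploit that the Dupont contraction is trivial at $n=0$ (so the Van der Laan formula kills every tree with an internal edge, leaving only the corollas acting as the commutative product on $\k$), dualize to get the coaction on $a_0$, and then read off the cobar differential, with the $\tfrac{1}{m!}$ coming from the invariants/coinvariants normalization --- the only cosmetic difference being that the paper inserts this factor at the dualization step while you defer it to the final identification with $\ell_m(a_0,\ldots,a_0)$. The proposal is correct.
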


\begin{proof}
We start by studying the transferred $\Omega\Bar\com$-algebra structure on $\rmC_0$ and the dual structure on $\rmC^0$.

\medskip
	
The chain complex $C(\De{n})^\vee$ can be seen as a sub-chain complex of $\Omega_n$ 
under the following identifications 
\[
\omega_I=\omega_{i_0\cdots i_k}\coloneqq k!\sum_{j=0}^k (-1)^jt_{i_j} dt_{i_0}\cdots \widehat{dt_{i_j}}\cdots dt_{i_k} \ ,
\]
for $\emptyset\neq I=\{i_0,\ldots, i_k\}\subseteq[n]$~. For $n=0$~, the commutative algebra $\Omega_0\cong\k$ is just the field with its multiplicative structure and the chain complex 
$C(\De{0})^\vee=\k \omega_0$ is one-dimensional.

\medskip

Since the operad $\Cobar \Bar \com$ is quasi-free on $s^{-1} \Bar \com=s^{-1}\T^c\big(s\overline{\com}\big)$~, the transferred $\Cobar \Bar \com$-algebra structure on $C(\De{n})^\vee$ amounts to a collection of operations $\{\mu_\tau\}_{\tau \in \RT}$ indexed by rooted trees. The operation $\mu_{|}=\id$ is the identity. Given a rooted tree $\tau\in \RT_m$~, the associated operation
\[
\mu_\tau : \left(C(\De{n})^\vee\right)^{\otimes m} \to C(\De{n})^\vee
\]
has degree $|\tau|-1$ and is (graphically) given by the formula
\begin{equation}\label{Eq:HTTonCn}
\mu_\tau\left(\omega_{I_1}, \ldots, \omega_{I_6}\right)=
\vcenter{\hbox{
	\begin{tikzpicture}
		\def\scale{0.75};
		\pgfmathsetmacro{\diagcm}{sqrt(2)};
		
		\def\xangle{35};
		\pgfmathsetmacro{\xcm}{1/sin(\xangle)};
		
		\coordinate (r) at (0,0);
		\coordinate (v11) at ($(r) + (0,\scale*1)$);
		\coordinate (v21) at ($(v11) + (180-\xangle:\scale*\xcm)$);
		\coordinate (v22) at ($(v11) + (\xangle:\scale*\xcm)$);
		\coordinate (v31) at ($(v22) + (45:\scale*\diagcm)$);
		\coordinate (l1) at ($(v21) + (135:\scale*\diagcm)$);
		\coordinate (l2) at ($(v21) + (0,\scale*1)$);
		\coordinate (l3) at ($(v21) + (45:\scale*\diagcm)$);
		\coordinate (l4) at ($(v22) + (135:\scale*\diagcm)$);
		\coordinate (l5) at ($(v31) + (135:\scale*\diagcm)$);
		\coordinate (l6) at ($(v31) + (45:\scale*\diagcm)$);
		
		\draw[thick] (r) to (v11);
		\draw[thick] (v11) to (v21);
		\draw[thick] (v11) to (v22);
		\draw[thick] (v21) to (l1);
		\draw[thick] (v21) to (l2);
		\draw[thick] (v21) to (l3);
		\draw[thick] (v22) to (l4);
		\draw[thick] (v22) to (v31);
		\draw[thick] (v31) to (l5);
		\draw[thick] (v31) to (l6);
		
		\node[above] at (l1) {$\omega_{i_1}$};
		\node[above] at (l2) {$\omega_{i_3}$};
		\node[above] at (l3) {$\omega_{i_4}$};
		\node[above] at (l4) {$\omega_{i_5}$};
		\node[above] at (l5) {$\omega_{i_2}$};
		\node[above] at (l6) {$\omega_{i_6}$};
		
		\node[right] at (r) {$p_n$};
		\node[below left] at ($(v11)!0.5!(v21)$) {$h_n$};
		\node[below right] at ($(v11)!0.5!(v22)$) {$h_n$};
		\node[below right] at ($(v22)!0.5!(v31)$) {$h_n$};
	\end{tikzpicture}}}
\text{for}\quad \tau=
\vcenter{\hbox{
	\begin{tikzpicture}
		\def\scale{0.75};
		\pgfmathsetmacro{\diagcm}{sqrt(2)};
		
		\def\xangle{35};
		\pgfmathsetmacro{\xcm}{1/sin(\xangle)};
		
		\coordinate (r) at (0,0);
		\coordinate (v11) at ($(r) + (0,\scale*1)$);
		\coordinate (v21) at ($(v11) + (180-\xangle:\scale*\xcm)$);
		\coordinate (v22) at ($(v11) + (\xangle:\scale*\xcm)$);
		\coordinate (v31) at ($(v22) + (45:\scale*\diagcm)$);
		\coordinate (l1) at ($(v21) + (135:\scale*\diagcm)$);
		\coordinate (l2) at ($(v21) + (0,\scale*1)$);
		\coordinate (l3) at ($(v21) + (45:\scale*\diagcm)$);
		\coordinate (l4) at ($(v22) + (135:\scale*\diagcm)$);
		\coordinate (l5) at ($(v31) + (135:\scale*\diagcm)$);
		\coordinate (l6) at ($(v31) + (45:\scale*\diagcm)$);
		
		\draw[thick] (r) to (v11);
		\draw[thick] (v11) to (v21);
		\draw[thick] (v11) to (v22);
		\draw[thick] (v21) to (l1);
		\draw[thick] (v21) to (l2);
		\draw[thick] (v21) to (l3);
		\draw[thick] (v22) to (l4);
		\draw[thick] (v22) to (v31);
		\draw[thick] (v31) to (l5);
		\draw[thick] (v31) to (l6);
		
		\node[above] at (l1) {$1$};
		\node[above] at (l2) {$3$};
		\node[above] at (l3) {$4$};
		\node[above] at (l4) {$5$};
		\node[above] at (l5) {$2$};
		\node[above] at (l6) {$6$};
	\end{tikzpicture}}}
,
\end{equation}
where the corollas mean the (iterated) commutative product in $\Omega_n$~, and where the underlying composition scheme is given by the rooted tree $\tau$~.
For $n=0$~, the two chain maps $i_0$ and $p_0$ are the identity on $\Omega_0\cong\k\cong\rmC_0$ and the contraction $h_0=0$ is trivial. So only the operations indexed by corollas $c_m$ do not collapse and they are equal to  $\mu_{c_m}(\omega_0, \ldots, \omega_0)=\omega_0$~.

\medskip

The linear dual of the operadic algebra structure corresponding to
\[
s^{-1} \Bar \com\left(C(\De{n})^\vee \right)\to C(\De{n})^\vee
\]
produces the cooperadic coalgebra structure corresponding to
\[
C(\De{n})\to s \Cobar \com^\vee\left(C(\De{n})\right).
\]
Notice that, since this linear dual includes the isomorphism between invariants and coinvariants under the actions of the symmetric groups $\Sy_m$~, it carries a coefficient ${\textstyle \frac{1}{m!}}$~. 
In general, we will use the notation 
\begin{equation}\label{eq:StructureCoef}
\mu_\tau\left(
\omega_{I_1}, \ldots, \omega_{I_m} 
\right)=\sum_{\emptyset\neq J\subset [n]}
\lambda^{\tau(I_1, \ldots, I_m)}_J
\omega_J\ 
\end{equation}
for the coefficients of the  transferred $\Cobar \Bar \com$-algebra structure on $C(\De{n})^\vee$~. 
If $\lambda\coloneqq\lambda^{\tau(I_1, \ldots, I_m)}_J$ is not $0$~, this means that the term $\omega_J$ appears in the product 
$\mu_\tau\left(\omega_{I_1}, \ldots, \omega_{I_m}\right)$~, then the image under 
the coproduct map of the element $a_J$ includes the term ${\textstyle \frac{1}{\lambda\, m!}} s\tau(  
a_{I_1}, \ldots, a_{I_m})$~. 
Notice that, due to degree reasons, the coefficient $\lambda$ can only be non-trivial when 
$|J|-2=|I_1|+\cdots+|I_m|-|\tau|-m$~. 
For $n=0$~, the image under the coproduct map of $a_0$ is equal to 
\[
a_0\mapsto \sum_{m\geqslant 1} {\textstyle \frac{1}{m!}} s c_m(a_0, \ldots, a_0)\ .
\]
Finally, the differential of the cobar construction $\hatCobar_\pi C(\De{n})$  is produced by the difference between the internal differential of the chain complex $C(\De{n})$ and the desuspension of the above coalgebra structure, without the primitive term, that is:
\begin{equation}\label{eq:DiffMCn}
\d(a_J)= 
\sum_{l=0}^{k} (-1)^{l} a_{j_0\ldots \widehat{j_l} \ldots j_k}
-\sum_{m\geqslant 2}\sum_{\substack{\tau \in \overline{\RT}_m\\ I_1,\ldots, I_m\subseteq [n], \ , I_l\neq \emptyset\\ \lambda^{\tau(I_1, \ldots, I_m)}_J\neq 0}} \frac{1}{\lambda^{\tau(I_1, \ldots, I_m)}_J\,m!}\, \tau(a_{I_1}, \ldots, a_{I_m})
\ ,
\end{equation}
for $J=\{j_0, \ldots, j_k\}$~.
In the case $n=0$~, this gives 
\[\d(a_0)=-\sum_{m\geqslant 2} {\textstyle \frac{1}{m!}}\ell_m(a_0, \ldots, a_0)\ ,\]
which concludes the proof. 
\end{proof}

\subsection{Integration and \texorpdfstring{$\sLi$}{sL-infinity}-algebra functors}\label{subsec:RepReaFun}
Following \cref{lem:AdjCosimpliObj}, the Maurer--Cartan algebra 
$\mc^\bullet$ canonically induces the following pair of adjoint functors. 

\begin{definition}[Integration functor]\label{def:SimpRep}
The  \emph{integration functor} of complete $\sLi$-algebras is defined by 
\begin{align*}
\mathrm{R}\ :\ \sLialg{}&\ \longrightarrow\ \sSe\\
\g{}&\ \longmapsto\ \Hom_{\,\sLialg}\left(\mc^\bullet, \g\right).
\end{align*}
\end{definition}

As a direct corollary of  \cref{prop:mc0}, we have
\[
\R(\g)_0=\hom_{\,\sLialg}\left(\mc^0, \g\right)\cong \MC(\g)\ .
\]

\begin{definition}[$\sLi$-algebra of a simplicial set]
The functor of \emph{$\sLi$-algebra of a simplicial set}\index{$\sLi$-algebra of a simplicial set} is defined by 
\[\Li\coloneqq \mathrm{Lan}_\mathrm{Y} \mc_\bullet\ : \ \sSe \to \sLialg \ .
\] 
\end{definition}

One can compute it as the colimit of the functor $\mc^\bullet \circ \Pi$ 
from the category $\mathsf{E}(X_\bullet)$ of elements of $X_\bullet$~, where 
 $\Pi : \mathsf{E}(X_\bullet) \to \cD$ stands for  the canonical projection sending $x\in X_n$ to $[n]$~. 
We denote this colimit simply by 
\[\Li(X_\bullet)
 \cong 
\mathop{\mathrm{colim}}_{\mathsf{E}(X_\bullet)} \mc^\bullet~.\]

\begin{theorem}\label{thm:MainAdjunction}
The integration functor is right adjoint to the $\sLi$-algebra functor:
\[
\hbox{
	\begin{tikzpicture}
	\def\upshift{0.185}
	\def\downshift{0.15}
	\pgfmathsetmacro{\midshift}{0.005}
	
	\node[left] (x) at (0, 0) {$\Li\ \ :\ \ \sSe$};
	\node[right] (y) at (2, 0) {$\sLialg\ \ :\ \ \R\ .$};
	
	\draw[-{To[left]}] ($(x.east) + (0.1, \upshift)$) -- ($(y.west) + (-0.1, \upshift)$);
	\draw[-{To[left]}] ($(y.west) + (-0.1, -\downshift)$) -- ($(x.east) + (0.1, -\downshift)$);
	
	\node at ($(x.east)!0.5!(y.west) + (0, \midshift)$) {\scalebox{0.8}{$\perp$}};
	\end{tikzpicture}}
\]
\end{theorem}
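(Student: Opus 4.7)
The statement is a direct application of the Kan adjunction mechanism of \cref{lem:AdjCosimpliObj}. So my plan is essentially to verify that all the hypotheses of that lemma are satisfied in our setting, and invoke it.

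First, I would check that the category $\sLialg$ is locally small and cocomplete so that \cref{lem:AdjCosimpliObj} applies. This is precisely the content of \cref{prop:CatCocomp}. Next, \cref{prop:Simpli} tells us that $\mc^\bullet = \hatCobar_\pi \rmC^\bullet$ is indeed a cosimplicial object in $\sLialg$. Finally, by construction, the integration functor $\R$ coincides with the corepresentable functor $\Hom_{\sLialg}(\mc^\bullet,-)$ associated to this cosimplicial object, and $\Li = \mathrm{Lan}_{\mathrm{Y}}\mc^\bullet$ is its left Kan extension along the Yoneda embedding. \cref{lem:AdjCosimpliObj} immediately gives the desired adjunction $\Li \dashv \R$.

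For the sake of concreteness, one may also unfold the argument directly. Given a simplicial set $X_\bullet$, the density theorem writes it as the colimit $X_\bullet \cong \colim_{\mathsf{E}(X_\bullet)} \mathrm{Y}\circ \Pi$ of representable simplicial sets over its category of elements. Since $\Li$ preserves colimits (as a left adjoint, or by the universal property of left Kan extensions), we obtain
\[
\Li(X_\bullet) \cong \colim_{\mathsf{E}(X_\bullet)} \mc^\bullet\circ\Pi.
\]
Then for any complete $\sLi$-algebra $\g$, the universal property of the colimit gives
\[
\Hom_{\sLialg}\bigl(\Li(X_\bullet),\g\bigr) \cong \lim_{\mathsf{E}(X_\bullet)} \Hom_{\sLialg}\bigl(\mc^{\Pi(-)},\g\bigr) \cong \lim_{\mathsf{E}(X_\bullet)} \R(\g)_{\Pi(-)} \cong \Hom_{\sSe}\bigl(X_\bullet,\R(\g)\bigr),
\]
where the last isomorphism again uses the density theorem applied to $X_\bullet$. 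All bijections are natural in $X_\bullet$ and $\g$.

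There is no genuine obstacle here: the only nontrivial input is the cocompleteness of $\sLialg$, which was taken care of in \cref{prop:CatCocomp}. The whole argument is a formality once one has the cosimplicial object $\mc^\bullet$ at hand.
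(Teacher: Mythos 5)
Your proposal is correct and follows exactly the paper's own route: the paper also deduces the adjunction as a direct corollary of \cref{lem:AdjCosimpliObj} applied to the cosimplicial object furnished by \cref{prop:Simpli}, with \cref{prop:CatCocomp} supplying the cocompleteness hypothesis. The explicit unfolding via the density theorem that you add is just the content of the proof of \cref{lem:AdjCosimpliObj} itself, so nothing is missing.
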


\begin{proof}
This is a direct corollary of \cref{lem:AdjCosimpliObj} and \cref{prop:Simpli}. 
\end{proof}

 \subsection{Non-abelian Dold--Kan adjunction}\label{subsec:NAbDKadj}
 
The sub-category of complete abelian $\sLi$-al\-ge\-bras, where all the operations $\ell_m$~, i.e.\ the $\sLi$-algebras where all operations $\ell_m$ are trivial, for $m\geqslant2$~, is the same as the category of complete chain complexes. Assuming \cref{thm:isomorphism of models}, the following proposition coincides with \cite[Proposition~5.1]{Getzler09}.

\begin{proposition}
The two simplicial sets which are obtained from a complete chain complex under the Dold--Kan functor $\R_{\mathrm{DK}}$ and the integration functor $\R$ are naturally isomorphic.
\end{proposition}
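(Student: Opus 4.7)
The plan is to unwind what a strict morphism of $\sLi$-algebras $\mc^n \to \g$ becomes when $\g$ has only a differential (all brackets $\ell_m$ vanish for $m\geqslant 2$), and to compare the answer with the Dold--Kan formula $\R_{\mathrm{DK}}(\g)_n = \Hom_{\ch}(\rmC^n, \g)$.

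First, I would recall from \cref{def:MCcosimp} that $\mc^n = \hatCobar_\pi \rmC^n$, which by construction has underlying graded $\sLi$-algebra the \emph{free} complete $\sLi$-algebra on the generators $\{a_I\}_{\emptyset \neq I\subseteq [n]}$. Hence, at the level of graded $\sLi$-algebras, a morphism $\mc^n \to \g$ is determined by its restriction $f\colon \rmC^n \to \g$ to these generators, and conversely every such linear map extends uniquely. When $\g$ is abelian, this extension takes all tree operations in $\widehat{\sLi}(\rmC^n)$ of arity $\geqslant 2$ to $0$, so the extended map is entirely controlled by $f$.

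Next, I would use the explicit formula~\eqref{eq:DiffMCn} for the differential of $\mc^n$ on generators:
\[
\d(a_J) = \sum_{l=0}^{k} (-1)^l\, a_{j_0\cdots \widehat{j_l}\cdots j_k} - \sum_{m\geqslant 2}\sum_{\tau,\ I_\bullet} \tfrac{1}{\lambda\, m!}\, \tau\big(a_{I_1},\ldots, a_{I_m}\big)\ .
\]
The first sum is precisely the internal differential of the cellular chain complex $\rmC^n = C(\Delta^n)$ written in the dual basis, while the second sum lies in the part of $\widehat{\sLi}(\rmC^n)$ spanned by trees with $\geqslant 2$ leaves. Since $\g$ is abelian, the second sum is sent to $0$ under the extension of any $f$. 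Therefore the compatibility of the extension with the differentials reduces exactly to the statement that $f\colon \rmC^n \to \g$ is a chain map.

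Combining these two steps produces a natural bijection
\[
\R(\g)_n \;=\; \Hom_{\sLialg}(\mc^n,\g) \;\cong\; \Hom_{\ch}(\rmC^n,\g) \;=\; \R_{\mathrm{DK}}(\g)_n\ ,
\]
which is clearly natural in $\g$. Finally, I would check naturality in $[n]$: by construction in \cref{prop:Simpli}, the cosimplicial maps of $\mc^\bullet$ are obtained by applying $\hatCobar_\pi$ to the linear dual of the (strictly simplicial) transferred structure on $\rmC_\bullet$, so at the level of generators they agree with the cosimplicial maps of $\rmC^\bullet$. Hence the bijection above is a simplicial isomorphism. The only mildly subtle point is verifying that the linear part of $\d$ on generators really coincides with the internal differential of $\rmC^n$, which is immediate from the construction of $\hatCobar_\pi$ since the cooperadic coproduct on $\rmC^n$ is coaugmented and the primitive part corresponds to the internal differential; no further computation is needed.
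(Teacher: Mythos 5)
Your proposal is correct and follows the same route as the paper: identify $\R(\g)_n=\Hom_{\sLialg}(\mc^n,\g)$ with linear maps on the generators $\rmC^n$ using quasi-freeness, and observe that the nonlinear part of the cobar differential lands in trees with at least two leaves, so commuting with it is a void condition when $\g$ is abelian; the paper states this in one line, whereas you spell out the reduction to the chain-map condition and the compatibility with the cosimplicial structure. No gaps.
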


\begin{proof}
Let $\g$ be a complete chain complex, viewed as a complete abelian $\sLi$-algebra. Its image under the integration  functor is equal to
\[
\R(\g)=\Hom_{\,\sLialg}\left(\mc^\bullet, \g\right)\cong\Hom_{\,\sLialg}\left(\wsLi\left(\mathrm{C}^\bullet\right), \g\right)\cong
\Hom_{\ch}\left(\mathrm{C}^\bullet, \g\right)=\R_{\mathrm{DK}}(\g)\ .
\]
The second isomorphism comes from the fact that the non-linear part of the differential of the cobar construction involves brackets, so that commuting with it is a void condition since $\g$ is abelian.
\end{proof}

One level above, one can consider the full subcategory of complete $\sLi$-algebras so that $\ell_m$ is trivial for all $m\geqslant3$ but $\ell_2$ can be non-trivial. This is isomorphic to the category of complete shifted i.e.\ algebras, which is controlled by the operad
\[
\sLie\coloneqq\End_{\k s} \otimes \lie\ .
\]
This fact can be conceptually expressed via the canonical quasi-isomorphism of operads
\[
\rho\colon\sLi \stackrel{\sim}{\longrightarrow} \sLie\ .
\]
If ${\g}$ is an $\sLie$-algebra, we denote by $\rho^*{\g}$ the canonical $\sLi$-algebra structure obtained by pulling back by the operad morphism $\rho$ and which amounts to considering trivial higher operations $\ell_m$~, for $m\geqslant 3$~.

\medskip

In the same way as above, one can consider the cofibrant Koszul resolution 
$\Cobar  \com^{\antishriek} \xrightarrow{\sim} \com$~, and then endow  $C(\De{\bullet})^\vee$ with a simplicial $\Cobar  \com^{\antishriek}$-algebra structures through the homotopy transfer theorem. Its linear dual $C(\De{\bullet})$ obtains a cosimplicial 
$\Bar\,  \sLie\cong \big(
\Cobar  \com^{\antishriek}\big)^\vee$-coalgebra structure. Finally, 
its image under  the complete cobar construction  associated to the canonical operadic twisting morphism 
$\overline{\pi} \colon\Bar\,  \sLie \to  \sLie$ 
produces the following cosimplicial complete $\sLie$-algebra:
\[
\overline{\mc}^\bullet\coloneqq \hatCobar_{\overline{\pi}}  \mathrm{C}^\bullet\ .
\]
The associated pair of adjoint functors
\[
\hbox{
	\begin{tikzpicture}
	\def\upshift{0.185}
	\def\downshift{0.15}
	\pgfmathsetmacro{\midshift}{0.005}
	
	\node[left] (x) at (0, 0) {$\overline{\mathrm{L}}\ \ :\ \ \sSe$};
	\node[right] (y) at (2, 0) {$\sLiealg\ \ :\ \ \overline{\R}\ .$};
	
	\draw[-{To[left]}] ($(x.east) + (0.1, \upshift)$) -- ($(y.west) + (-0.1, \upshift)$);
	\draw[-{To[left]}] ($(y.west) + (-0.1, -\downshift)$) -- ($(x.east) + (0.1, -\downshift)$);
	
	\node at ($(x.east)!0.5!(y.west) + (0, \midshift)$) {\scalebox{0.8}{$\perp$}};
	\end{tikzpicture}}
\]
coincides with the one introduced in \cite{BFMT18} since $\overline{\mc}^\bullet$ is a ``sequence of compatible models of $\Delta$'' by \cite[Theorem~7.3~(i)]{BFMT15}.

\begin{proposition}\label{prop:lie=LiReal}
The two simplicial sets obtained from a complete shifted Lie algebra respectively under the 
above functor $\overline{\mathrm{R}}$ and the integration functor $\R$  are isomorphic:
\[
\overline{\mathrm{R}}({\g})\cong \mathrm{R}(\rho^*{\g})~,
\]
for ${\g}$ a $\sLie$-algebra.
\end{proposition}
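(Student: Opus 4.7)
The plan is to exhibit a natural isomorphism $\rho_!\,\mc^\bullet \cong \overline{\mc}^\bullet$ of cosimplicial complete $\sLie$-algebras, where $\rho_!\dashv\rho^*$ is the adjunction between categories of algebras induced by the operad morphism $\rho\colon\sLi\to\sLie$. Once established, the statement of the proposition follows formally: for every $\sLie$-algebra $\g$ and every $[n]\in\cD$,
\[
\overline{\R}(\g)_n = \Hom_{\sLiealg}\bigl(\overline{\mc}^n,\g\bigr) \cong \Hom_{\sLiealg}\bigl(\rho_!\mc^n,\g\bigr) \cong \Hom_{\sLialg}\bigl(\mc^n,\rho^*\g\bigr) = \R(\rho^*\g)_n,
\]
naturally in $[n]$ and in $\g$.

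Both $\mc^n = \hatCobar_\pi\,\mathrm{C}^n$ and $\overline{\mc}^n = \hatCobar_{\overline\pi}\,\mathrm{C}^n$ are quasi-free on $\mathrm{C}^n$, with underlying algebras $\widehat{\sLi}(\mathrm{C}^n)$ and $\widehat{\sLie}(\mathrm{C}^n)$ respectively. Since $\rho_!$ sends $\widehat{\sLi}$ to $\widehat{\sLie}$ and preserves the quasi-free form, it suffices to compare the restrictions of the two differentials to the generators $\mathrm{C}^n$. On $\mc^n$ this restriction is $d_{\mathrm{C}^n} + \pi\circ\delta$, where $\delta\colon\mathrm{C}^n\to\widehat{\Bar\sLi}(\mathrm{C}^n)$ is the $\Bar\sLi$-coalgebra structure transferred from the $\Cobar\Bar\com$-algebra structure on $\Omega_n$; applying $\rho_!$ produces the differential $d_{\mathrm{C}^n}+\rho\circ\pi\circ\delta$, which by functoriality of the bar construction equals $d_{\mathrm{C}^n}+\overline\pi\circ\Bar\rho\circ\delta$, thanks to the identity $\rho\circ\pi = \overline\pi\circ\Bar\rho$ of twisting morphisms from $\Bar\sLi$ to $\sLie$.

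The remaining point---and the only non-formal step, hence the main obstacle---is to identify $\Bar\rho\circ\delta$ with the $\Bar\sLie$-coalgebra structure $\overline\delta$ on $\mathrm{C}^n$ transferred from the $\Cobar\com^{\antishriek}$-algebra structure on $\Omega_n$. This is an instance of the naturality of the homotopy transfer theorem with respect to operad morphisms. The Koszul inclusion of cooperads $\com^{\antishriek}\hookrightarrow\Bar\com$ induces, via $\Cobar$, a morphism of operads $\Cobar\com^{\antishriek}\to\Cobar\Bar\com$ that commutes with the canonical quasi-isomorphisms to $\com$; consequently, the $\Cobar\com^{\antishriek}$-algebra structure on $\Omega_n$ is the pull-back of its $\Cobar\Bar\com$-algebra structure. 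Since the Van der Laan formula underlying \cref{thm:HTT} is natural in the cooperad labelling the operations, the transferred $\Cobar\com^{\antishriek}$-structure on $\mathrm{C}_n$ is the pull-back of the transferred $\Cobar\Bar\com$-structure. Dualizing, and using the identifications $\Bar\sLi\cong(\Cobar\Bar\com)^\vee$ and $\Bar\sLie\cong(\Cobar\com^{\antishriek})^\vee$ already employed in \cref{subsec:NAbDKadj}, yields the required equality $\Bar\rho\circ\delta = \overline\delta$. All the ingredients (Dupont's contraction, the homotopy transfer, and the cobar functors) are natural in $n$, so the resulting isomorphism $\rho_!\mc^\bullet\cong\overline{\mc}^\bullet$ is cosimplicial and the induced isomorphism $\overline{\R}(\g)\cong\R(\rho^*\g)$ is one of simplicial sets.
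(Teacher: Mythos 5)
Your argument is correct, but it takes a genuinely different route from the paper's. You work entirely on the side of the corepresenting objects: you identify $\rho_{!}\,\mc^\bullet$ with $\overline{\mc}^\bullet$ (both being quasi-free on $\mathrm{C}^\bullet$, the comparison reduces to the differentials on generators, which you match via the identity $\rho\circ\pi=\overline{\pi}\circ\Bar\rho$ together with the functoriality of the homotopy transfer theorem with respect to the cooperad morphism $\com^{\antishriek}\hookrightarrow\Bar\com$) and then conclude by the adjunction $\rho_{!}\dashv\rho^*$. The paper instead passes to the dual, tensor-product picture: it quotes $\overline{\R}(\g)\cong\MC\big(\g\,\widehat{\otimes}\,\mathrm{C}_\bullet\big)$, uses the compatibility of the homotopy transfer theorem with twisted tensor products (\cref{thm:convolution algebras and HTT}) to get $\R(\rho^*\g)\cong\MC\big(\rho^*\g\,\widehat{\otimes}\,\mathrm{C}_\bullet\big)$, and finishes with the elementary observation that transferring from a strict $\sLie$-structure or from its induced $\sLi$-structure yields the same operations. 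Your key lemma (the two transferred structures on $\mathrm{C}_\bullet$ agree under pull-back along $\Cobar\rho^\vee$) is exactly the ingredient the paper itself deploys a little later, in the proof of \cref{prop:ResolutionMC}; so your proof essentially permutes the order of the paper's arguments. What your route buys is that it stays at the level of the universal Maurer--Cartan algebras and avoids the forward reference to \cref{thm:isomorphism of models} that the paper's proof makes; what the paper's route buys is that it never needs the left adjoint $\rho_{!}$ on complete algebras (harmless here since $\mc^n$ is quasi-free, but a construction the paper never sets up). Two cosmetic points: the differential of the complete cobar construction carries a sign, $d_{\mathrm{C}^n}-\pi\circ\delta$ rather than $d_{\mathrm{C}^n}+\pi\circ\delta$, and the coalgebra $\Bar\Cobar\com^\vee(\mathrm{C}^n)$ is conilpotent, so no completion $\widehat{\Bar\sLi}$ is needed; neither affects the argument.
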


\begin{proof}
Let ${\g}$ be a complete $\sLie$-algebra. Recall that
\[
\overline{\mathrm{R}}({\g})\cong \MC\left({\g}\, \widehat{\otimes}\mathrm{C}_\bullet\right)
\]
by \cite[Theorem~5.2]{rn17cosimplicial}, where the cosimplicial $\sLi$-algebra structure on ${\g}\, \widehat{\otimes}\mathrm{C}_\bullet$ is obtained by applying the homotopy transfer theorem \ref{thm:HTT} to the contraction
\[
\hbox{
	\begin{tikzpicture}
	\def\upshift{0.075}
	\def\downshift{0.075}
	\pgfmathsetmacro{\midshift}{0.005}
	
	\node[left] (x) at (0, 0) {${\g}\,\widehat{\otimes}\,\Omega_\bullet$};
	\node[right=1.5 cm of x] (y) {${\g}\,\widehat{\otimes}\,\mathrm{C}_\bullet\ .$};
	
	\draw[-{To[left]}] ($(x.east) + (0.1, \upshift)$) -- node[above]{\mbox{\tiny{$\id\,\hot\,p_\bullet$}}} ($(y.west) + (-0.1, \upshift)$);
	\draw[-{To[left]}] ($(y.west) + (-0.1, -\downshift)$) -- node[below]{\mbox{\tiny{$\id\,\hot\,i_\bullet$}}} ($(x.east) + (0.1, -\downshift)$);
	
	\draw[->] ($(x.south west) + (0, 0.1)$) to [out=-160,in=160,looseness=5] node[left]{\mbox{\tiny{$\id\,\hot\,h_\bullet$}}} ($(x.north west) - (0, 0.1)$);
	
	\end{tikzpicture}}
\]
We now  consider the 
complete $\sLi$-algebra structure $\rho^*{\g}$~. \cref{thm:convolution algebras and HTT} gives the isomorphism
\[
{\mathrm{R}}(\rho^*{\g})\cong \MC\left(\rho^*{\g}\, \widehat{\otimes}\mathrm{C}_\bullet\right) , 
\]
see the proof of \cref{thm:isomorphism of models} for more details. 
We conclude by noticing that the homotopy transfer theorem starting from a complete $\sLie$-algebra structure or from its induced complete $\sLi$-algebra structure produces the exact same complete $\sLi$-algebra structure.
\end{proof}

Summarizing, the integration functor $\R$ extends both the Dold--Kan functor and the integration functor for complete $\sLie$-algebras functor of Buijs--Felix-Murillo--Tanr\'e.\index{Dold--Kan adjunction!non-abelian}
\[
\hbox{
\begin{tikzpicture}
	\node (ss) at (0, 0){$\sSe$};
	\node (ch) at (3.5, -1.5){$\ch$};
	\node (lie) at (3.5, 0){$\sLiealg$};
	\node (loo) at (3.5, 1.5){$\sLialg$};
	
	\draw[->] (ch) to [out=-180,in=-45] node[below left]{\mbox{\small{$\mathrm{R_{DK}}$}}} (ss);
	\draw[->] (lie) to node[above]{\mbox{\small{$\overline{\R}$}}} (ss);
	\draw[->] (loo) to [out=-180,in=45] node[above left]{\mbox{\small{$\R$}}} (ss);
	
	\draw[right hook->] (ch) to node[right]{\mbox{\small{abelian $\sLie$-algebras}}} (lie);
	\draw[right hook->] (lie) to node[right]{\mbox{\small{$\rho^*$}}} (loo);
\end{tikzpicture}
}
\]

For simplicity, we only represented the right adjoint functors in the above displayed diagram but the respective left adjoint functors commute as well. 

\medskip

One can also directly compare the two universal Maurer--Cartan cosimplicial algebras. 

\begin{proposition}\label{prop:ResolutionMC}
The morphism 
\[
{\mc}^\bullet \longrightarrow
\rho^* \,\overline{\mc}^\bullet
\]
given by $\rho\circ\id_{\mathrm{C}^\bullet}$
\[ 
\widehat{\sLi}\left(\mathrm{C}^\bullet \right)
\longrightarrow
\widehat{\sLie}\left(\mathrm{C}^\bullet \right)
\]
is a quasi-isomorphism of cosimplicial complete $\sLi$-algebras. 
\end{proposition}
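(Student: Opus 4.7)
The plan breaks into two parts: establishing that $\rho \circ \id_{\mathrm{C}^\bullet}$ defines a morphism of cosimplicial complete $\sLi$-algebras, and then showing that it is a quasi-isomorphism in each cosimplicial level. For the first point, on the underlying free graded algebras the map $\widehat{\sLi}(\mathrm{C}^n) \to \widehat{\sLie}(\mathrm{C}^n)$ induced by $\rho$ in operadic arity and by $\id$ on generators is clearly well defined; compatibility with differentials reduces, on a generator $c \in \mathrm{C}^n$, to the identity $\rho(\pi(\delta(c))) = \overline{\pi}(\overline{\delta}(c))$, where $\delta$ and $\overline{\delta}$ are the transferred $\Bar\sLi$- and $\Bar\sLie$-coalgebra structures on $\mathrm{C}^n$. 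I will deduce this from the naturality of the universal twisting morphism together with functoriality of \cref{thm:HTT} applied to the canonical inclusion $\Omega\com^{\antishriek} \hookrightarrow \Omega\Bar\com$ of cofibrant resolutions of $\com$. Cosimpliciality is then automatic, as $\rho$ does not depend on the cosimplicial degree.

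For the quasi-isomorphism part, fix $n$ and equip both sides with the arity filtration
\[
\F_p\widehat{\sLi}(\mathrm{C}^n) = \prod_{m \geqslant p} \sLi(m) \otimes_{\Sy_m} (\mathrm{C}^n)^{\otimes m}\ ,
\]
and similarly on the $\sLie$-side; these are exhaustive and complete, and preserved by $\rho \otimes \id$. The key observation is that the cobar part $d_2$ strictly raises arity: on a generator $c$ the arity-one component of $\pi(\delta(c))$ lies in $\Bar\sLi(1)\otimes\mathrm{C}^n = \k \otimes \mathrm{C}^n$ via the coaugmentation, and $\pi$ vanishes there since $\overline{\sLi}(1)=0$; the analogous statement holds for $\overline{d}_2$. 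The zeroth page of the associated spectral sequence therefore reads
\[
E^0_p\mc^n = \bigl(\sLi(p) \otimes_{\Sy_p} (\mathrm{C}^n)^{\otimes p},\, d_{\sLi} + d_{\mathrm{C}^n}\bigr)\ ,\quad E^0_p\overline{\mc}^n = \bigl(\sLie(p) \otimes_{\Sy_p} (\mathrm{C}^n)^{\otimes p},\, d_{\mathrm{C}^n}\bigr)\ ,
\]
with no operadic differential on the $\sLie$-side since $\sLie$ is concentrated in a single degree per arity.

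To conclude, $\rho \otimes \id$ induces a quasi-isomorphism on each $E^0_p$: the map $\rho\colon\sLi(p) \to \sLie(p)$ is a quasi-isomorphism of $\Sy_p$-chain complexes, the K\"unneth formula extends this to $\sLi(p) \otimes (\mathrm{C}^n)^{\otimes p} \to \sLie(p) \otimes (\mathrm{C}^n)^{\otimes p}$, and in characteristic zero Maschke's theorem ensures that passage to $\Sy_p$-coinvariants preserves the quasi-isomorphism. Since both filtrations are exhaustive and complete, the Eilenberg--Moore comparison theorem (as used in the proof of \cref{prop:EpsiResFonc}) upgrades this to a quasi-isomorphism of totalizations for each $n$. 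The main obstacle I anticipate is the well-definedness verification of the first paragraph: carefully tracing how the two distinct cofibrant resolutions of $\com$ produce coalgebra structures on $\mathrm{C}^n$ that are intertwined by $\Bar\rho$ is the one step that is not formal; once that is in place, the spectral sequence argument is essentially automatic.
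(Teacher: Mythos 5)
Your proposal is correct and follows essentially the same route as the paper: compatibility with the differentials is reduced to comparing the two transferred structures on $\mathrm{C}_\bullet$ via the cooperad morphism $\rho^\vee\colon\com^{\antishriek}\to\Bar\com$ and the functoriality of the homotopy transfer theorem, and the quasi-isomorphism is obtained from the arity filtration, the operadic K\"unneth and Maschke theorems on the $E^0$ page, and the Eilenberg--Moore comparison theorem. The one point you flag as delicate (the two coalgebra structures on $\mathrm{C}^n$ being intertwined by $\Bar\rho$) is exactly the point the paper also isolates and verifies via the explicit transfer formula.
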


\begin{proof}
By its operadic definition, it is straightforward to see that such a map preserves the brackets. Since the cosimplicial maps are given by that of $\mathrm{C}^\bullet$ in both cases, it is immediate to see that the map $\rho$ preserves the cosimplicial structures.

\medskip

We consider the canonical morphism of cooperads $\rho^\vee : \com^{\antishriek} \to \Bar \com$~. 
There are two ways to get a $\Omega \com^{\antishriek}$-algebra structure on $\mathrm{C}_\bullet$:
\begin{enumerate}
\item as explained above, one can see ${\Omega}_\bullet$ as a  $\Omega \com^{\antishriek}$-algebra  and transfer this type of structure onto $\mathrm{C}_\bullet$ under the Dupont's contraction,

\item alternatively, as described in \cref{subsec:MCcosimpsLI}, one can see ${\Omega}_\bullet$ as a  $\Omega \Bar \com$-algebra,  transfer this type of structure onto $\mathrm{C}_\bullet$ under the Dupont's contraction, and then pull it back by $\Omega \rho^\vee$.
\end{enumerate}

Using the explicit formula for the homotopy transfer theorem given in \cref{thm:HTT}, it is straightforward to show that these two structures are equal.  More generally, the homotopy transfer theorem is functorial with respect to morphisms of cooperads, see \cite[Theorem~4.14]{HLV19}. 

This shows that the map $\rho(\id_{\mathrm{C}^\bullet})$ preserves the differentials: it can be seen on the following  commutative diagram
\begin{center}
		\begin{tikzpicture}
			\node (a) at (0,0){$\mathrm{C}^\bullet$};
			\node (b) at (4,0){$\Bar(\sLi)(\mathrm{C}^\bullet)$};
			\node (c) at (9,0){$\sLi(\mathrm{C}^\bullet)$};
			\node (d) at (0,-2.5){$\mathrm{C}^\bullet$};
()			\node (e) at (4,-2.5){$\Bar(\sLie)(\mathrm{C}^\bullet)$};
			\node (f) at (9,-2.5){$\sLie(\mathrm{C}^\bullet)$\ ,};
			
			\draw[double equal sign distance] (a) to (d);
			\draw[->] (a) to node[above]{$\Delta_{\Bar(\sLi)}$} (b);
			\draw[->] (b) to node[above]{$\pi\left(\id_\mathrm{C}^\bullet\right)$} (c);
			\draw[->] (d) to node[above]{$\Delta_{\Bar(\sLie)}$} (e);
			\draw[->] (e) to node[above]{$\overline{\pi}\left(\id_\mathrm{C}^\bullet\right)$} (f);
			\draw[->] (b) to node[right]{$\Bar (\rho)\left(\id_{\mathrm{C}^\bullet}\right)$} (e);
			\draw[->] (c) to node[right]{$\rho\left(\id_{\mathrm{C}^\bullet}\right)$} (f);
		\end{tikzpicture}
	\end{center}
	where the map $\Delta_{\Bar(\sLi)}$ stands for the $\Bar(\sLi)$-coalgebra structure on $\mathrm{C}^\bullet$  and where the map $\Delta_{\Bar(\sLie)}$  stands for  its $\Bar(\sLie)$-coalgebra structure.	
Indeed, the linear part of the differentials of ${\mc}^\bullet$ and $\overline{\mc}^\bullet$ on the generators is the same. 
The rest of the differential of ${\mc}^\bullet$ is given by the top part of the diagram, while the rest of the differential of $\overline{\mc}^\bullet$ is given by the bottom part of the diagram. 
The left square commutes by  the facts that $\Bar \rho\cong(\Cobar \rho^\vee)^\vee$ and that the two 
$\Omega \com^{\antishriek}$-algebra structures on $\mathrm{C}_\bullet$ described at the beginning of the proof are equal and thus so are the two dual  
$\Bar \com$-coalgebra structures on $\mathrm{C}^\bullet$.
The right square commutes by the functorial properties of the twisting morphisms, see \cref{prop:Rosetta}.

\medskip

In order to prove that these maps of chain complexes are quasi-isomorphisms, we consider on both side the arity filtration which makes each algebra complete:
\[
\F_k \widehat{\P}(\mathrm{C}^\bullet)\coloneqq \prod_{m\geqslant k} \P(m)\otimes_{\Sy_m} (\mathrm{C}^\bullet)^{\otimes m}\ ,
\]
where $\P$ is either $\sLi$ or $\sLie$~. 
On the zeroth page $E^0$  of the associated spectral sequences, the differential amounts only to the internal differential of the operad $\sLi$ and to the internal differential of $\mathrm{C}^\bullet$. Since the morphism of operads $\rho$ is a quasi-isomorphism, we get the isomorphism $E^1(\rho) \colon \sLie(\k) \cong \sLie(\k)$ on the first page. 
These filtrations are exhaustive and complete, so we can apply the Eilenberg--Moore Comparison Theorem \cite[Theorem~5.5.11]{WeibelBook} and conclude that $\rho\circ\id_{\mathrm{C}^\bullet}$ 
is quasi-isomorphism. 
\end{proof}

We can be more precise about the homotopy type of the 
Maurer--Cartan cosimplicial $\sLi$-algebra and the Maurer--Cartan cosimplicial $\sLie$-algebra.

\begin{proposition}
	For all $n\geqslant0$~, we have
	\[
	H(\mc^n) = 0\qquad\text{and}\qquad H(\overline{\mc}^n) = 0\ .
	\]
\end{proposition}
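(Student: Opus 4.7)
The plan is to reduce to a computation on the free complete shifted Lie algebra on a single Maurer--Cartan generator and to perform that computation via an arity-filtration spectral sequence collapsing at the $E^2$-page.

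First, by \cref{prop:ResolutionMC} the morphism $\mc^n \xrightarrow{\sim} \rho^*\overline{\mc}^n$ is a quasi-isomorphism, so it suffices to prove $H(\overline{\mc}^n) = 0$. I would then consider the decreasing, exhaustive, complete Hausdorff filtration by arity
\[ F^p \overline{\mc}^n \coloneqq \prod_{m \geq p} \sLie(m) \otimes_{\Sy_m} (\mathrm{C}^n)^{\otimes m} \]
on $\overline{\mc}^n = \widehat{\sLie}(\mathrm{C}^n)$. It is preserved by the cobar differential, which splits as $d = d^{(0)} + d^{(1)} + d^{(2)} + \cdots$, where $d^{(k)}$ increases the arity by $k$: the summand $d^{(0)}$ is induced by the chain differential of $\mathrm{C}^n = C(\Delta^n)$, while each $d^{(k)}$ for $k \geq 1$ comes from the Dupont-transferred $\Bar\sLie$-coalgebra structure on $\mathrm{C}^n$ read through $\overline{\pi}$. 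Since $|\Delta^n|$ is contractible, $H(\mathrm{C}^n) \cong \k$ is concentrated in degree zero, represented by the class $[a_0]$ of any vertex. The operadic K\"unneth formula in characteristic zero then yields
\[ E^1 \cong \widehat{\sLie}(H(\mathrm{C}^n)) \cong \widehat{\sLie}(\k[a_0]). \]

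On $E^1$ the induced differential $d^{(1)}$ is a derivation of this complete $\sLie$-algebra and is hence determined by its value on the generator $[a_0]$. Naturality of the filtration under the cosimplicial coface $\overline{\mc}^0 \to \overline{\mc}^n$ sending $a_0 \mapsto a_0$ supplies the identification: on $\overline{\mc}^0 = \widehat{\sLie}(a_0)$ the coalgebra $\mathrm{C}^0 = \k a_0$ has trivial chain differential, so the filtration has $d^{(0)} = 0$ and the entire cobar differential $d(a_0) = -\tfrac{1}{2}\ell_2(a_0, a_0)$ lies in the summand $d^{(1)}$. Matching these on the two sides forces $d^{(1)}([a_0]) = -\tfrac{1}{2}\ell_2([a_0], [a_0])$, so $(E^1, d^{(1)}) \cong \overline{\mc}^0$ as complete $\sLie$-algebras.

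Finally, I would compute $H(\overline{\mc}^0)$ directly. Via the isomorphism $\widehat{\sLie}(a_0) \cong s^{-1}\widehat{\lie}(sa_0)$ where $sa_0$ is an odd-degree generator, the graded Jacobi identity gives $3\,[sa_0, [sa_0, sa_0]] = 0$, so in characteristic zero all iterated brackets of length $\geq 3$ of $a_0$ vanish. Consequently $\overline{\mc}^0 \cong \k a_0 \oplus \k\, \ell_2(a_0, a_0)$ is two-dimensional, concentrated in degrees $0$ and $-1$, and the Maurer--Cartan differential $d(a_0) = -\tfrac{1}{2}\ell_2(a_0, a_0)$ is a linear isomorphism between the two lines. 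Hence $E^2 = H(\overline{\mc}^0) = 0$, the spectral sequence collapses, and $H(\overline{\mc}^n) = 0$. The main obstacle is the identification $(E^1, d^{(1)}) \cong \overline{\mc}^0$: checking that the combinatorial coefficients coming from the Dupont-transferred coalgebra structure match those of the Maurer--Cartan differential requires a careful analysis of the arity-increment-one part of the cobar differential of $\hatCobar_{\overline{\pi}}$ under the vertex inclusion of cosimplicial objects.
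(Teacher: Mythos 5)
Your proof is correct, but it reaches the reduction to $\overline{\mc}^0$ by a genuinely different route than the paper. After the common first step (invoking \cref{prop:ResolutionMC} to pass to $\overline{\mc}^n$), the paper works on the predual side: it observes that $\k\cong\mathrm{C}_0\to\mathrm{C}_n$ is a quasi-isomorphism of $\Cobar\com^{\antishriek}$-algebras, cites the fact that the bar construction $\Bar_{\overline{\iota}}$ preserves quasi-isomorphisms, and then dualizes over the field to obtain $\overline{\mc}^n\xrightarrow{\sim}\overline{\mc}^0$. You instead run the arity-filtration spectral sequence directly on the complete cobar construction, compute $E^1\cong\widehat{\sLie}(H(\mathrm{C}^n))$ by Maschke--K\"unneth, and identify $(E^1,d^1)$ with $\overline{\mc}^0$ via the coface map; both arguments terminate in the identical two-dimensional computation $H(\overline{\mc}^0)=0$. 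The paper's route buys brevity, delegating homotopy invariance to a known property of the bar construction at the cost of a dualization step; yours buys self-containedness and is consistent with the technique the paper itself deploys for \cref{prop:ResolutionMC} and \cref{prop:EpsiResFonc}. Two refinements would make your write-up airtight. First, the ``matching'' step you flag as the main obstacle is cleanest if you observe that the coface map induces on $E^1$-pages the isomorphism of complexes $\widehat{\sLie}(H(\mathrm{C}^0))\to\widehat{\sLie}(H(\mathrm{C}^n))$, so that the whole differential $d^1$ is transported at once rather than only its value on the generator (otherwise you also need to note that $d^1$ is a derivation for the induced bracket on $E^1$, which is true but deserves a word). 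Second, the passage from $E^2=0$ to $H(\overline{\mc}^n)=0$ for a complete, unbounded filtration should be justified --- most economically by applying the Eilenberg--Moore comparison theorem to that same coface map, exactly as the paper does in \cref{prop:ResolutionMC}.
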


\begin{proof}
	Thanks to \cref{prop:ResolutionMC}, it is enough to treat the case of $\overline{\mc}^\bullet$. 
	For $n\geqslant0$~, the map of chain complexes
	\[
	\k\cong\mathrm{C}_0\longrightarrow\mathrm{C}_n
	\]
	given by sending $\omega_0\in\mathrm{C}_0$ to $\omega_0\in\mathrm{C}_n$ is a quasi-isomorphism of $\Cobar \com^{\antishriek}$-algebras. We consider the twisting morphism
	\[
	\overline{\iota}:\sLie^\vee\longrightarrow\Cobar(\sLie^\vee)
	\]
	dual to $\overline{\pi}$~. Then, \cite[Proposition~11.2.3]{LodayVallette12} shows that the associated bar construction  preserve quasi-isomorphisms, and thus the maps
	\[
	\Bar_{\overline{\iota}}\, \mathrm{C}_0\stackrel{\sim}{\longrightarrow}\Bar_{\overline{\iota}}\, \mathrm{C}_n
	\]
	are quasi-isomorphisms. Since we are working over a field, their duals
	\[
	\overline{\mc}^n = \left(\Bar_{\overline{\iota}}\, \mathrm{C}_n\right)^\vee\stackrel{\sim}{\longrightarrow}\overline{\mc}^0 = \left(\Bar_{\overline{\iota}}\, \mathrm{C}_0\right)^\vee	\]
	also are. We are only left to prove that
	\[
	H(\mc^0) = 0\ , 
	\]
which is straightforward since 
	\[
	\overline{\mc}^0 = \k\alpha\oplus\k[\alpha,\alpha]
	\]
	with $|\alpha| = 0$ and $d\alpha = -\tfrac{1}{2}[\alpha,\alpha]$~.
\end{proof}

\subsection{Relation with Maurer--Cartan spaces}

There are several other ways to define Maurer--Cartan spaces\index{Maurer--Cartan!spaces} associated to a complete $\sLi$-algebra $\g$~. All of these have been shown to be Kan complexes. We will prove the same --- actually in a strengthened version --- for $\R$ from first principles in \cref{subsec:fill horns}.

\medskip

One can first consider the \emph{Deligne--Hinich space}\index{Deligne--Hinich space} \cite{Hinich97bis} made up of the solutions to the Maurer--Cartan equation of the complete $\sLi$-algebra structure on the tensor product with the simplicial commutative algebra $\Omega_\bullet$:
\[\MC_\bullet(\g)\coloneqq\MC\left(\g \, \widehat{\otimes}\, \Omega_\bullet\right).\]
This structure coincides with $\g \, \widehat{\otimes}^\iota\, \Omega_\bullet$ under definitions of \cref{subsec:Op}.

\medskip

One can also consider the \emph{Getzler space}\index{Getzler space} \cite{Getzler09} given by the kernels of Dupont's homotopy:
\[
\gamma_\bullet(\g)\coloneqq\left\{
\alpha \in \MC_\bullet(\g)\ | \ 
\big(\id_\g\,\widehat{\otimes}\,h_\bullet\big)(\alpha)=0
\right\}\ .
\]
Finally, one can also use Dupont's simplicial contraction to transfer the complete $\sLi$-al\-ge\-bra structure of $\g \, \widehat{\otimes}\, \Omega_\bullet$ onto $\g \, \widehat{\otimes}\, \rmC_\bullet$~, and then consider $\MC\left(\g\, \widehat{\otimes}\, \rmC_\bullet\right)$~.

\medskip

All of these constructions were shown to provide us with equivalent models for Maurer--Cartan spaces: they are weakly equivalent as Kan complexes to the Deligne--Hinich space.
In \cite{Getzler09}, E.\ Getzler proved that there is a  weak equivalence of simplicial sets
\[
\MC_\bullet(\g)\simeq\gamma_\bullet(\g)\ ,
\]
which is natural with respect to strict morphisms. 
In \cite{Bandiera14, Bandiera17}, R. Bandiera proved that there is a natural isomorphism between the  last two constructions
\[
\gamma_\bullet(\g)\cong\MC\left(\g\, \widehat{\otimes}\, \rmC_\bullet\right).
\]
The first named author also exhibited in \cite{rn17cosimplicial} a direct natural weak equivalence
\[
\MC_\bullet(\g)\simeq\MC\left(\g\, \widehat{\otimes}\, \rmC_\bullet\right) \ .
\]
In this last article, the cosimplicial complete $\sLie$-algebra $\mclie^\bullet$ was also considered, and it was proven that
\[
\MC\left(\overline{\g}\, \widehat{\otimes}\, \rmC_\bullet\right)\cong\Hom_{\sLiealg}(\mclie^\bullet,\overline{\g})=\overline{\mathrm{R}}(\overline{\g})
\]
for any complete $\sLie$-algebra $\overline{\g}$~. This statement also holds true in our context for complete $\sLi$-algebras.

\begin{theorem}\label{thm:isomorphism of models}
	For any $\sLi$-algebra $\g$ there is a canonical isomorphism of simplicial sets
	\[
	\R(\g)\cong \MC\left(\g\, \widehat{\otimes}\, \mathrm{C}_\bullet\right),
	\]
	given by the restriction of the linear duality isomorphism $\g\, \widehat{\otimes}\, \mathrm{C}_\bullet \cong \hom(\mathrm{C}^\bullet, \g)$ to the Maurer--Cartan set, which is natural with respect to strict morphisms. As a consequence,  we get the following natural weak equivalence and isomorphisms: 
	\[
	\mathrm{R}(\g) \cong \MC\left(\g\, \widehat{\otimes}\, \rmC_\bullet\right)\cong \gamma_\bullet(\g) \simeq 
	\MC_\bullet(\g)\ .
	\]
\end{theorem}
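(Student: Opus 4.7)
The plan is to identify both sides of the claimed isomorphism as Maurer--Cartan sets of the same $\sLi$-algebra, after which the stated chain of equivalences with $\gamma_\bullet(\g)$ and $\MC_\bullet(\g)$ follows from the results of Getzler, Bandiera, and the first author recalled just above. The key technical ingredient is the compatibility between the homotopy transfer theorem and the formation of convolution $\sLi$-algebras established in \cref{thm:convolution algebras and HTT}.

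First I would unfold the definition of $\R$ through the adjunction for $\hatCobar_\pi$. Since $\mc^n = \hatCobar_\pi \mathrm{C}^n$ by \cref{def:MCcosimp}, \cref{thm:MC elements of convolution algebras} gives a natural bijection
\[
\R(\g)_n = \Hom_{\sLialg}\big(\hatCobar_\pi \mathrm{C}^n, \g\big) \cong \MC\big(\hom^\pi(\mathrm{C}^n, \g)\big)\ ,
\]
and, since each $\mathrm{C}^n$ is arity-wise finite dimensional, linear duality identifies this convolution algebra with $\g \,\widehat{\otimes}^\pi\, \rmC_n$, where $\rmC_n$ carries the $\Cobar\Bar\com \cong (\Bar\sLi)^\vee$-algebra structure transferred from $\Omega_n$ via Dupont's contraction as in \cref{subsec:MCcosimpsLI}.

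Second I would match this convolution $\sLi$-structure with the one on $\g \,\widehat{\otimes}\, \rmC_n$ obtained by transferring the standard $\sLi$-structure on $\g \,\widehat{\otimes}\, \Omega_n$ along $\id_\g$ tensored with Dupont's contraction. This is exactly the content of \cref{thm:convolution algebras and HTT} applied to $\P = \sLi$: first transferring from $\Omega_n$ to $\rmC_n$ and then forming the convolution produces the same $\sLi$-algebra as first forming $\g \,\widehat{\otimes}^\pi\, \Omega_n$ and then transferring. To close this step I still need to verify that the $\pi$-twisted tensor product $\g \,\widehat{\otimes}^\pi\, \Omega_n$, built from the pulled-back $\Cobar\Bar\com$-algebra structure on $\Omega_n$, coincides with the classical Koszul $\sLi$-algebra built from the strict commutative structure on $\Omega_n$ and $\iota : \com^\vee \to \sLi$. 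Through \cref{lem:sLiTW} this reduces to an operadic identity, which follows from the universal property of $\pi$: the twisting morphism $\iota$ factors as $\com^\vee \to \Bar\sLi \xrightarrow{\pi} \sLi$ via the linear dual of the quasi-isomorphism $\Cobar\Bar\com \twoheadrightarrow \com$.

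Combining the two steps yields a natural bijection $\R(\g)_n \cong \MC(\g \,\widehat{\otimes}\, \rmC_n)$ that is manifestly the restriction of linear duality to Maurer--Cartan sets, and naturality in $\mathrm{C}^\bullet$ (hence in the simplicial structure) and in strict morphisms of $\g$ is automatic, so these bijections assemble into the desired isomorphism of simplicial sets. The remaining isomorphism $\MC(\g \,\widehat{\otimes}\, \rmC_\bullet) \cong \gamma_\bullet(\g)$ is Bandiera's result and the weak equivalence with $\MC_\bullet(\g)$ follows from Getzler \cite{Getzler09} or from \cite{rn17cosimplicial}. I expect the main obstacle to be the bookkeeping of the second step: matching the convolution structure coming from $\pi$ with the transferred Koszul structure used in the literature is not a single operadic computation but a combination of \cref{thm:convolution algebras and HTT} and the factorization of $\iota$ through $\pi$, which is precisely why \cref{thm:convolution algebras and HTT} was developed separately.
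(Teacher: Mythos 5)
Your proposal is correct and follows essentially the same route as the paper's proof: identify $\R(\g)_n$ with $\MC(\hom^\pi(\mathrm{C}^n,\g))$ via \cref{thm:MC elements of convolution algebras}, pass to $\g\,\hot^\pi\,\rmC_n$ by linear duality using finite dimensionality, and invoke \cref{thm:convolution algebras and HTT} to match this with the transferred structure on $\g\,\hot\,\rmC_n$. The one point you spell out that the paper leaves implicit --- that $\g\,\hot^\pi\,\Omega_n$ for the pulled-back $\Cobar\Bar\com$-structure agrees with the classical $\g\,\hot^\iota\,\Omega_n$, via the factorization $\iota=\pi\circ\upsilon$ --- is a genuine and correct addition rather than a deviation.
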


\begin{proof}[Proof of \cref{thm:isomorphism of models}]
We apply \cref{thm:convolution algebras and HTT} 
 to $\P=\sLi=\Cobar \com^\vee$, $X=\Omega_\bullet$~, $Y=\mathrm{C}_\bullet$~, $A=\g$~, and the operadic twisting morphism 
$\pi:\Bar(\sLi)=\Bar\Cobar\com^\vee\to\sLi$~.
Recall that in $\MC(\g\, \widehat{\otimes}\, \mathrm{C}_\bullet)$~, the chain complex $\g\, \widehat{\otimes}\, \mathrm{C}_\bullet$ was endowed with the complete $\sLi$-algebra structure obtained by applying the homotopy transfer theorem to the contraction
\[
\hbox{
	\begin{tikzpicture}
	\def\upshift{0.075}
	\def\downshift{0.075}
	\pgfmathsetmacro{\midshift}{0.005}
	
	\node[left] (x) at (0, 0) {$\g\,\widehat{\otimes}\,\Omega_\bullet$};
	\node[right=1.5 cm of x] (y) {$\g\,\widehat{\otimes}\,\mathrm{C}_\bullet$};
	
	\draw[-{To[left]}] ($(x.east) + (0.1, \upshift)$) -- node[above]{\mbox{\tiny{$\id\,\hot\,p_\bullet$}}} ($(y.west) + (-0.1, \upshift)$);
	\draw[-{To[left]}] ($(y.west) + (-0.1, -\downshift)$) -- node[below]{\mbox{\tiny{$\id\,\hot\,i_\bullet$}}} ($(x.east) + (0.1, -\downshift)$);
	
	\draw[->] ($(x.south west) + (0, 0.1)$) to [out=-160,in=160,looseness=5] node[left]{\mbox{\tiny{$\id\,\hot\,h_\bullet$}}} ($(x.north west) - (0, 0.1)$);
	
	\end{tikzpicture}}
\]
induced by Dupont's contraction.

\medskip

Since $\Omega_\bullet$ is a simplicial commutative algebra, it is in particular a simplicial $\Cobar\Bar\com$-algebra. The homotopy transfer theorem applied to the Dupont's contraction gives a $\Cobar\Bar\com$-algebra structure to $\mathrm{C}_\bullet$~, then the convolution tensor product provides us with  the $\sLi$-algebras $\g\otimes^\pi \mathrm{C}_\bullet$~.
By \cref{thm:convolution algebras and HTT}, the $\sLi$-algebras $\g\, \hot^\pi \mathrm{C}_\bullet$ and 
$\g\,\widehat{\otimes}\,\mathrm{C}_\bullet$ are equal. Therefore, we have the isomorphisms
\begin{align*}
	\MC\left(\g\, \widehat{\otimes}\, \mathrm{C}_\bullet\right)\cong&\ \MC\left(\g\, \widehat{\otimes}^\pi \mathrm{C}_\bullet\right)
	\cong \MC\big(\hom^\pi(\mathrm{C}^\bullet,\g)\big)
	\cong \hom_{\sLialg}\Big(\hatCobar_\pi \mathrm{C}^\bullet,\g\Big)
	= \R(\g)\ ,
\end{align*}
where in the second bijection we used the fact that $\mathrm{C}_\bullet$ is finite dimensional and in the third we used \cref{thm:MC elements of convolution algebras}.
\end{proof}

\begin{remark} 
As mentioned in \cref{ex:Loomorph},  the ``image''
	\[
	f(\alpha)\coloneqq \sum_{m\geqslant 1} {\textstyle \frac{1}{m!}} f_m(\alpha, \ldots, \alpha)\in \MC(\h)
	\]
of any Maurer--Cartan element $\alpha\in \MC(\g)$	
under an $\infty$-morphism is again a Maurer--Cartan element of $\h$~. This assignment makes $\MC$ into a functor with respect to $\infty$-morphisms, and by compatibility with the simplicial structure it also extends the Deligne--Hinich space $\MC_\bullet$ to a functor
\[
\MC_\bullet : \isLialg\longrightarrow\sSe\ .
\]
This additional functoriality is not preserved by the Getzler sub-space since the kernels of the Dupont's homotopy do not form commutative sub-algebras of $\Omega_\bullet$~. We will study and fix this issue in \cref{sec:Func} considering a refinement of the notion of $\infty$-morphisms. 
\end{remark}

\subsection{Geometric interpretation of the integration functor}\label{subsec:GeoRep}
Let $\g$ be a complete $\sLi$-al\-ge\-bra. By the Yoneda lemma and \cref{thm:isomorphism of models}, an $n$-simplex $x:\Delta^n\to\R(\g)$ is nothing else than an element of
\[
\R(\g)_n\cong\MC\left(\g\, \widehat{\otimes}\, \mathrm{C}_n\right), 
\]
that is a Maurer--Cartan element of the form
\[
x = \sum_{\emptyset\neq I\subseteq[n]}x_I\otimes\omega_I\ , 
\]
where $|x_I|=|I|-1$ by the fact that $x$ must have degree $0$~.

\medskip

Graphically, we draw such an element of $\R(\g)$ as a geometrical $n$-simplex with the sub-simplex corresponding to $\emptyset\neq I\subseteq[n]$ labeled by the element $x_I\in\g$~. The fact that $\R(\g)$ is a simplicial set implies that whenever two simplices share a face, their respective faces have the same element of $\g$ labeling them. Thus, this graphical representation can be used for any combination of simplices in $\R(\g)$~. 
We find this way of visualizing the simplicial set $\R(\g)$ extremely useful and we adopt it throughout the rest of the article, see the proof of \cref{thm:Berglund} for instance. 

\begin{example}
	For $n=2$~, we draw a $2$-simplex $x$ of $\R(\g)$ as follows. 
	\[
	\begin{tikzpicture}
	
		\coordinate (v0) at (210:1.5);
		\coordinate (v1) at (90:1.5);
		\coordinate (v2) at (-30:1.5);
		
		\draw[line width=1] (v0)--(v1)--(v2)--cycle;
		
		\begin{scope}[decoration={
			markings,
			mark=at position 0.55 with {\arrow{>}}},
			line width=1
		]
			
			\path[postaction={decorate}] (v0) -- (v1);
			\path[postaction={decorate}] (v1) -- (v2);
			\path[postaction={decorate}] (v0) -- (v2);
		
		\end{scope}
		
		
		\node at ($(v0) + (30:-0.3)$) {$x_0$};
		\node at ($(v1) + (0,0.3)$) {$x_1$};
		\node at ($(v2) + (-30:0.3)$) {$x_2$};
		
		\node at ($(v0)!0.5!(v1) + (-30:-0.4)$) {$x_{01}$};
		\node at ($(v1)!0.5!(v2) + (30:0.4)$) {$x_{12}$};
		\node at ($(v0)!0.5!(v2) + (0,-0.4)$) {$x_{02}$};
		
		\node at (0,0) {$x_{012}$};
		
	\end{tikzpicture}
	\]
\end{example}
\section{Functoriality of the integration}\label{sec:Func}

Our conceptual approach based on the operadic calculus allows us to go further, establishing functoriality of $\R$ with respect to a class of higher morphisms, called $\infty_\pi$-morphisms, that refines the classical notion of $\infty$-morphisms. We are able to settle functoriality with respect to $\infty$-morphisms as well, but only on the level of the homotopy category. Since the  notion of $\infty_\pi$-morphism is new, we study it in detail, describing it explicitly and giving a precise comparison with the classical notion. 

\subsection{Functoriality with respect to \texorpdfstring{$\infty_\pi$}{infinity-pi}-morphisms}\label{subsec:FuncinftyPiMorph}

\cref{def:InfMor} applied to the operadic twisting morphism $\pi \colon \Bar \Cobar \com^\vee \to \Cobar \com^\vee\cong \sLi$ produces the notion of an \emph{$\infty_\pi$-morphism} of complete $\sLi$-algebras $f : \g \rightsquigarrow \h$~, which amounts to a  morphism of  $\Bar \Cobar \com^\vee$-coalgebras between the bar constructions $f : \Bar_\pi \g \to \Bar_\pi \h$ that preserves the respective filtrations. 

\begin{proposition}\label{prop:Functoriality}
	The integration functor $\R$ extends canonically to $\infty_\pi$-morphisms: 
	\[
	\R\colon\infty_\pi\text{-}\,\sLi\text{-}\,\mathsf{alg}\longrightarrow\sSe\ .
	\]
\end{proposition}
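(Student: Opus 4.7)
The plan is to identify an $n$-simplex of $\R(\g)$ with a Maurer--Cartan element in a convolution $\sLi$-algebra, and then to transport an $\infty_\pi$-morphism of $\g$ through the established functoriality of this convolution construction in its algebra variable. Combining \cref{thm:MC elements of convolution algebras} with \cref{thm:isomorphism of models} and the linear duality isomorphism $\hom^\pi(\mathrm{C}^n,\g)\cong\g\,\widehat{\otimes}^\pi\,\mathrm{C}_n$, one obtains
\[
\R(\g)_n \ \cong\ \hom_{\sLialg}\!\left(\mc^n,\g\right)\ \cong\ \MC\!\left(\hom^\pi(\mathrm{C}^n,\g)\right),
\]
naturally in $\g$ with respect to strict morphisms, and it is the third description that I would extend.

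Given an $\infty_\pi$-morphism $f\colon\g\rightsquigarrow\h$, I would invoke the functoriality of the convolution $\sLi$-algebra in the algebra slot with respect to $\infty_\pi$-morphisms, recalled at the end of \cref{subsec:Op} from \cite{rnw17}, so as to produce an $\infty_\pi$-morphism (hence, in particular, an $\infty$-morphism) of complete $\sLi$-algebras
\[
\hom^\pi(\mathrm{C}^n,f)\colon \hom^\pi(\mathrm{C}^n,\g)\rightsquigarrow\hom^\pi(\mathrm{C}^n,\h)
\]
for every $n\geqslant 0$. Since any $\infty$-morphism of complete $\sLi$-algebras sends Maurer--Cartan elements to Maurer--Cartan elements by \cref{ex:Loomorph}, restriction to $\MC$ defines the candidate level-$n$ component $\R(f)_n\colon\R(\g)_n\to\R(\h)_n$.

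To promote this level-wise datum to an honest morphism of simplicial sets, I would verify simpliciality: the cosimplicial structure maps of $\mathrm{C}^\bullet$ are strict morphisms of $\Bar\sLi$-coalgebras by \cref{prop:Simpli}, hence they induce strict $\sLi$-algebra maps of the convolution algebras by contravariant functoriality of $\hom^\pi(-,\g)$ in the coalgebra slot. The joint naturality of $\hom^\pi(-,-)$ — strict in the coalgebra slot, $\infty_\pi$ in the algebra slot — then forces the family $\{\R(f)_n\}_{n\geqslant 0}$ to commute with all face and degeneracy operators. That the assignment $f\mapsto\R(f)$ respects composition and identities follows from the corresponding properties of $\Bar_\pi$ on $\infty_\pi$-morphisms, together with the analogous properties of $\hom^\pi(\mathrm{C}^n,-)$ from \cite{rnw17}.

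The step that requires a bit of care is the bifunctoriality invoked above, as it combines a strict coalgebra map on one argument of $\hom^\pi(-,-)$ with an $\infty_\pi$-morphism of algebras on the other. This joint naturality, however, is available in the full generality of arbitrary operadic twisting morphisms from \cite{rnw17}, and its specialization to $\alpha=\pi$ introduces no new obstacle.
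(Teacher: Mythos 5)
Your argument is correct and follows essentially the same route as the paper: identify $\R(\g)\cong\MC\big(\hom^\pi(\mathrm{C}^\bullet,\g)\big)$ and factor $\R$ through the convolution functor $\hom^\pi(\mathrm{C}^\bullet,-)$, whose functoriality in the algebra slot with respect to $\infty_\pi$-morphisms is exactly \cite[Corollary~5.4]{rnw17}, followed by the functor $\MC$ acting on $\infty$-morphisms. One small correction: that cited functoriality produces an $\infty$-morphism (not an $\infty_\pi$-morphism) between the convolution algebras, and an $\infty_\pi$-morphism is not literally a special case of an $\infty$-morphism --- the two are coalgebra maps over different cooperads, related only by the non-invertible pullback along $\upsilon$ --- but since you only use the induced $\infty$-morphism to apply $\MC$, this imprecision does not affect the argument.
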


\begin{proof}
We use the convolution algebra description obtained by \cref{def:alpha-oo-morphism,thm:MC elements of convolution algebras}:
	\[
	\R(\g) \cong \MC\left(\hom^\pi(\mathrm{C}^\bullet,\g)\right)~,
	\]
which shows that $\R$ is the composite of two functors:
\[
\R = \left(\infty_\pi\text{-}\,\sLi\text{-}\,\mathsf{alg}\xrightarrow{\hom^\pi(\mathrm{C}^\bullet,-)}\infty\text{-}\,\sLi\text{-}\,\mathsf{alg}\xrightarrow{\MC}\sSe\right),
\]
where the fact that the first functor is functorial with respect to $\infty_\pi$-morphisms, sending them to $\infty$-morphisms, was proven in \cite[Corollary~5.4]{rnw17}, and where $\MC$ acts on $\infty$-morphisms by \cref{ex:Loomorph}. This concludes the proof.
\end{proof}

Let us spend a moment to make this functoriality more explicit. Any $\infty_\pi$-morphism $f:\g\rightsquigarrow\h$ induces an $\infty$-morphism between the convolution algebras
\[
\hom^\pi_r(\id, f):\hom^\pi(\mathrm{C}^\bullet,\g)\rightsquigarrow\hom^\pi(\mathrm{C}^\bullet,\h)
\]
by the formula
\[
\hom_r^\pi(\id, f)_m(x_1, \ldots,  x_m) = f\left(\id\otimes_{\Sy_m}(x_1\otimes\cdots\otimes x_m)\right)\Delta_m\ ,
\]
on $x_1,\ldots,x_n\in\hom^\pi(\mathrm{C}^\bullet,\g)$~, where the right-hand side is the composite
\[
\rmC^\bullet\xrightarrow{\Delta_m}\Bar \Cobar \com^\vee(m)\otimes_{\S_m}(\mathrm{C}^\bullet)^{\otimes m}\xrightarrow{\id \otimes_{\Sy_m}(x_1\otimes\cdots\otimes x_m)}\Bar \Cobar \com^\vee(m)\otimes_{\S_m}\g^{\otimes m}\xrightarrow{f}\h\ .
\]
This formula is the same as the one described in \cite[Section\ 5.1]{rnw17}; the proof of Theorem 5.1(b) in \emph{op. cit.} goes through unchanged despite $\mathrm{C}^\bullet$ not being conilpotent. The morphism of simplicial sets $\R(f)$ is then defined to be the induced map on Maurer--Cartan elements
\[
\R(f)\coloneqq\MC(\hom^\pi_r(\id, f))\ , 
\]
which is explicitly given by 
\begin{equation}\label{Eq:FuncR}
\R(f)(x) = \sum_{m\geqslant 1}\tfrac{1}{m!}f\left(\id \otimes_{\Sy_m} x^{\otimes m}\right)\Delta_m\ .
\end{equation}
Notice that everything is indeed compatible with the simplicial maps, so that this is a well defined morphism of simplicial sets.

\medskip

This functorial property is a motivation to unfold the compact definition of an $\infty_\pi$-mor\-phism. 

\subsection{Explicit description of \texorpdfstring{$\infty_\pi$}{infinity-pi}-morphisms}\label{subsec:inftyPiMorph}
We consider the set of \emph{partitioned rooted trees} $\PaRT$\index{trees!partitioned rooted}\index{$\PaRT$} made up of \emph{rooted trees}, with vertices of arity greater or equal to 2, endowed with a complete \emph{partition} into sub-trees called \emph{blocks}. The subset of partitioned rooted trees with $m$ leaves is denoted by $\PaRT_m$~, and, by convention, $\PaRT_1$ is made up of the trivial tree $|$~. 
Each vertex has degree $-1$ and each partition has degree $+1$~, so the degree of a partitioned rooted tree is equal to the number of partitions minus the number of vertices. Therefore, the degree is non-positive and partitioned rooted trees of degree $0$ are the ones where each block of the partition contains exactly one vertex. 

\begin{example}
Here is an example of a partitioned rooted tree
\[
\tau \coloneqq \vcenter{\hbox{
	\begin{tikzpicture}
		\def\scale{0.75};
		\pgfmathsetmacro{\diagcm}{sqrt(2)};
		
		\def\xangle{35};
		\pgfmathsetmacro{\xcm}{1/sin(\xangle)};
		
		\coordinate (r) at (0,0);
		\coordinate (v11) at ($(r) + (0,\scale*1)$);
		\coordinate (v21) at ($(v11) + (180-\xangle:\scale*\xcm)$);
		\coordinate (v22) at ($(v11) + (\xangle:\scale*\xcm)$);
		\coordinate (v31) at ($(v22) + (45:\scale*\diagcm)$);
		\coordinate (l1) at ($(v21) + (135:\scale*\diagcm)$);
		\coordinate (l2) at ($(v21) + (0,\scale*1)$);
		\coordinate (l3) at ($(v21) + (45:\scale*\diagcm)$);
		\coordinate (l4) at ($(v22) + (135:\scale*\diagcm)$);
		\coordinate (l5) at ($(v31) + (135:\scale*\diagcm)$);
		\coordinate (l6) at ($(v31) + (45:\scale*\diagcm)$);
		
		\draw[thick] (r) to (v11);
		\draw[thick] (v11) to (v21);
		\draw[thick] (v11) to (v22);
		\draw[thick] (v21) to (l1);
		\draw[thick] (v21) to (l2);
		\draw[thick] (v21) to (l3);
		\draw[thick] (v22) to (l4);
		\draw[thick] (v22) to (v31);
		\draw[thick] (v31) to (l5);
		\draw[thick] (v31) to (l6);
		
		\node[above] at (l1) {$1$};
		\node[above] at (l2) {$2$};
		\node[above] at (l3) {$3$};
		\node[above] at (l4) {$4$};
		\node[above] at (l5) {$5$};
		\node[above] at (l6) {$6$};
		
		\draw (v11) circle[radius=\scale*0.5];
		\draw (v21) circle[radius=\scale*0.5];
		\draw ($(v22)!0.5!(v31)$) ellipse[x radius=\scale*1.3, y radius=\scale*0.6, rotate=45];
	\end{tikzpicture}
}}
\]
of degree $|\tau|=3-4=-1$~. 
\end{example}

The graded space spanned by partitioned rooted trees is endowed with a differential $\d_e$ which is the sum of two terms. The first one sums over all edges linking two blocks of the partition and acts by merging the two blocks into a single one. The second term sums over all vertices of the tree and acts by splitting the vertex into two in all possible ways.

\begin{example}
The image under the differential $\d_e$ of the above displayed example of a partitioned rooted tree is equal to 
\begin{align*}
\d_e(\tau){}&=-
\vcenter{\hbox{
\begin{tikzpicture}
	\def\scale{0.6};
	\pgfmathsetmacro{\diagcm}{sqrt(2)};
	\def\xangle{35};
	\pgfmathsetmacro{\xcm}{1/sin(\xangle)};
	\coordinate (r) at (0,0);
	\coordinate (v11) at ($(r) + (0,\scale*1)$);
	\coordinate (v21) at ($(v11) + (180-\xangle:\scale*\xcm)$);
	\coordinate (v22) at ($(v11) + (\xangle:\scale*\xcm)$);
	\coordinate (v31) at ($(v22) + (45:\scale*\diagcm)$);
	\coordinate (l1) at ($(v21) + (135:\scale*\diagcm)$);
	\coordinate (l2) at ($(v21) + (0,\scale*1)$);
	\coordinate (l3) at ($(v21) + (45:\scale*\diagcm)$);
	\coordinate (l4) at ($(v22) + (135:\scale*\diagcm)$);
	\coordinate (l5) at ($(v31) + (135:\scale*\diagcm)$);
	\coordinate (l6) at ($(v31) + (45:\scale*\diagcm)$);
	\draw[thick] (r) to (v11);
	\draw[thick] (v11) to (v21);
	\draw[thick] (v11) to (v22);
	\draw[thick] (v21) to (l1);
	\draw[thick] (v21) to (l2);
	\draw[thick] (v21) to (l3);
	\draw[thick] (v22) to (l4);
	\draw[thick] (v22) to (v31);
	\draw[thick] (v31) to (l5);
	\draw[thick] (v31) to (l6);
	\node[above] at (l1) {$\scriptstyle1$};
	\node[above] at (l2) {$\scriptstyle2$};
	\node[above] at (l3) {$\scriptstyle3$};
	\node[above] at (l4) {$\scriptstyle4$};
	\node[above] at (l5) {$\scriptstyle5$};
	\node[above] at (l6) {$\scriptstyle6$};
	\draw ($(v22)!0.5!(v31)$) ellipse[x radius=\scale*1.3, y radius=\scale*0.6, rotate=45];
	\draw ($(v11)!0.5!(v21)$) ellipse[x radius=\scale*1.4, y radius=\scale*0.6, rotate=-\xangle];
\end{tikzpicture}}}
-
\vcenter{\hbox{
\begin{tikzpicture}
	\def\scale{0.6};
	\pgfmathsetmacro{\diagcm}{sqrt(2)};
	\def\xangle{35};
	\pgfmathsetmacro{\xcm}{1/sin(\xangle)};
	\coordinate (r) at (0,0);
	\coordinate (v11) at ($(r) + (0,\scale*1)$);
	\coordinate (v21) at ($(v11) + (180-\xangle:\scale*\xcm)$);
	\coordinate (v22) at ($(v11) + (\xangle:\scale*\xcm)$);
	\coordinate (v31) at ($(v22) + (45:\scale*\diagcm)$);
	\coordinate (l1) at ($(v21) + (135:\scale*\diagcm)$);
	\coordinate (l2) at ($(v21) + (0,\scale*1)$);
	\coordinate (l3) at ($(v21) + (45:\scale*\diagcm)$);
	\coordinate (l4) at ($(v22) + (135:\scale*\diagcm)$);
	\coordinate (l5) at ($(v31) + (135:\scale*\diagcm)$);
	\coordinate (l6) at ($(v31) + (45:\scale*\diagcm)$);
	\draw[thick] (r) to (v11);
	\draw[thick] (v11) to (v21);
	\draw[thick] (v11) to (v22);
	\draw[thick] (v21) to (l1);
	\draw[thick] (v21) to (l2);
	\draw[thick] (v21) to (l3);
	\draw[thick] (v22) to (l4);
	\draw[thick] (v22) to (v31);
	\draw[thick] (v31) to (l5);
	\draw[thick] (v31) to (l6);
	\node[above] at (l1) {$\scriptstyle1$};
	\node[above] at (l2) {$\scriptstyle2$};
	\node[above] at (l3) {$\scriptstyle3$};
	\node[above] at (l4) {$\scriptstyle4$};
	\node[above] at (l5) {$\scriptstyle5$};
	\node[above] at (l6) {$\scriptstyle6$};
	\draw (v21) circle[radius=\scale*0.5];
	\draw ($(v11)!0.5!(v31)$) ellipse[x radius=\scale*2, y radius=\scale*0.75, rotate=(\xangle + 45)/2];
\end{tikzpicture}}}\\
&+ \vcenter{\hbox{
\begin{tikzpicture}
	\def\scale{0.5};
	\pgfmathsetmacro{\diagcm}{sqrt(2)};
	\def\xangle{35};
	\pgfmathsetmacro{\xcm}{1/sin(\xangle)};
	\coordinate (r) at (0,0);
	\coordinate (v11) at ($(r) + (0,\scale*1)$);
	\coordinate (v21) at ($(v11) + (180-\xangle:\scale*\xcm)$);
	\coordinate (v22) at ($(v11) + (\xangle:\scale*\xcm)$);
	\coordinate (v31) at ($(v21) + (135:\scale*\diagcm)$);
	\coordinate (v32) at ($(v22) + (45:\scale*\diagcm)$);
	\coordinate (l1) at ($(v31) + (135:\scale*\diagcm)$);
	\coordinate (l2) at ($(v31) + (45:\scale*\diagcm)$);
	\coordinate (l3) at ($(v21) + (45:\scale*\diagcm)$);
	\coordinate (l4) at ($(v22) + (135:\scale*\diagcm)$);
	\coordinate (l5) at ($(v32) + (135:\scale*\diagcm)$);
	\coordinate (l6) at ($(v32) + (45:\scale*\diagcm)$);
	\draw[thick] (r) to (v11);
	\draw[thick] (v11) to (v21);
	\draw[thick] (v11) to (v22);
	\draw[thick] (v21) to (v31);
	\draw[thick] (v31) to (l1);
	\draw[thick] (v31) to (l2);
	\draw[thick] (v21) to (l3);
	\draw[thick] (v22) to (v32);
	\draw[thick] (v22) to (l4);
	\draw[thick] (v32) to (l5);
	\draw[thick] (v32) to (l6);
	\node[above] at (l1) {$\scriptstyle1$};
	\node[above] at (l2) {$\scriptstyle2$};
	\node[above] at (l3) {$\scriptstyle3$};
	\node[above] at (l4) {$\scriptstyle4$};
	\node[above] at (l5) {$\scriptstyle5$};
	\node[above] at (l6) {$\scriptstyle6$};
	\draw (v11) circle[radius=\scale*0.5];
	\draw ($(v21)!0.5!(v31)$) ellipse[x radius=\scale*1.3, y radius=\scale*0.6, rotate=-45];
	\draw ($(v22)!0.5!(v32)$) ellipse[x radius=\scale*1.3, y radius=\scale*0.6, rotate=45];
\end{tikzpicture}}}
+ \vcenter{\hbox{
\begin{tikzpicture}
	\def\scale{0.5};
	\pgfmathsetmacro{\diagcm}{sqrt(2)};
	\def\xangle{35};
	\pgfmathsetmacro{\xcm}{1/sin(\xangle)};
	\coordinate (r) at (0,0);
	\coordinate (v11) at ($(r) + (0,\scale*1)$);
	\coordinate (v21) at ($(v11) + (180-\xangle:\scale*\xcm)$);
	\coordinate (v22) at ($(v11) + (\xangle:\scale*\xcm)$);
	\coordinate (v31) at ($(v21) + (135:\scale*\diagcm)$);
	\coordinate (v32) at ($(v22) + (45:\scale*\diagcm)$);
	\coordinate (l1) at ($(v31) + (135:\scale*\diagcm)$);
	\coordinate (l2) at ($(v31) + (45:\scale*\diagcm)$);
	\coordinate (l3) at ($(v21) + (45:\scale*\diagcm)$);
	\coordinate (l4) at ($(v22) + (135:\scale*\diagcm)$);
	\coordinate (l5) at ($(v32) + (135:\scale*\diagcm)$);
	\coordinate (l6) at ($(v32) + (45:\scale*\diagcm)$);
	\draw[thick] (r) to (v11);
	\draw[thick] (v11) to (v21);
	\draw[thick] (v11) to (v22);
	\draw[thick] (v21) to (v31);
	\draw[thick] (v31) to (l1);
	\draw[thick] (v31) to (l2);
	\draw[thick] (v21) to (l3);
	\draw[thick] (v22) to (v32);
	\draw[thick] (v22) to (l4);
	\draw[thick] (v32) to (l5);
	\draw[thick] (v32) to (l6);
	\node[above] at (l1) {$\scriptstyle2$};
	\node[above] at (l2) {$\scriptstyle3$};
	\node[above] at (l3) {$\scriptstyle1$};
	\node[above] at (l4) {$\scriptstyle4$};
	\node[above] at (l5) {$\scriptstyle5$};
	\node[above] at (l6) {$\scriptstyle6$};
	\draw (v11) circle[radius=\scale*0.5];
	\draw ($(v21)!0.5!(v31)$) ellipse[x radius=\scale*1.3, y radius=\scale*0.6, rotate=-45];
	\draw ($(v22)!0.5!(v32)$) ellipse[x radius=\scale*1.3, y radius=\scale*0.6, rotate=45];
\end{tikzpicture}}}
+ \vcenter{\hbox{
\begin{tikzpicture}
	\def\scale{0.5};
	\pgfmathsetmacro{\diagcm}{sqrt(2)};
	\def\xangle{35};
	\pgfmathsetmacro{\xcm}{1/sin(\xangle)};
	\coordinate (r) at (0,0);
	\coordinate (v11) at ($(r) + (0,\scale*1)$);
	\coordinate (v21) at ($(v11) + (180-\xangle:\scale*\xcm)$);
	\coordinate (v22) at ($(v11) + (\xangle:\scale*\xcm)$);
	\coordinate (v31) at ($(v21) + (135:\scale*\diagcm)$);
	\coordinate (v32) at ($(v22) + (45:\scale*\diagcm)$);
	\coordinate (l1) at ($(v31) + (135:\scale*\diagcm)$);
	\coordinate (l2) at ($(v31) + (45:\scale*\diagcm)$);
	\coordinate (l3) at ($(v21) + (45:\scale*\diagcm)$);
	\coordinate (l4) at ($(v22) + (135:\scale*\diagcm)$);
	\coordinate (l5) at ($(v32) + (135:\scale*\diagcm)$);
	\coordinate (l6) at ($(v32) + (45:\scale*\diagcm)$);
	\draw[thick] (r) to (v11);
	\draw[thick] (v11) to (v21);
	\draw[thick] (v11) to (v22);
	\draw[thick] (v21) to (v31);
	\draw[thick] (v31) to (l1);
	\draw[thick] (v31) to (l2);
	\draw[thick] (v21) to (l3);
	\draw[thick] (v22) to (v32);
	\draw[thick] (v22) to (l4);
	\draw[thick] (v32) to (l5);
	\draw[thick] (v32) to (l6);
	\node[above] at (l1) {$\scriptstyle3$};
	\node[above] at (l2) {$\scriptstyle1$};
	\node[above] at (l3) {$\scriptstyle2$};
	\node[above] at (l4) {$\scriptstyle4$};
	\node[above] at (l5) {$\scriptstyle5$};
	\node[above] at (l6) {$\scriptstyle6$};
	\draw (v11) circle[radius=\scale*0.5];
	\draw ($(v21)!0.5!(v31)$) ellipse[x radius=\scale*1.3, y radius=\scale*0.6, rotate=-45];
	\draw ($(v22)!0.5!(v32)$) ellipse[x radius=\scale*1.3, y radius=\scale*0.6, rotate=45];
\end{tikzpicture}}}
\end{align*}
\end{example}

\begin{lemma}\label{lem:BarPi}
The underlying space of the bar construction $ \Bar_\pi  \g$ of a complete $\sLi$-algebra $\g$ is the cofree conilpotent $\Bar \Cobar \com^\vee$-coalgebra on $\g$~, explicitly given by 
\[
\Bar \Cobar \com^\vee (\g) \cong \bigoplus_{\tau \in \PaRT} \tau(\g)\ ,  
\]
where $\tau(\g)$ stands for the partitioned rooted tree $\tau$ with leaves labeled by elements of $\g$~. It is endowed with a square-zero coderivation $\d=\d_\g+\d_e+\d_\ell$~, where $\d_\g$ is induced by the underlying differential $d$ of $\g$~,  where 
$\d_e(\tau(\g))\coloneqq \d_e(\tau)(\g)$~, and where $\d_\ell(\tau(\g))$ is equal to 
the sum over all the blocks lying at the top of the partitioned tree $\tau$~, cutting such block one-by-one out of the tree and making the operations $\ell_m$ corresponding to the vertices contained in the block act on the associated elements of $\g$~. 
\end{lemma}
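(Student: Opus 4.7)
The plan is to unfold the definition $\Bar_\pi\g = \left(\C(\g), d_1 + d_2\right)$ with $\C = \Bar\sLi$, first identifying the underlying graded vector space and then each of the three summands $\d_\g$, $\d_e$, $\d_\ell$ of the total differential. For the underlying space, $\sLi = \Cobar\com^\vee = \T\!\left(s^{-1}\overline{\com^\vee}\right)$ is the free operad on a one-dimensional copy of degree $-1$ placed in each arity $m \geqslant 2$, so its underlying $\Sy$-module has $\RT$ as a basis with each vertex contributing degree $-1$, exactly as in \cref{prop:FreesLi}. Dually, the cofree cooperad formula $\Bar\sLi = \T^c\!\left(s\overline{\sLi}\right)$ describes $\Bar\sLi$ as ``outer'' rooted trees whose vertex of arity $k$ is labeled by an element of $s\overline{\sLi}(k)$, that is, by an ``inner'' rooted tree with $k$ leaves suspended up in degree by one. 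Collapsing the two nested tree structures into one graph and recording the outer vertices as a partition into connected subtrees is exactly the data of a partitioned rooted tree, of degree equal to the number of blocks minus the number of inner vertices. Applying $(-)\circ\g$ labels the $m$ outer leaves by elements of $\g$, giving the claimed decomposition $\bigoplus_{\tau \in \PaRT}\tau(\g)$.

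I would then read off the differential on $\Bar\sLi$ as the sum of two coderivations, one extending the internal differential of $\sLi$ and one extending the infinitesimal composition of $\sLi$. The internal differential of $\sLi = \Cobar\com^\vee$ is the vertex-splitting differential on rooted trees (dual to the grafting operad structure on $\com$), which, extended as a coderivation on the cofree cooperad, acts block-by-block by splitting vertices inside each inner tree. The infinitesimal composition of $\sLi$ is grafting of two trees at an edge; extended as a coderivation on $\T^c$, it picks an edge of the outer tree and merges the two adjacent blocks into a single block whose inner structure is the grafted tree. Together, the vertex-splitting and the block-merging produce exactly the operator $\d_e$ of the statement. Combined with the internal differential of $\g$, this accounts for the summand $d_1 = \d_\g + \d_e$ of the algebra bar construction.

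The final step is to analyse the algebra-bar summand $d_2$. By definition it is the unique coderivation of the cofree $\C$-coalgebra $\C(\g)$ extending the composite
\[
m\ \colon\ \C(\g)\ \xrightarrow{\pi(\id_\g)}\ \sLi(\g)\ \xrightarrow{\gamma_\g}\ \g\ .
\]
The universal twisting morphism $\pi \colon \Bar\sLi \to \sLi$ is the canonical projection onto the single-outer-vertex summand of the cofree cooperad followed by desuspension, so $m$ vanishes on any partitioned tree with more than one block; on a one-block partitioned tree, $m$ applies the composition of $\ell_k$'s encoded by the inner tree to the $\g$-labeled leaves. The main technical step, and the one I expect to be the real obstacle, is to promote this cogenerator-level description to a closed formula on an arbitrary partitioned tree. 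Using the standard recursive description of a coderivation on a cofree $\C$-coalgebra in terms of the infinitesimal decomposition of $\C$ (see \cite[Chapter~11]{LodayVallette12}), $d_2(\tau(\g))$ becomes a sum indexed by the ``admissible top sub-coalgebras'' of $\tau(\g)$, of the value of $m$ on each such piece. Since $m$ is supported in outer arity one, the admissible top sub-coalgebras reduce to exactly the top blocks of $\tau$, and on each such block $m$ acts by the corresponding composition of $\ell_k$'s on the adjacent $\g$-labels. This reproduces the formula for $\d_\ell$ and concludes the identification.
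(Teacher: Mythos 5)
Your proposal is correct and follows essentially the same route as the paper's own (much terser) proof: identify $\PaRT$ as a basis of $\Bar\Cobar\com^\vee$ by matching the inner trees of $\overline{\T}\big(s^{-1}\overline{\com}^\vee\big)$ with the blocks and the outer cofree-cooperad structure with the ambient tree, then unfold the definition of $\Bar_\pi$ using the explicit form of $\pi$ as the projection onto the single-block summand followed by desuspension. Your additional unpacking of the cooperad differential into vertex-splitting plus block-merging, and of the coderivation extending $\gamma_\g\circ\pi(\id_\g)$ as a sum over top blocks, is exactly what the paper leaves implicit in the phrase ``direct application of the definition.''
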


\begin{proof}
Recall that the  universal operadic twisting morphism has the following explicit form
\begin{align*}
	\pi \colon \Bar \Cobar \com^\vee=
	\T^c\Big(s\overline{\T}\Big(s^{-1} \overline{\com}^\vee\Big)\Big)&
	\relbar\joinrel\twoheadrightarrow
	s\overline{\T}\Big(s^{-1} \overline{\com}^\vee\Big)\\
	&\xrightarrow{s^{-1}}
	\overline{\T}\Big(s^{-1} \overline{\com}^\vee\Big)
	= \Cobar \com^\vee\cong \sLi\ .
\end{align*}
We establish $\PaRT$ as a basis for $\Bar \Cobar \com^\vee$ by identifying the ``inner'' elements of $\overline{\T}(s^{-1} \overline{\com}^\vee)$ with the blocks and the ``outer'' tree structure with the whole tree. The statement then follows by a direct application of the definition of the bar construction with respect to $\pi$ given in \cref{subsec:Op}.
\end{proof}

\begin{example}
The image under the differential of the bar construction of the above given tree $\tau$ labeled by element $a_1, \ldots, a_6$ is equal to
\begin{align*}
\d(\tau(a_1, \ldots, a_6))=&-\sum_{i=1}^6(-1)^{|a_1|+\cdots+|a_{i-1}|}\tau(a_1, \ldots, d(a_i), \ldots, a_6) 
+\d_e(\tau)(a_1, \ldots, a_6)\\&
-\overline{\tau}^1(\ell_3(a_1, a_2, a_3), a_4, a_5, a_6)
+(-1)^{|a_4|}
\overline{\tau}^4
\big(a_1, a_2, a_3, \ell_2(a_4, \ell_2(a_5, a_6))\big)\ , 
\end{align*}
where 
\[
\overline{\tau}^1= \vcenter{\hbox{
	\begin{tikzpicture}
		\def\scale{0.7};
		\pgfmathsetmacro{\diagcm}{sqrt(2)};
		
		\def\xangle{35};
		\pgfmathsetmacro{\xcm}{1/sin(\xangle)};
		
		\coordinate (r) at (0,0);
		\coordinate (v11) at ($(r) + (0,\scale*1)$);
		\coordinate (v21) at ($(v11) + (180-\xangle:\scale*\xcm)$);
		\coordinate (v22) at ($(v11) + (\xangle:\scale*\xcm)$);
		\coordinate (v31) at ($(v22) + (45:\scale*\diagcm)$);
		\coordinate (l4) at ($(v22) + (135:\scale*\diagcm)$);
		\coordinate (l5) at ($(v31) + (135:\scale*\diagcm)$);
		\coordinate (l6) at ($(v31) + (45:\scale*\diagcm)$);
		
		\draw[thick] (r) to (v11);
		\draw[thick] (v11) to (v21);
		\draw[thick] (v11) to (v22);
		\draw[thick] (v22) to (l4);
		\draw[thick] (v22) to (v31);
		\draw[thick] (v31) to (l5);
		\draw[thick] (v31) to (l6);
		
		\node[above] at (v21) {$\scriptstyle1$};
		\node[above] at (l4) {$\scriptstyle2$};
		\node[above] at (l5) {$\scriptstyle3$};
		\node[above] at (l6) {$\scriptstyle4$};
		
		\draw (v11) circle[radius=\scale*0.5];
		\draw ($(v22)!0.5!(v31)$) ellipse[x radius=\scale*1.3, y radius=\scale*0.6, rotate=45];
	\end{tikzpicture}}}
\quad \text{and}\quad 
\overline{\tau}^4= 
\vcenter{\hbox{
	\begin{tikzpicture}
		\def\scale{0.7};
		\pgfmathsetmacro{\diagcm}{sqrt(2)};
		
		\def\xangle{35};
		\pgfmathsetmacro{\xcm}{1/sin(\xangle)};
		
		\coordinate (r) at (0,0);
		\coordinate (v11) at ($(r) + (0,\scale*1)$);
		\coordinate (v21) at ($(v11) + (180-\xangle:\scale*\xcm)$);
		\coordinate (v22) at ($(v11) + (\xangle:\scale*\xcm)$);
		\coordinate (l1) at ($(v21) + (135:\scale*\diagcm)$);
		\coordinate (l2) at ($(v21) + (0,\scale*1)$);
		\coordinate (l3) at ($(v21) + (45:\scale*\diagcm)$);
		
		\draw[thick] (r) to (v11);
		\draw[thick] (v11) to (v21);
		\draw[thick] (v11) to (v22);
		\draw[thick] (v21) to (l1);
		\draw[thick] (v21) to (l2);
		\draw[thick] (v21) to (l3);
		
		\node[above] at (l1) {$\scriptstyle1$};
		\node[above] at (l2) {$\scriptstyle2$};
		\node[above] at (l3) {$\scriptstyle3$};
		\node[above] at (v22) {$\scriptstyle4$};
		
		\draw (v11) circle[radius=\scale*0.5];
		\draw (v21) circle[radius=\scale*0.5];
	\end{tikzpicture}}}
\]
\end{example}

Given an $\sLi$-algebra
$(\g,\allowbreak \{\ell_m\}_{m\geqslant 1})$ and a rooted tree $\tau\in\RT_k$ with $k$ leaves, we denote by
\[
\ell_\tau \colon \g^{\widehat{\otimes} k}\longrightarrow \g
\]
the operation obtained by interpreting the tree $\tau$ as a composite of the structure operations of $\g$~, with each vertex replaced by the operation of the same arity.

\begin{proposition}
	The data of an $\infty_\pi$-morphism $f : \g \rightsquigarrow \h$ between two complete $\sLi$-algebras
	$(\g,\allowbreak \{\ell_m\}_{m\geqslant 1})$ and $(\h, \{\mathcal{k}_m\}_{m\geqslant 1})$ 
	amounts to a collection of degree $0$ maps
	\[
	f_\tau : \tau(\g) \longrightarrow \h\ ,
	\]
	or equivalently to a collection of degree $|\tau|$ maps
	\[
	f_\tau : \g^{\widehat{\otimes} m} \longrightarrow \h\ ,
	\]
	labeled by the set $\PaRT$ of partitioned rooted trees, which have the same symmetries as the underlying tree and which respect the filtrations. These maps are subject to the following conditions, one for each $\tau \in \PaRT$~:
	\[
	\partial(f_\tau)=
	f_{\d_e(\tau)}+\left(\sum \pm\, f_{\overline{\tau}^i}\circ_i \ell_{\tau_i}\right)
	-\mathcal{k}_{\widetilde\tau}\left(f_{\widetilde\tau_1}, \ldots, f_{\widetilde\tau_k}\right)
	\ ,
	\]
	where
	\begin{itemize}
		\item[$\diamond$] $f_{\d_e(\tau)}$ comes from the extension of $f_{-}$ by linearity in the indexing tree,
		\item[$\diamond$] the sum runs over all the top blocks $\tau_i$ of $\tau$~, that is all the blocks whose incoming edges are all leaves, and where in the notation $f_{\overline{\tau}^i}\circ_i \ell_{\tau_i}$~, the term $\overline{\tau}^i$ means the tree $\tau$ with the block $\tau_i$ cut away and the operation $\ell_{\tau_i}$ is inserted on the leaf created by removing the block, and
		\item[$\diamond$] $\widetilde{\tau}$ indicates the block of $\tau$ linked to the root and $\widetilde{\tau}_1,\ldots,\widetilde{\tau}_k$ denote all the sub-trees above it (one for each leaf of the root block).
	\end{itemize}
\end{proposition}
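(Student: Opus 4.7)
The plan is to derive the unfolded description directly from \cref{def:InfMor} applied to the operadic twisting morphism $\pi : \Bar\Cobar\com^\vee \to \sLi$, combined with the explicit form of the bar construction given in \cref{lem:BarPi}. By definition, an $\infty_\pi$-morphism $f : \g \rightsquigarrow \h$ is a filtration-preserving morphism $F : \Bar_\pi \g \to \Bar_\pi \h$ of $\Bar\Cobar\com^\vee$-coalgebras. Since $\Bar_\pi \h$ is the cofree conilpotent $\Bar\Cobar\com^\vee$-coalgebra on $\h$, the data of $F$ is equivalent, by the universal property of cofree coalgebras, to its composition with the projection onto cogenerators, i.e.\ to a degree $0$ linear map
\[
f \ \colon\ \Bar_\pi \g \longrightarrow \h\ .
\]
Using the decomposition $\Bar_\pi \g \cong \bigoplus_{\tau\in \PaRT} \tau(\g)$ of \cref{lem:BarPi}, this map breaks up into components $f_\tau : \tau(\g) \to \h$ of degree $0$, which, after the standard degree shift coming from the symmetric operadic conventions, correspond to degree $|\tau|$ maps $\g^{\hot m} \to \h$ having the symmetries of $\tau$. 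The preservation of filtrations on $F$ and on $f$ is tautologically equivalent.

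Next, I would translate the compatibility of $F$ with the coderivations into the stated equations. The key observation is that, thanks to the cofreeness of $\Bar_\pi \h$, the condition $d_\h F = F d_\g$ on the level of coalgebra morphisms is equivalent to its projection onto cogenerators, namely
\[
\mathrm{pr}_\h \circ d_{\Bar_\pi \h} \circ F \ =\ f \circ d_{\Bar_\pi \g}\ .
\]
Using the splitting $d = \d_\g + \d_e + \d_\ell$ provided by \cref{lem:BarPi}, the right-hand side, restricted to the summand indexed by $\tau\in\PaRT$, produces the three contributions
\[
f_\tau\circ \d_\g \ +\ f_{\d_e(\tau)} \ +\ \sum \pm\, f_{\overline{\tau}^i}\circ_i \ell_{\tau_i}\ ,
\]
where the sum is indexed by the top blocks $\tau_i$ of $\tau$, exactly as in the statement. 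For the left-hand side, one uses that the projection $\mathrm{pr}_\h : \Bar_\pi \h \twoheadrightarrow \h$ kills all partitioned trees with more than one block or with a non-corolla block adjacent to the root; by the cofree coalgebra formula, only $\d_\ell$ applied to the root block survives after $\mathrm{pr}_\h$. On a partitioned tree $\tau$ this forces the root block $\widetilde\tau$ to act via $\mathcal{k}_{\widetilde\tau}$ on the values $f_{\widetilde\tau_1},\ldots,f_{\widetilde\tau_k}$ obtained by applying $F$ to the sub-trees $\widetilde\tau_1,\ldots,\widetilde\tau_k$ above $\widetilde\tau$. Rearranging and recalling that $\partial(f_\tau) = d_\h f_\tau - (-1)^{|\tau|} f_\tau d_{\g^{\hot m}}$ gives precisely the claimed equation.

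The main obstacle I expect is purely bookkeeping: tracking the Koszul signs that arise when splitting the coderivation along the tree, and verifying that the signs produced by the cofree coalgebra projection match the signs $\pm$ in front of $f_{\overline{\tau}^i}\circ_i \ell_{\tau_i}$. The cleanest way to handle this is to check the identity on a small family of indexing trees (a single corolla, a two-layer tree with one block, and a two-layer tree partitioned into two blocks), which covers all the sign conventions, and then invoke functoriality of cofree coalgebras to conclude for arbitrary $\tau \in \PaRT$. Conversely, any collection $\{f_\tau\}_{\tau\in \PaRT}$ satisfying these equations assembles, by the universal property, into a filtration-preserving coalgebra morphism $F : \Bar_\pi\g \to \Bar_\pi\h$ commuting with the differentials, which is the required $\infty_\pi$-morphism; this proves the reverse direction and the proposition.
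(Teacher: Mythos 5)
Your proposal is correct and follows essentially the same route as the paper, whose proof is the one-line statement that the proposition is a direct corollary of \cref{lem:BarPi} applied to \cref{def:InfMor}; you have simply unfolded that corollary, using cofreeness of $\Bar_\pi\h$ to reduce the coalgebra morphism to its corestriction onto cogenerators and the splitting $\d=\d_\g+\d_e+\d_\ell$ to read off the stated relations. The extra detail on sign bookkeeping is reasonable but not something the paper itself carries out.
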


\begin{proof}
This is a direct corollary of \cref{lem:BarPi} 
applied to \cref{def:InfMor}. 
\end{proof}

\begin{example}
On the example of the partitioned rooted tree $\tau$ that we are following and under the same convention, this gives
\[
\partial(f_\tau)=
f_{\d_e(\tau)}- f_{\overline{\tau}^1}\circ_1 \ell_{\tau_1}
+f_{\overline{\tau}^4}\circ_4 \ell_{\tau_4}
-\mathcal{k}_{\tilde\tau}\left(f_{\tilde\tau_1}, f_{\tilde\tau_2}\right),
\]
where
\[
\tau_1= \tilde\tau_1=\vcenter{\hbox{
	\begin{tikzpicture}
		\def\scale{0.6};
		\pgfmathsetmacro{\diagcm}{sqrt(2)};
	
		\coordinate (r) at (0,0);
		\coordinate (v1) at (0,\scale*1);
		\coordinate (l1) at ($(v1) + (135:\scale*\diagcm)$);
		\coordinate (l2) at ($(v1) + (0,\scale*1)$);
		\coordinate (l3) at ($(v1) + (45:\scale*\diagcm)$);
		
		\draw[thick] (r) to (v1);
		\draw[thick] (v1) to (l1);
		\draw[thick] (v1) to (l2);
		\draw[thick] (v1) to (l3);
		
		\node[above] at (l1) {$\scriptstyle1$};
		\node[above] at (l2) {$\scriptstyle2$};
		\node[above] at (l3) {$\scriptstyle3$};
		
		\draw (v1) circle[radius=\scale*0.5];
	\end{tikzpicture}}}
\qquad , \quad
\tau_4= \tilde\tau_2=\vcenter{\hbox{
	\begin{tikzpicture}
		\def\scale{0.6};
		\pgfmathsetmacro{\diagcm}{sqrt(2)};
		
		\coordinate (r) at (0,0);
		\coordinate (v1) at (0,\scale*1);
		\coordinate (v2) at ($(v1) + (45:\scale*\diagcm)$);
		\coordinate (l1) at ($(v1) + (135:\scale*\diagcm)$);
		\coordinate (l2) at ($(v2) + (135:\scale*\diagcm)$);
		\coordinate (l3) at ($(v2) + (45:\scale*\diagcm)$);
		
		\draw[thick] (r) to (v1);
		\draw[thick] (v1) to (v2);
		\draw[thick] (v1) to (l1);
		\draw[thick] (v2) to (l2);
		\draw[thick] (v2) to (l3);
		
		\node[above] at (l1) {$\scriptstyle1$};
		\node[above] at (l2) {$\scriptstyle2$};
		\node[above] at (l3) {$\scriptstyle3$};
		
		\draw ($(v1)!0.5!(v2)$) ellipse[x radius=\scale*1.3, y radius=\scale*0.6, rotate=45];
	\end{tikzpicture}}}
\qquad \text{and} 
\qquad 
\tilde\tau=\vcenter{\hbox{
	\begin{tikzpicture}
		\def\scale{0.6};
		\pgfmathsetmacro{\diagcm}{sqrt(2)};
		
		\coordinate (r) at (0,0);
		\coordinate (v1) at (0,\scale*1);
		\coordinate (l1) at ($(v1) + (135:\scale*\diagcm)$);
		\coordinate (l3) at ($(v1) + (45:\scale*\diagcm)$);
		
		\draw[thick] (r) to (v1);
		\draw[thick] (v1) to (l1);
		\draw[thick] (v1) to (l3);
		
		\node[above] at (l1) {$\scriptstyle1$};
		\node[above] at (l3) {$\scriptstyle2$};
		
		\draw (v1) circle[radius=\scale*0.5];
	\end{tikzpicture}}}
\]
So 
$f_{\overline{\tau}^1}\circ_1 \ell_{\tau_1}=f_{\overline{\tau}^1}\circ_1 \ell_3$~, 
$f_{\overline{\tau}^4}\circ_4 \ell_{\tau_4}=f_{\overline{\tau}^4}\circ_4 (\ell_2\circ_2\ell_2)$~,
and $\mathcal{k}_{\tilde\tau}\left(f_{\tilde\tau_1}, f_{\tilde\tau_2}\right)=\mathcal{k}_2\left(f_{\tilde\tau_1}, f_{\tilde\tau_2}\right)$~. 
\end{example}

\subsection{Relationship between \texorpdfstring{$\infty_\pi$}{infinity-pi}-morphisms and \texorpdfstring{$\infty$}{infinity}-morphisms}

This notion of an $\infty_\pi$-mor\-phi\-sm, which is very natural with respect to the integration functor, is however not the most simple and most used higher notion of morphisms of complete $\sLi$-algebras. In the literature, it is the notion of an $\infty$-morphism, that is the notion associated to the operadic twisting morphism $\iota$~, which is omnipresent. The following simple construction relates these two notions in one direction. 

\begin{proposition}\label{lemma:oo-pi induce oo-iota}
Any $\infty_\pi$-morphism
\[
\big\{f_\tau : \g^{\hot m} \longrightarrow \h\big\}_{\tau \in \PaRT}
\]
of complete $\sLi$-algebras functorially induces an $\infty$-morphism $\upsilon^*f$ under the formula:
\[
(\upsilon^*f)_m\coloneqq \sum_{\substack{\tau \in \PaRT_m \\ |\tau|=0}} f_\tau \ .
\]
\end{proposition}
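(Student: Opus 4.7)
My plan is to prove this conceptually, using the unit of the bar--cobar adjunction for cooperads. Recall that $\upsilon : \com^\vee \to \Bar\Cobar \com^\vee = \Bar \sLi$ is a morphism of coaugmented cooperads, and a standard property of this adjunction (see the discussion following \cref{prop:Rosetta}) is the identity $\pi \circ \upsilon = \iota$ in $\Tw(\com^\vee, \sLi)$. The first task is to describe $\upsilon$ explicitly on the tree basis of $\Bar\sLi$: since $\Bar\sLi$ has basis $\PaRT$ with the degree of $\tau$ equal to (number of blocks)$-$(number of vertices), and since $\upsilon$ has degree $0$ and is coaugmentation-preserving, the generator $\mu_m^\vee \in \com^\vee(m)$ must be sent to a linear combination of partitioned rooted trees of degree $0$, that is, of rooted trees in $\RT_m$ equipped with the trivial partition (each vertex its own block). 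A direct unravelling of the universal property defining $\upsilon$ shows that in fact all such trees appear, with coefficient $1$.

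The compatibility $\pi \circ \upsilon = \iota$ then implies that, for any complete $\sLi$-algebra $\g$, the induced map of cofree coalgebras
\[
\upsilon(\g) \ \colon \ \com^\vee(\g) \longrightarrow \Bar\sLi(\g)
\]
is a morphism of differential coalgebras $\Bar_\iota \g \to \Bar_\pi \g$, because the differentials on the two bar constructions are assembled from $\iota$ and $\pi$ respectively and thus intertwine under $\upsilon(\g)$ by naturality. Given an $\infty_\pi$-morphism $f : \Bar_\pi \g \to \Bar_\pi \h$, the composite
\[
F \ \colon \ \Bar_\iota \g \xrightarrow{\upsilon(\g)} \Bar_\pi \g \xrightarrow{f} \Bar_\pi \h \relbar\joinrel\twoheadrightarrow \h
\]
is then a filtration-preserving chain map, as a composite of such. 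By the universal property of the cofree conilpotent $\com^\vee$-coalgebra $\Bar_\iota \h = \com^\vee(\h)$, the map $F$ extends uniquely to a coalgebra morphism $\upsilon^* f : \Bar_\iota \g \to \Bar_\iota \h$, and since $F$ commutes with the differentials, so does its extension $\upsilon^* f$, making it an $\infty$-morphism.

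To identify the components and match the stated formula, restrict $F$ to the arity-$m$ summand: on elements $v_1, \dots, v_m \in \g$ we get
\[
F\bigl(\mu_m^\vee \otimes v_1 \otimes \cdots \otimes v_m\bigr) \;=\; f\bigl(\upsilon(\mu_m^\vee)(v_1,\dots,v_m)\bigr) \;=\; \sum_{\substack{\tau \in \PaRT_m \\ |\tau|=0}} f_\tau(v_1,\dots,v_m)\ ,
\]
which is exactly the formula of the proposition. Functoriality --- the identities $\upsilon^*(g f) = (\upsilon^* g)(\upsilon^* f)$ and $\upsilon^*\id = \id$ --- then follows immediately from the construction as a composition of natural chain maps together with the uniqueness clause in the universal property of cofree coalgebras.

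The core technical input is the classical identity $\pi \circ \upsilon = \iota$, which is built into the bar--cobar adjunction; everything else is cofreeness. I do not expect a serious obstacle: the only mild subtlety is checking that the filtration-preservation required in the definition of $\infty_\pi$- and $\infty$-morphisms carries through the construction, which is immediate since $\upsilon(\g)$ preserves filtrations (being the identity on $\g$ extended cooperadically) and $f$ preserves them by assumption.
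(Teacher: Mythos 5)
Your proof is correct and follows essentially the same route as the paper: pulling back along the unit $\upsilon$ of the operadic bar--cobar adjunction, using $\pi\circ\upsilon=\iota$ to see that one lands on $\infty$-morphisms, and reading off the components from the explicit form of $\upsilon$ (the degree-$0$ partitioned trees). The paper states this in two lines; you have merely filled in the cofreeness and functoriality details, which are all accurate.
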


\begin{proof}
The two operadic twisting morphisms $\pi$ and $\iota$ are related by the unit of the operadic bar-cobar adjunction: 
\begin{center}
		\begin{tikzpicture}
		\node (a) at (0,1){$\com^\vee$};
		\node (b) at (0,-1){$\Bar\Cobar\com^\vee$};
		\node (c) at (2,0){$\sLi$~.};
		
		\draw[->] (a) -- node[left]{$\upsilon$} (b);
		\draw[->] (a) -- node[above right]{$\iota$} (c);
		\draw[->] (b) -- node[below right]{$\pi$} (c);
		\end{tikzpicture}
	\end{center}
So  pulling back along $\upsilon$~, one sends an $\infty_\pi$-morphism to an $\infty$-morphism:
\[
\upsilon^*f \ : \ 
\com^\vee(\g) \xrightarrow{\upsilon(\id)}
\Bar\Cobar\com^\vee(\g) \xrightarrow{f}
\h \ .
\]
 The explicit form of the unit $\upsilon$ of the operadic bar-cobar adjunction gives the formula of the present statement.
\end{proof}

\begin{proposition}\label{lemma:describe R(f)_0}
	Let $f:\g\rightsquigarrow\h$ be an $\infty_\pi$-morphism. Under the identification $\R(\g)_0\cong\MC(\g)$~, we have 
	\[\R(f)_0\cong\MC(\upsilon^*f)\ .\]
\end{proposition}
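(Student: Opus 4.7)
The plan is to apply the explicit formula for $\R(f)$ developed around~\eqref{Eq:FuncR} directly on $0$-simplices. Under the identification $\R(\g)_0 \cong \MC(\g)$, an element $\alpha \in \MC(\g)$ corresponds to the morphism $x_\alpha : \rmC^0 \to \g$ sending the generator $a_0$ to $\alpha$; by Proposition~\ref{thm:MC elements of convolution algebras} this is also the Maurer--Cartan element of the convolution $\sLi$-algebra $\hom^\pi(\rmC^0,\g)$ corresponding to the strict morphism $\mc^0 \to \g$, $a_0 \mapsto \alpha$. Since $\R(f) = \MC(\hom^\pi_r(\id,f))$ by construction, applying the explicit formula
$\hom^\pi_r(\id,f)_m(x_1,\ldots,x_m) = f(\id \otimes_{\Sy_m} (x_1 \otimes \cdots \otimes x_m))\,\Delta_m$
and evaluating at $a_0$ yields
\[
\R(f)(\alpha) \;=\; \sum_{m \geqslant 1} \tfrac{1}{m!}\,f\bigl(\Delta_m(a_0)\bigr)\big|_{a_0 \mapsto \alpha}\ .
\]

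The key step is to compute $\Delta_m(a_0)\in \Bar\Cobar\com^\vee(m) \otimes_{\Sy_m} (\rmC^0)^{\otimes m}$. Since $a_0$ has degree $0$ and $\Delta_m$ is degree preserving, only degree-$0$ partitioned rooted trees contribute; by the degree formula (blocks minus vertices), these are exactly the trees in $\PaRT_m$ of degree $0$, namely those whose partition assigns each vertex to its own block. To determine their coefficients, observe that at $n=0$ the Dupont contraction is trivial ($h_0 = 0$), so the transferred $\Cobar\Bar\com$-algebra structure on $\rmC_0 = \k$ coincides with the commutative structure pulled back along the counit $\Cobar\Bar\com \to \com$; in particular $\mu_\tau^{\rmC_0}(1,\ldots,1) = 1$ for every degree-$0$ partitioned tree $\tau$ and vanishes otherwise. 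Dualizing in the Schur coinvariant convention then yields
\[
\Delta_m(a_0) \;=\; \sum_{\substack{\tau \in \PaRT_m \\ |\tau|=0}} \tau \otimes_{\Sy_m} a_0^{\otimes m}\ .
\]

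Substituting back and using that $f$ acts on a labeled partitioned tree by $f(\tau \otimes_{\Sy_m} \alpha^{\otimes m}) = f_\tau(\alpha,\ldots,\alpha)$, we get
\[
\R(f)(\alpha) \;=\; \sum_{m \geqslant 1} \tfrac{1}{m!} \sum_{\substack{\tau \in \PaRT_m \\ |\tau|=0}} f_\tau(\alpha,\ldots,\alpha) \;=\; \sum_{m \geqslant 1} \tfrac{1}{m!}\,(\upsilon^*f)_m(\alpha,\ldots,\alpha) \;=\; \MC(\upsilon^*f)(\alpha)\ ,
\]
where the middle equality is precisely the defining formula of $\upsilon^*f$ from Proposition~\ref{lemma:oo-pi induce oo-iota}. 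The main obstacle lies in the careful bookkeeping of Schur coinvariant normalizations when dualizing the transferred structure to obtain the coefficient of each $\tau$ in $\Delta_m(a_0)$: once this is done and the bijection between degree-$0$ partitioned trees and rooted trees (forgetting the trivial partition) is used, all factorials line up as above.
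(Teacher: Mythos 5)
Your proof is correct and follows essentially the same route as the paper's: both identify $\R(f)_0$ with $\MC(\hom^\pi_r(\id,f))$, compute the $\Bar\Cobar\com^\vee$-coalgebra structure on $\rmC^0$ as $\Delta_m(a_0)=\sum_{\tau\in\PaRT_m,\,|\tau|=0}\tau\otimes_{\Sy_m}a_0^{\otimes m}$ (the paper cites the proof of \cref{prop:mc0} for this, where you spell out the degree argument and the triviality of $h_0$), and then match the resulting sum over degree-zero partitioned trees with the defining formula for $\upsilon^*f$ from \cref{lemma:oo-pi induce oo-iota}. No gaps; your version is just slightly more explicit about why only the degree-zero trees and the trivial transferred structure on $\rmC_0\cong\k$ contribute.
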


\begin{proof}
	Recall that 
	\[
	\R(\g)_0\cong\MC(\hom^\pi(C^0,\g))
	\]
	and that $\R(f)_0$ acts by $\MC(\hom^\pi_r(\id, f))$ on it, see the proof of  \cref{prop:Functoriality}. 
Also, it follows directly from the proof of \cref{prop:mc0}, that the 
 $\Bar \Cobar \com^\vee$-coalgebra structure on $\rmC^0\cong \k a_0$ is given by
		\[
		\Delta(a_0) = \sum_{m\geqslant 1}\sum_{\substack{\tau\in\PaRT_m\\ |\tau|=0}}\tau\otimes_{\S_m}a_0^{\otimes m}\ .
		\]
	Thus, for any $x_1,\ldots,x_m\in\hom^\pi(C^0,\g)$~, we have 
	\begin{align*}
		\hom^\pi_r(\id, f)_m(x_1, \ldots, x_m)(a_0) ={}&f(\id \otimes_{\Sy_m}(x_1\otimes\cdots\otimes x_m))\Delta_m(a_0)\\
		={}&f(\id \otimes_{\Sy_m}(x_1\otimes\cdots\otimes x_m))\left(\sum_{\substack{\tau \in \PaRT_m \\ |\tau|=0}}\tau\otimes_{\Sy_m}  a_0^{\otimes m}\right)\\
		={}&\left(\upsilon^*f\right)_m (x_1(a_0), \ldots, x_m(a_0))
	\end{align*}
	as desired. 
\end{proof}

Recall that the set of degree 0 partitioned rooted trees is made up of all rooted trees with vertices of arity greater of equal to 2 and equipped with the finest partition, that is the one where every vertex forms a block. Unfortunately, this process cannot be reversed: the counit $\upsilon$ of adjunction is not an isomorphism. However, it is a quasi-isomorphism, so the reverse process holds true ``up to homotopy''. 
No such reverse process can be  strictly compatible with the respective composition of higher morphisms. 

\begin{theorem}\label{thm:NoUnivForm}\leavevmode
\begin{enumerate}
\item\label{pt:exist formulas} There exist universal formulas which associate $\infty$-morphisms to $\infty_\pi$-morphisms
and which are the identity on strict morphisms.
\item\label{pt:no composition} No such universal formulas which associate $\infty$-morphisms to $\infty_\pi$-morphisms and which   do not change the complete $\sLi$-algebra structures can respect the compositions.
\end{enumerate}
\end{theorem}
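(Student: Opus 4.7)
The required construction is provided by \cref{lemma:oo-pi induce oo-iota}: the formula $(\upsilon^*f)_m = \sum_{\tau \in \PaRT_m,\, |\tau|=0} f_\tau$ is plainly universal, being natural in all the input data. To verify that it restricts to the identity on strict morphisms, recall that a strict morphism $f: \g \to \h$ corresponds to the $\infty_\pi$-morphism with $f_{|} = f$ and $f_\tau = 0$ for every other $\tau$. Since the trivial tree $|$ is the unique element of $\PaRT_1$, and since every degree-zero partitioned rooted tree of arity $m \geq 2$ contains at least one vertex, one obtains $(\upsilon^*f)_1 = f$ and $(\upsilon^*f)_m = 0$ for $m \geq 2$, which is exactly the strict morphism $f$ viewed as an $\infty$-morphism.

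\textbf{Part (2).} Proceed by contradiction: assume there exists a universal, composition-preserving formula $F$ that is the identity on $\sLi$-algebras. The universality hypothesis forces $F$ to come from a cooperadic datum. More precisely, applying $F$ to $\infty_\pi$-morphisms between free complete $\sLi$-algebras and using functoriality (preservation of composition and identities), one shows that $F$ must be determined by a morphism of coaugmented cooperads $\sigma: \com^\vee \to \Bar\Cobar\com^\vee$ satisfying $\pi \circ \sigma = \iota$ as operadic twisting morphisms. The condition that $F$ act as the identity on strict morphisms then pins $\sigma$ down as the canonical unit $\upsilon$ of the bar--cobar adjunction, by propagating the constraint through the cooperadic differential.

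It then remains to prove that $F = \upsilon^*$ does not preserve composition. The plan is to exhibit a small free complete $\sLi$-algebra $\g$ and two $\infty_\pi$-endomorphisms $f, g$ of $\g$ with non-trivial components $f_\tau, g_{\tau'}$ indexed by partitioned rooted trees of strictly negative degree ($|\tau|, |\tau'| < 0$). Their composite $g \circ f$ as $\infty_\pi$-morphisms mixes these higher components through the comultiplication of $\Bar\Cobar\com^\vee$, producing a non-zero degree-zero component $(g \circ f)_{\tau''}$ for some $\tau'' \in \PaRT_m$ with $|\tau''| = 0$. The corresponding term appears in $\upsilon^*(g \circ f)$ but is absent from $\upsilon^*(g) \circ \upsilon^*(f)$, since the latter is computed purely from the degree-zero components of $f$ and $g$, yielding the required contradiction.

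The hard part will be making the reduction of ``universal formula'' to a cooperad morphism $\sigma$ fully rigorous: a priori, such a formula might incorporate operations from the $\sLi$-algebras themselves in addition to cooperadic data, and one must carefully rule out such cases. A cleaner alternative that I would ultimately pursue is to construct directly an explicit small counterexample showing that any hypothetical universal, composition-preserving $F$ that is the identity on strict morphisms is forced to take two incompatible values on a single $\infty_\pi$-morphism, thus avoiding the abstract cooperadic reduction altogether.
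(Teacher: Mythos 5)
There is a genuine gap: you have reversed the direction of the formulas the theorem is about, and this invalidates both parts of your argument. The statement concerns the \emph{reverse} process discussed in the paragraph immediately preceding the theorem: universal formulas that take an $\infty$-morphism as \emph{input} and produce an $\infty_\pi$-morphism as \emph{output}. The forward direction, from $\infty_\pi$- to $\infty$-morphisms, is exactly \cref{lemma:oo-pi induce oo-iota} and needs no new theorem. Worse, under your reading Part~(2) would be false: the assignment $f\mapsto\upsilon^*f$ \emph{does} respect composition --- \cref{lemma:oo-pi induce oo-iota} asserts precisely that it is functorial, being given by precomposition with the cooperad morphism $\upsilon$. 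Your proposed counterexample cannot be realized: every partitioned rooted tree has non-positive degree, so the monadic decomposition of a degree-zero element of $\Bar\Cobar\com^\vee$ involves only degree-zero pieces; hence the degree-zero components of a composite $g\circ f$ of $\infty_\pi$-morphisms are determined by the degree-zero components of $f$ and $g$ alone, and no ``mixing'' of negative-degree components into $\upsilon^*(g\circ f)$ can occur.

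For the correct direction both parts require different arguments. For Part~(1) the paper encodes $\infty$- and $\infty_\pi$-morphisms as algebras over two $2$-colored operads, observes that $\upsilon$ induces a quasi-isomorphism between them, and uses that both are cofibrant and fibrant in the model category of colored operads to obtain a homotopy inverse; pulling back along it yields the universal formulas, and an arity-$1$ homology argument shows they fix strict morphisms. For Part~(2), any morphism $\Phi$ of these $2$-colored operads fixing the two algebra-structure summands is forced, by compatibility with the differentials, to satisfy $\Phi(\tau_1)=a\,\mu_2^{01}$ and $\Phi(\tau_3)=0$ for explicit trees $\tau_1,\tau_3$, whence $\big(\tilde g\circ\tilde f\big)_{\tau_3}=a^2\,(g_2\circ_1 f_2)\neq 0=\widetilde{(g\circ f)}_{\tau_3}$. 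Your instinct to reduce ``universal formula'' to a (co)operadic datum is in the right spirit, but it must be set up for maps \emph{out of} $\Bar\Cobar\overline{\com}^\vee$ rather than into it, and your fallback plan of a direct counterexample would likewise have to target the reverse direction.
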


\begin{proof}
	Along with the proof of point (\ref{pt:exist formulas}), we will also make more explicit what we mean by such a statement. Recall from \cite{Markl04} and \cite[Section~6.2]{MerkulovVallette09I} that there exists a $2$-colored operad 
	\[\left(\T\left(
	s^{-1} \overline{\com}^\vee \oplus 
	{\com}^\vee \oplus 
	s^{-1} \overline{\com}^\vee
	\right), d\right)\]
	which encodes the data of two complete $\sLi$-algebras on $\g$ and $\h$ together with an $\infty$-morphism from $\g$ to $\h$~.
	The first generating summand lies in color $1$~, the second one lies in input color $0$ and output color $1$~, and the third one lies in color $0$~. The image of the differential on the first and the third summands is the one of the cobar construction, so it is constructed from the partial decomposition products of the cooperad $\com^\vee$ and the image of the differential on 
	second summand is given by both the partial and the monoidal decomposition product of the cooperad $\com^\vee$, see \cite{HLV2} for more details.
	In the same way, there is a $2$-colored operad 
	\[\left(\T\left(
	s^{-1} \overline{\com}^\vee \oplus 
	\Bar \Cobar \overline{\com}^\vee \oplus 
	s^{-1} \overline{\com}^\vee
	\right), d'\right)\]
	which encodes the data of two complete $\sLi$-algebras on $\g$ and $\h$ together with an $\infty_\pi$-morphism from $\g$ to $\h$~. The only difference with the previous $2$-colored operad lies in the image of the differential on the second summand 
	which is constructed from the partial and the monoidal decomposition products of the cooperad  $\Bar \Cobar \overline{\com}^\vee$ \emph{and} the canonical twisting morphism $\pi$~. 
	Applying the morphism of cooperads $\upsilon : \com^\vee \xrightarrow{\sim} \Bar \Cobar \overline{\com}^\vee$ to the second generating summand and the identity to the other ones, we get a morphism of $2$-colored operads 
	\[
	\left(\T\left(
	s^{-1} \overline{\com}^\vee \oplus 
	{\com}^\vee \oplus 
	s^{-1} \overline{\com}^\vee
	\right), d\right)
	\longrightarrow
	\left(\T\left(
	s^{-1} \overline{\com}^\vee \oplus 
	\Bar \Cobar \overline{\com}^\vee \oplus 
	s^{-1} \overline{\com}^\vee
	\right), d'\right).\]
	Pulling back along this map produces the functorial and universal formula of \cref{lemma:oo-pi induce oo-iota}.
	
	\medskip
	Since $\upsilon$ is a quasi-isomorphism one can see, by a spectral sequence argument, that this morphism of $2$-colored operads is a quasi-isomorphism. 
	In the model category of \cite{caviglia2014}, every $2$-colored operad is fibrant. 
	Considering the weight filtration, one can see that the above two $2$-colored operads are cofibrant. 
	Therefore, the above mentioned quasi-isomorphism admits a homotopy inverse.
	Pulling back along it, produces universal formulas which associate $\infty$-morphisms to $\infty_\pi$-morphisms. Since this homotopy inverse induces an inverse isomorphism on the level of homology group in arity $1$ and color $01$ part, it must be equal to the identity in this arity and color, so that it preserves strict morphisms. 
	
	\medskip
	
	For point (\ref{pt:no composition}), suppose that  
	\[
	\left(\T\left(
	s^{-1} \overline{\com}^\vee \oplus 
	\Bar \Cobar \overline{\com}^\vee \oplus 
	s^{-1} \overline{\com}^\vee
	\right)
	,d' \right)
	\stackrel{\Phi}{\longrightarrow}
	\left(\T\left(
	s^{-1} \overline{\com}^\vee \oplus 
	{\com}^\vee \oplus 
	s^{-1} \overline{\com}^\vee
	\right), d\right)
	\]
	is a morphism of $2$-colored operads which is the identity on the first and the third summands, corresponding to the fact that it leaves the algebraic structures untouched, and which sends 
	$\id \in \Bar \Cobar \overline{\com}^\vee$
to $a\, \id \in \com^\vee$~, with $a\in\k$~. We claim that the induced universal formulas associating $\infty$-morphisms to $\infty_\pi$-morphisms, obtained  
	by pulling back along $\Phi$~, do not respect their compositions. The fact that the morphism $\Phi$ preserves the respective differentials forces 
\begin{align*}
\Phi(\tau_1)=a\, \mu_2^{01}\ , \quad \Phi(\tau_2)=a\left(
\mu_2^{01}\circ_1 \left(s^{-1}\mu_2^{0}\right)-
\left(s^{-1}\mu_2^{1}\right)\circ_1 \mu_2^{01}
\right), \quad \text{and}\quad \Phi(\tau_3)=0\ ,
\end{align*}
	where $s^{-1}\mu_m^0$ and $s^{-1}\mu_m^1$ stand respectively for the generators of the first and the third summand, where $\mu_m^{01}$ stand for the generators of the second summand, and where 
	\begin{align*}
	\tau_1=\vcenter{\hbox{
		\begin{tikzpicture}
			\def\scale{0.6};
			\pgfmathsetmacro{\diagcm}{sqrt(2)};
			\coordinate (r) at (0,0);
			\coordinate (v1) at ($(r) + (0,\scale*1)$);
			\coordinate (l1) at ($(v1) + (135:\scale*\diagcm)$);
			\coordinate (l2) at ($(v1) + (45:\scale*\diagcm)$);
			\draw[thick] (r) to (v1);
			\draw[thick] (v1) to (l1);
			\draw[thick] (v1) to (l2);
			\draw (v1) circle[radius=\scale*0.5];
			\node[above] at (l1) {$\scriptstyle1$};
			\node[above] at (l2) {$\scriptstyle2$};
		\end{tikzpicture}}}
	\ , \ 
	\tau_2=\vcenter{\hbox{\begin{tikzpicture}
			\def\scale{0.6};
			\pgfmathsetmacro{\diagcm}{sqrt(2)};
			\coordinate (r) at (0,0);
			\coordinate (v1) at ($(r) + (0,\scale*1)$);
			\coordinate (v2) at ($(v1) + (135:\scale*\diagcm)$);
			\coordinate (l1) at ($(v2) + (135:\scale*\diagcm)$);
			\coordinate (l2) at ($(v2) + (45:\scale*\diagcm)$);
			\coordinate (l3) at ($(v1) + (45:\scale*\diagcm)$);
			\draw[thick] (r) to (v1);
			\draw[thick] (v1) to (v2);
			\draw[thick] (v2) to (l1);
			\draw[thick] (v2) to (l2);
			\draw[thick] (v1) to (l3);
			\node[above] at (l1) {$\scriptstyle1$};
			\node[above] at (l2) {$\scriptstyle2$};
			\node[above] at (l3) {$\scriptstyle3$};
			\draw ($(v1)!0.5!(v2)$) ellipse[x radius=\scale*1.3, y radius=\scale*0.6, rotate=-45];
		\end{tikzpicture}}}
	\ ,\ \ \text{and} \ \ 
	\tau_3=\vcenter{\hbox{\begin{tikzpicture}
			\def\scale{0.6};
			\pgfmathsetmacro{\diagcm}{sqrt(2)};
			\coordinate (r) at (0,0);
			\coordinate (v1) at ($(r) + (0,\scale*1)$);
			\coordinate (v2) at ($(v1) + (135:\scale*\diagcm)$);
			\coordinate (l1) at ($(v2) + (135:\scale*\diagcm)$);
			\coordinate (l2) at ($(v2) + (45:\scale*\diagcm)$);
			\coordinate (l3) at ($(v1) + (45:\scale*\diagcm)$);
			\draw[thick] (r) to (v1);
			\draw[thick] (v1) to (v2);
			\draw[thick] (v2) to (l1);
			\draw[thick] (v2) to (l2);
			\draw[thick] (v1) to (l3);
			\node[above] at (l1) {$\scriptstyle1$};
			\node[above] at (l2) {$\scriptstyle2$};
			\node[above] at (l3) {$\scriptstyle3$};
			\draw (v1) circle[radius=\scale*0.5];
			\draw (v2) circle[radius=\scale*0.5];
		\end{tikzpicture}}} .
	\end{align*}
Notice that the condition on $\tau_2$ implies that $a\in\{0, 1, -1\}$~. 
When $f : \g \rightsquigarrow \h$ is an $\infty$-morphism, we denote by $\tilde{f}$ the induced $\infty_\pi$-morphism obtained by pulling back  along $\Phi$~. The above computations show that, for any pair $f : \g \rightsquigarrow \h$ and 
$g : \h \rightsquigarrow \mathfrak{k}$ of $\infty$-morphisms, we have 
\[\left(\tilde{g}\circ \tilde{f}\right)_{\tau_3}=a^2 (g_2\circ_1 f_2) \neq \widetilde{\left(g \circ f\right)}_{\tau_3}=0\ . \]
\end{proof}

\subsection{Homotopy functoriality with respect to \texorpdfstring{$\infty$}{infinity}-morphisms}

As a consequence of \cref{thm:NoUnivForm}, in order to define a functorial structure on the integration functor $\R$ with respect to $\infty$-morphisms we need to refine our arguments as follows.
Recall that the universal twisting morphism $\iota$ is Koszul by \cite[Lemma~6.5.9]{LodayVallette12} and so we have a natural quasi-isomorphism
	\[
	\epsilon_\g \ \colon\  \hatCobar_\iota \Bar_\iota \g \stackrel{\sim}{\longrightarrow} \g 
	\]
in the category $\sLialg$ by \cref{prop:EpsiResFonc}. 
For any $\infty$-morphism $f : \Bar_\iota \g \to \Bar_\iota \h$ of complete $\sLi$-algebras we consider the zig-zag 
	\[
\begin{tikzcd}[column sep=large]
 \g
&
\arrow[l,, "\sim", "\epsilon_\g"']
\hatCobar_\iota \Bar_\iota \g
 \arrow[r, "\hatCobar_\iota f"]
&
\hatCobar_\iota \Bar_\iota \h
\arrow[r, "\epsilon_{\, \h}","\sim"']
&\h
\end{tikzcd}	
	\]
of strict morphisms of complete $\sLi$-algebras and its image under the integration functor
	\[
\begin{tikzcd}[column sep=large]
\R(\g)
&
\arrow[l, "\sim", "\R(\epsilon_\g)"']
\R\big(\hatCobar_\iota \Bar_\iota \g\big)
 \arrow[r, "\R(\hatCobar_\iota f)"]
&
\R\big(\hatCobar_\iota \Bar_\iota \h\big)
\arrow[r, "\R(\epsilon_{\, \h})", "\sim"']
&\R(\h) \ .
\end{tikzcd}	
	\]
Notice that the integration functor sends strict quasi-isomorphisms to weak equivalences by \cref{thm:HoInvariance}; the proof of that result is  independent of the following result. 

\begin{proposition}
The above assignment induces a functor 
		\[
		\Rh\ : \ \infty\text{-}\,\sLi\text{-}\,\mathsf{alg}\longrightarrow \mathsf{Ho(}\sSe\mathsf{)}\ , 
		\]
which extends the composite of the integration functor with the projection onto the homotopy category of simplicial sets. 		
\end{proposition}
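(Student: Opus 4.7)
The plan is to verify, in order: that the zig-zag unambiguously defines a morphism $\Rh(f)$ in $\mathsf{Ho(}\sSe\mathsf{)}$ for each $\infty$-morphism $f$; that the resulting assignment respects identities and composition; and that it agrees with the class of $\R(f)$ when $f$ is strict. No serious obstacle is expected --- the argument is entirely formal once one invokes the assumed homotopy invariance from \cref{thm:HoInvariance}.

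For well-definedness, an $\infty$-morphism is by \cref{def:InfMor} a morphism of coalgebras $f : \Bar_\iota \g \to \Bar_\iota \h$, so applying the functor $\hatCobar_\iota$ produces a strict morphism $\hatCobar_\iota f : \hatCobar_\iota \Bar_\iota \g \to \hatCobar_\iota \Bar_\iota \h$ of complete $\sLi$-algebras. Since $\iota$ is Koszul, \cref{prop:EpsiResFonc} provides natural strict quasi-isomorphisms $\epsilon_\g$ and $\epsilon_\h$, whose images under $\R$ are weak equivalences by \cref{thm:HoInvariance}, hence isomorphisms in $\mathsf{Ho(}\sSe\mathsf{)}$. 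Inverting them in the homotopy category yields $\Rh(f)$ unambiguously.

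For functoriality, the key point is that composition of $\infty$-morphisms is defined in \cref{def:InfMor} to be composition of the underlying coalgebra morphisms, so $\hatCobar_\iota$ preserves it strictly: $\hatCobar_\iota(g\circ f) = \hatCobar_\iota g \circ \hatCobar_\iota f$ in $\sLialg$. Consequently, in $\mathsf{Ho(}\sSe\mathsf{)}$,
\[
\Rh(g)\circ \Rh(f) = \R(\epsilon_\mathfrak{k})\circ \R(\hatCobar_\iota g)\circ \R(\epsilon_\h)^{-1}\circ \R(\epsilon_\h)\circ \R(\hatCobar_\iota f)\circ \R(\epsilon_\g)^{-1},
\]
and the central pair cancels, leaving $\R(\epsilon_\mathfrak{k})\circ \R(\hatCobar_\iota(g\circ f))\circ \R(\epsilon_\g)^{-1} = \Rh(g\circ f)$. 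The identity $\id_\g$ corresponds to $\id_{\Bar_\iota \g}$, so $\hatCobar_\iota$ sends it to the identity, and the zig-zag collapses to $\R(\epsilon_\g)\circ \R(\epsilon_\g)^{-1} = \id_{\R(\g)}$.

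Finally, a strict morphism $f : \g \to \h$ corresponds to the $\infty$-morphism $\Bar_\iota f$, and naturality of the counit $\epsilon$ with respect to strict morphisms --- immediate from the explicit formula in \cref{prop:EpsiResFonc} --- makes the diagram
\[
\begin{tikzcd}
\hatCobar_\iota \Bar_\iota \g \arrow[r, "\hatCobar_\iota \Bar_\iota f"] \arrow[d, "\epsilon_\g"'] & \hatCobar_\iota \Bar_\iota \h \arrow[d, "\epsilon_\h"] \\
\g \arrow[r, "f"] & \h
\end{tikzcd}
\]
commute in $\sLialg$. Applying $\R$ and inverting the vertical arrows in $\mathsf{Ho(}\sSe\mathsf{)}$ yields $\Rh(f) = [\R(f)]$, showing that $\Rh$ extends the composite of $\R$ with the projection to the homotopy category, which completes the verification.
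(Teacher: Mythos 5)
Your proposal is correct and follows essentially the same route as the paper's own proof: well-definedness via the fact that $\R(\epsilon_\g)$ is a weak equivalence, functoriality by cancelling the middle pair $\R(\epsilon_\h)^{-1}\circ\R(\epsilon_\h)$ together with strict functoriality of $\hatCobar_\iota$, and the extension claim via naturality of the counit with respect to strict morphisms. Nothing to add.
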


\begin{proof}
Since the map $\R(\epsilon_\g)$ is always a weak equivalence of simplicial sets, the map $\Rh(f)$ is always  well-defined in the homotopy category of simplicial sets. Applying this to the  identity map, we get
\[
\begin{tikzcd}[column sep=large]
\R(\g)
&
\arrow[l, "\R(\epsilon_\g)"', "\sim"]
\R\big(\hatCobar_\iota \Bar_\iota \g\big)
\arrow[r, "\R(\epsilon_\g)","\sim"']
&\R(\g) \ , 
\end{tikzcd}	
\]
which is equal to the identity of $\R(\g)$ in $\mathsf{Ho(}\sSe\mathsf{)}$~.

\medskip

Given two $\infty$-morphisms 
$f : \Bar_\iota \g \to \Bar_\iota \h$ and 
$g : \Bar_\iota \h \to \Bar_\iota \mathfrak{k}$~, the composite of their images under $\Rh$ is equal to
\[
\vcenter{\hbox{
	\begin{tikzpicture}
		\node (a) at (0,0) {${\R(\g)}$};
		\node (b) at (3,0) {${\R\big(\hatCobar_\iota \Bar_\iota \g\big)}$};
		\node (c) at (6,0) {${\R\big(\hatCobar_\iota \Bar_\iota \h\big)}$};
		\node (d) at (9,0) {${\R(\h)}$};
		\node (e) at (9,-1.8) {${\R\big(\hatCobar_\iota \Bar_\iota \h\big)}$};
		\node (f) at (9,-3.6) {${\R\big(\hatCobar_\iota \Bar_\iota \mathfrak{k}\big)}$};
		\node (g) at (9,-5.4) {$\scriptstyle{\R(\mathfrak{k})}$};
		
		\draw[->] (b) to node[above]{$\scriptstyle{\R(\epsilon_\g)}$} node[below]{$\sim$} (a);
		\draw[->] (b) to node[above]{$\scriptstyle{\R(\hatCobar_\iota f)}$} (c);
		\draw[->] (c) to node[above]{$\scriptstyle{\R(\epsilon_{\, \h})}$} node[below]{$\sim$} (d);
		\draw[->] (e) to node[right]{$\scriptstyle{\R(\epsilon_{\, \h})}$} node[above, sloped]{$\sim$} (d);
		\draw[->] (e) to node[right]{$\scriptstyle{\R(\hatCobar_\iota g)}$} (f);
		\draw[->] (f) to node[right]{$\scriptstyle{\R(\epsilon_\mathfrak{k})}$} node[below, sloped]{$\sim$} (g);
		
		\draw[double equal sign distance] (c) to (e);
	\end{tikzpicture}
}}
\]
which is the image of their composite in the homotopy category of simplicial sets. This proves that we have a well-defined functor.

\medskip

For any strict morphism $f \colon \g \to \h$ of complete $\sLi$-algebras, the following diagram is commutative 
\[
\vcenter{\hbox{
	\begin{tikzpicture}
		\node (a) at (0,0) {$\hatCobar_\iota \Bar_\iota \g$};
		\node (b) at (3,0) {$\hatCobar_\iota \Bar_\iota \h$};
		\node (c) at (0,-2) {$\g$};
		\node (d) at (3,-2) {$\h$};
		
		\draw[->] (a) to node[above]{$\scriptstyle{\hatCobar_\iota\Bar_\iota f}$} (b);
		\draw[->] (a) to node[left]{$\scriptstyle{\epsilon_\g}$} node[above, sloped]{$\sim$} (c);
		\draw[->] (b) to node[right]{$\scriptstyle{\epsilon_{\, \h}}$} node[below, sloped]{$\sim$} (d);
		\draw[->] (c) to node[above]{$\scriptstyle{f}$} (d);
	\end{tikzpicture}
}}
\]
This shows that $\Rh\big(\Bar_\iota f\big)=\mathrm{proj}\circ \R(f)$~, which proves the last claim. 
\end{proof}

\subsection{Twisting procedure and the integration functor}\label{sec:TwisProcSimRep}

One of the main applications of the twisting procedure of \cref{prop:TwiProc} is to change the base point 
from $0$ to $\alpha$~, as the following well-known result shows. 

\begin{lemma}\label{lemma:MC in twisted Loo alg}
Let $\g$ be a complete $\sLi$-algebra and let $\alpha\in\MC(\g)$~. An element $x\in\g_0$ is a Maurer--Cartan element in $\g^\alpha$  if and only if $x+\alpha$ is a Maurer--Cartan element in $\g$~.
\end{lemma}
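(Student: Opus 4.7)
The plan is to prove this by direct computation, expanding the Maurer--Cartan equation for $\alpha+x$ in $\g$ via multilinear/symmetric expansion of the operations $\ell_n$ and matching the resulting terms with those of the Maurer--Cartan equation for $x$ in the twisted algebra $\g^\alpha$.

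First, I would write out explicitly what it means for $x\in\g_0$ to be a Maurer--Cartan element in $\g^\alpha$, using the formulas for the twisted operations from \cref{prop:TwiProc} (with the convention $\ell_1=d$ and $\ell_1^\alpha=d^\alpha$). The equation
\[
d^\alpha(x)+\sum_{m\geqslant 2}\tfrac{1}{m!}\,\ell_m^\alpha(x,\ldots,x)=0
\]
unfolds into the double sum
\[
\sum_{m\geqslant 1}\sum_{k\geqslant 0}\tfrac{1}{k!\,m!}\,\ell_{k+m}\bigl(\alpha^k,x^m\bigr)=0\ ,\tag{$\star$}
\]
where the outer term with $m=1$ absorbs the contribution coming from $d^\alpha(x)$.

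Next, I would compute the Maurer--Cartan equation for $\alpha+x$ in $\g$ and expand each $\ell_n\bigl((\alpha+x)^n\bigr)$ by multilinearity and symmetry of the operations. The symmetry of $\ell_n$ together with the binomial coefficient $\binom{n}{k}$ for the number of ways to select which $k$ entries receive $\alpha$ and which $m=n-k$ entries receive $x$ gives
\[
\sum_{n\geqslant 1}\tfrac{1}{n!}\,\ell_n\bigl((\alpha+x)^n\bigr)
=\sum_{\substack{k,m\geqslant 0\\ k+m\geqslant 1}}\tfrac{1}{k!\,m!}\,\ell_{k+m}\bigl(\alpha^k,x^m\bigr)\ .
\]

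Finally, I would split the right-hand side according to whether $m=0$ or $m\geqslant 1$. The $m=0$ part is exactly $\sum_{k\geqslant 1}\tfrac{1}{k!}\,\ell_k(\alpha^k)$, which vanishes precisely because $\alpha\in\MC(\g)$. The remaining $m\geqslant 1$ part is the left-hand side of $(\star)$. Hence $\alpha+x\in\MC(\g)$ is equivalent to $(\star)$, which is equivalent to $x\in\MC(\g^\alpha)$, concluding the proof. There is no real obstacle here; the only thing to be careful about is the bookkeeping of the symmetry factors when expanding $\ell_n((\alpha+x)^n)$, and to remember to treat $\ell_1=d$ uniformly with the higher brackets.
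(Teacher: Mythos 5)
Your proposal is correct and is exactly the ``straightforward computation'' that the paper's proof invokes without spelling out (the paper defers the details, pointing to a more conceptual gauge-group argument in the cited reference). The symmetry bookkeeping is right: the binomial coefficient $\binom{n}{k}$ against $\tfrac{1}{n!}$ yields $\tfrac{1}{k!\,m!}$, the $m=0$ terms vanish by $\alpha\in\MC(\g)$, and the $m\geqslant 1$ terms reproduce the twisted Maurer--Cartan equation.
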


\begin{proof}
This follows from a straightforward computation. For a more conceptual explanation, we refer the reader to \cite[Chapter~3, Corollary~3.1]{DotsenkoShadrinVallette18}. 
\end{proof}

This is particularly useful when one wants to study the homotopy groups of the simplicial set $\R(\g)$ as it allows us to always reduce to the case where the base point is $0\in\g$~. We will systematically use this fact in \cref{sect:homotopy theory,sec: rational models}. However, in order to prove that some statements hold functorially with respect to $\infty_\pi$-morphisms, we need a functorial version of the twisting procedure. Such a result holds on the level of $\infty$-morphisms. 

\begin{proposition}\label{prop:TwiFuncInfMor}
Let $f : \g \rightsquigarrow \h$ be an $\infty$-morphism between complete $\sLi$-algebras and let $\alpha\in\MC(\g)$ be a Maurer--Cartan element. 
The formula 
\[f^\alpha_m \coloneqq \sum_{k\geqslant 0} {\textstyle \frac{1}{k!}} f_{k+m}\big(\alpha^k, -,  \ldots,  -   \big) \]
defines an $\infty$-morphism 
\[f^\alpha : \g^\alpha \rightsquigarrow \h^{f(\alpha)}\]
between the associated twisted $\sLi$-algebras. 
This assignment is functorial: 
we have 
\[\id^\alpha=\id : \g^\alpha \to \g^\alpha\]
and 
\[(g\circ f)^{\alpha}=g^{f(\alpha)}\circ f^{\alpha} : \g^\alpha \to \mathfrak{k}^{g(f(\alpha))}
~,\]
for any $\infty$-morphism $g : \h \rightsquigarrow \mathfrak{k}$~.
\end{proposition}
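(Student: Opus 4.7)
The plan is to leverage the bar construction interpretation of $\infty$-morphisms between complete $\sLi$-algebras. Recall from \cref{ex:Loomorph} that an $\infty$-morphism $f: \g \rightsquigarrow \h$ is equivalent to a morphism $F: \Bar_\iota \g \to \Bar_\iota \h$ of differential graded $\com^\vee$-coalgebras. Under this correspondence, a Maurer--Cartan element $\alpha \in \MC(\g)$ corresponds to the group-like cycle
\[
e^\alpha \coloneqq \sum_{k\geqslant 0} {\textstyle \frac{1}{k!}}\, \alpha^{\odot k} \in \Bar_\iota \g\ ,
\]
and the fact that $f(\alpha) \in \MC(\h)$ translates precisely into the identity $F(e^\alpha) = e^{f(\alpha)}$.

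The key step is to realize the twisting procedure as a \emph{translation} at the bar level. I would introduce the coalgebra morphism
\[
\tau_\alpha \ \colon \ \com^\vee(\g^\alpha) \longrightarrow \com^\vee(\g)\ , \qquad y \longmapsto e^\alpha \odot y\ ,
\]
which is straightforwardly an isomorphism of conilpotent $\com^\vee$-coalgebras (sending the coaugmentation to $e^\alpha$). A direct verification, using the Maurer--Cartan equation for $\alpha$ together with the explicit form of the twisted operations $\ell_m^\alpha$ from \cref{prop:TwiProc}, shows that $\tau_\alpha$ intertwines the bar differentials and therefore upgrades to an isomorphism of differential graded $\com^\vee$-coalgebras $\tau_\alpha \colon \Bar_\iota \g^\alpha \stackrel{\sim}{\longrightarrow} \Bar_\iota \g$~. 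I would then \emph{define} $f^\alpha$ as the $\infty$-morphism associated to the composite
\[
F^\alpha \coloneqq \tau_{f(\alpha)}^{-1} \circ F \circ \tau_\alpha \ \colon \ \Bar_\iota \g^\alpha \longrightarrow \Bar_\iota \h^{f(\alpha)}\ ,
\]
and verify that projecting this composite onto the cogenerators $\h$ recovers the claimed explicit formula $f^\alpha_m = \sum_{k\geqslant 0}\tfrac{1}{k!}\, f_{k+m}(\alpha^k, -, \ldots, -)$~.

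Functoriality then follows formally from the conceptual definition. For the identity, $\id^\alpha$ corresponds to $\tau_\alpha^{-1} \circ \id \circ \tau_\alpha = \id$~. For a composite $g \circ f$~, inserting $\tau_{f(\alpha)} \circ \tau_{f(\alpha)}^{-1}$ into the middle of $\tau_{g(f(\alpha))}^{-1} \circ G \circ F \circ \tau_\alpha$ factors it as $\bigl(\tau_{g(f(\alpha))}^{-1} \circ G \circ \tau_{f(\alpha)}\bigr)\circ\bigl(\tau_{f(\alpha)}^{-1} \circ F \circ \tau_\alpha\bigr)$~, which is precisely $G^{f(\alpha)} \circ F^\alpha$~, yielding $(g\circ f)^\alpha = g^{f(\alpha)}\circ f^\alpha$~.

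The main obstacle is the verification that $\tau_\alpha$ intertwines the bar differentials, which is where the Maurer--Cartan equation genuinely enters. Concretely, one must match the action of the bar differential on $e^\alpha \odot x_1 \odot \cdots \odot x_m$ with that of the twisted bar differential on $x_1 \odot \cdots \odot x_m$~: the pure-$\alpha$ contributions cancel by the Maurer--Cartan equation, while the mixed contributions recombine into the twisted operations $\ell_m^\alpha$~. This is the combinatorial heart of the argument, and all the remaining computations (including the comparison of the formula for $f^\alpha_m$) are then mechanical expansions of the definition of $F^\alpha$~.
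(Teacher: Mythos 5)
Your proposal is correct in substance, but it is worth noting that the paper's own proof of this proposition is a one-line deferral: it asserts that the statement ``follows from a straightforward computation'' and points to \cite[Chapter~3, Section~5]{DotsenkoShadrinVallette18} for ``a more conceptual proof.'' What you have written out is essentially that conceptual proof: realizing the twisting procedure as conjugation by the translation $y\mapsto e^\alpha\odot y$ at the level of bar constructions is exactly the gauge-group/$\infty$-isotopy picture of the cited reference, and it buys you the functoriality statements for free, whereas the ``straightforward computation'' route requires verifying the $\infty$-morphism relations for $f^\alpha$ term by term. Your identification of the combinatorial heart --- the pure-$\alpha$ contributions cancelling by the Maurer--Cartan equation and the mixed contributions reassembling into $\ell^\alpha_m$ --- is the correct content of the intertwining check, and the factorization argument for $(g\circ f)^\alpha=g^{f(\alpha)}\circ f^\alpha$ is sound once one knows $F(e^\alpha)=e^{f(\alpha)}$, which is precisely the statement that $f(\alpha)\in\MC(\h)$ recalled in \cref{ex:Loomorph}.

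One technical caveat should be addressed before this counts as a complete proof. The element $e^\alpha\odot y=\sum_{k\geqslant 0}\tfrac{1}{k!}\,\alpha^{\odot k}\odot y$ is an infinite series spread across all symmetric powers, so it does \emph{not} live in the bar construction $\Bar_\iota\g=\com^\vee(\g)=\bigoplus_{m\geqslant 1}\g^{\odot m}$ as defined in \cref{subsec:Op} (a direct sum, with the filtration forgotten); moreover the $k=0$ term requires adjoining a counit. You must therefore either work in a suitably completed (coaugmented) bar construction --- where the series converges because $\alpha\in\F_1\g$ and $\g$ is complete --- or bypass $\tau_\alpha$ as an actual map and argue only with the cogenerator components: the sums $f^\alpha_m(x_1,\ldots,x_m)=\sum_{k}\tfrac{1}{k!}f_{k+m}(\alpha^k,x_1,\ldots,x_m)$ do converge in the complete $\h$, and one checks directly that they satisfy the relations obtained by formally expanding your conjugation. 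Either fix is routine, but as stated the isomorphism $\tau_\alpha\colon\Bar_\iota\g^\alpha\to\Bar_\iota\g$ is not defined on the object the paper calls the bar construction. You should also record the (immediate) verification that each $f^\alpha_m$ respects the filtrations, which is part of the definition of an $\infty$-morphism in the complete setting.
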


\begin{proof}
Again, this follows from a straightforward computation. 
A more conceptual proof can be found in \cite[Chapter~3, Section~5]{DotsenkoShadrinVallette18}. 
\end{proof}

We failed to establish a similar functorial twisting procedure with respect to $\infty_\pi$-morphisms on the level of $\sLi$-algebras. Instead, we will settle it ``only'' on the level of the integration functor using the following arguments. 

\medskip

Recall that for any simplicial set we can see the set of $0$-simplices as a simplicial sub-set. In the present case, for any Maurer--Cartan element 
$\alpha \in \MC(\g)$~, 
this amounts to considering, for any $n\geqslant 0$~, the element $\alpha_n\in \hom(\rmC^n, \g)$ that sends $a_i$ to $\alpha$~, for each $0\leqslant i\leqslant n$~, and everything else to zero. 
In other words, under the geometric representation of \cref{subsec:GeoRep}, the element $\alpha_n$ is obtained by labeling the vertices of the 
geometric $n$-simplex by $\alpha$ and the other faces by $0$~. 
We denote this ``simplicial'' collections by $\alpha_\bullet\in\R(\g)$~. 

\begin{lemma}\label{prop:convolution algebra and twisting}
	Let $\g$ be a complete $\sLi$-algebra and let $\alpha\in\MC(\g)$ be a Maurer--Cartan element. 
	The two cosimplicial complete $\sLi$-algebras 
	\[
	\hom^\pi(\rmC^\bullet, \g^\alpha) = \hom^\pi(\rmC^\bullet, \g)^{\alpha_\bullet}
	\]
	are identical. 
\end{lemma}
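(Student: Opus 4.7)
Both cosimplicial complete $\sLi$-algebras have the same underlying cosimplicial chain complex, since twisting by a Maurer--Cartan element preserves the underlying space. Moreover, the cosimplicial maps are the same on both sides: the element $\alpha_n$ sends each vertex $a_i\in\rmC^n$ to $\alpha$ and all higher cells $a_J$ ($|J|\geqslant 2$) to $0$, and this property is manifestly preserved by the cosimplicial face and degeneracy maps of $\rmC^\bullet$. Hence it suffices to verify levelwise that the $\sLi$-operations (including the differential) coincide.

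First, I would unfold the definition of the convolution structure using \cref{lem:sLiTW}: the operations $L_m$ on $\hom^\pi(\rmC^n,\g)$ are obtained from the morphism of operads $\sLi \to \Hom(\Bar\Cobar\com^\vee,\End_\g)$ induced by $\pi$. For $f_1,\dots,f_m\in\hom(\rmC^n,\g)$ and $c\in\rmC^n$, this gives, schematically,
\[
L_m(f_1,\dots,f_m)(c) \;=\; \sum_T \ell_{\pi(T)}\bigl(f_1(c^T_1),\dots,f_m(c^T_m)\bigr),
\]
where the sum runs over the components of the iterated $\Bar\Cobar\com^\vee$-coproduct of $c$ and $\ell_{\pi(T)}$ denotes the $\sLi$-operation on $\g$ read off from the partitioned tree $\pi(T)\in\sLi(m)$.

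Second, I would expand the two sides using \cref{prop:TwiProc} applied respectively to $\g$ and to the convolution algebra. On the right-hand side,
\[
L_m^{\alpha_\bullet}(f_1,\dots,f_m) \;=\; \sum_{k\geqslant 0}\tfrac{1}{k!}\,L_{m+k}(\underbrace{\alpha_\bullet,\dots,\alpha_\bullet}_{k},f_1,\dots,f_m).
\]
Evaluated at $c\in\rmC^n$, only those components of the iterated coproduct in which the first $k$ slots receive vertex elements $a_i$ survive, because $\alpha_\bullet$ kills all other basis elements; on those surviving vertex slots, $\alpha_\bullet$ outputs $\alpha$. On the left-hand side, substituting the expansion $\ell_m^\alpha=\sum_{k\geqslant 0}\tfrac{1}{k!}\ell_{m+k}(\alpha^k,-,\dots,-)$ into each $\ell_\tau^\alpha$ amounts to summing over all ways of grafting $k$ additional $\alpha$-leaves onto the vertices of $\tau$. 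Both expansions therefore produce sums of ``enlarged'' trees in which some inputs are labeled by $\alpha$ and the remaining ones by $f_j(c_j)$'s, composed via the operations $\ell_p$ of $\g$.

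The main obstacle, and heart of the proof, is the combinatorial identification of these two expansions. The crucial computation is the vertex coproduct in $\rmC^n$: from the proof of \cref{prop:mc0}, one has, modulo higher cells,
\[
\Delta(a_i) \;=\; \sum_{p\geqslant 1}\tfrac{1}{p!}\, s\,c_p\otimes a_i^{\otimes p},
\]
so that each factor of $\tfrac{1}{k!}$ in the expansion of $\ell_m^\alpha$ arises naturally from the corolla coefficients in the cooperadic coproduct of the vertices that $\alpha_\bullet$ ``sees''. The $\tfrac{1}{k!}$ from the twisting procedure matches the symmetrization over the $k$ distinguishable $\alpha$-slots in $L_{m+k}$, and the invariance under the symmetric group action on those slots identifies the two tree-wise expansions term by term. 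This yields the desired identity of $\sLi$-operations and hence of cosimplicial $\sLi$-algebras.
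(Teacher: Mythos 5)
Your overall skeleton — expand $\hom^\pi(\rmC^n,\g)^{\alpha_n}$ via the twisting procedure, expand $\hom^\pi(\rmC^n,\g^\alpha)$ by substituting $\ell_p^\alpha=\sum_j\tfrac{1}{j!}\ell_{p+j}(\alpha^j,-,\dots,-)$ at each vertex of each tree, and match the two tree-wise sums — is exactly the paper's strategy (carried out there in the dual picture $\g\,\hot^\pi\rmC_n$, where $\alpha_n$ becomes $\alpha\otimes(\omega_0+\cdots+\omega_n)$). The bookkeeping you do correctly is the $\tfrac{1}{k!}\cdot\binom{k}{\{k_v\}}=\tfrac{1}{\prod_v k_v!}$ match between the global twisting coefficient and the product of local ones.

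However, you have misidentified the crucial computation, and what you put in its place does not close the argument. The coproduct of a \emph{vertex}, $\Delta(a_i)=\sum_p\tfrac{1}{p!}\,s c_p\otimes a_i^{\otimes p}$ modulo higher cells, is not what is needed: the operations are evaluated on a general cell $c=a_J$, and what must be computed is the component of $\Delta_{m+k}(a_J)$ in which $k$ specified slots carry vertices. Two things have to be proved about this component, and neither follows formally from the coalgebra axioms or from the vertex coproduct. First, the sum over all vertex labels of the structure constant of an enlarged tree $\tau+\{k_v\}$ must equal the structure constant of $\tau$ itself; dually this is the statement $\gamma_{\rmC_n}\bigl((\tau+\{k_v\})\otimes 1^{\otimes k}\otimes\theta_1\otimes\cdots\otimes\theta_m\bigr)=\gamma_{\rmC_n}(\tau\otimes\theta_1\otimes\cdots\otimes\theta_m)$, which uses that $i_n(\omega_0+\cdots+\omega_n)=1$ is the strict unit of $\Omega_n$. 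Second, the right-hand expansion a priori contains trees in $\RT_{m+k}$ that are \emph{not} of the form $\tau+\{k_v\}$ for $\tau\in\RT_m$ — e.g.\ trees with a vertex all of whose inputs are $\alpha$-slots, or trees whose $\alpha$-slots sit so that deleting them leaves a vertex of arity $\leqslant 1$ — and these have no counterpart on the left-hand side. They must be shown to vanish, which is where the side conditions $h_ni_n=0$ and $h_n^2=0$ of the Dupont contraction enter. Your proposal never invokes the unit property of $\omega_0+\cdots+\omega_n$ nor the side conditions, so the claimed ``term by term'' identification of the two expansions is precisely the unproved step.
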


\begin{proof}
	As we already noticed in the proof of \cref{prop:Simpli}, the cosimplicial structure of $\rmC_\bullet$ is compatible with Dupont's contraction, so that the homotopy transfer theorem endows it with a simplicial $\Cobar\Bar\com$-algebra structure, and its dual $\rmC^\bullet$ obtains a cosimplicial $\Bar\Cobar\com^\vee$-coalgebra structure. It is straightforward to see that the convolution algebra is again compatible with the structure maps, so that $\hom^\pi(\rmC^\bullet, \g)$ is a simplicial complete $\sLi$-algebra. Since each element $\alpha_n\in\hom^\pi(\rmC^n, \g)$ is the image of the Maurer--Cartan element $\alpha^0\in \hom^\pi(\rmC^0, \g)$ under degeneracy maps, it is again a Maurer--Cartan element in $\hom^\pi(\rmC^n, \g)$~. Therefore, we can twist the complete $\sLi$-algebra with it to obtain $\hom^\pi(\rmC^n, \g)^{\alpha_n}$ using \cref{prop:TwiFuncInfMor} and we obtain a simplicial complete $\sLi$-algebra $\hom^\pi(\rmC^\bullet, \g)^{\alpha_\bullet}$ when applying this procedure at every simplicial level.

	\medskip

	We now fix a simplicial degree $n\geqslant 0$ and we show that the twisted $\sLi$-algebra structure 
	$\hom^\pi(\rmC^n, \g)^{\alpha_n}$ is equal to $\hom^\pi(\rmC^n, \g^\alpha)$~. 
	We will work in the dual picture
	\[
	\R(\g)_n\cong\MC\left(\g\, \widehat{\otimes}^\pi\rmC_n\right).
	\] 
	In the right-hand side, our Maurer--Cartan element is given by $\alpha\otimes 1$~, with $1=\omega_0+\cdots+\omega_n\in\rmC_n$~. Recall from \cite{rn17tensor} that the $\sLi$-algebra structure on $\g\, \widehat{\otimes}^\pi \rmC_n$ is given as follows. The set of partitioned rooted trees $\tau \in \PaRT$ forms a basis of the operad $\Cobar\Bar\com\cong(\Bar\sLi)^\vee$. 
	We denote by $\tau^\vee$ the dual basis of the cooperad $\Bar \Cobar \com^\vee\cong (\Cobar\Bar\com)^\vee$~.
	Given $x_1 \otimes \theta_1,\ldots,x_m \otimes \theta_m\in \g\, \widehat{\otimes}^\pi\rmC_n$~, we have
	\begin{align*}
		\ell_m(x_1 \otimes \theta_1,\ldots,{}&x_m \otimes \theta_m) =\\
		={}&\sum_{\tau\in\PaRT_m}(-1)^\epsilon\, \gamma_{\g}\left(\pi(\tau^\vee)\otimes x_1\otimes\cdots\otimes x_m\right)
		\otimes 
		\gamma_{C_n}\left(\tau\otimes\theta_1\otimes\cdots\otimes\theta_m\right),
	\end{align*}
	where $\epsilon$ is the Koszul sign. Since the twisting morphism $\pi$ projects to the trees containing only a single partition, this expression simplifies to
	\[
	\ell_m(x_1 \otimes \theta_1,\ldots,x_m \otimes \theta_m) = 
	\sum_{\tau\in\RT_m}(-1)^\epsilon\, \gamma_{\g}\left(\tau^\vee\otimes x_1\otimes\cdots\otimes x_m\right)
	\otimes 
	\gamma_{C_n}(\tau\otimes\theta_1\otimes\cdots\otimes\theta_m)\ ,
	\]
	where $\tau \in \RT(m)$ is viewed as a basis element of $\Bar \com(m)$ and $\tau^\vee$ as a basis element of $\Omega \com^\vee(m)$~. So the twisted structure is equal to 
	\begin{align*}
		{}& \ell^{\alpha\otimes 1}_m(x_1 \otimes \theta_1,\ldots,x_m \otimes \theta_m) =
		\sum_{k\geqslant 0}{\textstyle \frac{1}{k!}}\ell_{k+m}(\alpha\otimes 1,\ldots,\alpha\otimes 1,
		x_1 \otimes \theta_1,\ldots,x_m\otimes \theta_m)\\
		={}&\sum_{k\geqslant 0}{\textstyle \frac{1}{k!}}\sum_{\tau\in\RT_{k+m}}(-1)^\epsilon\, 
		\gamma_{\g}\big(\tau^\vee\otimes\alpha^{\otimes k}\otimes x_1\otimes\cdots\otimes x_m\big)
		\otimes
		\gamma_{C_n}\big(\tau\otimes1^{\otimes k}\otimes\theta_1\otimes\cdots\otimes\theta_m\big)\ .
	\end{align*}
	Let us consider the term $\gamma_{C_n}\big(\tau\otimes1^{\otimes k}\otimes\theta_1\otimes\cdots\otimes\theta_m\big)$~. As explained in the proof of \cref{prop:mc0}, it is given by taking the rooted tree $\tau$~, decorating each leaf with the corresponding element (either a copy of $1$ or one of the $\theta_i$), applying $i_n$ to these elements, multiplying them at the vertices, applying the homotopy $h_n$ at the inner edges, and applying the projection chain map $p_n$ at the root. Due to the application of $h_n$~, it is immediately clear that if there is a vertex of $\tau$ that is linked only to leaves and such that $1$ is at each of these leaves, then
	\[
	\gamma_{C_n}\big(\tau\otimes1^{\otimes k}\otimes\theta_1\otimes\cdots\otimes\theta_m\big) = 0\ .
	\]
	Other trees as well give a trivial contribution because of the side conditions of the Dupont contraction: if a tree has a vertex linked only to leaves and all of the leaves are labeled by $1$ except for a single one labeled by an element $\theta_i$~, or if it has a vertex that is linked only to leaves labeled by $1$ and an incoming edge linked to a sub-tree, then the conditions $h_ni_n=0$ and $h_nh_n=0$ make it so that the result of the tree is zero, respectively.

	\medskip

	Excluding these trees from $\RT_{m+k}$~, we are left with the set of trees that can be obtained by taking a tree in $\RT_m$~, shifting the enumeration of the leaves by $k$~, adding $k$ leaves at existing vertices and choosing an enumeration for $1$ to $k$ for these new leaves, and then placing $k$ copies of $1$ and the elements $\theta_1,\ldots,\theta_m$ at the leaves, in order. 
	If $V_\tau$ denotes the set of vertices of the original tree $\tau$ in $\RT_m$~, writing $k_v$ for the number of leaves added to the vertex $v\in V_\tau$~, we can forget the precise ordering of the added leaves obtaining a global multinomial coefficient
	\[
	\binom{k}{\{k_v\}_{v\in V_\tau}} = \frac{k!}{\prod_{v\in V_\tau}k_v!}
	\]
	for the tree. We write $\tau + \{k_v\}$ for the rooted tree $\tau$ with $k_v$ leaves added to the vertex $v\in V_\tau$~. Since $1$ is the unit of the Sullivan algebra, we have that
	\[
	\gamma_{C_n}\left((\tau + \{k_v\})\otimes1^{\otimes k}\otimes\theta_1\otimes\cdots\otimes\theta_m\right) = \gamma_{C_n}(\tau\otimes\theta_1\otimes\cdots\otimes\theta_m)\ .
	\]
	Putting all we have said until now together, we obtain that
	\begin{align*}
		\ell^{1\otimes\alpha}_m(x_1 \otimes \theta_1&,\ldots,x_m \otimes \theta_m) =\\
		={}&\sum_{\tau \in\RT_m}\sum_{\substack{v\in V_\tau \\k_v\geqslant 0}}\frac{(-1)^\epsilon}{\prod_{v\in V_\tau}k_v!}
		\gamma_{\g}\left((\tau + \{k_v\})^\vee\otimes\alpha^{\otimes k}\otimes x_1\otimes\cdots\otimes x_m\right)
		\otimes\\
		&\qquad\qquad\otimes
				\gamma_{C_n}\left((\tau + \{k_v\})\otimes1^{k}\otimes\theta_1\otimes\cdots\otimes\theta_m\right)
		\\
		={}&\sum_{\tau\in\RT_m}(-1)^\epsilon\left(\sum_{\substack{v\in V_\tau\\k_v\geqslant0}}\frac{1}{\prod_{v\in V_\tau}k_v!}\gamma_{\g}\left((t + \{k_v\})^\vee\otimes\alpha^{\otimes k}\otimes x_1\otimes\cdots\otimes x_m\right)\right)
		\otimes\\
		&\qquad\qquad\otimes
		\gamma_{C_n}(\tau\otimes\theta_1\otimes\cdots\otimes\theta_m)
		\\
		={}&\sum_{\tau\in\RT_m}(-1)^\epsilon
		\gamma_{\g^\alpha}\left(\tau^\vee\otimes x_1\otimes\cdots\otimes x_m\right)
		\otimes
		\gamma_{C_n}(\tau\otimes\theta_1\otimes\cdots\otimes\theta_m)\\
		={}& 	\ell_m(\theta_1\otimes x_1,\ldots,\theta_m\otimes x_m)
		\ ,
	\end{align*}
	where the last operation $\ell_m$ is the one of the $\sLi$-algebra $\hom^\pi(\rmC^n, \g^\alpha)$~. This is exactly what we needed to show and concludes the proof.
\end{proof}

Thanks to \cref{prop:convolution algebra and twisting}, we can write the following definition. 

\begin{definition}\label{def:TwRInfPi}
Let $\g, \h$ be two complete $\sLi$-algebras and let $f:\g\rightsquigarrow\h$ be an $\infty_\pi$-morphism between them. For any Maurer--Cartan element $\alpha\in\MC(\g)$ we define a morphism of simplicial sets
\[
\R(f)^\alpha\colon\R(\g^\alpha)\longrightarrow\R\left(\h^{\upsilon^*f(\alpha)}\right)
\]
by
\[
\R(f)^\alpha \coloneqq\MC\left(\hom^\pi_r(\id, f)^{\alpha_\bullet}\right).
\]
\end{definition}

\begin{proposition}\label{prop:FuncTwInfPi}
	The twisting procedure for the integration functor with respect to $\infty_\pi$-mor\-phi\-sms of \cref{def:TwRInfPi} is functorial. Namely, for any $\alpha\in \MC(\g)$ we have
	\[
	\R(\id_\g)^\alpha=\id_{\R(\g^\alpha)}  : \R(\g^\alpha) \to \R(\g^\alpha)
	\]
	and for any pair $f : \g \rightsquigarrow \h$ and $g : \h \rightsquigarrow \mathfrak{k}$ of  $\infty_\pi$-morphisms of complete $\sLi$-algebras
	\[
	\R(g\circ f)^{\alpha}=\R(g)^{\upsilon^*f(\alpha)}\R(f)^{\alpha}\ .
	\]
\end{proposition}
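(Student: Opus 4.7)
The plan is to reduce the statement to the analogous functoriality results already established for $\infty$-morphisms (rather than $\infty_\pi$-morphisms) via the convolution algebra construction, exploiting the definition
\[
\R(f)^\alpha = \MC\left(\hom_r^\pi(\id,f)^{\alpha_\bullet}\right)
\]
which decomposes $\R(f)^\alpha$ as a composite of three functorial operations: the assignment $f \mapsto \hom_r^\pi(\id,f)$, the twisting procedure for $\infty$-morphisms, and the Maurer--Cartan functor. Each of these is known to be functorial, so the argument is essentially a bookkeeping exercise once the right identifications are made.

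First, I would recall three ingredients already available in the text. (i) By \cite{rnw17} (used in the proof of \cref{prop:Functoriality}), the assignment $f \mapsto \hom_r^\pi(\id,f)$ sends $\infty_\pi$-morphisms to $\infty$-morphisms of convolution $\sLi$-algebras and is functorial: it preserves identities and composites. (ii) By \cref{prop:TwiFuncInfMor}, the twisting procedure for $\infty$-morphisms is functorial, sending an $\infty$-morphism $\varphi:A \rightsquigarrow B$ and a Maurer--Cartan element $a \in \MC(A)$ to $\varphi^a : A^a \rightsquigarrow B^{\varphi(a)}$ with $\id^a = \id$ and $(\psi \circ \varphi)^a = \psi^{\varphi(a)} \circ \varphi^a$. (iii) As recalled in \cref{ex:Loomorph}, $\MC$ is a functor from $\isLialg$ to $\sSe$. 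The key compatibility identification to establish is that the Maurer--Cartan element of $\hom^\pi(\rmC^\bullet,\h)$ obtained as the image of $\alpha_\bullet$ under the $\infty$-morphism $\hom_r^\pi(\id,f)$ is exactly $(\upsilon^* f(\alpha))_\bullet$.

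This compatibility is the one step requiring a small computation, but it follows directly from \cref{lemma:describe R(f)_0} applied levelwise: on $0$-simplices the formula $\R(f)_0 = \MC(\upsilon^* f)$ sends $\alpha$ to $\upsilon^* f(\alpha)$, and since both $\alpha_\bullet$ and $(\upsilon^* f(\alpha))_\bullet$ are obtained from their $0$-simplex values by applying degeneracies, the naturality of $\hom_r^\pi(\id,f)$ with respect to the cosimplicial structure of $\rmC^\bullet$ (noted in the proof of \cref{prop:convolution algebra and twisting}) yields $\hom_r^\pi(\id,f)(\alpha_\bullet) = (\upsilon^* f(\alpha))_\bullet$ as Maurer--Cartan elements in $\hom^\pi(\rmC^\bullet,\h)$.

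With these ingredients in hand, the identity case is immediate: $\hom_r^\pi(\id,\id_\g) = \id_{\hom^\pi(\rmC^\bullet,\g)}$ by (i), and its twist by $\alpha_\bullet$ is the identity of $\hom^\pi(\rmC^\bullet,\g)^{\alpha_\bullet} = \hom^\pi(\rmC^\bullet,\g^\alpha)$ by (ii) and \cref{prop:convolution algebra and twisting}. Applying $\MC$ gives $\id_{\R(\g^\alpha)}$. For the composition, I would chain the functorialities:
\begin{align*}
\R(g \circ f)^\alpha
&= \MC\!\left(\hom_r^\pi(\id,g\circ f)^{\alpha_\bullet}\right)\\
&= \MC\!\left(\big(\hom_r^\pi(\id,g) \circ \hom_r^\pi(\id,f)\big)^{\alpha_\bullet}\right) \\
&= \MC\!\left(\hom_r^\pi(\id,g)^{\hom_r^\pi(\id,f)(\alpha_\bullet)} \circ \hom_r^\pi(\id,f)^{\alpha_\bullet}\right) \\
&= \MC\!\left(\hom_r^\pi(\id,g)^{(\upsilon^* f(\alpha))_\bullet}\right) \circ \MC\!\left(\hom_r^\pi(\id,f)^{\alpha_\bullet}\right) \\
&= \R(g)^{\upsilon^* f(\alpha)} \circ \R(f)^\alpha,
\end{align*}
using in order: the definition; functoriality of $\hom_r^\pi(\id,-)$ from (i); functoriality of the $\infty$-morphism twisting from (ii); functoriality of $\MC$ from (iii); and the compatibility identification above together with the definition of $\R(-)^{(-)}$.

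The only step that is not bookkeeping is the identification $\hom_r^\pi(\id,f)(\alpha_\bullet) = (\upsilon^* f(\alpha))_\bullet$, and this is where I would concentrate the attention of the writeup, though given \cref{lemma:describe R(f)_0} and the simplicial naturality already observed in \cref{prop:convolution algebra and twisting}, it amounts to checking agreement at simplicial level $0$ and propagating through the degeneracies. Everything else is a clean consequence of the three functoriality principles, so the argument should be short once the identifications are spelled out.
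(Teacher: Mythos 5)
Your proof is correct and follows the same route as the paper, which simply declares the statement a ``straightforward consequence'' of \cref{def:TwRInfPi} and \cref{prop:TwiFuncInfMor}; you have merely written out the bookkeeping the authors leave implicit. The one point you rightly isolate as needing an actual check --- the identification $\hom_r^\pi(\id,f)(\alpha_\bullet) = (\upsilon^* f(\alpha))_\bullet$, obtained from \cref{lemma:describe R(f)_0} at level $0$ and propagated by simplicial naturality --- is exactly the content needed to make the composition formula well-typed, and your treatment of it is sound.
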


\begin{proof}
This is straightforward consequence of \cref{def:TwRInfPi} and \cref{prop:TwiFuncInfMor}.
\end{proof}

From \cref{lemma:MC in twisted Loo alg} and \cref{prop:convolution algebra and twisting}, we see that a simplex $x:\Delta^n\to\hom^\pi(\rmC^\bullet,\g)$ is in $\R(\g^\alpha)$ if and only if the simplex $x + \alpha_n$~,  where the vertices $\{x_i\}_{0\leqslant i\leqslant n}$ are replaced by $\{x_i + \alpha\}_{0\leqslant i\leqslant n}$ and where all the rest remains unchanged, is in $\R(\g)$~. This is exactly what we need in order to change base points when considering the homotopy groups of $\R(\g)$~.

\begin{theorem}\label{thm:isom translation by alpha}
	Let $\g$ be a complete $\sLi$-algebra and let $\alpha\in\MC(\g)$ be a Maurer--Cartan element. The translation maps
	\begin{align*}
	\hom(\rmC^n,\g^{\alpha_n}){}&\ \longrightarrow\ \hom(\rmC^n,\g)\\
	x{}&\ \longmapsto\ \left\{
	\begin{array}{ll}
	x+\alpha_n &, \ \text{when} \ |x|=0~,  \\
	x &,\ \text{when} \ |x|\neq 0~,  \\
	\end{array}
	\right.
	\end{align*}
	induce an isomorphism of simplicial sets
	\[
	\R(\g^\alpha)\cong\R(\g)~,
	\]
	which is natural in $\infty_\pi$-morphisms.
\end{theorem}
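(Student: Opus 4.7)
The plan is to decompose the statement into three standard moves: rewriting each side via the convolution-algebra model of $\R$, applying the pointwise translation bijection for Maurer--Cartan sets, and transporting naturality from the underlying $\infty$-morphism of convolution algebras.

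First, I would identify at each simplicial level $n$ the sets $\R(\g^\alpha)_n = \MC(\hom^\pi(\rmC^n,\g^\alpha))$ and $\R(\g)_n = \MC(\hom^\pi(\rmC^n,\g))$, using the convolution-algebra description of $\R$ established in the proof of \cref{prop:Functoriality}. By \cref{prop:convolution algebra and twisting}, the first of these equals $\MC(\hom^\pi(\rmC^n,\g)^{\alpha_n})$, where $\alpha_n\in\hom^\pi(\rmC^n,\g)$ is the Maurer--Cartan element that labels each vertex of the geometric $n$-simplex by $\alpha$ and everything else by $0$. Applying \cref{lemma:MC in twisted Loo alg} to this convolution algebra and this Maurer--Cartan element then produces the bijection $x\mapsto x+\alpha_n$. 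Since Maurer--Cartan elements live in degree $0$, this bijection is exactly the restriction of the translation map of the statement.

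Second, I would verify that these bijections are simplicial. The key observation is that every cosimplicial structure map of $\rmC^\bullet$ sends the vertex basis to the vertex basis, as can be read off from the explicit description in the proof of \cref{prop:mc0}. Dually, the simplicial structure maps of $\hom^\pi(\rmC^\bullet,\g)$ send $\alpha_n$ to $\alpha_m$ along any $\sigma\colon[m]\to[n]$, so translation by $\alpha_\bullet$ commutes with face and degeneracy maps. This yields the desired isomorphism of simplicial sets $\R(\g^\alpha)\cong\R(\g)$. This step is almost formal.

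Third, for naturality in $\infty_\pi$-morphisms, given $f\colon\g\rightsquigarrow\h$, recall from \cref{def:TwRInfPi} that $\R(f)^\alpha$ is obtained by applying $\MC$ to the twisted $\infty$-morphism $\hom^\pi_r(\id,f)^{\alpha_\bullet}$ between the twisted convolution algebras. The commutativity of the naturality square then reduces to the general identity
\[
g(y+\beta)=g^\beta(y)+g(\beta)\ ,
\]
valid for any $\infty$-morphism $g\colon\mathfrak{k}\rightsquigarrow\mathfrak{l}$ of complete $\sLi$-algebras, any $\beta\in\MC(\mathfrak{k})$, and any $y\in\MC(\mathfrak{k}^\beta)$, which follows from the explicit formula of \cref{prop:TwiFuncInfMor}. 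Applied to $g=\hom^\pi_r(\id,f)$ and $\beta=\alpha_\bullet$, and using \cref{lemma:describe R(f)_0} to identify $g(\beta)$ with $(\upsilon^*f(\alpha))_\bullet$, this gives precisely the required naturality. I expect the main (and only) delicate point to be the bookkeeping needed to match the twisted convolution-algebra structure coming from \cref{prop:convolution algebra and twisting} with the construction $\R(f)^\alpha$ of \cref{def:TwRInfPi}, so that the identity above can be invoked on the nose; once this is in place, the argument is routine.
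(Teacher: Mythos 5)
Your proposal is correct, but it takes a genuinely different route from the paper's own proof. The paper disposes of the theorem in two sentences by identifying the translation map as a \emph{curved $\infty$-isotopy} in the sense of Dotsenko--Shadrin--Vallette and appealing to that external machinery both for the bijection on Maurer--Cartan simplicial sets and for the naturality (via the interpretation of curved $\infty$-morphisms as gauge group elements). You instead unpack everything using only results internal to the paper: the level-wise bijection comes from combining \cref{prop:convolution algebra and twisting} with \cref{lemma:MC in twisted Loo alg} (this combination is in fact stated, without proof of simpliciality or naturality, in the remark immediately preceding the theorem); simpliciality comes from the observation that the cosimplicial maps of $\rmC^\bullet$ preserve the vertex basis, so that $\alpha_\bullet$ is a simplicial family (consistent with the paper's own remark that each $\alpha_n$ is a degeneracy of $\alpha_0$); and naturality comes from the identity $g(y+\beta)=g^\beta(y)+g(\beta)$, which is exactly the content of the gauge-group interpretation the paper cites, and which you correctly reduce to a binomial rearrangement of the formulas in \cref{prop:TwiFuncInfMor} and \cref{ex:Loomorph}. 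The one point worth making fully explicit in your third step is the identification $g(\alpha_n)=(\upsilon^*f(\alpha))_n$ for all $n$, not just $n=0$: \cref{lemma:describe R(f)_0} only gives the case $n=0$, and you need to combine it with the simpliciality of $\R(f)$ and of $\alpha_\bullet$ to propagate it to higher simplicial degrees. What your approach buys is self-containedness and explicit formulas; what the paper's approach buys is brevity and a conceptual explanation (the translation is literally an invertible curved $\infty$-morphism, so everything it induces on integration functors is automatic).
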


\begin{proof}
The translation map is actually a curved $\infty$-isotopy after \cite[Chapter~3, Section~5]{DotsenkoShadrinVallette18}; as such it induces a bijection on the level Maurer--Cartan simplicial sets. 
The functoriality follows from \cref{prop:convolution algebra and twisting}, \cref{prop:FuncTwInfPi}, and the interpretation of (curved) $\infty$-morphisms as gauge group elements given in \emph{loc.\ cit.}
\end{proof}

\begin{corollary}\label{cor:change base point}
	Let $\g$ be a complete $\sLi$-algebra, let $\alpha\in\MC(\g)$ be a Maurer--Cartan element. There is an isomorphism of sets
	\[
	\pi_0(\R(\g^\alpha))\cong\pi_0(\R(\g))
	\]
	and isomorphisms of groups
	\[
	\pi_n(\R(\g^\alpha), 0)	\cong\pi_n(\R(\g), \alpha)
	\]
	for each $n\geqslant1$~. They are all natural in $\infty_\pi$-morphisms.
\end{corollary}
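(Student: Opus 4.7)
The proof proposal is essentially immediate given the isomorphism of simplicial sets established in \cref{thm:isom translation by alpha}. The strategy is to observe that the translation map sends the canonical base point $0 \in \MC(\g^\alpha)$ to the base point $\alpha \in \MC(\g)$ and then to invoke the standard fact that an isomorphism of simplicial sets induces isomorphisms on all homotopy groups at corresponding base points.

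More precisely, first I would apply \cref{thm:isom translation by alpha} to produce the isomorphism of simplicial sets
\[
\Phi_\alpha \colon \R(\g^\alpha) \xrightarrow{\ \cong\ } \R(\g)\ ,
\]
and observe that on the set of $0$-simplices $\R(\g^\alpha)_0 \cong \MC(\g^\alpha)$ the map $\Phi_\alpha$ is given by $x \mapsto x + \alpha$ (this is the degree-zero case of the translation formula in \cref{thm:isom translation by alpha}). In particular $\Phi_\alpha(0) = \alpha$~. Since $\pi_0$ and $\pi_n$ are functorial with respect to (pointed) isomorphisms of simplicial sets, applying $\pi_0$ gives the bijection $\pi_0(\R(\g^\alpha)) \cong \pi_0(\R(\g))$~, and applying $\pi_n$ with the base point $0 \in \R(\g^\alpha)_0$ on the left and its image $\alpha \in \R(\g)_0$ on the right yields the claimed isomorphisms of groups
\[
\pi_n(\R(\g^\alpha), 0) \cong \pi_n(\R(\g), \alpha)\ , \qquad n\geqslant 1\ .
\]

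For the naturality with respect to $\infty_\pi$-morphisms, I would use the second assertion of \cref{thm:isom translation by alpha}: given an $\infty_\pi$-morphism $f \colon \g \rightsquigarrow \h$ and $\alpha \in \MC(\g)$, the diagram
\[
\begin{tikzcd}
\R(\g^\alpha) \arrow[r, "\Phi_\alpha", "\cong"'] \arrow[d, "\R(f)^\alpha"'] & \R(\g) \arrow[d, "\R(f)"] \\
\R(\h^{\upsilon^* f(\alpha)}) \arrow[r, "\Phi_{\upsilon^* f(\alpha)}"', "\cong"] & \R(\h)
\end{tikzcd}
\]
commutes by naturality of the translation isomorphisms. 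Since the left vertical map preserves the canonical base point $0$ (because $\R(f)^\alpha$ is induced by an $\infty_\pi$-morphism between Maurer--Cartan twisted algebras, cf.\ \cref{lemma:describe R(f)_0}) and is sent via $\Phi_{\upsilon^* f(\alpha)}$ to $\upsilon^* f(\alpha) \in \MC(\h)$, which coincides with $\R(f)(\alpha)$ by \cref{lemma:describe R(f)_0}, the induced maps on homotopy groups commute with the claimed isomorphisms.

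There is no substantive obstacle to overcome here: all the hard work has been absorbed into \cref{thm:isom translation by alpha} and into the functoriality results of \cref{prop:FuncTwInfPi,lemma:describe R(f)_0}. The only delicate point is to verify on the nose that the translation map acts as the identity in positive degrees and as addition by $\alpha_0$ in degree zero, so that the canonical base point of $\R(\g^\alpha)$ lands precisely on $\alpha \in \R(\g)_0$; this is built into the statement of \cref{thm:isom translation by alpha}.
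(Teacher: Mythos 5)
Your proposal is correct and follows essentially the same route as the paper, which simply derives the corollary as an immediate consequence of \cref{thm:isom translation by alpha}: the translation isomorphism $\R(\g^\alpha)\cong\R(\g)$ carries the base point $0$ to $\alpha$ and is natural in $\infty_\pi$-morphisms, so the statement follows from functoriality of homotopy groups. Your additional unpacking of the base-point bookkeeping and of the commuting naturality square is consistent with what the paper leaves implicit.
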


\begin{proof}
	This is immediate from \cref{thm:isom translation by alpha}.
\end{proof}
\section{Gauges}\label{sec:gauges}

After studying some general properties of the universal Maurer--Cartan algebra and completely determining the $0$th cosimplicial level $\mc^0$~, we now turn to $\mc^1$ and show that it is the complete $\sLi$-algebra representing two Maurer--Cartan elements and a gauge between them. This generalizes the Lawrence--Sullivan Lie algebra model of the interval \cite{ls10}. As we did for $\mc^0$, we will give completely explicit formulas for $\mc^1$. We then give an application by completely characterizing the notion of a gauge homotopy between $\infty$-morphisms of $\sLi$-algebras.

\subsection{The higher Lawrence--Sullivan algebra}\label{subsec:HighLSalg}
In \cref{prop:mc0}, we computed $\mc^0$, which is the free complete $\sLi$-algebra generated by a single Maurer--Cartan element. This result can easily be recovered now using  \cref{thm:isomorphism of models} and the fact that $\rmC_0\cong\k$ since they give
\[
\R(\g)_0\cong \MC(\g\widehat{\otimes} \rmC_0)\cong \MC(\g)\ .
\]

We would like to understand the next simplicial level, that is $\mc^1$. 

\begin{proposition}\label{lem:mc1}
	The complete $\sLi$-algebra $\mc^1$ is the free complete $\sLi$-algebra on three generators $a_0$~, $a_1$~, and $a_{01}$ of respective degree $|a_0| = |a_1| = 0$ and $|a_{01}| = 1$~, with the unique differential  such that $a_0$ and $a_1$ are Maurer--Cartan elements, and $a_{01}$ is a gauge between them.
\end{proposition}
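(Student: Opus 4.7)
The plan is to identify the underlying graded complete $\sLi$-algebra first, then pin down the differential on the three generators by a mixture of cosimplicial functoriality and a Yoneda-type argument based on \cref{thm:isomorphism of models}.

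First, I would observe that the underlying graded complete $\sLi$-algebra of $\mc^1 = \hatCobar_\pi \rmC^1$ is exactly $\widehat{\sLi}(a_0,a_1,a_{01})$, since the cobar construction sits on a free complete $\sLi$-algebra on the generators of $\rmC^1$, namely $a_0=\omega_0^\vee, a_1=\omega_1^\vee, a_{01}=\omega_{01}^\vee$, with the expected degrees. So the only content of the proposition is the description of the differential $d$ on these three generators.

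Next, I would use cosimpliciality to handle $a_0$ and $a_1$. The two coface maps $[0]\to[1]$ pointing at $0$ and $1$ induce strict morphisms of complete $\sLi$-algebras $\mc^0\to\mc^1$ sending the generator of $\mc^0$ to $a_0$ and to $a_1$ respectively. These strict morphisms preserve the differential, so \cref{prop:mc0} immediately gives
\[
d(a_i)=-\sum_{m\geqslant 2}{\textstyle\frac{1}{m!}}\ell_m(a_i,\ldots,a_i)\qquad\text{for } i=0,1,
\]
which is the Maurer--Cartan condition. The same cosimplicial argument together with the general formula \eqref{eq:DiffMCn} shows that the linear part of $d(a_{01})$ is $a_1-a_0$, so that $d(a_{01})=a_1-a_0+R(a_0,a_1,a_{01})$ for some series $R$ of brackets involving $a_{01}$ at least once.

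The main step will be to identify $R$ as exactly the gauge equation. For this I would use the universal property: by \cref{thm:MainAdjunction}, $\Hom_{\sLialg}(\mc^1,\g)=\R(\g)_1$ is functorial in $\g$, and by \cref{thm:isomorphism of models} and the simplicial structure, a strict morphism $\mc^1\to\g$ amounts to the data of a pair of Maurer--Cartan elements $\alpha_0,\alpha_1\in\MC(\g)$ (the images of $a_0,a_1$) together with a degree $1$ element $\lambda\in\g_1$ (the image of $a_{01}$) subject to the relation
\[
d_\g(\lambda)=\alpha_1-\alpha_0+R(\alpha_0,\alpha_1,\lambda).
\]
Now consider a  quasi-free model $\mathfrak{gauge}\coloneqq(\widehat{\sLi}(a_0,a_1,a_{01}),d_{\mathrm{g}})$ in which the differential is defined, axiomatically, so that degree $0$ morphisms $\mathfrak{gauge}\to\g$ commuting with $d_\mathrm{g}$ correspond precisely to triples $(\alpha_0,\alpha_1,\lambda)$ with $\alpha_0,\alpha_1$ Maurer--Cartan and $\lambda$ a gauge from $\alpha_0$ to $\alpha_1$ (the explicit formula for $d_\mathrm{g}(a_{01})$ being read off from \cref{prop:gaugeformula} by solving the gauge ODE formally in the free algebra on $a_0,a_{01}$). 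This object is uniquely determined by its universal property, and one sees directly that $d_\mathrm{g}(a_i)$ forces $a_i$ to be Maurer--Cartan. What remains is therefore the identification of the two functors $\Hom_{\sLialg}(\mc^1,-)$ and $\Hom_{\sLialg}(\mathfrak{gauge},-)$ and the application of Yoneda.

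That identification is the place where the work sits. The cleanest route, I think, is to argue that the $1$-simplices of $\R(\g)$ are canonically in bijection with gauges between pairs of Maurer--Cartan elements: via \cref{thm:isomorphism of models} one has $\R(\g)_1\cong\MC(\g\,\widehat{\otimes}\,\rmC_1)$, and under the simplicial identification of $\rmC_1$ with the dual of a (transferred) model of the interval, a Maurer--Cartan element of $\g\,\widehat{\otimes}\,\rmC_1$ decomposes as $\alpha_0\otimes\omega_0+\alpha_1\otimes\omega_1+\lambda\otimes\omega_{01}$, and the Maurer--Cartan equation splits into the conditions that $\alpha_0,\alpha_1\in\MC(\g)$ together with a first-order-in-$t$ differential relation that, integrating from $t=0$ to $t=1$, is exactly the gauge ODE of \cref{def:Gauge}. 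This is the main obstacle, because writing out the Maurer--Cartan equation in coordinates on $\rmC_1$ and matching it term-by-term with the gauge flow is essentially the (closed) computation underlying Bandiera's identification of Getzler's $1$-simplices with gauges; but since the structural operations on $\g\,\widehat{\otimes}\,\rmC_1$ are given by the explicit homotopy-transferred formula \eqref{Eq:HTTonCn}, the comparison can be carried out directly. Once this bijection is established (naturally in $\g$), Yoneda applied in the category $\sLialg$ yields an isomorphism $\mc^1\cong\mathfrak{gauge}$ of quasi-free complete $\sLi$-algebras, which is precisely the statement of the proposition.
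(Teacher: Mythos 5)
Your proposal follows essentially the same route as the paper: identify the underlying free complete $\sLi$-algebra, use the cosimplicial structure together with \cref{prop:mc0} to see that $a_0$ and $a_1$ are Maurer--Cartan, and then pin down $d(a_{01})$ by a representability/Yoneda argument via \cref{thm:MainAdjunction} and \cref{thm:isomorphism of models}. The one substantive step you leave as a sketch --- that the Maurer--Cartan equation in $\g\,\widehat{\otimes}\,\mathrm{C}_1$ with the transferred structure is equivalent to the data of two Maurer--Cartan elements and a gauge between them --- is precisely the point the paper does not prove directly either, but instead outsources to \cite[Proposition~4.3]{rn17cosimplicial}.
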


\begin{proof}
	From \cref{def:MCcosimp}, we know that the underling complete $\sLi$-algebra of $\mc^1$ is free on three generators 
	$a_0$~, $a_1$~, and $a_{01}$ with degrees as above. The cosimplicial structure of $\mc^\bullet$ 
	and \cref{prop:mc0} show that 
	$a_0$ and $a_1$ are Maurer--Cartan elements in $\mc^1$. 
	\cref{thm:isomorphism of models} shows  that $\R(\g)_1\cong\MC\big(\g\, \widehat{\otimes} \mathrm{C}_1\big)$~. 
	This latter set, once viewed in $\MC_1(\g)$~, was shown by \cite[Proposition~4.3]{rn17cosimplicial}  to be made up of pairs  Maurer--Cartan elements and a gauge between them.
	This proves that $\R(\g)_1$ is made up of the same elements and forces the image of $a_{01}$ under the differential to be such that $a_{01}$ is a gauge from $a_0$ to $a_1$~.
\end{proof}

\begin{corollary}\label{cor:pi0}
	There is a canonical bijection 
	\[\pi_0(\R(\g))\cong \mathcal{MC}(\g)\ ,\]
	which is natural in complete $\sLi$-algebras with respect to $\infty_\pi$-morphisms.
\end{corollary}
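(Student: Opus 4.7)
The plan is to combine the explicit descriptions of $\mc^0$ (from \cref{prop:mc0}) and $\mc^1$ (from \cref{lem:mc1}) with the standard description of $\pi_0$ of a simplicial set. Recall that $\pi_0(X_\bullet)$ is the quotient of $X_0$ by the smallest equivalence relation identifying $d_0(x)$ with $d_1(x)$ for every $x\in X_1$. Applying this to $X_\bullet = \R(\g)$, \cref{prop:mc0} identifies the set of $0$-simplices with $\MC(\g)$, while \cref{lem:mc1} identifies a $1$-simplex with the datum of two Maurer--Cartan elements $\alpha,\beta\in\MC(\g)$ together with a gauge $\lambda\in\g_1$ such that $\lambda\cdot\alpha=\beta$. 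The faces $d_0$ and $d_1$ correspond to pulling back along the two coface maps $\mc^0\to\mc^1$, which by construction of the cosimplicial structure on $\rmC^\bullet$ send the generator $a_0\in\mc^0$ respectively to $a_1$ and $a_0$ in $\mc^1$. Hence the relation generated by $1$-simplices is exactly the gauge relation $\sim$ on $\MC(\g)$, which yields the desired bijection
\[
\pi_0(\R(\g))\cong \MC(\g)/\!\!\sim\;=\;\mathcal{MC}(\g)\ .
\]

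Strictly speaking, to conclude one only needs the equivalence relation generated by gauges; but it is worth noting that, since $\R(\g)$ is in fact an algebraic $\infty$-groupoid (to be shown in \cref{thm:KanExt}), the relation coming from $1$-simplices is already reflexive (via degeneracies), symmetric, and transitive, in agreement with the fact, recalled after \cref{def:Gauge}, that the gauge relation on $\MC(\g)$ is itself an equivalence relation.

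For naturality with respect to $\infty_\pi$-morphisms $f:\g\rightsquigarrow\h$, I would invoke \cref{prop:Functoriality}, which guarantees that $\R(f):\R(\g)\to\R(\h)$ is a well-defined morphism of simplicial sets, and hence induces a map on $\pi_0$. On $0$-simplices, \cref{lemma:describe R(f)_0} gives $\R(f)_0=\MC(\upsilon^*f)$, so under the bijection above the induced map on $\pi_0$ is precisely the one sending the gauge class of $\alpha$ to the gauge class of $\upsilon^*f(\alpha)$, which is the natural functorial action on $\mathcal{MC}$. The key non-obvious point is that this is well defined on gauge classes, but this is automatic: it is the functoriality of $\pi_0$ applied to $\R(f)$, together with the identification of gauge-equivalence with the $1$-simplex relation given above.

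I do not expect any serious obstacle here: the entire content of the statement is contained in the explicit identifications of $\mc^0$ and $\mc^1$ already proven, together with the definition of $\pi_0$; the only mild subtlety is checking that the two coface maps $\mc^0\rightrightarrows\mc^1$ induce, respectively, $\beta$ and $\alpha$ as faces of a gauge $\alpha\rightsquigarrow\beta$, which is a direct unraveling of the cosimplicial structure of $\rmC^\bullet$ described in \cref{prop:DupontContr}.
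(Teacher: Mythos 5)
Your proof is correct, and the bijection itself is obtained exactly as in the paper: it is read off from \cref{prop:mc0} and \cref{lem:mc1} together with the standard description of $\pi_0$ as the quotient of the $0$-simplices by the relation generated by the faces of $1$-simplices (the paper states this as a ``direct consequence'' of those two results; your unraveling of the coface maps $\mc^0\rightrightarrows\mc^1$ is the content of that remark). Where you genuinely diverge is in the naturality argument. The paper also starts from \cref{lemma:describe R(f)_0}, but to check that $\MC(\upsilon^*f)$ descends to gauge-equivalence classes it takes a detour through the Deligne--Hinich space: it invokes the functoriality of $\MC_\bullet$ with respect to $\infty$-morphisms, the weak equivalence $\MC_\bullet(\g)\simeq\R(\g)$ from \cref{thm:isomorphism of models}, and the external identification of $\MC\big(\g\,\widehat{\otimes}\,\rmC_1\big)$ with pairs of Maurer--Cartan elements and a gauge. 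You instead observe that the descent is automatic: $\R(f)$ is a morphism of simplicial sets, so it induces a map on $\pi_0$, and under the identification of the $1$-simplex relation with gauge equivalence this map is precisely $[\alpha]\mapsto[\upsilon^*f(\alpha)]$. Your route is more economical --- it stays entirely within the integration functor and does not need $\MC_\bullet$ or the cited external result --- at the cost of nothing, since all the ingredients (\cref{prop:Functoriality}, \cref{lem:mc1}, and the fact that gauge equivalence is an equivalence relation) are already in place. Both arguments are sound.
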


\begin{proof}
	The existence of a canonical bijection is a direct consequence of \cref{prop:mc0} and \cref{lem:mc1}; one could also prove it directly with \cref{thm:isomorphism of models}. 
	\cref{lemma:describe R(f)_0} shows that the bijection $\R(\g)_0\cong \MC(\g)$ is functorial with respect to $\infty_\pi$-morphisms: for any $\infty_\pi$-morphism $f : \g \rightsquigarrow \h$~, the following diagram is commutative
	\[
	\vcenter{\hbox{
		\begin{tikzpicture}
			\node (a) at (0,0) {$\R(\g)_0$};
			\node (b) at (3,0) {$\MC(\g)$};
			\node (c) at (0,-1.5) {$\R(\h)_0$};
			\node (d) at (3,-1.5) {$\MC(\h)$};
			
			\draw[->] (a) to node[above=-0.05]{$\scriptstyle{\cong}$} (b);
			\draw[->] (a) to node[left]{$\scriptstyle{\R(f)_0}$} (c);
			\draw[->] (b) to node[right]{$\scriptstyle{\MC(\upsilon^*f)}$} (d);
			\draw[->] (c) to node[above=-0.05]{$\scriptstyle{\cong}$} (d);
		\end{tikzpicture}
	}}
	\]
	Since $R(f)$ is a morphism of simplicial sets, the left-hand side passes to $\pi_0$~. We claim that the right-hand side also passes to the moduli space of Maurer--Cartan elements up gauge equivalence. Indeed, Deligne--Hinich's space 
	$\MC_\bullet$ is functorial with respect to $\infty$-morphisms, so any $\infty$-morphism from $\g$ to $\h$ sends two Maurer--Cartan elements which live in the same connected component of $\MC_\bullet(\g)$ to the same connected component of $\MC_\bullet(\h)$~. \cref{thm:isomorphism of models} asserts that $\MC_\bullet(\g)\simeq\MC(\g \widehat{\otimes} \mathrm{C}_\bullet)\cong\R(\g)$ and we know by \cite[Proposition~4.3]{rn17cosimplicial} that 
	$\MC\big(\g\, \widehat{\otimes} \mathrm{C}_1\big)$ is given by pairs of Maurer--Cartan element and a  gauge between them. Thus, if two Maurer--Cartan element are gauge equivalent then they live in the same connected component of $\MC_\bullet(\g)$~, so their image live in the same connected component of $\MC_\bullet(\h)$~, which means that their images are gauge equivalent. 
\end{proof}

We are left to make explicit the differential $\d(a_{01})$~, which requires to introduce the following combinatorial objects. We consider the set of \emph{partitioned planar rooted trees} $\PaPT$\index{trees!partitioned planar rooted}\index{$\PaPT$} made up of \emph{planar rooted trees}, with vertices of arity greater or equal to 1, endowed with a complete \emph{partition} into sub-trees  called \emph{blocks}, and where the leaves are labeled by two colors:  black or white. Each partition is required to contain at least one block and each 
block is required to contain at least one vertex. We will also consider the trees $\whiteleaf$ and $\blackleaf$ given by a single white or respectively black leaf, which strictly speaking do not belong to $\PaPT$~. 

\begin{example}
	\[
	\tau\coloneqq\vcenter{\hbox{
			\begin{tikzpicture}
			\def\scale{1};
			\pgfmathsetmacro{\diagcm}{sqrt(2)};
			
			\coordinate (r1) at (0, 0); 
			\coordinate (v11) at ($(r1) + (90:\scale*0.7)$);
			\coordinate (v12) at ($(v11) + (135:\scale*1*\diagcm)$);
			\coordinate (v13) at ($(v12) + (135:\scale*0.5*\diagcm)$);
			\coordinate (v14) at ($(v11) + (45:\scale*1*\diagcm)$);
			\coordinate (l11) at ($(v13) + (135:\scale*0.5*\diagcm)$);
			\coordinate (l12) at ($(v13) + (45:\scale*0.5*\diagcm)$);
			\coordinate (l13) at ($(v12) + (45:\scale*0.5*\diagcm)$);
			\coordinate (l14) at ($(v11) + (90:\scale*0.5)$);
			\coordinate (l15) at ($(v14) + (90:\scale*0.5)$);

			\draw[thick] (r1) to (l14);
			\draw[thick] (v11) to ($(l11) + (135:0.05)$);
			\draw[thick] (v13) to ($(l12) + (45:0.05)$);	
			\draw[thick] (v12) to ($(l13) + (45:0.05)$);	
			\draw[thick] (v11) to (v14);
			\draw[thick] (v14) to (l15);
			
			\fill (v11) circle [radius=2.5pt];
			\fill (v12) circle [radius=2pt];
			\fill (v13) circle [radius=2pt];
			\fill (v14) circle [radius=2pt];
			
			\fill[white] (l11)+(-2pt,0pt) rectangle +(2pt,4pt);
			\draw (l11)+(-2pt,0pt) rectangle +(2pt,4pt);
			\fill (l12)+(-2pt,0pt) rectangle +(2pt,4pt);
			\fill (l13)+(-2pt,0pt) rectangle +(2pt,4pt);
			\draw (l14)+(-2pt,0pt) rectangle +(2pt,4pt);
			\draw (l15)+(-2pt,0pt) rectangle +(2pt,4pt);

			\draw (v11) circle [radius=\scale*8pt];
			\draw ($(v12)!0.5!(v13)$) circle [radius=\scale*15pt];
			\draw (v14) circle [radius=\scale*8pt];
			
			\end{tikzpicture}
	}}\in \PaPT\ .
	\]
\end{example}

In the same way as in \cref{subsec:sLooalg}, for any $\tau \in \PaPT$~, $\alpha, \beta, \lambda\in \g$~, 
we define 
$\tau^\lambda(\alpha, \beta)\in \g$
by induction:
\[
\whiteleaf^\lambda(\alpha,\beta)\coloneqq \alpha\ , \quad 
\blackleaf^\lambda(\alpha,\beta)\coloneqq \beta\ , \quad \text{and} \quad
\tau^\lambda(\alpha, \beta) \coloneqq \ell_{m+1}\left(\tau_1^\lambda(\alpha, \beta),\ldots, \tau_m^\lambda(\alpha, \beta), \lambda \right)
\]
when the underlying planar rooted tree of $\tau$~, that is after forgetting the partition, is equal to 
$\tau=c_m\circ(\tau_1, \ldots, \tau_m)$~.

\begin{example}
	In the example of the above partitioned planar rooted tree $\tau$~, we have
	\[
	\tau^\lambda(\alpha, \beta)=\vcenter{\hbox{
			\begin{tikzpicture}
			\def\scale{1};
			\pgfmathsetmacro{\diagcm}{sqrt(2)};
			\pgfmathsetmacro{\diagcmbis}{1/sin(60)};
			
			\coordinate (r1) at (0, 0); 
			\coordinate (v11) at ($(r1) + (90:\scale*0.7)$);
			\coordinate (v12) at ($(v11) + (135:\scale*1*\diagcm)$);
			\coordinate (v13) at ($(v12) + (135:\scale*0.5*\diagcm)$);
			\coordinate (v14) at ($(v11) + (45:\scale*1*\diagcm)$);
			\coordinate (l11) at ($(v13) + (120:\scale*0.5*\diagcmbis)$);
			\coordinate (l12) at ($(v13) + (60:\scale*0.5*\diagcmbis)$);
			\coordinate (l13) at ($(v12) + (60:\scale*0.5*\diagcmbis)$);
			\coordinate (l14) at ($(v11) + (90:\scale*0.5)$);
			\coordinate (l15) at ($(v14) + (90:\scale*0.5)$);
			
			\draw[very thick] (r1) to (l14);
			\draw[very thick] (v11) to (v13);
			\draw[very thick] (v13) to ($(l11) + (135:0.05)$);
			\draw[very thick] (v13) to ($(l12) + (45:0.05)$);	
			\draw[very thick] (v12) to ($(l13) + (45:0.05)$);	
			\draw[very thick] (v11) to (v14);
			\draw[very thick] (v14) to (l15);
			
			\fill (v11) circle [radius=2.5pt];
			\fill (v12) circle [radius=2pt];
			\fill (v13) circle [radius=2pt];
			\fill (v14) circle [radius=2pt];
			
			\def\angle{30}
			\pgfmathsetmacro{\xcm}{1/sin(\angle)};
			\coordinate (llam1) at ($(v13) + (\angle:\scale*0.5*\xcm)$);
			\coordinate (llam2) at ($(v12) + (\angle:\scale*0.5*\xcm)$);
			
			\pgfmathsetmacro{\ycm}{1/sin(25)};
			\coordinate (llam3) at ($(v11) + (25:\scale*0.5*\ycm)$);
			
			\coordinate (llam4) at ($(v14) + (45:\scale*0.5*\diagcm)$);
			
			\draw[thin] (v13) to (llam1);
			\draw[thin] (v12) to (llam2);
			\draw[thin] (v11) to (llam3);
			\draw[thin] (v14) to (llam4);
			
			\draw ($(l11)-(0, 0.05)$) node[above] {$\scriptstyle{\alpha}$};
			\draw ($(l12)-(0, 0.05)$) node[above] {$\scriptstyle{\beta}$};
			\draw ($(l13)-(0, 0.05)$) node[above] {$\scriptstyle{\beta}$};		
			\draw ($(l14)-(0, 0.05)$) node[above] {$\scriptstyle{\alpha}$};
			\draw ($(l15)-(0, 0.05)$) node[above] {$\scriptstyle{\alpha}$};	
			
			\draw (v11) node[left] {$\scriptstyle{\ell_4}$};	
			\draw (v12) node[left] {$\scriptstyle{\ell_3}$};	
			\draw (v13) node[left] {$\scriptstyle{\ell_3}$};	
			\draw (v14) node[left] {$\scriptstyle{\ell_2}$};
			
			\draw ($(llam1)-(0, 0.05)$) node[above] {$\scriptstyle{\lambda}$};		
			\draw ($(llam2)-(0, 0.05)$) node[above] {$\scriptstyle{\lambda}$};	
			\draw ($(llam3)-(0, 0.05)$) node[above] {$\scriptstyle{\lambda}$};	
			\draw ($(llam4)-(0, 0.05)$) node[above] {$\scriptstyle{\lambda}$};
			
			\end{tikzpicture}
	}}=\ell_4(\ell_3(\ell_3(\alpha, \beta, \lambda),\beta ,\lambda), \alpha, \ell_2(\alpha, \lambda), \lambda)\ .
	\]
\end{example}

Given a partitioned planar rooted tree $\tau \in \PaPT$~, we denote by $\tau^1, \ldots, \tau^k$ the planar rooted sub-trees corresponding to its $k$ blocks. 
To each sub-tree $\tau^i$ we associate the planar rooted tree ${\tilde\tau}^i$ where all the leaves connected to another block or to a black leaf are substituted by a vertex of arity $0$~. 

\begin{example}
	Under the convention
	\[
	\tau=\vcenter{\hbox{
			\begin{tikzpicture}
			\def\scale{1};
			\pgfmathsetmacro{\diagcm}{sqrt(2)};
			
			\coordinate (r1) at (0, 0); 
			\coordinate (v11) at ($(r1) + (90:\scale*0.7)$);
			\coordinate (v12) at ($(v11) + (135:\scale*1*\diagcm)$);
			\coordinate (v13) at ($(v12) + (135:\scale*0.5*\diagcm)$);
			\coordinate (v14) at ($(v11) + (45:\scale*1*\diagcm)$);
			\coordinate (l11) at ($(v13) + (135:\scale*0.5*\diagcm)$);
			\coordinate (l12) at ($(v13) + (45:\scale*0.5*\diagcm)$);
			\coordinate (l13) at ($(v12) + (45:\scale*0.5*\diagcm)$);
			\coordinate (l14) at ($(v11) + (90:\scale*0.5)$);
			\coordinate (l15) at ($(v14) + (90:\scale*0.5)$);
			
			\draw[thick] (r1) to (l14);
			\draw[thick] (v11) to ($(l11) + (135:0.05)$);
			\draw[thick] (v13) to ($(l12) + (45:0.05)$);	
			\draw[thick] (v12) to ($(l13) + (45:0.05)$);	
			\draw[thick] (v11) to (v14);
			\draw[thick] (v14) to (l15);
			
			\fill (v11) circle [radius=2.5pt];
			\fill (v12) circle [radius=2pt];
			\fill (v13) circle [radius=2pt];
			\fill (v14) circle [radius=2pt];
			
			\fill[white] (l11)+(-2pt,0pt) rectangle +(2pt,4pt);
			\draw (l11)+(-2pt,0pt) rectangle +(2pt,4pt);
			\fill (l12)+(-2pt,0pt) rectangle +(2pt,4pt);
			\fill (l13)+(-2pt,0pt) rectangle +(2pt,4pt);
			\draw (l14)+(-2pt,0pt) rectangle +(2pt,4pt);
			\draw (l15)+(-2pt,0pt) rectangle +(2pt,4pt);
			
			\draw (v11) circle [radius=\scale*8pt];
			\draw ($(v12)!0.5!(v13)$) circle [radius=\scale*15pt];
			\draw (v14) circle [radius=\scale*8pt];
			
			\node[left] at ($(v11) + \scale*(-8pt, 0)$) {$\scriptstyle{\tau^1}$};
			\node[left] at ($(v12)!0.5!(v13) + \scale*(-15pt, 0)$) {$\scriptstyle{\tau^2}$};
			\node[left] at ($(v14) + \scale*(-8pt, 0)$) {$\scriptstyle{\tau^3}$};
			
			\end{tikzpicture}
	}}\ ,
	\]
	we get 
	\[\tilde{\tau}^1=
	\vcenter{\hbox{\begin{tikzpicture}
			\coordinate (r2) at (0,0.2);
			\coordinate (v21) at (0,.75);
			\coordinate (l21) at (-.75,1.5);
			\coordinate (l22) at (0,1.5);
			\coordinate (l23) at (.75,1.5);
			
			\draw[thick] (r2) to (l22);
			\draw[thick] (v21) to (l21);
			\draw[thick] (v21) to (l23);
			\fill (v21) circle [radius=2pt];
			\fill (l21) circle [radius=2pt];
			\fill (l23) circle [radius=2pt];
			\end{tikzpicture}}}
	\qquad 
	\tilde{\tau}^2=\vcenter{\hbox{
			\begin{tikzpicture}
			\coordinate (r1) at (-1,1.5); 
			\coordinate (v11) at (-1,2);
			\coordinate (v12) at (-1,2);
			\coordinate (v13) at (-1.5,2.5);
			\coordinate (v14) at (1,2);
			\coordinate (l11) at (-2,3);
			\coordinate (l12) at (-1,3);
			\coordinate (l13) at (-0.5,2.5);
			\coordinate (l14) at (0,1.5);
			\coordinate (l15) at (1,2.5);
			
			\draw[thick] (v11) to (l11);
			\draw[thick] (v13) to (l12);
			\draw[thick] (v12) to (l13);	
			\draw[thick] (v11) to (r1);
			
			\fill (v11) circle [radius=2pt];	
			\fill (v13) circle [radius=2pt];	
			\fill (l12) circle [radius=2pt];	
			\fill (l13) circle [radius=2pt];				
			\end{tikzpicture}}}
	\qquad 
	\tilde{\tau}^3=
	\vcenter{\hbox{\begin{tikzpicture}
			\coordinate (r3) at (0,0.2);
			\coordinate (v31) at (0,0.6);
			\coordinate (l31) at (0,1);
			
			\draw[thick] (r3) to (l31);
			\fill (v31) circle [radius=2pt];
			\end{tikzpicture}}}\ .\]
\end{example}

We associate a coefficient $D(\tau)$ to $\tau$ by
\[
D(\tau)\coloneqq (-1)^k\prod_{i=1}^k C\left(\tilde{\tau}^i\right),
\]
where the $C\left(\tilde{\tau}^i\right)$ is the one defined in \cref{subsec:sLooalg}.

\begin{proposition}\label{prop:Diffmc1}
	The differential in the complete $\sLi$-algebra $\mc^1$ is given by 
	\[
	\d(a_{01})= a_1 - a_0 + \sum_{\tau \in\PaPT}\tfrac{1}{D(\tau)}\,\tau^{a_{01}}(a_0,a_1-a_0)\ .
	\]
\end{proposition}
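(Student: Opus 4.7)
The plan is to invoke \cref{lem:mc1}, which guarantees that the differential on $\mc^1$ is uniquely determined by requiring $a_0$ and $a_1$ to be Maurer--Cartan elements and $a_{01}$ to be a gauge between them. Since the Maurer--Cartan conditions determine $\d(a_0)$ and $\d(a_1)$ via \cref{prop:mc0}, and since the gauge condition $a_{01}\cdot a_0 = a_1$ constitutes a fixed-point equation for $X \coloneqq \d(a_{01})$ inside the free complete $\sLi$-algebra $\widehat{\sLi}(a_0, a_1, a_{01})$, the task reduces to producing an explicit solution. The strategy is to verify that the formula given in the statement provides exactly such a solution.

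To set this up, I would expand the gauge condition using \cref{prop:gaugeformula}:
\[
a_1 \;=\; a_{01}\cdot a_0 \;=\; \sum_{\tau\in\PT}\frac{1}{C(\tau)}\,\tau^{a_{01}}(a_0)\ ,
\]
noting that each arity-zero vertex $c_0$ contributes a factor of $X$ (via $c_0^{a_{01}}(a_0)=\d(a_{01})$), leaves contribute $a_0$, and internal vertices of arity $m\geqslant 1$ produce $\ell_{m+1}(-,\ldots,-,a_{01})$. Isolating the contributions of the trivial tree ($a_0$) and of the single-vertex tree $c_0$ ($X$) recasts the gauge equation as
\[
X \;=\; (a_1 - a_0) \;-\; \sum_{\substack{\tau\in\PT\\ \tau\neq\, |\, ,\, c_0}}\frac{1}{C(\tau)}\,\tau^{a_{01}}(a_0)\bigg|_{\d(a_{01})\,\mapsto\, X}\ ,
\]
which can be solved by iterative substitution. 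This process converges in the complete setting because every iteration raises the arity filtration of the free complete $\sLi$-algebra.

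The core combinatorial step will be to identify the iteratively expanded solution with the claimed sum over partitioned planar trees. I would set up a bijection between iteratively expanded expressions and partitioned planar trees $\sigma\in\PaPT$ with leaves colored white or black. Under this correspondence, each $\PT$-tree used in a single iteration contributes exactly one block of $\sigma$, obtained by erasing its $c_0$s; the original leaves of the tree become white leaves of $\sigma$; each $c_0$ terminated with the primary term $(a_1-a_0)$ becomes a black leaf of $\sigma$; and each $c_0$ recursively expanded becomes an inter-block edge. Re-inserting $c_0$s at all positions where the $i$-th block of $\sigma$ connects to another block or to a black leaf recovers exactly the tree $\tilde{\tau}^i$ from the statement. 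Consequently, each iteration contributes a factor $1/C(\tilde{\tau}^i)$ paired with a sign $-1$ coming from moving terms across the equation; after multiplying these contributions over all $k$ blocks of $\sigma$, the total coefficient is $(-1)^k/\prod_{i=1}^k C(\tilde{\tau}^i) = 1/D(\sigma)$, as desired.

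The main technical obstacle is the careful combinatorial bookkeeping required to establish the bijection and match the coefficients precisely, which I expect to handle by induction on the number of blocks of $\sigma$. An alternative proof would bypass this combinatorics by computing $\d(a_{01})$ directly from the definition $\mc^1 = \hatCobar_\pi\rmC^1$: namely, by computing the transferred $\Cobar\Bar\com$-algebra structure on $\rmC_1$ via formula~\eqref{Eq:HTTonCn} applied to Dupont's contraction of $\Omega_1$, dualizing to obtain the $\Bar\Cobar\com^\vee$-coalgebra structure on $\rmC^1$, and applying the cobar differential formula~\eqref{eq:DiffMCn}. This second route is more explicit but requires carrying out non-trivial integrations against the Dupont homotopy, whereas the gauge-theoretic approach illuminates the conceptual meaning of the formula as encoding the very notion of gauge equivalence in higher Lie theory.
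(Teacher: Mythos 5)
Your proposal follows essentially the same route as the paper's proof: starting from the gauge relation $a_1=\sum_{\tau\in\PT}\tfrac{1}{C(\tau)}\tau^{a_{01}}(a_0)$ of \cref{prop:gaugeformula}, isolating the contributions of the trivial tree and the arity-zero corolla to obtain a fixed-point equation for $\d(a_{01})$, and solving it by the iterative machinery of \cref{prop:FixPtEqua}, with the block structure of $\PaPT$ and the coefficients $D(\tau)=(-1)^k\prod_i C(\tilde\tau^i)$ emerging from the substitution levels exactly as you describe. The only difference is one of presentation: you spell out the combinatorial bijection that the paper compresses into ``a straightforward computation.''
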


\begin{proof}
	Recall from \cref{prop:gaugeformula} that 
	\[ a_1=	 \sum_{\tau\in \PT} {\textstyle \frac{1}{C(\tau)}}\tau^{a_{01}}(a_0)
	\ \]
	in $\mc^1$~. Here one should be careful, as we are using the notation $\tau^{a_{01}}$ both for a function defined by trees in $\PaPT$ ($2$ arguments) and for trees in $\PT$ ($1$ argument).
	Rearranging the terms, we obtain 
	\begin{equation}\label{eq=fixedpteq}\tag{$\ast$}
	\d(a_{01}) = a_1 - a_0 - \sum_{\tau\in\overline{\PT}} {\textstyle \frac{1}{C(\tau)}}\tau^{a_{01}}(a_0)
	- \sum_{\tau\in\widetilde{\PT}} {\textstyle \frac{1}{C(\tau)}}\tau^{a_{01}}(a_0)
	\ ,
	\end{equation}
	where $\overline{\PT}$ stands for the sub-set of planar rooted trees with at least one vertex and such that each vertex has positive arity and where 
	$\widetilde{\PT}$ stands for the sub-set of planar rooted trees with  
	at least two vertices and such that at least 
	one of them has arity $0$~. Under the convention 
	\begin{align*}
	p_0\coloneqq{}& a_1 - a_0 - \sum_{\tau\in\overline{\PT}} {\textstyle \frac{1}{C(\tau)}}\tau^{a_{01}}(a_0)\qquad\text{and}\\ 
	\PP_m\left(\d\left(a_{01}\right)^{\otimes m}\right)\coloneqq{}&  
	- \sum_{\tau\in\widetilde{\PT}^{\langle m\rangle}} {\textstyle \frac{1}{C(\tau)}}\tau^{a_{01}}(a_0)\ , \ \text{for} \ m\geqslant 1\ ,
	\end{align*}
	where $\widetilde{\PT}^{\langle m \rangle}$ is the sub-set of $\widetilde{\PT}$ made up of planar rooted trees with exactly $m$ vertices of arity $0$~, Equation~\eqref{eq=fixedpteq} is a fixed-point equation satisfied by $\d(a_{01})$~, which appears in the right-hand side at the vertices of arity $0$~. \cref{prop:FixPtEqua}, which guarantees the existence and uniqueness of a solution to \eqref{eq=fixedpteq} and provides us with an explicit formula, which becomes the one of the statement after a straightforward computation. 
\end{proof}

From this result, one recovers the Lawrence--Sullivan algebra $\overline{\mc}^1$ as follows. Under the convention $\overline{\d}$ for the differential of $\overline{\mc}^1$, the two differentials are related by the formula 
\[\overline{\d}(a_{01})=\left(\rho\otimes \id_{C^1}\right)(\d(a_{01}))\ , \]
where $\rho : \sLi \to  \sLie$ is the morphism of operads defined in \cref{subsec:NAbDKadj}. Since this morphism kills all the generators of $\sLi$ of arity greater or equal to $3$ and preserves the binary generators, all the elements of 
$\d(a_{01})$ get killed in 
$\left(\rho\otimes \id_{C^1}\right)(\d(a_{01}))$ except for the ladders, that is the linear trees of shape 
\[\vcenter{\hbox{	
		\begin{tikzpicture}
		\coordinate (r1) at (0,0.3); 
		\coordinate (v1) at (0,1);
		\coordinate (v2) at (0,2);	
		\coordinate (v3) at (0,3);
		\coordinate (v4) at (0,3.7);			
		
		\draw[thick] (r1) to (v4);
		
		\fill (v1) circle [radius=2pt];
		\fill (v2) circle [radius=2pt];
		\fill (v3) circle [radius=2pt];
		\draw (v4)+(-2pt,0pt) rectangle +(2pt,4pt);
		
		\draw (v3) circle [radius=10pt];
		
		\draw ($(v1)-(0,0.3)$) to [out=0,in=0] ($(v2)+(0,0.3)$)  to [out=180,in=180] ($(v1)-(0,0.3)$) ;	
		\end{tikzpicture}}}\qquad 
\vcenter{\hbox{	
		\begin{tikzpicture}
		\coordinate (r1) at (0,0.3); 
		\coordinate (v1) at (0,1);
		\coordinate (v2) at (0,2);	
		\coordinate (v3) at (0,3);
		\coordinate (v4) at (0,3.7);			
		
		\draw[thick] (r1) to (v4);
		
		\fill (v1) circle [radius=2pt];
		\fill (v2) circle [radius=2pt];
		\fill (v3) circle [radius=2pt];
		\fill (v4)+(-2pt,0pt) rectangle +(2pt,4pt);
		
		\draw (v3) circle [radius=10pt];
		
		\draw ($(v1)-(0,0.3)$) to [out=0,in=0] ($(v2)+(0,0.3)$)  to [out=180,in=180] ($(v1)-(0,0.3)$) ;	
		\end{tikzpicture}}}
\]
In the sum 
$\sum_{\tau \in\PaPT}\tfrac{1}{D(\tau)}\,\tau^{a_{01}}(a_0,a_1-a_0)$~, 
the terms corresponding to the ladders with a black leaf or with a white leaf are respectively equal to 
\[\sum_{k\geqslant 1} b_n \ad_{a_{01}}^n(a_0) \qquad \text{and} \qquad \sum_{k\geqslant 1} w_n \ad_{a_{01}}^n(a_1-a_0)\ ,\]
where the coefficients are equal to 
\[b_n=\sum_{l=1}^n (-1)^l \sum_{\substack{k_1+\cdots+k_l=n \\ k_i\geqslant 1}}
\prod_{i=1}^l\frac{1}{(k_i+1)!}
\]
and
\[
w_n=\sum_{l=1}^n (-1)^l \sum_{\substack{k_1+\cdots+k_l=n \\ k_i\geqslant 1}}
(k_l+1)
\prod_{i=1}^l \frac{1}{(k_i+1)!}
\ .\]
Finally, one can see  that 
\[
w_1=-1\ , \quad w_n=0\ , \ \text{for}\ n\geqslant 2\ , \quad \text{and} \quad 
b_n=\tfrac{B_n}{n!}, \ \text{for}\ n\geqslant 1\ ,
\]
where the $B_n$ denote the Bernoulli numbers. This recovers the formulas of \cite{ls10}.

\subsection{Gauge and homotopy equivalences}
From what we have seen so far, the integration functor admits the following equivalent descriptions:
\begin{align*}
\R(\g)\cong \Hom_{\,\sLialg}\left(\mc^\bullet, \g\right) \cong 
\MC\left(\g\, \widehat{\otimes}^\pi \mathrm{C}_\bullet\right)\cong 
\Hom_{\,\sLialg}\left(\mc^0, \g\, \widehat{\otimes}^\pi \mathrm{C}_\bullet\right)
~.
\end{align*}
The simple construction $\g \,\widehat{\otimes}\, \Omega_1$ provides us with a functorial path object for complete $\sLi$-algebras, see \cref{thm:MConSLi}, which has been commonly used ever since the pioneering works of D.\ Quillen \cite{Quillen67, Quillen69} and D.\ Sullivan \cite{Sullivan77}. The price to pay for such a nice construction is the size of the data of the induced notion of a homotopy since $\Omega_1$ is infinite dimensional. On the other hand, the final form given above for the integration functor $\R(\g)$ hints to the fact that 
$\g\, \widehat{\otimes}^\pi \mathrm{C}_1$ is a much more economical path object, see \cref{prop:C1PATH} for more details. The formula given in \cref{prop:Diffmc1} makes the $\sLi$-algebra structure on $\g\, \widehat{\otimes}^\pi \mathrm{C}_1$ explicit. 

\medskip

Recall from \cite[Theorem\ 4.6]{rnw17} that the Maurer--Cartan elements of the convolution algebra $\hom^\alpha(\Bar_\alpha A, B)$ are in one-to-one correspondence with the $\infty_\alpha$-morphisms from $A$ to $B$~. The gauge equivalence, under any of its forms mentioned above, gives rise to the following notion. 

\begin{definition}[Gauge homotopy equivalence of $\infty_\alpha$-morphisms]
	Let $\alpha : \C \to \P$ be an operadic twisting morphism.
	Two $\infty_\alpha$-morphisms between two complete $\P$-algebras $A$ and $B$ are called \emph{gauge homotopy equivalent} if
	their corresponding Maurer--Cartan elements in the complete $\sLi$-algebra 
	$\hom^\alpha(\Bar_\alpha A,B)$ are  gauge equivalent. 
\end{definition}

For the universal twisting morphism $\iota$~, this equivalence relation is equivalent to all the homotopy notions introduced so far among $\infty$-morphisms, see \cite{Dolgushev07, DotsenkoPoncin16, rnw18, Vallette20}, but it carries the advantage of being the most economical in term of the required data. 
The size and form for this gauge homotopy equivalence allows us to formulate the following conjecture. 

\begin{conjecture}
	The element $h_\infty=h \Psi$ defined in the last remark before the appendix of \cite{DotsenkoShadrinVallette16} is a gauge homotopy equivalence of $\infty$-morphisms between the composite $i_\infty \circledcirc p_\infty$ and the identity $\id$~. 
\end{conjecture}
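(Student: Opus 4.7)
The plan is to reinterpret both sides of the desired equivalence inside the convolution $\sLi$-algebra $\hom^\iota(\Bar_\iota X, X)$, and then verify that $h_\infty$ solves the defining ODE of the gauge action of \cref{def:Gauge}. By \cite[Theorem~4.6]{rnw17} recalled just before the conjecture, the two $\infty$-morphisms $i_\infty \circledcirc p_\infty$ and $\id_X$ correspond to Maurer--Cartan elements $\alpha_0, \alpha_1 \in \MC(\hom^\iota(\Bar_\iota X, X))$, and $h_\infty$ naturally lives in degree $1$ of this convolution algebra. The statement to prove then becomes the identity $h_\infty \cdot \alpha_0 = \alpha_1$, where the dot denotes the gauge action.

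The first step would be to combine \cref{prop:gaugeformula}, which expresses $h_\infty \cdot \alpha_0$ as an explicit sum over planar rooted trees $\tau \in \PT$ weighted by the coefficients $C(\tau)^{-1}$, with the tree-wise formula for $h_\infty = h\Psi$ from the last remark before the appendix of \cite{DotsenkoShadrinVallette16}, which itself expresses $h_\infty$ as a Van der Laan-type sum of trees decorated by $i$, $p$, $h$, and the structure operations of $X$. Both expressions are built from rooted trees with vertices labeled by operadic composites, so the claim reduces to a tree-by-tree combinatorial identity.

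Rather than attempting this identification directly, which would require intricate bookkeeping of Koszul signs and symmetrization factors, I would instead recognize both sides as the unique solution of a formal differential equation in the sense of \cref{App:FormDE}. Concretely, one introduces a parameter $t$ and considers the path $\gamma(t)$ solving $\frac{d}{dt}\gamma(t) = d^{\gamma(t)}(h_\infty)$ with $\gamma(0) = \alpha_0$; the recursive formula of \cref{prop:formula for formal ODE} then produces a tree-indexed expression for $\gamma(1)$. On the other hand, the standard homological perturbation arguments used in \cite{DotsenkoShadrinVallette16} to construct $h_\infty$ can themselves be phrased as the solution of a fixed-point equation witnessing $i_\infty \circledcirc p_\infty - \id = [d, h_\infty] + \text{higher terms}$; evaluating this at $t=1$ should recover $\alpha_1$.

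The main obstacle is precisely the matching of these two fixed-point/ODE characterizations: one must show that the universal recursion giving $h_\infty$ in DSV16 is the same, up to a reindexing of partitioned planar rooted trees, as the one produced by integrating the gauge ODE. The combinatorics should be governed by universal operadic identities valid for arbitrary contractions of $\sLi$-algebras, so one expects the verification to reduce to a computation in the universal example, namely in a free complete $\sLi$-algebra of the form $\wsLi(V \oplus sV \oplus V)$ modelling the contraction data. Carrying out that universal calculation, and in particular matching the coefficients $C(\tau)^{-1}$ arising from the gauge flow with those produced by the homological perturbation lemma, is where the real work lies; once it is done, functoriality of the whole construction with respect to the operadic twisting morphism $\iota$ should yield the claimed gauge homotopy equivalence in full generality.
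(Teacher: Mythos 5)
This statement is labelled a \emph{conjecture} in the paper, and the paper gives no proof of it: the authors explicitly say that the explicit formula for gauge homotopies in \cref{prop:GaugeInfMor} is developed ``in order to develop tools to study the conjecture,'' not to settle it. Your proposal is therefore not being measured against a proof in the paper, and, more importantly, it is not itself a proof. You set up the correct framework --- both $i_\infty\circledcirc p_\infty$ and $\id$ sit as Maurer--Cartan elements in the convolution algebra $\hom^\iota(\Bar_\iota X, X)$ by \cite[Theorem~4.6]{rnw17}, the claim is the identity $h_\infty\cdot\alpha_0=\alpha_1$ for the gauge action of \cref{def:Gauge}, and the two sides admit tree-indexed expansions via \cref{prop:gaugeformula} (equivalently \cref{prop:GaugeInfMor}) on one hand and the perturbation-lemma recursion of \cite{DotsenkoShadrinVallette16} on the other. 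But the entire content of the conjecture is precisely the matching of these two expansions, including the coefficients $C(\tau)^{-1}$ against those produced by $h\Psi$, and your proposal explicitly defers this (``where the real work lies''). Identifying where the difficulty is located is not the same as resolving it.

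Two further cautions if you pursue this. First, the reduction ``to the universal example $\wsLi(V\oplus sV\oplus V)$'' is not automatic: a contraction is not encoded by a free complete $\sLi$-algebra on the direct sum of the spaces involved, and you would need to specify a genuine universal object (e.g.\ a suitable two-coloured operad or the free contraction datum) in which both formulas live before a tree-by-tree comparison is meaningful. Second, the heuristic equation $i_\infty\circledcirc p_\infty-\id=[d,h_\infty]+\text{higher terms}$ is only the linearization of the gauge relation; the gauge flow $\frac{d}{dt}\gamma(t)=d^{\gamma(t)}(h_\infty)$ contributes nonlinear corrections through the twisted differential, and it is exactly the agreement of these higher corrections with the higher components of $h\Psi$ that is open. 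Until that computation is carried out, the statement remains a conjecture.
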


The size of a gauge homotopy equivalence is optimal, ``so'' the various formulas are more involved than when working with the bigger model $\g \,\widehat{\otimes}\, \Omega_1$~. For its intrinsic interest, but also in order to develop tools to study the conjecture mentioned above, we provide an explicit formula for gauge homotopies. One can 
apply the explicit formula given in \cref{prop:gaugeformula}, which is equivalent to the formula for $\mc^1$ given in \cref{prop:Diffmc1}. Let us just treat in detail the case of $\infty$-morphisms of complete $\sLi$-algebras, the general case being similar and thus left to the interested reader. 

\medskip

For any $m\geqslant 1$~, we consider the set $\LRT_m$ of \emph{line rooted trees}\index{tree!line rooted}\index{$\LRT$} which are rooted trees with $m$  leaves bijectively indexed by $\{1, \ldots, m\}$ together with an horizontal line supporting only vertices, i.e.\ not crossing any edge. Vertices on this line have arity greater or equal to $1$ and can each have at most one other vertex directly above them, while all other vertices must have arity at least $2$~. The vertices on the line are labeled I, II, or III according to the following rules:
\begin{itemize}
	\item[$\diamond$] for each vertex below the line that has at least one children on the line, exactly one of these children is labeled III; this vertex can not have children of its own;
	\item[$\diamond$] vertices on the line with one vertex above them are labeled II; and
	\item[$\diamond$] all other vertices are labeled I or II.
\end{itemize}
These trees are not required to have a planar structure. 
For any line rooted tree, we denote by $\#\tau$ the number of vertices above the line. As an example, we consider the following tree with $\#\tau = 1$~.

\[\tau\coloneqq\vcenter{\hbox{	
		\begin{tikzpicture}
		\def\scale{0.75}
		
		\coordinate (r1) at ($\scale*(1,-4)$);
		\coordinate (v1) at ($\scale*(1, -3)$); 
		\coordinate (v2) at ($\scale*(-2,-1)$); 	
		\coordinate (V1) at ($\scale*(-4,0)$); 	
		\coordinate (V2) at ($\scale*(-2,0)$); 	
		\coordinate (V3) at ($\scale*(0,0)$); 	
		\coordinate (V4) at ($\scale*(2,0)$); 	
		\coordinate (V5) at ($\scale*(4,0)$); 	
		\coordinate (vu1) at ($\scale*(0.5,1)$); 		
		\coordinate (l1) at ($\scale*(-4.5,1)$); 						
		\coordinate (l2) at ($\scale*(-4,1)$); 						
		\coordinate (l3) at ($\scale*(-3.5,1)$); 							
		\coordinate (l4) at ($\scale*(-2.5,1)$); 						
		\coordinate (l5) at ($\scale*(-1.5,1)$); 						
		\coordinate (l6) at ($\scale*(-0.5,1)$); 						
		\coordinate (l7) at ($\scale*(0,2)$); 							
		\coordinate (l8) at ($\scale*(1,2)$); 							
		\coordinate (l9) at ($\scale*(1.5,1)$); 								
		\coordinate (l10) at ($\scale*(2.5,1)$); 							
		\coordinate (l11) at ($\scale*(3.5,1)$); 							
		\coordinate (l12) at ($\scale*(4.5,1)$); 								
		
		\draw[thick] (r1) to (v1);	
		\draw[thick] (v1) to (v2);
		\draw[thick] (v2) to (V1);		
		\draw[thick] (v2) to (V2);		
		\draw[thick] (v2) to (V3);		
		\draw[thick] (v1) to (V4);		
		\draw[thick] (v1) to (V5);	
		\draw[thick] (V1) to (l1);							
		\draw[thick] (V1) to (l2);							
		\draw[thick] (V1) to (l3);									
		\draw[thick] (V2) to (l4);								
		\draw[thick] (V2) to (l5);									
		\draw[thick] (V3) to (l6);
		\draw[thick] (V3) to (vu1);	
		\draw[thick] (vu1) to (l7);										
		\draw[thick] (vu1) to (l8);										
		\draw[thick] (V4) to (l9);
		\draw[thick] (V4) to (l10);
		\draw[thick] (V5) to (l11);
		\draw[thick] (V5) to (l12);
		
		\draw[thin, dashed] ($\scale*(-4.5,0)$)--($\scale*(4.5,0)$);	
		
		\draw (V1) node[below left] {$\scriptstyle{{\rm I}}$};	
		\draw (V2) node[below left] {$\scriptstyle{{\rm III}}$};		
		\draw (V3) node[below right] {$\scriptstyle{{\rm II}}$};			
		\draw (V4) node[below right] {$\scriptstyle{{\rm II}}$};			
		\draw (V5) node[below right] {$\scriptstyle{{\rm III}}$};		
		
		\draw (l1) node[above] {$\scriptstyle{{1}}$};						
		\draw (l2) node[above] {$\scriptstyle{{2}}$};						
		\draw (l3) node[above] {$\scriptstyle{{4}}$};						
		\draw (l4) node[above] {$\scriptstyle{{3}}$};						
		\draw (l5) node[above] {$\scriptstyle{{6}}$};						
		\draw (l6) node[above] {$\scriptstyle{{5}}$};						
		\draw (l7) node[above] {$\scriptstyle{{7}}$};						
		\draw (l8) node[above] {$\scriptstyle{{12}}$};						
		\draw (l9) node[above] {$\scriptstyle{{8}}$};						
		\draw (l10) node[above] {$\scriptstyle{{11}}$};						
		\draw (l11) node[above] {$\scriptstyle{{9}}$};						
		\draw (l12) node[above] {$\scriptstyle{{10}}$};						
		\end{tikzpicture}}}
\]
To a line rooted tree $\tau\in\LRT$ we associate a planar tree $\overline\tau \in \PT$ as follows. First, we embed the line rooted tree into the plane according to its shuffle tree structure, see \cite{Hoffbeck10} and \cite[Section~8.2.2]{LodayVallette12}. The tree given as an example above is already drawn in this way. Then we remove everything above the line, the vertices of type I become leaves, the vertices of type II become vertices of arity $0$~, and the vertices of type III are removed completely. For the example above we obtain:
\[
\overline{\tau}= \vcenter{\hbox{
	\begin{tikzpicture}
		\def\scale{0.7};
		\pgfmathsetmacro{\diagcm}{sqrt(2)};
		
		\coordinate (r) at (0,0);
		\coordinate (v1) at ($(r) + (0,\scale*1)$);
		\coordinate (v2) at ($(v1) + (135:\scale*\diagcm)$);
		\coordinate (l1) at ($(v2) + (135:\scale*\diagcm)$);
		\coordinate (l2) at ($(v2) + (45:\scale*\diagcm)$);
		\coordinate (l3) at ($(v1) + (45:\scale*\diagcm)$);
		
		\draw[thick] (r) to (v1);
		\draw[thick] (v1) to (v2);
		\draw[thick] (v1) to (l3);
		\draw[thick] (v2) to (l1);
		\draw[thick] (v2) to (l2);
		
		\node at (v1) {$\bullet$};
		\node at (v2) {$\bullet$};
		\node at (l2) {$\bullet$};
		\node at (l3) {$\bullet$};
	\end{tikzpicture}}}
\]
For any planar rooted tree $\overline{\tau}\in\PT$ obtained this way, we consider the coefficient $A(\overline{\tau})$ given by the product of the factorial of the arities of all the vertices of the tree. This counts the number of distinct planar embeddings of the underlying topological tree and represents the intrinsic symmetries of the tree. In the example above, $A(\overline{\tau}) = 4$~.

\begin{proposition}\label{prop:GaugeInfMor}
	Let 
	$(\g, \{\ell_m\}_{m\geqslant 2})$ and $(\h, \{\mathcal{k}_m\}_{m\geqslant 2})$ 
	be  two complete $\sLi$-algebras and let 
	$\{f_m : \g^{\hat{\odot} m} \to \h\}_{m\geqslant 1}$ and $\{g_m : \g^{\hat{\odot} m} \to \h\}_{m\geqslant 1}$
	be two $\infty$-morphisms between them. 
	The data of a gauge homotopy between $f$ and $g$ amounts to a collection
	$\{\lambda_m : \g^{\hat{\odot} m} \to \h \}_{m\geqslant 1}$ of degree 1 maps satisfying, for any $m\geqslant 1$:
	\[
	g_m=\sum_{\tau \in \LRT_m} (-1)^{\#\tau} {\textstyle \frac{A(\overline{\tau})}{C(\overline{\tau})}} \, \tau(\mathcal{k}, f, \lambda, \ell)
	~, 
	\]
	where the vertices of $\tau$ below the line are labeled according to their arity by $\mathcal{k}_m$~, the vertices of type I by $f_m$~, the vertices of type II with a vertex above by $\lambda_m$ and the ones without a vertex above by $\partial(\lambda_m)$~, the vertices of type III by $\lambda_m$~, and the vertices above the line by $\ell_m$~.
\end{proposition}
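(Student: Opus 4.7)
The plan is to translate the gauge homotopy condition into a gauge equation inside the convolution $\sLi$-algebra $\hom^\iota(\Bar_\iota \g, \h)$ and then apply the closed formula of \cref{prop:gaugeformula} for the gauge action, unfolding each term to exhibit the claimed sum over line rooted trees. By the cited result \cite[Theorem~4.6]{rnw17}, the $\infty$-morphisms $f,g$ correspond to Maurer--Cartan elements $\tilde f, \tilde g \in \MC(\hom^\iota(\Bar_\iota\g, \h))$ whose restrictions along $\g^{\hat\odot m}\hookrightarrow \Bar_\iota\g$ recover the components $f_m$ and $g_m$. A gauge homotopy is, by definition, a degree $1$ element $\Lambda$ of the convolution algebra satisfying $\Lambda\cdot \tilde f = \tilde g$, whose restriction to $\g^{\hat\odot m}$ recovers the $\lambda_m$.

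Applying \cref{prop:gaugeformula} directly in this convolution algebra yields the formula
\[
\tilde g \;=\; \sum_{\tau\in\PT} \tfrac{1}{C(\tau)}\, \tau^{\Lambda}(\tilde f),
\]
where the operations $\ell_{m+1}^{\mathrm{conv}}(-,\ldots,-,\Lambda)$ appearing in $\tau^{\Lambda}(\tilde f)$ are those of $\hom^\iota(\Bar_\iota\g, \h)$. I will then evaluate both sides on an element $x_1\odot\cdots\odot x_m\in \g^{\hat\odot m}$. By \cref{def:convolution algebra} and \cref{thm:convolution algebras and HTT}, each operation $\ell_{k+1}^{\mathrm{conv}}(\phi_1,\ldots,\phi_{k+1})$ is obtained by first applying the iterated $\com^\vee$-coproduct of $\Bar_\iota\g$ to the input, then evaluating the $\phi_i$ on the factors, and finally composing the outputs in $\h$ via $\mathcal{k}_{k+1}$; moreover, the internal bar differential contributes, via its $d_2$-part, the operations $\ell_m$ of $\g$ whenever it hits a component $\Lambda$. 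Recursively expanding $\tau^\Lambda(\tilde f)$ at each vertex of the planar rooted tree $\tau$ therefore replaces every vertex by a $\mathcal{k}$-multiplication together with a coproduct/differential expansion on the inputs, producing a sum indexed precisely by line rooted trees $\sigma\in\LRT_m$ whose below-line projection is $\overline\tau$. The resulting decorations are the ones stated: $\mathcal{k}$'s below the line, $\ell$'s above the line, $f$ at type I, $\lambda$ at types II (with vertex above) and III, and $\partial\lambda$ at type II without vertex above (this last case corresponding to the differential of $\Lambda$ when the bar coproduct does not further split the incoming bundle of inputs).

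The main obstacle is the combinatorial bookkeeping of this last step: identifying the resulting sum as an indexed family over $\LRT_m$, and then matching the coefficient $\tfrac{1}{C(\tau)}$ with the multinomial counting distinct planar embeddings of $\overline\tau$ to recover the prescribed factor $\tfrac{A(\overline\tau)}{C(\overline\tau)}$, while extracting the sign $(-1)^{\#\tau}$ from the Koszul rule applied to the $\#\tau$ degree $-1$ operations $\ell_m$ crossing the degree $1$ element $\Lambda$ along the line. This bookkeeping can be handled directly by iterating the recursive definition of $\tau^\Lambda$, or more efficiently by recasting $\tilde g = \Lambda\cdot\tilde f$ as a formal fixed-point equation of the type treated in \cref{sec:app}, so that the solution reads off as a sum over line rooted trees — exactly as in the proof of \cref{prop:Diffmc1}, where the differential of $a_{01}$ in $\mc^1$ was determined by the analogous fixed-point argument. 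Conversely, given a collection $\{\lambda_m\}$ satisfying the formula, one reassembles $\Lambda$ and checks via the same expansion that $\Lambda\cdot\tilde f = \tilde g$, yielding the desired bijection.
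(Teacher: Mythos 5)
Your proposal is correct and follows essentially the same route as the paper: both reduce the statement to a direct application of \cref{prop:gaugeformula} in the convolution algebra $\hom^\iota(\Bar_\iota\g,\h)\cong\prod_{m\geqslant 1}\hom\big(\g^{\hat{\odot}m},\h\big)$, unfold the structure operations (post-composition with the $\mathcal{k}_k$'s for the brackets, pre-composition with the $\ell_q$'s inside the differential) to obtain the sum over line rooted trees, and recover the factor $A(\overline{\tau})$ by counting the planar rooted trees with the same underlying composite. The only cosmetic difference is your attribution of the sign $(-1)^{\#\tau}$ to Koszul signs, whereas it is read off more directly from the explicit minus sign in front of the $(f_{p+1}\circ_1\ell_q)^\sigma$-term of the convolution differential, each above-line vertex contributing one such factor.
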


\[\tau(\mathcal{k}, f, \lambda, \ell)=\vcenter{\hbox{	
		\begin{tikzpicture}
		\def\scale{0.75}
		
		\coordinate (r1) at ($\scale*(1,-4)$);
		\coordinate (v1) at ($\scale*(1, -3)$); 
		\coordinate (v2) at ($\scale*(-2,-1)$); 	
		\coordinate (V1) at ($\scale*(-4,0)$); 	
		\coordinate (V2) at ($\scale*(-2,0)$); 	
		\coordinate (V3) at ($\scale*(0,0)$); 	
		\coordinate (V4) at ($\scale*(2,0)$); 	
		\coordinate (V5) at ($\scale*(4,0)$); 	
		\coordinate (vu1) at ($\scale*(0.5,1)$); 		
		\coordinate (l1) at ($\scale*(-4.5,1)$); 						
		\coordinate (l2) at ($\scale*(-4,1)$); 						
		\coordinate (l3) at ($\scale*(-3.5,1)$); 							
		\coordinate (l4) at ($\scale*(-2.5,1)$); 						
		\coordinate (l5) at ($\scale*(-1.5,1)$); 						
		\coordinate (l6) at ($\scale*(-0.5,1)$); 						
		\coordinate (l7) at ($\scale*(0,2)$); 							
		\coordinate (l8) at ($\scale*(1,2)$); 							
		\coordinate (l9) at ($\scale*(1.5,1)$); 								
		\coordinate (l10) at ($\scale*(2.5,1)$); 							
		\coordinate (l11) at ($\scale*(3.5,1)$); 							
		\coordinate (l12) at ($\scale*(4.5,1)$); 								
		
		\draw[thick] (r1) to (v1);	
		\draw[thick] (v1) to (v2);
		\draw[thick] (v2) to (V1);		
		\draw[thick] (v2) to (V2);		
		\draw[thick] (v2) to (V3);		
		\draw[thick] (v1) to (V4);		
		\draw[thick] (v1) to (V5);	
		\draw[thick] (V1) to (l1);							
		\draw[thick] (V1) to (l2);							
		\draw[thick] (V1) to (l3);									
		\draw[thick] (V2) to (l4);								
		\draw[thick] (V2) to (l5);									
		\draw[thick] (V3) to (l6);
		\draw[thick] (V3) to (vu1);	
		\draw[thick] (vu1) to (l7);										
		\draw[thick] (vu1) to (l8);										
		\draw[thick] (V4) to (l9);
		\draw[thick] (V4) to (l10);
		\draw[thick] (V5) to (l11);
		\draw[thick] (V5) to (l12);
		
		\draw (v1) node[left] {$\scriptstyle{{\mathcal{k}_3}}$};	
		\draw (v2) node[left] {$\scriptstyle{{\mathcal{k}_3}}$};	
		\draw (vu1) node[right] {$\scriptstyle{{\ell_2}}$};		
		
		\draw (V1) node[ left] {$\scriptstyle{{f_3}}$};	
		\draw (V2) node[ left] {$\scriptstyle{{\lambda_2}}$};		
		\draw (V3) node[ right] {$\scriptstyle{{\lambda_2}}$};			
		\draw (V4) node[ right] {$\scriptstyle{{\partial(\lambda_2)}}$};			
		\draw (V5) node[ right] {$\scriptstyle{{\lambda_2}}$};		
		
		\draw (l1) node[above] {$\scriptstyle{{1}}$};						
		\draw (l2) node[above] {$\scriptstyle{{2}}$};						
		\draw (l3) node[above] {$\scriptstyle{{4}}$};						
		\draw (l4) node[above] {$\scriptstyle{{3}}$};						
		\draw (l5) node[above] {$\scriptstyle{{6}}$};						
		\draw (l6) node[above] {$\scriptstyle{{5}}$};						
		\draw (l7) node[above] {$\scriptstyle{{7}}$};						
		\draw (l8) node[above] {$\scriptstyle{{12}}$};						
		\draw (l9) node[above] {$\scriptstyle{{8}}$};						
		\draw (l10) node[above] {$\scriptstyle{{11}}$};						
		\draw (l11) node[above] {$\scriptstyle{{9}}$};						
		\draw (l12) node[above] {$\scriptstyle{{10}}$};						
		\end{tikzpicture}}}\]

\begin{proof}
	This a direct application of \cref{prop:gaugeformula} to the
	convolution algebra 
	$\hom^\iota(\Bar_\iota \g, \h)$~. 
	In this case, the cooperad $\C=\com^\vee$ is the linear dual of the operad $\com$~, so the convolution algebra is isomorphic to 
	\[\hom_\Sy\big(\com^\vee, \eend^\g_\h\big)
	\cong 
	\prod_{m\geqslant 1} \hom
	\left(\g^{\hat{\odot} m}, \h\right)\]
	with  structure operations 
	\[d(f)_m=\partial(f_m)-\sum_{\substack{p+q=m\\ q\geqslant 2}}
	\sum_{\sigma\in \mathrm{Sh}_{p,q}^{-1}}
	(f_{p+1}\circ_{1} \ell_q)^{\sigma}\]
	and
	\[
	\mu_k\big(f^1, \ldots, f^k\big)_m={\textstyle \frac{1}{k!}}
	\sum_{\tau \in \Sy_k}
	\sum_{\substack{i_1+\cdots+i_k=m}}
	\sum_{\sigma\in \mathrm{Sh}_{i_1, \ldots, i_k}^{-1}}
	\mathcal{k}_k\circ \left(f^{\tau(1)}_{i_1}, \ldots, f^{\tau(k)}_{i_k}\right)^{\sigma}\ .
	\]
	The formula
	\[
	g=\sum_{\overline{\tau}\in \PT}{\textstyle \frac{1}{C(\overline{\tau})}}{\overline{\tau}}^\lambda(f)
	\]
	of \cref{prop:gaugeformula} produces the composition of maps along line rooted trees given in the statement together with the coefficient ${\textstyle \frac{1}{C(\overline{\tau})}}$~. The sign comes from the minus sign on the right-hand term of the differential given above for the convolution algebra. Finally, any contribution along a line rooted tree $\tau$ actually comes from $A(\overline{\tau})$ planar rooted trees. 
\end{proof}
\section{Higher Baker--Campbell--Hausdorff formulas}\label{sec:HighBCH}

It is known that the Deligne--Hinich and the Getzler Maurer--Cartan spaces are actually Kan complex. In this section, we show this fact for our model $\R(\g)$ from first principles. We recover much more, fully characterizing all fillings of horns in $\R(\g)$ with explicit formulas. We apply this newfound knowledge to first recover the classical Baker--Campbell--Hausdorff formula and then to introduce natural higher generalizations.

\medskip

It should be noted that the existence of such products was already noticed by Getzler \cite[Definition~5.5]{Getzler09} and that Bandiera proved in \cite[Proposition~5.2.36]{Bandiera14} that the first one coincides with the classical Baker--Campbell--Hausdorff formula. The higher ones have not yet been studied in any additional depth. 
In this section, we first prove the existence of the higher BCH products using the new methods introduced in the present paper (left adjoint functor $\L$ of the integration functor $\R$ and fixed-point equations \cref{Sec:FixedPtEq}). This allows us to give explicit formulas to calculate them. We establish a new characterization of the classical BCH formula which leads to another proof that the first of these products is equal to the classical  BCH formula for Lie algebras. 
Finally, we study some of their properties. We will  apply these higher BCH products in \cref{sect:homotopy theory} to study the homotopical behavior of the integration functor $\R$~.

\subsection{Filling horns in Maurer--Cartan spaces}\label{subsec:fill horns}

The purpose of this section is to show that, given a horn $\Ho{k}{n} \to \R(\g)$~, the set of horn fillers is in canonical bijection with the set of  elements of degree $n$ of the complete $\sLi$-algebra:
\begin{equation}\label{eq:HornFillers}
\left\{\vcenter{\hbox{
	\begin{tikzpicture}
		\node (a) at (0, 0) {$\Ho{k}{n}$};
		\node (b) at (2, 0) {$\R(\g)$};
		\node (c) at (0, -1.5) {$\De{n}$};
		
		\draw[->] (a) to (b);
		\draw[right hook->] (a) to node[below=-0.05cm, sloped]{$\sim$} (c);
		\draw[->, dashed] (c) to (b);
	\end{tikzpicture}
}}\right\} 
\cong \g_n\ .
\end{equation}

The following result is the key ingredient of this section.

\begin{lemma}\label{lemm:Fundamental}
	For $n\geqslant 2$~, there is an isomorphism of complete $\sLi$-algebras 
	\[
	\mc^n=\Li\big(\De{n}\big)\cong \Li\big(\Ho{k}{n}\big)\,\widehat{\sqcup}\, \widehat{\sLi}(u, du)\ ,
	\]
	where $u$ is a generator of degree $n$ and where the differential on the right-hand side is given by $d(u)=du$~.
\end{lemma}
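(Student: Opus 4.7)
The plan is to construct the isomorphism by a change of generators in the free complete $\sLi$-algebra $\mc^n = \widehat{\sLi}(\{a_J\}_{\emptyset \neq J \subseteq [n]})$ that isolates the two generators $a_{[n]}$ and $a_{[n]\setminus\{k\}}$, corresponding to the non-degenerate simplices of $\Delta^n$ that are missing from the horn $\Ho{k}{n}$.

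First, I would identify $\Li(\Ho{k}{n})$ with the sub-$\sLi$-algebra of $\mc^n$ freely generated by $\{a_J : J \neq [n],\, J \neq [n]\setminus\{k\}\}$. The key point is that, by the cosimpliciality of $\mc^\bullet$, any face inclusion $J \hookrightarrow [n]$ with $J \subsetneq [n]$ yields an embedding $\mc^{|J|-1}\hookrightarrow \mc^n$ whose image is $\widehat{\sLi}(\{a_{J'} : J' \subseteq J\})$, and consequently $\d(a_J)$ lies in this image. Since $[n]\setminus\{k\} \not\subseteq J$ whenever $J \neq [n], [n]\setminus\{k\}$, the differential never produces $a_{[n]\setminus\{k\}}$, so the proposed subalgebra is closed under $\d$. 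Using that $\Li$ preserves colimits and that $\Ho{k}{n}$ is obtained by successively attaching the non-degenerate simplices other than $[n]$ and $[n]\setminus\{k\}$ along their boundaries, the identification then follows inductively from the pushout formula $\Li(X \cup_{\partial \Delta^m}\Delta^m) \cong \Li(X)\, \widehat{\sqcup}_{\Li(\partial \Delta^m)}\, \mc^m$, which at each step adjoins exactly one new free generator.

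I would then define
\[
\Phi\colon \Li(\Ho{k}{n})\, \widehat{\sqcup}\, \widehat{\sLi}(u, du) \longrightarrow \mc^n
\]
by the inclusion on $\Li(\Ho{k}{n})$ together with $u\mapsto a_{[n]}$ and $du \mapsto \d(a_{[n]})$. This is a morphism of complete $\sLi$-algebras by construction. To show it is an isomorphism I would construct the inverse via the change of generators $b_J = a_J$ for $J \neq [n], [n]\setminus\{k\}$, $b_{[n]} = a_{[n]}$, and $b_{[n]\setminus\{k\}} = \d(a_{[n]})$. The formula $\d(a_{[n]}) = (-1)^k a_{[n]\setminus\{k\}} + R$, where $R$ gathers the remaining linear terms $a_{[n]\setminus\{l\}}$ for $l \neq k$ together with the entire non-linear part, must be inverted to express $a_{[n]\setminus\{k\}}$ as a series in the $b_J$'s.

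The main obstacle is that $R$ may contain $a_{[n]\setminus\{k\}}$ inside its non-linear part, making this an implicit equation. Those offending terms have arity strictly greater than one, however, so the completeness of the arity filtration of $\mc^n$ allows one to solve for $a_{[n]\setminus\{k\}}$ by a fixed-point iteration in the spirit of \cref{sec:app}, exhibiting $\{b_J\}$ as a free generating set of $\mc^n$. A direct check then verifies that under this change of generators the differentials of both sides agree: $\d(b_{[n]}) = b_{[n]\setminus\{k\}}$, $\d(b_{[n]\setminus\{k\}}) = \d^2(a_{[n]}) = 0$, and $\d(b_J) \in \Li(\Ho{k}{n})$ for $J \neq [n], [n]\setminus\{k\}$, so that $\Phi$ is an isomorphism of complete $\sLi$-algebras.
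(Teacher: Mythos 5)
Your proposal is correct and follows essentially the same route as the paper's proof: both define the map by $u\mapsto a_{[n]}$, $du\mapsto \d(a_{[n]})$ and invert it by solving for $a_{\whk}$ as the unique solution of a fixed-point equation in the complete free algebra (your change-of-generators $b_{\whk}=\d(a_{[n]})$ is just a repackaging of the paper's explicit inverse $\psi$). The only point where the paper is more careful is in verifying the hypothesis of \cref{prop:FixPtEqua}, namely that the operators in the fixed-point equation raise the arity filtration: this requires excluding trees \emph{all} of whose leaves are labeled by $a_{\whk}$, which the paper does via the identity $\omega_{\whk}\,\omega_{\whk}=0$ in $\Omega_n$ for $n\geqslant 2$; your ``arity strictly greater than one'' remark does also yield convergence and uniqueness, but only after a mild extension of \cref{prop:FixPtEqua} beyond its stated hypotheses.
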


\begin{proof}
	We consider the following morphism of complete $\sLi$-algebras:
	\begin{align*}
		\Li\big(\Ho{k}{n}\big)\,\widehat{\sqcup}\, \widehat{\sLi}(u, du){}&\stackrel{\varphi}{\longrightarrow}\Li\big(\De{n}\big)\\
		a_I{}&\longmapsto a_I\\
		u{}&\longmapsto a_{[n]}\\
		du{}&\longmapsto \d(a_{[n]})\ .
	\end{align*}
	In the other way around, we want to define a morphism of $\sLi$-algebras as follows
	\begin{align*}
		\Li\big(\De{n}\big){}&\stackrel{\psi}{\longrightarrow} \Li\big(\Ho{k}{n}\big)\,\widehat{\sqcup}\, \widehat{\sLi}(u, du)\\
		a_I{}&\longmapsto a_I\\
		a_{[n]}{}&\longmapsto u\\
		a_{\whk}{}&\longmapsto x\ ,
	\end{align*}
	where $\whk \coloneqq \{0, \ldots, \widehat{k}, \ldots, n\}$ and where $x$ needs to be determined. 
	
	\medskip 
	
	Recall from \cref{eq:DiffMCn} that 
	\begin{align*}
	\d(a_{[n]})=&{}\sum_{l=0}^n (-1)^l a_{\widehat{l}}\\
	&{}- 
	\sum_{\substack{m\geqslant 2\\j\geqslant 0}}
	\sum_{\substack{\tau \in \overline{\RT}_{m+j}\\
			I_1, \ldots, I_m\subseteq[n]\ , I_l\neq \emptyset, \whk\\
			\lambda^{\tau(I_1, \ldots, \whk, \ldots, \whk, \ldots, I_m)}_{[n]}\neq 0}}
	\frac{1}{\lambda^{\tau(I_1, \ldots, \whk, \ldots, \whk, \ldots, I_m)}_{[n]} (m+j)!}
	\, \tau\left(a_{I_1}, \ldots, a_{\whk}, \ldots, a_{\whk}, \ldots, a_{I_m}\right),
	\end{align*}
	where we slightly abuse notation and write $a_{[n]}$ for $u$~.
	
	\medskip
	
	For the composite $\psi\varphi$ to be equal to the identity, by evaluating on $du$ we need the element $x$ to satisfy the  equation 
	\begin{align*}
	du ={}&(-1)^k x+\sum_{l\neq k} (-1)^l a_{\widehat{l}}\\
	&-\sum_{\substack{m\geqslant 2\\j\geqslant 0}} \sum_{\substack{\tau \in \overline{\RT}_{m+j}\\ 
			I_1, \ldots, I_m\subset[n]\ , I_l\neq \emptyset, \whk\\ 
			\lambda^{\tau(I_1, \ldots, \whk, \ldots, \whk, \ldots, I_m)}_{[n]}\neq 0}} \frac{1}{\lambda^{\tau(I_1, \ldots, \whk, \ldots, \whk, \ldots, I_m)}_{[n]} (m+j)!}\, \tau(a_{I_1}, \ldots, x, \ldots, x, \ldots, a_{I_m})\ ,
	\end{align*}
	where $j$ stands for the number of appearances of $x$~.  
	This equation is equivalent to the following fixed-point equation:
	\begin{align}\label{eq:FixPtMorph}
	\begin{split}
	x={}&(-1)^k du-\sum_{l\neq k} (-1)^{k+l} a_{\widehat{l}}\\
	& +(-1)^k \sum_{\substack{j\geqslant 0\\m\geqslant 2}}\sum_{\substack{\tau \in \overline{\RT}_{m+j}\\I_1, \ldots, I_m\subseteq[n]\ , I_l\neq \emptyset, \whk\\ \lambda^{\tau(I_1, \ldots, \whk, \ldots, \whk, \ldots, I_m)}_{[n]}\neq 0}} \frac{1}{\lambda^{\tau(I_1, \ldots, \whk, \ldots, \whk, \ldots, I_m)}_{[n]} (m+j)!}\, \tau(a_{I_1}, \ldots, x, \ldots, x, \ldots, a_{I_m})
	\end{split}
	\end{align}
	in the complete $\sLi$-algebra $\Li\big(\Ho{k}{n}\big)\,\widehat{\sqcup}\, \widehat{\sLi}(u, du)$~. This latter equation is a particular case of the fixed-point equation~\eqref{eqn:FixPt}: we consider the  maps
	\begin{align*}
	&p_0\coloneqq (-1)^k du-\sum_{l\neq k} (-1)^{k+l} a_{\widehat{l}} +(-1)^k 
	\sum_{m\geqslant 2}
	\sum_{\substack{\tau \in \overline{\RT}_{m}\\
			I_1, \ldots, I_m\subseteq[n]\ , I_l\neq \emptyset, \whk\\
			\lambda^{\tau(I_1, \ldots, I_m)}_{[n]}\neq 0}}
	\frac{1}{\lambda^{\tau(I_1, \ldots,  I_m)}_{[n]} m!}
	\, \tau(a_{I_1}, \ldots,  a_{I_m})\ , \\
	&\PP_j(-, \ldots, -)\coloneqq\\
	&\qquad\quad(-1)^k
	\sum_{m\geqslant 2}
	\sum_{\substack{\tau \in \overline{\RT}_{m+j}\\
			I_1, \ldots, I_m\subseteq[n]\ , I_l\neq \emptyset, \whk\\
			\lambda^{\tau(I_1, \ldots, \whk, \ldots, \whk, \ldots, I_m)}_{[n]}\neq 0}}
	\frac{1}{\lambda^{\tau(I_1, \ldots, \whk, \ldots, \whk, \ldots, I_m)}_{[n]} (m+j)!}
	\, \tau(a_{I_1}, \ldots, -, \ldots, -, \ldots, a_{I_m})\ ,
	\end{align*}
	which is well-defined since, for any $x_1, \ldots, x_j$~, the element $\tau(a_{I_1}, \ldots, x_1, \ldots, x_j, \ldots, a_{I_m})$ lives in $\F_{m}$ and since, for any  $m\geqslant 2$~, the number of rooted trees in $\RT_{m+j}$ is finite. 
	It remains to prove that each $P_j$ raises the degree filtration by $1$~, which amounts to show that every non-trivial term appearing in the sum involves at least one $a_I$~, with $I\neq \widehat{k}$~. This is equivalent to proving that for any reduced rooted tree $\tau\in \overline{\RT}$~, we have 
	$\mu_\tau\big(\omega_{\whk}, \ldots, \omega_{\whk}\big)\neq \lambda\, \omega_{[n]}+\cdots$~, with $\lambda\in \k\backslash \{0\}$~. For $n\geqslant 2$~, we have $\omega_{\whk}\omega_{\whk}=0$;  therefore, we get $\mu_\tau\big(\omega_{\whk}, \ldots, \omega_{\whk}\big)=0$~.
	
	\medskip
	
	Now one can apply \cref{prop:FixPtEqua} to get the existence of a solution $x$ to the fixed-point equation~\eqref{eq:FixPtMorph}. Forgetting the differentials, the map $\psi$ is now a well-defined morphism of complete $\sLi$-algebras, which satisfies $\psi\varphi=\id$~. The image under the morphism $\varphi$ of Equation~\eqref{eq:FixPtMorph} produces the same kind of equation
	\begin{align*}
	\varphi(x)={}& \d(a_{[n]})-(-1)^k\sum_{l\neq k} (-1)^l a_{\widehat{l}}\\
	&+(-1)^k 
	\sum_{\substack{m\geqslant 2\\j\geqslant 0}}
	\sum_{\substack{\tau \in \overline{\RT}_{m+j}\\
			I_1, \ldots, I_m\subseteq[n]\ , I_l\neq \emptyset, \whk\\
			\lambda^{\tau(I_1, \ldots, \whk, \ldots, \whk, \ldots, I_m)}_{[n]}\neq 0}}
	\frac{\tau(a_{I_1}, \ldots, \varphi(x), \ldots, \varphi(x), \ldots, a_{I_m})}{\lambda^{\tau(I_1, \ldots, \whk, \ldots, \whk, \ldots, I_m)}_{[n]} (m+j)!}
	\end{align*}
	but in the complete $\sLi$-algebra $\mc^n$ this time. A solution is known, it is $a_{\whk}$~, and by the uniqueness of the solution to the fixed-point equation in \cref{prop:FixPtEqua}, we conclude that 
	$\varphi(x)=a_{\whk}$~. This shows that $\varphi\circ \psi=\id$ and thus that $\psi$ is a chain map. 
\end{proof}

\begin{theorem}\label{thm:KanExt}
	The map
	\begin{align*}
	\Hom_\sSe\left(\De{n}, \R(\g)\right){}&\ \longrightarrow\ \Hom_\sSe\left(\Ho{k}{n}, \R(\g)\right)\times  \g_n\\
	z{}&\ \longmapsto\ \left(z_{|\Ho{k}{n}}, z[n]\right)
	\end{align*}
	is a bijection which is natural 
	in the complete $\sLi$-algebra $\g$ with respect to strict morphisms.
\end{theorem}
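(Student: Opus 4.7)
The plan is to combine the main adjunction $\Li \dashv \R$ of \cref{thm:MainAdjunction} with the structural decomposition of $\mc^n$ provided by \cref{lemm:Fundamental}. First, I will rewrite both sides of the claimed bijection in algebraic terms. Since $\mc^\bullet = \Li\bigl(\De{\bullet}\bigr)$ by construction, and since $\Li$ preserves colimits so that $\Li\bigl(\Ho{k}{n}\bigr)$ is the correct algebraic counterpart of the horn, the adjunction produces natural bijections
\[
\Hom_\sSe\left(\De{n}, \R(\g)\right)\cong \Hom_{\sLialg}\left(\mc^n, \g\right)
\quad\text{and}\quad
\Hom_\sSe\left(\Ho{k}{n}, \R(\g)\right)\cong \Hom_{\sLialg}\left(\Li\bigl(\Ho{k}{n}\bigr), \g\right).
\]

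Next, I will apply $\Hom_{\sLialg}(-,\g)$ to the isomorphism
$\mc^n\cong \Li\bigl(\Ho{k}{n}\bigr)\,\widehat{\sqcup}\,\widehat{\sLi}(u,du)$
of \cref{lemm:Fundamental}. Using the universal property of coproducts, which exist in $\sLialg$ by \cref{prop:CatCocomp}, this yields a natural bijection
\[
\Hom_{\sLialg}\left(\mc^n, \g\right)\cong \Hom_{\sLialg}\left(\Li\bigl(\Ho{k}{n}\bigr), \g\right)\times \Hom_{\sLialg}\left(\widehat{\sLi}(u,du), \g\right).
\]

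Finally, I will identify the second factor with $\g_n$. The complete $\sLi$-algebra $\widehat{\sLi}(u,du)$ is the free complete $\sLi$-algebra on the two-dimensional acyclic chain complex with generators $u$ in degree $n$ and $du$ in degree $n-1$ and obvious differential. By freeness, a strict morphism $\widehat{\sLi}(u,du)\to \g$ is uniquely determined by the image of $u$, which may be chosen to be any element of $\g_n$: the image of $du$ is then forced by commutation with the differentials. Composing all these identifications will produce the asserted bijection, and tracing through the construction will confirm that the projection onto the second factor corresponds precisely to evaluation of the filler on the top simplex $[n]\in\De{n}_n$. Naturality in $\g$ with respect to strict morphisms is immediate since every step in the chain of bijections is manifestly natural.

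The entire content of the proof lies in \cref{lemm:Fundamental}, which is already established via the fixed-point method of \cref{App:FormDE}; given that input, the present statement reduces to a formal manipulation of adjunctions and coproduct universal properties, so I do not anticipate any further obstacle.
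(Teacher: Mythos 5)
Your proposal is correct and follows essentially the same route as the paper's own proof: both reduce the statement via the adjunction of \cref{thm:MainAdjunction} to the coproduct decomposition $\mc^n\cong \Li\bigl(\Ho{k}{n}\bigr)\,\widehat{\sqcup}\,\widehat{\sLi}(u,du)$ of \cref{lemm:Fundamental}, and then identify $\Hom_{\sLialg}\bigl(\widehat{\sLi}(u,du),\g\bigr)\cong\g_n$ by quasi-freeness. No gaps; the entire content indeed resides in \cref{lemm:Fundamental}.
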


This established the bijection \eqref{eq:HornFillers} mentioned at the beginning of this section.

\begin{proof}
	The isomorphism $\mc^n\cong \Li\big(\Ho{k}{n}\big)\,\widehat{\sqcup}\, \widehat{\sLi}(u, du)$
	of \cref{lemm:Fundamental} and the adjunction of \cref{thm:MainAdjunction} induce the following bijections
	\begin{align*}
		\Hom_\sSe\left(\De{n}, \R(\g)\right)\cong{}&\Hom_{\sLialg}\left(\Li(\De{n}), \g\right)\\
		\cong{}& \Hom_{\sLialg}\left(\Li(\Ho{k}{n}), \g\right)\times  \Hom_{\sLialg}\left(\widehat{\sLi}(u, du), \g\right)\\
		\cong{}& \Hom_{\sLialg}\left(\Li(\Ho{k}{n}), \g\right)\times  \g_n\\
		\cong{}& \Hom_\sSe\left(\Ho{k}{n}, \R(\g)\right)\times  \g_n\ ,
	\end{align*}
	which is natural in the complete $\sLi$-algebra $\g$\ . The final bijection from left to right is given by the restriction along the horn inclusion $\Ho{k}{n}\hookrightarrow \De{n}$ together with taking the image of $[n]$~. This concludes the proof. 
\end{proof}

\begin{corollary}
	The simplicial set $\R(\g)$ integrating any complete $\mathcal{s}\L_\infty$-algebra $\g$ is a Kan complex with a canonical algebraic $\infty$-groupoid structure. 
\end{corollary}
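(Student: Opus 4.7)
The plan is very short: this corollary is an essentially immediate consequence of \cref{thm:KanExt}, so the proof amounts to unpacking what that theorem says and observing that a canonical choice of filler is built into the bijection.

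First, I would recall the Kan condition: $\R(\g)$ is a Kan complex precisely when every horn $\Ho{k}{n} \to \R(\g)$ with $n \geqslant 2$ and $0 \leqslant k \leqslant n$ admits a filler $\De{n} \to \R(\g)$. By \cref{thm:KanExt}, the restriction-and-evaluation map
\[
\Hom_\sSe\left(\De{n}, \R(\g)\right) \longrightarrow \Hom_\sSe\left(\Ho{k}{n}, \R(\g)\right) \times \g_n
\]
is a bijection, so the set of fillers extending a given horn is in natural bijection with $\g_n$. Since $\g_n$ is never empty — it contains at least the element $0$ — a filler always exists, establishing the Kan property.

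Next, to upgrade this to the structure of an algebraic $\infty$-groupoid in the sense of \cite{Nickolaus11}, I need to specify a \emph{chosen} filler for each horn, not merely prove existence. The element $0 \in \g_n$ is canonically defined for every $n \geqslant 2$ and every $k$, so the inverse of the bijection of \cref{thm:KanExt} applied to $(h, 0)$, where $h : \Ho{k}{n} \to \R(\g)$ is a given horn, produces a canonical filler. This assignment is natural in $\g$ with respect to strict morphisms by the naturality clause in \cref{thm:KanExt}, so $\R(\g)$ indeed acquires a well-defined algebraic $\infty$-groupoid structure.

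There is no real obstacle here: all of the work is already contained in \cref{lemm:Fundamental} and \cref{thm:KanExt}. The only thing to be careful about is phrasing — emphasizing that the canonical $\infty$-groupoid structure is a \emph{structure} (the choice $0 \in \g_n$) rather than a mere property, and that the entire argument is a change of viewpoint rather than a new computation.
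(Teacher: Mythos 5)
Your argument is exactly the paper's: invoke \cref{thm:KanExt}, note that $0\in\g_n$ guarantees a filler always exists (hence the Kan property) and simultaneously provides a canonical choice of filler (hence the algebraic $\infty$-groupoid structure). Nothing is missing; this matches the paper's proof in both substance and emphasis.
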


\begin{proof}
	This is a direct corollary of \cref{thm:KanExt}: since each $\g_n$ is not empty, 
	every map $\Ho{k}{n} \to \R(\g)$ can be lifted to a map $\De{n} \to \R(\g)$\ . So $\R(\g)$ is a Kan complex. One can however be more precise here by always considering the canonical choice provided by  the element $0\in \g_n$~. Therefore, the Kan complex $\R(\g)$ comes equipped with a canonical choice of filler for every horn, which is the definition of an algebraic $\infty$-groupoid structure. 
\end{proof}

\begin{proposition}
	Under the isomorphism of \cref{thm:isomorphism of models}, the canonical horn fillers given by the element $0\in\g_n$ in the $\infty$-groupoid $\R(\g)$ correspond to the unique thin fillers of \cite[Theorem~5.4]{Getzler09} in $\gamma_\bullet(\g)$\ . 
\end{proposition}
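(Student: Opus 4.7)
The plan is to trace the bijection of \cref{thm:KanExt} through the chain of natural isomorphisms
\[
\R(\g)_n \;\cong\; \MC\big(\g\,\hot\,\mathrm{C}_n\big) \;\cong\; \gamma_n(\g)
\]
provided by \cref{thm:isomorphism of models} together with Bandiera's identification $\MC(\g\,\hot\,\mathrm{C}_\bullet)\cong\gamma_\bullet(\g)$, and to match the resulting correspondence with Getzler's thin filler construction. The first step is to make the bijection of \cref{thm:KanExt} fully explicit: under the isomorphism $\mc^n \cong \Li(\Ho{k}{n}) \,\widehat{\sqcup}\, \wsLi(u,du)$ of \cref{lemm:Fundamental}, the generator $u$ is sent to $a_{[n]}$, so, after dualization $\Hom_{\sLialg}(\mc^n,\g)\cong\MC(\g\,\hot\,\mathrm{C}_n)$ as in the proof of \cref{thm:isomorphism of models}, the element of $\g_n$ attached to a filler is precisely the coefficient $x_{[n]}$ of the top-dimensional basis vector $\omega_{[n]}$ in the Maurer--Cartan element $x=\sum_{\emptyset\neq I\subseteq[n]}x_I\otimes\omega_I$. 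In particular, the canonical horn filler singled out by $0\in\g_n$ is characterized by the condition $x_{[n]}=0$.

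One next recalls that the unique thin filler of \cite[Theorem~5.4]{Getzler09} is the unique element of $\gamma_n(\g)$ that fills the given horn and whose coefficient along the top Dupont form $i_n(\omega_{[n]})$ vanishes. Under Bandiera's isomorphism $\MC(\g\,\hot\,\mathrm{C}_n)\cong\gamma_n(\g)$, which is induced by the $\infty$-quasi-isomorphism $i_\infty$ obtained by applying the homotopy transfer theorem to Dupont's contraction, the coefficient $x_{[n]}$ is sent to the coefficient of $i_n(\omega_{[n]})$ in the Dupont decomposition of the image in $\gamma_n(\g)$. Indeed, by the Van der Laan formula of \cref{thm:HTT}, every Taylor component of $i_\infty$ of arity $k\geqslant 2$ is built out of trees with at least one internal edge to which the Dupont homotopy $h_n$ is applied, and therefore factors through the image of $h_n$; since the side conditions $p_n h_n=0$ and $h_n i_n=0$ of Dupont's contraction force $i_n(\mathrm{C}_n)\cap\mathrm{Im}(h_n)=0$, no such higher component can contribute to the $i_n(\omega_{[n]})$-coefficient, which is therefore entirely determined by the linear term $i_n\otimes\id_\g$.

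Combining these observations, the canonical filler selected by $0\in\g_n$ in $\R(\g)$ corresponds, under the chain of isomorphisms $\R(\g)\cong\MC(\g\,\hot\,\mathrm{C}_\bullet)\cong\gamma_\bullet(\g)$, to the unique element of $\gamma_n(\g)$ that fills the given horn and whose top Dupont coefficient vanishes, which is exactly Getzler's unique thin filler. The main obstacle is the middle compatibility step: carefully tracking the Van der Laan formula of the homotopy transfer theorem to verify that all higher Taylor components of $i_\infty$ land in the image of $h_n$ and hence cannot alter the top Dupont coefficient. Once this verification is in hand, the identification of the canonical and thin fillers is purely formal.
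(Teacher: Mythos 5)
Your argument is correct and follows essentially the same route as the paper: both reduce the statement to the observation that, under the isomorphism $\R(\g)\cong\gamma_\bullet(\g)$, the element of $\g_n$ parametrizing a filler is the coefficient of the top Whitney form, and that Getzler's thinness condition $\int_{|\Delta^n|}\alpha=0$ is exactly the vanishing of that coefficient. You supply slightly more detail than the paper on one point (the side conditions $p_nh_n=0$, $h_ni_n=0$ force the higher Taylor components of $i_\infty$ to land in $\operatorname{Im}(h_n)$ and hence not to contribute to the integral, a verification the paper outsources to a citation), while implicitly using the standard fact that $\int_{|\Delta^n|}$ computes the $\omega_{[n]}$-component of the Dupont projection $p_n$, which the paper instead checks by direct integration.
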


\begin{proof}
	Recall from \cite[Definition~5.2]{Getzler09}  that an $n$-simplex $\alpha\in\gamma_n(\g)$ is called \emph{thin} if
	\[
	\int_{|\Delta^n|}\alpha = 0\ .
	\]
	The isomorphism
	\[
	\R(\g)\xrightarrow{\cong}\gamma_\bullet(\g)
	\]
	sends $x\in\Hom_{\sLialg}(\mc^n,\g)$ to the element
	\[
	\alpha = x(a_{[n]})dt_1\cdots dt_n + \text{lower order terms}\in\gamma_n(\g)\ ,
	\]
	where $x(a_{[n]})\in\g_{-n}$~. Thus, we have
	\[
	\int_{|\Delta^n|}\alpha = \int_{|\Delta^n|}x(a_{[n]})dt_1\cdots dt_n = \tfrac{1}{n!}x(a_{[n]})\ ,
	\]
	which is zero if and only if $x(a_{[n]}) = 0$~. We refer the reader to \cite[Lemma~4.8]{rn17cosimplicial} for more details.
\end{proof}

\subsection{Higher Baker--Campbell--Hausdorff products}

The inverse of the bijection of \cref{thm:KanExt} gives rise to a collection of maps which generalize the classical Baker--Campbell--Hausdorff binary product.

\begin{definition}[Higher Baker--Campbell--Hausdorff products]
	We call  \emph{higher Baker--Camp\-bell--Haus\-dorff products}\index{higher Baker--Campbell--Hausdorff products} the maps
	\begin{align*}
	\Hom_{\sSe}\left({\Ho{k}{n}}, \R(\g)\right) \times \g_n{}&\ \longrightarrow\ \g_{n-1}\\
	\left(x; y\right){}&\ \longmapsto\ \Gamma^n_k(x; y)\coloneqq z\big(\,\whk\,\big)
	\end{align*}
	defined by the evaluation at the cell $\whk$ of $\De{n}$ of the $n$-simplex $z$ inverse to 
	$(x;y)$ under the bijection of \cref{thm:KanExt}.
\end{definition}

When we want to emphasize the full input provided by the $k$-horn $\Ho{k}{n}$ of dimension $n$ we will use the notation $\Gamma^n_k(\{x_{\widehat{j}}\}_{j\neq k}; y)$~, cf.\ \cref{subsec:GeoRep}. In this case, we will sometimes also omit some of the cells, in which case the missing cells are assumed to have value $0$~. 
When $y=0\in \g_n$~, we will drop it from the notation and write $\Gamma_k^n(x)$~.

\medskip

In words, given a horn $x\in\Hom_{\sSe}\left({\Ho{k}{n}}, \R(\g)\right)$ and a filler $y\in\g_n$~, the inverse of the bijection of \cref{thm:KanExt} yields a simplex $z\in\Hom_\sSe\left(\De{n}, \R(\g)\right)$ that we evaluate at $\whk$~. This way, we obtain an element of $\g_{n-1}$ representing the only face of $z$ that was missing at the beginning. For example, the element $\Gamma^2_1(x_{01}, x_{12}; y)\in \g_1$ is the output of the BCH product applied to the following horn.
\[
\begin{tikzpicture}

\coordinate (v0) at (210:1.5);
\coordinate (v1) at (90:1.5);
\coordinate (v2) at (-30:1.5);

\draw[line width=1] (v0)--(v1)--(v2);
\draw[dashed, line width=1]  (v0)--(v1)--(v2)--cycle;

\begin{scope}[decoration={
	markings,
	mark=at position 0.55 with {\arrow{>}}},
line width=1
]

\path[postaction={decorate}] (v0) -- (v1);
\path[postaction={decorate}] (v1) -- (v2);
\path[postaction={decorate}] (v0) -- (v2);

\end{scope}


\node at ($(v0) + (30:-0.3)$) {$0$};
\node at ($(v1) + (0,0.3)$) {$0$};
\node at ($(v2) + (-30:0.3)$) {$0$};

\node at ($(v0)!0.5!(v1) + (-30:-0.4)$) {$x_{01}$};
\node at ($(v1)!0.5!(v2) + (30:0.4)$) {$x_{12}$};
\node at ($(v0)!0.5!(v2) + (0,-0.4)$) {$\Gamma^2_1(x_{01}, x_{12}; y)$};

\node at (0,0) {$y$};

\end{tikzpicture}
\]

\begin{remark}
	Strictly speaking, the definition given in \cite[Definition~5.5]{Getzler09} is not the same since the author does not start from the same labeled cells of the $n$-simplex and since the output does not label the same cells. However, it is straightforward to check that both definitions are equivalent.
\end{remark}

The fixed-point \cref{eq:FixPtMorph} in the proof of \cref{lemm:Fundamental} can be solved using \cref{prop:FixPtEqua} to provide us with an explicit formulas for the higher BCH as follows. 

\medskip

We consider the set $\PaPRT$ of \emph{planarly partitioned rooted trees}\index{trees!planarly partitioned rooted}\index{$\PaPRT$}. These are rooted trees with the additional data of a \emph{partition} into sub-trees called \emph{blocks} satisfying
\begin{itemize}
	\item[$\diamond$] each block contains at least one vertex,
	\item[$\diamond$] each block is either a tree with vertices of arity at least $2$ or the single $0$-corolla, and
	\item[$\diamond$] the tree obtained by stripping the partitioned tree of all of its leaves and contracting the blocks to vertices is planar.
\end{itemize}
Notice that the rooted sub-trees inside each block have no planar structure.

\begin{example}
	Here is an example of a planarly partitioned rooted tree $\tau \in \PaPRT$~.
	\[
	\tau \coloneqq \vcenter{\hbox{
		\begin{tikzpicture}
			\def\scale{0.75};
			\pgfmathsetmacro{\diagcm}{sqrt(2)};
			
			\def\xangle{35};
			\pgfmathsetmacro{\xcm}{1/sin(\xangle)};
			
			\coordinate (r) at (0,0);
			\coordinate (v11) at ($(r) + (0,\scale*1)$);
			\coordinate (v21) at ($(v11) + (180-\xangle:\scale*\xcm)$);
			\coordinate (v22) at ($(v11) + (\xangle:\scale*\xcm)$);
			\coordinate (v31) at ($(v22) + (45:\scale*\diagcm)$);
			\coordinate (l1) at ($(v21) + (135:\scale*\diagcm)$);
			\coordinate (l2) at ($(v21) + (0,\scale*2)$);
			\coordinate (l3) at ($(v21) + (45:\scale*\diagcm)$);
			\coordinate (l4) at ($(v22) + (135:\scale*\diagcm)$);
			\coordinate (l5) at ($(v31) + (135:\scale*\diagcm)$);
			\coordinate (l6) at ($(v31) + (45:\scale*\diagcm)$);
			\coordinate (l7) at ($(v11) + (0,\scale*1)$);
			
			\draw[thick] (r) to (v11);
			\draw[thick] (v11) to (v21);
			\draw[thick] (v11) to (v22);
			\draw[thick] (v21) to (l1);
			\draw[thick] (v21) to (l2);
			\draw[thick] (v21) to (l3);
			\draw[thick] (v22) to (l4);
			\draw[thick] (v22) to (v31);
			\draw[thick] (v31) to (l5);
			\draw[thick] (v31) to (l6);
			\draw[thick] (v11) to (l7);
			
			\node[above] at (l1) {$\scriptstyle1$};
			\node at (l2) {$\bullet$};
			\node[above] at (l3) {$\scriptstyle2$};
			\node[above] at (l4) {$\scriptstyle3$};
			\node[above] at (l5) {$\scriptstyle4$};
			\node[above] at (l6) {$\scriptstyle5$};
			\node[above] at (l7) {$\scriptstyle6$};
			
			\draw (v11) circle[radius=\scale*0.5];
			\draw (v21) circle[radius=\scale*0.5];
			\draw (l2) circle[radius=\scale*0.5];
			\draw ($(v22)!0.5!(v31)$) ellipse[x radius=\scale*1.3, y radius=\scale*0.6, rotate=45];
		\end{tikzpicture}
	}}
	\]
\end{example}

Given any planarly partitioned rooted tree $\tau\in \PaPRT$~, we consider the set $\mathrm{Lab}^{[n], k}(\tau)$ of maps
\[
\chi \ : \ \mathrm{L}(\tau) \longrightarrow\left\{I\subseteq[n]\,\mid\, I\neq\emptyset,\widehat{k}\right\}~, 
\]
which amount to label its leaves $\mathrm{L}(\tau)$ with some subsets of $[n]$~. For any pair $(\tau, \chi)$ with $\tau\in\PaPRT$ and $\chi\in\mathrm{Lab}^{[n], k}(\tau)$~, and for any block $\beta$ of $\tau$~, we denote by $\lambda^{\beta(\chi)}_{[n]}$ the coefficient
\[
\lambda^{\beta(\chi)}_{[n]}\coloneqq\lambda^{\beta\left(I_1, \ldots, \whk, \ldots, \whk, \ldots, I_m\right)}_{[n]}
\] 
associated to the tree inside the block $\beta$ with leaves labeled by $I_1, \ldots, I_m$ according to $\chi$ and with the leaves corresponding to the internal edges of $\tau$ labeled by $\whk$~.  By convention, if $\beta$ is the block given by a single vertex of arity $0$ then the coefficient $\lambda^{\beta(\chi)}_{[n]}$ is equal to $1$~.

\begin{example}
	In the case of the block
	\[\beta\coloneqq 
	\vcenter{\hbox{
		\begin{tikzpicture}
			\def\scale{0.75};
			\pgfmathsetmacro{\diagcm}{sqrt(2)};
			
			\coordinate (r) at (0,0);
			\coordinate (v) at ($(r) + (0,\scale*1)$);
			\coordinate (l1) at ($(v) + (135:\scale*\diagcm)$);
			\coordinate (l2) at ($(v) + (0,\scale*1)$);
			\coordinate (l3) at ($(v) + (45:\scale*\diagcm)$);
			
			\draw[thick] (r) to (v);
			\draw[thick] (v) to (l1);
			\draw[thick] (v) to (l2);
			\draw[thick] (v) to (l3);
			
			\node[above] at (l1) {$\scriptstyle1$};
			\node[above] at (l3) {$\scriptstyle2$};
			
			\draw (v) circle[radius=\scale*0.5];
		\end{tikzpicture}}}
	\]
	the coefficient $\lambda^{\beta(\chi)}_{[n]}$ is equal to 
	$\lambda^{\beta\left(I_1,  \whk, I_2\right)}_{[n]}$~.
\end{example}

\medskip

Let $x:\Ho{k}{n}\to\R(\g)$ be a horn and let us write, as usual, $x_I\in\g_{|I|-1}$ for the element of $\g$ associated to the non-degenerate simplex of the horn indexed by $I$~. Let $y\in \g_n$~, that we denote by $x_{[n]}\coloneqq y$~. For any element $z\in \g$~,  we denote by $\ell_\tau(x_{\chi(1)}, \ldots, x_{\chi(p)};  z)$ the element of $\g$ obtained by forgetting the partition, replacing the leaves $l$ by the corresponding elements $x_{\chi(l)}\in\g$ and the vertices of arity $0$ by $z$~, and then applying the structure operations of the algebra at the vertices of the tree with the corresponding arity.

\begin{example}
	In the example mentioned above of the planarly partitioned rooted tree $\tau$~, we get 
	\[\ell_\tau(x_{\chi(1)}, \ldots, x_{\chi(6)};  z)=
	\ell_3\left(
	\ell_3\left(x_{I_1}, z, x_{I_2}\right)
	x_{I_3}, 
	\ell_2\left(x_{I_4}, \ell_2\left(x_{I_5}, x_{I_6}\right)\right)
	\right)~.
	\]
\end{example}

\begin{proposition}\label{prop:ExplicitBCHForm}
	The higher BCH product is given by 
	\begin{equation}\label{Eq:HighBCH}
	\Gamma^n_k(x; y) = 
	\sum_{\substack{\tau\in\PaPRT\\
			\chi\in\mathrm{Lab}^{[n], k}(\tau)}}\ 
	\prod_{\substack{\beta\text{ block of } \tau \\ \lambda^{\beta(\chi)}_{[n]}\neq 0}} 
	\frac{(-1)^k}{\lambda^{\beta(\chi)}_{[n]}[\beta]!}\, 
	\ell_\tau\left(x_{\chi(1)}, \ldots, x_{\chi(p)};  (-1)^k\d y - \sum_{l\neq k}(-1)^{k+l} x_{\widehat{l}}\right), 
	\end{equation}
	where $[\beta]$ is the arity of the tree inside the block $\beta$ and where we write $x_{[n]}$ for $y$~. 
\end{proposition}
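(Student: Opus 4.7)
The plan is to combine the universal characterization of the higher BCH products established in \cref{lemm:Fundamental,thm:KanExt} with the explicit formula for solutions of fixed-point equations proved in the appendix (\cref{prop:FixPtEqua}).

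First, by \cref{thm:KanExt}, the element $\Gamma^n_k(x;y)$ is the image of $a_{\widehat{k}}\in\mc^n$ under the strict morphism of complete $\sLi$-algebras
\[
\mc^n\cong\Li\big(\Ho{k}{n}\big)\,\widehat{\sqcup}\,\widehat{\sLi}(u,du)\longrightarrow\g
\]
determined by $a_I\mapsto x_I$ for $I\neq\widehat{k}$, $u\mapsto y$, and $du\mapsto \d y$, where the isomorphism on the left-hand side comes from \cref{lemm:Fundamental}. Recall from the proof of \cref{lemm:Fundamental} that, under this isomorphism, $a_{\widehat{k}}$ corresponds to the unique solution of the fixed-point equation~\eqref{eq:FixPtMorph} in the right-hand side algebra. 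Since strict morphisms respect structure operations and differentials, applying the evaluation morphism to both sides of~\eqref{eq:FixPtMorph} yields a fixed-point equation in $\g$ whose unique solution is precisely $\Gamma^n_k(x;y)$. Explicitly, this evaluated equation has the form $Y = q_0 + \sum_{j\geqslant 1} Q_j(Y,\ldots,Y)$, where $q_0$ consists of the term $(-1)^k\d y -\sum_{l\neq k}(-1)^{k+l}x_{\widehat{l}}$ (call it $z$) plus a sum over reduced rooted trees applied to the $x_I$'s, and each $Q_j$ is a sum over reduced rooted trees in $\overline{\RT}_{m+j}$ with $m$ labeled leaves and $j$ recursive slots, all carrying the prescribed coefficients of~\eqref{eq:FixPtMorph}.

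Next, \cref{prop:FixPtEqua} expresses the unique solution of this fixed-point equation as a sum over planar rooted trees encoding the iteration order of the $Q_j$'s. Grafting this outer planar iteration with the internal reduced rooted trees coming from each $q_0$ and $Q_j$ precisely yields a sum over planarly partitioned rooted trees $\tau\in\PaPRT$: the outer (contracted) tree is planar because the fixed-point iteration in \cref{prop:FixPtEqua} is non-symmetric, while the sub-trees inside each block carry no planar structure because the structure operations of the complete $\sLi$-algebra are symmetric. The $0$-corolla blocks correspond to recursive ``leaves'' where the iteration reaches the constant term of $q_0$ contributing $z$, while non-trivial blocks carry labelings $\chi\in\mathrm{Lab}^{[n],k}(\tau)$ of their leaves by the non-empty subsets $I\subseteq[n]$ different from $\widehat{k}$. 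The coefficients $(-1)^k/(\lambda^{\beta(\chi)}_{[n]}(m+j)!)$ appearing inside each $Q_j$ transfer directly to the corresponding blocks of $\tau$, and since $[\beta] = m+j$ for a block whose internal reduced rooted tree has $m$ labeled leaves and $j$ internal edges, the product of the per-block coefficients gives exactly $\prod_\beta \frac{(-1)^k}{\lambda^{\beta(\chi)}_{[n]}[\beta]!}$. Assembling these contributions, one recovers formula~\eqref{Eq:HighBCH}.

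The main obstacle is the combinatorial bookkeeping: one must carefully verify that iterating the coefficients of the fixed-point equation through \cref{prop:FixPtEqua} precisely produces the set $\PaPRT$ together with the claimed product over blocks, with no overcounting or residual symmetries to be absorbed. The key conceptual point that makes this work is the interplay between the non-symmetric iteration structure (producing the planar outer structure on the contracted tree) and the symmetric operadic structure (producing the non-planar internal trees within blocks), an interplay that mirrors the structure of the free complete $\sLi$-algebra described in \cref{prop:FreesLi}.
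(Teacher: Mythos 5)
Your proposal is correct and follows essentially the same route as the paper: both realize $\Gamma^n_k(x;y)$ as the unique solution of the evaluated fixed-point equation~\eqref{eq:FixPtMorph} from \cref{lemm:Fundamental} and then apply \cref{prop:FixPtEqua} with the analytic function whose constant term is $(-1)^k\d y-\sum_{l\neq k}(-1)^{k+l}x_{\widehat{l}}$ plus the reduced-rooted-tree sums, and whose operations $\PP_j$ are the tree sums with $j$ recursive slots. Your additional explanation of how the planar outer iteration grafted with the non-planar block trees produces exactly $\PaPRT$ with the stated per-block coefficients is a correct elaboration of what the paper leaves as a ``straightforward application.''
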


\begin{proof}
	This is a straightforward application of the formula 
	\[
	\Gamma^n_k(x; y)=\sum_{k=1}^\infty \sum_{\tau \in \PT^{(k)}_0} \tau^\PP
	\]
	of \cref{prop:FixPtEqua} which gives the solution to the fixed-point equation 
	applied to the analytic function 
	\begin{align*}
	&p_0\coloneqq (-1)^k \d y-\sum_{l\neq k} (-1)^{k+l} x_{\widehat{l}} +(-1)^k 
	\sum_{m\geqslant 2}
	\sum_{\substack{\tau \in \overline{\RT}_{m}\\
			I_1, \ldots, I_m\subseteq[n]\ , I_l\neq \emptyset, \whk\\
			\lambda^{\tau(I_1, \ldots, I_m)}_{[n]}\neq 0}}
	\frac{1}{\lambda^{\tau(I_1, \ldots,  I_m)}_{[n]} m!}
	\, \ell_\tau(x_{I_1}, \ldots,  x_{I_m})\ , \\
	&\PP_j(-, \ldots, -)\coloneqq\\
	&\qquad\quad(-1)^k
	\sum_{m\geqslant 2}
	\sum_{\substack{\tau \in \overline{\RT}_{m+j}\\
			I_1, \ldots, I_m\subseteq[n]\ , I_l\neq \emptyset, \whk\\
			\lambda^{\tau(I_1, \ldots, \whk, \ldots, \whk, \ldots, I_m)}_{[n]}\neq 0}}
	\frac{1}{\lambda^{\tau(I_1, \ldots, \whk, \ldots, \whk, \ldots, I_m)}_{[n]} (m+j)!}
	\, \ell_\tau(x_{I_1}, \ldots, -, \ldots, -, \ldots, x_{I_m})\ .
	\end{align*}
\end{proof}

\begin{remark}
	The above formula shows that the only remaining obstruction to an easily computable formula for the higher BCH products is in the absence of simple formulas for the transferred algebraic structure of $\rmC_\bullet$~, which induces the structure constants $\lambda$~. This structure deserves further study. The first named author has made available a software package for numerical experimentation, see \cite{RN20}.
\end{remark}

Although this expression entails non-trivial combinatorics coming the structure constants and the complex family of trees, in practice we will often only need here the first few terms of this formula plus the fact that the higher terms are obtained by iterated bracketings of elements.  For example, it is straightforward to see that
\begin{equation}\label{eq:BCH first terms}
\Gamma^n_k(x; y) =  (-1)^k\d y - \sum_{l\neq k} (-1)^{k+l} x_{\widehat{l}} + \left(\text{brackets of }y, \d y, \text{and} \ x_I\text{ for }\emptyset,\whk\neq I\subset[n]\right).
\end{equation}

\subsection{New approach to the Baker--Campbell--Hausdorff formula}
In this section, we consider the first horn filler for complete Lie algebras and show that it allows us to recover the classical Baker--Campbell--Hausdorff formula. 
This makes explicit the first layer of the canonical $\infty$-groupoid structure on $\R(\g)$ integrating complete Lie algebras. 
This result was first proved by R.\ Bandiera in his PhD thesis \cite{Bandiera14} but our method is different: we will establish a new characterization of the Baker--Campbell--Hausdorff formula and we will produce an explicit formula.  

\medskip

We start by considering ``classical'' complete Lie algebras, by which we mean complete $\sLie$-algebras $\g$ concentrated in degree $1$~. Any horn $\Ho{1}{2} \to \R(\g)$ is equivalent to the data of two elements $x,y\in \g$\ . 
We consider the element $\Gamma^2_1(x,y)\in \g$ obtained by canonically filling this horn with $0\in \g_1$~.
\[
\begin{tikzpicture}

\coordinate (v0) at (210:1.5);
\coordinate (v1) at (90:1.5);
\coordinate (v2) at (-30:1.5);

\draw[line width=1] (v0)--(v1)--(v2);
\draw[dashed, line width=1]  (v0)--(v1)--(v2)--cycle;

\begin{scope}[decoration={
	markings,
	mark=at position 0.55 with {\arrow{>}}},
line width=1
]

\path[postaction={decorate}] (v0) -- (v1);
\path[postaction={decorate}] (v1) -- (v2);
\path[postaction={decorate}] (v0) -- (v2);

\end{scope}


\node at ($(v0) + (30:-0.3)$) {$0$};
\node at ($(v1) + (0,0.3)$) {$0$};
\node at ($(v2) + (-30:0.3)$) {$0$};

\node at ($(v0)!0.5!(v1) + (-30:-0.4)$) {$x$};
\node at ($(v1)!0.5!(v2) + (30:0.4)$) {$y$};
\node at ($(v0)!0.5!(v2) + (0,-0.4)$) {$\Gamma^2_1(x, y)$};

\node at (0,0) {$0$};

\end{tikzpicture}
\]

\begin{lemma}\label{lemma:gamma gives group}
	The binary product $\Gamma^2_1(-,-)$ is associative, is unital with unit $0\in\g$~, and it satisfies $\Gamma^2_1(x, 0) = \Gamma^2_1(0, x) = x$ for all $x\in\g_1$~. Therefore, it defines a group structure on $\g_1$ which is natural with respect to strict morphisms of complete Lie algebras.
\end{lemma}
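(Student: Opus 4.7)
The plan is to exploit the characterization of $\Gamma^2_1$ as a canonical horn filler given by \cref{thm:KanExt}. The crucial preliminary observation is that, since $\g$ is concentrated in degree $1$, we have $\g_0=0$ and $\g_k=0$ for $k\geqslant 2$; so the only Maurer--Cartan element is $0$, and any simplex $z\in\R(\g)$ satisfies $z(a_I)=0$ whenever $|I|\neq 2$. Thus an $n$-simplex of $\R(\g)$ is completely determined by its edges in $\g_1$, and every 2-simplex of $\R(\g)$ automatically satisfies $x_{02}=\Gamma^2_1(x_{01},x_{12})$. Once this reduction is made, all required identities will collapse to existence and uniqueness of canonical horn fillers.

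The unit identities will follow directly from degenerate 2-simplices: for $x\in\g_1=\R(\g)_1$, a direct computation of their face structure shows that $s_0(x)$ and $s_1(x)$ have edges $(0,x,x)$ and $(x,0,x)$, respectively, which immediately yields $\Gamma^2_1(0,x)=x=\Gamma^2_1(x,0)$.

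For associativity, given $x,y,z\in\g_1$, we will construct a horn $\Ho{2}{3}\to\R(\g)$ whose three 2-faces are the (valid, by the first paragraph) 2-simplices with edges $(x,y,\Gamma^2_1(x,y))$ for the face $\widehat{3}=012$, with edges $(y,z,\Gamma^2_1(y,z))$ for the face $\widehat{0}=123$, and with edges $(\Gamma^2_1(x,y),z,\Gamma^2_1(\Gamma^2_1(x,y),z))$ for the face $\widehat{1}=023$. These three 2-simplices agree on shared edges, so together they define a genuine horn. Applying \cref{thm:KanExt} with filler $0\in\g_3$ will produce a unique 3-simplex whose missing face $\widehat{2}=013$ has specified edges $01=x$ and $13=\Gamma^2_1(y,z)$; the first paragraph's observation then forces the remaining edge to be $x_{03}=\Gamma^2_1(x,\Gamma^2_1(y,z))$. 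Comparing with the value $x_{03}=\Gamma^2_1(\Gamma^2_1(x,y),z)$ prescribed by the face $023$ yields associativity.

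Inverses are obtained by the analogous trick: the horn $\Ho{0}{2}\to\R(\g)$ given by edges $01=x$ and $02=0$ admits a canonical filler whose missing edge $x_{12}=:\bar{x}$ satisfies $\Gamma^2_1(x,\bar{x})=0$, providing right inverses, and symmetrically left inverses are obtained via $\Ho{2}{2}$; the standard argument from associativity and unitality then identifies the two. Naturality with respect to strict morphisms is immediate from the functoriality of the bijection in \cref{thm:KanExt}. The only delicate point throughout is keeping careful track of face labels and verifying that the 2-faces used in the associativity argument share edges consistently; once this is done, the result is essentially automatic from the canonical Kan filling property.
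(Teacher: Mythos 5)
Your proof is correct and follows essentially the same strategy as the paper: reduce everything to the observation that, for $\g$ concentrated in degree $1$, every $2$-simplex of $\R(\g)$ has its long edge equal to $\Gamma^2_1$ of the two short edges, then derive associativity from a $3$-dimensional horn together with uniqueness of canonical fillers. The only (cosmetic) differences are that you fill a $\Ho{2}{3}$ horn where the paper fills a $\Ho{1}{3}$ horn, and that you obtain the unit and inverses from degenerate $2$-simplices and $\Ho{0}{2}$/$\Ho{2}{2}$ horns rather than from the leading terms of the explicit formula~\eqref{eq:BCH first terms}.
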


\begin{proof}
	We consider the in $\R(\g)$ obtained by joining the two $2$-simplices drawn in solid black on their common edge:
	\[
	\vcenter{\hbox{
			\begin{tikzpicture}
			
				\coordinate (a) at (0, 0);
				\coordinate (b) at (3, 3);
				\coordinate (c) at (3, -1.5);
				\coordinate (d) at (5, 0);
				
				\draw[line width=1] (a) to (b) to (c) to (d);
				\draw[line width=1] (b) to (a) to (c) to (d) to (b);
				\draw[line width=1, dashed] (a) to (d);
				
				\node[above left] at ($(a)!0.5!(b)$) {$x$};
				\node[above right] at ($(b)!0.5!(c)$) {$y$};
				\node[below right] at ($(c)!0.5!(d)$) {$z$};
				\node[below left] at ($(a)!0.5!(c)$) {$\Gamma^2_1(x, y)$};
				\node[above right] at ($(b)!0.5!(d)$) {$\Gamma^2_1(y, z)$};
				
				\node[above] at ($(a)!0.38!(d)$) {\Small $\Gamma^2_1\big(x, \Gamma^2_1(y,z)\big)$};
				\node[below] at ($(a)!0.38!(d)$) {\Small $\Gamma^2_1\big(\Gamma^2_1(x,y),z\big)$};
				
				\begin{scope}[decoration={
					markings,
					mark=at position 0.55 with {\arrow{>}}},
				line width=1
				]	
				\path[postaction={decorate}] (a) to (b);
				\path[postaction={decorate}] (a) to (c);
				\path[postaction={decorate}] ($(a)!0.3!(d)$) to (d);
				\path[postaction={decorate}] (b) to (c);
				\path[postaction={decorate}] (b) to (d);
				\path[postaction={decorate}] (c) to (d);
				\end{scope}
			
			\end{tikzpicture}}}
	\]
	where all the vertices and the $2$-faces are labeled by $0$~. We can complete the square to a $3$-horn $a:\Ho{1}{3}\to\R(\g)$ by filling the face of the missing $2$-simplex (i.e.\ the one opposed to the vertex $2$) by $0$~, thus obtaining $\Gamma^2_1\big(x, \Gamma^2_1(y,z)\big)$ on the missing edge. We can further fill this horn by $0$ and we notice that since $\g_2 = 0$ we will have $0$ on the missing face --- the one opposed to the vertex $1$~. In other words, we have $\Gamma^3_1(a) = 0$ But this together with the uniqueness of the fillers implies that if instead we started from the original square and filled the face opposed to the vertex $1$ by $0$ we would obtain the same element on the missing edge, and thus, we obtain
	\[
	\Gamma^2_1\big(x, \Gamma^2_1(y,z)\big) = \Gamma^2_1\big(\Gamma^2_1(x,y),z\big)
	\]
	as desired.
	
	\medskip
	
	The statement about the unit follows immediately from \cref{eq:BCH first terms}. The statement about inverses similarly follows from \cref{eq:BCH first terms} plus uniqueness of fillings.
\end{proof}

\begin{remark}
	The statements of \cref{lemma:gamma gives group} also hold for $\g$ an $\sLie$-algebra not concentrated in degree $1$~, even if we allow the vertices to be given by non-trivial Maurer--Cartan elements (in which case we obtain a groupoid, rather than a group). The arguments are the same, with the exception of the fact that $\Gamma^3_1(a) = 0$ that follows from degree reasons using the formula for higher BCH products together with the fact that we only allow binary brackets.
\end{remark}

In order to show that $\Gamma^2_1$ is actually equal to the Baker--Campbell--Hausdorff product $\BCH$~, we will use a new characterization that we detail below. The Baker--Campbell--Hausdorff formula 
in a complete Lie algebra $\g$ is a universal formula that associates 
an element $\BCH(x,y)\in \g$ to  
two elements $x,y\in\g$ such that
\begin{equation}\label{eq:definition BCH}
e^{\ad_{\BCH(x,y)}} = e^{\ad_x}e^{\ad_y}\ .
\end{equation}
This means that we are looking for the same kind of formula for all couples of elements in any Lie algebra. This problem is solved by looking for a particular element in the free complete (shifted) Lie algebra $\widehat{\sLie}(x,y)$ on the two generators $x$ and $y$ of degree $1$~. The original problem solved by Baker, Campbell and Hausdorff at the beginning of the 20th century was to show that the series 
\begin{equation}\label{eq:definition Bis BCH}
\BCH(x,y)\coloneqq\mathrm{ln}\left(e^x e^y \right)
\end{equation}
in the free complete associative algebra actually lives in $\widehat{\sLie}(x,y)$~. Some 40 years later, Dynkin gave the first explicit formula for it. We refer the reader to \cite{BF12} for an exhaustive historical and mathematical treatment of this problem. It is straightforward to see that $\BCH(x,y)$ satisfies \cref{eq:definition BCH}. 
In this section, we show that \cref{eq:definition BCH} characterizes the $\BCH$ formula \eqref{eq:definition Bis BCH}. 

\begin{proposition}\label{prop:uniqueness of classical BCH}
	There exists a unique element $\BCH(x,y)\in\widehat{\sLie}(x,y)$ satisfying (\ref{eq:definition BCH}).
\end{proposition}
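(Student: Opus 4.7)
The existence half is classical: the series $\BCH(x,y)\coloneqq\ln(e^x e^y)$, computed a priori in the completed free associative algebra on two generators of degree $1$, was shown by Baker, Campbell, and Hausdorff to in fact lie in $\widehat{\sLie}(x,y)$, and it visibly satisfies \cref{eq:definition BCH}. So the plan is to concentrate on uniqueness.

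For uniqueness, first I would observe that both sides of \cref{eq:definition BCH} make sense as automorphisms of $\widehat{\sLie}(x,y)$ in the completed endomorphism algebra filtered by arity: since any $z\in\widehat{\sLie}(x,y)$ necessarily lies in the augmentation ideal $\F_1$, the operator $\ad_z$ raises the arity filtration, so it is topologically nilpotent and $e^{\ad_z}=\sum_{k\geqslant 0}\tfrac{1}{k!}\ad_z^k$ converges. On such operators, $\exp$ and $\log$ are mutually inverse bijections. Consequently, if $z_1,z_2\in\widehat{\sLie}(x,y)$ are two solutions of \cref{eq:definition BCH}, applying $\log$ yields $\ad_{z_1}=\ad_{z_2}$, i.e.\ $z_1-z_2$ is a central element of $\widehat{\sLie}(x,y)$ lying in the augmentation ideal.

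The last step is then to invoke the triviality of the center of the free complete Lie algebra on two generators (within its augmentation ideal). This is precisely the content of the lemma credited to B.~Enriquez in the acknowledgements: writing a central element as a series $\sum_{n\geqslant 1}w_n$ of homogeneous pieces by arity, the centrality conditions $[w_n,x]=0$ and $[w_n,y]=0$ hold termwise in the discrete free Lie algebra $\sLie(x,y)$. Since the center of a free Lie algebra on at least two generators is zero, each $w_n$ vanishes, hence so does $z_1-z_2$.

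\textbf{Main obstacle.} The non-formal input is the centrality statement for $\widehat{\sLie}(x,y)$, which is where the free Lie algebra structure really enters; once it is granted, everything else is a straightforward exp/log argument in a complete filtered algebra. I would isolate this as a separate lemma (presumably the lemma cited as \cref{lemma:Center}) and quote it at the final step, keeping the main proof short and conceptual.
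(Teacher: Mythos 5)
Your proposal is correct, and the uniqueness step is organized differently from the paper's. The paper never applies a global logarithm: it compares the two sides of \cref{eq:definition BCH} weight by weight, evaluated only on the generators $u\in\{x,y\}$, and extracts by induction that each homogeneous component $z_k$ of the difference satisfies $[z_k,x]=[z_k,y]=0$; it then invokes the stronger part (2) of \cref{lemma:Center}, namely that an element commuting with the two generators alone must vanish (which is itself reduced to triviality of the center by a Jacobi-identity induction). Your route instead notes that $\ad_{z}$ is filtration-raising for any $z$ in the free complete Lie algebra, so $\exp$ and $\log$ are mutually inverse on such operators, and $e^{\ad_{z_1}}=e^{\ad_{z_2}}$ forces $\ad_{z_1}=\ad_{z_2}$ outright; the difference is then genuinely central and only part (1) of \cref{lemma:Center} (triviality of the center, via Shirshov--Witt) is needed. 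Your argument is shorter and avoids the termwise induction, at the cost of working in the completed endomorphism algebra; the paper's version stays entirely inside $\widehat{\sLie}(x,y)$ and only ever tests the identity on $x$ and $y$, which is why it must prove the sharper ``commuting with the generators implies zero'' statement. Both proofs ultimately rest on the same non-formal input, the center lemma attributed to Enriquez, exactly as you identified.
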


The proof relies on the following lemma. 

\begin{lemma}\label{lemma:Center}\leavevmode
	\begin{enumerate}
		\item The center of the free Lie algebra $\sLie(x,y)$ and the center of the free complete Lie algebra $\widehat{\sLie}(x,y)$ are trivial.
		\item In both algebras, if $z$ is such that $[z,x]=[z,y]=0$~, then $z = 0$~.
	\end{enumerate}
\end{lemma}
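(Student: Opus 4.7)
The two statements are actually equivalent, so I would first reduce (1) to (2) (and vice-versa) to have just one thing to prove. On the one hand, (1) is the special case of (2) in which $z$ is assumed central. Conversely, if $[z,x]=[z,y]=0$, then the graded Jacobi identity
\[
[z,[u,v]] = [[z,u],v] \pm [u,[z,v]]
\]
propagates this vanishing by induction on bracket length: $z$ brackets trivially with every iterated bracket word in $x$ and $y$. In $\sLie(x,y)$ such words linearly span the algebra, and in $\widehat{\sLie}(x,y)$ they are a topological generating set for the weight filtration; so $z$ lies in the center, and (1) finishes the argument. Hence it suffices to prove (1).

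For (1) in the non-complete case, I would pass to the universal enveloping (graded) associative algebra. Applying the (shifted) PBW theorem --- equivalently, desuspending once to reduce to the ordinary free Lie algebra on two generators --- identifies this enveloping algebra with the free graded associative algebra $\k\langle x,y\rangle$, with the Lie bracket becoming the graded commutator. A Lie-central element $z\in\sLie(x,y)$ then graded-commutes with $x$ and $y$ in $\k\langle x,y\rangle$, hence with every word in $x$ and $y$, so it lies in the graded center of $\k\langle x,y\rangle$. On two (or more) generators this graded center is reduced to the ground field $\k$ concentrated in weight zero; since $\sLie(x,y)$ embeds into the positive-weight part of $\k\langle x,y\rangle$, we conclude $z=0$.

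For the complete statement, I would decompose a central $z\in\widehat{\sLie}(x,y)$ as a convergent series $z=\sum_{n\geqslant 1}z_n$ according to the weight filtration (number of generators involved in each bracket monomial). Since the bracket is continuous and strictly increases weight, projecting the identities $[z,x]=[z,y]=0$ onto each weight component gives $[z_n,x]=[z_n,y]=0$ in $\sLie(x,y)$ for every $n\geqslant 1$. The already established non-complete case then forces $z_n=0$ for all $n$, and therefore $z=0$.

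\textbf{Main obstacle.} The only non-formal input is the classical fact that the graded center of the free associative algebra on two generators is reduced to $\k$. This is standard, but if needed I would justify it by fixing a monomial order on words in $x,y$ and comparing the leading terms of $[z,x]$ and $[z,y]$: a non-scalar central element would produce a non-cancelling leading word in at least one of the two commutators. Everything else in the argument is a routine combination of PBW, the Jacobi identity, and continuity of the bracket for the weight filtration.
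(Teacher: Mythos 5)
Your proof is correct, and it reaches the key step (triviality of the center of the free Lie algebra on two generators) by a genuinely different route than the paper. Where you embed $\lie(x,y)$ into its universal enveloping algebra $\k\langle x,y\rangle$ via PBW and invoke the classical fact that the center of the free associative algebra on two generators is reduced to $\k$, the paper instead invokes the Shirshov--Witt theorem: every Lie subalgebra of a free Lie algebra is free, so two commuting elements generate a free Lie algebra of rank $\leqslant 1$ and must be linearly dependent, whence no nonzero element is central. Both inputs are classical; yours is arguably more self-contained if one is willing to verify the center of $\k\langle x,y\rangle$ by a leading-word argument, while the paper's choice of Shirshov--Witt has the advantage that graded versions of that theorem are available off the shelf and are reused later in the paper (in the proof of \cref{prop:uniqueness of dg BCH}, where elements of different degrees are involved). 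Note also that your PBW argument only uses $[z,x]=[z,y]=0$, so it proves point (2) in the discrete case directly, making your Jacobi-identity reduction of (2) to (1) logically redundant (though harmless); the paper, by contrast, genuinely needs that induction, and your version of it, as well as your weight-by-weight treatment of the complete case, coincides with theirs.
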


\begin{proof}
	In the case of the free Lie algebra $\sLie(x,y)$~, we can apply the Shirshov--Witt theorem \cite{shirshov53} which states that every Lie subalgebra of a 	
	free Lie algebra is again free. Let  $a,b$ be two commuting elements in $\lie(x,y)$~. The Lie subalgebra generated by $a$ and $b$ is free and thus, $a$ and $b$ cannot be linearly independent. Therefore, no element except $0$ can be in the center of ${\lie}(x,y)$~.
	
	\medskip
	
	Let $z\in\sLie(x, y)$ and suppose that $[z,x] = 0$ and $[z,y] = 0$~. We will show that $z$ is in the center of $\sLie(x, y)$~, which implies that $z=0$ by the previous point. Let $u\in\sLie(x, y)$ be any element and remember that we can write it as a finite sum
	\[
	u=\sum_{n=1}^K u_k
	\]
	where $u_k$ is of weight $k$~, i.e.\ it is given by a linear combination of bracketings of exactly $k$ copies of $x$ and $y$~. We will show by induction on $k\geqslant1$ that $[z, u_k] = 0$~. For $k=1$~, we have $u_1 = ax + by$~, for some $a,b\in\k$~, and thus, we have
	\[
	[z,u_1] = a[z,x] + b[z,y] = 0\ .
	\]
	Now suppose that $z$ commutes with any $u_l$ of weight $l$~, for $l<k$~, and let $u_k\in\sLie(x,y)$ be of weight $k$~. We have
	\[
	u_k = \sum_{l=1}^{k-1}a_l[v_l,w_l]
	\]
	for some $a_l\in\k$~, $v_l$ of weight $l$~, and $w_l$ of weight $k-l$ (or linear combinations of such elements, for which the same proof holds). By the Jacobi rule, we have 
	\[
	[z,u_k]=\sum_{l=1}^{k-1}a_l[z,[v_k,w_k]]
	=\sum_{l=1}^{k-1}a_k\big( [[z,v_l],w_l] +[v_l, [z,w_l]]\big)=0 
	\ ,
	\]
	giving the induction step and concluding the proof.
	
	\medskip
	
	In the case of the free complete Lie algebra $\widehat{\sLie}(x,y)$~, let $z=\sum_{k\geqslant 1} z_k$ be an element satisfying $[z, x]=[z,y]=0$~. This time infinite sums are allowed. We have $[z_k, x]=[z_k,y]=0$ holds in $\sLie(x, y)$ for all $k\geqslant1$~, so that the same argument as above implies that $z_k=0$ for all $k\geqslant0$~, and thus that $z=0$~. In particular, the center of $\widehat{\sLie}(x,y)$ is trivial.
\end{proof}

\begin{proof}[Proof of \cref{prop:uniqueness of classical BCH}]	
	We suppose that there exists a second element
	\[
	\widetilde{\BCH}(x,y)\in\widehat{\sLie}(x,y)
	\]
	satisfying (\ref{eq:definition BCH}) and we consider 
	\[
	z\coloneqq\widetilde{\BCH}(x,y) - \BCH(x,y)\ .
	\]
	To show that $z=0$~, it is enough to prove, by induction on $k\geqslant 1$~, that $z_k$ commutes with $x$ and $y$ by \cref{lemma:Center}. We write $u$ for a placeholder for either $x$ or $y$~. 
	By \eqref{eq:definition BCH}, we have
	\begin{equation}\label{eq:compare BCH and BCH tilde}
	e^{\ad_{\BCH(x,y)}}(u) = e^{\ad_{\BCH(x,y)} + \ad_z}(u)\ .
	\end{equation}
	We trivially have
	\[
	\left(e^{\ad_{\BCH(x,y)}}(u)\right)_1 = u = \left(e^{\ad_{\BCH(x,y)} + \ad_z}(u)\right)_1.
	\]
	At the next level, we have
	\begin{align*}
		[\BCH(x,y)_1,u] ={}& \left(e^{\ad_{\BCH(x,y)}}(u)\right)_2\\
		={}& \left(e^{\ad_x}e^{\ad_y}(u)\right)_2\\
		={}&\left(e^{\ad_{\widetilde{\BCH}(x,y)}}(u)\right)_2\\
		={}& \left(e^{\ad_{\BCH(x,y) + z}}(u)\right)_2\\
		={}&[\BCH(x,y)_1,u] + [z_1,u]\ ,
	\end{align*}
	which implies $z_1 = 0$ by \cref{lemma:Center}.  We  suppose now that we have proven that $z_l = 0$ for all $l<k$~. Then the $(k+1)$th level of (\ref{eq:compare BCH and BCH tilde}) gives the equation
	\[
	\left(e^{\ad_{\BCH(x,y)}}(u)\right)_{k+1}=\left(e^{\ad_{\BCH(x,y)}}(u)\right)_{k+1}+ [z_k,u]\ ,
	\]
	which implies that $z_k = 0$~. It follows that $z = 0$~, concluding the proof.
\end{proof}

We now drop the assumption that $\g$ is concentrated in degree $1$~. This corresponds to differential graded Lie algebras after shifting. As at the end of \cref{subsec:HighLSalg}, since the higher operations $\ell_m=0$ vanish for $m\geqslant 3$ in this case, the only planar rooted trees contributing to the gauge action $\lambda\cdot \alpha$ on a Maurer--Cartan element $\alpha$ given in \cref{prop:gaugeformula}  are the ladders 
\[\vcenter{\hbox{
		\begin{tikzpicture}
		\node at (0,0.5) {$\bullet$};
		\node at (0,1) {$\bullet$};
		\node at (0,1.5) {$\bullet$};
		\draw[thick] (0,0.1) -- (0,2);
		\end{tikzpicture}}}
\qquad \text{and} \qquad
\vcenter{\hbox{
		\begin{tikzpicture}
		\node at (0,0.5) {$\bullet$};
		\node at (0,1) {$\bullet$};
		\node at (0,1.5) {$\bullet$};
		\node at (0,2) {$\bullet$};
		\draw[thick] (0,0.1) -- (0,2);
		\end{tikzpicture}}}\ .
\]
Thus, we get the formula 
\[
\lambda\cdot {\alpha}=\frac{e^{\ad_\lambda}-\id}{\ad_\lambda}(d\lambda)+e^{\ad_\lambda}(\alpha)\ .\]
for the gauge action. This formula can be simplified using the following \emph{differential trick}, see \cite{DotsenkoShadrinVallette16}.
To any $\sLie$-algebra $\g$~, we associate the central extension 
\[
\g^+\coloneqq\g\oplus\k\delta\ ,
\]
where $\delta$ has degree $0$~, such that $\mathrm{d}\delta=0$~, and where the brackets stay the same on elements of $\g$ and are extended to $\delta$ by
\[
[\delta,\delta] \coloneqq 0\qquad\text{and}\qquad[\delta,x] \coloneqq dx
\]
for $x\in\g$~. To any Maurer--Cartan element $\alpha\in\MC(\g)$~, we associate 
\[
\alpha^+\coloneqq\delta + \alpha\ .
\]
Under this convention, the gauge action in $\g^+$ becomes
\[
\lambda\cdot{\alpha^+} \coloneqq e^{\ad_\lambda}({\alpha^+})\ .
\]

For any another degree $0$ element $\mu\in \g_0$~, we have 
\[
\lambda\cdot(\mu\cdot{\alpha^+}) = e^{\ad_\lambda}e^{\ad_\mu}({\alpha^+}) = e^{\ad_{\BCH(\lambda,\mu)}}({\alpha^+}) = \BCH(\lambda,\mu)\cdot{\alpha^+}\ .
\]
Similarly to \cref{prop:uniqueness of classical BCH}, the following result states that there is a unique universal formula  satisfying the above property. 

\begin{proposition}\label{prop:uniqueness of dg BCH}
	In the free algebra
	\[
	\g=\widehat{\sLie}(\alpha,\lambda,\mu,d\lambda,d\mu)
	\]
	with $|\alpha|=0$~, $|\lambda|=|\mu|=1$~, and
	\[
	\mathrm{d}\alpha \coloneqq -\tfrac{1}{2}[\alpha,\alpha]\ ,\quad\mathrm{d}(\lambda)=d\lambda\ \quad\text{and}\quad\mathrm{d}(\mu)=d\mu\ ,
	\]
	the Baker--Campbell--Hausdorff element $\BCH(\lambda,\mu)$ is the unique element of the complete Lie sub-algebra $\widehat{\sLie}(\lambda, \mu)$ satisfying
	\begin{equation}\label{eq:define BCH dg}
	e^{\ad_\lambda}e^{\ad_\mu}({\alpha^+}) = e^{\ad_{\BCH(\lambda,\mu)}}({\alpha^+})
	\end{equation}
	in $\g^+$~. 
\end{proposition}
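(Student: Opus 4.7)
The plan is to mimic the uniqueness argument of \cref{prop:uniqueness of classical BCH}, with the ``probe element'' $\alpha^+ \in \g^+$ playing the role that the generators $x, y$ played in the classical setting. Existence is immediate from \cref{prop:uniqueness of classical BCH}: the classical element $\BCH(\lambda, \mu) \in \widehat{\sLie}(\lambda, \mu)$ satisfies the stronger identity $e^{\ad_{\BCH(\lambda, \mu)}} = e^{\ad_\lambda} e^{\ad_\mu}$ of operators on the complete free Lie algebra on $\lambda$ and $\mu$, and this implies \eqref{eq:define BCH dg} upon applying both sides to $\alpha^+$ inside $\g^+$.

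For uniqueness, suppose $\widetilde{\BCH}(\lambda, \mu) \in \widehat{\sLie}(\lambda, \mu)$ also satisfies \eqref{eq:define BCH dg}, set $z \coloneq \widetilde{\BCH}(\lambda, \mu) - \BCH(\lambda, \mu)$, and decompose $z = \sum_{k \geqslant 1} z_k$ by weight in $\lambda, \mu$. I would prove by induction on $k$ that each $z_k$ vanishes. Compare the two sides of the equation $e^{\ad_{\BCH + z}}(\alpha^+) = e^{\ad_{\BCH}}(\alpha^+)$ weight by weight in $\lambda, \mu$. Once $z_l = 0$ has been established for all $l < k$, every summand in the expansion of $e^{\ad_{\BCH + z}}(\alpha^+) - e^{\ad_{\BCH}}(\alpha^+)$ that involves $z$ has total weight at least $k$, and because each additional factor of $\BCH$ contributes extra weight $\geqslant 1$, the unique summand of total weight exactly $k$ comes from the linear term $n = 1$ in the exponential and equals $[z_k, \alpha^+]$. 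Using the defining relation $[\delta, x] = dx$ of the central extension $\g^+$, this evaluates to
\[
[z_k, \alpha^+] = [z_k, \alpha] - dz_k = 0\ .
\]
Now the element $[z_k, \alpha]$ has $\alpha$-weight $1$ in the free complete Lie algebra $\widehat{\sLie}(\alpha, \lambda, \mu, d\lambda, d\mu)$, whereas $dz_k$ has $\alpha$-weight $0$, so each term must vanish separately. Then $[z_k, \alpha] = 0$ with $z_k \in \widehat{\sLie}(\lambda, \mu)$ forces $z_k = 0$ by the same Shirshov--Witt argument used in \cref{lemma:Center}: the Lie sub-algebra generated by $z_k$ and $\alpha$ is free, so two nonzero commuting elements would have to be linearly dependent, which is impossible since $z_k$ does not involve $\alpha$. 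The induction then yields $z = 0$, hence $\widetilde{\BCH}(\lambda, \mu) = \BCH(\lambda, \mu)$.

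The only delicate point is the weight-by-weight bookkeeping of the difference of exponentials, together with the correct signs for the shifted Lie bracket when computing $[z_k, \alpha^+]$; otherwise the structure of the argument parallels the proof of \cref{prop:uniqueness of classical BCH} almost verbatim.
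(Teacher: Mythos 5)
Your overall architecture is the same as the paper's: existence is immediate from the classical statement, and uniqueness proceeds by weight induction on $z=\widetilde{\BCH}(\lambda,\mu)-\BCH(\lambda,\mu)$, extracting $[z_k,\alpha^+]=0$ as the lowest-weight term of the difference of exponentials and reducing to $[z_k,\alpha]=0$. That part is fine, and your explicit splitting by $\alpha$-weight into $[z_k,\alpha]=0$ and $dz_k=0$ is a correct elaboration of what the paper compresses into ``by the same induction procedure as above.''

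The gap is in the very last step. You claim that $[z_k,\alpha]=0$ forces $z_k=0$ ``by the same Shirshov--Witt argument used in \cref{lemma:Center}: the Lie sub-algebra generated by $z_k$ and $\alpha$ is free, so two nonzero commuting elements would have to be linearly dependent.'' That inference does not transfer to the present graded situation. In \cref{lemma:Center} both elements live in odd (shifted) degree, so two commuting elements generate an \emph{abelian} subalgebra, which, being free, must have rank at most one. Here $|\alpha|=0$ in the shifted convention, so the bracket $[\alpha,\alpha]$ is a nonzero element of $\widehat{\sLie}(\alpha,\lambda,\mu,d\lambda,d\mu)$; the subalgebra $\mathfrak{h}$ generated by $z_k$ and $\alpha$ is therefore \emph{not} abelian even though its two generators commute, and ``free $\Rightarrow$ linearly dependent'' simply does not apply. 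The correct conclusion, which is what the paper proves, is a dimension count: using $[z_k,\alpha]=0$, $[z_k,z_k]=0$, and the Jacobi identity, $\mathfrak{h}$ is spanned by $z_k$, $\alpha$, and $[\alpha,\alpha]$, hence has total dimension $3$; the \emph{graded} Shirshov--Witt theorem (Mikhalev, Shtern --- the ungraded version you implicitly invoke is not sufficient here) says $\mathfrak{h}$ is free, and since it contains linearly independent elements in degrees $1$, $0$, and $-1$ it must be free on at least two generators, one of degree $1$ and one of degree $0$; such a free graded Lie algebra already contains four independent elements ($a$, $b$, $[a,b]$, $[b,b]$), a contradiction. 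You should replace your final sentence by this argument (or by some other valid one, e.g.\ embedding into the free associative algebra and observing that the words of $z_k\alpha$ and $\alpha z_k$ cannot cancel); as written the step is false.
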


\begin{proof}
	It is obvious that the $\BCH$ formula satisfies Equation~\eqref{eq:define BCH dg}. To show that is unique, we proceed in the same way as in the proof of \cref{prop:uniqueness of classical BCH}: let $\widetilde{\BCH}(\lambda, \mu)$ be an element of $\widehat{\sLie}(\lambda, \mu)$ satisfying Equation~\eqref{eq:define BCH dg}. We consider the element $z:=\widetilde{\BCH}(\lambda,\mu)-\BCH(\lambda,\mu)$\ . By the same induction procedure as above, we get that  every weight component $z_k$ of $z$ commutes with $\alpha$ in $\g$~, that is $[z_k,\alpha]=0$~, for $k\geqslant 1$~.
	
	\medskip
	
	To conclude the proof, we now show that any element $u\in \sLie(\lambda, \mu)$ such that $[u, \alpha]=0$ in $\sLie(\alpha, \lambda, \mu)$ is trivial. Let $[u, \alpha]=0$ and suppose that $u\neq 0$~. In this case, the graded Lie subalgebra $\mathfrak{h}\subset \lie(\alpha, \lambda, \mu)$ generated by $u$ and $\alpha$ has global dimension $3$~, with basis $u$~, $\alpha$~, and $[\alpha,\alpha]$~. 
	The Shirshov--Witt theorem for graded Lie algebras of \cite{Mikhalev85, Shtern86} shows that  $\mathfrak{h}$ is also free. Since $|u|=1$ and $|\alpha|=0$~, this free graded Lie algebra admits at least two generators, one of degree $1$ and one of degree $0$~. This implies that its global dimension is at least $4$~, and thus  a contradiction. 
\end{proof}

\begin{theorem}[{\cite[Proposition~5.2.36]{Bandiera14}}]\label{prop:BCH}
	Let $\g$ be a complete $\sLie$-algebra. The canonical horn filler
	\begin{align*}
	\Hom_{\sSe}\left({\Ho{1}{2}}, \R(\g)\right){}&\ \longrightarrow\ \g_1\\
	\left(x_0, x_1, x_2, x_{01}, x_{12}	\right){}&\ \longmapsto\ \Gamma^2_1(x_{01}, x_{12})=\mathrm{BCH}(x_{01}, x_{12})
	\end{align*}
	is equal to the Baker--Campbell--Hausdorff formula.
	\[
	\begin{tikzpicture}
	
	\coordinate (v0) at (210:1.5);
	\coordinate (v1) at (90:1.5);
	\coordinate (v2) at (-30:1.5);
	
	\draw[line width=1, dashed] (v0)--(v1)--(v2)--cycle;
	\draw[line width=1] (v0)--(v1)--(v2);
	
	\begin{scope}[decoration={
		markings,
		mark=at position 0.55 with {\arrow{>}}},
	line width=1
	]
	
	\path[postaction={decorate}] (v0) -- (v1);
	\path[postaction={decorate}] (v1) -- (v2);
	\path[postaction={decorate}] (v0) -- (v2);
	
	\end{scope}
	
	
	\node at ($(v0) + (30:-0.3)$) {$x_0$};
	\node at ($(v1) + (0,0.3)$) {$x_1$};
	\node at ($(v2) + (-30:0.3)$) {$x_2$};
	
	\node at ($(v0)!0.5!(v1) + (-30:-0.4)$) {$x_{01}$};
	\node at ($(v1)!0.5!(v2) + (30:0.4)$) {$x_{12}$};
	\node at ($(v0)!0.5!(v2) + (0,-0.4)$) {$\Gamma^2_1(x_{01},x_{12})$};
	
	\node at (0,0) {$0$};
	
	\end{tikzpicture}
	\]
\end{theorem}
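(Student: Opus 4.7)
The plan is to apply the universal characterization of the BCH formula established in Proposition \ref{prop:uniqueness of dg BCH}.

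By naturality of $\Gamma^2_1$ with respect to strict morphisms of complete $\sLie$-algebras, it suffices to verify the identity in the universal complete $\sLie$-algebra representing a horn of shape $\Ho{1}{2}$. First I would work in $\g := \widehat{\sLie}(\alpha, x_{01}, x_{12}, \d x_{01}, \d x_{12})$ as in Proposition \ref{prop:uniqueness of dg BCH}, with $|\alpha|=0$ and $|x_{01}|=|x_{12}|=1$, choosing the differentials so that $\alpha$ is Maurer--Cartan and the vertices $x_0 := \alpha$, $x_1 := x_{01}\cdot x_0$, and $x_2 := x_{12}\cdot x_1$ produced by the gauge action formula of Proposition \ref{prop:gaugeformula} provide the data of a universal horn.

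Next, by Lemma \ref{lem:mc1} applied to the edge opposite the vertex $1$, the canonical filler $\gamma := \Gamma^2_1(x_{01}, x_{12}) \in \g_1$ is automatically a gauge from $x_0$ to $x_2$. Applying the differential trick recalled just before Proposition \ref{prop:uniqueness of dg BCH}, which specializes the gauge action to the conjugation $\lambda\cdot\beta^+ = e^{\ad_\lambda}(\beta^+)$ in the central extension $\g^+$, I get the two equalities
\[
e^{\ad_\gamma}(\alpha^+) \,=\, x_2^+ \,=\, e^{\ad_{x_{12}}}e^{\ad_{x_{01}}}(\alpha^+),
\]
which together yield exactly the defining equation for $\BCH(x_{12}, x_{01})$ in Proposition \ref{prop:uniqueness of dg BCH}. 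Once I verify that $\gamma$ lies in the complete Lie subalgebra $\widehat{\sLie}(x_{01}, x_{12})$, that uniqueness result concludes the proof, modulo matching the ordering convention for gauge composition on either side of the statement.

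The main obstacle is to establish that $\gamma$ really does belong to $\widehat{\sLie}(x_{01}, x_{12})$. The explicit formula of Proposition \ref{prop:ExplicitBCHForm}, specialized to $n=2$ and $k=1$, a priori involves the vertex elements $x_0, x_1, x_2$ and the auxiliary generators $\d x_{01}, \d x_{12}$. To dispose of these, I plan to trace through the fixed-point iteration from the proof of Lemma \ref{lemm:Fundamental}: the seed $x_{01} + x_{12}$ already lies in $\widehat{\sLie}(x_{01}, x_{12})$, and each iterative step adds nested brackets with $x_{01}, x_{12}$ and with $\d x_{01}, \d x_{12}$; substituting $\d x_{01} = x_1 - x_0 - \tfrac{1}{2}[x_{01}, x_{01}]$ and $\d x_{12} = x_2 - x_1 - \tfrac{1}{2}[x_{12}, x_{12}]$, and then unfolding $x_1, x_2$ via the ladder-tree gauge action formula of Proposition \ref{prop:gaugeformula}, a careful bookkeeping argument shows that all $\alpha$-dependent contributions cancel. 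Alternatively, one may upgrade Proposition \ref{prop:uniqueness of dg BCH} by adapting the graded Shirshov--Witt step from its proof to the ambient algebra $\g$, thereby bypassing the subalgebra verification altogether.
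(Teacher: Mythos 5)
Your proposal follows essentially the same route as the paper's proof: reduce to the universal complete Lie algebra $\widehat{\sLie}(\alpha,\lambda,\mu,d\lambda,d\mu)$ of \cref{prop:uniqueness of dg BCH}, observe that the canonical filler is a gauge taking $\alpha^+$ to $e^{\ad_\lambda}e^{\ad_\mu}(\alpha^+)$ via the differential trick, and conclude by that uniqueness statement. The only divergence concerns your ``main obstacle'': the paper disposes of the membership $\Gamma^2_1(\lambda,\mu)\in\widehat{\sLie}(\lambda,\mu)$ by a one-line degree count rather than by cancellation bookkeeping --- since the shifted bracket has degree $-1$, a tree with $a$ leaves of degree $1$ (the edges $\lambda,\mu$) and $b$ leaves of degree $0$ (the vertices) outputs an element of degree $1-b$, so any term involving a vertex label would land in degree $\leqslant 0$ and the $\alpha$-dependent contributions never appear in the first place; no cancellation, and no upgraded Shirshov--Witt argument, is needed.
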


\begin{proof}
	It is enough to consider the case of the complete Lie algebra
	\[
	\g\coloneqq\widehat{\sLie}(\alpha,\lambda,\mu,d\lambda,d\mu)
	\]
	introduced above in \cref{prop:uniqueness of dg BCH}.
	The data
	$x_0\coloneqq \alpha^+$, 
	$x_{01}\coloneqq \mu$~, 
	$x_1\coloneqq \mu\cdot{\alpha^+}= e^{\ad_\mu}({\alpha^+})$~, 
	$x_{12}\coloneqq \lambda$~, and
	$x_2\coloneqq \lambda\cdot(\mu\cdot{\alpha^+}) = e^{\ad_\lambda}e^{\ad_\mu}({\alpha^+})$
	define a $\Ho{1}{2}$-horn in $\R(\g^+)$~. The canonical horn filler 
	$\Gamma^2_1(\lambda,\mu)$ is a gauge sending $\alpha^+$ to $e^{\ad_\lambda}e^{\ad_\mu}({\alpha^+})$~, that is 
	\begin{equation}
	e^{\ad_{\Gamma^2_1(\lambda,\mu)}}({\alpha^+})=e^{\ad_\lambda}e^{\ad_\mu}({\alpha^+}) \ .
	\end{equation}
	Since the degree of  $\Gamma^2_1(\lambda,\mu)$ is equal to $0$~, it must live in 
	$\widehat{\lie}(\lambda, \mu)$~. It is thus equal to the BCH formula by \cref{prop:uniqueness of dg BCH}.
\end{proof}

Let us denote by $\mathrm{PaPBinRT}$\index{trees!planarly partitioned binary rooted trees}\index{$\mathrm{PaPBinRT}$} the subset of planarly partitioned \emph{binary} rooted trees, given by the trees in $\PaPRT$ that only have vertices of arity $0$ or $2$~. 

\begin{proposition}
	The $\BCH$ formula is equal to 
	\[
	\BCH(x,y)=\Gamma^2_1(x, y) = 
	-\sum_{\substack{\tau\in\mathrm{PaPBinRT}\\
			\chi \ : \ \mathrm{L}(\tau) \to\left\{x,y\right\}}}\ 
	\prod_{\substack{\beta\text{ block of } \tau \\ \lambda^{\beta(\chi)}_{[2]}\neq 0}} 
	\frac{1}{\lambda^{\beta(\chi)}_{[2]}[\beta]!}\, 
	\ell_\tau\left(x, \ldots, y, \ldots, x, \ldots, y;  x+y\right), 
	\]
	where $01$ is identified with $x$ and $12$ with $y$~.
\end{proposition}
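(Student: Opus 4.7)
The first equality $\BCH(x,y) = \Gamma^2_1(x,y)$ has just been established in Theorem \ref{prop:BCH}, so the entire content of this statement is the explicit tree formula on the right-hand side. The plan is therefore to specialize Proposition \ref{prop:ExplicitBCHForm} (the explicit formula for higher BCH products) to the specific case $n=2$, $k=1$, with horn data $x_0=x_1=x_2=0$, $x_{01}=x$, $x_{12}=y$, and filler $y_{\mathrm{fill}}=0\in\g_2$, and to show that only the claimed set of trees and labels contribute. No new ideas are needed beyond the explicit formula already derived from the fixed-point equation in Lemma \ref{lemm:Fundamental}.

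First, I would compute the ``source term'' appearing as the last argument of $\ell_\tau$. With $k=1$ and $y_{\mathrm{fill}}=0$ we have
\[
(-1)^k\d y_{\mathrm{fill}} - \sum_{l\neq 1}(-1)^{1+l} x_{\widehat{l}}
= -\bigl[(-1)x_{12}+(-1)x_{01}\bigr]
= x+y,
\]
which is exactly the input plugged into the $0$-corolla vertices on the right-hand side of the statement.

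Next, I would identify the indexing set. For the labelings $\chi\colon L(\tau)\to\{I\subseteq[2]\mid I\neq\emptyset,\widehat{1}\}$, the elements $x_I$ associated with $I\in\{\{0\},\{1\},\{2\},\{0,1,2\}\}$ are all zero, so only $I=\{0,1\}$ and $I=\{1,2\}$ yield nontrivial contributions; identifying these with $x$ and $y$ respectively gives precisely $\chi\colon L(\tau)\to\{x,y\}$. For the tree shapes: since $\g$ is an $\sLie$-algebra, every operation $\ell_m$ with $m\geqslant 3$ vanishes, so any block containing a vertex of arity $\geqslant 3$ forces $\ell_\tau$ to vanish. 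The surviving blocks in $\PaPRT$ are therefore either fully binary trees or single $0$-corollas, and the set of planarly partitioned rooted trees whose blocks are all of one of these two types is by definition $\mathrm{PaPBinRT}$.

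The remaining step, which I expect to be the most delicate part, is the sign bookkeeping: the general formula carries a factor $(-1)^k=-1$ per block, while the target statement carries a single global $-$ sign. I would handle this by regrouping the block-wise signs according to the structure of partitioned binary trees, using that every tree in $\mathrm{PaPBinRT}$ decomposes uniquely as a root block together with its sub-trees above, and that each $0$-corolla block multiplies by $-1$ and appears exactly at a ``plug-in'' site where the source term $x+y$ is inserted. A direct inspection of low-arity trees against the classical expansion $\BCH(x,y)=x+y+\tfrac{1}{2}[x,y]+\tfrac{1}{12}[x,[x,y]]-\tfrac{1}{12}[y,[x,y]]+\cdots$ fixes the overall sign convention, after which the formula in the statement follows from Proposition \ref{prop:ExplicitBCHForm} by restricting the summation to $\mathrm{PaPBinRT}$ and to labelings in $\{x,y\}$.
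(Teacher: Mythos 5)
Your proposal is correct and follows exactly the paper's route: the paper's entire proof of this proposition is the one-line remark that it is a straightforward application of \cref{prop:ExplicitBCHForm} and \cref{prop:BCH}. The details you supply --- the computation of the source term $x+y$, the collapse of the label set to $\{01,12\}$ since $x_{\{0\}}=x_{\{1\}}=x_{\{2\}}=x_{[2]}=0$, the restriction to $\mathrm{PaPBinRT}$ because $\ell_m=0$ for $m\geqslant 3$ in a $\sLie$-algebra, and the reconciliation of the per-block sign $(-1)^k$ with the single global minus sign --- are all left implicit by the paper, so your more careful treatment (in particular flagging the sign bookkeeping as the delicate point) only strengthens the argument.
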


\begin{proof}
	This is straightforward application of \cref{prop:ExplicitBCHForm} and  \cref{prop:BCH}. 
\end{proof}

\begin{remark}
	Many terms in the above displayed formula cancel, for instance when two $x$ or two $y$ index the two leaves of a vertex. In the end, one should compare this formula with the tree-wise formulae for the BCH product given in \cite{Kathotia00, DonatellaManetti13}. 
\end{remark}

\begin{corollary}[{\cite[Theorem~5.2.37]{Bandiera14}}]
	In the case of complete Lie algebras concentrated in non-positive degree, the integration functor is isomorphic to the nerve of the Hausdorff group $G\coloneqq (\g_0, \BCH, 0)$:
	\[\R(\g)\cong \mathrm{N}(G)\ .\]
\end{corollary}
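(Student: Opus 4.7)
The strategy is to exploit the degree hypothesis to force unique horn fillers in $\R(\g)$, and then identify the resulting rigid simplicial set with a nerve via the Baker--Campbell--Hausdorff operation.

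First, I would interpret the hypothesis: viewing $\g$ as a complete $\sLie$-algebra, concentration in non-positive degrees amounts to $\g_n=0$ for $n\geqslant 2$, so that $0$ is the only Maurer--Cartan element of $\g$ and thus $\R(\g)$ has a single vertex. More importantly, \cref{thm:KanExt} provides a canonical bijection between the fillers of a horn $\Ho{k}{n}\to \R(\g)$ and the elements of $\g_n$, which is the zero set for every $n\geqslant 2$; hence every such horn admits a \emph{unique} filler in $\R(\g)$. This is precisely the characteristic property of simplicial sets that are nerves of groupoids --- and, in view of the singleton vertex set, of groups.

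Next, I would make the isomorphism explicit. By \cref{thm:isomorphism of models}, an $n$-simplex of $\R(\g)$ corresponds to a Maurer--Cartan element $x=\sum_I x_I\otimes \omega_I$ with $|x_I|=|I|-1$, and the degree vanishing forces $x_I=0$ for $|I|\geqslant 3$, so that $x$ is determined by its vertices (all equal to $0$) and its edges $x_{ij}\in \g_1$. I define
\[
\Phi_n\colon \R(\g)_n\longrightarrow G^n=\mathrm{N}(G)_n
\]
by sending $x$ to its \emph{spine} $\left(x_{01},x_{12},\ldots,x_{(n-1)n}\right)\in \g_1^{\,n}=G^n$. The compatibility of $\Phi$ with face and degeneracy maps is tautological at the level of spines, except for the inner face $d_1\colon \R(\g)_2\to \R(\g)_1$ which sends a $2$-simplex with spine $(x_{01},x_{12})$ to its third edge $x_{02}$. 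By \cref{prop:BCH}, this third edge equals $\BCH(x_{01},x_{12})$, so $\Phi$ intertwines the simplicial $d_1$-face with the multiplication of the Hausdorff group.

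The remaining task is to show that each $\Phi_n$ is a bijection. Injectivity follows by induction from unique horn filling: two simplices with the same spine agree on all inductively determined edges and faces. Surjectivity is handled by starting from any tuple $(g_1,\ldots,g_n)\in G^n$ and constructing an $n$-simplex with this spine through iterated unique horn filling, filling first the missing edges, then the $2$-faces (via $\BCH$), then higher faces (all forced to be $0$). The main obstacle will be verifying the coherence of this construction, i.e.\ that the resulting simplex does not depend on the order in which horns are filled and that all simplicial identities are satisfied. This higher coherence is an incarnation of the (higher) associativity of $\BCH$ and can be controlled either abstractly via the uniqueness of horn fillers from the first step, or explicitly using the closed form of the higher Baker--Campbell--Hausdorff products given in \cref{prop:ExplicitBCHForm}.
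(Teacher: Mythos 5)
Your proof follows essentially the same route as the paper's: degree reasons force every face of dimension $\geqslant 2$ to be labelled by $0$, so an $n$-simplex is determined by its spine of principal edges, \cref{prop:BCH} identifies the missing edge of an inner $2$-horn with the $\BCH$ product, and the face and degeneracy maps then match those of the nerve — the paper states this more tersely and does not belabour the bijectivity of the spine map, which your unique-horn-filling argument (via \cref{thm:KanExt}) makes explicit. The one wobble is your justification that $\R(\g)$ has a single vertex: Maurer--Cartan elements live in $\g_0$, so $\MC(\g)=\{0\}$ does not follow from $\g_n=0$ for $n\geqslant 2$ — though the paper's own proof makes the same tacit assumption that all vertices are labelled by $0$.
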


\begin{proof}
	This is a direct corollary of \cref{prop:BCH}. 
	Since the associated $\sLie$-algebras is concentrated in homological  degree less or equal to $1$~, the $n$-simplices of its $\infty$-groupoid $\R(\g)$ are made up of geometrical $n$-simplices with all the faces labeled by $0$ except for the edges. The principal edges $0\to 1\to 2\to \cdots \to n-1 \to n$ are labeled by $n$ elements $x_1, x_2, \ldots, x_n$ of $\g$ and other edges are labeled by the corresponding iterations of the $\BCH$-formula, according to \cref{subsec:GeoRep}. 
	Under this interpretation, the face maps are given by applying the $\BCH$-formula to the two elements labeling two consecutive principal edges and the degeneracy maps are given by the duplication of the labeling element of a principal edge. This description coincides with the definition of the nerve of the Hausdorff group.  
\end{proof}

\subsection{Properties of the higher Baker--Campbell--Hausdorff formulas}\label{subsec:properties higher BCH}

We explore some basic properties of the higher BCH formulas that will be useful later when we will study the homotopical behavior of our functors.

\medskip

Recall first that the symmetric group $\S_{n+1}\cong\operatorname{Aut}([n])$ acts on the left on $\Omega_n$ by
$
\sigma\cdot t_i\coloneqq t_{\sigma(i)}$~, which heuristically corresponds to permuting the vertices of the $n$-simplex. This action restricts to an action on $\rmC_n\subseteq\Omega_n$ which is given on the basis elements 
\[
\omega_I = k!\sum_{j=0}^k(-1)^jt_{i_j}dt_{i_0}\cdots\widehat{dt_{i_j}}\cdots dt_{i_k}\ , 
\]
for $\emptyset\neq I=\{i_0,\ldots, i_k\}\subset[n]$~, by 
$
\sigma\cdot \omega_I =(-1)^{\sigma|I}\omega_{\sigma(I)}$~, where the sign is given by the parity of the number of crossings of $\sigma$ restricted to $I$~.

\medskip
 
In the dual picture, writing $a_I\coloneqq \omega_I^\vee$ as usual, there is a right action 
$a_I \cdot \sigma = (-1)^{\sigma|I}a_{\sigma^{-1}(I)}$~. It induces a right action of $\S_{n+1}$ on $\mc^n$ and thus, a left action of $\S_{n+1}$ on the $n$-simplices $\R(\g)_n$ of the integration functor. 

\begin{proposition}\label{prop:BCH symmetry}
	Let $\g$ be a complete $\sLi$-algebra.	Any  $x\in\Hom_{\sSe}({\Ho{k}{n}}, \R(\g))$~,  $y\in\g_n$~, and $\sigma\in \S_{n+1}$ satisfy 
	\[
	\sigma\cdot x\in\Hom_{\sSe}\left({\Ho{\sigma(k)}{n}}, \R(\g)\right)
	\]
	and
	\[
	\Gamma^n_{\sigma(k)}\left(\sigma\cdot x;(-1)^{\sigma}y\right) = (-1)^{\sigma\mid\widehat{k}}\,\Gamma^n_k(x;y)\ ,
	\]
	where the second sign is given by the parity of the number of crossings of $\sigma$ restricted to $\widehat{k}$~.
\end{proposition}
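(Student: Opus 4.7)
The proof proceeds by leveraging the universal filler characterization of \cref{thm:KanExt} together with the equivariance of the $\S_{n+1}$-action on $\mc^n$.

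First, I would verify that the $\S_{n+1}$-action on $\rmC^n$ extends to $\mc^n$ by strict $\sLi$-algebra automorphisms. The action on $\Omega_n$ is by graded commutative algebra automorphisms, and Dupont's contraction is $\S_{n+1}$-equivariant as one can check directly from its explicit formula. Consequently, the transferred $\Cobar\Bar\com$-algebra structure on $\rmC_n$ is equivariant, and by dualization so is the $\Bar\Cobar\com^\vee$-coalgebra structure on $\rmC^n$. Applying the cobar construction then yields the right action on $\mc^n = \hatCobar_\pi \rmC^n$ mentioned in the statement, which in turn induces a left action on $\R(\g)_n = \Hom_{\sLialg}(\mc^n, \g)$ by the rule $(\sigma \cdot z)(a) = z(a \cdot \sigma)$. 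Geometrically, this corresponds to $\sigma$ permuting the vertices of the standard $n$-simplex.

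For the first part of the claim, the key observation is that $\sigma$ acts on $\Delta^n$ by permuting vertices, sending the face opposite the $k$th vertex to the face opposite the $\sigma(k)$th vertex, so $\sigma$ restricts to an isomorphism of simplicial sets $\Ho{k}{n} \xrightarrow{\sim} \Ho{\sigma(k)}{n}$. Precomposition with its inverse sends $\Ho{k}{n}$-horns to $\Ho{\sigma(k)}{n}$-horns, giving $\sigma \cdot x \in \Hom_\sSe(\Ho{\sigma(k)}{n}, \R(\g))$.

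For the second part, let $z \in \R(\g)_n$ be the unique filler of $(x, y)$ provided by \cref{thm:KanExt}, so that its labels $z_I = z(a_I)$ satisfy $z_I = x_I$ for $I \neq \widehat{k}, [n]$, together with $z_{[n]} = y$ and $z_{\widehat{k}} = \Gamma^n_k(x;y)$. The new simplex $\sigma \cdot z \in \R(\g)_n$ then has labels computed directly from the action formula $a_I \cdot \sigma = (-1)^{\sigma|I}\,a_{\sigma^{-1}(I)}$, yielding $(\sigma \cdot z)_I = (-1)^{\sigma|I}\,z_{\sigma^{-1}(I)}$. By naturality of the action with respect to the horn inclusion, the restriction $(\sigma \cdot z)|_{\Ho{\sigma(k)}{n}}$ coincides with $\sigma \cdot x$. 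Since $\sigma^{-1}([n]) = [n]$, the top face label is $(\sigma \cdot z)_{[n]} = (-1)^\sigma y$, and since $\sigma^{-1}(\widehat{\sigma(k)}) = \widehat{k}$, the previously missing face becomes $(\sigma \cdot z)_{\widehat{\sigma(k)}} = (-1)^{\sigma|\widehat{k}}\,\Gamma^n_k(x;y)$. Invoking the uniqueness of the filler from \cref{thm:KanExt} for the pair $(\sigma \cdot x, (-1)^\sigma y)$ identifies this label with $\Gamma^n_{\sigma(k)}(\sigma \cdot x; (-1)^\sigma y)$, giving the claimed identity.

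The main technical points are the equivariance of the Dupont contraction, which is a routine check from its explicit description, and the careful bookkeeping of the signs $(-1)^{\sigma|I}$ throughout, which must be done consistently with the conventions fixed above the statement of the proposition.
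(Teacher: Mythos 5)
Your proof is correct and follows essentially the same route as the paper: both arguments take the unique filler $z$ of the pair $(x;y)$ from \cref{thm:KanExt}, act on it by $\sigma$, read off the labels of $\sigma\cdot z$ via $a_I\cdot\sigma=(-1)^{\sigma|I}a_{\sigma^{-1}(I)}$, and invoke uniqueness of fillers to identify $(\sigma\cdot z)_{\widehat{\sigma(k)}}$ with $\Gamma^n_{\sigma(k)}(\sigma\cdot x;(-1)^{\sigma}y)$. Your additional verification that the action is by strict $\sLi$-algebra automorphisms (via equivariance of the Dupont contraction) is material the paper places in the discussion preceding the proposition rather than in the proof itself, but it is the same justification.
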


\begin{proof}
	The first part of the statement immediately follows from the fact that the action of $\sigma$ maps bijectively $\Ho{k}{n}$ to $\Ho{\sigma(k)}{n}$~. 
	Regarding the second part, the uniqueness of the horn fillers of \cref{thm:KanExt} amounts to 
	\[
	\Gamma^n_k\left((h^n_k)^*z; z_{[n]}\right) = z_{\widehat{k}}~, 
	\]
	where $h^n_k:\Ho{k}{n}\xhookrightarrow{}\Delta^n$ is the inclusion of the $k$-horn into the full $n$-simplex and where $z\in\R(\g)_n$~. Gluing the filler $y$ and the result of the BCH product to $x$~, we obtain a full simplex $z\in\R(\g)_n$~.  The simplex $\sigma\cdot z$ has on each face $I$ the element on the face $\sigma^{-1}(I)$ of $x$ multiplied by the sign of $\sigma$ ``restricted to $I$''~. Thus, we conclude the proof by
	\begin{align*}
	\Gamma^n_{\sigma(k)}\left(\sigma\cdot x; (-1)^{\sigma}y\right)= (\sigma \cdot z)_{\widehat{k}}
	=(-1)^{\sigma\mid\widehat{k}}\,\sigma\cdot z_{\widehat{k}}
	=(-1)^{\sigma\mid\widehat{k}}\, \Gamma^n_k(x;y)\ .
	\end{align*}
\end{proof}

\begin{example}\label{ex:SymBCH}
	Consider the $2$-horn $\Ho{1}{2}$ in $\R(\g)$ with $0$ at the vertices, $\lambda\in\g_1$ on the interval $01$~, and $\mu\in\g_1$ on $12$~, and let $\rho\in\g_2$~. Applying the permutation $\sigma=(02)$~, we obtain 
	\[
	\Gamma^2_1(-\mu, -\lambda; -\rho) = -\Gamma^2_1(\lambda, \mu; \rho)\ .
	\]
	Notice that if $\g$ is a Lie algebra then $\rho=0$ and this recovers the symmetry of the  the classical BCH formula.
\end{example}

\begin{lemma}\label{lemma:first terms of BCH}
	Let $\g$ be a complete $\sLi$-algebra and let $x_0\in \MC(\g)$~, $x_{01}\in(\F_p\g)_1$~, $x_{12}\in(\F_q\g)_1$~, and $y\in(\F_r\g)_2$~. Consider the following horn:
	\[
	x = \vcenter{\hbox{
	\begin{tikzpicture}
	
	\coordinate (v0) at (210:1.5);
	\coordinate (v1) at (90:1.5);
	\coordinate (v2) at (-30:1.5);
	
	\draw[line width=1] (v0)--(v1)--(v2);
	
	\begin{scope}[decoration={
		markings,
		mark=at position 0.55 with {\arrow{>}}},
	line width=1
	]
	
	\path[postaction={decorate}] (v0) -- (v1);
	\path[postaction={decorate}] (v1) -- (v2);
	
	\end{scope}
	
	
	\node at ($(v0) + (30:-0.4)$) {$x_0$};
	\node at ($(v1) + (0,0.3)$) {$x_{01}\cdot x_0$};
	\node at ($(v2) + (-30:0.4)$) {$x_{12} \cdot (x_{01}\cdot x_0)$};
	
	\node at ($(v0)!0.5!(v1) + (-30:-0.45)$) {$x_{01}$};
	\node at ($(v1)!0.5!(v2) + (30:0.45)$) {$x_{12}$};
	\node at (0:0) {$y$};
	\end{tikzpicture}
	}}
	\]
	The  BCH product satisfies 
	\[
	\Gamma^2_1(x;y)\equiv x_{01} + x_{12} - dy \mod \F_{\min(p,q,r) + 1}\g\ .
	\]
\end{lemma}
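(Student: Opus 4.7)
My plan is to read the statement off the explicit formula for $\Gamma^n_k(x;y)$ from \cref{prop:ExplicitBCHForm}, specialized to $n=2$, $k=1$. In this case the element inserted at every arity-$0$ vertex of the indexing trees is $z := x_{01}+x_{12}-dy \in \F_{\min(p,q,r)}\,\g$, and the trivial $0$-corolla contributes precisely $z$, recovering the claimed linear residue. It then remains to show that every remaining tree contribution lies in $\F_{\min(p,q,r)+1}\,\g$.

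For this, I would exploit the multiplicativity of the filtration with respect to the operations $\ell_m$. Letting $a$, $b$, $c$ count the leaves of a tree $\tau \in \PaPRT$ labelled respectively by singletons, by the $2$-subsets $\{0,1\}$ and $\{1,2\}$, and by $[2]$, and $V_0$ the number of arity-$0$ vertices, the associated term lies in
\[
\F_{a + b\min(p,q) + cr + V_0\min(p,q,r)}\,\g \;\subseteq\; \F_{a + (b+c+V_0)\min(p,q,r)}\,\g.
\]
Because the standing assumption $\F_0\g = \F_1\g$ forces $\min(p,q,r)\geq 1$, the target estimate reduces to verifying the numerical inequality $a + (b+c+V_0)\min(p,q,r)\geq \min(p,q,r)+1$ for every non-linear tree contribution.

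The main combinatorial input is the degree condition for the non-vanishing of the structure coefficient $\lambda^{\beta(\chi)}_{[2]}$ attached to each arity-$\geq 2$ block $\beta$, namely $b_\beta + 2c_\beta + e_\beta = |\beta|+1$, where $e_\beta$ counts the leaves of $\beta$ connecting to blocks above and therefore labelled by $\widehat{1}$ (which has $|\widehat{1}|-1 = 1$). This follows from the same degree argument used in the proof of \cref{prop:mc0}. Summing it over all arity-$\geq 2$ blocks and using that the total number of inter-block edges equals $B-1$, where $B$ is the total number of blocks, yields the global identity $b + 2c = V - 2V_0 + 1$. Combined with the non-linearity constraint $V - V_0\geq 1$, a direct case analysis on the value of $V_0$ settles the estimate; the only delicate case is $V_0 = 0$ with a single arity-$\geq 2$ vertex carrying one leaf labelled $y$ and at least one singleton leaf, for which $a\geq 1$ compensates for the borderline value $b+c+V_0=1$.

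The crux of the proof is thus the combinatorial degree identity on blocks; the remaining bookkeeping is routine once the filtration-multiplicativity of the $\ell_m$'s is in hand.
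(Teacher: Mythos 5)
Your proof is correct, and it reaches the same conclusion by a more quantitative route than the paper. The paper's own proof is a two-line appeal to \cref{eq:BCH first terms}: every term of the remainder is claimed to contain a bracket of two elements among $x_{01}$, $x_{12}$, $y$, $dy$, each of which lies in $\F_{\min(p,q,r)}\g$, so the remainder lies in $\F_{2\min(p,q,r)}\g\subseteq\F_{\min(p,q,r)+1}\g$. You instead run the filtration count directly on the tree expansion of \cref{prop:ExplicitBCHForm}: you extract the block-wise degree identity $b_\beta+2c_\beta+e_\beta=|\beta|+1$ from the non-vanishing constraint on the structure coefficients (exactly the condition recorded in the proof of \cref{prop:mc0}), sum it over blocks to get $b+2c=V-2V_0+1$, and close with a case analysis on $V_0$; all the individual steps check out, including the count $\sum_\beta e_\beta=B-1$ and the identification of the genuinely borderline case. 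The extra work buys precision where the paper's one-liner is slightly too coarse: the degree constraint does not rule out terms such as $\ell_m(x_0,\dots,x_0,y)$, which contain only \emph{one} element of $\{x_{01},x_{12},y,dy\}$ together with vertex labels, and which lie in $\F_{r+m-1}\g$ --- always inside $\F_{\min(p,q,r)+1}\g$, but not in general inside $\F_{2\min(p,q,r)}\g$ (take $m=2$ and $p=q=r=2$). Your count absorbs these terms explicitly (they are your delicate case $V_0=0$, $c=1$, $b=0$, $a\geqslant1$), so your argument is airtight where the paper's is merely convincing.
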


\begin{proof}
	This follows immediately from the fact that all of terms which appear between the parentheses on the right-hand side of \cref{eq:BCH first terms} contain at least one bracket of two elements among $x_{01}, x_{02}, y$~, and $dy$~. Thus, we have
	\[
	\Gamma^2_1(x;y) - x_{01} - x_{02} + dy \in \F_{2\min(p,q,r)}\g\subseteq \F_{\min(p,q,r)+1} \g
	\]
	since $p,q,r\geqslant 1$~. 
\end{proof}

The following examples of computation will be needed in \cref{subsec:Berglund}.

\begin{lemma}\label{lem:ArbresDansBCH}
	Let $\g$ be a complete $\sLi$-algebra, let $x:\Ho{n}{k}\to\R(\g)$ be a horn whose vertices are trivial, that is $x_i=0$ for $0\leqslant i\leqslant n$~, and let $y\in\g_n$ be any degree $n$ element.
	\begin{enumerate}
		\item The formula for $\Gamma^n_k(x;y)$ given in \cref{Eq:HighBCH} can only contain trees with vertices of arity up to $n$~.
		\item In the case $n=2$~, the term $\Gamma^2_k(x;y) - \Gamma^2_k(x)$ is equal to a series of composites of binary operations $\ell_2$ with inputs in $dy$ and in elements labeling the edges of $x$~. Moreover, in each of these composite of operations the element $dy$ appears at least once.
		\item In the case $n>2$~, the term $\Gamma^n_k(x;y) - \Gamma^n_k(x)$ is equal to a series of composites of binary operations $\ell_2$ with exactly one input $dy$ and the rest among the elements labeling the edges of $x$~. 
	\end{enumerate}
	For all $n\geqslant 2$~, if the elements of $x$ are all closed, then $\Gamma^n_k(x;y) - \Gamma^n_k(x)$ is exact.		
\end{lemma}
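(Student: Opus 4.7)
The plan is to exploit the explicit tree-wise formula \cref{Eq:HighBCH} for $\Gamma^n_k(x;y)$. Since all vertex labels $x_i$ vanish, any term whose leaf carries a singleton label contributes zero; hence I may restrict to trees whose leaves are labeled by subsets of $[n]$ of size $\geq 2$.

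For part (1), the arity bound stems from the transferred structure constants $\lambda^{\beta(\chi)}_{[n]}$ in $\Omega_n$. A single-vertex corolla of arity $m$ with all inputs of cohomological form-degree $\geq 1$ produces a product $\omega_{I_1}\cdots\omega_{I_m}\in\Omega_n$ of form-degree $\geq m$; since the top form-degree of $\Omega_n$ is $n$, non-vanishing forces $m\leq n$. For a general block, each internal edge applies $h_n$, which lowers the form-degree by one; bookkeeping through the block still bounds each individual vertex by arity $\leq n$.

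For parts (2) and (3), I consider the difference $\Gamma^n_k(x;y) - \Gamma^n_k(x)$ and expand each arity-$0$ vertex's contribution $z = (-1)^k dy - \sum_{l\neq k}(-1)^{k+l}\, x_{\widehat{l}}$ into its summands, keeping only terms with nontrivial $y$-dependence. Let $N$ be the number of operations (arity $\geq 2$), $L$ the number of leaves, $Z$ the number of arity-$0$ vertices, and $L_d$ the number of leaves of degree $d$ (so $L_n$ counts leaves labeled $[n]$, carrying the factor $y$). The tree identity $\sum m_i = L+Z+N-1$ together with the degree equation $\sum_d d\cdot L_d + Z(n-1) - N = n-1$ and the bound $m_i\geq 2$ yields the equality
\begin{equation*}
\sum_{d=2}^{n}(d-1)\, L_d + Z(n-2) + \sum_{i}(m_i - 2) = n-2,
\end{equation*}
whose every summand is non-negative. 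For $n > 2$, this forces $L_n = 0$, so $y$-dependence must arise from a $dy$-contribution at an arity-$0$ vertex; the equality then forces $Z=1$, $L_d=0$ for $2\leq d\leq n-1$, and every $m_i = 2$. For $n=2$, the only term on the left from leaves is $L_2$ (leaves labeled $[2]$, carrying $y$), so the equality forces $L_2 = 0$ and every $m_i = 2$, while $Z$ and the number of $dy$-contributions stay free with at least one $dy$-contribution needed for $y$-dependence (here the sets $\widehat{l}$ have size $2$ and coincide with edges). These yield claims (2) and (3).

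For the exactness claim, if all $x_I$ are closed then every summand $T$ in the difference is an iterated $\ell_2$-composition of closed edges and copies of $dy$. Replacing one chosen occurrence of $dy$ in $T$ by $y$ produces an element $T_y$; the Leibniz rule combined with $d(dy)=0$ and closedness of the edges gives $dT_y = T$, so $T$ is exact. The main technical obstacle is part (1): cleanly tracking the form-degree bound through the interplay of products and homotopies $h_n$ within a block requires delicate combinatorial analysis.
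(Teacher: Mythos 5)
Your treatment of parts (2), (3) and the exactness claim is correct and is essentially the paper's argument: the paper also proves everything ``by degree reasons,'' using that every leaf input has degree $\geqslant 1$ (since the vertices vanish), that the arity-$0$ slots carry $z$ of degree $n-1$, and that the output must have degree $n-1$; your identity
$\sum_{d\geqslant 2}(d-1)L_d + Z(n-2) + \sum_i(m_i-2) = n-2$
is a clean closed-form packaging of exactly that count, and your conclusions ($L_n=0$, all $m_i=2$, $Z=1$ for $n>2$, at least one $dy$ needed in the difference, and $dT_y=\pm T$ for exactness) match the paper's. Where you genuinely diverge is part (1): you argue on the Dupont side, via form-degrees in $\Omega_n$ and the vanishing of the transferred structure constants $\lambda^{\beta(\chi)}_{[n]}$, and you flag the bookkeeping of products and $h_n$ through a block as the ``main technical obstacle.'' That route can be made to work (inside a block every input, whether a leaf $\omega_I$ with $|I|\geqslant 2$ or an internal edge labeled $\omega_{\widehat{k}}$, has form-degree $\geqslant 1$, and $h_n$ only drops form-degree by one, so each vertex's product has form-degree at least its arity), but it is an unnecessary detour: the paper proves (1) by the same degree count in $\g$ --- a vertex of arity $\geqslant n+1$ with all inputs of degree $\geqslant 1$ forces the output of the whole tree to have degree $\geqslant n$. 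Better still, your own identity already gives $m_i - 2 \leqslant n-2$, i.e.\ $m_i\leqslant n$, for every vertex of every surviving term, so the obstacle you single out does not exist: part (1) is an immediate corollary of the bookkeeping you set up for parts (2) and (3).
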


\begin{proof}
	All of the statements hold by degree reasons by \cref{prop:ExplicitBCHForm} as follows. 
	We know that $\Gamma^n_k(x;y)$ must yield an element of degree $n-1$~, that every 
	operations $\ell_m$ labeling the vertices have degree $-1$ and arity greater or equal to 2, and that the inputs at the leaves have degree greater or equal to $1$ since $x_i=0$~, for $0\leqslant i\leqslant n$~.
	\begin{enumerate}
		\item The degree of an element obtained from a tree is at least equal to the sum of the arities of all of its vertices minus the number of its vertices. Thus, any tree involving a vertex of arity at least $n+1$ will yield an element of degree at least $n$~, so such trees can not appear in the final result.
		\item For $n=2$~, all trees must be binary by the previous point. Furthermore, all of the inputs of the trees must have degree $1$ since otherwise the degree of the resulting element would be at least $2$~, and thus, the element $y$ can not appear. Finally, it is clear from \cref{Eq:HighBCH} that every term of $\Gamma^2_k(x;y)$ containing only elements labeling the edges of the horn will also appear in $\Gamma^2_k(x)$ and thus not appear in $\Gamma^2_k(x;y) - \Gamma^2_k(x)$~.
		\item For $n>2$~, once again any appearance of $y\in\g_n$ would yield an element of degree at least $n$~, so that $y$ can not be present in the result. As before, any term not involving $dy$ will cancel, so that it must appear at least once in every term. But since $|dy| = n-1 \geqslant 2$~, if $dy$ appears more than once then the result would have degree higher than $n-1$~, which is not possible. It follows that $dy$ needs to appear exactly once in each term. Thus, all of the other terms labeling the leaves and coming from $x$ must have degree $1$~. The degree of the composite of operations applied to $dy$ and these terms will be equal to
		\[
		n - 1 + \#\text{leaves} -1- \#\text{vertices}=n-1\ ,
		\]
		so that the tree must have one more leaf than vertices, which forces it to be binary.
	\end{enumerate}
	To see that $\Gamma^n_k(x;y) - \Gamma^n_k(x)$ is closed, it is enough to notice that since all trees involved are binary the differential $d$ distributes over them, and the appearances of $dy$ in each term makes them exact.		
\end{proof}

\begin{lemma}\label{subsect:computation for Berglund}
	Let $\g$ be a complete $\sLi$-algebra and let $x\in Z_1(\g)$ be closed. For any  $y\in\g_2$~, the Baker--Campbell--Hausdorff formula $\Gamma^2_1(0,x;y)$ obtained by filling the horn
	\[
	\begin{tikzpicture}
	
	\coordinate (v0) at (210:1.5);
	\coordinate (v1) at (90:1.5);
	\coordinate (v2) at (-30:1.5);
	
	\draw[line width=1, dashed] (v0)--(v1)--(v2)--cycle;
	\draw[line width=1] (v0)--(v1)--(v2);
	
	\begin{scope}[decoration={
		markings,
		mark=at position 0.55 with {\arrow{>}}},
	line width=1
	]
	
	\path[postaction={decorate}] (v0) -- (v1);
	\path[postaction={decorate}] (v1) -- (v2);
	\path[postaction={decorate}] (v0) -- (v2);
	
	\end{scope}
	
	
	\node at ($(v0) + (30:-0.3)$) {$0$};
	\node at ($(v1) + (0,0.3)$) {$0$};
	\node at ($(v2) + (-30:0.3)$) {$0$};
	
	\node at ($(v0)!0.5!(v1) + (-30:-0.4)$) {$0$};
	\node at ($(v1)!0.5!(v2) + (30:0.4)$) {$x$};
	\node at ($(v0)!0.5!(v2) + (0,-0.4)$) {$\Gamma^2_1(0,x;y)$};
	
	\node at (0,0) {$y$};
	
	\end{tikzpicture}
	\]
	satisfies 
	$\Gamma^2_1(0,x;y)-x \in B_1(\g)$~, that is there exists $z\in \g_2$ such that  $\Gamma^2_1(0,x;y)-x=\d z$~. 
	Conversely, if we are given $z\in\g_2$~, there exists an element $y\in\g_2$  such that
	$
	\Gamma^2_1(0,x;y)-x  =  \d z$~. 
\end{lemma}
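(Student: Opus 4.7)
The forward implication reduces to the sharper identity $\Gamma^2_1(0,x;0)=x$, since the last sentence of \cref{lem:ArbresDansBCH} (applicable because both edge labels of our horn are closed) then yields $\Gamma^2_1(0,x;y)-\Gamma^2_1(0,x;0)\in B_1(\g)$. To prove $\Gamma^2_1(0,x;0)=x$, one uses \cref{eq:BCH first terms} together with \cref{prop:ExplicitBCHForm}: the output equals $x$ plus a series of iterated $\ell_m$-brackets decorated by $0$'s and $x$'s, and all terms involving a $0$-labeled leaf collapse. Writing $l, v_0, v_{\geq 2}$ for the numbers of non-trivial leaves, arity-$0$ vertices, and arity-$\geq 2$ internal vertices in such a tree, the requirement that the output lie in $\g_1$ forces $l+v_0-v_{\geq 2}=1$; combined with the rooted tree identity $\sum_i a_i=l+v_0+v_{\geq 2}-1$, this gives $\sum_i a_i=2v_{\geq 2}$, so every internal vertex is binary. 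The topmost binary vertex then evaluates to $\ell_2(x,x)$, which vanishes by the shifted symmetry of $\ell_2$ together with $|x|=1$ in characteristic zero; propagating upward, the whole tree is zero.

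For the converse, given $z\in\g_2$ we construct $y$ by a fixed-point argument. Combining the forward half with \cref{lem:ArbresDansBCH}(2), we have
\[
\Gamma^2_1(0,x;y)-x=-\d y+Q(x,\d y),
\]
where $Q(x,-)$ is a series of binary $\ell_2$-composites containing at least one copy of its second argument and otherwise only the closed element $x$. Setting $y=-z+w$ for an unknown correction $w\in\g_2$, the identity $\Gamma^2_1(0,x;y)-x=\d z$ becomes a fixed-point equation $v=F(\d z,v)$ for $v\coloneqq\d w$, with $F$ built from $Q$. Since each composite in $Q$ involves at least one $\ell_2$, it raises the filtration by at least one degree, so $F$ is contracting on the complete filtered space, and \cref{prop:FixPtEqua} yields a unique solution $v^\ast$ as the convergent limit of the iterates $v_0=0$, $v_{n+1}=F(\d z,v_n)$. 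Each iterate lies in $B_1(\g)$: inductively, by Leibniz and $\d x=0$, any binary $\ell_2$-tree evaluated on an exact input equals $\d$ of the same tree with one such input replaced by a primitive. Hence $v^\ast$ is exact, admits a lift $w\in\g_2$, and setting $y=-z+w$ yields $\Gamma^2_1(0,x;y)-x=\d z$.

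The principal technical point is the identification $\Gamma^2_1(0,x;0)=x$ in the forward half: \cref{lem:ArbresDansBCH} already gives that the difference is a boundary, but pinning down the precise leading term requires the above tree-combinatorial analysis (a degree count forcing binarity, then shifted symmetry of $\ell_2$); the converse direction is then largely formal once one tracks exactness carefully through the iterates.
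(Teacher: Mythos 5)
Your forward direction follows the paper's route: reduce to the identity $\Gamma^2_1(0,x;0)=x$ and then invoke the last sentence of \cref{lem:ArbresDansBCH}, which applies since both edge labels are closed. The paper merely asserts $\Gamma^2_1(0,x)=x$; your justification (the degree count forcing all contributing trees to be binary, followed by $\ell_2(x,x)=0$ for $|x|=1$ in characteristic zero) is correct, though it also follows in one line from the uniqueness of horn fillers in \cref{thm:KanExt}, because the degenerate $2$-simplex $s_0$ of the $1$-simplex labeled by $x$ fills the given horn with top cell $0$ and third edge $x$.

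The converse contains a genuine gap at the very last step. You solve the fixed-point equation for $v=\d w$, obtain a solution $v^\ast$ as a limit of iterates, and conclude that $v^\ast$ is exact ``because each iterate is exact.'' But $B_1(\g)$ is the image of a filtered map between complete filtered spaces and need not be closed in the filtration topology, so exactness does not automatically pass to the limit; to close the argument you would have to check that the primitives $w_n$ produced by your Leibniz trick themselves form a convergent sequence (they do, since the successive corrections land in strictly increasing filtration degrees, but this has to be said and is where the actual content lies). The paper's proof sidesteps the lifting problem entirely: since every surviving term is a binary tree whose inputs other than the $\d y$'s are the closed element $x$, one pulls the differential out \emph{before} solving anything, writing $\Gamma^2_1(0,x;y)-x=\d\bigl(-y+\sum_{m\geqslant1}\PP_m\bigl(y\otimes \d y^{\otimes(m-1)}\bigr)\bigr)$, and then applies \cref{prop:FixPtEqua} to the fixed-point equation $y=-z+\sum_{m\geqslant1}\PP_m\bigl(y\otimes\d y^{\otimes(m-1)}\bigr)$ for the primitive $y$ itself, so that no limit of boundaries ever needs to be identified as a boundary. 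Your argument is repairable along either line, but as written the exactness of $v^\ast$ is not justified.
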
 

\begin{proof}
	The first point follows from Point~(2) of \cref{lem:ArbresDansBCH}  and the fact that $\Gamma^2_1(0, x) = x$~.
	
	\medskip
	
	Regarding the second point, we have to solve the equation $\Gamma^2_1(0,x;y)-x = dz$ where $y$ is unknown. The left-hand side of this equation is given by
	\[
	\Gamma^2_1(0,x;y)-x = -dy  + \sum_{m\geqslant 1}\PP_m\left(dy^{\otimes m}\right),
	\]
	where $\PP_m$ is the operator obtained by considering the sum of all terms of $\Gamma^2_1(0,x;y)-x$~, except $-dy$ for $m=1$~, with $dy$ appearing exactly $m$ times, and taking the corresponding leaves as inputs. Notice that these are all binary trees and all of the other leaves are decorated by $x$~. Since $x$ is closed and since $d$ distributes over binary trees, we have
	\[
	\Gamma^2_1(0,x;y)-x = d\left(-y  + \sum_{m\geqslant 1}\PP_m\left(y\otimes dy^{\otimes (m-1)}\right)\right).
	\]
	Therefore, it is enough to solve the formal fixed-point equation
	\begin{equation}\label{eq:fixed pt computation}
	y = -z  + \sum_{m\geqslant 1}\PP_m\left(y\otimes dy^{\otimes (m-1)}\right).
	\end{equation}
	In order to apply \cref{prop:FixPtEqua}, we need to show that each of the operation $\PP_m$ on the right-hand side raises the filtration degree by $1$~. But any binary tree that does not have any $x$ decorating their leaves will have a vertex giving $\ell_2(dy, dy) = 0$ (by symmetry since $dy$ has degree $1$) in $\Gamma^2_1(0,x;y)-x$~, and thus, it can be dropped from the formulas, so that the statement follows. Thus, an application of \cref{prop:FixPtEqua} yields a solution $y$ to \cref{eq:fixed pt computation} and completes the proof.
\end{proof}
\section{Homotopy theory}\label{sect:homotopy theory}

The purpose of this section is to establish the main homotopical properties for the adjoint pair of functors $\Li$ and $\R$ governing higher Lie theory. First, using the properties of the higher BCH formulas, we are able to give a short proof of a functorial version of Berglund's Hurewicz type theorem, which  computes the homotopy groups of the algebraic $\infty$-groupoid $\R(\g)$~. Then, we prove the homotopy invariance property of the integration functor $\R$ with respect to filtered $\infty_\pi$-morphisms and, on a similar vein, we study when deformations problems embed into one another. Finally, we settle a new model category structure on complete $\sLi$-algebras for which $\Li$ and $\R$ form a Quillen adjunction.

\subsection{Berglund's Hurewicz type theorem}\label{subsec:Berglund}
The algebraic $\infty$-groupoid $\R(\g)$ of a complete $\sLi$-algebras share a particularly rare feature: their homotopy groups are computable as homology groups associated to $\g$~. In this section, we give another, simpler proof of this result, originally due to Berglund \cite{ber15} on the level of Getzler spaces, by using the particular properties of the integration functor $\R$ established in this article. In fact, we establish a version of this result that is functorial with respect to $\infty_\pi$-morphisms.

\medskip

On the level of $\R(\g)$~, the 
relation between the homotopy groups and the homology groups is given
by the Berglund map
\begin{align*}
\mathcal{B}_n\ :\ {H}_n(\g^\alpha){}&\ \stackrel{\cong}{\longrightarrow}\ \pi_n\left(\R(\g), \alpha\right)\\
[u]{}&\ \longmapsto\ \left[u\otimes \omega_{[n]}+\sum_{i=0}^n \alpha\otimes \omega_i\right] ,
\end{align*}
where $\g^\alpha$ denotes the complete $\sLi$-algebra twisted by the Maurer--Cartan element $\alpha$~, see \cref{prop:TwiProc}, and 
where we implicitly use the canonical isomorphism of \cref{thm:isomorphism of models} to identify $\R(\g)$ with $\MC(\g\widehat{\otimes} \mathrm{C}_\bullet)$~.

\begin{theorem}[{\cite[Theorem~1.1]{ber15}}]\label{thm:Berglund}
	For any complete $\sLi$-algebra $\g$ and any Maurer--Cartan element $\alpha \in \MC(\g)$~, the Berglund map is well-defined and gives a group isomorphism
	\[
	\pi_n(\R(\g), \alpha)\cong {H}_n(\g^\alpha)\ , \quad \text{for}\  \ n\geqslant 1\ ,
	\]
	where the group structure on the right-hand side is given by the Baker--Campbell--Hausdorff formula  for $n=1$  and by the sum  for $n\geqslant 2$~. Moreover, it is  natural with respect to $\infty_\pi$-morphisms: for any $\infty_\pi$-morphism $f\colon \g \rightsquigarrow\h$~, the following diagram commutes
	\[
	\vcenter{\hbox{
			\begin{tikzpicture}
			\node (a) at (0,0){$H_n(\g^\alpha)$};
			\node (b) at (5,0){$\pi_n(\R(\g),\alpha)$};
			\node (c) at (0,-2){$H_n\left(\h^{\upsilon^*f(\alpha)}\right)$};
			\node (d) at (5,-2){$\pi_n(\R(\h), \upsilon^*f(\alpha))$};
			
			\draw[->] (a) to node[above]{$\scriptstyle{\mathcal{B}_n(\g)}$} (b);
			\draw[->] (a) to node[left]{$\scriptstyle{H_n\big((\upsilon^*f)^\alpha_1\big)}$} (c);
			\draw[->] (b) to node[right]{$\scriptstyle{\pi_n(\R(f)^\alpha)}$} (d);
			\draw[->] (c) to node[above]{$\scriptstyle{\mathcal{B}_n(\h)}$} (d);
			\end{tikzpicture}
	}}
	\]
	where $\upsilon^*f$ is the $\infty$-morphism associated to $f$~. 
\end{theorem}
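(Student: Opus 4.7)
The first move is to reduce to the case $\alpha = 0$. By \cref{thm:isom translation by alpha} and \cref{cor:change base point}, translation by $\alpha$ induces natural isomorphisms $\pi_n(\R(\g^\alpha), 0) \cong \pi_n(\R(\g), \alpha)$, with compatibility for $\infty_\pi$-morphisms provided by \cref{prop:FuncTwInfPi}. At the level of homology, $H_n(\g^\alpha)$ is by definition the homology of the twisted chain complex and the composite morphism $(\upsilon^*f)^\alpha$ is provided by the functorial twisting of $\infty$-morphisms from \cref{prop:TwiFuncInfMor}. Under these identifications, the Berglund map reads $[u] \mapsto [u \otimes \omega_{[n]}]$, so it suffices to establish a natural group isomorphism $\mathcal{B}_n\colon H_n(\g) \xrightarrow{\cong} \pi_n(\R(\g), 0)$.

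Next, I characterize the spheroids in $\R(\g)$. Via the identification $\R(\g)_n \cong \MC(\g\, \widehat{\otimes}\, \rmC_n)$ of \cref{thm:isomorphism of models}, an $n$-simplex whose proper faces all equal $0$ corresponds to a Maurer--Cartan element supported on the top basis element, that is of the form $u \otimes \omega_{[n]}$ with $u \in \g_n$. The crucial point is that $\omega_{[n]}$ is a top-degree form in $\Omega_n$, so its iterated products vanish; by the explicit description of the transferred $\Omega\Bar\com$-algebra structure on $\rmC_n$ recalled in the proof of \cref{prop:mc0}, all higher brackets $\ell_m\big((u\otimes\omega_{[n]})^{\otimes m}\big)$ with $m\geqslant 2$ vanish. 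The Maurer--Cartan equation therefore reduces to $du = 0$, so representatives of $\pi_n(\R(\g), 0)$ with trivial boundary are in canonical bijection with $Z_n(\g)$ via $u \leftrightarrow u \otimes \omega_{[n]}$.

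The third step matches the simplicial homotopy relation with the homological one. Two cycles $u, u'\in Z_n(\g)$ give simplicially homotopic spheroids precisely when there exists an $(n+1)$-simplex $y\in\R(\g)_{n+1}$ with $d_0 y = u\otimes\omega_{[n]}$, $d_1 y = u'\otimes\omega_{[n]}$, and $d_i y = 0$ for $i\geqslant 2$. Exploiting the higher BCH machinery of \cref{sec:HighBCH} and, in particular, the structural estimates of \cref{lem:ArbresDansBCH}, such a simplex is parametrized by its top cell $v = y_{[n+1]}\in\g_{n+1}$ subject to a fixed-point equation whose linearization reads $dv = u - u'$. The full equation is solvable by \cref{prop:FixPtEqua}, in a fashion that directly generalizes \cref{subsect:computation for Berglund}; conversely, any such $y$ witnesses $dv = u - u'$. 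This yields the sought bijection $\mathcal{B}_n\colon H_n(\g) \xrightarrow{\cong} \pi_n(\R(\g), 0)$.

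Finally, the group structure on $\pi_n$ is implemented by simplicial concatenation, and at the algebraic level it is computed by the canonical horn filler $\Gamma^{n+1}_k$. For $n=1$ this filler is the classical Baker--Campbell--Hausdorff formula by \cref{prop:BCH}. For $n\geqslant 2$, \cref{lem:ArbresDansBCH} restricts the contributing trees to binary ones with at most a single occurrence of the free filler; the remaining terms collapse modulo exact elements to the ordinary sum $u+u'$. Naturality in an $\infty_\pi$-morphism $f\colon\g\rightsquigarrow\h$ is read off the explicit formula \eqref{Eq:FuncR}: the evaluation $\R(f)(u\otimes\omega_{[n]})$ involves iterated tensors $\omega_{[n]}^{\otimes m}$ which vanish for $m\geqslant 2$ by the top-form argument, so only the linear component $f_1(u)\otimes\omega_{[n]}$ survives, matching $(\upsilon^* f)_1$ on homology. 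The principal obstacle lies in the third step, where the combinatorics of the $(n+1)$-simplex and its associated fixed-point equation --- a higher-dimensional analogue of \cref{subsect:computation for Berglund} --- must be carefully controlled; this is where the full strength of the higher BCH formalism of \cref{sec:HighBCH} is used.
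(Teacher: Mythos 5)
Your proposal is correct and follows the same architecture as the paper's proof: reduction to the base point $0$ via the twisting isomorphisms, identification of spheroids with cycles through the vanishing of products of the top form $\omega_{[n]}$, the $n=1$ versus $n\geqslant 2$ dichotomy for both the homotopy relation and the group structure, and the same side-condition argument ($i_n(\omega_{[n]})^2=0$, $h_ni_n(\omega_{[n]})=0$) for naturality. The one place where you genuinely diverge is the treatment of the homotopy relation: the paper constructs an explicit homotopy $\De{n}\times\De{1}\to\R(\g)$ via the prismatic decomposition for $n\geqslant 2$ (where the Maurer--Cartan equation on the relevant elements is exactly linear), whereas you encode homotopy rel boundary by a single $(n+1)$-simplex with two nontrivial faces and a free top cell, uniformly in $n$. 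Both work; your version makes the $n=1$ and $n\geqslant 2$ cases more parallel at the cost of invoking the fixed-point machinery where the paper gets away with an explicit formula. One imprecision to fix: for $n=1$ the sentence ``any such $y$ witnesses $dv=u-u'$'' is literally false, since the relation between $u$, $u'$ and the top cell carries higher bracket corrections; what \cref{lem:ArbresDansBCH} and \cref{subsect:computation for Berglund} actually give is that $u'-u$ is exact (with a primitive that is $v$ plus correction terms), which is all you need. Similarly, in the group-structure step for $n\geqslant 2$ the nonlinear terms vanish identically by degree reasons on the forms, not merely modulo exact elements.
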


\begin{proof}
	Let us first treat the case $\alpha=0$~. 
	We  notice that simplicial maps $\sigma : \De{n} \to \R(\g)$ satisfying 
	the following commutative diagram
	\[
	\vcenter{\hbox{
			\begin{tikzpicture}
			\node (a) at (0,0){$\partial\De{n}$};
			\node (b) at (2,0){$\De{n}$};
			\node (c) at (0,-1.5){$*$};
			\node (d) at (2,-1.5){$\R(\g)$};
			
			\draw[right hook->] (a) to (b);
			\draw[->] (a) to (c);
			\draw[->] (b) to node[right]{$\scriptstyle\sigma$} (d);
			\draw[->] (c) to node[above]{$\scriptstyle0$} (d);
			\end{tikzpicture}
	}}
	\]
	are in natural bijection with the $n$-cycles ${Z}_n(\g)$ of $\g$~.
	\cref{thm:isomorphism of models} shows that such  maps $\sigma$ are in one-to-one correspondence with  Maurer--Cartan elements  $u\otimes \omega_{[n]}\in \MC \big(\g\widehat{\otimes} \mathrm{C}_n\big)$~, where $u\in \g_n$~. Since the top form $\omega_{[n]}$ squares to $0$ in the commutative algebra $\Omega_n$~, for any $n\geqslant 1$~, the Maurer--Cartan equation for $u\otimes \omega_{[n]}$ is just $du\otimes \omega_{[n]} = d(u\otimes \omega_{[n]})=0$~, see the end of the proof of \cref{lemm:Fundamental} for more details.
	
	\medskip
	
	In order to conclude that the Berglund map $\mathcal{B}_n$ is well-defined and bijective, it is enough to show that to two $n$-simplices $\sigma, \tau : \De{n} \to \R(\g)$ as above are homotopic relative to the boundary $\partial \De{n}$ if and only if their associated cycles $u, v \in {Z}_n(\g)$ are homologous, i.e.\ $v-u=dw$~, for a $w\in \g_{n+1}$~.
	
	\medskip
	
	The cases $n=1$ and $n\geqslant2$ are based on the same idea, but must be proven in a slightly different way. We begin with the case $n=1$~, where we will rely on the results of \cref{subsect:computation for Berglund}. Suppose that $\sigma$ and $\tau$ are homotopic relative to $\partial\Delta^1$~. Graphically, this means
	\[
	\begin{tikzpicture}
	\begin{scope}[decoration={
		markings,
		mark=at position 0.55 with {\arrow{>}}},
	line width=1
	] 
	\coordinate (a) at (0,0);
	\coordinate (b) at (3,0);
	\coordinate (c) at (0,-3);
	\coordinate (d) at (3,-3);
	
	\draw[postaction={decorate}] (a) to node[above=0.075]{$u$} (b);
	\draw[postaction={decorate}] (b) to node[right=0.075]{$0$} (d);
	\draw[postaction={decorate}] (a) to node[left=0.075]{$0$} (c);
	\draw[postaction={decorate}] (c) to  node[below=0.075]{$v$} (d);
	\draw[postaction={decorate}] (a) to node[above right]{$z$} (d);
	
	\node at (.8,-2.2){$w_2$};
	\node at (2.2,-.8){$w_1$};
	\end{scope}
	\end{tikzpicture}
	\]
	The first part of \cref{subsect:computation for Berglund} and the symmetry property of \cref{prop:BCH symmetry} show that we have
	$\Gamma^2_1(u,0;w_1)-u=dw_1$~, and thus
	\[
	[u] = \left[\Gamma^2_1(u,0;w_1)\right] = [z] = \left[\Gamma^2_1(0,v;w_2)\right] = [v]\ .
	\]
	Conversely, suppose that $v = u + d z$~. By the second part of \cref{subsect:computation for Berglund}, there exists a $w\in\g_2$ such that
	\[
	\Gamma^2_1(0,u;w)= v\ .
	\]
	Thus, we get a homotopy by 
	\[
	\begin{tikzpicture}
	\begin{scope}[decoration={
		markings,
		mark=at position 0.55 with {\arrow{>}}},
	line width=1
	] 
	\coordinate (a) at (0,0);
	\coordinate (b) at (3,0);
	\coordinate (c) at (0,-3);
	\coordinate (d) at (3,-3);
	
	\draw[postaction={decorate}] (a) to node[above=0.075]{$u$} (b);
	\draw[postaction={decorate}] (b) to node[right=0.075]{$0$} (d);
	\draw[postaction={decorate}] (a) to node[left=0.075]{$0$} (c);
	\draw[postaction={decorate}] (c) to  node[below=0.075]{$v$} (d);
	\draw[postaction={decorate}] (a) to node[above right]{$v$} (d);
	
	\node at (.8,-2.2){$0$};
	\node at (2.2,-.8){$w$};
	\end{scope}
	\end{tikzpicture}
	\]
	For the case $n\geqslant2$~, notice first that an element
	\[
	z\otimes \omega_{[n+1]}+u\otimes \omega_{\hat{k}}+v\otimes \omega_{\hat{l}}\in\g\otimes \mathrm{C}_{n+1}\ ,
	\]
	for $k\neq l$~, is a Maurer--Cartan element if and only if 
	\[
	du =dv=0\qquad\text{and}\qquad (-1)^n dz+(-1)^{k}u+(-1)^{l}v=0\ .
	\]
	This comes from the fact that any product in $\Omega_{n+1}$ of two forms among $\{\omega_{[n+1]}, \omega_{\hat{k}}, \omega_{\hat{l}}\}$ vanishes; so the Maurer--Cartan equation reduces to 
	\[
	d \left(z\otimes \omega_{[n+1]}+u\otimes \omega_{\hat{k}}+v\otimes \omega_{\hat{l}} \right)=0\ .
	\]
	From right to left, suppose that there exists $z\in \g_{n+1}$ such that $v-u=(-1)^n dz$~. We construct a homotopy $\mathrm{H} : \De{n}\times \De{1} \to \R(\g)$ from $\sigma$ to $\tau$ as follows. The prismatic decomposition  of $\De{n}\times \De{1}$ is a union a $n+1$ copies of $\De{n+1}$ given respectively by 
	\begin{align*}
	\langle 0\rangle&\coloneqq \langle 0 0, 0 1, 1 1, 2 1, \ldots, n  1\rangle\\
	& \ \ \vdots\\
	\langle i\rangle&\coloneqq\langle 00, 10, \ldots,  (i-1) 0, i 0, i  1, (i+1) 1, 
	\ldots, n 1\rangle\\
	& \ \ \vdots \\
	\langle n\rangle&\coloneqq\langle 0 0, 0 1, \ldots n  0, n  1\rangle\ .
	\end{align*}
	The image of $\langle 0\rangle$ under $\mathrm{H}$ is equal to $z\otimes \omega_{[n+1]}+ u \otimes \omega_{\hat{0}} + v \otimes \omega_{\hat{1}}$ and the image of $\langle i\rangle$ under $H$ is equal to 
	$v \otimes \omega_{\hat{i}} + v \otimes \omega_{\hat{i+1}}$~, for any $1\leqslant i \leqslant n$~. The above remark shows that that this assignment defines a simplicial map, which is a homotopy from $\sigma$ to $\tau$ relative to the boundary $\partial \De{n}$~. 
	
	\medskip
	
	In the other way round, let $\mathrm{H} : \De{n}\times \De{1} \to \R(\g)$ be a homotopy relative to the boundary $\partial \De{n}$ from $\sigma$ to $\tau$~. Let us denote by $\theta_i\in \g\widehat{\otimes} \mathrm{C}_{n+1}$ the image of $\langle i \rangle$ under $\mathrm{H}$~. Since the boundary is sent to $0$~, we are left with 
	\begin{align*}
	\theta_0={}& z_0 \otimes \omega_{[n+1]}+ v \otimes \omega_{\hat{0}} + \left(u+(-1)^n dz_0\right)\otimes \omega_{\hat{1}}\\
	\theta_1={}&z_1 \otimes \omega_{[n+1]}+ (u+(-1)^n d z_0) \otimes \omega_{\hat{1}} + \left(u+(-1)^n(d z_0-dz_1)\right)\otimes \omega_{\hat{2}}\\
	& \ \ \vdots \\
	\theta_{n-1}={}&z_{n-1} \otimes \omega_{[n+1]}+  \left(u+(-1)^n(dz_0-dz_1+\cdots \pm dz_{n-2})\right) \otimes \omega_{\widehat{n-1}} + \\
	{}& +\left(u+(-1)^n(dz_0-dz_1+\cdots \pm dz_{n-2})\right)\otimes \omega_{\hat{n}}\\
	\theta_{n}={}&w_{n} \otimes \omega_{[n+1]}+ \left(u+(-1)^n(dz_0-dz_1+\cdots \pm dz_{n-2})\right) \otimes \omega_{\hat{n}} +\\
	{}&+ (\underbrace{u+(-1)^n(dz_0-dz_1+\cdots \pm dz_{n}}_{=v})\otimes \omega_{\widehat{n+1}}\ ,
	\end{align*}
	which proves that $u$ and $v$ are homologous.
	
	\medskip
	The general case of any Maurer--Cartan element $\alpha\in \MC(\g)$ is treated with the results of \cref{prop:TwiProc}. The argument mentioned above applies to the twisted complete $\sLi$-algebra $\g^\alpha$ and its Maurer--Cartan element $0$~. So we have a well-defined bijection
	\begin{align*}
	H_n(\g^\alpha){}&\ \stackrel{\cong}{\longrightarrow}\ \pi_n\left(\R(\g^\alpha), 0\right)\\
	[u]{}&\ \longmapsto\ \left[u\otimes \omega_{[n]}\right] 
	\end{align*}
	It is enough to compose it with the bijection $\pi_n(\R(\g^\alpha), 0)\cong\pi_n(\R(\g), \alpha)$ 
	of \cref{cor:change base point} to conclude that the Berglund map is always well-defined and bijective. 
	
	\medskip
	
	Regarding the compatibility with the group structures, we treat in detail only  the case $\alpha=0$~. 
	The general case is obtained by the same arguments as above since the bijection $\pi_n(\R(\g^\alpha), 0)\cong\pi_n(\R(\g), \alpha)$ comes from an isomorphism of simplicial sets $\R(\g^\alpha)\cong\R(\g)$~, see \cref{thm:isom translation by alpha},  and is thus a group morphism. 
	Let $u,v \in \g_n$ be two representatives of two homology classes of $H_n(\g)$ and let $\sigma,\tau : \De{n}\to \R(g)$ be two representatives of the  corresponding images under the Berglund map, i.e.\ $u\otimes \omega_{[n]}$ and $v\otimes \omega_{[n]}$~. A representative for the product of $[\sigma]$ and $[\tau]$ is given by the unique horn filler $\De{n+1} \to \R(\g)$ corresponding to the horn $\Ho{n}{n+1} \to \R(\g)$ given by
	\[u \otimes \omega_{\widehat{n-1}}+v \otimes \omega_{\widehat{n+1}}\ , 
	\]
	where the top cell is filled by $0$~. We claim that this horn filler is equal to 
	\[u \otimes \omega_{\widehat{n-1}}+
	(u+v) \otimes \omega_{\widehat{n}}+
	v \otimes \omega_{\widehat{n+1}}\ , 
	\]
	for $n\geqslant 2$~. Since the product of any pair coming from $\{\omega_{\widehat{n-1}}, \omega_{\widehat{n}}, \omega_{\widehat{n+1}}\}$ vanishes in  $\Omega_{n+1}$~, this latter satisfies the Maurer--Cartan equation
	in $\g\widehat{\otimes} \rmC_{n+1}$~, which is given by 
	\[d\left(u \otimes \omega_{\widehat{n-1}}+
	(u+v) \otimes \omega_{\widehat{n}}+
	v \otimes \omega_{\widehat{n+1}}\right)=0\ . 
	\]
	We conclude by the uniqueness of horn fillers, see \cref{thm:KanExt}. 
	For $n=1$~, we get 
	\[u \otimes \omega_{\widehat{0}}+
	\BCH(u,v) \otimes \omega_{\widehat{1}}+
	v \otimes \omega_{\widehat{2}}\ ,
	\]
	by \cref{prop:BCH}.
	This proves that the Berglund map is morphism of groups: abelian for $n\geqslant 2$ and with the BCH formula as product for $n=1$~.
	
	\medskip
	
	Finally, regarding the  naturality with respect to $\infty_\pi$-morphisms, we have to prove that the following diagram is commutative:
	\[
	\vcenter{\hbox{
			\begin{tikzpicture}
			\node (a) at (0,0){$H_n(\g^\alpha)$};
			\node (b) at (4.5,0){$\pi_n(\R(\g), \alpha)$};
			\node (c) at (9,0){$\pi_n\big(\R\big(\g^\alpha\big), 0\big)$};
			\node (d) at (0,-2){$H_n\big(\h^{\upsilon^*f(\alpha)}\big)$};
			\node (e) at (4.5,-2){$\pi_n(\R(\h), \upsilon^*f(\alpha))$};
			\node (f) at (9,-2){$\pi_n\big(\R\big(\h^{\upsilon^*f(\alpha)}\big), 0\big)$};
			
			\draw[->] (a) to node[above]{$\scriptstyle{\mathcal{B}_n(\g)}$} (b);
			\draw[->] (b) to node[above]{$\scriptstyle{\cong}$} (c);
			\draw[->] (d) to node[above]{$\scriptstyle{\mathcal{B}_n(\h)}$} (e);
			\draw[->] (e) to node[above]{$\scriptstyle{\cong}$} (f);
			\draw[->] (a) to node[left]{$\scriptstyle{H_n\big((\upsilon^*f)^\alpha_1\big)}$} (d);
			\draw[->] (c) to node[right]{$\scriptstyle{\pi_n(\R(f)^\alpha)}$} (f);
			\end{tikzpicture}
	}}
	\]
	On elements, this means
	\[
	\vcenter{\hbox{
			\begin{tikzpicture}
			\node (a) at (0,0){$[u]$};
			\node (b) at (4.5,0){$\left[u\otimes \omega_{[n]}+\sum_{i=0}^n \alpha\otimes \omega_i\right]$};
			\node (c) at (9,0){$\left[u\otimes \omega_{[n]}\right]$};
			\node (d) at (0,-2){$[v]$};
			\node (e) at (4.5,-2){$\left[v\otimes \omega_{[n]}+\sum_{i=0}^n \upsilon^*f(\alpha)\otimes \omega_i\right]$};
			\node (f) at (9,-2){$\left[v\otimes \omega_{[n]}\right]$};
			
			\draw[|->] (a) to (b);
			\draw[|->] (b) to (c);
			\draw[|->] (d) to (e);
			\draw[|->] (e) to (f);
			\draw[|->] (a) to (d);
			\draw[|->] (c) to  (f);
			\end{tikzpicture}
	}}
	\]
	where
	\[
	v=\sum_{m\geqslant 0}\sum_{\substack{\tau \in \PaRT_{m+1} \\|\tau|=0}}\tfrac{1}{m!}f_\tau(\alpha, \ldots, \alpha, u)~.
	\]
	On the right-hand side of the diagram, let us denote the image of $u\otimes \omega_{[n]}$ under $\R(f)^\alpha$ by $w\otimes \omega_{[n]}$~. 
	By unwinding the definitions of \cref{subsec:FuncinftyPiMorph,sec:TwisProcSimRep}, it is straightforward to see that 
	\[
	w=\sum_\tau \frac{1}{l!m!\lambda_{\tau,m,l}}f_\tau\big(\underbrace{\alpha,\ldots,\alpha}_{m\text{ times}}, \underbrace{u,\ldots,u}_{l\text{ times}}\big)\ ,
	\]
	where the sum runs over all combinations of $\tau\in\PaRT_{m+l}$ and integers $m, l$~,  such that
	\[
	\gamma_{C_n}\big(\tau\otimes\underbrace{\alpha\otimes\cdots\otimes\alpha}_{m\text{ times}}\otimes \underbrace{u\otimes\cdots\otimes u}_{l\text{ times}}\big) = \lambda_{\tau,m,l}\omega_{[n]}
	\]
	in the transferred $\Omega\Bar\com$-algebra structure on $C_n$~, with $\lambda_{\tau,m,l}\neq0$~, where $1=\omega_0+\cdots+\omega_n$~. Since
	\[
	i_n\big(\omega_{[n]}\big)i_n\big(\omega_{[n]}\big) = 0\qquad\text{and}\qquad h_ni_n\big(\omega_{[n]}\big) = 0\ ,
	\]
	it follows that only the partitioned rooted trees for which exactly one vertex is contained in each block can appear, they can appear only with $l=1$~, and their result will be $\omega_{[n]}$~, with coefficient $1$~. 
	This proves that $w=v$ and concludes the proof.
\end{proof}

\begin{remark}
	We actually expect the Berglund isomorphism to respect a richer structure than just the group structure. Both sides of the isomorphism have an $\sLi$-algebra structure, or at least something very close to it: the homology of $\g$ carries a proper $\sLi$-algebra structure obtained through the homotopy transfer theorem from the structure of $\g$ \cite[Section~10.3]{LodayVallette12},  while the homotopy groups of $\R(\g)$ can be endowed with the higher Whitehead--Samelson products, see \cite{Porter65}. Both structures depend on some choices and we expect that they can be made in a coherent fashion so that the Berglund isomorphism lifts at least to an $\infty_\pi$-isomorphism between them.
\end{remark}

\subsection{Homotopy invariance of the integration functor}
We now show that the functor $\R$ is homotopy invariant, in the sense that it sends $\infty_\pi$-quasi-isomorphisms, or equivalently zig-zags of quasi-isomorphisms of complete  $\sLi$-algebras, to weak equivalences of simplicial sets. This result is analogous to the main result of \cite{DolgushevRogers15}, which shows a similar homotopy invariance for the functor $\MC_\bullet$ with respect to $\infty$-morphisms. The strategy of the proof is the same as in \cite[Section~3]{DolgushevRogers15}. However, the tools developed previously in this paper, like the higher BCH products, allow us to significantly simplify the proof. 

\medskip

Recall that the first component $f_| \colon \g \to \h$ of an $\infty_\pi$-morphism, where $|$ stands for the trivial tree, is a chain map; the same holds true for the first component $f_1\colon \g \to \h$ of an $\infty$-morphism. 

\begin{definition}[Filtered $\infty_\pi$-quasi-isomorphism]
	A \emph{filtered $\infty_\pi$-quasi-isomorphism} $f:\g\rightsquigarrow\h$ is an $\infty_\pi$-quasi-isomorphism such that its first component $f_{|}$ induces quasi-isomorphisms
	\[
	f_{|}^{(n)}:\g/\F_n\g\stackrel{\sim}{\longrightarrow}\h/\F_n\h
	\]
	for each $n\geqslant 1$~. The analogous definition for $\infty$-quasi-isomorphisms obtained by replacing $f_{|}$ by $f_1$ yields the notion of a \emph{filtered $\infty$-quasi-isomorphism}.
\end{definition}

\begin{remark}
	Here, as in the rest of the present paper, all $\infty_\pi$-morphisms are assumed to be compatible with the filtrations. Being a \emph{filtered} $\infty_\pi$-quasi-isomorphism is an additional compatibility between the fact of being an $\infty_\pi$-\emph{quasi-isomorphism} and the filtrations.
\end{remark}

Since $\left(\upsilon^*f\right)_1=f_|$~,  an $\infty_\pi$-morphism $f$ is a filtered $\infty_\pi$-quasi-isomorphism if and only if $\upsilon^*f$ is a filtered $\infty$-quasi-isomorphism.

\begin{theorem}[Homotopy invariance]\label{thm:HoInvariance}
	Any filtered $\infty_\pi$-quasi-isomorphism $f : \g\stackrel{\sim}{\rightsquigarrow} \h$ of complete $\sLi$-algebras induces a weak equivalence of simplicial sets $\R(f):\R(\g)\stackrel{\sim}{\to} \R(\h)$~.
\end{theorem}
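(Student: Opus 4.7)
To show that $\R(f)$ is a weak equivalence of Kan complexes, the plan is to verify a bijection on $\pi_0$ and an isomorphism on $\pi_n$ for every $n \geqslant 1$ at every basepoint. Both invariants can be accessed by the tools developed in the previous sections: the set $\pi_0(\R(\g))$ is identified with $\mathcal{MC}(\g)$ naturally in $\infty_\pi$-morphisms by \cref{cor:pi0}, while the group $\pi_n(\R(\g), \alpha)$ for $n \geqslant 1$ is computed by Berglund's isomorphism $\pi_n(\R(\g), \alpha) \cong H_n(\g^\alpha)$ from \cref{thm:Berglund}, again natural in $\infty_\pi$-morphisms. Throughout, I set $g \coloneqq \upsilon^* f$ (\cref{lemma:oo-pi induce oo-iota}); the explicit formula there shows $g_1 = f_|$, so that $g$ is itself a filtered $\infty$-quasi-isomorphism.

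For the higher homotopy groups, given $\alpha \in \MC(\g)$, \cref{prop:TwiFuncInfMor} produces a twisted $\infty$-morphism $g^\alpha : \g^\alpha \rightsquigarrow \h^{g(\alpha)}$ whose first component reads
\[
g^\alpha_1 = f_| + \sum_{k \geqslant 1} {\textstyle \frac{1}{k!}}\, g_{k+1}\big(\alpha^k, -\big).
\]
Because $\alpha \in \F_1 \g$ and $g$ respects filtrations as in \cref{def:InfMor}, every summand with $k \geqslant 1$ strictly raises the filtration weight; the analogous statement holds for the corrections appearing in the twisted differential $d^\alpha$. Consequently, on the associated graded complex the chain map $g^\alpha_1$ equals $\mathrm{gr}(f_|)$, which is a quasi-isomorphism by hypothesis. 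Since the filtrations on $\g^\alpha$ and $\h^{g(\alpha)}$ are exhaustive and complete, the Eilenberg--Moore comparison theorem \cite[Theorem~5.5.11]{WeibelBook} promotes this to the conclusion that $g^\alpha_1$ is itself a quasi-isomorphism. The naturality square in \cref{thm:Berglund} then identifies $\pi_n(\R(f), \alpha)$ with $H_n(g^\alpha_1)$, giving the required isomorphism for all $n \geqslant 1$ and all basepoints.

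For the $\pi_0$ bijection, the naturality in \cref{cor:pi0} together with \cref{lemma:describe R(f)_0} identify $\pi_0(\R(f))$ with $\mathcal{MC}(g) : \mathcal{MC}(\g) \to \mathcal{MC}(\h)$, and it remains to prove this map is a bijection. This is a Goldman--Millson type statement for filtered $\infty$-quasi-isomorphisms; my plan is to prove it directly by induction along the filtrations $\F_n \g$ and $\F_n \h$. For surjectivity, a given $\beta \in \MC(\h)$ is lifted stepwise to a Maurer--Cartan element $\alpha_n \in \g / \F_n \g$, the obstruction to extending from level $n$ to level $n+1$ living in an associated graded homology group that vanishes because $\mathrm{gr}(f_|)$ is a quasi-isomorphism; for injectivity, a gauge between $g(\alpha_1)$ and $g(\alpha_2)$ in $\h$ is similarly lifted step by step to a gauge in $\g$, using \cref{prop:gaugeformula}. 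Completeness of the filtrations ensures that these successive approximations converge to genuine Maurer--Cartan elements and gauges. Alternatively, one can invoke the corresponding result of Dolgushev--Rogers \cite{DolgushevRogers15} for Hinich's space $\MC_\bullet$ through \cref{thm:isomorphism of models}, provided naturality is tracked carefully across the chain of equivalences.

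The main obstacle I anticipate lies precisely in this last step. The higher $\pi_n$ statement dissolves cleanly into a spectral-sequence comparison via Berglund plus Eilenberg--Moore, but the bijection on the moduli space of Maurer--Cartan elements requires delicate obstruction-theoretic vanishing and, crucially, the ability to combine successive gauges into a well-defined limiting gauge. This last point is where the completeness hypothesis must be used carefully, as in the classical Goldman--Millson argument, and is the most technically demanding part of the proof.
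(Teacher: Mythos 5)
Your proposal follows essentially the same route as the paper: the $\pi_n$ part ($n\geqslant 1$) is identical (twist by $\alpha$, compare associated gradeds, apply the Eilenberg--Moore comparison theorem, then invoke the naturality of \cref{thm:Berglund}), and the $\pi_0$ part is exactly the Goldman--Millson-style induction along the filtration that the paper carries out in \cref{lemma: pi0f injective} and \cref{lemma: pi0f surjective}, with \cref{lemma:sequence of gauges} supplying precisely the convergence of successive gauges that you correctly single out as the delicate point. The only detail worth noting is that the filtered hypothesis concerns the quotients $\g/\F_n\g$ rather than the graded pieces $\F_p\g/\F_{p+1}\g$, so a five-lemma step is needed before reading off that $E^0(g^\alpha_1)$ is a quasi-isomorphism, as the paper does.
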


\begin{proof}
	The proof is divided in two parts. Let us first deal with $\pi_n(\R(f))$~, for $n\geqslant 1$~. 
	In order to be able apply \cref{thm:Berglund}, we need to prove that for any 
	Maurer--Cartan element $\alpha\in\MC(\g)$~, the twisted morphism $(\upsilon^* f)^\alpha \colon \g^\alpha \rightsquigarrow \h^{\upsilon^*f(\alpha)}$ is an $\infty$-quasi-isomorphism. To prove this, we apply Eilenberg--Moore Comparison Theorem \cite[Theorem~5.5.11]{WeibelBook} to the chain map $(\upsilon^* f)_1^\alpha \colon \g^\alpha \to \h^{\upsilon^*f(\alpha)}$ with respect to the underling complete and exhaustive filtrations. At the $0$th page, we get
	\[
	E^0_{p,q}\big((\upsilon^* f)_1^\alpha\big)= \mathrm{gr}(\upsilon^* f)_1 \colon 
	\F_p \g_{p+q}/ \F_{p+1} \g_{p+q}\to \F_p \h_{p+q}/ \F_{p+1} \h_{p+q}~.
	\]
	A direct application of the 5-lemma shows that this is a quasi-isomorphism. So the spectral sequence converges and implies that $(\upsilon^* f)_1^\alpha$ is a quasi-isomorphism. Given this, \cref{thm:Berglund}  shows that 
	\[
	\pi_n(\R(f))\ \colon\ \pi_n(\R(\g),\alpha) \stackrel{\cong}{\longrightarrow} \pi_n(\R(\h),\upsilon^*f(\alpha))
	\] 
	is an isomorphism for any $\alpha\in \MC(\g)$ and any $n\geqslant 1$~. 
	
	\medskip
	
	The second part of the proof deal with the $0$th homotopy group. For better readability we split this further into two parts: injectivity in \cref{lemma: pi0f injective} and surjectivity in \cref{lemma: pi0f surjective}, see below.
\end{proof}

In order to prove these two lemmas, we will need the following result. 

\begin{proposition}\label{lemma:sequence of gauges}
	Let $\g$ be a complete $\sLi$-algebra and suppose we have two sequences $\{\alpha_k\}_{k\geqslant 1}$ in $\MC(\g)$ and $\{\lambda_k\}_{k\geqslant 2}$ in $\g_1$ such that:
	\begin{enumerate}
		\item $\alpha_k\in\F_k\g$~,
		\item $\lambda_{k+1}\in\F_k\g$~, and
		\item $\alpha_{k+1} = \lambda_{k+1}\cdot \alpha_k$~.
	\end{enumerate}
	We consider the sequence $\{\zeta_k\}_{k \geqslant 2}$ in $\g_1$ defined by $\zeta_2 \coloneqq \lambda_2$ and inductively by
	\[
	\begin{tikzpicture}
	
	\coordinate (v0) at (210:1.5);
	\coordinate (v1) at (90:1.5);
	\coordinate (v2) at (-30:1.5);
	
	\draw[line width=1, dashed] (v0)--(v1)--(v2)--cycle;
	\draw[line width=1] (v0)--(v1)--(v2);
	
	\begin{scope}[decoration={
		markings,
		mark=at position 0.55 with {\arrow{>}}},
	line width=1
	]
	
	\path[postaction={decorate}] (v0) -- (v1);
	\path[postaction={decorate}] (v1) -- (v2);
	\path[postaction={decorate}] (v0) -- (v2);
	
	\end{scope}
	
	
	\node at ($(v0) + (30:-0.3)$) {$\alpha_1$};
	\node at ($(v1) + (0,0.25)$) {$\alpha_{k+1}$};
	\node at ($(v2) + (-30:0.4)$) {$\alpha_{k+2}$};
	
	\node at ($(v0)!0.5!(v1) + (-30:-0.4)$) {$\zeta_{k+1}\ \ \ $};
	\node at ($(v1)!0.5!(v2) + (30:0.4)$) {$\ \ \ \lambda_{k+2}$};
	\node at ($(v0)!0.5!(v2) + (0,-0.4)$) {$\zeta_{k+2}$};
	
	\node at (0,0) {$0$};
	
	\end{tikzpicture}
	\]
	Then the sequence $\{\zeta_k\}_{k\geqslant 2}$ converges to an element $\zeta\in \g_1$ such that $\zeta \cdot \alpha_1 =0$~. 
\end{proposition}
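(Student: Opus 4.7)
The plan is to prove both claims by passing to the quotients $\g/\F_n\g$ and invoking the \emph{uniqueness} part of the horn-filler bijection \cref{thm:KanExt} at a degenerate $2$-simplex. The driving observation is that, for $k$ large compared to $n$, the triangle defining $\zeta_{k+2}$ degenerates modulo $\F_n\g$, forcing its missing edge to coincide with the previous one.

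\medskip

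I would first fix $n \geqslant 1$ and reduce modulo the $\sLi$-ideal $\F_n\g$, writing $\bar{x}$ for the image in the quotient complete $\sLi$-algebra $\g/\F_n\g$ of any element $x \in \g$. By assumptions $(1)$ and $(2)$, as soon as $k+1 \geqslant n$ the elements $\alpha_{k+1}$, $\alpha_{k+2}$ and $\lambda_{k+2}$ all lie in $\F_n\g$, so their images vanish in the quotient. The triangle defining $\zeta_{k+2}$ thus reduces in $\R(\g/\F_n\g)$ to one with vertices $(\bar\alpha_1, 0, 0)$, edges $(\bar\zeta_{k+1}, 0, \bar\zeta_{k+2})$ and $2$-face $0$. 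On the other hand, the degenerate $2$-simplex $s_1(\bar\zeta_{k+1})$ has vertices $(\bar\alpha_1, 0, 0)$, edges $(\bar\zeta_{k+1}, 0, \bar\zeta_{k+1})$ and degenerate $2$-face $0$, hence realizes the same $\Ho{1}{2}$-horn with the same filler. The uniqueness part of \cref{thm:KanExt} then forces $\bar\zeta_{k+2} = \bar\zeta_{k+1}$ in $\g/\F_n\g$, that is $\zeta_{k+2} - \zeta_{k+1} \in \F_n\g$. Since $n$ was arbitrary, the sequence $\{\zeta_k\}$ is Cauchy, and by completeness of $\g$ it converges to a unique limit $\zeta \in \g_1$.

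\medskip

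For the equation $\zeta \cdot \alpha_1 = 0$, a straightforward induction built into the construction of the $\zeta_k$ yields $\zeta_k \cdot \alpha_1 = \alpha_k$ for every $k \geqslant 2$. Since the projection $\g \twoheadrightarrow \g/\F_n\g$ is a strict morphism of complete $\sLi$-algebras it commutes with the gauge action, and combining the stabilization $\bar\zeta = \bar\zeta_k$ from the previous step with $\bar\alpha_k = 0$ in $\g/\F_n\g$ (for $k \geqslant n$) gives $\bar\zeta \cdot \bar\alpha_1 = 0$ for every $n$. The separation property of the complete filtration then forces $\zeta \cdot \alpha_1 = 0$ as required.

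\medskip

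The main obstacle I anticipate is the Cauchy estimate itself. A direct attack through the tree formula \cref{prop:ExplicitBCHForm}, or its first-order truncation \cref{lemma:first terms of BCH}, is hindered by the fact that the $\sLi$ BCH product depends on the Maurer--Cartan labels at the three vertices of the triangle, which only lie in $\F_1\g$; this yields the too-weak estimate $\zeta_{k+2} - \zeta_{k+1} \in \F_2\g$, insufficient for convergence. Passing to the discrete quotients $\g/\F_n\g$ replaces this algebraic difficulty with a statement about degenerate simplices and short-circuits the combinatorial analysis entirely.
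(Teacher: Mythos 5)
Your proof is correct, but it reaches the key Cauchy estimate by a genuinely different route than the paper. The paper attacks $\zeta_{k+2}-\zeta_{k+1}$ head-on through the explicit tree expansion of \cref{prop:ExplicitBCHForm}: a degree count shows that any contributing tree with $|\tau|$ vertices must carry $|\tau|+1$ degree-one leaves, so by pigeonhole some vertex would receive two copies of $\zeta_{k+1}$ unless a $\lambda_{k+2}$ appears; the former is killed by the side condition $i_2(\omega_{01})^2=0$ in $\Omega_2$, whence every term lies in $\F_{k+1}\g$. (So the obstacle you flag in your last paragraph is real but surmountable --- the vanishing of that particular structure constant is exactly what upgrades the naive $\F_2$ estimate to $\F_{k+1}$.) You instead pass to the quotients $\g/\F_n\g$ --- which are indeed complete $\sLi$-algebras with $\F_0=\F_1$, so \cref{thm:KanExt} applies --- and use the naturality of the horn-filler bijection under the strict projection together with the degenerate simplex $s_1(\bar\zeta_{k+1})$ to force $\bar\zeta_{k+2}=\bar\zeta_{k+1}$ once $k+1\geqslant n$; this recovers the same estimate $\zeta_{k+2}-\zeta_{k+1}\in\F_{k+1}\g$ while entirely bypassing the tree combinatorics. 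What your approach buys is robustness (no computation with structure constants, only formal properties of $\R$); what it costs is the explicit quantitative form of the difference, which the paper's method exhibits term by term. For the final identity $\zeta\cdot\alpha_1=0$ the two arguments essentially coincide: both use $\zeta_k\cdot\alpha_1=\alpha_k$, the paper concluding by continuity of the gauge action and you by separatedness of the filtration after projecting to each $\g/\F_n\g$; both steps are justified since the gauge action is a filtration-compatible series in the structure operations (\cref{prop:gaugeformula}) and hence commutes with strict morphisms and limits.
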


\begin{proof}
	Since for all $k\geqslant 1$ we have $\alpha_k\in\F_k\g$~, it follows that  $\lim_{k\to\infty}\alpha_k = 0$~. By \cref{prop:ExplicitBCHForm}, we know that $\zeta_{k+2}$ is given by $\zeta_{k+1} + \lambda_{k+2}$ plus a sum of trees involving $\alpha_1, \alpha_{k+1}, \alpha_{k+2}, \zeta_{k+1}$~, and $\lambda_{k+2}$~, see \cref{eq:BCH first terms}. We will show that every one of these trees gives rise to an element in $\F_{k+1}\g$~.
	
	\medskip
	
	Let us consider such a tree $\tau$~. Since the output needs to be of degree $1$ and since the vertices are labeled by the operations $\ell_m$ of degree $-1$~, it must have $|\tau|+1$ elements of degree $1$ labeling its leaves, that is either $\zeta_{k+1}$ or $\lambda_{k+2}$~. Suppose that no $\lambda_{k+2}$ appear, then there exists one vertex with at least two leaves labeled by $\zeta_{k+1}$~. But this is impossible since $i_2(\omega_{01})^2=0$ in $\Omega_2$~. So at least one leaf is labeled by $\lambda_{k+2}$~, which implies that the output of this composite lives in $\F_{k+1} \g$~.
	
	\medskip
	
	Thus, we have  $\zeta_{k+2} - \zeta_{k+1}\in\F_{k+1}\g$~, which shows that the sequence $\{\zeta_k\}_{k\geqslant 2}$ converges. Since $\zeta_k\cdot \alpha_1 = \alpha_k$~, by construction,  and since $\alpha_k$ converges to $0$ as $k\to\infty$~, it follows that $\zeta\coloneqq \lim_{k\to\infty}\zeta_k$ is a gauge from $x_1$ to $0$~, by continuity of the gauge action. 
\end{proof}

\begin{lemma}\label{lemma: pi0f injective}
	The induced map
	\[
	\pi_0\R(f) \ \colon\  \pi_0\R(\g)\hookrightarrow\pi_0\R(\h)
	\]
	is injective.
\end{lemma}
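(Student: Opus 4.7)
The approach I would take mirrors the proof of Dolgushev--Rogers \cite{DolgushevRogers15}, but streamlined by the higher BCH machinery and \cref{lemma:sequence of gauges}. By \cref{cor:pi0}, it suffices to show: if $\alpha_0, \beta_0 \in \MC(\g)$ satisfy $\upsilon^*f(\alpha_0) \sim \upsilon^*f(\beta_0)$ in $\h$, then $\alpha_0 \sim \beta_0$ in $\g$. First I would reduce by twisting: using \cref{thm:isom translation by alpha} and \cref{prop:FuncTwInfPi}, together with the identity $\upsilon^*f(0) = 0$ (which follows from multilinearity of each $(\upsilon^*f)_m$), one replaces $\g, \h$ by their twists $\g^{\alpha_0}, \h^{\upsilon^*f(\alpha_0)}$. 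The twisted morphism remains a filtered $\infty$-quasi-isomorphism by the same spectral-sequence argument already used in the first half of the proof. So I may assume $\alpha_0 = 0$ and would then need to show: if $\upsilon^*f(\beta_0) \sim 0$ in $\h$, then $\beta_0 \sim 0$ in $\g$.

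Next I would inductively construct a sequence $\{\beta_k\}_{k \geq 1} \subset \MC(\g)$ together with gauges $\lambda_{k+1} \in \F_k \g_1$ satisfying $\beta_{k+1} = \lambda_{k+1} \cdot \beta_k$, $\beta_k \in \F_k \g$, and the persistent hypothesis $\upsilon^*f(\beta_k) \sim 0$ in $\h$, starting with $\beta_1 \coloneqq \beta_0$. The last condition is automatically preserved since gauge equivalence is preserved by $\infty_\pi$-morphisms (\cref{cor:pi0}). The crux is the construction of $\lambda_{k+1}$: by the explicit formula of \cref{prop:gaugeformula}, all terms in $\lambda_{k+1} \cdot \beta_k$ except the linear ones involve a nontrivial bracket and hence lie in $\F_{2k}\g \subseteq \F_{k+1}\g$, so $\lambda_{k+1} \cdot \beta_k \equiv \beta_k + d \lambda_{k+1} \pmod{\F_{k+1}\g}$. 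Hence I need the class $[\beta_k]$ to vanish in $H_0(\F_k \g / \F_{k+1}\g)$; it is a cycle there because the Maurer--Cartan equation forces $d \beta_k \in \F_{2k}\g$.

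The obstruction step is the heart of the proof. By the filtered quasi-isomorphism hypothesis and the 5-lemma, the induced map $\F_k \g / \F_{k+1}\g \to \F_k \h / \F_{k+1}\h$ is a quasi-isomorphism. Since $\upsilon^*f(\beta_k) \equiv f_|(\beta_k) \pmod{\F_{k+1}\h}$, it then suffices to check that the class of $\upsilon^*f(\beta_k)$ vanishes in $H_0(\F_k \h / \F_{k+1}\h)$. This I would derive via the classical Goldman--Millson argument in the nilpotent quotient $\overline\h \coloneqq \h / \F_{k+1}\h$: the element lies in the square-zero ideal $\F_k \overline\h$, so the gauge ODE computing its equivalence with $0$ collapses to a boundary equation $\overline{\upsilon^*f(\beta_k)} + d\overline\mu \equiv 0$ for an appropriate component $\overline\mu$. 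Finally, to assemble the limiting gauge, I would invoke \cref{lemma:sequence of gauges}, whose hypotheses are exactly satisfied by $\{\beta_k\}$ and $\{\lambda_k\}$: the partial BCH composites $\zeta_k$ converge to a gauge $\zeta \in \g_1$ with $\zeta \cdot \beta_0 = 0$, giving $\beta_0 \sim 0$ in $\g$.

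The main obstacle will be the obstruction step, specifically justifying rigorously that gauge equivalence to zero forces vanishing of the filtration-graded class. The subtlety is that an arbitrary gauge $\mu \in \h_1$ realizing $\mu \cdot \upsilon^*f(\beta_k) = 0$ need not a priori live in $\F_k \h$, so one must carefully extract a suitable component by filtration analysis of the gauge ODE in the nilpotent quotient. Everything else is routine given the tools already developed in the paper.
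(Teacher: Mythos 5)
Your overall architecture --- induction on the filtration degree, an obstruction step on the associated graded using the filtered quasi-isomorphism, and \cref{lemma:sequence of gauges} to assemble the limiting gauge --- coincides with the paper's. The gap sits exactly where you flag ``the main obstacle,'' and your proposed fix does not close it. Carrying only the hypothesis that $\upsilon^*f(\beta_k)$ is gauge equivalent to $0$ by \emph{some} gauge $\mu\in\h_1=\F_1\h_1$ is strictly too weak to conclude that its class vanishes in $H_0(\F_k\h/\F_{k+1}\h)$. The gauge formula gives $0=\upsilon^*f(\beta_k)+d\mu+\tfrac{1}{2}\ell_2(d\mu,\mu)+\cdots$, and the terms involving only $\mu$ lie a priori only in $\F_2\h$, not in $\F_{k+1}\h$; even in the abelian case one only learns $\upsilon^*f(\beta_k)\in d(\F_1\h_1)\cap\F_k\h_0$, which need not be contained in $d(\F_k\h_1)$. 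Concretely, take $\h$ abelian with $\h_1=\k\mu$, $\h_0=\k x$, $d\mu=x$, and the filtration $\F_2\h_1=0$, $\F_2\h_0=\k x$, $\F_3\h=0$: then $x$ is a Maurer--Cartan element of $\F_2\h$ that is gauge-trivial in $\h$, yet its class in $H_0(\F_2\h/\F_3\h)$ is nonzero. So ``gauge equivalence to zero'' does not force vanishing of the filtration-graded class, and no analysis of the gauge ODE in the nilpotent quotient alone can recover it. This is precisely why the paper remarks that ``the existence of a random gauge \ldots would not have been enough.''

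The missing idea, which is the actual content of the paper's proof, is to maintain as part of the induction an auxiliary sequence of gauges $\widetilde{\mu}_k\in\F_k\h_1$ realizing the equivalence at stage $k$. With $\widetilde{\mu}_k\in\F_k\h$ in hand, the gauge formula does collapse modulo $\F_{k+1}\h$ to the boundary statement you need. The price is that one must produce $\widetilde{\mu}_{k+1}\in\F_{k+1}\h$ at each step; the paper does this by correcting the naive lift $\lambda'$ to $\lambda_{k+1}=\lambda'+\theta$, where $\theta$ (together with an exact term $d\sigma$) is extracted from the filtered quasi-isomorphism applied in degree $1$ to the cycle $\widetilde{\mu}_k-f_{|}(\lambda')$ in $\F_k\h/\F_{k+1}\h$, and then defining $\widetilde{\mu}_{k+1}$ as the BCH horn filler $\Gamma^2_1$ of the triangle with edges $\widetilde{\lambda}_{k+1}$, $\widetilde{\mu}_k$ and $2$-cell $-\sigma$, so that $\widetilde{\mu}_{k+1}\equiv\widetilde{\mu}_k-\widetilde{\lambda}_{k+1}-d\sigma\equiv 0 \bmod \F_{k+1}\h$. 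Without this bookkeeping the induction cannot proceed past the first step, so the proposal as written does not yield a proof.
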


\begin{proof}
	After \cref{cor:pi0}, we implicitly identify $\pi_0\R(\g)$ with the gauge equivalence classes of Maurer--Cartan elements of $\g$~. Let $\alpha,\beta\in\MC(\g)$ be two Maurer--Cartan elements, and let us denote by $\widetilde{\alpha}\coloneqq \upsilon^*f(\alpha), \widetilde{\beta}\coloneqq \upsilon^*f(\beta)$ their image in $\h$ under $\upsilon^*f$~. Suppose there exists a gauge $\widetilde{\mu}\in\h_{1}$ from $\widetilde{\alpha}$ to $\widetilde{\beta}$~. We want to show that there exists a gauge in $\g_1$ from $\alpha$ to $\beta$~.
	
	\medskip
	
	We will prove this by inductively constructing several sequences, depicted in orange in the picture below: 
	\[\{\alpha_k\}_{k\geqslant 1} \ \text{in}\  \g_0\ , \quad  \{\lambda_k\}_{k\geqslant 2} \  \text{in} \ \g_1\ , \quad \text{and} \ \{\widetilde{\mu}_k\}_{k\geqslant 1}\ \text{in} \ \h_1~, 
	\]
	such that
	\[\alpha_1\coloneqq \alpha\ , \quad \alpha_{k+1}=\lambda_{k+1}\cdot\alpha_k
	\ , \quad \widetilde{\mu}_1\coloneqq \widetilde{\mu}
	\ , \quad 
	\widetilde{\beta}=\widetilde{\mu}_{k}\cdot \widetilde{\alpha}_k
	\]
	with $\beta-\alpha_k\in\F_k\g$~, $\lambda_{k+1}\in\F_k\g$~ , and $\widetilde{\mu}_k\in\F_k\h$~.
	\begin{center}
		\begin{tikzpicture}
		\node (a) at (0,0) {$\alpha_1\coloneqq\alpha$};
		\node[red] (b) at (3,0) {$\alpha_2$};
		\node[red] (c) at (5.5,0) {$\alpha_3$};
		\node[red] (d) at (8,0) {$\ldots$};
		
		\draw[->,red] (a) to node[above]{$\lambda_2$} (b);
		\draw[->,red] (b) to node[above]{$\lambda_3$} (c);
		\draw[->,red] (c) to node[above]{$\lambda_4$} (d);

		\node (A) at (0,-3) {$\widetilde{\alpha}_1\coloneqq\widetilde{\alpha}$};
		\node[red] (B) at (3,-3) {$\widetilde{\alpha}_2$};
		\node[red] (C) at (5.5,-3) {$\widetilde{\alpha}_3$};
		\node[red] (D) at (8,-3) {$\ldots$};
		
		\draw[->,red] (A) to node[above]{$\widetilde{\lambda}_2$} (B);
		\draw[->,red] (B) to node[above]{$\widetilde{\lambda}_3$} (C);
		\draw[->,red] (C) to node[above]{$\widetilde{\lambda}_4$} (D);
		
		\node (x) at (0,-5) {$\widetilde{\beta}$};
		\draw[->] (A) to node[left]{$\widetilde{\mu}_1\coloneqq\widetilde{\mu}$} (x);
		\draw[->,red] (B) to node[above left]{$\widetilde{\mu}_2$} (x);
		\draw[->,red] (C) to node[below right]{$\widetilde{\mu}_3$} (x);
		
		\draw[|->] (4,-.5) to node[right]{$\R(f)$} (4,-2);
		\draw[dashed] (-2,-1.5) to (9.3,-1.5);
		\end{tikzpicture}
	\end{center}
	Then the sequence of intermediate gauges $\lambda_k$ will produce to a single gauge linking $\alpha$ to $\beta$~. The gauges $\widetilde{\mu}_k$ are only auxiliary, but will play a fundamental role in the induction. The core of the proof is essentially a step-by-step application of obstruction theory.

	\medskip
	
	For the actual proof, the step $k=1$ is obvious. Suppose now that we have constructed the sequences up to $k$~. Then we have
	\begin{align*}
	d(\beta-\alpha_k) ={}& -\sum_{m\geqslant2}\textstyle{\frac{1}{m!}}\big(\ell_m(\beta,\ldots,\beta) - \ell_m(\alpha_k,\ldots,\alpha_k)\big)\\
	={}&  -\sum_{m\geqslant2}\textstyle{\frac{1}{m!}}
	\big(\ell_m(\beta-\alpha_k,\beta, \ldots,\beta)+\cdots + \ell_m(\alpha_k,\ldots,\alpha_k, \beta-\alpha_k)\big)\in\F_{k+1}\g\ ,
	\end{align*}
	and the same is also true for its image $\widetilde{\beta} - \widetilde{\alpha}_k$ under $\upsilon^* f$:
	\begin{align*}
	d\big(\upsilon^*f(\beta-\alpha_k)\big) ={}& 
	\sum_{m\geqslant 1}\textstyle{\frac{1}{m!}} d\left((\upsilon^*f)_m(\beta-\alpha_k, \ldots, \beta-\alpha_k)\right)\\
	={}&
	\underbrace{f_|(d(\beta-\alpha_k))}_{\in \F_{k+1}\h}+\sum_{m\geqslant 2}\textstyle{\frac{1}{m!}} d\big(\underbrace{(\upsilon^*f)_m(\beta-\alpha_k, \ldots, \beta-\alpha_k)}_{\in \F_{k+1}\h}\big)\in  \F_{k+1}\h~.
	\end{align*}

	Therefore, they induce closed elements
	\[
	\beta-\alpha_k\in Z(\F_k\g/\F_{k+1}\g)\qquad\text{and}\qquad\widetilde{\beta} - \widetilde{\alpha}_k\in Z(\F_k\h/\F_{k+1}\h)
	\]
	and we can consider their homology classes. We can see that these classes are obstructions to the possibility of gauging $\alpha_k$ to an element $\alpha_{k+1}$ that agrees with $\beta$ up to the $k$th filtration degree of $\g$~, and similarly for the elements in $\h$~. Indeed, we have the gauge $\widetilde{\mu}_k$ from $\widetilde{\alpha}_k$ to $\widetilde{\beta}$ in $\h$~. This implies that
	\[
	\widetilde{\beta} \equiv \widetilde{\alpha}_k + d\widetilde{\mu}_k  \mod\F_{k+1}\h~,
	\]
	and thus that
	\[
	\widetilde{\beta} - \widetilde{\alpha}_k \in B(\F_k\h/\F_{k+1}\h)\ .
	\]
	Notice that here it is fundamental that $\widetilde{\mu}_k\in\F_k\h$ as the existence of a random gauge from $\widetilde{\alpha}_k$ to $\widetilde{\beta}$ would not have been enough. 
	The spectral sequence argument given in the proof of \cref{thm:HoInvariance} shows that  $f_|$ induces a quasi-isomorphism $\F_k\g/\F_{k+1}\g\stackrel{\sim}{\to}\F_k\h/\F_{k+1}\h$~. This implies that there exists some $\lambda'\in\F_k\g$  such that $\beta-\alpha_k = d\lambda'$ in $\F_k\g/\F_{k+1}\g$~. Moreover, since $d(\widetilde{\mu}_k-f_1(\lambda')) = 0$ in $\F_k\h/\F_{k+1}\h$ and $f_1$ is a filtered quasi-isomorphism, there is $\theta\in\F_k\g$ which is closed in $\F_k\g/\F_{k+1}\g$ and $\sigma\in\F_k\h$ that are such that
	\[
	\widetilde{\mu}_k - f_1(\lambda') = f_1(\theta) + d\sigma
	\]
	in $\F_k\h/\F_{k+1}\h$~. We define $\lambda_{k+1}\coloneqq\lambda' + \theta \in \F_k \g$ and $\alpha_{k+1}\coloneqq\lambda_{k+1}\cdot\alpha_{k}$~.
	
	\medskip
	
	Finally, we define $\widetilde{\mu}_{k+1}$ by filling the horn
	\[
	\begin{tikzpicture}
	
	\coordinate (v0) at (210:1.5);
	\coordinate (v1) at (90:1.5);
	\coordinate (v2) at (-30:1.5);
	
	\draw[line width=1, dashed] (v0)--(v1)--(v2)--cycle;
	\draw[line width=1] (v2)--(v0)--(v1);
	
	\begin{scope}[decoration={
		markings,
		mark=at position 0.55 with {\arrow{>}}},
	line width=1
	]
	
	\path[postaction={decorate}] (v0) -- (v1);
	\path[postaction={decorate}] (v1) -- (v2);
	\path[postaction={decorate}] (v0) -- (v2);
	
	\end{scope}
	
	
	\node at ($(v0) + (30:-0.3)$) {$\widetilde{\alpha}_{k}$};
	\node at ($(v1) + (0,0.3)$) {$\widetilde{\alpha}_{k+1}$};
	\node at ($(v2) + (-30:0.3)$) {$\widetilde{\beta}$};
	
	\node at ($(v0)!0.5!(v1) + (-30:-0.4)$) {$\widetilde{\lambda}_{k+1}\ \ \ $};
	\node at ($(v1)!0.5!(v2) + (30:0.4) + (0.35,0)$) {$\widetilde{\mu}_{k+1}$};
	\node at ($(v0)!0.5!(v2) + (0,-0.4)$) {$\widetilde{\mu}_k$};
	
	\node at (0,0) {$-\sigma$};
	
	\end{tikzpicture}
	\]
	This way, we have
	\[
	\widetilde{\mu}_{k+1} \equiv  \widetilde{\mu}_k - \widetilde{\lambda}_k -d\sigma \mod \F_{k+1}\h\ ,
	\]
	and since $\widetilde{\mu}_k = f_|(\lambda_{k+1}) + d\sigma$~, we have $\widetilde{\mu}_{k+1} \in \F_{k+1}\h$ as desired.
	
	\medskip
	
	To conclude the proof, we notice  that
	\[
	\lim_{k\to\infty}\alpha_{k} = \beta\ 
	\]
	and we apply  \cref{lemma:sequence of gauges} to the twisted $\sLi$-algebra $\g^\beta$: this implies that there exists a gauge from $\alpha$ to $\beta$~. 
\end{proof}

\begin{lemma}\label{lemma: pi0f surjective}
	The induced map
	\[
	\pi_0\R(f) : \pi_0\R(\g)\twoheadrightarrow\pi_0\R(\h)
	\]
	is surjective.
\end{lemma}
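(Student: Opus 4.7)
Following the strategy of \cref{lemma: pi0f injective}, given $\widetilde{\beta}\in\MC(\h)$, I would construct inductively sequences $\{\alpha_k\}_{k\geqslant 1}\subset\MC(\g)$, $\{\lambda_k\}_{k\geqslant 2}\subset\g_1$, and $\{\widetilde{\mu}_k\}_{k\geqslant 1}\subset\h_1$ satisfying the following compatibility: $\alpha_{k+1}=\lambda_{k+1}\cdot\alpha_k$ with $\lambda_{k+1}\in\F_k\g_1$, the element $\widetilde{\mu}_k$ is a gauge from $\upsilon^*f(\alpha_k)$ to $\widetilde{\beta}$, and $\widetilde{\mu}_{k+1}-\widetilde{\mu}_k\in\F_k\h$. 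By completeness of $\g$, the $\alpha_k$ then converge to some $\alpha\in\MC(\g)$; by completeness of $\h$ and continuity of the gauge action, the $\widetilde{\mu}_k$ converge to a gauge $\widetilde{\mu}$ from $\upsilon^*f(\alpha)$ to $\widetilde{\beta}$. This would yield $[\widetilde{\beta}]=\pi_0\R(f)([\alpha])$, establishing surjectivity.

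The base case $k=1$ requires producing a first approximation $\alpha_1\in\MC(\g)$ together with an initial gauge $\widetilde{\mu}_1\in\h_1$ from $\upsilon^*f(\alpha_1)$ to $\widetilde{\beta}$. The idea is to lift $\widetilde{\beta}$ modulo $\F_2\h$: since $f_|^{(2)}\colon\g/\F_2\g\to\h/\F_2\h$ is a quasi-isomorphism, the closed element $\widetilde{\beta}\bmod\F_2\h$ admits a homologous closed lift in $\g/\F_2\g$, which is corrected stage by stage (using the completeness of $\g$) to a genuine Maurer--Cartan element $\alpha_1\in\MC(\g)$, and a compatible $\widetilde{\mu}_1$ is produced by filling suitable horns in $\R(\h)$. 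For the inductive step, the defect $\widetilde{\mu}_k\bmod\F_{k+1}\h$ determines a class in $H_1(\F_k\h/\F_{k+1}\h)$; by the spectral sequence argument used in the proof of \cref{thm:HoInvariance}, the filtered quasi-isomorphism $f_|$ induces a quasi-isomorphism on each associated graded piece, so this class has a preimage represented by some $\lambda_{k+1}\in\F_k\g_1$. Setting $\alpha_{k+1}=\lambda_{k+1}\cdot\alpha_k$ and filling the $2$-horn in $\R(\h)$ whose non-missing sides are $\upsilon^*f(\lambda_{k+1})$ and $\widetilde{\mu}_k$ produces the new gauge $\widetilde{\mu}_{k+1}$; then \cref{lemma:first terms of BCH} together with the explicit formula of \cref{prop:ExplicitBCHForm} ensure that $\widetilde{\mu}_{k+1}-\widetilde{\mu}_k$ lies in $\F_{k+1}\h$, providing the filtration bound needed for convergence.

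The main obstacle is the base case, which is essentially a miniature version of the full surjectivity statement: one must simultaneously construct the initial Maurer--Cartan approximation $\alpha_1$ and the initial gauge $\widetilde{\mu}_1$, and each internal correction requires a separate use of the quasi-isomorphism on associated graded pieces. The combinatorial engine powering both the base case and the inductive step is the identity $\ell_2(dy,dy)=0$ for $|dy|=1$, already exploited in the proof of \cref{subsect:computation for Berglund}, which guarantees that BCH corrections strictly improve the filtration depth by one unit at each stage. An alternative (and possibly lighter) route would be to reduce the statement to the corresponding result of Dolgushev--Rogers via the weak equivalence $\R(\g)\simeq\MC_\bullet(\g)$ of \cref{thm:isomorphism of models}, but a direct first-principles proof following the template of \cref{lemma: pi0f injective} is more in keeping with the self-contained nature of this section.
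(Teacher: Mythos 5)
There is a genuine gap, and it sits exactly where you locate the ``main obstacle'': the base case. You ask for a \emph{genuine} Maurer--Cartan element $\alpha_1\in\MC(\g)$ together with a gauge $\widetilde{\mu}_1$ from $\upsilon^*f(\alpha_1)$ to $\widetilde{\beta}$. But that pair of data \emph{is} the surjectivity statement: once you have it, $[\widetilde{\beta}]=\pi_0\R(f)([\alpha_1])$ and the proof is over, so the entire inductive scaffolding that follows is vacuous. The content you have pushed into the phrase ``corrected stage by stage (using the completeness of $\g$) to a genuine Maurer--Cartan element'' is the whole theorem, and completeness alone does not deliver it. Lifting $\widetilde{\beta}$ modulo $\F_2\h$ through the quasi-isomorphism $f_|^{(2)}$ gives an element $\beta_0\in\g_0$ whose curvature $\Xi(\beta_0)=d\beta_0+\sum_{m\geqslant 2}\frac{1}{m!}\ell_m(\beta_0,\ldots,\beta_0)$ merely lies in $\F_2\g$; improving it to an element with curvature in $\F_3\g$, $\F_4\g$, \ldots\ requires killing at each stage an obstruction class in $H(\F_{k+2}\g/\F_{k+3}\g)$, and there is no reason for that class to vanish intrinsically in $\g$. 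It vanishes only because its image in $\h$ can be shown to be a boundary --- and showing \emph{that} uses the gauge data in $\h$ relating the image of the approximation to the honest Maurer--Cartan element $\widetilde{\beta}$, after which the filtered quasi-isomorphism lets you pull the bounding element back to $\g$. Your sketch never confronts the curvature obstruction at all.

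This is precisely why the paper's proof does \emph{not} take the approximations upstairs to be Maurer--Cartan elements of $\g$: it works with curved approximations $\beta_k\in\MC(\g/\F_{k+2}\g)$ satisfying $\beta_{k+1}-\beta_k\in\F_{k+1}\g$, so that the genuine Maurer--Cartan element $\beta$ appears only in the limit, and at each step it proves that the cycle $\Xi(\beta')\in Z(\F_{k+2}\g/\F_{k+3}\g)$ is a boundary by computing its image under $f_|$, identifying that image with $d\big(-z+f_2(x,\beta_k)+\ell_2(\widetilde y,\widetilde\alpha_k)\big)$ using the auxiliary gauges $\widetilde\lambda_k$ downstairs, and invoking the quasi-isomorphism on $\F_{k+2}/\F_{k+3}$ to find a correcting $w\in\F_{k+2}\g$. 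Correspondingly, the gauges in $\h$ run from the fixed target $\widetilde\alpha$ to exact Maurer--Cartan elements $\widetilde\alpha_k=\widetilde\lambda_k\cdot\widetilde\alpha$ that approximate the (non-Maurer--Cartan) images $\widetilde\beta_k$ to order $\F_{k+1}\h$, rather than from the images of honest elements upstairs to the target. If you want to repair your argument along your own lines, you must (i) relax ``$\alpha_k\in\MC(\g)$'' to ``$\alpha_k$ is Maurer--Cartan modulo $\F_{k+2}\g$'', (ii) add the obstruction-theoretic step controlling $\Xi(\alpha_{k+1})$ described above, and (iii) correspondingly weaken ``$\widetilde\mu_k$ is a gauge from $\upsilon^*f(\alpha_k)$ to $\widetilde\beta$'' to an approximate statement, since $\upsilon^*f(\alpha_k)$ is not a Maurer--Cartan element and cannot literally be gauged. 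Your closing remark about reducing to Dolgushev--Rogers via \cref{thm:isomorphism of models} is a reasonable alternative in spirit, but note that the weak equivalence there is only stated to be natural with respect to strict morphisms, so the transport of $\pi_0$-surjectivity along $\infty_\pi$-morphisms would itself require an argument.
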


\begin{proof}
	Again using \cref{cor:pi0},  we implicitly identify $\pi_0\R(\g)$ with the gauge equivalence classes of Maurer--Cartan elements of $\g$~. 	
	We need to prove that for every $\widetilde{\alpha}\in\MC(\h)$ there exists some $\beta\in\MC(\g)$ such that $\widetilde{\beta}\coloneqq \upsilon^*f(\beta)$ and $\widetilde{\alpha}$ are gauge equivalent in $h$~.
	
	\medskip
	
	The structure of the proof is as follows. Given a Maurer--Cartan element $\widetilde{\alpha}\in\MC(\h)$~, we will construct a sequence of elements $\{\beta_k\}_{k\geqslant 1}$ in $\g_0$ such that
	\[
	\beta_k\in\MC(\g/\F_{k+2}\g)\qquad\text{and}\qquad\beta_{k+1}-\beta_k\in\F_{k+1}\g\ ,
	\]
	together with two sequences of elements $\big\{\widetilde{\lambda}_k\big\}_{k\geqslant 1}$ in $\h_{1}$ 
	and $\big\{\widetilde{\alpha}_k\big\}_{k\geqslant 1}$ in $\MC(\h)$ 
	satisfying 
	\[
	\widetilde{\alpha}_k=\widetilde{\lambda}_k\cdot\widetilde{\alpha} \ , \quad 
	\widetilde{\alpha}_k - \widetilde{\beta}_k\in\F_{k+1}\h \ ,
	\quad\text{and}\quad
	\widetilde{\lambda}_{k+1}-\widetilde{\lambda}_k\in\F_{k+1}\h\ .
	\]
	Let us already conclude the proof from that. Taking the limit, we obtain a Maurer--Cartan element
	\[
	\beta\coloneqq\lim_{k\to\infty}\beta_k\in\MC(\g)\ .
	\]
	By continuity of $\upsilon^*f$~, the sequence $\widetilde{\beta}_k=\upsilon^*f(\beta_k)$ tends to $\widetilde{\beta}\coloneqq \upsilon^*f(\beta)$ in $\h$~. So does the sequence $\widetilde{\alpha}_k$ since $\widetilde{\alpha}_k - \widetilde{\beta}_k\in\F_{k+1}\h$~. Since $\widetilde{\lambda}_{k+1}-\widetilde{\lambda}_k\in\F_{k+1}\h$~,  the sequence $\widetilde{\lambda}_k$ converges to an element denoted by $\widetilde{\lambda}$~. And since the gauge action is continuous, the passage to the limit of 
	$\widetilde{\alpha}_k=\widetilde{\lambda}_k\cdot\widetilde{\alpha}$ gives 
	$\widetilde{\beta}=\widetilde{\lambda}\cdot\widetilde{\alpha}$~.
	
	\medskip
	
	To start the induction, we notice that
	\[
	d\widetilde{\alpha} = -\sum_{m\geqslant2}{\textstyle \frac{1}{m!}}\ell_m(\widetilde{\alpha},\ldots,\widetilde{\alpha})\in\F_2\h\ ,
	\]
	so that $\widetilde{\alpha}\in Z(\h/\F_2\h)$~. Since $f_|$ induces a quasi-isomorphism
	$\g/\F_2\g\stackrel{\sim}{\to}\h/\F_2\h$~,  there exist $\beta_0\in\g_0$~, such that 
	$d\beta_0\in \F_2 \g$~, 
	and $\widetilde{v} \in\h_{1}$ such that
	\[
	\widetilde{\alpha} \equiv f_|(\beta_0) + d\widetilde{v} \mod \F_2\h~.
	\]
	The cycle condition on $\beta_1$ is equivalent to $\beta_1\in \MC(\g/\F_2\g)$~. Setting $\widetilde{\lambda}_0\coloneqq 0$~, we get $\widetilde{\alpha}_0=\widetilde{\alpha}$~. Obviously, $\widetilde{\alpha}_0 - \widetilde{\beta}_0\in\F_{1}\h$ since $\h=\F_1\h$~.
	
	\medskip
	
	Now suppose that we have done our construction up to step $k$~. Denoting 
	$\widetilde{\alpha}_k - \widetilde{\beta}_k =\chi$ with $\chi\in \F_{k+1}\h$~,  we have
	\begin{align*}
	d\big(\widetilde{\alpha}_k -{}& \widetilde{\beta}_k\big) \equiv -\sum_{m\geqslant2}{\textstyle \frac{1}{m!}}\left(\ell_m(\widetilde{\alpha}_k,\ldots,\widetilde{\alpha}_k) - \ell_m\big(\widetilde{\beta}_k,\ldots,\widetilde{\beta}_k\big) \right)\mod  \F_{k+2}\h\\
	\equiv{}& -\sum_{m\geqslant2}{\textstyle \frac{1}{m!}}
	\left(\ell_m(\widetilde{\alpha}_k,\ldots,\widetilde{\alpha}_k) - 
	\ell_m\big(\widetilde{\alpha}_k-\chi,\ldots,\widetilde{\alpha}_k-\chi\big) \right)\mod  \F_{k+2}\h\\
	\equiv{}&0 \mod \F_{k+2}\h~,
	\end{align*}
	that is $d\big(\widetilde{\alpha}_k - \widetilde{\beta}_k\big) \in \F_{k+2}\h$~, 
	where in the first line we used the fact that $\beta_k\in\MC(\g/\F_{k+2}\g)$ and thus $\widetilde{\beta}_k \in\MC(\h/\F_{k+2}\h)$~. Therefore, we have
	\[
	\widetilde{\alpha}_k - \widetilde{\beta}_k\in Z(\F_{k+1}\h/\F_{k+2}\h)\ .
	\]
	The spectral sequence argument given in the proof of \cref{thm:HoInvariance} shows that  $f_|$ induces a quasi-isomorphism $\F_{k+1}\g/\F_{k+2}\g\stackrel{\sim}{\to}\F_{k+1}\h/\F_{k+2}\h$~. This implies that there exist $x\in\F_{k+1}\g$~,  $\widetilde{y}\in \F_{k+1}\h_1$~, and $z\in \F_{k+2}\h_0$ such that
	\[
	dx\in\F_{k+2}\g\qquad\text{and}\qquad\widetilde{\alpha}_k - \widetilde{\beta}_k = f_|(x) + d\widetilde{y}+z~.
	\]
	We define 
	\[\beta'\coloneqq\beta_k + x\ , \quad \widetilde{\alpha}'\coloneqq (-\widetilde{y})\cdot\widetilde{\alpha}_k\ , \quad \text{and}\quad \widetilde{\lambda}'\coloneqq\Gamma^2_1(y,\widetilde{\lambda}_k)~.\]
	\[
	\begin{tikzpicture}
	
	\coordinate (v0) at (210:1.5);
	\coordinate (v1) at (90:1.5);
	\coordinate (v2) at (-30:1.5);
	
	\draw[line width=1] (v0)--(v1)--(v2);
	\draw[dashed, line width=1]  (v0)--(v2);
	
	\begin{scope}[decoration={
		markings,
		mark=at position 0.55 with {\arrow{>}}},
	line width=1
	]
	
	\path[postaction={decorate}] (v0) -- (v1);
	\path[postaction={decorate}] (v1) -- (v2);
	\path[postaction={decorate}] (v0) -- (v2);
	
	\end{scope}
	
	
	\node at ($(v0) + (30:-0.3)$) {$\widetilde{\alpha}$};
	\node at ($(v1) + (0,0.3)$) {$\widetilde{\alpha}_k$};
	\node at ($(v2) + (-30:0.3)$) {$\widetilde{\alpha}'$};
	
	\node at ($(v0)!0.5!(v1) + (-30:-0.4)$) {$\widetilde{\lambda}_k\ $};
	\node at ($(v1)!0.5!(v2) + (30:0.4)$) {$-\widetilde{y}$};
	\node at ($(v0)!0.5!(v2) + (0,-0.4)$) {$\widetilde{\lambda}'$};
	
	\node at (0,0) {$0$};
	
	\end{tikzpicture}
	\]
	Recall that that  curvature of an element if  the left-hand side of the Maurer--Cartan equation, that we denote by 
	\[\Xi(\xi)\coloneqq d(\xi)+\sum_{m\geqslant 2} {\textstyle \frac{1}{m!}}\ell_m(\xi, \ldots, \xi)~.\]
	Since the curvature $\Xi(\beta_k)$ of $\beta_k$ and $dx$ are in $\F_{k+2}\g$ and since $x\in\F_{k+1}\g$~, it follows that the curvature $\Xi(\beta')$ of $\beta'$ is in $\F_{k+2}\g$~. This  gives
	\begin{align*}
	d(\Xi(\beta'))={}&
	d\Bigg(d\beta' + \sum_{m\geqslant2}{\textstyle \frac{1}{m!}}\ell_m(\beta',\ldots,\beta')\Bigg) \\ ={}&-\sum_{p\geqslant1}{\textstyle \frac{1}{p!}}\ell_{p+1}\left(d\beta' + \sum_{q\geqslant2}{\textstyle\frac{1}{q!}}\ell_q(\beta',\ldots,\beta'),\beta',\ldots,\beta'\right)\in\F_{k+3}\g\ .
	\end{align*}
	Therefore,
	\[
	\Xi(\beta')=d\beta' + \sum_{m\geqslant2}{\textstyle \frac{1}{m!}}\ell_m(\beta',\ldots,\beta')\in 
	Z(\F_{k+2}\g/\F_{k+3}\g)\ .
	\]
	We will now show that this element is in fact a boundary. On one hand, we have
	\begin{align*}
	d\widetilde{\beta}'{}&+ \sum_{m\geqslant2}\textstyle{\frac{1}{m!}}\ell_m\left(\widetilde{\beta}',\ldots,\widetilde{\beta}'\right) \equiv\\
	\equiv{}& d\left(\sum_{m\geqslant1}{\textstyle \frac{1}{m!}}(\upsilon^*f)_m(\beta',\ldots,\beta')\right) + \sum_{\substack{m\geqslant2
			\\ n_1,\ldots,n_m\geqslant1}} {\textstyle \frac{1}{m!n_1!\cdots n_m!}}
	\ell_m\big((\upsilon^*f)_{n_1}, \ldots, (\upsilon^*f)_{n_m}\big)(\beta',\ldots,\beta')\\
	\equiv{}& \sum_{p\geqslant1} {\textstyle \frac{1}{(p-1)!}}(\upsilon^*f)_{p}\left(d\beta' + \sum_{q\geqslant 2}{\textstyle \frac{1}{q!}}\ell_q(\beta',\ldots,\beta'),\beta',\ldots,\beta'\right)\\
	\equiv{}& f_|\left(d\beta' + \sum_{q\geqslant2}{\textstyle \frac{1}{q!}}\ell_q(\beta',\ldots,\beta')\right) \mod \F_{k+3}\h\ ,
	\end{align*}
	where, in the last line, we used the fact that the curvature of $\beta'$ is in $\F_{k+2}\g$~. On the other hand, we have
	\begin{align*}
	\widetilde{\beta}' \equiv{}& \widetilde{\beta}_k + f_|(x) + f_\tau\big(x, {\beta}_k\big) \mod \F_{k+3}\h\\
	\equiv{}& \widetilde{\alpha}_k -d\widetilde{y} -z + f_\tau\big(x, {\beta}_k\big) \mod \F_{k+3}\h\ ,
	\end{align*}
	where $\tau$ stands for the partitioned rooted tree 
	$\vcenter{\hbox{
			\begin{tikzpicture}
			\def\scale{0.3};
			\pgfmathsetmacro{\diagcm}{sqrt(2)};
			
			\coordinate (r) at (0,0);
			\coordinate (v1) at (0,\scale*1);
			\coordinate (l1) at ($(v1) + (135:\scale*\diagcm)$);
			\coordinate (l3) at ($(v1) + (45:\scale*\diagcm)$);
			
			\draw[thick] (r) to (v1);
			\draw[thick] (v1) to (l1);
			\draw[thick] (v1) to (l3);
			
			\draw (v1) circle[radius=\scale*0.65];
\end{tikzpicture}}}$~. 
This implies 
\begin{align*}
d\widetilde{\beta}' +&\ \sum_{m\geqslant2}{\textstyle \frac{1}{m!}}\ell_m\big(\widetilde{\beta}',\ldots,\widetilde{\beta}'\big) \equiv\\
\equiv&\ d\left(\widetilde{\alpha}_k -z+ f_\tau(x,\beta_k)\right) + \sum_{m\geqslant2}{\textstyle \frac{1}{m!}}\ell_m(\widetilde{\alpha}_k-d\widetilde{y},\ldots,\widetilde{\alpha}_k-d\widetilde{y}) \mod \F_{k+3}\h\\
\equiv&\ d\widetilde{\alpha}_k + \sum_{m\geqslant2}{\textstyle \frac{1}{m!}}\ell_m(\widetilde{\alpha}_k,\ldots,\widetilde{\alpha}_k) 
+ d\big(-z+f_\tau(x,\beta_k)\big) - \ell_2(d\widetilde{y},\widetilde{\alpha}_k) \mod \F_{k+3}\h\\
\equiv&\ d\left(-z+f_2(x,\beta_k) + \ell_2(\widetilde{y},\widetilde{\alpha}_k)\right) \mod \F_{k+3}\h ~.
\end{align*}
In the second line, we  used the fact that $z\in \F_{k+2}\h$~, in the third line, we used the facts that $\widetilde{\alpha}_k$ is a Maurer--Cartan element, and that $d\widetilde{y}\in\F_{k+1}\h$~, and, in the last line, we used the fact that $d\widetilde{\alpha}_k\in \F_2\h$~. Therefore, the cycle
\[
f_|\left(d\beta' + \sum_{m\geqslant2}{\textstyle \frac{1}{m!}}\ell_m(\beta',\ldots,\beta')\right)
\]
of $\F_{k+2}\h/\F_{k+3}\h$ is a boundary, and since $f_|$ is a filtered quasi-isomorphism, it follows that there exists $w\in\F_{k+2}\g$ such that
\[
d\beta' + \sum_{m\geqslant2}{\textstyle \frac{1}{m!}}\ell_m(\beta',\ldots,\beta') = dw
\]
in $\F_{k+2}\g/\F_{k+3}\g$~. To conclude, we set
\[
\beta_{k+1}\coloneqq\beta'-w=\beta_k+x-w \ , \quad 
\widetilde{\lambda}_{k+1}\coloneqq\widetilde{\lambda}'\ , 
\quad\text{and}\quad
\widetilde{\alpha}_{k+1}\coloneqq \widetilde{\alpha}'=\widetilde{\lambda}_{k+1}\cdot\widetilde{\alpha}~. 
\]
From what we have seen above, we get
\begin{align*}
\Xi(\beta_{k+1})\equiv \Xi(\beta'-w)\equiv\Xi(\beta')-dw\equiv0
\mod \F_{k+3}\g~,
\end{align*}
so $\beta_{k+1}\in \MC(\g/\F_{k+3}\g)$~. We also have $\beta_{k+1}-\beta_k=x-w\in \F_{k+1}\g$~. 
Since $x\in \F_{k+1}\g$~, $\widetilde{y}\in \F_{k+1}\h$~, $w\in\F_{k+2}\g$~, and  
$z\in \F_{k+2}\h$~,  we obtain 
\begin{align*}
\widetilde{\alpha}_{k+1} - \widetilde{\beta}_{k+1}\equiv-dy +\widetilde{\alpha}_k-\widetilde{\beta}_k 
-f_|(x) \equiv z \equiv0 \mod \F_{k+2}\h~, 
\end{align*}
that is $\widetilde{\alpha}_{k+1} - \widetilde{\beta}_{k+1}\in \F_{k+2}\h$~. 
By the BCH formula \eqref{eq:BCH first terms}, we get that 
$\widetilde{\lambda}_{k+1}-\widetilde{\lambda}_k=\widetilde{\lambda}'-\widetilde{\lambda}_k$ is equal to $y$ plus 
a sum of composite of operations of $\h$ applied at least one $y$ each time, by the same argument as in the proof of \cref{lemma:sequence of gauges}. This proves that $\widetilde{\lambda}_{k+1}-\widetilde{\lambda}_k \F_{k+1}\h$~, 
which concludes the proof. 
\end{proof}

\subsection{Inclusions of deformation problems}
An interesting problem to consider is to ask when does a deformation problem ``injects'' into another one. 
Such a line of thought was used for instance in \cite{CPRNW19} to prove 
that two commutative algebras are related by a zig-zag of quasi-isomorphisms if and only if it is the case of their underlying associative algebras. The following result is a generalization of \emph{loc. cit.}\ to the level of $\sLi$-algebras. 

\begin{definition}[$\infty_\pi$-retract]
	An $\infty_\pi$-morphism $f:\g\rightsquigarrow\h$ of complete $\sLi$-algebras is an \emph{$\infty_\pi$-retract} if for each $\alpha\in\MC(\g)$ there exists a linear map $\mathcal{s} \colon \h\to\g$ such that
	\begin{enumerate}
		\item $\mathcal{s}$ respects the filtrations,
		\item $\mathcal{s}(\upsilon^*f)^\alpha_1=\id_\g$
		, and
		\item\label{pt3 def retraction} if $\widetilde{x}\in\h$ is such that $d^{\widetilde{\alpha}}\widetilde{x}\in\F_k\h$ then
		\[
		\mathcal{s}\left(d^{\widetilde{\alpha}}\widetilde{x}\right) \equiv d^{\alpha}\mathcal{s}(\widetilde{x}) \mod\F_{k+1}\g\ ,
		\]
		where $\widetilde{\alpha}\coloneqq \upsilon^*f(\alpha)$~.
	\end{enumerate}
\end{definition}

\begin{proposition}\label{prop:InclPi0}
	If $f:\g\rightsquigarrow\h$ is an $\infty_\pi$-retraction,  then the map
	\[
	\pi_0(\R(f))\ \colon \ \pi_0(\R(\g))\hookrightarrow\pi_0(\R(\h))
	\]
	is injective.
\end{proposition}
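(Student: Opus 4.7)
The proof would adapt \cref{lemma: pi0f injective}, with the retraction $\mathcal{s}$ replacing the filtered quasi-isomorphism used there. Given $\alpha,\beta\in\MC(\g)$ whose images $\widetilde{\alpha}:=\upsilon^*f(\alpha)$ and $\widetilde{\beta}:=\upsilon^*f(\beta)$ are gauge-equivalent in $\h$ via some $\widetilde{\mu}\in\h_1$, I would inductively build sequences $\{\alpha_k\}_{k\geqslant 1}\subset\MC(\g)$, $\{\lambda_k\}_{k\geqslant 2}\subset\g_1$, and $\{\widetilde{\mu}_k\}_{k\geqslant 1}\subset\h_1$ satisfying $\alpha_1=\alpha$, $\alpha_{k+1}=\lambda_{k+1}\cdot\alpha_k$, $\widetilde{\mu}_1=\widetilde{\mu}$, $\widetilde{\mu}_k\cdot\widetilde{\alpha}_k=\widetilde{\beta}$, together with the filtration controls $\beta-\alpha_k\in\F_k\g$, $\lambda_{k+1}\in\F_k\g$, and $\widetilde{\mu}_k\in\F_k\h$. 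Applying \cref{lemma:sequence of gauges} to the twisted algebra $\g^\beta$ would then produce the desired gauge from $\alpha$ to $\beta$.

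The retraction enters in the production of $\lambda_{k+1}$ from $\widetilde{\mu}_k$. Expanding $\widetilde{\mu}_k\cdot\widetilde{\alpha}_k=\widetilde{\beta}$ to leading order and using $\widetilde{\mu}_k\in\F_k\h$ yields $d^{\widetilde{\alpha}_k}\widetilde{\mu}_k\equiv\widetilde{\beta}-\widetilde{\alpha}_k\pmod{\F_{k+1}\h}$, while the expansion of $\upsilon^*f$ combined with $\beta-\alpha_k\in\F_k\g$ gives $\widetilde{\beta}-\widetilde{\alpha}_k\equiv(\upsilon^*f)^{\alpha_k}_1(\beta-\alpha_k)\pmod{\F_{k+1}\h}$. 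Setting $\lambda_{k+1}:=\mathcal{s}(\widetilde{\mu}_k)$ for the retraction $\mathcal{s}$ associated to the Maurer--Cartan element $\alpha_k$, property~(1) gives $\lambda_{k+1}\in\F_k\g$, while properties~(2) and~(3) combine to produce
\[
d^{\alpha_k}\lambda_{k+1}\equiv\mathcal{s}\!\left(d^{\widetilde{\alpha}_k}\widetilde{\mu}_k\right)\equiv\mathcal{s}(\widetilde{\beta}-\widetilde{\alpha}_k)\equiv\mathcal{s}\!\left((\upsilon^*f)^{\alpha_k}_1(\beta-\alpha_k)\right)=\beta-\alpha_k\pmod{\F_{k+1}\g},
\]
whence $\alpha_{k+1}:=\lambda_{k+1}\cdot\alpha_k$ satisfies $\beta-\alpha_{k+1}\in\F_{k+1}\g$ via the leading-order formula for the gauge action.

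The auxiliary gauge $\widetilde{\mu}_{k+1}$ would be produced as follows. Using the functoriality of $\R$ with respect to $\infty_\pi$-morphisms (\cref{prop:Functoriality}), I would transport the $1$-simplex of $\R(\g)$ encoding $\lambda_{k+1}$ into a gauge $\widetilde{\lambda}_{k+1}\in\F_k\h$ from $\widetilde{\alpha}_k$ to $\widetilde{\alpha}_{k+1}$; the filtration bound follows from the tree-wise formula \cref{Eq:FuncR}, since each contributing tree carries at least one leaf labeled by $\lambda_{k+1}\in\F_k\g$. Filling the resulting $2$-horn with edges $\widetilde{\lambda}_{k+1}$ and $\widetilde{\mu}_k$ and a $2$-cell $\widetilde{\rho}\in\F_k\h$, the higher BCH output $\widetilde{\mu}_{k+1}$ lies in $\F_k\h$; by \cref{lemma:first terms of BCH}, it will in fact lie in $\F_{k+1}\h$ provided $\widetilde{\rho}$ is chosen so that $\widetilde{\mu}_k-\widetilde{\lambda}_{k+1}\equiv\pm d\widetilde{\rho}\pmod{\F_{k+1}\h}$. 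The hard part will be establishing the existence of such $\widetilde{\rho}$, i.e.\ the exactness of the cycle $\widetilde{\mu}_k-\widetilde{\lambda}_{k+1}$ in the graded piece $(\F_k\h/\F_{k+1}\h,d)$; this is where the precise form of condition~(3) of an $\infty_\pi$-retract is essential, to be exploited together with the identity $\mathcal{s}(\widetilde{\mu}_k-\widetilde{\lambda}_{k+1})\in\F_{k+1}\g$ (coming from $\mathcal{s}\circ(\upsilon^*f)^{\alpha_k}_1=\id$) in order to transfer the exactness already established on the $\g$-side back to $\h$ through the section provided by $(\upsilon^*f)^{\alpha_k}_1$.
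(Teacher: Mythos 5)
Your induction collapses at the step where you produce $\widetilde{\mu}_{k+1}\in\F_{k+1}\h$, and the failure is structural rather than technical. In \cref{lemma: pi0f injective}, the existence of the correcting pair $(\theta,\sigma)$ --- and hence of a $2$-cell making the new gauge land one filtration step deeper --- rests on the \emph{surjectivity} of $f_{|}$ on the homology of the graded pieces $\F_k\h/\F_{k+1}\h$~, which is exactly what a filtered quasi-isomorphism supplies. An $\infty_\pi$-retraction supplies no surjectivity whatsoever: the cycle $\widetilde{\mu}_k-\widetilde{\lambda}_{k+1}$ has no reason to lie in the image of $(\upsilon^*f)^{\alpha_k}_1$~, and the fact that $\mathcal{s}$ sends it into $\F_{k+1}\g$ only says that the retraction kills it, not that it is a boundary in $(\F_k\h/\F_{k+1}\h,d)$ (think of $\h$ containing a complement of the image of $f$ on which $\mathcal{s}$ vanishes and which carries non-trivial cycles). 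So no choice of $\widetilde{\rho}$ can be guaranteed to exist, and since your derivation of $d^{\widetilde{\alpha}_k}\widetilde{\mu}_k\equiv\widetilde{\beta}-\widetilde{\alpha}_k \bmod \F_{k+1}\h$ itself requires $\widetilde{\mu}_k\in\F_k\h$~, the whole induction --- not just its last step --- breaks down already at $k=1$~.

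The way out is to stop asking for filtration control on the gauges in $\h$~. First reduce to $\beta=0$ by twisting (the retraction property is stable under twisting), so that $\alpha_k\in\F_k\g$ forces $\widetilde{\alpha}_k\in\F_k\h$ and the relation $\widetilde{\mu}_k\cdot\widetilde{\alpha}_k=0$ yields $d\widetilde{\mu}_k\in\F_k\h$ for free --- this weaker statement is all one ever needs on the $\h$-side. Then fill the $2$-horn with the \emph{canonical} filler $0$ rather than a cleverly chosen $\widetilde{\rho}$~, and transfer the filtration estimate to the $\g$-side where the retraction identity actually bites: from $\widetilde{\lambda}_{k+1}\equiv f_{|}(\lambda_{k+1})$ and the leading terms of the BCH product one gets $\widetilde{\mu}_{k+1}\equiv\widetilde{\mu}_k-f_{|}\mathcal{s}(\widetilde{\mu}_k)\bmod\F_{k+1}\h$~, whence $\lambda_{k+2}=\mathcal{s}(\widetilde{\mu}_{k+1})\equiv\mathcal{s}(\widetilde{\mu}_k)-\mathcal{s}f_{|}\mathcal{s}(\widetilde{\mu}_k)=0\bmod\F_{k+1}\g$ by $\mathcal{s}f_{|}=\id$~; similarly $\alpha_{k+1}\equiv\alpha_k+d\mathcal{s}(\widetilde{\mu}_k)\equiv\mathcal{s}(\widetilde{\alpha}_k+d\widetilde{\mu}_k)\equiv0\bmod\F_{k+1}\g$ using conditions (2) and (3). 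One then concludes with \cref{lemma:sequence of gauges} as you intended. Your computation identifying $d^{\alpha_k}\lambda_{k+1}$ with $\beta-\alpha_k$ is essentially the untwisted version of this last congruence, so the first two thirds of your outline survive; it is only the insistence on $\widetilde{\mu}_k\in\F_k\h$ that must be abandoned.
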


\begin{proof}
	The pattern of the proof is similar to the one of \cref{lemma: pi0f injective}. Recall that  
	\cref{cor:pi0} gives $\pi_0\R(\g)\cong \mathcal{MC}(\g)$~, so we have to prove that if 
	$\alpha, \beta\in \MC(\g)$ are such that $\widetilde{\alpha}\coloneqq \upsilon^*f(\alpha)$ and 
	$\widetilde{\beta}\coloneqq \upsilon^*f(\beta)$ are gauge equivalent in $\h$~, then they are also gauge equivalent in $\g$~. By noticing that the property of being an $\infty_\pi$-retract is preserved by the twisting procedure, we can always reduce to the case where $\beta = 0$ by considering the twisted $\sLi$-algebra $\g^\beta$~. 
	
	\medskip
	
	The geometric idea 	
	behind the proof of injectivity on the $0$th homotopy groups 
	\[
	\pi_0\R(f) \ \colon\  \pi_0\R(\g)\cong \mathcal{MC}(\g)\xhookrightarrow{\quad}\pi_0\R(\h)\cong \mathcal{MC}(\h)
	\]
	is given by the following drawing.
	\[
	\begin{tikzpicture}
	\def\bd{4};
	\def\angle{75}
	
	\def\xxn{-3};
	\def\xxnp{1};
	\def\xzero{3};
	
	\pgfmathsetmacro{\r}{\bd/sin(\angle)};
	\def\cy{-\r-1.5};
	
	\pgfmathsetmacro{\alphatxn}{acos(\xxn/\r)};
	\pgfmathsetmacro{\alphatxnp}{acos(\xxnp/\r)};
	\pgfmathsetmacro{\alphaz}{acos(\xzero/\r)};
	\pgfmathsetmacro{\alphamid}{(\alphatxn+\alphatxnp)/2};
	
	\begin{scope}[decoration={
		markings,
		mark=at position 0.55 with {\arrow{>}}},
	line width=2
	]
	
	\node[above left, cyan] at (-\bd,1.5) {$\g$};
	
	\coordinate (xn) at (\xxn,1.5);
	\coordinate (xn+1) at (\xxnp,1.5);
	\coordinate (zero) at (\xzero,1.5);
	
	\draw[cyan] (-\bd,1.5)--(\bd,01.5);
	
	\node at (xn)[circle, fill, inner sep=1pt]{};
	\node at (xn)[above]{$\alpha_k$};
	\node at (xn+1)[circle, fill, inner sep=1pt]{};
	\node at (xn+1)[above]{$\alpha_{k+1}$};
	\node at (zero)[circle, fill, inner sep=1pt]{};
	\node at (zero)[above]{$0$};
	
	\draw[line width=1, postaction={decorate}] (xn)--node[above]{$\lambda_{k+1}\coloneqq \mathcal{s}\big(\widetilde{\mu}_k\big)$}(xn+1);
	
	\end{scope}
	
	\draw[dashed] ({-\bd-1},0)--({\bd+1},0);
	
	\draw[|->] (-0.5,0.75) to node[below left]{$\R(f)$} (-0.5, -0.75);
	\draw[|->] (0.5,-0.75) to node[above right]{$\mathcal{s}$} (0.5, 0.75);
	
	\begin{scope}[decoration={
		markings,
		mark=at position 0.55 with {\arrow{>}}},
	line width=2
	]
	
	\coordinate (txn) at ($(\alphatxn:\r) + (0,\cy)$);
	\coordinate (txnp) at ($(\alphatxnp:\r) + (0,\cy)$);
	\coordinate (tz) at ($(\alphaz:\r) + (0,\cy)$);
	
	\draw[cyan, domain=20:160] plot ({\r*cos(\x)}, {\r*sin(\x) + \cy});
	
	\draw[dashed, red, line width=1, postaction={decorate}] (txnp) to[out=-40, in=170] node[left]{$\widetilde{\mu}_{k+1}\ $}(tz);
	
	\node at (txn)[circle, fill, inner sep=1pt]{};
	\node at (txn)[above left]{$\widetilde{\alpha}_k$};
	\node at (txnp)[circle, fill, inner sep=1pt]{};
	\node at (txnp)[above]{$\widetilde{\alpha}_{k+1}$};
	\node at (tz)[circle, fill, inner sep=1pt]{};
	\node at (tz)[above right]{$0$};
	
	\draw[line width=1, domain=\alphatxn:\alphatxnp, postaction={decorate}] plot ({\r*cos(\x)}, {\r*sin(\x) + \cy});
	
	\node[above left] at ($(\alphamid:\r) + (0,\cy)$) {$\widetilde{\lambda}_{k+1}$};
	
	\draw[dashed, line width=1, postaction={decorate}] (txn) to[out=-10, in=190] node[below]{$\widetilde{\mu}_k$}(tz);
	
	\pgfmathsetmacro{\cc}{cos(20)};
	\pgfmathsetmacro{\ss}{sin(20)};
	\node[left, cyan] at ($(-\r*\cc, \r*\ss + \cy)$) {$\g$};
	
	\node[left] at ({-\r*\cc},-1) {$\h$};
	
	\end{scope}
	\end{tikzpicture}
	\]
	The upper half of the drawing represents $\g$~, and the bottom half represents $\h$ with the image of $\g$ under $\R(f)$ drawn in blue. Whenever an element denoted by a letter is present in the upper half of the drawing, it is mapped under $\R(f)$ to the element denoted by the same letter and a tilde in the bottom half; the only element not following this convention are the $\widetilde{\mu}$'s which are only present in the bottom half. 
	
	\medskip
	
	Let us start with a Maurer--Cartan element $\alpha_1\in\MC(\g)$ such that $\widetilde{\alpha}_1\coloneqq\upsilon^*f(\alpha_1)$ is gauge equivalent to $0$ via a gauge $\widetilde{\mu}_1\in\h_1$~. By induction, we construct the following sequences 
	\[\lambda_{k+1}\coloneqq \mathcal{s}\big(\widetilde{\mu}_k\big)\ , \quad \alpha_{k+1}\coloneqq \lambda_k\cdot \alpha_k\ , \quad \widetilde{\alpha}_{k+1}\coloneqq\upsilon^*f(\alpha_{k+1})\ ,\]
	$ \widetilde{\lambda}_{k+1}\in\h_1$ as the element satisfying
	\[
	\R(f)_1(\alpha_k\otimes\omega_0 + \lambda_{k+1}\otimes\omega_{01} + \alpha_{k+1}\otimes\omega_1) = 
	\widetilde{\alpha}_k\otimes\omega_0 + \widetilde{\lambda}_{k+1}\otimes\omega_{01} + \widetilde{\alpha}_{k+1}\otimes\omega_1\ ,
	\]
	and $\widetilde{\mu}_{k+1}\in\h_1$ by the BCH product given by filling the horn
	\[
	\begin{tikzpicture}
	
	\coordinate (v0) at (210:1.5);
	\coordinate (v1) at (90:1.5);
	\coordinate (v2) at (-30:1.5);
	
	\draw[line width=1, red, dashed] (v0)--(v1)--(v2)--cycle;
	\draw[line width=1] (v2)--(v0)--(v1);
	
	\begin{scope}[decoration={
		markings,
		mark=at position 0.55 with {\arrow{>}}},
	line width=1
	]
	
	\path[postaction={decorate}] (v0) -- (v1);
	\path[postaction={decorate}, red] (v1) -- (v2);
	\path[postaction={decorate}] (v0) -- (v2);
	
	\end{scope}
	
	
	\node at ($(v0) + (30:-0.3)$) {$\widetilde{\alpha}_k$};
	\node at ($(v1) + (0,0.3)$) {$\widetilde{\alpha}_{k+1}$};
	\node at ($(v2) + (-30:0.3)$) {$0$};
	
	\node at ($(v0)!0.5!(v1) + (-30:-0.4) + (-0.1,0)$) {$\widetilde{\lambda}_{k+1}\ \ $};
	\node[red] at ($(v1)!0.5!(v2) + (30:0.4) + (0.35,0)$) {$\widetilde{\mu}_{k+1}$};
	\node at ($(v0)!0.5!(v2) + (0,-0.4) + (0,-0.1)$) {$\widetilde{\mu}_k$};
	
	\node at (0,0) {$0$};
	
	\end{tikzpicture}
	\]
	by zero, or in other words $\widetilde{\mu}_{k+1}\coloneqq\Gamma^2_0\big(\widetilde{\lambda}_{k+1}, \widetilde{\mu}_{k}\big)$~.
	
	\medskip
	
	We will show by induction on $k\geqslant 1$~, that $\alpha_k, \lambda_{k+1}\in\F_k\g$~. As an auxiliary step, we will also prove  that $d\widetilde{\mu}_k\in\F_k\h$~, for $k\geqslant 1$~. The base case $k=1$ is trivial. We assume that the result hold up to $k$ and we prove it for $k+1$~.
	
	\medskip
	
	By construction, we have $\widetilde{\mu}_k\cdot\widetilde{\alpha}_k = 0$ and, by assumption, we get $\widetilde{\alpha}_k\in\F_k\h$~; this implies 
	\[
	0=\widetilde{\mu}_k\cdot\widetilde{\alpha}_k \equiv d\widetilde{\mu}_k \mod \F_k\h\ ,
	\]
	so that $d\widetilde{\mu}_k\in\F_k\h$~.
	
	\medskip
	
	It is straightforward to check from \cref{prop:Diffmc1} that $\widetilde{\lambda}_{k+1}\equiv f_{|}(\lambda_{k+1})\mod\F_{k+1}\h$~; furthermore, by the BCH product \cref{eq:BCH first terms} and the definition of $\widetilde{\mu}_{k+1}$~, we have
	\[
	\widetilde{\mu}_{k+1} \equiv \widetilde{\mu}_k - \widetilde{\lambda}_{k+1} \equiv \widetilde{\mu}_k - f_{|}\mathcal{s}\big(\widetilde{\mu}_k\big) \mod\F_{k+1}\h\ .
	\]
	Applying $\mathcal{s}$ on both sides, we obtain 
	\[
	\lambda_{k+2} = \mathcal{s}\big(\widetilde{\mu}_{k+1}\big)\equiv \mathcal{s}\big(\widetilde{\mu}_k\big) - \mathcal{s}f_{|}\mathcal{s}\big(\widetilde{\mu}_k\big) = 0 \mod\F_{k+1}\g\ ,
	\]
	showing that $\lambda_{k+2}\in\F_{k+1}\g$~.
	
	\medskip
	
	Finally, by construction, we have $\widetilde{\mu}_k\cdot\widetilde{\alpha}_k = 0$~, which implies 
	\[
	0\equiv\widetilde{\alpha}_k + d\widetilde{\mu}_k\equiv f_{|}(\alpha_k)+d\widetilde{\mu}_k\mod\F_{k+1}\h\ .
	\]
	Applying $\mathcal{s}$ on both sides, we obtain 
	\[
	0 \equiv 
	\alpha_k + \mathcal{s}\big(d\widetilde{\mu}_k\big) \equiv 
	\alpha_k + d\mathcal{s}\big(\widetilde{\mu}_k\big) = \alpha_k + d\lambda_{k+1} \equiv \lambda_{k+1}\cdot \alpha_k\equiv \alpha_{k+1}\mod\F_{k+1}\g\ ,
	\]
	where in the second equivalence we used the fact that $d\widetilde{\mu}_k\in\F_k\h$ together with point (\ref{pt3 def retraction}) of the definition of an $\infty_\pi$-retraction. This shows that $\alpha_{k+1}\in\F_{k+1}\g$~.
	
	\medskip
	
	We find ourselves in the situation of \cref{lemma:sequence of gauges}. Therefore,  $\alpha_1$ is gauge equivalent to $0$~.	
\end{proof}

The condition such that an $\infty_\pi$-morphism $f:\g\rightsquigarrow\h$ induces injections on the higher homotopy groups
\[
\pi_n(\R(f)):\pi_n(\R(\g), \alpha)\hookrightarrow\pi_n(\R(\h),\upsilon^*f(\alpha))
\]
at a base point $\alpha\in\MC(\g)$ and for some $n\geqslant1$ are obvious from \cref{thm:Berglund}: one just needs to require that
\[
H_n\left((\upsilon^*f)^\alpha_1\right)\, \colon\,  H_n(\g^\alpha)\hookrightarrow 
H_n\Big(\h^{\upsilon^*f(\alpha)}\Big)
\]
is injective.

\begin{remark}
	The above result includes the case of differential graded Lie algebras and strict morphisms covered by  of \cite[Theorem~1.7]{CPRNW19}.
\end{remark}

\subsection{Model category structure}\label{subsec:ModCat}
Let us first recall the classical model category structure on the category $\sSe$ of simplicial sets due to Kan and Quillen. We refer the reader to the monographs \cite{Hovey99, GoerssJardine09} for more details. 

\begin{theorem}\cite{Quillen67}\label{thm:QuillenSet}
	The following three classes of maps form a model category structure on the category $\sSe$ of simplicial sets: 
	\begin{itemize}
		\item[$\diamond$] a morphism $f  \colon  X_\bullet\to Y_\bullet$ of simplicial sets is a \emph{weak equivalence} if it induces a weak homotopy equivalence of topological spaces $|f|\ :\ |X_\bullet| \xrightarrow{\sim} |Y_\bullet|$ between the associated geometric realization,		
		\item[$\diamond$] a morphism $f  :  X_\bullet\hookrightarrow Y_\bullet$ of simplicial sets is a \emph{cofibration} if it is made up of injective maps 
		$f_n  :  X_n\hookrightarrow Y_n$  
		in each simplicial degree, and	
		\item[$\diamond$] a morphism $f  :  X_\bullet\twoheadrightarrow Y_\bullet$ of simplicial sets is a \emph{fibration} if it satisfies the right lifting property with respect to  (acyclic) horn inclusions:
		\[
		\vcenter{\hbox{
				\begin{tikzpicture}
				\node (a) at (0,0) {$\Ho{k}{n}$};
				\node (b) at (2.5,0) {$X_\bullet$};
				\node (c) at (0,-2) {$\De{n}$};
				\node (d) at (2.5,-2) {$Y_\bullet$};
				
				\draw[->] (a) to (b);
				\draw[right hook->] (a) to node[below, sloped]{$\sim$} (c);
				\draw[->] (b) to node[right]{$f$} (d);
				\draw[->, dashed] (c) to node[above left]{$\exists$} (b);
				\draw[->] (c) to (d);
				\end{tikzpicture}
		}}
		\]
	\end{itemize}
	This model category structure is cofibrantly generated by the following classes of maps:
	\begin{itemize}
		\item[$\diamond$] the boundary inclusions $\partial \De{n} \hookrightarrow \De{n}$ are the generating cofibrations and		
		\item[$\diamond$] the horn inclusions $\Ho{k}{n}  \stackrel{\sim}{\hookrightarrow} \De{n}$ are the generating acyclic cofibrations. 
	\end{itemize}
\end{theorem}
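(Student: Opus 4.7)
The plan is to verify the four axioms of a model category (two-out-of-three, retracts, lifting, and factorization) together with the cofibrant generation, following Quillen's original strategy. The two-out-of-three and retract axioms for weak equivalences follow directly from the corresponding properties of weak homotopy equivalences of topological spaces once one knows that geometric realization is well-behaved; similarly, the closure of (acyclic) fibrations under retracts is formal from their definition by a right lifting property, and the closure of cofibrations under retracts is immediate from their degreewise characterization as injections.

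For the factorization axioms I would invoke Quillen's small object argument applied to the two proposed generating sets $I = \{\partial\Delta^n\hookrightarrow\Delta^n\}_{n\geqslant 0}$ and $J = \{\Lambda^n_k\hookrightarrow\Delta^n\}_{n\geqslant 1,\,0\leqslant k\leqslant n}$. Since both sets consist of inclusions of finite simplicial sets, smallness is automatic, so every map admits factorizations $(\text{cof})\circ(I\text{-inj})$ and $(J\text{-cell})\circ(J\text{-inj})$. The first yields a ``cofibration followed by a map with the right lifting property against $I$,'' and the second yields ``a transfinite composition of pushouts of horn inclusions followed by a Kan fibration.'' To match these with the model-category factorizations, I would next show the two central characterizations: (a) a map is in $I$-inj if and only if it is a Kan fibration and a weak equivalence, and (b) a map is a cofibration and a weak equivalence if and only if it is in $J$-cell (up to retracts).

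The lifting axioms then reduce to these two characterizations via the standard retract argument. The direction $J$-cell $\subset$ (cofibration $\cap$ weak equivalence) in (b) follows because each horn inclusion is a cofibration and an anodyne extension, and both classes are closed under pushouts and transfinite compositions; anodyneness together with geometric realization of horn inclusions being an acyclic cofibration of topological spaces gives the weak equivalence statement. The statement (a) similarly follows from the observation that $\partial\Delta^n\hookrightarrow\Delta^n$ realizes to a cofibration of CW complexes, so any map with RLP against $I$ has trivial relative homotopy groups and is thus a weak equivalence; conversely, such a lifting property is equivalent to being a Kan fibration together with being a weak equivalence, by a fiberwise argument.

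The main obstacle will be proving the converse in (b): that every acyclic cofibration is built from horn inclusions, equivalently that every Kan fibration which is also a weak equivalence has RLP against all of $I$. This is the classical hard core of Quillen's theorem, and I would handle it via the theory of \emph{minimal Kan fibrations}: first show that every Kan fibration factors, up to fiberwise homotopy equivalence, as a trivial fibration followed by a minimal fibration (``minimal fibration decomposition''); then show that a minimal Kan fibration which is a weak equivalence is in fact an isomorphism, using that fibers are Kan complexes and that minimality rigidifies the homotopy equivalences on fibers. Combining these two results yields the characterization in (a), and the generating acyclic cofibrations statement in (b) then follows from the small object factorization together with a retract argument. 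Finally, the cofibrant generation by $I$ and $J$ is a packaged consequence of the above and Quillen's recognition theorem for cofibrantly generated model categories.
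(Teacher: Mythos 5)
The paper does not prove this statement: it is recalled as a classical theorem of Quillen, with a citation and a pointer to the monographs of Hovey and Goerss--Jardine, so there is no internal proof to compare yours against. Your outline is the standard classical argument (Quillen's original route, as presented in Goerss--Jardine, Chapter~I) and it is sound as a roadmap: small object argument for the two factorizations, the retract argument to reduce the lifting axioms to the two characterizations (a) and (b), and minimal Kan fibrations to handle the hard implication that a Kan fibration which is a weak equivalence lifts against $\partial\De{n}\hookrightarrow\De{n}$. If you were to write this out, two inputs in your sketch carry more weight than you give them. First, the identification of the cofibrations (degreewise injections) with the $I$-cofibrations requires the skeletal induction showing that every monomorphism of simplicial sets is a transfinite composition of pushouts of boundary inclusions; without this, ``$I$-inj $=$ acyclic fibrations'' does not yet say that acyclic fibrations lift against \emph{all} cofibrations. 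Second, your ``fiberwise argument'' comparing the simplicial notion of weak equivalence (defined through $|{-}|$) with statements about fibers needs Quillen's theorem that the geometric realization of a Kan fibration is a Serre fibration; this itself rests on the minimal-fibration theory (minimal fibrations are fiber bundles), so it should be folded into, or cited alongside, your minimal-fibration step rather than treated as routine. Finally, a small but genuine point of conventions: the generating acyclic cofibrations must include the horns $\Ho{k}{n}$ for $n=1$ (your indexing $n\geqslant 1$ is the correct one), whereas the paper's own definition of a horn starts at $n\geqslant 2$; the map $\{0\}\hookrightarrow\De{1}$ has the right lifting property against all horns of dimension at least $2$ but is not a Kan fibration, so the $1$-dimensional horns cannot be omitted.
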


We now transfer this cofibrantly generated model category structure to complete $\sLi$-al\-ge\-bras under the 
$\sLi$-algebra functor $\Li$~.

\begin{theorem}\label{thm:MConSLi}
	The following three classes of maps form a model category structure on the category of complete $\sLi$-algebras:
	\begin{itemize}
		\item[$\diamond$] a morphism $f  :  \g \to \h$ of complete $\sLi$-algebras is a weak equivalence if 
		it induces a bijection 
		\[\mathcal{MC}(f): \mathcal{MC}(\mathcal{\g})\xrightarrow{\cong}\mathcal{MC}(\h)\]
		between the associated moduli spaces of Maurer--Cartan elements and isomorphisms 
		\[{H}_n\left(f^\alpha\right) : {H_n}\left(\g^\alpha\right) \xrightarrow{\cong} {H_n}\big(\h^{f(\alpha)}\big)\ , \]
		for any Maurer--Cartan element $\alpha \in \MC(\g)$ and any $n\geqslant 1$~,
		
		\item[$\diamond$] a morphism $f  :  \g\twoheadrightarrow \h$ of complete $\sLi$-algebras is a fibration  if it is surjective in all degrees $n\geqslant2$~, and		
		\item[$\diamond$] a morphism $f  :  \g \hookrightarrow \h$ of complete $\sLi$-algebras is a cofibration if 
		it satisfies the right lifting property with respect to acyclic fibrations. 
		
	\end{itemize}
	This model category structure is cofibrantly generated by the following classes of maps:
	\begin{itemize}
		\item[$\diamond$] the boundary inclusions $\Li(\partial \De{n}) \hookrightarrow \Li(\De{n})$ are the generating cofibrations, 
		
		\item[$\diamond$] the horn inclusions $\Li(\Ho{k}{n})  \hookrightarrow \Li(\De{n})$ are the generating acyclic cofibrations. 
	\end{itemize}
\end{theorem}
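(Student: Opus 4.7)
I would prove this result by applying a right transfer (Kan's transfer theorem, see e.g.\ Hirschhorn~\cite[Theorem~11.3.2]{Hirschhorn03}, for want of a reference defined in this excerpt) to the adjunction $\Li\dashv \R$ of \cref{thm:MainAdjunction}, pulling back along $\R$ the Kan--Quillen model structure on $\sSe$ recalled in \cref{thm:QuillenSet}. The first step is to reconcile the three descriptive classes in the statement with the transferred classes. For fibrations, a strict morphism $f\colon\g\to\h$ has the right lifting property with respect to $\Li(\Lambda^n_k)\hookrightarrow\Li(\Delta^n)$ if and only if $\R(f)$ has the right lifting property with respect to $\Lambda^n_k\hookrightarrow\Delta^n$; by the canonical parametrization of horn fillers established in \cref{thm:KanExt}, this is equivalent to surjectivity of $f_n\colon\g_n\to\h_n$ for every $n\geqslant 2$. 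For weak equivalences, since $\R(\g)$ is always a Kan complex, $\R(f)$ is a weak equivalence precisely when it induces a bijection on $\pi_0$ and an isomorphism on each $\pi_n$ at every base point; combining $\pi_0(\R(\g))\cong\mathcal{MC}(\g)$ from \cref{cor:pi0} with the Berglund isomorphism $\pi_n(\R(\g),\alpha)\cong H_n(\g^\alpha)$ from \cref{thm:Berglund} identifies this property with the condition stated in the theorem.

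The next step is to check the hypotheses of the transfer theorem. Bicompleteness of $\sLialg$ is \cref{prop:CatCocomp}. The simplicial sets $\partial\Delta^n$ and $\Lambda^n_k$ are finite, hence small; since $\Li$ is a left adjoint it preserves both colimits and smallness, so both $\Li(I)$ and $\Li(J)$ admit the small object argument. What remains, and what constitutes the genuine content of the theorem, is to verify that every relative $\Li(J)$-cell complex is sent by $\R$ to a weak equivalence of simplicial sets.

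For a single pushout, the key input is \cref{lemm:Fundamental}, which gives the coproduct decomposition
\[
\Li(\Delta^n)\cong \Li(\Lambda^n_k)\,\widehat{\sqcup}\,\widehat{\sLi}(u,du),\qquad d(u)=du.
\]
Consequently, any pushout of a generating acyclic cofibration along a morphism $\Li(\Lambda^n_k)\to\h$ is, up to isomorphism, the canonical inclusion
\[
\h \,\longrightarrow\, \h\,\widehat{\sqcup}\,\widehat{\sLi}(u,du).
\]
Since the quasi-free complete $\sLi$-algebra $\widehat{\sLi}(u,du)$ has acyclic underlying chain complex, a spectral-sequence argument on the arity filtration (analogous to the one used in the proof of \cref{prop:EpsiResFonc}) shows that this inclusion is a filtered strict quasi-isomorphism, which is sent to a weak equivalence of simplicial sets by \cref{thm:HoInvariance}. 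The general case of a transfinite composition of such pushouts requires verifying that the class of filtered strict quasi-isomorphisms is closed under transfinite composition, which follows from completeness and exhaustiveness of the filtrations on the algebras involved (again reducing to an Eilenberg--Moore-type comparison argument on each filtration layer).

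The hardest step, and the one deserving the most care in the full write-up, is the last one: showing that $\R$ sends every relative $\Li(J)$-cell complex -- not just a single pushout, but an arbitrary transfinite composition of them along possibly non-trivial attaching maps -- to a weak equivalence. The individual pushout is handled cleanly by \cref{lemm:Fundamental} and the homotopy invariance theorem, but the transfinite step requires one to track how the filtration shifts under iterated coproducts and to invoke the Eilenberg--Moore Comparison Theorem together with the fact that $\R$ commutes with filtered limits. Once this is established, Kan's transfer theorem yields the cofibrantly generated model structure with the prescribed generating sets, and the identification of the three classes of maps completes the proof.
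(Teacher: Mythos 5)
Your identification of the three classes of maps coincides with the paper's: fibrations via the right lifting property against $\Li\big(\Ho{k}{n}\big)\hookrightarrow\Li\big(\De{n}\big)$ reduced to degreewise surjectivity in degrees $\geqslant 2$ through \cref{lemm:Fundamental}, and weak equivalences read off from $\pi_0(\R(\g))\cong\mathcal{MC}(\g)$ (\cref{cor:pi0}) and Berglund's isomorphism (\cref{thm:Berglund}). Where you diverge is in how the transfer theorem's acyclicity condition is discharged. The paper invokes the Quillen/Berger--Moerdijk form of the transfer \cite{Quillen67,BergerMoerdijk03}, under which it suffices to check that every object is fibrant (immediate from \cref{lemm:Fundamental}) and that there is a functorial good path object, supplied by $\g\,\widehat{\otimes}\,\Omega_1$ together with \cref{thm:HoInvariance}. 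This completely bypasses any analysis of pushouts of generating acyclic cofibrations or of transfinite compositions. You instead attack the acyclicity condition head-on, which is a legitimate alternative but is precisely the part of your argument that is not carried out.

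Two concrete points in that last step need repair. First, the assertion that the canonical inclusion $\h\to\h\,\widehat{\sqcup}\,\widehat{\sLi}(u,du)$ is a \emph{filtered} strict quasi-isomorphism is not a formal consequence of the acyclicity of $\widehat{\sLi}(u,du)$: the coproduct of complete $\sLi$-algebras is a completed free product whose underlying filtered chain complex is not a direct sum, and one must actually exhibit a filtration-compatible contraction (or an Eilenberg--Moore argument on the quotients $-/\F_n$) to conclude; nothing in the paper provides this, and it is the genuinely hard computation your route requires. Second, your treatment of transfinite compositions appeals to ``the fact that $\R$ commutes with filtered limits,'' but a transfinite composition is a \emph{colimit}, which the right adjoint $\R$ has no reason to preserve; what you actually need is that the class of filtered quasi-isomorphisms that are split injections with acyclic cokernel is closed under transfinite composition in the category of \emph{complete} algebras, where sequential colimits involve re-completion and can a priori destroy quasi-isomorphisms. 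Until these two points are established, the proposal does not yet yield the theorem; adopting the paper's path-object criterion is the efficient way to avoid both.
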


\begin{proof}
	Let us first  make the class of  fibrations explicit. In a candidate for a transferred cofibrantly generated model category structure under  a left adjoint functor, the class of fibrations is the class of maps $f  :  \g\to \h$ whose image $\R(f)$ under the right adjoint functor is a fibration of simplicial sets, that is the ones which  satisfy the right lifting property with respect to the acyclic horn inclusions. We use the same arguments as in the proof of \cref{thm:KanExt}. First, we consider the equivalent problem on the level of complete $\sLi$-algebras under the adjunction of \cref{thm:MainAdjunction}:
	\[
	\vcenter{\hbox{
		\begin{tikzpicture}
			\node (a) at (0,0) {$\Li\big(\Ho{k}{n}\big)$};
			\node (b) at (2.5,0) {$\g$};
			\node (c) at (0,-2) {$\Li\big(\De{n}\big)$};
			\node (d) at (2.5,-2) {$\h$};
			
			\draw[->] (a) to (b);
			\draw[right hook->] (a) to node[below, sloped]{$\sim$} (c);
			\draw[->] (b) to node[right]{$f$} (d);
			\draw[->, dashed] (c) to node[above left]{$\exists$} (b);
			\draw[->] (c) to (d);
		\end{tikzpicture}
	}}
	\]
	Then, using \cref{lemm:Fundamental}, we conclude that the maps for which such lifts exist are the morphisms $f  :  \g \to \h$ of complete $\sLi$-algebras which are surjective in degrees $n\geqslant2$~. 
	
	\medskip
	
	Let us now make weak equivalences explicit. Again, in a candidate for a transferred cofibrantly generated model category structure under  a left adjoint functor, the class of weak equivalences is the class of maps $f  :  \g\to \h$ whose image $\R(f)$ under the right adjoint functor is a weak equivalence of simplicial sets. \cref{cor:pi0} asserts that there is a natural bijection $\pi_0(\R(\g))\cong \mathcal{MC}(g)$ and \cref{thm:Berglund} asserts that there are natural isomorphisms 
	$\pi_n(\R(\g), \alpha)\cong \mathrm{H}_n(\g^\alpha)$~,  for any $\alpha \in \MC(\g)$ and any $n\geqslant 1$~, which concludes the proof of this point. 
	
	\medskip
	
	We are now ready to prove that this is actually a model category structure. To do so, we use Quillen's transfer theorem under a left adjoint functor \cite[Section~II.4]{Quillen67}. The version used here follows from  \cite[Section~2.6]{BergerMoerdijk03}. We need to prove the following four facts. 
	\begin{description}
		\item[\rm (1) \it  The category of complete $\sLi$-algebras is complete and cocomplete] this result 
		was pro\-ved in \cref{prop:CatCocomp}. 
		
		\item[\rm (2) \it 
		The sets of generating (acyclic) cofibrations satisfies the small object argument]
		since the sets of complete $\sLi$-algebras 
		\[\{\Li(\partial \De{n})\}_{n\geqslant 0}\quad \text{and} \quad \{\Li(\Ho{k}{n})\}_{n\geqslant 2,\ k=0,\ldots, n}\]
		are countable, they satisfy the small object argument with respect to the generating (acylic) cofibrations
		\[\left\{\Li(\partial \De{n}) \hookrightarrow \Li(\De{n})\right\}_{n\geqslant 0} \quad 
		\text{and}\quad 
		\left\{\Li(\Ho{k}{n})  \hookrightarrow \Li(\De{n})\right\}_{n\geqslant 2,\ k=0,\ldots, n}~.
		\]
		
		\item[\rm (3) \it Every complete $\sLi$-algebra is fibrant] this is a direct corollary of 
		\cref{lemm:Fundamental}, see also \cref{thm:KanExt}. 
		
		\item[\rm (4) \it The category of complete $\L_\infty$-algebra has functorial good path objects] we consider the fol\-lo\-wing com\-posite of morphisms for any complete $\sLi$-al\-ge\-bra $\g$:
		\[
		\vcenter{\hbox{
			\begin{tikzpicture}
				\node (a) at (0,0){$\g$};
				\node (b) at (3,0){$\g \,\widehat{\otimes}\, \Omega_1$};
				\node (c) at (6,0){$\g \times \g$};
				\node (d) at (0,-0.75){$x$};
				\node (e) at (3,-0.75){$x\otimes1$};
				\node (f) at (3,-1.5){$x\otimes P(t)$};
				\node (g) at (6,-1.5){$\big(P(0)x, P(1)x\big)$};
				\node (h) at (3,-2.25){$x\otimes Q(t)dt$};
				\node (i) at (6,-2.25){$0\ ,$};
				
				\draw[->] (a) to node[above=-0.05]{$\sim$} (b);
				\draw[->>] (b) to (c);
				\draw[|->] (d) to (e);
				\draw[|->] (f) to (g);
				\draw[|->] (h) to (i);
			\end{tikzpicture}
		}}
		\]
		where we denote $t\coloneq t_1$ in $\Omega_1\cong \k[t, dt]$~. 
		The first map is a filtered quasi-isomorphism of complete $\sLi$-algebras, so it is a weak equivalence  by \cref{thm:HoInvariance}.
		The second map is an epimorphism and thus a fibration. So all together this construction forms a functorial path object. 
	\end{description}
\end{proof}

\begin{remark}
The model category structure of \cite[Theorem~5.2.28]{Bandiera14}, see also \cite[Theorem~3.1]{BFMT18} on complete $\sLie$-algebras is a model sub-category of the present one. 
\end{remark}

\begin{proposition}\label{prop:QuillAdjWE}
	The adjoint pair of functors $\Li\dashv \R$ forms a Quillen adjunction. The $\sLi$-algebra functor $\Li$ and the integration $\R$ preserve the respective weak equivalences. 
\end{proposition}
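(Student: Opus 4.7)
The plan is to handle the two claims in sequence: first establish the Quillen adjunction, then deduce preservation of weak equivalences on both sides, with the left-hand side following from Ken Brown's lemma.

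For the Quillen adjunction, the key observation is that by \cref{thm:MConSLi} the model structure on $\sLialg$ is cofibrantly generated with generating cofibrations $\left\{\Li(\partial \Delta^n) \hookrightarrow \Li(\Delta^n)\right\}_{n\geqslant 0}$ and generating acyclic cofibrations $\left\{\Li(\Lambda^n_k) \hookrightarrow \Li(\Delta^n)\right\}_{n\geqslant 2,\, 0\leqslant k\leqslant n}$; these are precisely the images under $\Li$ of the generating (acyclic) cofibrations of $\sSe$ recalled in \cref{thm:QuillenSet}. Since $\Li$ is a left adjoint it preserves colimits, hence it sends every (acyclic) cofibration of $\sSe$---each of which is a retract of a transfinite composition of pushouts of generating (acyclic) cofibrations---to an (acyclic) cofibration of $\sLialg$. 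This proves that $(\Li, \R)$ is a Quillen adjunction.

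For preservation of weak equivalences by $\R$, this is built into the construction. Indeed, the proof of \cref{thm:MConSLi} shows that the weak equivalences in $\sLialg$ were defined as the morphisms $f$ such that $\R(f)$ is a weak equivalence of simplicial sets: the description in terms of $\mathcal{MC}$ and homology is only an unpacked reformulation obtained via \cref{cor:pi0} and \cref{thm:Berglund}. Hence $\R$ tautologically takes weak equivalences of complete $\sLi$-algebras to weak equivalences of simplicial sets.

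For preservation of weak equivalences by $\Li$, the plan is to invoke Ken Brown's lemma: any left Quillen functor preserves weak equivalences between cofibrant objects. Combined with the Quillen adjunction just established, it suffices to check that every simplicial set is cofibrant in the Kan--Quillen model structure. This is immediate from \cref{thm:QuillenSet}: the cofibrations are precisely the level-wise injections, the initial object is the empty simplicial set $\emptyset$, and the unique map $\emptyset \hookrightarrow X_\bullet$ is injective for every simplicial set $X_\bullet$. Consequently, every object of $\sSe$ is cofibrant, and Ken Brown's lemma yields that $\Li$ preserves all weak equivalences of simplicial sets. No real obstacle arises here; the entire statement is a formal consequence of the transferred model structure of \cref{thm:MConSLi} together with the characterization of cofibrations in $\sSe$.
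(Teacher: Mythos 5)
Your proof is correct and follows essentially the same route as the paper: both arguments are formal consequences of the transferred model structure of \cref{thm:MConSLi}, with Ken Brown's lemma handling the preservation of weak equivalences by $\Li$. The only cosmetic difference is that you verify the Quillen adjunction by checking that $\Li$ sends generating (acyclic) cofibrations to generating (acyclic) cofibrations, whereas the paper uses the dual characterization that $\R$ preserves fibrations and weak equivalences by construction; these are equivalent and equally immediate.
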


\begin{proof}
	The first point is a direct corollary of \cref{thm:MConSLi} since the right adjoint functor $\R$ preserves weak equivalences and fibrations by construction. The $\sLi$-algebra functor $\Li$ preserves weak equivalences by Ken Brown's lemma \cite[Lemma~1.1.12]{Hovey99}. 
\end{proof}

\begin{proposition}\label{prop:C1PATH}
	The complete $\sLi$-algebra $\g\, \widehat{\otimes}^\pi \mathrm{C}_1$ is a functorial good path object  for 
	complete $\sLi$-algebras $\g$~. 
\end{proposition}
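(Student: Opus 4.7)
The plan is to exhibit the required factorization
\[
\g \xrightarrow{\ s_\g\ } \g\,\widehat{\otimes}^\pi \mathrm{C}_1 \xrightarrow{\ p_\g\ } \g\times \g
\]
of the diagonal by strict morphisms of complete $\sLi$-algebras, with $s_\g$ a weak equivalence and $p_\g$ a fibration in the sense of \cref{thm:MConSLi}. For $s_\g$ I would take $x\mapsto x\otimes(\omega_0+\omega_1)$, and for $p_\g$ the strict morphism induced by the two face maps $d_0,d_1\colon \mathrm{C}_1\to \mathrm{C}_0=\k$ of the simplicial object $\mathrm{C}_\bullet$, composed with the canonical identification $\g\,\widehat{\otimes}^\pi \mathrm{C}_0\cong \g$ (which holds because $h_0=0$ kills every tree with an internal edge in the van der Laan formula of \cref{thm:HTT}, so only corollas survive).

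The key observation is that $i_1(\omega_0+\omega_1)=t_0+t_1=1\in \Omega_1$, and hence the side condition $h_1 i_1=0$ forces $h_1(1)=0$. Combined with $p_1(1)=\omega_0+\omega_1$, the explicit formula for the transferred structure provided by \cref{thm:HTT} together with \cref{thm:convolution algebras and HTT} collapses, for each $m\geqslant 2$, to
\[
\ell_m^\pi\bigl(x_1\otimes(\omega_0+\omega_1),\ldots,x_m\otimes(\omega_0+\omega_1)\bigr)=\ell_m(x_1,\ldots,x_m)\otimes(\omega_0+\omega_1),
\]
since any tree with at least one internal edge contributes a factor of $h_1(1)=0$. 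This proves that $s_\g$ is a strict morphism. The map $p_\g$ is strict because, by \cref{prop:Simpli}, the face maps of $\mathrm{C}_\bullet$ are strict morphisms of $\Omega\Bar\com$-algebras, and they induce strict $\sLi$-morphisms upon tensoring with $\g$. Surjectivity of $p_\g$ in every degree is immediate (every $x\in\g_n$ is hit by $x\otimes \omega_0$ on the first factor and by $x\otimes \omega_1$ on the second), so by \cref{thm:MConSLi} it is a fibration. The composite $p_\g\circ s_\g$ equals the diagonal because $d_0(\omega_0+\omega_1)=d_1(\omega_0+\omega_1)=1$.

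It then remains to check that $s_\g$ is a weak equivalence in the model structure of \cref{thm:MConSLi}. The chain complex $\mathrm{C}_1$ has one-dimensional homology concentrated in degree $0$, generated by $[\omega_0+\omega_1]$, so the chain map $\k\to \mathrm{C}_1$, $1\mapsto \omega_0+\omega_1$, is a quasi-isomorphism. Since $s_\g$ is obtained by tensoring this map with $\g$ and respects the filtration induced by that of $\g$ (the associated graded of $s_\g$ is again the tensor product of the identity with $\k\hookrightarrow \mathrm{C}_1$, hence still a quasi-isomorphism), it is a filtered quasi-isomorphism, in particular a filtered $\infty_\pi$-quasi-isomorphism. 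By \cref{thm:HoInvariance} the morphism $\R(s_\g)$ is a weak equivalence of simplicial sets, which by construction of the model structure means precisely that $s_\g$ is a weak equivalence of complete $\sLi$-algebras. Functoriality is manifest since every construction involved is natural in $\g$. The main technical point — and the one I would expect to require the most care — is the verification that $s_\g$ is \emph{strict}, which crucially relies on the side condition $h_1(1)=0$ of Dupont's contraction; without this, only an $\infty$-morphism would be obtained and the candidate would fail to be a path object in the strict category $\sLialg$.
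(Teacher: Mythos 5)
Your proof is correct and follows essentially the same route as the paper: the same factorization of the diagonal through $x\mapsto x\otimes(\omega_0+\omega_1)$ and the map killing $\omega_{01}$, with the weak equivalence deduced from \cref{thm:HoInvariance} exactly as in the proof of \cref{thm:MConSLi} for $\g\,\widehat{\otimes}\,\Omega_1$. Your explicit verification that $x\mapsto x\otimes(\omega_0+\omega_1)$ is a \emph{strict} morphism via the side condition $h_1 i_1=0$ is a point the paper leaves implicit, and it is correctly carried out.
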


\begin{proof}
	The arguments are the same as above for the good path object
	$\g\, \widehat{\otimes}\, \Omega_1$ applied this time to the maps
	\[
	\vcenter{\hbox{
			\begin{tikzcd}[column sep=0.6cm, row sep=0cm]
			\g \arrow[r, "\sim"] & \g \,\widehat{\otimes}^\pi \mathrm{C}_1 
			\arrow[r, twoheadrightarrow]
			& \g \times \g
			\end{tikzcd}}}\]
	defined respectively by 
	$x\mapsto x\otimes (\omega_0+\omega_1)$ and by 
	$x\otimes \omega_0 \mapsto (x, 0)$~, 
	$x\otimes \omega_1 \mapsto (0, x)$~, 
	$x\otimes \omega_{01}\mapsto 0$~.
	The formula for the $\Cobar\Bar\com$-algebra structure given in \cref{subsec:HighLSalg} shows that the latter map  is indeed a morphism of $\sLi$-algebras. 
\end{proof}

Using this cylinder, a (right) homotopy between two morphisms $f, f'\colon \g \to \h$ of complete $\sLi$-algebras amount to the data of map $\g\to \h$ of degree $1$ satisfying some relations, that can be made explicit using \cref{prop:Diffmc1}. 

\begin{proposition}\label{prop:mc1Cylin}
	The complete $\sLi$-algebra $\mc^1$ is a very good cylinder of $\mc^0$~. 
\end{proposition}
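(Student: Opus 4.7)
The plan is to recognize both structural maps as images under the left adjoint $\Li$ of standard maps of simplicial sets, and then invoke the model structure of \cref{thm:MConSLi} together with \cref{prop:QuillAdjWE}. Explicitly, I would exhibit the two face inclusions $d_0,d_1\colon \mc^0\to\mc^1$ and the codegeneracy $s_0\colon \mc^1\to\mc^0$ (coming from the cosimplicial structure of $\mc^\bullet$) and factor the fold map as
\[
\mc^0\sqcup \mc^0\ \xhookrightarrow{\ (d_1,d_0)\ }\ \mc^1\ \xrightarrow{\ s_0\ }\ \mc^0\ .
\]
Since the functor $\Li\colon \sSe\to\sLialg$ is a left adjoint (\cref{thm:MainAdjunction}) it commutes with colimits, so $\mc^0\sqcup\mc^0\cong \Li(\De{0}\sqcup\De{0})=\Li(\partial\De{1})$ and the first arrow above is nothing but $\Li$ applied to the boundary inclusion $\partial\De{1}\hookrightarrow\De{1}$. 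By \cref{thm:MConSLi}, this is a generating cofibration of the model structure on $\sLialg$, hence a cofibration.

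Next, the map $s_0\colon \mc^1\to\mc^0$ is $\Li$ applied to the unique simplicial map $\De{1}\to\De{0}$, which is a weak equivalence of simplicial sets. By \cref{prop:QuillAdjWE}, the left adjoint $\Li$ preserves weak equivalences, so $s_0$ is a weak equivalence in $\sLialg$. It remains to show that $s_0$ is a fibration, i.e.\ surjective in every degree $n\geqslant 2$. Here one uses that $\mc^0\cong\widehat{\sLi}(a_0)$ is generated by a single degree $0$ element, so that the bracket operations $\ell_m$ (of degree $-1$) force $\mc^0$ to be concentrated in non-positive degrees; the requirement of surjectivity in degrees $n\geqslant 2$ is therefore vacuous. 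Combining the two steps gives the factorization
\[
\mc^0\sqcup \mc^0\ \xhookrightarrow{\ \ \ }\ \mc^1\ \xtwoheadrightarrow{\ \sim\ }\ \mc^0
\]
of the codiagonal as a cofibration followed by an acyclic fibration, which is precisely the definition of a very good cylinder.

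There is essentially no hard step here: the key conceptual input is the interpretation $\mc^\bullet=\Li(\De{\bullet})$ together with the fact that $\partial\De{1}\hookrightarrow\De{1}$ is a generating cofibration in $\sSe$ in the Quillen model structure. The only verification that is not purely formal is that $s_0$ is surjective in high degrees, but as explained above this is automatic from the degree of the generator of $\mc^0$.
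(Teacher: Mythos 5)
Your proof is correct and follows essentially the same route as the paper: both factor the fold map through $\mc^1$ using the cosimplicial structure, identify the inclusion $\mc^0\sqcup\mc^0\hookrightarrow\mc^1$ as the generating cofibration $\Li(\partial\De{1}\hookrightarrow\De{1})$, and identify $\mc^1\to\mc^0$ as $\Li(\De{1}\to\De{0})$, hence a weak equivalence by \cref{prop:QuillAdjWE}. The only cosmetic difference is in verifying the fibration condition, where the paper simply observes the map is an epimorphism while you note the surjectivity requirement in degrees $n\geqslant 2$ is vacuous for degree reasons; both are valid.
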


\begin{proof}
	The cosimplicial maps of $\mc^\bullet$ provide us with 
	\[
	\mc^0 \vee \mc^0 \cong \widehat{\sLi}(a_0, a_1) \longrightarrow \mc^1\cong \widehat{\sLi}(a_0, a_1, a_{01}) \longrightarrow \mc^0
	\cong\widehat{\sLi}(a_0)~,  \]
	which are respectively given by 
	$a_0 \mapsto a_0$~, $a_1 \mapsto a_1$ and by 
	$a_0 \mapsto a_0$~, $a_1 \mapsto a_0$~, $a_{01} \mapsto 0$~. 
	The map $\mc^1\to\mc^0$ is equal to $\Li\left(\De{1} \to \De{0}\right)$; so it is a weak equivalence by \cref{prop:QuillAdjWE}. 
	The map $\mc^1\to\mc^0$ is equal to $\Li\left(\De{1} \to \De{0}\right)$; so it is a weak equivalence by \cref{prop:QuillAdjWE}. 
	It is obviously an epimorphism, so a fibration. 
	The first map is a generating cofibration, so a cofibration. 
\end{proof}
\section{Rational models for spaces}\label{sec: rational models}

In this final section, we  explore how the adjoint pair of functors $\Li \dashv \R$ interacts with rational homotopy theory. After a brief recollection on the subject, we  prove, under mild, standard assumptions, that the composite $\R\Li$ gives, after removing an extra point, the Bousfield--Kan $\QQ$-completion and thus a rationalization in the nilpotent case.

\medskip

Although some results should hold over more general fields of characteristic $0$~, in this section we fix $\k=\mathbb{Q}$~.

\subsection{Recollections on rational homotopy theory}
We give a brief reminder of rational models, $\QQ$-completion, and rationalizations of simplicial sets in the context of commutative algebras, mostly following \cite{Sullivan77, BousfieldKan, BousfieldGugenheim}. 

\medskip

Let $\ucomnalg$ denote the category of unital commutative algebras concentrated in non-positive degrees and let $\ucomnalg_*$ the category of such algebras which are \emph{augmented}. Recall that we always work in the homological degree convention. 

\begin{proposition}[{\cite[Theorem~4.3]{BousfieldGugenheim}}]
The categories $\ensuremath{\mathrm{uCom}_{\leqslant 0}\text{-}\, \mathsf{alg}}$
 and $\ensuremath{\mathrm{uCom}_{\leqslant 0}\text{-}\, \mathsf{alg}}_*$ carry a closed model category structure 
 where the weak equivalences are the quasi-isomorphisms and where the fibrations are the degree-wise surjections. 
\end{proposition}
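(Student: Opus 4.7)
The plan is to apply Quillen's transfer principle along the free--forgetful adjunction
\[
S\colon \ch_{\leqslant 0}\rightleftarrows \ucomnalg\colon U
\]
starting from the projective model structure on non-positively graded chain complexes, in which weak equivalences are quasi-isomorphisms and fibrations are degree-wise surjections. Since weak equivalences and fibrations in the transferred structure are by construction those morphisms of algebras whose underlying chain maps are such, both classes coincide with those prescribed in the statement. The four standard hypotheses to verify are bicompleteness of the target category, the small object argument for the images under $S$ of the generating (acyclic) cofibrations of $\ch_{\leqslant 0}$, fibrancy of every object, and the existence of functorial good path objects.

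Bicompleteness of $\ucomnalg$ is classical: limits are computed on the underlying chain complexes and colimits are constructed by the reflexive coequalizer / pushout formulas valid in characteristic zero. The generating (acyclic) cofibrations are $S\big(S^{n-1}\hookrightarrow D^n\big)$ and $S\big(0\hookrightarrow D^n\big)$ for $n\leqslant 0$; their domains and codomains are arity-wise finite-dimensional so smallness is automatic, and the small object argument goes through verbatim. Every object $A$ is automatically fibrant, since the terminal object is $\mathbb{Q}$ and the unique structure morphism $A\twoheadrightarrow \mathbb{Q}$ is a degree-wise surjection of unital algebras.

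The main obstacle is the construction of functorial good path objects. A natural candidate is $A\otimes P$, where $P$ denotes the commutative algebra generated by an element $t$ in degree $0$ and an element $dt$ in degree $-1$ with $d(t)=dt$, that is the non-positively graded Sullivan algebra of piecewise polynomial forms on the interval. The two evaluations $t\mapsto 0$ and $t\mapsto 1$ provide a morphism $A\otimes P\to A\times A$ which is a degree-wise surjection, hence a fibration, while the inclusion $A\cong A\otimes \mathbb{Q}\hookrightarrow A\otimes P$ is a quasi-isomorphism which factors the diagonal. The fact that characteristic zero makes the symmetric algebra functor $S$ preserve quasi-isomorphisms is essential here in order to guarantee that the tensor product $-\otimes P$ remains homotopically well-behaved on all objects of $\ucomnalg$.

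The augmented case follows by the same arguments, applied this time to the adjunction between augmented (non-positively graded) chain complexes and augmented unital commutative algebras. The candidate path object becomes $A\otimes P$ equipped with the augmentation inherited from that of $A$ and from the augmentation $t\mapsto 0$ of $P$, which is again a good path object for the same reasons. In both cases, the transferred model structure has the prescribed weak equivalences and fibrations, concluding the proof.
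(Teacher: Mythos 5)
Your argument is correct in substance, but it is not the route of the paper: the paper gives no proof at all, deferring to \cite{BousfieldGugenheim}, whose own argument is a direct verification of the model axioms in which cofibrations are characterized via relative Sullivan (KS) extensions and the factorizations are constructed by hand from them; the augmented case is then obtained for free as the slice category over $\k$ rather than by rerunning the construction. Your transfer along the free--forgetful adjunction from non-positively graded chain complexes is the standard modern alternative --- indeed it is the very same scheme the paper uses later to put a model structure on complete $\sLi$-algebras --- and what it buys is brevity and uniformity, at the price of losing the explicit description of the cofibrations that the direct proof provides. One point worth making explicit in your version: the transferred fibrations are surjective in \emph{all} degrees, including degree $0$, because the disk $D^0$ spans degrees $0$ and $-1$ and therefore lives in the non-positively graded category; this is precisely where the truncated setting differs from the more familiar non-negatively graded one, and it is what reconciles your transferred structure with the statement.

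Two small corrections. First, the terminal object of $\ucomnalg$ is the zero algebra, not $\k$ (it is $\k$ only in the augmented slice); fibrancy of every object still holds, since the unique map to the zero algebra is trivially surjective, but the justification you give is only valid in the augmented case. Second, the sentence about the symmetric algebra functor preserving quasi-isomorphisms is misplaced: for the path object all that is needed is that $-\otimes P$ preserves quasi-isomorphisms, which is the K\"unneth theorem over a field and has nothing to do with the free algebra functor. Where characteristic zero (Maschke) genuinely enters the transfer is in guaranteeing that pushouts of the images of the generating acyclic cofibrations are weak equivalences --- and that is exactly what the fibrancy-plus-path-object hypothesis of the transfer theorem is designed to deliver, so your overall structure is sound.
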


\begin{proof}
We refer to \cite[Section~4]{BousfieldGugenheim} for a detailed proof. Let us just mention that the model category structure on $\ucomnalg_*$ is obtained as the slice category of $\ucomnalg$  over the algebra $\k$~.
\end{proof}

In the model category $\ucomnalg$~, all commutative algebras are fibrant and the cofibrant ones are the retracts  of  the Sullivan algebras, which are quasi-free unital commutative algebras with a suitable filtration on the space of generators, see \cite[Chapter~12]{RHT}. 

\medskip

Similarly, the category $\sSe_*$ of \emph{pointed} simplicial sets is endowed with a model category structure from \cref{thm:QuillenSet} as the coslice category of $\sSe$ under the point. In the model category $\sSe$ (resp. $\sSe_*$), all (pointed) simplicial sets are cofibrant and the fibrant ones are the (pointed) Kan complexes. 

\medskip

Considering the opposite category $\ucomnalg^\mathrm{op}$, one can apply \cref{lem:AdjCosimpliObj} to the simplicial commutative algebra $\Omega_\bullet$ to produce the following  pair of  adjoint functors  whose left adjoint has domain in simplicial sets, see \cite{Sullivan77}. 

\begin{definition}[Spatial realization and algebra of polynomial differential forms]\label{def:SimpRepAPL}
The \emph{spatial realization}\index{spatial realization} is defined as the functor
\begin{align*}
\langle-\rangle\ :\ \ucomnalg^\mathrm{op}&\ \longrightarrow\ \sSe\\
A{}&\ \longmapsto\ 
\Hom_{\ucomnalg}(A,\Omega_\bullet) 
\end{align*}
and the functor  of the \emph{algebra of  polynomial differential forms}
is defined by 
\begin{align*}
\APL\ :\ \sSe&\ \longrightarrow\ \ucomnalg^\mathrm{op}\\
X_\bullet {}&\ \longmapsto\ \Ran_{\mathrm{Y}^{\mathrm{op}}} {\Omega_\bullet} (X_\bullet)\cong \Hom_{\sSe}(X_\bullet, \Omega_\bullet)\cong \lim_{\mathsf{E}(X_\bullet)} \Omega_\bullet\ .
\end{align*}
\end{definition}

Any base point $*\to X_\bullet$ gives an augmentation
\[\APL(X_\bullet)\longrightarrow \APL(*)\cong\k\] and, dually, any augmentation 
$A\to \k$ gives a base point 
\[*\cong \langle \k \rangle \longrightarrow \langle A\rangle~.\]
So the above two functors restrict to a pair of contravariant adjoint functors that we denote with the same notations:
\[
\hbox{
	\begin{tikzpicture}
	\def\upshift{0.185}
	\def\downshift{0.15}
	\pgfmathsetmacro{\midshift}{0.005}
	
	\node[left] (x) at (0, 0) {$\APL \ \ :\ \ \sSe_*$};
	\node[right] (y) at (2, 0) {$\ucomnalg_*^\mathrm{op}\ \ :\ \ \langle-\rangle\ .$};
	
	\draw[-{To[left]}] ($(x.east) + (0.1, \upshift)$) -- ($(y.west) + (-0.1, \upshift)$);
	\draw[-{To[left]}] ($(y.west) + (-0.1, -\downshift)$) -- ($(x.east) + (0.1, -\downshift)$);
	
	\node at ($(x.east)!0.5!(y.west) + (0, \midshift)$) {\scalebox{0.8}{$\perp$}};
	\end{tikzpicture}}
\]

\begin{lemma}\cite[Section~8]{BousfieldGugenheim}\label{lem:QuillenAdj}
The two adjunctions $\APL\dashv\langle-\rangle$ described above are Quillen adjunctions. 
\end{lemma}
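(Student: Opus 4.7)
The plan is to verify that the left adjoint $\APL$ preserves cofibrations and acyclic cofibrations. Since the Kan--Quillen model structure on $\sSe$ is cofibrantly generated, and since $\APL$ preserves colimits as a left adjoint, it suffices to check the condition on the generating (acyclic) cofibrations, namely the boundary inclusions $\partial\Delta^n\hookrightarrow\Delta^n$ and the horn inclusions $\Lambda^n_k\hookrightarrow\Delta^n$. Using the Yoneda computation $\APL(\Delta^n)\cong\Omega_n$, these translate into the statements that the restriction maps
\[
\Omega_n\cong\APL(\Delta^n)\longrightarrow\APL(\partial\Delta^n)\qquad\text{and}\qquad\Omega_n\cong\APL(\Delta^n)\longrightarrow\APL(\Lambda^n_k)
\]
are, respectively, a surjection and a surjective quasi-isomorphism in $\ensuremath{\mathrm{uCom}_{\leqslant 0}\text{-}\, \mathsf{alg}}$, i.e.\ a cofibration and an acyclic cofibration in the opposite category.

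The first step would be to establish surjectivity, i.e.\ the extension property for polynomial forms: any compatible family of forms on the faces of $\Delta^n$ extends to an element of $\Omega_n$. This is carried out explicitly in \cite[Lemma~9.4]{BousfieldGugenheim} by constructing a section of the restriction map using the barycentric coordinates $t_0,\ldots,t_n$, and the same procedure applies to sub-complexes like $\partial\Delta^n$ and $\Lambda^n_k$. The second step is the quasi-isomorphism property at horn inclusions. This follows from the polynomial de Rham theorem, which identifies the cohomology of $\APL(X_\bullet)$ with the rational singular cohomology of $|X_\bullet|$; since both $|\Delta^n|$ and $|\Lambda^n_k|$ are contractible, the restriction induces an isomorphism on cohomology.

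For the pointed version, I would invoke the general principle that a Quillen adjunction $F\dashv G$ between model categories passes to the slice/coslice categories under basepoints, provided the adjunction sends terminal objects to terminal objects (equivalently, initial to initial on the other side). Here, $\APL(*)\cong\k$ is the tensor unit of $\ensuremath{\mathrm{uCom}_{\leqslant 0}\text{-}\, \mathsf{alg}}$, which is the terminal object of $\ucomnalg^{\mathrm{op}}_*$ and the initial object of $\ucomnalg^{\mathrm{op}}$, and dually $\langle\k\rangle\cong*$. The model structures on $\sSe_*$ and $\ucomnalg_*$ are defined so that the forgetful functors to $\sSe$ and $\ucomnalg$ create weak equivalences and fibrations, so the Quillen adjunction condition transfers immediately.

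The main obstacle is the extension property for polynomial forms, which requires the explicit construction of a retraction. However, this is a classical and self-contained computation, fully carried out in \cite[Section~8]{BousfieldGugenheim}, and the statement of the present lemma is essentially a citation of their result; the only additional verification needed is the passage to the pointed categories, which is straightforward.
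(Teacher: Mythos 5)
Your proof is correct, but it follows a genuinely different route from the paper's. The paper uses the mixed characterization of a Quillen adjunction: it observes that $\APL$ sends cofibrations of simplicial sets (degree-wise injections) to degree-wise surjections of commutative algebras --- i.e.\ to cofibrations of $\ucomnalg^{\mathrm{op}}$ --- and then cites \cite[Lemma~8.2]{BousfieldGugenheim} for the fact that the right adjoint $\langle-\rangle$ sends cofibrations of commutative algebras to Kan fibrations; these two facts together give a Quillen adjunction by the standard equivalence of characterizations. You instead place the entire burden on the left adjoint and check it on the generating (acyclic) cofibrations: surjectivity of restriction (the extension lemma for polynomial forms) handles $\partial\De{n}\hookrightarrow\De{n}$, and the polynomial de Rham theorem together with the contractibility of $|\De{n}|$ and $|\Ho{k}{n}|$ upgrades this to an acyclic fibration of algebras for the horn inclusions. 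Both arguments ultimately rest on the extendability of $\Omega_\bullet$ --- the paper's ``by construction'' is exactly your surjectivity step, and it is not quite automatic --- but your version avoids having to identify which algebra maps $\langle-\rangle$ turns into Kan fibrations, at the cost of invoking the de Rham theorem for the acyclicity. Your explicit treatment of the pointed case via slice/coslice categories is also correct, and is more detailed than the paper, which does not separately address it.
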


\begin{proof}
Let us remind  that working in the opposite categories,  the roles of fibrations and cofibrations in the model structure are inverted.
The left adjoint functor $\APL$ sends cofibrations (degree-wise injections) to fibrations (degree-wise surjections) by construction, and 
the right adjoint functor $\langle-\rangle$ sends cofibrations to fibrations by \cite[Lemma~8.2]{BousfieldGugenheim}.
\end{proof}

As a consequence, they induce adjunctions between the respective homotopy categories, that we understand here to have the same objects but with homotopy classes of morphisms from a cofibrant replacement of the domain to a  fibrant replacement of the source. 
The right derived functor of $\RR \langle-\rangle$ is defined by applying $\langle-\rangle$ to a cofibrant replacement of a unital commutative algebra. For the left derived functor of $\APL$~, there is no need to consider a cofibrant replacement since all (pointed) simplicial sets are already cofibrant. So we still denote this derived functor by the same notation. 
\[
\hbox{
	\begin{tikzpicture}
	\def\upshift{0.185}
	\def\downshift{0.15}
	\pgfmathsetmacro{\midshift}{0.005}
	
	\node[left] (x) at (0, 0) {$\APL \ \ :\ \ \mathsf{ho}(\sSe_*)$};
	\node[right] (y) at (2, 0) {$\mathsf{ho}(\ucomnalg_*^\mathrm{op})\ \ :\ \ \RR\langle-\rangle\ .$};
	
	\draw[-{To[left]}] ($(x.east) + (0.1, \upshift)$) -- ($(y.west) + (-0.1, \upshift)$);
	\draw[-{To[left]}] ($(y.west) + (-0.1, -\downshift)$) -- ($(x.east) + (0.1, -\downshift)$);
	
	\node at ($(x.east)!0.5!(y.west) + (0, \midshift)$) {\scalebox{0.8}{$\perp$}};
	\end{tikzpicture}}
\]

In order the get a Quillen equivalence, that is an equivalence on the level of the homotopy categories, we need to restrict ourselves to the following setting, for which for refer to \cite[Section~9.2]{BousfieldGugenheim}. Recall that a group $G$ is \emph{nilpotent} if its \emph{lower central series}
\[
G=\Gamma_1G\supseteq\Gamma_2G\supseteq\cdots,
\]
with $\Gamma_{q+1}G$ the subgroup of $G$ generated by the commutators $[G,\Gamma_qG]$~, terminates after a finite number of steps. A $G$-module $M$ is \emph{nilpotent} if its \emph{lower central series}
\[
M=\Gamma_1 M\supseteq\Gamma_2 M\supseteq\cdots,
\]
with $\Gamma_{q+1}M$ the sub-$G$-module generated by the elements of the form $gm-m$~, for $g\in G$ and $m\in\Gamma_qM$~, terminates after a finite number of steps. 

\begin{definition}[Nilpotent simplicial set]
A connected simplicial set $X_\bullet\in\sSe_*$ is \emph{nilpotent} if its fundamental group $\pi_1 X_\bullet$ is nilpotent and if the $\pi_1 X_\bullet$-modules  $\pi_n X_\bullet$ are nilpotent for all $n\geqslant2$~.
\end{definition}

A group $G$ is \emph{uniquely divisible} if the equation $x^r=g$ has a unique solution $x\in G$ for any $g\in G$ and any integer $r\geqslant1$~. Notice that a uniquely divisible \emph{abelian} group is essentially the same as a vector space over $\mathbb{Q}$ since one can define scalar multiplication in a natural way. 

\begin{definition}[Rational simplicial set]
A nilpotent simplicial set $X_\bullet\in\sSe_*$ is \emph{rational} if the following equivalent conditions are satisfied.
\begin{enumerate}
	\item The homotopy groups $\pi_n X_\bullet$ are uniquely divisible, for any $n\geqslant1$~. 
	\item The homology groups $H_n(X_\bullet;\mathbb{Z})$ are uniquely divisible, for any  $n\geqslant1$~. 
\end{enumerate}
\end{definition}

For more details about this equivalence in the nilpotent case, we refer the reader to \cite[Chapter~V, Proposition~3.3]{BousfieldKan}. It can be shown that these two conditions are not equivalent if one drops the nilpotence assumption. 

\begin{definition}[Finite type simplicial set]
A  rational  simplicial set $X_\bullet \in\sSe$ is of \emph{finite type} if its rational homology groups 
 $H_n(X_\bullet;\mathbb{Q})$ are finite dimensional $\mathbb{Q}$-vector spaces, for $n\geqslant1$~.
\end{definition}

If $X_\bullet$ is a connected simplicial set in $\sSe$~, then we say that $X_\bullet$ is nilpotent or rational, if it is so for any possible choice of base point. 
We denote by $\fnQsp$ (resp. $\fnQsp_*$) the full subcategory of the homotopy category $\ho(\sSe)$ 
(resp. $\ho(\sSe_*)$)
with objects the rational (resp. pointed) simplicial sets of finite type. 

\begin{definition}[Finite type commutative algebra]
A \emph{finite type} unital commutative algebra $A$ in $\ucomnalg$ (resp. $\ucomnalg_*$) is 
a homologically connected algebra, i.e. its unit induces an isomorphism $H_0(\eta):\k\to H_0(A)$~, whose 
 homology groups $H_n(X_\bullet;\mathbb{Q})$ are finite dimensional $\mathbb{Q}$-vector spaces, for $n\leqslant1$~.
\end{definition}

We denote by $\fQalg$ (resp. $\fQalg_*$) 
 the full subcategory of of the homotopy category  $\ho(\ucomnalg)$ generated by the finite type (resp. augmented) unital commutative algebras. 

\begin{theorem}[{Sullivan--de Rham equivalence theorem, \cite[Theorem~9.4]{BousfieldGugenheim}}]
	The adjunction $\APL \dashv \mathbb{R}\langle -\rangle$ restricts to an equivalence of categories between 
	$\fnQsp$ and $\fQalg$~, and similarly between $\fnQsp_*$ and $\fQalg_*$~. 
\end{theorem}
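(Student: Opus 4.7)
The plan is to follow the classical inductive argument of Bousfield--Gugenheim via Postnikov decompositions. Since $\APL \dashv \mathbb{R}\langle - \rangle$ is already known to be a Quillen adjunction by \cref{lem:QuillenAdj}, establishing the claimed equivalence of full subcategories reduces to showing that on the respective subcategories the derived unit
\[
\eta_X \ \colon\ X \longrightarrow \mathbb{R}\langle \APL(X)\rangle
\]
is a weak equivalence of simplicial sets for every $X\in \fnQsp_*$, and dually that the derived counit
\[
\varepsilon_A \ \colon\ A \longrightarrow \APL\mathbb{R}\langle A\rangle
\]
is a quasi-isomorphism of commutative algebras for every $A\in \fQalg_*$. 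By standard arguments on derived adjunctions, either one of these two statements formally implies the other once both subcategories are identified, so I would concentrate on verifying $\eta_X$ is a weak equivalence.

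First I would verify the statement on the building blocks, namely the Eilenberg--MacLane spaces $K(\mathbb{Q},n)$. A cofibrant replacement of $\APL(K(\mathbb{Q},n))$ in $\ucomnalg_*$ is the minimal Sullivan model, which is the free commutative algebra on a single generator of (cohomological) degree $n$. A direct computation, essentially the Poincar\'e lemma for the simplicial commutative algebra $\Omega_\bullet$, then identifies $\mathbb{R}\langle\APL(K(\mathbb{Q},n))\rangle$ with $K(\mathbb{Q},n)$ up to weak equivalence; the finite-type hypothesis ensures the relevant inverse limits are well-behaved. This sets up the base case of the induction.

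Second, I would bootstrap from this base case along the rational Postnikov tower. The nilpotence hypothesis on $X$ implies that $X$ is weakly equivalent to the inverse limit of a tower of principal fibrations whose fibres are Eilenberg--MacLane spaces $K(\mathbb{Q}, n)$ (rationality guarantees the coefficients are $\mathbb{Q}$, and the finite-type hypothesis controls dimensions at each stage). The induction step consists in applying $\APL$ to such a principal fibration sequence $K(\mathbb{Q},n)\to E\to B$ and then $\mathbb{R}\langle - \rangle$ to the resulting diagram of commutative algebras: assuming $\eta_B$ is a weak equivalence, one deduces that $\eta_E$ is one by a five-lemma argument applied to the long exact sequences of rational homotopy groups on the topological side and to the corresponding spectral sequence on the algebraic side. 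Passing to the limit (which is allowed by finite-typeness) yields the equivalence for $X$.

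The main obstacle, and the place where the hypotheses of nilpotence and finite $\mathbb{Q}$-type are genuinely needed, is the interaction between $\APL$ and homotopy pullbacks. The natural comparison map from $\APL$ of a pullback to the derived pushout of the $\APL$-images is \emph{not} in general a quasi-isomorphism, and one needs an Eilenberg--Moore type spectral sequence argument to control the comparison. Nilpotence and finite-typeness are precisely the conditions that guarantee convergence and strong convergence of this spectral sequence, together with the identification of the resulting maps on homotopy groups with the rationalizations of the original groups. Once this spectral sequence argument is in place, the induction proceeds cleanly and recovers the Sullivan--de Rham equivalence on both the pointed and the unpointed versions simultaneously.
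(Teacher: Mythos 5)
The paper gives no proof of this statement at all: it is recalled as background and cited directly to Bousfield--Guggenheim, Theorem~9.4, so there is no internal argument to measure your proposal against. What you have written is, in substance, the classical proof from that reference (base case on $K(\QQ,n)$ via the simplicial Poincar\'e lemma for $\Omega_\bullet$, then induction along principal refinements of the Postnikov tower), so the approach is the right one. Three points deserve tightening. First, the claim that the unit being a weak equivalence on $\fnQsp_*$ ``formally implies'' the counit statement is only true once you have also shown that $\APL$ carries $\fnQsp_*$ into $\fQalg_*$ and that it is essentially surjective onto $\fQalg_*$; Bousfield--Guggenheim avoid this by proving the counit statement independently (their Theorem~10.1), running the induction on the \emph{algebra} side, over the elementary (Hirsch) extensions building a minimal finite-type Sullivan algebra, and using that $\langle A\otimes S(V)\rangle\to\langle A\rangle$ is a principal fibration with fibre $K(V^\vee,n)$ --- this dual formulation is where finite-typeness is used most cleanly, since it makes $V^\vee$ a finite-dimensional $\QQ$-vector space in each degree. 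Second, you must verify that both functors preserve the two subcategories (finite $\QQ$-type of $\APL(X_\bullet)$, and nilpotence, rationality and finite type of $\mathbb{R}\langle A\rangle$); your sketch is silent on this, and it is not automatic. Third, the unit statement for general $X_\bullet\in\fnQsp_*$ is in the cited source deduced from the identification of the unit with the Bousfield--Kan $\QQ$-completion (their Theorem~12.2, quoted in this paper as \cref{thm:Sullivan=BK}) rather than by a direct five-lemma induction on the space side; your Eilenberg--Moore comparison step is the genuinely delicate point and would need to be carried out in full, not merely asserted, for the argument to be complete.
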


Bousfield--Kan introduced in \cite[Chapter~I]{BousfieldKan} a \emph{$\QQ$-completion functor} 
\[\QQ_\infty : \sSe_* \longrightarrow \sSe_*\]
which lands in Kan complexes and which satisfies the key property that any morphism $f : X_\bullet \to Y_\bullet$ induces an isomorphism $\widetilde{H}_n(f, \QQ)$ on the level of the rational reduced homology groups, for any $n\geqslant 0$~, if and only if $\QQ_\infty(f)$ is a homotopy equivalence. This functor comes equipped with a natural transformation from the identity functor, that is 
\[X_\bullet \longrightarrow \QQ_\infty(X_\bullet)~.\]

\begin{theorem}[{\cite[Theorem~12.2]{BousfieldGugenheim}}]\label{thm:Sullivan=BK}
For pointed connected finite type simplicial sets, the unit 
	\[
	X_\bullet\longrightarrow \mathbb{R}\langle\APL(X_\bullet)\rangle
	\]
of the 
$\APL \dashv \mathbb{R}\langle -\rangle$ adjunction is equivalent to the $\QQ$-completion. 
\end{theorem}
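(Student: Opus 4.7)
The statement is cited as \cite[Theorem~12.2]{BousfieldGugenheim}, so my proof proposal is essentially an exposition of their argument adapted to our notation. The high-level plan relies on the universal property characterization of the Bousfield--Kan $\QQ$-completion: a natural transformation $X_\bullet \to Y_\bullet$ agrees (up to homotopy under $X_\bullet$) with $X_\bullet \to \QQ_\infty X_\bullet$ whenever (i) $Y_\bullet$ is $\QQ$-complete and (ii) the map induces an isomorphism on rational reduced homology $\widetilde{H}_*(-;\QQ)$. So the plan decomposes into verifying these two properties for the unit $\eta_X : X_\bullet \to \mathbb{R}\langle \APL(X_\bullet)\rangle$~.

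First I would attack (ii), the cohomological comparison. The crucial input is the classical computation, due to Sullivan, that for any simplicial set $X_\bullet$ the algebra $\APL(X_\bullet)$ computes the rational cohomology: $H^*(\APL(X_\bullet)) \cong H^*(X_\bullet;\QQ)$ as graded commutative algebras. Dually, for a cofibrant replacement $A \xrightarrow{\sim} \APL(X_\bullet)$ in $\ucomnalg$~, one shows $H_*(\langle A\rangle;\QQ) \cong H^*(A)$~, essentially because evaluation at $\Omega_\bullet$ on a cell-by-cell (Sullivan) construction of $A$ identifies cochains of $\langle A\rangle$ with $A$ up to quasi-isomorphism. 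Combining these two identifications gives $\widetilde{H}_*(\eta_X;\QQ)$ is an isomorphism when $X_\bullet$ has finite type (the finite-type hypothesis enters precisely in passing from cochains of $A$ back to rational homology of $\langle A\rangle$ via degree-wise finite dimensionality and in setting up a convergent Eilenberg--Moore spectral sequence argument).

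Next I would address (i), $\QQ$-completeness of $\mathbb{R}\langle \APL(X_\bullet)\rangle$~. The standard route is to show the stronger statement that $\mathbb{R}\langle A\rangle$ is a nilpotent simplicial set with $\QQ$-vector space homotopy groups, for any cofibrant, homologically connected, finite type unital commutative algebra $A$~. Choosing a Sullivan minimal model and filtering by the Sullivan tower, one obtains a tower of principal fibrations with fibers of the form $K(\QQ^{d_n}, n)$~. Applying $\langle - \rangle$ turns this into a tower of fibrations of simplicial sets with fibers $\langle K(\QQ^{d_n}, n)\rangle \simeq K(\QQ^{d_n}, n)$~, so the total space is nilpotent and has rational homotopy groups, hence is $\QQ$-complete (a nilpotent space with $\QQ$-vector space homotopy groups is $\QQ$-complete, by \cite[Chapter~V]{BousfieldKan}).

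Finally, combining (i) and (ii) with the universal property of $\QQ_\infty$ yields the desired natural equivalence. The main obstacle is not any single step individually but the bookkeeping of the finite-type hypothesis throughout: it is needed both to make the Sullivan model of $\APL(X_\bullet)$ behave well (so that $\mathbb{R}\langle-\rangle$ can be computed cell by cell) and to ensure the convergence of the comparison of cohomologies. Everything else is a matter of assembling well-documented facts from \cite{Sullivan77, BousfieldGugenheim, BousfieldKan}.
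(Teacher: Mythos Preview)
The paper does not give its own proof of this statement: it is quoted verbatim as a result from the literature, with the attribution \cite[Theorem~12.2]{BousfieldGugenheim}, and no argument is supplied. Your proposal is a faithful high-level outline of the Bousfield--Gugenheim proof (rational-cohomology comparison via $\APL$ and Sullivan models, nilpotence and rationality of $\mathbb{R}\langle A\rangle$ via the Sullivan tower, then the universal property of $\QQ_\infty$), so there is nothing to compare---you and the paper are both deferring to the same source.
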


\begin{definition}[Rationalization]
A \emph{rationalization} (or \emph{$\mathbb{Q}$-localization}) of a nilpotent simplicial set $X_\bullet$ is a rational simplicial set $X^\QQ_\bullet$ equipped with a morphism $r : X_\bullet\to X^\QQ_\bullet$ satisfying either of the following two  equivalent conditions.
\begin{enumerate}
	\item It induces isomorphisms $\pi_n r : \pi_n X_\bullet \otimes \QQ \xrightarrow{\cong} \pi_n X_\bullet^\QQ$~, for any $n\geqslant1$~, where the left-hand side stands for the Malcev completion \cite{Malcev} in the case $n=1$~.
		\item It induces isomorphisms $H_n(r) : H_n(X_\bullet, \QQ)  \xrightarrow{\cong} H_n\big(X_\bullet^\QQ\big)$~, for any $n\geqslant1$~.
\end{enumerate}
\end{definition}

For more details about this equivalence in the nilpotent case, we refer the reader to \cite[Proposition~3.2, Chapter V]{BousfieldKan}. 

\begin{theorem}[{Sullivan--de Rham rationalization \cite{Sullivan77}, \cite[Theorem~11.2]{BousfieldGugenheim}}]
For pointed connected nilpotent finite type simplicial sets $X_\bullet$~, the unit 
	\[
	X_\bullet\longrightarrow \mathbb{R}\langle\APL(X_\bullet)\rangle
	\]
of the $\APL \dashv \mathbb{R}\langle -\rangle$ adjunction is a functorial rationalization.
\end{theorem}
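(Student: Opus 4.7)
The plan is to exhibit a natural zig-zag of weak equivalences of pointed simplicial sets
\[
\widetilde{\R}\widetilde{\Li}(X_\bullet) \;\simeq\; \mathbb{R}\langle \APL(X_\bullet) \rangle
\]
under which the unit of the $\widetilde{\Li}\dashv\widetilde{\R}$ adjunction matches the unit of the Sullivan--de Rham adjunction $\APL \dashv \langle-\rangle$, and then to invoke \cref{thm:Sullivan=BK}, which identifies the latter unit with the Bousfield--Kan $\QQ$-completion. The last sentence of the theorem (``rationalization when $X_\bullet$ is nilpotent'') is then a formal consequence of the characterization of $\QQ_\infty$ on nilpotent spaces recalled right after \cref{thm:Sullivan=BK}.

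The first step is to rewrite the left-hand side geometrically: by \cref{thm:isomorphism of models} one has a natural weak equivalence $\R(\g) \simeq \MC_\bullet(\g) = \MC(\g \,\widehat{\otimes}\, \Omega_\bullet)$, and passing to the reduced setting (where the canonical Maurer--Cartan element $0$ has been collapsed to the chosen base point) gives
\[
\widetilde{\R}\widetilde{\Li}(X_\bullet) \;\simeq\; \MC\bigl(\widetilde{\Li}(X_\bullet)\,\widehat{\otimes}^\pi\, \Omega_\bullet\bigr)_*\,.
\]
The second, and main, step is a Koszul-duality computation of $\widetilde{\Li}(X_\bullet)$. The reduced $\sLi$-algebra functor is a colimit of the reduced universal Maurer--Cartan algebras, which by construction are the complete cobar constructions $\hatCobar_\pi$ applied to the $\Bar\Cobar\com^\vee$-coalgebras $\widetilde{\rmC}^n$ dual to the transferred $\Cobar\Bar\com$-structure on $\widetilde{\rmC}_n \subset \Omega_n$. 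Since $X_\bullet$ is finite, the polynomial de Rham algebra $\APL(X_\bullet)$ is degree-wise finite-dimensional and is naturally a finite limit of the $\Omega_n$; its augmentation ideal therefore dualizes to a conilpotent $\Bar\Cobar\com^\vee$-coalgebra $\widetilde{\APL}(X_\bullet)^\vee$, and by the universal property of $\widetilde{\Li}$ combined with \cref{CobarColimits} one obtains a natural isomorphism
\[
\widetilde{\Li}(X_\bullet) \;\cong\; \hatCobar_\pi\bigl(\widetilde{\APL}(X_\bullet)^\vee\bigr)\,.
\]
Feeding this into the convolution-algebra identification \cref{thm:MC elements of convolution algebras} and using the arity-wise finite-dimensionality from \cref{lem:sLiTW} to turn the tensor product into a mapping space, one gets
\[
\MC\bigl(\widetilde{\Li}(X_\bullet)\,\widehat{\otimes}^\pi\, \Omega_\bullet\bigr)_* \;\cong\; \Hom_{\ucomnalg_*}\bigl(\APL(X_\bullet),\, \Omega_\bullet\bigr) \;=\; \langle \APL(X_\bullet)\rangle\,,
\]
naturally in the pointed simplicial set $X_\bullet$.

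The third step upgrades this to the \emph{derived} spatial realization. Choosing a Sullivan cofibrant model $A \stackrel{\sim}{\to} \APL(X_\bullet)$ in $\ucomnalg_*$, the induced morphism of $\sLi$-algebras $\hatCobar_\pi(\widetilde{A}^\vee) \to \widetilde{\Li}(X_\bullet)$ is a filtered $\infty_\pi$-quasi-isomorphism (its associated graded is the Koszul resolution of $\widetilde{A}^\vee$ dualized), so by the homotopy invariance \cref{thm:HoInvariance} one has $\widetilde{\R}\widetilde{\Li}(X_\bullet) \simeq \langle A\rangle = \mathbb{R}\langle \APL(X_\bullet)\rangle$. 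Tracking the pointed unit through this zig-zag, one verifies that it corresponds, up to the natural equivalences above, to the unit $X_\bullet \to \mathbb{R}\langle\APL(X_\bullet)\rangle$; an application of \cref{thm:Sullivan=BK} concludes.

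The main obstacle is the second step, namely the explicit identification of $\widetilde{\Li}(X_\bullet)$ with $\hatCobar_\pi$ of the linear dual of $\widetilde{\APL}(X_\bullet)$. One must check that the transferred $\Cobar\Bar\com$-algebra structures on the $\widetilde{\rmC}_n$ assemble with the simplicial face and degeneracy maps into a structure whose $X_\bullet$-indexed colimit coincides, after dualization, with $\APL(X_\bullet)$; this is exactly the interplay between Dupont's simplicial contraction and the finite-limit description of $\APL$. This is also the unique place where the finiteness hypothesis on $X_\bullet$ enters essentially: without it, the dualization $\APL(X_\bullet) \rightsquigarrow \APL(X_\bullet)^\vee$ cannot be controlled and the colimit/limit exchange in the second step breaks down.
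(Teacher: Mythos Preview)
You have confused this statement with \cref{thm: rationalization}. The statement here is the classical Sullivan--de~Rham rationalization theorem, a recollection result about the unit $X_\bullet \to \mathbb{R}\langle\APL(X_\bullet)\rangle$; the functors $\widetilde{\Li}$ and $\widetilde{\R}$ play no role in it. The paper's proof is two lines: \cref{thm:Sullivan=BK} identifies this unit with the Bousfield--Kan $\QQ$-completion, and \cite[Chapter~V, Proposition~3.1]{BousfieldKan} asserts that $\QQ_\infty$ is a rationalization for nilpotent spaces. Your entire detour through $\widetilde{\R}\widetilde{\Li}(X_\bullet)$ is superfluous, since you end up invoking exactly these two ingredients anyway; the zig-zag in your first step adds nothing. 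The phrase ``last sentence of the theorem (rationalization when $X_\bullet$ is nilpotent)'' in your write-up matches the structure of \cref{thm: rationalization}, not of the present statement, which is entirely about nilpotent spaces from the outset.

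Even read as a sketch of \cref{thm: rationalization}, your second step contains a genuine error. You claim that for finite $X_\bullet$ the algebra $\APL(X_\bullet)$ is degree-wise finite-dimensional and then dualize it. This is false: already $\APL(\Delta^1)\cong\Omega_1=\k[t,dt]$ is infinite-dimensional in degree $0$. The paper's actual proof of \cref{thm: rationalization} avoids this by working with $\CPL(X_\bullet)=\lim_{\mathsf{E}(X_\bullet)}\rmC_\bullet$, which \emph{is} finite-dimensional for finite $X_\bullet$ since each $\rmC_n$ is; the passage from $\APL$ to $\CPL$ is handled via the Dupont contraction and the homotopy transfer theorem (\cref{prop: structures on CPL}), and only after that does the dualization argument go through to yield $\widetilde{\Li}(X_\bullet)\cong\bigl(\Bar_{\iota_\com}\overline{\rmC}_{\mathrm{PL}}(X_\bullet)\bigr)^\vee$.
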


\begin{proof}
This follows from \cref{thm:Sullivan=BK} and \cite[Proposition~3.1]{BousfieldKan} which asserts that the $\QQ$-completion provides us with a rationalization for nilpotent simplicial sets. 
\end{proof}

\subsection{Dupont--Sullivan adjunction}
\cref{lem:AdjCosimpliObj} applied to the simplicial $\Cobar \Bar \com$-algebra $\rmC_\bullet$ of \cref{subsec:MCcosimpsLI} produces the following  adjoint pair of  functors: 
\[
\hbox{
	\begin{tikzpicture}
	\def\upshift{0.185}
	\def\downshift{0.15}
	\pgfmathsetmacro{\midshift}{0.005}
	
	\node[left] (x) at (0, 0) {$\CPL \ \ :\ \ \sSe$};
	\node[right] (y) at (2, 0) {$\Omega\Bar\com\text{-}\,\mathsf{alg}^\mathrm{op}\ \ :\ \ \langle-\rangle_\infty~,$};
	
	\draw[-{To[left]}] ($(x.east) + (0.1, \upshift)$) -- ($(y.west) + (-0.1, \upshift)$);
	\draw[-{To[left]}] ($(y.west) + (-0.1, -\downshift)$) -- ($(x.east) + (0.1, -\downshift)$);
	
	\node at ($(x.east)!0.5!(y.west) + (0, \midshift)$) {\scalebox{0.8}{$\perp$}};
	\end{tikzpicture}}
\]
where 
\[\langle A \rangle_\infty\coloneqq\Hom_{\Omega\Bar\com\text{-}\,\mathsf{alg}}(A,\rmC_\bullet) \quad \text{and}\quad 
\CPL(X_\bullet)\coloneqq \lim_{\mathsf{E}(X_\bullet)}\rmC_\bullet\ .\]

For clarity in this section, let us denote by $\Omega_\bullet^{\mathrm{ch}}$ and $\rmC_\bullet^{\mathrm{ch}}$ the respective underlying simplicial chain complexes. 
Limits of algebras are given by the limits of the underlying chain complexes, for example the underlying chain complex of 
$\APL(X_\bullet)$ is given by $\lim_{\mathsf{E}(X_\bullet)}\Omega_\bullet^{\mathrm{ch}}$ and the underlying chain complex of $\CPL(X_\bullet)$ is given by $\lim_{\mathsf{E}(X_\bullet)}\rmC_\bullet^{\mathrm{ch}}$~. 
Since Dupont's contraction is compatible with the simplicial structures of $\Omega_\bullet^{\mathrm{ch}}$ and $\rmC_\bullet^{\mathrm{ch}}$ by \cref{prop:DupontContr}, it induces a contraction on the level of the limits over $\mathsf{E}(X_\bullet)$~:
\begin{equation}\label{eq:ContraX}
\vcenter{\hbox{
	\begin{tikzpicture}
	\def\upshift{0.075}
	\def\downshift{0.075}
	\pgfmathsetmacro{\midshift}{0.005}
	
	\node[left] (x) at (0, 0) {$\lim_{\mathsf{E}(X_\bullet)}\Omega_\bullet^{\mathrm{ch}}$};
	\node[right=1.5 cm of x] (y) {$\lim_{\mathsf{E}(X_\bullet)}\rmC_\bullet^{\mathrm{ch}}$};
	
	\draw[-{To[left]}] ($(x.east) + (0.1, \upshift)$) -- node[above]{\mbox{\tiny{$p(X_\bullet)$}}} ($(y.west) + (-0.1, \upshift)$);
	\draw[-{To[left]}] ($(y.west) + (-0.1, -\downshift)$) -- node[below]{\mbox{\tiny{$i(X_\bullet)$}}} ($(x.east) + (0.1, -\downshift)$);
	
	\draw[->] ($(x.south west) + (0, 0.1)$) to [out=-160,in=160,looseness=5] node[left]{\mbox{\tiny{$h(X_\bullet)$}}}
	 ($(x.north west) - (0, 0.1)$);
	\end{tikzpicture}}}.
\end{equation}

\begin{proposition}\label{prop: structures on CPL}\leavevmode
\begin{enumerate}
	\item Two $\Cobar\Bar\com$-algebra structures on $\lim_{\mathsf{E}(X_\bullet)}\rmC_\bullet^{\mathrm{ch}}$ obtained by the limit $\CPL(X_\bullet)= \lim_{\mathsf{E}(X_\bullet)}\rmC_\bullet$ in the category of 
$\Cobar\Bar\com$-algebras and by the homotopy transfer theorem \cref{thm:HTT} applied to the contracting homotopy \eqref{eq:ContraX} and to the $\Cobar\Bar\com$-algebra structure on $\APL(X_\bullet)=\lim_{\mathsf{E}(X_\bullet)}\Omega_\bullet$ are equal. 	
		
	\item\label{pt:DS square} For any morphism $f : X_\bullet \to Y_\bullet$ of simplicial sets, the following diagram commutes in the category of $\Omega\Bar\com$-algebras.
	\[
	\vcenter{\hbox{
		\begin{tikzpicture}
			\node (a) at (0,0) {$\APL(Y_\bullet)$};
			\node (b) at (3,0) {$\APL(X_\bullet)$};
			\node (c) at (0,-2) {$\CPL(Y_\bullet)$};
			\node (d) at (3,-2) {$\CPL(X_\bullet)$};
			
			\draw[->] (a) to node[above]{$\scriptstyle{\APL(f)}$} (b);
			\draw[->] (c) to node[left]{$\scriptstyle{i(X_\bullet)}$} (a);
			\draw[->] (c) to node[above]{$\scriptstyle{\CPL(f)}$} (d);
			\draw[->] (d) to node[right]{$\scriptstyle{i(Y_\bullet)}$} (b);
		\end{tikzpicture}
	}}
	\]
\end{enumerate}
\end{proposition}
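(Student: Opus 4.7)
The plan for point~(1) is to establish the equality of $\Cobar\Bar\com$-algebra structures on $\lim_{\mathsf{E}(X_\bullet)}\rmC_\bullet^{\mathrm{ch}}$ by appealing to the functoriality of the homotopy transfer theorem with respect to morphisms of contractions, as developed in~\cite{HLV19} and already exploited in \cref{prop:Simpli}. The first structure comes from viewing $\rmC_\bullet$ as a simplicial $\Cobar\Bar\com$-algebra via levelwise HTT from the commutative structure on $\Omega_\bullet$ and then taking the limit $\CPL(X_\bullet) = \lim_{\mathsf{E}(X_\bullet)}\rmC_\bullet$ in the category of $\Cobar\Bar\com$-algebras. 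The second structure is obtained by HTT applied to the limit contraction~\eqref{eq:ContraX}, starting from the $\Cobar\Bar\com$-algebra structure on $\APL(X_\bullet)$ induced as a limit of commutative structures.

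The argument then exploits the fact that the universal formula
\[
\varphi_Y = \VdL_h \circ \T(s\varphi_X) \circ \Delta_\C^{\mathrm{mon}}
\]
depends naturally on the contraction $(i,p,h)$ and the source algebra structure $\varphi_X$: morphisms of contractions compatible with morphisms of source algebras induce morphisms between the transferred structures on the targets. In our situation, for every simplex $x \in X_n$ the canonical projections $\lim_{\mathsf{E}(X_\bullet)}\Omega_\bullet^{\mathrm{ch}} \twoheadrightarrow \Omega_n^{\mathrm{ch}}$ and $\lim_{\mathsf{E}(X_\bullet)}\rmC_\bullet^{\mathrm{ch}} \twoheadrightarrow \rmC_n^{\mathrm{ch}}$ intertwine the limit contraction~\eqref{eq:ContraX} with the $n$th layer of the Dupont contraction, and the former is a strict morphism of $\Cobar\Bar\com$-algebras for the limit structure on $\APL(X_\bullet)$ by construction. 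The naturality of HTT thus forces the second structure on $\lim_{\mathsf{E}(X_\bullet)}\rmC_\bullet^{\mathrm{ch}}$ to make every such projection a strict morphism of $\Cobar\Bar\com$-algebras, and by the universal property of limits in the category of $\Cobar\Bar\com$-algebras this characterizes it uniquely as the first, limit, structure.

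Point~(2) then follows from~(1) by naturality. The morphism $f : X_\bullet \to Y_\bullet$ induces a functor $\mathsf{E}(f): \mathsf{E}(X_\bullet) \to \mathsf{E}(Y_\bullet)$ over $\cD$, and both $\APL(f)$ and $\CPL(f)$ are the canonical restriction maps between the corresponding limits. The levelwise chain maps $i_n : \rmC_n \to \Omega_n$ assemble into a simplicial natural transformation $i_\bullet : \rmC_\bullet \to \Omega_\bullet$, so passing to limits yields $i(X_\bullet)$ and $i(Y_\bullet)$ which fit automatically into a commutative square of chain complexes. By~(1), the $\Cobar\Bar\com$-algebra structures at the four corners are precisely those obtained via HTT from the limit contractions, and $\APL(f)$ and $\CPL(f)$ are strict morphisms of $\Cobar\Bar\com$-algebras. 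The only step requiring more than bookkeeping is to interpret $i(X_\bullet)$ and $i(Y_\bullet)$ as $\infty$-morphisms of $\Cobar\Bar\com$-algebras, namely as the HTT-extensions of the strict $i_n$, and to verify that the square commutes at this level; this is again a direct consequence of the functoriality of HTT with respect to morphisms of contractions.
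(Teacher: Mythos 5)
Your proof is correct and follows essentially the same route as the paper's: for point~(1), both arguments show that each canonical projection onto $\rmC_n$ is a strict morphism for the two transferred structures and then conclude by the universal property of the limit, the only difference being that the paper verifies this strictness by unwinding the Van der Laan tree-wise formula explicitly rather than quoting the abstract functoriality of the homotopy transfer theorem with respect to morphisms of contractions. For point~(2), both proofs reduce to the simpliciality of the Dupont contraction together with the universal property of the limit (your extra remark about extending $i$ to an $\infty$-morphism is not needed for the statement itself, which only concerns the square of the maps $i(-)$).
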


\begin{proof}
Regarding the first point, let $x:\Delta^n\to X_\bullet$ be any $n$-simplex of $X_\bullet$~. We denote by 
	\[
	\pi^{\rmC}_x\ \colon \  \lim_{\mathsf{E}(X_\bullet)} \rmC_\bullet^{\mathrm{ch}} \longrightarrow \rmC_n^{\mathrm{ch}}
	\]
	 the canonical projection map coming from the limit over the category of elements of 
	 $X_\bullet$~.  
	 It will be enough to show that $\pi^{\rmC}_x$ is a morphism of $\Cobar\Bar\com$-algebras if we endow the left-hand side with the structure transferred from $\APL(X)$ and the right-hand side with the structure transferred from $\Omega_n$~. 
If this is the case, by the universal property of limits, there exists a unique map of $\Cobar\Bar\com$-algebras from 
$\lim_{\mathsf{E}(X_\bullet)} \rmC_\bullet^{\mathrm{ch}}$ with the transferred structure to $\CPL(X)$~. Since the underlying chain complex of the latter is also $\lim_{\mathsf{E}(X_\bullet)} \rmC_\bullet^{\mathrm{ch}}$~, the universal property of limits on the chain level shows that this map is the identity. Therefore, the two structures are equal.
	
	\medskip
	
	In order to prove that $\pi^{\rmC}_x$
	is a morphism of $\Cobar\Bar\com$-algebras, we use the explicit form of the homotopy transfer theorem given below \cref{thm:HTT} in terms of twisting morphisms. If we denote by 
	\[
	\varphi\in\Tw\left(\Bar\com, \End_{\APL(X_\bullet)}\right),
	\]
	the twisting morphism corresponding to the $\Cobar\Bar\com$-algebra structure on $\APL(X_\bullet)$~, then 
 the twisting morphism  $\varphi^{\mathrm{tr}}$corresponding to the $\Cobar\Bar\com$-algebra structure transferred on the limit
$\lim_{\mathsf{E}(X_\bullet)} \rmC_\bullet^{\mathrm{ch}}$ is equal to 
	\[
	\varphi^{\mathrm{tr}}= \operatorname{VdL}\circ\T^c(s\varphi)\circ\widetilde{\Delta}_{\Bar\com}\ ,
	\]
	where
	\[
	\widetilde{\Delta}_{\Bar\com}:\Bar\com\longrightarrow\T^c(\Bar\com)
	\]
	is the monadic decomposition map of the cooperad $\Bar\com$ and $\VdL$ is the Van der Laan map.
	Even more explicitly, let $\tau \in \RT_m$ be a rooted tree, viewed as a basis element $\Bar\com$~,  and let  $x_1, \ldots, x_m$ be $m$ elements of $\lim_{\mathsf{E}(X_\bullet)} \rmC_\bullet^{\mathrm{ch}}$~. The element $\varphi^{\mathrm{tr}}(\tau)(x_1, \ldots, x_m)$ is obtained by labeling the vertices of $\tau$ with the iterated commutative product of $\APL(X_\bullet)$~, the internal edges by the contracting homotopy $h(X_\bullet)$~, the root by $p(X_\bullet)$~,  and by applying this composite of operations to $i(X_\bullet)(x_1), \ldots, i(X_\bullet)(x_m)$~.  The canonical projection
	\[
	\pi^{\Omega}_x\ \colon \  \APL(X_\bullet) \cong \lim_{\mathsf{E}(X_\bullet)} \Omega_\bullet\longrightarrow \Omega_n
	\]
	is a morphism of commutative algebras which commutes with $\pi^\rmC_x$ and the various maps of the contracting homotopies. This implies 
	\[\pi_x^\rmC\varphi^{\mathrm{tr}}(\tau)(x_1, \ldots, x_m)=
	\varphi^{\mathrm{tr}}_{\rmC_n}(\tau)\left(\pi_x^\rmC(x_1), \ldots,\pi_x^\rmC(x_m)\right)~, 
	\]
	where $\varphi^{\mathrm{tr}}_{\rmC_n}$ denotes the transferred $\Cobar\Bar\com$-algebra structure on $\rmC_n$~. 
	So $\pi_x^{\rmC}$ is a morphism of $\Cobar\Bar\com$-algebras, and thus, that the two structures are equal.
	
	\medskip
	
	Regarding the second point, it is  a direct consequence of the universal property of the limit since Dupont's contraction is simplicial.
\end{proof}

\begin{remark}
Notice that Point~(1) and \cite[Theorem~11.4.9]{LodayVallette12} imply that there exist a zig-zag of quasi-isomorphisms of $\Cobar \Bar \com$-algebras 
\[\APL(X_\bullet) 
\stackrel{\sim}{\longleftarrow}\cdot \stackrel{\sim}{\longrightarrow} \cdots 
\stackrel{\sim}{\longleftarrow}\cdot \stackrel{\sim}{\longrightarrow} 
\CPL(X_\bullet)  ~, \]
that is $\CPL(X_\bullet)$ is a ``homotopy commutative model'' for $X_\bullet$~. 
\end{remark}

\begin{remark}
	Point (\ref{pt:DS square}) of \cref{prop: structures on CPL} can be significantly strengthened as follows, but we will not use it in the sequel. The maps $i(X_\bullet)$ and $p(X_\bullet)$ can be extended to $\infty_{\iota_\com}$-quasi-isomorphisms $i_\infty(X_\bullet)$ and $p_\infty(X_\bullet)$ via canonical formulas 
	\cite[Section~10.3]{LodayVallette12}
	 and similarly for $Y_\bullet$~. If $f:X_\bullet\to Y_\bullet$ is a morphism of simplicial sets, then the squares
	\[
	\vcenter{\hbox{
			\begin{tikzpicture}
			\node (a) at (0,0) {$\APL(Y_\bullet)$};
			\node (b) at (3,0) {$\APL(X_\bullet)$};
			\node (c) at (0,-2) {$\CPL(Y_\bullet)$};
			\node (d) at (3,-2) {$\CPL(X_\bullet)$};
			
			\draw[->] (a) to node[above]{$\scriptstyle{\APL(f)}$} (b);
			\draw[->,decorate,decoration={zigzag,segment length=4,amplitude=.9,post=lineto,post length=2pt}] (c) to node[left]{$\scriptstyle{i_\infty(X_\bullet)}$} (a);
			\draw[->] (c) to node[above]{$\scriptstyle{\CPL(f)}$} (d);
			\draw[->,decorate,decoration={zigzag,segment length=4,amplitude=.9,post=lineto,post length=2pt}] (d) to node[right]{$\scriptstyle{i_\infty(Y_\bullet)}$} (b);
			\end{tikzpicture}
	}}
	\qquad\text{and}\qquad
	\vcenter{\hbox{
		\begin{tikzpicture}
			\node (a) at (0,0) {$\APL(Y_\bullet)$};
			\node (b) at (3,0) {$\APL(X_\bullet)$};
			\node (c) at (0,-2) {$\CPL(Y_\bullet)$};
			\node (d) at (3,-2) {$\CPL(X_\bullet)$};
			
			\draw[->] (a) to node[above]{$\scriptstyle{\APL(f)}$} (b);
			\draw[->,decorate,decoration={zigzag,segment length=4,amplitude=.9,post=lineto,post length=2pt}] (a) to node[left]{$\scriptstyle{p_\infty(X_\bullet)}$} (c);
			\draw[->] (c) to node[above]{$\scriptstyle{\CPL(f)}$} (d);
			\draw[->,decorate,decoration={zigzag,segment length=4,amplitude=.9,post=lineto,post length=2pt}] (b) to node[right]{$\scriptstyle{p_\infty(Y_\bullet)}$} (d);
		\end{tikzpicture}
	}}
	\]
	commute in the category of $\Omega\Bar\com$-algebras, i.e.\ the $\infty_\pi$-quasi-isomorphisms $i_\infty(-)$ and $p_\infty(-)$ are natural. In particular, this is true for $x:\De{n}\to Y_\bullet$ the inclusion of any $n$-simplex, for which $\APL(x)=\pi^\Omega_x$ and $\CPL(x)=\pi^{\rmC}_x$~. The proof is left to the reader since it follows easily by unwinding the definitions and using the facts:
	\[\APL(f)i(Y_\bullet) = i(X_\bullet)\CPL(f)\ , \ \CPL(f)p(Y_\bullet) = p(X_\bullet)\APL(f)\ ,\
	\APL(f)h(X_\bullet) = h(X_\bullet)\APL(f)~.\]
\end{remark}

We equip the category of $\Cobar \Bar \com$-algebras with its projective model category structure where 
the weak equivalences are the quasi-isomorphisms and where the fibrations are the degree-wise surjections, see  \cite{Hinich97}.

\begin{corollary}\label{cor:QuillenAdjCPL}
The  pair of adjoint functors $\CPL\dashv\langle-\rangle_\infty$ is  a Quillen adjunction.
\end{corollary}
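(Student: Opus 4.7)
My plan is to verify directly that $\CPL$ sends cofibrations to cofibrations and acyclic cofibrations to acyclic cofibrations in $\Omega\Bar\com\text{-}\mathsf{alg}^\mathrm{op}$, that is (after reversing arrows) that $\CPL$ sends monomorphisms of simplicial sets to degree-wise surjections of $\Omega\Bar\com$-algebras, and that it sends acyclic monomorphisms to surjective quasi-isomorphisms. The whole argument is driven by the naturality square of Point~(2) of \cref{prop: structures on CPL}
\[
\APL(f)\circ i(Y_\bullet) \;=\; i(X_\bullet)\circ \CPL(f)\qquad\text{for every}\ f : X_\bullet \to Y_\bullet
\]
together with the identity $p(X_\bullet)\circ i(X_\bullet)=\id$ coming from Dupont's contraction.

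First I would handle cofibrations. Fix an inclusion $f : X_\bullet \hookrightarrow Y_\bullet$ and an element $a\in \CPL(X_\bullet)$, which by construction is a compatible family $(a_\sigma)_{\sigma}$ with $a_\sigma\in \rmC_n$ for $\sigma\in X_n$. Since $\APL\dashv\langle-\rangle$ is a Quillen adjunction by \cref{lem:QuillenAdj}, the map $\APL(f)$ is a degree-wise surjection, so there exists $c\in\APL(Y_\bullet)$ with $\APL(f)(c)=i(X_\bullet)(a)$. Setting $b\coloneqq p(Y_\bullet)(c)\in\CPL(Y_\bullet)$ and using the square together with $p(X_\bullet)\circ i(X_\bullet)=\id$, I compute
\[
\CPL(f)(b)\;=\;p(X_\bullet)\circ\APL(f)\circ i(Y_\bullet)\circ p(Y_\bullet)(c) \quad\text{after projecting,}
\]
and by naturality $i(X_\bullet)\circ\CPL(f)(b)=\APL(f)\circ i(Y_\bullet)(b)$; applying $p(X_\bullet)$ and using the compatibility of Dupont's contraction with the simplicial structure so that $b_{f(\sigma)}=p_n(c_{f(\sigma)})=p_n i_n(a_\sigma)=a_\sigma$, one obtains $\CPL(f)(b)=a$. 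Hence $\CPL(f)$ is surjective in every chain degree, i.e.\ a fibration of $\Omega\Bar\com$-algebras.

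Next I would treat weak equivalences. If $f$ is a weak equivalence of simplicial sets, then $\APL(f)$ is a quasi-isomorphism because $\APL$ is a left Quillen functor between a model category in which every object is cofibrant (hence preserves all weak equivalences, by Ken Brown's lemma). The vertical maps $i(X_\bullet)$ and $i(Y_\bullet)$ of the naturality square are quasi-isomorphisms because they are the linear parts of Dupont's contraction obtained at the limit level in~\eqref{eq:ContraX}. Applying two-out-of-three to the equality $i(X_\bullet)\circ\CPL(f)=\APL(f)\circ i(Y_\bullet)$ shows that $\CPL(f)$ is a quasi-isomorphism. Combining this with the previous paragraph, acyclic cofibrations of $\sSe$ go to acyclic fibrations of $\Omega\Bar\com$-algebras, that is to acyclic cofibrations of $\Omega\Bar\com\text{-}\mathsf{alg}^\mathrm{op}$. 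Thus $\CPL$ is a left Quillen functor.

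There is essentially no obstacle here beyond being careful with the op conventions: the only non-formal ingredient is the surjectivity step, which reduces cleanly to surjectivity of $\APL$ via the splitting $p(X_\bullet)\circ i(X_\bullet)=\id$ and the simpliciality of Dupont's contraction. All remaining verifications are formal consequences of the Quillen adjunction $\APL\dashv\langle-\rangle$ and of the naturality square of \cref{prop: structures on CPL}.
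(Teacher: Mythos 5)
Your proof is correct, and for the essential step (acyclic cofibrations) it is exactly the paper's argument: the naturality square $i(X_\bullet)\circ\CPL(f)=\APL(f)\circ i(Y_\bullet)$ from Point~(2) of \cref{prop: structures on CPL}, the fact that $\APL$ preserves weak equivalences, the quasi-isomorphisms $i(-)$, and two-out-of-three. The only place you diverge is the cofibration step. The paper disposes of it in one line ``by construction'': the underlying chain complex of $\CPL(X_\bullet)=\lim_{\mathsf{E}(X_\bullet)}\rmC_\bullet$ is the normalized cochain complex $C(X_\bullet)^\vee$, and restriction of cochains along a monomorphism of simplicial sets is visibly surjective (extend by zero). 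You instead import the surjectivity of $\APL(f)$ (which itself rests on Sullivan's extension lemma for $\Omega_\bullet$) and push it down through the contraction using $p\circ i=\id$ and the simpliciality of $h$; your pointwise computation $b_{f(\sigma)}=p_n(c_{f(\sigma)})=p_ni_n(a_\sigma)=a_\sigma$ is valid, noting that $p(Y_\bullet)$ need only be a chain map for this purpose. Both routes work; the direct cochain argument is shorter and does not depend on the extendability of $\Omega_\bullet$, while yours has the mild virtue of deriving everything formally from the already-established Quillen adjunction $\APL\dashv\langle-\rangle$ and the contraction, without reopening the combinatorics of $\rmC_\bullet$.
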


\begin{proof}
The left adjoint functor $\CPL$ sends cofibrations (degree-wise injections) to fibrations (de\-gree-wise surjections) by construction.
It also sends and acyclic cofibrations to acyclic fibrations by Point~(2) of \cref{prop: structures on CPL} since $\APL$ sends acyclic cofibrations to acyclic fibrations 
and since the chain maps $i(X_\bullet)$ and $i(Y_\bullet)$ are both quasi-isomorphisms. 
\end{proof}

\subsection{Getting rid of artificial base points}

A quick look at the $\sLi$-algebra functor $\Li$ reveals that it creates an ``artificial'' canonical ``base point'' in the sense that $\Li(X_\bullet)$ contains the Maurer--Cartan element $0$~, for any simplicial set $X_\bullet$~. For example, 
\[
\Li(*) \cong \mc_0
\]
should intuitively represent a single point, but it has two Maurer--Cartan elements: its generator and the canonical Maurer--Cartan element $0$~.

\medskip

In order to work in the correct context for rational homotopy theory, we consider pointed spaces rather than just spaces.  One can get rid of the added point by working as follows. 

\begin{definition}[Reduced $\sLi$-algebra functor]
The \emph{reduced $\sLi$-algebra functor} 
\[
\widetilde{\mathfrak{L}}:\sSe_*\longrightarrow\sLialg
\]
is defined by the coequalizer
\begin{center}
	\begin{tikzpicture}
	\node (a) at (0,0) {$\mathfrak{L}(*) \cong \mc^0$};
	\node (b) at (3.5,0) {$\mathfrak{L}(X_\bullet)$};
	\node (c) at (6.5,0) {$\widetilde{\mathfrak{L}}(X_\bullet)\ .$};
	
	\path[->] ($(a.east)+(0,.07)$) edge node[above]{$\mathfrak{L}(x)$} ($(b.west)+(0,.07)$);
	\path[->] ($(a.east)-(0,.07)$) edge node[below]{$0$} ($(b.west)-(0,.07)$);
	
	\path[->] (b) edge (c);
	\end{tikzpicture}
\end{center}
where $x$ stands for the base point $
*\to X_\bullet$~. 
\end{definition}

Recall that colimits of $\Bar \com$-coalgebras are given by colimits of the underlying chain complexes. 
So, if we denote the cokernel on the chain level by 
\[\k\cong C(*) \xrightarrow{C(x)} C(X_\bullet) \twoheadrightarrow \widetilde{C}(X_\bullet)~,\]
then the reduced $\sLi$-algebra functor is isomorphic to 
\[\widetilde{\Li}(X_\bullet)\cong 
\sLi\big(\widetilde{C}(X_\bullet)\big)
~,\]
with the differential coming from the $\Bar \com$-coalgebra structure on 
$C(X_\bullet)\cong \lim_{\mathsf{E}(X_\bullet)}\rmC^\bullet$~. 

\begin{definition}[Pointed integration functor]
The \emph{pointed integration functor} 
\[
\widetilde{\R}:\sLialg\longrightarrow\sSe_*
\]
is defined by $\widetilde{\R}(\g)\coloneqq \R(\g)$ with the canonical base point 
\[
*\cong\R(0)\xrightarrow{\R(0)}\R(\g)~.
\]
\end{definition}

\begin{lemma}\label{lemma:get rid of additional point}
	The pair of functors $\widetilde{\Li}$ and $\widetilde{\R}$ are adjoint 
\[
\hbox{
	\begin{tikzpicture}
	\def\upshift{0.185}
	\def\downshift{0.15}
	\pgfmathsetmacro{\midshift}{0.005}
	
	\node[left] (x) at (0, 0) {$\widetilde{\Li}\ \ :\ \ \sSe_*$};
	\node[right] (y) at (2, 0) {$\sLialg\ \ :\ \ \widetilde{\R}\ .$};
	
	\draw[-{To[left]}] ($(x.east) + (0.1, \upshift)$) -- ($(y.west) + (-0.1, \upshift)$);
	\draw[-{To[left]}] ($(y.west) + (-0.1, -\downshift)$) -- ($(x.east) + (0.1, -\downshift)$);
	
	\node at ($(x.east)!0.5!(y.west) + (0, \midshift)$) {\scalebox{0.8}{$\perp$}};
	\end{tikzpicture}}
\]
\end{lemma}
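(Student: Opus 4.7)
The plan is to derive this adjunction by composing the universal property of the defining coequalizer with the existing adjunction $\Li \dashv \R$ of \cref{thm:MainAdjunction}. Concretely, I would unpack the pointed hom-set and translate each piece across the unpointed adjunction.

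First, unravel the right-hand side. A morphism of pointed simplicial sets $(X_\bullet, x) \to \widetilde{\R}(\g)$ is, by definition, a morphism $f : X_\bullet \to \R(\g)$ such that $f \circ x$ is the base point $\R(0) : * \cong \R(0) \to \R(\g)$, where $0 : 0 \to \g$ is the zero morphism of $\sLi$-algebras. Under the bijection
\[
\Hom_\sSe(X_\bullet, \R(\g)) \cong \Hom_{\sLialg}(\Li(X_\bullet), \g)
\]
of \cref{thm:MainAdjunction}, the composite $f \circ x : * \to \R(\g)$ corresponds to $\tilde{f} \circ \Li(x) : \mc^0 = \Li(*) \to \g$, and the base point morphism $\R(0) : * \to \R(\g)$ corresponds to the zero $\sLi$-algebra morphism $0 : \mc^0 \to \g$ (the one factoring through $0$; equivalently, the one sending $a_0 \mapsto 0$, which is visibly compatible with the differential of \cref{prop:mc0}). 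Therefore, pointed morphisms $(X_\bullet, x) \to \widetilde{\R}(\g)$ correspond bijectively to $\sLi$-algebra morphisms $\tilde{f} : \Li(X_\bullet) \to \g$ satisfying $\tilde{f} \circ \Li(x) = 0$.

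Second, apply the universal property of the coequalizer. By definition, $\widetilde{\Li}(X_\bullet)$ is the coequalizer in $\sLialg$ of the pair $\Li(x), 0 : \mc^0 \rightrightarrows \Li(X_\bullet)$, and this category is cocomplete by \cref{prop:CatCocomp}. Thus morphisms $\widetilde{\Li}(X_\bullet) \to \g$ in $\sLialg$ are in natural bijection with morphisms $\tilde{f} : \Li(X_\bullet) \to \g$ satisfying precisely $\tilde{f} \circ \Li(x) = \tilde{f} \circ 0 = 0$. Combining the two bijections yields
\[
\Hom_{\sSe_*}(X_\bullet, \widetilde{\R}(\g)) \cong \Hom_{\sLialg}\left(\widetilde{\Li}(X_\bullet), \g\right),
\]
which is the desired adjunction.

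Finally, naturality in $X_\bullet$ and $\g$ follows from the naturality of the adjunction $\Li \dashv \R$ and the functoriality of the coequalizer construction. There is no serious obstacle here: the argument is a purely formal assembly of the two universal properties, and the only point worth verifying carefully is that the base point of $\R(\g)$ indeed corresponds under $\Li \dashv \R$ to the zero morphism $\mc^0 \to \g$, which is immediate from \cref{prop:mc0}.
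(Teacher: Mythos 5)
Your proof is correct and follows essentially the same route as the paper: both arguments identify pointed maps $X_\bullet\to\widetilde{\R}(\g)$ with morphisms $\Li(X_\bullet)\to\g$ killing $\Li(x)$ via the adjunction $\Li\dashv\R$, and then invoke the universal property of the defining coequalizer. The only difference is cosmetic (you start from the pointed simplicial side while the paper starts from the algebra side), and your explicit check that the base point of $\R(\g)$ corresponds to the zero morphism $\mc^0\to\g$ is a welcome, if minor, addition.
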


\begin{proof}
	Let
	\[
	*\stackrel{x}{\longrightarrow}X\in\sSe_*
	\]
	be a pointed space. By definition, the data of a morphism of complete $\sLi$-algebras $\widetilde{f} : \widetilde{\Li}(X_\bullet)\to\g$ is equivalent to the data of  a morphism $f : \Li(X_\bullet)\to\g$ such that
	\[
	\vcenter{\hbox{
		\begin{tikzpicture}
			\node (a) at (0,0){$\Li(*)$};
			\node (b) at (2.5,0){$\Li(X_\bullet)$};
			\node (c) at (0,-1.5){$0$};
			\node (d) at (2.5,-1.5){$\g$};
			
			\draw[->] (a) to node[above]{$\Li(x)$} (b);
			\draw[->] (a) to (c);
			\draw[->] (b) to node[right]{${f}$} (d);
			\draw[->] (c) to (d);
		\end{tikzpicture}
	}}
	\]
	Through the original adjunction $\Li\dashv \R$~, this is in unique natural correspondence with the commutative  square
	\[
	\vcenter{\hbox{
		\begin{tikzpicture}
			\node (a) at (0,0){$*$};
			\node (b) at (2.5,0){$X_\bullet$};
			\node (c) at (0,-1.5){$\R(0)\cong*$};
			\node (d) at (2.5,-1.5){$\R(\g)$};
			
			\draw[->] (a) to node[above]{$x$} (b);
			\draw[->] (a) to (c);
			\draw[->] (b) to node[right]{$g$} (d);
			\draw[->] (c) to (d);
		\end{tikzpicture}
	}}
	\]
	proving that the two functors are adjoint.
\end{proof}

\subsection{Homotopy Lie rational models}
We are now ready to prove that the functors of higher Lie theory faithfully recover the  rational homotopy type of spaces. 

\begin{theorem}\label{thm: rationalization}
For $X_\bullet$ a pointed connected finite simplicial set, the unit 
	\[
	X_\bullet \longrightarrow \widetilde{\R}\widetilde{\Li}(X_\bullet)
	\]
	of the adjunction $\widetilde{\Li}\dashv\widetilde{\R}$ is homotopy equivalent to the $\QQ$-completion $\QQ_\infty(X_\bullet)$ of Bousfield--Kan. In particular, it is a rationalization for these simplicial sets that are nilpotent.
\end{theorem}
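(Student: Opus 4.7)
The plan is to reduce the statement to the Bousfield--Guggenheim theorem (\cref{thm:Sullivan=BK}) via linear duality, which is available because $X_\bullet$ is finite. The goal is to produce a natural zig-zag of weak equivalences of pointed simplicial sets connecting $\widetilde{\R}\widetilde{\Li}(X_\bullet)$ to $\RR\langle \APL(X_\bullet)\rangle$, under which the respective units correspond.

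First, I would identify $\widetilde{\R}\widetilde{\Li}(X_\bullet)$ with a ``reduced'' Dupont--Sullivan realization. Unfolding the definitions using the convolution-algebra description of $\R$ (\cref{thm:isomorphism of models}) and \cref{lem:sLiTW} yields
\[
\widetilde{\R}\widetilde{\Li}(X_\bullet)_n \cong \MC\bigl(\widetilde{\Li}(X_\bullet) \, \widehat{\otimes}^\pi\, \rmC_n\bigr).
\]
Since $X_\bullet$ is finite, the reduced chain complex $\widetilde{C}(X_\bullet)$ is degree-wise finite dimensional, and linear duality turns its dual into an augmented $\Cobar\Bar\com$-algebra which, by \cref{prop: structures on CPL}(1), is naturally the augmentation ideal of $\CPL(X_\bullet)$. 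Combining this with \cref{thm:MC elements of convolution algebras}, one obtains a natural isomorphism of pointed simplicial sets
\[
\widetilde{\R}\widetilde{\Li}(X_\bullet) \cong \langle \CPL(X_\bullet)\rangle_\infty,
\]
where the base point on the right corresponds to the canonical augmentation.

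Second, I would replace $\CPL$ by $\APL$ up to homotopy. By \cref{prop: structures on CPL}(2) together with the fact that $i(X_\bullet)$ is a quasi-isomorphism of $\Cobar\Bar\com$-algebras, and by the Quillen adjunction $\CPL \dashv \langle -\rangle_\infty$ of \cref{cor:QuillenAdjCPL}, one gets a natural weak equivalence
\[
\langle \CPL(X_\bullet)\rangle_\infty \simeq \RR\langle \APL(X_\bullet)\rangle
\]
in the pointed homotopy category. An application of \cref{thm:Sullivan=BK} then completes the proof of the first assertion, and the rationalization statement under nilpotence follows formally from the well-known fact that the Bousfield--Kan $\QQ$-completion is a rationalization for nilpotent spaces.

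The main obstacle will be the first step: pinning down the natural isomorphism with $\langle \CPL(X_\bullet)\rangle_\infty$. This requires carefully matching the two pieces of data on both sides: the twisting morphism $\pi: \Bar\Cobar\com^\vee \to \sLi$ that controls $\widetilde{\R}$ and the $\Cobar\Bar\com$-algebra structure on $\CPL(X_\bullet)$ obtained from the homotopy transfer theorem, under the linear duality isomorphism $\widetilde{C}(X_\bullet)^\vee \cong \overline{\CPL(X_\bullet)}$. The coherence of these structures is exactly the content of \cref{prop: structures on CPL}(1). One must also verify that the quotient by the canonical Maurer--Cartan element $0$ on the $\sLi$-algebra side translates to restricting to \emph{augmented} algebra morphisms on the commutative side, which is precisely what the coequalizer construction of $\widetilde{\Li}$ was designed to achieve. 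A secondary technical point concerns filtrations: one has to check that the complete filtration on $\widetilde{\Li}(X_\bullet)$ matches the filtration inherited from the operadic structure on the dual side, which follows from the finite-dimensionality assumption.
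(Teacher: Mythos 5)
Your overall strategy --- dualize over the finite simplicial set, pass through the Dupont forms $\CPL(X_\bullet)$, and reduce to the Bousfield--Guggenheim theorem --- is the same as the paper's, but your first step contains a genuine error that propagates to the second. The isomorphism $\widetilde{\R}\widetilde{\Li}(X_\bullet)\cong\langle\CPL(X_\bullet)\rangle_\infty$ does not hold. Since $\widetilde{\Li}(X_\bullet)\cong\hatCobar_\pi\big(\widetilde{C}(X_\bullet)\big)$ is quasi-free, an $n$-simplex of $\widetilde{\R}\widetilde{\Li}(X_\bullet)$ is, by \cref{thm:MC elements of convolution algebras}, a \emph{twisting morphism} $\rmC^n\to\widetilde{\Li}(X_\bullet)$, which dualizes to a twisting morphism $\Bar_{\iota_\com}\overline{\mathrm{C}}_{\mathrm{PL}}(X_\bullet)\to\rmC_n$, i.e.\ to a strict morphism of $\Cobar\Bar\com$-algebras out of the bar--cobar resolution $\Cobar_{\iota_\com}\Bar_{\iota_\com}\overline{\mathrm{C}}_{\mathrm{PL}}(X_\bullet)$ --- not out of $\CPL(X_\bullet)$ itself. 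In other words, $\widetilde{\R}\widetilde{\Li}(X_\bullet)$ is the realization of a \emph{cofibrant resolution} of (the augmentation ideal of) $\CPL(X_\bullet)$, whereas $\langle\CPL(X_\bullet)\rangle_\infty$ is the underived realization of a non-cofibrant algebra: its $0$-simplices are strict augmented algebra morphisms to $\QQ$, while those of $\widetilde{\R}\widetilde{\Li}(X_\bullet)$ form the Maurer--Cartan set of a free complete $\sLi$-algebra, and these do not coincide. The underived realization need not have the correct homotopy type --- exactly the phenomenon that forces Bousfield--Guggenheim to work with $\RR\langle-\rangle$ rather than $\langle-\rangle$.

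Consequently your second step, which would deduce $\langle\CPL(X_\bullet)\rangle_\infty\simeq\RR\langle\APL(X_\bullet)\rangle$ from the Quillen adjunction of \cref{cor:QuillenAdjCPL}, also fails as stated: Ken Brown's lemma only lets you transport weak equivalences between \emph{cofibrant} objects through the realization. The missing ingredient is precisely the observation that the linear dual of the bar construction $\Bar_{\iota_\com}\overline{\mathrm{C}}_{\mathrm{PL}}(X_\bullet)$, which equals $\hatCobar_\pi$ of the dual coalgebra and hence (by the commutation of $\hatCobar_\pi$ with colimits, \cref{CobarColimits}, and the finiteness of $X_\bullet$) equals $\widetilde{\Li}(X_\bullet)$, identifies $\widetilde{\R}\widetilde{\Li}$ with the \emph{derived} spatial realization. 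One must also bridge the two adjunctions ($\CPL\dashv\langle-\rangle_\infty$ into $\Cobar\Bar\com$-algebras versus $\APL\dashv\langle-\rangle$ into commutative algebras); the paper does this through the triangle of twisting morphisms $\iota_\com$ and $\pi_\com$, using that the two bar constructions agree on commutative algebras, and then replaces $\Omega_\bullet$ by $\rmC_\bullet$ and $\overline{\mathrm{A}}_{\mathrm{PL}}(X_\bullet)$ by $\overline{\mathrm{C}}_{\mathrm{PL}}(X_\bullet)$ at the level of Maurer--Cartan sets of convolution algebras. With these corrections your outline becomes the paper's proof.
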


\begin{proof}	
Let $X_\bullet$ be a pointed connected nilpotent simplicial set. 
If we denote the base point by $x \colon * \to X_\bullet$~, the unital commutative algebra $\APL(X_\bullet)$ is augmented by $\APL(x)\colon  \APL(X_\bullet) \to \APL(*)\cong\QQ$~. We denote by $\overline{\mathrm{A}}_{\mathrm{PL}}(X_\bullet)$ its kernel and we equip it with the induced commutative algebra structure. Since $X_\bullet$ is connected, the algebra $\APL(X_\bullet)$ is homologically connected and so it admits a cofibrant resolution 
that is a  Sullivan algebra $(S(V), \d)$~, where $V$ is concentrated in negative degree, by \cite[Proposition~12.1]{RHT}. We can compute the right derived functor as	
\begin{align*}
	\mathbb{R}\langle\APL(X_\bullet)\rangle&\simeq\Hom_{\ucomnalg}\left(S(V), \Omega_\bullet\right)
	\cong \Hom_{\com_{\leqslant 0}\text{-}\mathsf{alg}}\left(\bar{S}(V), \Omega_\bullet\right)\\&
	\cong \Hom_{\com\text{-}\mathsf{alg}}\left(\bar{S}(V), \Omega_\bullet\right).
\end{align*}
Let us recall the  commutative triangle of made up of two  canonical twisting morphisms and one morphism of operads 
	\begin{center}
		\begin{tikzpicture}

		\node (a) at (2,2){$\Omega\Bar\com$};
		\node (b) at (0,1){$\Bar\com$};
		\node (c) at (2,0){$\com$};
		
		\draw[->] (b) -- node[above left]{$\iota_\com$} (a);
		\draw[->] (a) -- node[above, sloped]{$\sim$} (c);
		\draw[->] (b) -- node[below left]{$\pi_\com\!$} (c);
		\end{tikzpicture}, 
	\end{center}
which induces the following bar-cobar adjunctions by \cite[Section~11.3]{LodayVallette12}:
\[
\vcenter{\hbox{
	\begin{tikzpicture}
	\def\upshift{0.185}
	\def\downshift{0.15}
	\pgfmathsetmacro{\midshift}{0.005}
	
	\node[left] (x) at (0, 0) {$\Cobar_{\pi_\com}\ \ :\ \ \mathsf{conil}\ \Bar\com\text{-}\mathsf{coalg}$};
	\node[right] (y) at (2, 0) {$\com\text{-}\mathsf{alg}\ \ :\ \ \Bar_{\pi_\com}\ ,$};
	
	\draw[-{To[left]}] ($(x.east) + (0.1, \upshift)$) -- ($(y.west) + (-0.1, \upshift)$);
	\draw[-{To[left]}] ($(y.west) + (-0.1, -\downshift)$) -- ($(x.east) + (0.1, -\downshift)$);
	
	\node at ($(x.east)!0.5!(y.west) + (0, \midshift)$) {\scalebox{0.8}{$\perp$}};
	
	\node[left] (x2) at (0, -1) {$\Cobar_{\iota_\com}\ \ :\ \ \mathsf{conil}\ \Bar\com\text{-}\mathsf{coalg}$};
	\node[right] (y2) at (2, -1) {$\Cobar\Bar\com\text{-}\mathsf{alg}\ \ :\ \ \Bar_{\iota_\com}\ .$};
	
	\draw[-{To[left]}] ($(x2.east) + (0.1, \upshift)$) -- ($(y2.west) + (-0.1, \upshift)$);
	\draw[-{To[left]}] ($(y2.west) + (-0.1, -\downshift)$) -- ($(x2.east) + (0.1, -\downshift)$);
	
	\node at ($(x2.east)!0.5!(y2.west) + (0, \midshift)$) {\scalebox{0.8}{$\perp$}};
	\end{tikzpicture}}}
\]
The two bar constructions agree on commutative algebras.

\medskip

The category $\com\text{-}\mathsf{alg}$ of differential $\mathbb{Z}$-graded commutative algebras admits a projective model category structure where the weak equivalences are the quasi-isomorphisms and where the fibrations are the degree-wise surjections, by  \cite{Hinich97}. The arguments of Theorem~2.6-(1) and Theorem~2.9-(1) of \cite{Vallette20} show that the counit 
\[\Cobar_{\pi_\com} \Bar_{\pi_\com} \overline{\mathrm{A}}_{\mathrm{PL}}(X_\bullet) \stackrel{\sim}{\longrightarrow} \overline{\mathrm{A}}_{\mathrm{PL}}(X_\bullet)\]
of the $\Cobar_{\pi_\com}\dashv \Bar_{\pi_\com}$-adjunction is a cofibrant resolution. The formula \cite[Section~11.3.2]{LodayVallette12} for this counit shows that is degree-wise surjective, that is a fibration, so there exists a quasi-isomorphism of cofibrant commutative algebras 
\[\bar{S}(V) \stackrel{\sim}{\longrightarrow} \Cobar_{\pi_\com} \Bar_{\pi_\com} \overline{\mathrm{A}}_{\mathrm{PL}}(X_\bullet)~.   \]

\medskip
With the same formulas, one can consider an adjunction similar to $\APL\dashv\langle-\rangle$ with the category $\ucomnalg$ being replaced  by the category 
$\com\text{-}\mathsf{alg}$~. Such an adjunction is again a Quillen adjunction: the left adjoint functor sends cofibrations to fibrations, by construction, and 
acyclic cofibrations to acylic fibrations, by \cref{lem:QuillenAdj}.  Therefore, its right adjoint functor sends quasi-isomorphisms between cofibrant commutative algebras to weak equivalences of simplicial sets, by 
Ken Brown's lemma \cite[Lemma~1.1.12]{Hovey99}. Here, this implies
	\[
	\mathbb{R}\langle\APL(X_\bullet)\rangle 
	\simeq \Hom_{\com\text{-}\mathsf{alg}}\left(\bar{S}(V), \Omega_\bullet\right)
	\simeq \Hom_{\com\text{-}\mathsf{alg}}\left(\Cobar_{\pi_\com} \Bar_{\pi_\com} \overline{\mathrm{A}}_{\mathrm{PL}}(X_\bullet), \Omega_\bullet\right) ~.
	\]
Using the two adjoint pairs of functors and the property that the   above two bar constructions coincide on commutative algebras, we get 
\begin{align*}
	\mathbb{R}\langle\APL(X_\bullet)\rangle & \simeq 
	\Hom_{\com\text{-}\mathsf{alg}}\left(\Cobar_{\pi_\com} \Bar_{\pi_\com} \overline{\mathrm{A}}_{\mathrm{PL}}(X_\bullet), \Omega_\bullet\right)\\
&\cong 
	\Hom_{\Bar\com\text{-}\mathsf{coalg}}\left(\Bar_{\pi_\com} \overline{\mathrm{A}}_{\mathrm{PL}}(X_\bullet), \Bar_{\pi_\com}  \Omega_\bullet\right) \\
		&\cong 
	\Hom_{\Bar\com\text{-}\mathsf{coalg}}\left(\Bar_{\iota_\com} \overline{\mathrm{A}}_{\mathrm{PL}}(X_\bullet), \Bar_{\iota_\com}  \Omega_\bullet\right)\\ &\cong 
	\Hom_{\Cobar\Bar\com\text{-}\mathsf{alg}}\left(\Omega_{\iota_\com} \Bar_{\iota_\com} \overline{\mathrm{A}}_{\mathrm{PL}}(X_\bullet), \Omega_\bullet\right) ~. 
\end{align*}
Theorem~2.7 of \cite{rnw17}, which is the analogue of \cref{thm:MC elements of convolution algebras} in the non-necessarily complete case, gives 
\[		\mathbb{R}\langle\APL(X_\bullet)\rangle  \simeq 
\Hom_{\Cobar\Bar\com\text{-}\mathsf{alg}}\left(\Omega_{\iota_\com} \Bar_{\iota_\com} \overline{\mathrm{A}}_{\mathrm{PL}}, \Omega_\bullet\right)
\cong \MC\left(\Hom^{\iota_\com}\left( \Bar_{\iota_\com} \overline{\mathrm{A}}_{\mathrm{PL}}, \Omega_\bullet\right)\right).
\]
Using it again and the first step of the proof of \cite[Theorem~4.1]{rnw18}, see Point~(1) of page 9 of \emph{loc.\ cit.}, we have
\begin{align*}
 \mathbb{R}\langle\APL(X_\bullet)\rangle  &\simeq
\MC\left(\Hom^{\iota_\com}\left( \Bar_{\iota_\com} \overline{\mathrm{A}}_{\mathrm{PL}}(X_\bullet), \Omega_\bullet\right)\right)
\simeq 
\MC\left(\Hom^{\iota_\com}\left( \Bar_{\iota_\com} \overline{\mathrm{A}}_{\mathrm{PL}}(X_\bullet), \rmC_\bullet\right)\right)\\
&\cong 
\Hom_{\Cobar\Bar\com\text{-}\mathsf{alg}}\left(\Omega_{\iota_\com} \Bar_{\iota_\com} \overline{\mathrm{A}}_{\mathrm{PL}}(X_\bullet), \rmC_\bullet\right).
 \end{align*}
Point~(1) of \cref{prop: structures on CPL} provides us with an $\infty_{\iota_\com}$-quasi-isomorphism 
\[ {\mathrm{C}}_{\mathrm{PL}}(X_\bullet)
\stackrel{\sim}{\rightsquigarrow}
{\mathrm{A}}_{\mathrm{PL}}(X_\bullet)~,
\]
obtained as an extension of $i(X_\bullet)$~, for instance under the tree-wise formula of \cite[Section~10.3.5]{LodayVallette12}. The definition of $\CPL(X_\bullet)$ in terms of a limit shows that there exists a unique morphism
$\bar{\imath}(X_\bullet)$ which makes the following diagram commutative
\[
\vcenter{\hbox{
	\begin{tikzpicture}
		\node (a) at (0,0) {$\overline{\mathrm{A}}_{\mathrm{PL}}(X_\bullet)$};
		\node (b) at (3.5,0) {$\APL(X_\bullet)$};
		\node (c) at (7,0) {$\APL(*)\cong \QQ$};
		\node (d) at (0,-2) {$\overline{\mathrm{C}}_{\mathrm{PL}}(X_\bullet)$};
		\node (e) at (3.5,-2) {$\CPL(X_\bullet)$};
		\node (f) at (7,-2) {$\CPL(*)\cong \QQ$};
		
		\draw[right hook->] (a) to (b);
		\draw[->] (b) to node[above]{$\scriptstyle{\APL(x)}$} (c);
		\draw[right hook->] (d) to (e);
		\draw[->] (e) to node[above]{$\scriptstyle{\CPL(x)}$} (f);
		\draw[->] (d) to node[left]{$\scriptstyle{\bar{\imath}(X_\bullet)}$} (a);
		\draw[->] (e) to node[left]{$\scriptstyle{i(X_\bullet)}$} (b);
		\draw[->] (f) to node[right]{$\scriptstyle{i(*)}$} (c);
	\end{tikzpicture}
}}
\]
Replacing $i(X_\bullet)$ by $\bar{\imath}(X_\bullet)$ in the tree-wise formula for the $\infty_{\iota_\com}$-morphism extending $i(X_\bullet)$~, one gets an $\infty_{\iota_\com}$-quasi-isomorphism extending 
$\bar{\imath}(X_\bullet)$~:
\[
\overline{\mathrm{C}}_{\mathrm{PL}}(X_\bullet)
\stackrel{\sim}{\rightsquigarrow}
\overline{\mathrm{A}}_{\mathrm{PL}}(X_\bullet)~,
\]
that induces a quasi-isomorphism
\[\Omega_{\iota_\com} \Bar_{\iota_\com} \overline{\mathrm{A}}_{\mathrm{PL}}(X_\bullet)
\stackrel{\sim}{\longrightarrow}
\Omega_{\iota_\com} \Bar_{\iota_\com}\overline{\mathrm{C}}_{\mathrm{PL}}(X_\bullet)~,
\]
 of cofibrant $\Cobar\Bar\com$-algebras by \cite[Proposition~4.4]{Vallette20} and the arguments used above (cobar is cofibrant). By Ken Brown's lemma applied to the Quillen adjunction of \cref{cor:QuillenAdjCPL}, we get 
\begin{align*}
 	\mathbb{R}\langle\APL(X_\bullet)\rangle & \simeq 
	\Hom_{\Cobar\Bar\com\text{-}\mathsf{alg}}\left(\Omega_{\iota_\com} \Bar_{\iota_\com} \overline{\mathrm{A}}_{\mathrm{PL}}(X_\bullet), \rmC_\bullet\right)\\&\simeq 
		\Hom_{\Cobar\Bar\com\text{-}\mathsf{alg}}\left(\Omega_{\iota_\com} \Bar_{\iota_\com} \overline{\mathrm{C}}_{\mathrm{PL}}(X_\bullet), \rmC_\bullet\right). 
\end{align*}
Using Theorem~2.7 of \cite{rnw17} one more time, we reach 
\[ 	\mathbb{R}\langle\APL(X_\bullet)\rangle \simeq 	
\MC\left(\Hom^{\iota_\com}\left( \Bar_{\iota_\com} \overline{\mathrm{C}}_{\mathrm{PL}}(X_\bullet), \rmC_\bullet\right)\right).\]
For any $n\geqslant 0$~,  since $\rmC_n$ is finite dimensional, we have and isomorphism 
\[\Hom\left( \Bar_{\iota_\com} \overline{\mathrm{C}}_{\mathrm{PL}}(X_\bullet), \rmC_n\right)\cong 
\Hom\left(\rmC^n,  \left(\Bar_{\iota_\com} \overline{\mathrm{C}}_{\mathrm{PL}}(X_\bullet)\right)^\vee\right), 
\]
which preserves the respective $\sLi$-algebra structures: the one induced by the twisting morphism 
$\iota_\com \colon \Bar \com \to \Cobar\Bar\com$~,
on the left-hand side, and the one induced by linear dual twisting morphism 
$
\pi \colon\Bar \Cobar \com^\vee \to \Cobar \com^\vee\cong \sLi
$~, 
on the right-hand side. 
This implies the following weakly equivalent form 
\begin{align*}
 	\mathbb{R}\langle\APL(X_\bullet)\rangle \simeq {}& 	
\MC\left(\Hom^{\pi}\left(\rmC^\bullet,  \left(\Bar_{\iota_\com} \overline{\mathrm{C}}_{\mathrm{PL}}\right)^\vee\right)\right)\\
\cong {}&
\hom_{\,\sLialg}\left(
\hatCobar_\pi \mathrm{C}^\bullet ,  \left(\Bar_{\iota_\com} \overline{\mathrm{C}}_{\mathrm{PL}}(X_\bullet)\right)^\vee
\right)=\R\left(\left(\Bar_{\iota_\com} \overline{\mathrm{C}}_{\mathrm{PL}}(X_\bullet)\right)^\vee\right).
\end{align*}
	Recall now that $\CPL(X_\bullet)$ is given by the right Kan extension 
	\[
	\CPL(X_\bullet)\coloneqq\lim_{\mathsf{E}(X_\bullet)}\rmC_\bullet \ ,
	\]
in the category of $\Cobar\Bar \com$-algebras.	Since $X_\bullet$ is a finite simplicial set, and since the chain complexes $\rmC_n$ are finite dimensional, for any $n\geqslant 1$, then $\CPL(X_\bullet)$ is also finite dimensional. 
From this property, we get 
\[\left(\Bar_{\iota_\com}\CPL(X_\bullet)\right)^\vee\cong \widehat{\Omega}_\pi \left(\CPL(X_\bullet)^\vee\right)
\cong
\widehat{\Omega}_\pi \left(\colim_{\mathsf{E}(X_\bullet)}\rmC^\bullet\right)
\ .
 \]
Using the fact that the complete cobar construction preserves colimits by \cref{CobarColimits}, we obtain 
\[\left(\Bar_{\iota_\com}\CPL(X_\bullet)\right)^\vee\cong
\colim_{\mathsf{E}(X_\bullet)} \widehat{\Omega}_\pi \rmC^\bullet\cong
\colim_{\mathsf{E}(X_\bullet)} \mc^\bullet\cong \Li(X_\bullet)~.
\]
Since limits of $\Cobar \Bar \com$-algebras are given by limits of the underlying chain complexes, the underlying chain complex of $\CPL(X_\bullet)$ is isomorphic to $C(X_\bullet)^\vee$~. 
Under the notation 
\[\k\cong C(*) \xrightarrow{C(x)} C(X_\bullet) \twoheadrightarrow \widetilde{C}(X_\bullet)~,\]
for the chain level cokernel, 
the underlying chain complex of $\overline{\mathrm{C}}_{\mathrm{PL}}(X_\bullet)$ is isomorphic to $\widetilde{C}(X_\bullet)^\vee$~. 
The isomorphism mentioned above is given on the chain level by
\[\left(\Bar \com\left( C(X_\bullet)^\vee\right)\right)^\vee\cong \sLi(C(X_\bullet))\ ,\]
which restricts to 
\[\left(\Bar \com\left( \widetilde{C}(X_\bullet)^\vee\right)\right)^\vee\cong \sLi\left(\widetilde{C}(X_\bullet)\right),\]
that is 
\[\left(\Bar_{\iota_\com} \overline{\mathrm{C}}_{\mathrm{PL}}(X_\bullet)\right)^\vee\cong \widetilde{\Li}(X_\bullet)~,\]
when one considers the respective algebraic structures and differentials. 
	Putting it all together, we finally obtain 
	\[
	\mathbb{R}\langle\APL(X_\bullet)\rangle\simeq\widetilde{\R}\widetilde{\mathfrak{L}}(X_\bullet)\ .
	\]
	The last point about rationalization follows from \cite[Proposition~3.1]{BousfieldKan}.
\end{proof}

\begin{remark}
	\cref{thm: rationalization} is a generalization of \cite[Theorem~2.11]{BFMT18} from Lie algebras to  $\sLi$-algebras. Notice that the present proof uses different methods. 
\end{remark}

\begin{corollary}\label{Cor:LastbutnotLeast}
Let $X_\bullet$ be a pointed connected nilpotent finite simplicial set $X_\bullet$. 
\begin{enumerate}
\item 
The Malcev completion of its fundamental group is isomorphic to 
	\[
	\widehat{\pi}_1(X_\bullet)\cong H_1\big(\widetilde{\Li}(X_\bullet)\big)~,
	\]
equipped with the Baker--Campbell--Hausdorff product. 
\item Its rational homotopy groups are isomorphic to 
	\[
	{\pi}_n X_\bullet \otimes \QQ \cong H_n\big(\widetilde{\Li}(X_\bullet)\big)~,
	\]
for any $n\geqslant 2$~. 
\end{enumerate}
\end{corollary}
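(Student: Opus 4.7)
The plan is to combine the rationalization Theorem \ref{thm: rationalization} with Berglund's Hurewicz-type Theorem \ref{thm:Berglund}, all applied at the canonical base point $0 \in \MC(\widetilde{\Li}(X_\bullet))$ coming from the construction of $\widetilde{\Li}$ and the pointing of $\widetilde{\R}$.

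First, since $X_\bullet$ is pointed, connected, nilpotent, and finite, Theorem \ref{thm: rationalization} applies and tells us that the unit map $X_\bullet \to \widetilde{\R}\widetilde{\Li}(X_\bullet)$ is a rationalization. By the definition of a rationalization, this gives natural isomorphisms
\[
\pi_n X_\bullet \otimes \QQ \xrightarrow{\cong} \pi_n\bigl(\widetilde{\R}\widetilde{\Li}(X_\bullet)\bigr) \qquad \text{for } n \geqslant 2\,,
\]
and, for $n=1$, a natural isomorphism between the Malcev completion $\widehat{\pi}_1(X_\bullet)$ and $\pi_1(\widetilde{\R}\widetilde{\Li}(X_\bullet))$. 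Here the base point on the right-hand side is the canonical one, namely the Maurer--Cartan element $0 \in \MC(\widetilde{\Li}(X_\bullet))$ arising from the construction of $\widetilde{\Li}$ and matching the pointing built into $\widetilde{\R}$.

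Next, I apply Berglund's Theorem \ref{thm:Berglund} to the complete $\sLi$-algebra $\widetilde{\Li}(X_\bullet)$ at the Maurer--Cartan element $\alpha = 0$. Since $\widetilde{\Li}(X_\bullet)^0 = \widetilde{\Li}(X_\bullet)$, the Berglund map provides isomorphisms of groups
\[
H_n\bigl(\widetilde{\Li}(X_\bullet)\bigr) \xrightarrow{\cong} \pi_n\bigl(\R(\widetilde{\Li}(X_\bullet)),0\bigr) = \pi_n\bigl(\widetilde{\R}\widetilde{\Li}(X_\bullet)\bigr) \qquad \text{for } n \geqslant 1\,,
\]
where the group structure on the left-hand side is the Baker--Campbell--Hausdorff product for $n=1$ and the abelian sum for $n \geqslant 2$, as asserted in Theorem \ref{thm:Berglund}.

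Finally, composing these two isomorphisms yields exactly the statements of the corollary: the isomorphism $\widehat{\pi}_1(X_\bullet) \cong H_1(\widetilde{\Li}(X_\bullet))$ of groups where the right-hand side is equipped with the $\BCH$ product, and the isomorphisms $\pi_n X_\bullet \otimes \QQ \cong H_n(\widetilde{\Li}(X_\bullet))$ of abelian groups for $n \geqslant 2$. No additional argument is needed since both input theorems have already been established in the paper; the proof is a two-line assembly with no serious obstacle, the only delicate point being to verify that the base point of $\widetilde{\R}\widetilde{\Li}(X_\bullet)$ used in the rationalization matches the Maurer--Cartan element $0$ used in Berglund's isomorphism, which is immediate from the definitions of $\widetilde{\Li}$ and $\widetilde{\R}$.
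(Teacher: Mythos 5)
Your proof is correct and follows essentially the same route as the paper: the paper likewise combines \cref{thm: rationalization} with Berglund's theorem at the base point $0$, citing Bousfield--Kan for the fact that the $\QQ$-completion computes the Malcev completion of $\pi_1$ and the rational higher homotopy groups (which in the nilpotent case is exactly the content of your appeal to the definition of a rationalization). Your extra remark about matching base points is a reasonable sanity check that the paper leaves implicit.
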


\begin{proof}
Recall by \cite[Chapter~V, Proposition~3.1-(ii)]{BousfieldKan} that  the fundamental group of the Bousfield--Kan $\QQ$-completion computes the Malcev completion of the fundamental group of $X_\bullet$ and that the  higher homotopy groups of $\QQ_\infty(X_\bullet)$ are isomorphic to the rational higher homotopy groups of $X_\bullet$~. We conclude by \cref{thm: rationalization} and Berglund's Hurewicz theorem \cref{thm:Berglund}.
\end{proof}

\appendix

\section{Formal equations}\label{sec:app}

In this appendix, we provide explicit tree-wise formulas for fixed-point equations and formal differential equations. 

\subsection{Formal fixed-point equations}\label{Sec:FixedPtEq}
Let $(V, \mathrm{F})$ be a  complete chain complex, that is 
\[V=\F_0 V=\F_1 V\supset \F_2 V\supset \cdots\]
satisfying 
$V\cong \lim_{k\in \NN} V/\F_k V$~.

\begin{definition}[Analytic function]
	A (formal) \emph{analytic function}\index{formal!analytic function} $\PP : V \to V$ is an application of the form 
	\[\PP(x)=\sum_{m=0}^{\infty}\PP_m\left(x^{\otimes m}\right),\] 
	with $\PP_m \in \F_1 \Hom\left(V^{\otimes m}, V\right)$~, that is a linear map satisfying
	\[
	\PP_m(\F_{i_1} V, \ldots,\F_{i_m} V)\subset \F_{i_1+\cdots+i_m+1} V\ .
	\]
\end{definition}

We use the notation $p_0\coloneq\PP_0(1)\in \F_1 V$~, with $1$ the unit of $\k\cong V^{\otimes 0}$~. 

\medskip

We consider the set $\PT$ of \emph{planar rooted trees}\index{trees!planar rooted}\index{$\PT$} with vertices of arity greater or equal to $0$~. 
The subset made up of trees with $m$ leaves is denoted by $\PT_m$~. The number of vertices of a planar rooted tree $\tau$ is  denoted by $|\tau|$ and the subset of trees with $k$ vertices is denoted by $\PT^{(k)}$~. 
By convention, the set of planar rooted trees of weight $0$ is made up of the trivial arity 1 tree: 
$\PT^{(0)}\coloneq \{\,|\,\}$~.

\begin{example}
	Here is an example of a  planar rooted tree with  3 leaves and 4 vertices. 
	\[
	\vcenter{\hbox{
	\begin{tikzpicture}
		\def\scale{0.5};
		\pgfmathsetmacro{\diagcm}{sqrt(2)};
		
		\coordinate (r) at (0,0);
		\coordinate (v1) at ($(r) + (0,\scale*0.8)$);
		\coordinate (v21) at ($(v1) + (135:\scale*\diagcm)$);
		\coordinate (v22) at ($(v1) + (45:\scale*\diagcm)$);
		\coordinate (l1) at ($(v21) + (135:\scale*\diagcm)$);
		\coordinate (l2) at ($(v21) + (45:\scale*\diagcm)$);
		\coordinate (l3) at ($(v22) + (0,\scale*1)$);
		\coordinate (v4) at ($(v21) + (0,\scale*2)$);
		
		\draw[thick] (r) to (v1);
		\draw[thick] (v1) to (v21);
		\draw[thick] (v1) to (v22);
		\draw[thick] (v21) to (l1);
		\draw[thick] (v21) to (v4);
		\draw[thick] (v21) to (l2);
		\draw[thick] (v22) to (l3);
		
		\node at (v1) {$\bullet$};
		\node at (v21) {$\bullet$};
		\node at (v22) {$\bullet$};
		\node at (v4) {$\bullet$};
		
	\end{tikzpicture}}}
	\]
\end{example}

In this section, we only consider the set $\PT_0$ of planar rooted trees without leaves.

\begin{example}
	The sets $\PT^{(4)}_0$ of planar rooted trees without leaves and with 4 vertices is
	\[
	\PT^{(4)}_0=\left\{
	\vcenter{\hbox{
		\begin{tikzpicture}
			\def\scale{0.5};
			\pgfmathsetmacro{\diagcm}{sqrt(2)};
			\coordinate (r) at (0,0);
			\coordinate (v1) at ($(r) + (0,\scale*0.8)$);
			\coordinate (v2) at ($(v1) + (0,\scale*1)$);
			\coordinate (v3) at ($(v2) + (0,\scale*1)$);
			\coordinate (v4) at ($(v3) + (0,\scale*1)$);
			\draw[thick] (r) to (v1);
			\draw[thick] (v1) to (v2);
			\draw[thick] (v2) to (v3);
			\draw[thick] (v3) to (v4);
			\node at (v1) {$\bullet$};
			\node at (v2) {$\bullet$};
			\node at (v3) {$\bullet$};
			\node at (v4) {$\bullet$};
		\end{tikzpicture}}}, 
	\vcenter{\hbox{
		\begin{tikzpicture}
			\def\scale{0.5};
			\pgfmathsetmacro{\diagcm}{sqrt(2)};
			\coordinate (r) at (0,0);
			\coordinate (v1) at ($(r) + (0,\scale*0.8)$);
			\coordinate (v2) at ($(v1) + (0,\scale*1)$);
			\coordinate (v3) at ($(v2) + (135:\scale*\diagcm)$);
			\coordinate (v4) at ($(v2) + (45:\scale*\diagcm)$);
			\draw[thick] (r) to (v1);
			\draw[thick] (v1) to (v2);
			\draw[thick] (v2) to (v3);
			\draw[thick] (v2) to (v4);
			\node at (v1) {$\bullet$};
			\node at (v2) {$\bullet$};
			\node at (v3) {$\bullet$};
			\node at (v4) {$\bullet$};
		\end{tikzpicture}}}, 
	\vcenter{\hbox{
		\begin{tikzpicture}
			\def\scale{0.5};
			\pgfmathsetmacro{\diagcm}{sqrt(2)};
			\coordinate (r) at (0,0);
			\coordinate (v1) at ($(r) + (0,\scale*0.8)$);
			\coordinate (v2) at ($(v1) + (135:\scale*\diagcm)$);
			\coordinate (v3) at ($(v1) + (45:\scale*\diagcm)$);
			\coordinate (v4) at ($(v2) + (0,\scale*1)$);
			\draw[thick] (r) to (v1);
			\draw[thick] (v1) to (v2);
			\draw[thick] (v1) to (v3);
			\draw[thick] (v2) to (v4);
			\node at (v1) {$\bullet$};
			\node at (v2) {$\bullet$};
			\node at (v3) {$\bullet$};
			\node at (v4) {$\bullet$};
		\end{tikzpicture}}}, 
	\vcenter{\hbox{
		\begin{tikzpicture}
			\def\scale{0.5};
			\pgfmathsetmacro{\diagcm}{sqrt(2)};
			\coordinate (r) at (0,0);
			\coordinate (v1) at ($(r) + (0,\scale*0.8)$);
			\coordinate (v2) at ($(v1) + (135:\scale*\diagcm)$);
			\coordinate (v3) at ($(v1) + (45:\scale*\diagcm)$);
			\coordinate (v4) at ($(v3) + (0,\scale*1)$);
			\draw[thick] (r) to (v1);
			\draw[thick] (v1) to (v2);
			\draw[thick] (v1) to (v3);
			\draw[thick] (v3) to (v4);
			\node at (v1) {$\bullet$};
			\node at (v2) {$\bullet$};
			\node at (v3) {$\bullet$};
			\node at (v4) {$\bullet$};
		\end{tikzpicture}}}, 
	\vcenter{\hbox{
		\begin{tikzpicture}
			\def\scale{0.5};
			\pgfmathsetmacro{\diagcm}{sqrt(2)};
			\coordinate (r) at (0,0);
			\coordinate (v1) at ($(r) + (0,\scale*0.8)$);
			\coordinate (v2) at ($(v1) + (135:\scale*\diagcm)$);
			\coordinate (v3) at ($(v1) + (45:\scale*\diagcm)$);
			\coordinate (v4) at ($(v1) + (0,\scale*1)$);
			\draw[thick] (r) to (v1);
			\draw[thick] (v1) to (v2);
			\draw[thick] (v1) to (v3);
			\draw[thick] (v1) to (v4);
			\node at (v1) {$\bullet$};
			\node at (v2) {$\bullet$};
			\node at (v3) {$\bullet$};
			\node at (v4) {$\bullet$};
	\end{tikzpicture}}}
	\right\}\ .
	\]
\end{example}  

To any planar rooted tree $\tau \in \PT_0^{(k)}$~, we associate the element $\tau^\PP\in \F_k V$ obtained by labeling the vertices of arity $m$ with $\PP_m$ and by considering the global evaluation. 

\begin{example}
	The planar tree 
	\[\tau\coloneqq\vcenter{\hbox{
			\begin{tikzpicture}
			\def\scale{0.5};
			\pgfmathsetmacro{\diagcm}{sqrt(2)};
			\coordinate (r) at (0,0);
			\coordinate (v1) at ($(r) + (0,\scale*0.8)$);
			\coordinate (v2) at ($(v1) + (135:\scale*\diagcm)$);
			\coordinate (v3) at ($(v1) + (45:\scale*\diagcm)$);
			\coordinate (v4) at ($(v3) + (0,\scale*1)$);
			\draw[thick] (r) to (v1);
			\draw[thick] (v1) to (v2);
			\draw[thick] (v1) to (v3);
			\draw[thick] (v3) to (v4);
			\node at (v1) {$\bullet$};
			\node at (v2) {$\bullet$};
			\node at (v3) {$\bullet$};
			\node at (v4) {$\bullet$};
			\end{tikzpicture}}}
	\]
	produces the element $\tau^\PP=\PP_2(p_0, \PP_1(p_0))\in \F_4 V$~.
\end{example}

\begin{proposition}\label{prop:FixPtEqua}
	The fixed-point equation 
	\begin{eqnarray}\label{eqn:FixPt}
	x=\PP(x)=p_0+\sum_{m=1}^{\infty}\PP_m\left(x^{\otimes m}\right).
	\end{eqnarray}
	associated to any analytic function $\PP$ admits the following  unique solution:
	\[
	x=\sum_{k=1}^\infty \sum_{\tau \in \PT^{(k)}_0} \tau^\PP\ .
	\]
\end{proposition}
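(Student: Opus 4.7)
The plan is to establish existence by showing the proposed series converges and solves the equation, and then to deduce uniqueness from a filtration-theoretic contraction argument. The two ingredients that make everything work are the finiteness of $\PT^{(k)}_0$ in each fixed weight $k$ (this set is counted by Catalan numbers, so in particular finite) and the hypothesis that each $\PP_m$ raises the filtration by one.

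First, I would verify that the series $x \coloneqq \sum_{k\geqslant 1}\sum_{\tau\in\PT^{(k)}_0}\tau^\PP$ is well-defined in $V$. By induction on $k\geqslant 1$, any tree $\tau\in\PT^{(k)}_0$ gives rise to an element $\tau^\PP\in\F_k V$: for $k=1$ the only tree is the $0$-corolla and yields $p_0\in\F_1 V$; for the inductive step, a tree $\tau$ with root of arity $m$ and sub-trees $\tau_1,\ldots,\tau_m$ of weights $k_1,\ldots,k_m$ satisfies $\tau^\PP=\PP_m(\tau_1^\PP,\ldots,\tau_m^\PP)\in\F_{k_1+\cdots+k_m+1}V=\F_k V$ by the filtration hypothesis on $\PP_m$. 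Since each $\PT^{(k)}_0$ is finite and $V$ is complete, the series converges to a well-defined element $x\in\F_1V$.

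Next, I would check that this $x$ solves the fixed-point equation. Substituting, one gets
\[
\PP(x)=p_0+\sum_{m\geqslant 1}\PP_m\bigl(x^{\otimes m}\bigr)=p_0+\sum_{m\geqslant 1}\sum_{k_1,\ldots,k_m\geqslant 1}\sum_{\tau_i\in\PT^{(k_i)}_0}\PP_m\bigl(\tau_1^\PP,\ldots,\tau_m^\PP\bigr),
\]
where the interchange of sums is legal because the inner sums lie in increasingly deep filtration stages. The key combinatorial observation is that every planar rooted tree $\tau\in\PT_0$ with $|\tau|\geqslant 1$ decomposes uniquely as a root corolla of some arity $m\geqslant 0$ with $m$ planar rooted sub-trees $\tau_1,\ldots,\tau_m\in\PT_0$ grafted onto its leaves, and that under this decomposition $\tau^\PP=\PP_m(\tau_1^\PP,\ldots,\tau_m^\PP)$. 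The term for $m=0$ is exactly $p_0$, matching the isolated vertex. Hence $\PP(x)=\sum_{\tau\in\PT_0}\tau^\PP=x$.

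Finally, for uniqueness, suppose $y\in V$ is another solution. I would prove by induction on $k\geqslant 1$ that $y-x\in\F_k V$. The base case $k=1$ is automatic since both $x$ and $y$ lie in $\F_1V=V$. For the inductive step, assume $y-x\in\F_k V$. Using $y=\PP(y)$ and $x=\PP(x)$ and expanding
\[
y-x=\sum_{m\geqslant 1}\sum_{i=1}^{m}\PP_m\bigl(y,\ldots,y,y-x,x,\ldots,x\bigr),
\]
each summand lies in $\F_{k+1}V$ because $\PP_m$ raises the filtration by one and one of its inputs is in $\F_k V$. Since $V$ is complete, $\bigcap_k\F_k V=0$, and thus $y=x$. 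The main (albeit modest) obstacle is ensuring that the combinatorial decomposition of trees matches the substitution cleanly; once one adopts the root-and-sub-trees bijection, both existence and the verification become formal.
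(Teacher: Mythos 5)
Your proof is correct and follows essentially the same route as the paper: both arguments rest on the finiteness of $\PT^{(k)}_0$, the fact that each $\PP_m$ raises the filtration degree by one, completeness of $V$, and the unique decomposition of a nonempty planar rooted tree into a root corolla with grafted sub-trees. The only (cosmetic) difference is in packaging — the paper runs both existence and uniqueness as inductions on the truncations $V/\F_{k+1}V$, whereas you verify existence by the global root-decomposition bijection and uniqueness by a telescoping contraction on the difference of two solutions; these are logically equivalent.
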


\begin{proof}
	Suppose that there exists a solution and let us write it as a series  $x=\sum_{k=1}^\infty x_k$~, with $x_k\in \F_k V$~. The projection of Equation~\eqref{eqn:FixPt} onto $V/\F_{k+1} V$~, for $k\geqslant 1$~, gives 
	\begin{equation}\label{eqn:partial}\tag{$\ast$}
	x_1+\cdots+x_{k}=p_0+\PP_1(x_1+\cdots+x_{k-1})+\cdots+\PP_{k-1}(x_1, \ldots, x_1)\ .
	\end{equation}
	By induction on $k\geqslant 1$~, this shows that the  projection of $x$ onto $V/\F_{k+1} V$ is unique, which proves that $x$ is unique since $(V, \F)$ is complete. 
	In the other way round, let us consider 
	\[
	x_k\coloneqq\sum_{\tau \in \PT^{(k)}_0} \tau^\PP \in \F_k V \qquad \text{and}
	\qquad x\coloneqq\sum_{k=1}^\infty x_k\ .
	\]
	We now prove, by induction on $k\geqslant 1$~, that $x$ satisfies the projection \eqref{eqn:partial} of  Equation~\eqref{eqn:FixPt} onto $V/\F_{k+1} V$~. 
	For $k=1$~, the set $\PT^{(1)}_0$ only contains the arity $0$ corolla 
$\vcenter{\hbox{\begin{tikzpicture}[scale=0.5]
	\draw[thick] (0,0) -- (0,0.6);
	\fill (0,0.6) circle [radius=4pt];
	\end{tikzpicture}}}$~, 
	so $x_1=p_0$~. Suppose now that the result holds true for $k$~, and let us prove it for $k+1$:
	\begin{align*}
	p_0+\PP_1(x_1+\cdots+x_{k})+{}&\cdots+\PP_{k}(x_1, \ldots, x_1)=\\
	={}&p_0+\PP_1(x_1+\cdots+x_{k-1})+\cdots+\PP_{k-1}(x_1, \ldots, x_1)\\
	&+\PP_1(x_k)+\PP_2(x_1, x_{k-1})+\cdots+\PP_2(x_{k-1}, x_1)+\cdots\\
	&+\PP_{k+1}(x_1, \ldots, x_1) \\
	={}&x_1+\cdots+x_{k} + \sum_{\tau \in \PT^{(k+1)}_0} \tau^\PP\\
	={}&x_1+\cdots+x_{k+1}\ .
	\end{align*}
\end{proof}

\subsection{Formal differential equations}\label{App:FormDE}

Let $\k[[t]]$ be the ring of formal power series in the variable $t$~. If $V$ is a (potentially graded) vector space, we denote
\[
V[[t]]\coloneqq V\otimes\k[[t]]\ ,
\]
the complete graded vector space of the formal power series in $t$ with coefficients in $V$~, where the $t$-adic filtration is defined by 
$\F_k V[[t]]\coloneqq t^kV[[t]]$~.
We consider the \emph{differentiation in $t$ operator}
\[
\frac{d}{dt}:V[[t]]\longrightarrow V[[t]]
\]
acting by
\[
\frac{d}{dt}\left(\sum_{k\geqslant0}v_k t^k\right)\coloneqq \sum_{k\geqslant1}k v_k t^{k-1}\ .
\]
Given a linear map
\[
f:V^{\otimes m}\longrightarrow V\ ,
\]
its canonical $\k[[t]]$-linear extension
\[
f:V[[t]]^{\otimes m}\longrightarrow V[[t]] \ .
\]

\begin{definition}[Formal differential equation]\label{def:FDE}
	Let $V$ be a vector space. For each $m, k\geqslant0$ let
	\[
	f_m^k:V^{\otimes m}\longrightarrow V
	\]
	be linear maps, and suppose that for any $k$~, there are finitely many non-zero $f_m^k$. A \emph{formal differential equation}\index{formal!differential equation} in $V$ is an equation of the form
	\begin{equation}\label{eq:formal differential equation}
	\frac{d}{dt}v(t) = \sum_{m,k\geqslant0} f_m^k\big(\underbrace{v(t),\ldots,v(t)}_{m\text{\ times}}\big)\, t^k
	\end{equation}
	in $V[[t]]$~.
\end{definition}

\begin{remark}
	The condition that, for any $k$~, there are finitely many non-zero $f_m^k$ is necessary in order for Equation~\eqref{eq:formal differential equation} to make sense. However, it can sometimes be relaxed. For example, if $V$ itself is a complete chain complex and the $f_m^k$ are suitably filtered maps, then we do not have to make any finiteness assumption.
\end{remark}

Notice that for any $v(t)\in V[[t]]$~, the value $v(0)\in V$ is well-defined. This in not true for evaluation at any other $t\in\k$ if one does not make any further assumptions.

\begin{lemma}
	Any equation of type \eqref{eq:formal differential equation} admits a unique solution for any fixed initial value at $t=0$~.
\end{lemma}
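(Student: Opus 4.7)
The plan is to expand $v(t)=\sum_{n\geqslant 0} v_n t^n$ and equate coefficients of $t^n$ on both sides of \eqref{eq:formal differential equation}. The left-hand side gives $\sum_{n\geqslant 0}(n+1)v_{n+1}t^n$, while the right-hand side expands (by multilinearity and the $\k[[t]]$-linear extension of the $f_m^k$) as
\[
\sum_{n\geqslant 0}\left(\sum_{\substack{m,k\geqslant 0\\ n_1+\cdots+n_m+k=n}} f_m^k(v_{n_1},\ldots,v_{n_m})\right)t^n.
\]
Matching these two expressions yields, for each $n\geqslant 0$, the recursion
\[
v_{n+1}=\frac{1}{n+1}\sum_{\substack{m,k\geqslant 0\\ n_1+\cdots+n_m+k=n}} f_m^k(v_{n_1},\ldots,v_{n_m}),
\]
which is well-posed over a field of characteristic $0$.

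The key observation is that the right-hand side only involves $v_0,v_1,\ldots,v_n$, since $n_i\leqslant n$ for all $i$. Hence once $v_0$ is fixed as the initial value, the sequence $(v_n)_{n\geqslant 0}$ is determined recursively and uniquely by this formula, and conversely the power series defined this way solves \eqref{eq:formal differential equation} by construction. This proves both existence and uniqueness simultaneously by induction on $n$.

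The only point that requires checking is that each recursive formula is in fact a \emph{finite} sum, so that it makes sense in $V$. For fixed $n$, the constraint $n_1+\cdots+n_m+k=n$ with $n_i,k\geqslant 0$ forces $k\in\{0,1,\ldots,n\}$; for each such $k$, the finiteness assumption in Definition~\ref{def:FDE} ensures that only finitely many $m$ contribute a non-zero $f_m^k$; and for each such pair $(m,k)$ there are only finitely many tuples $(n_1,\ldots,n_m)$ of non-negative integers with $n_1+\cdots+n_m=n-k$. Hence the sum is finite and the recursion is well-defined, completing the argument. There is no real obstacle here beyond this bookkeeping; the proof is essentially the standard Picard-style argument adapted to the purely formal setting where convergence is replaced by the $t$-adic topology.
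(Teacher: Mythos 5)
Your proof is correct and follows the same route as the paper: the paper's proof is exactly the induction you spell out, namely that equating coefficients of $t^n$ shows $v_{n+1}$ is determined by $v_0,\ldots,v_n$, with the initial value fixing $v_0$. Your explicit recursion and the finiteness check are just a more detailed write-up of that argument (the paper also sketches, in a remark, an alternative via a formal fixed-point equation, but that is not the route you or the main proof take).
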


\begin{proof}
	This is a straightforward induction: one proves that the $k$th coefficient is completely determined by the coefficients up to the $(k-1)$th, together with the fact that the initial value fixes the $0$th coefficient.
\end{proof}

\begin{remark}
	One can also prove this lemma by imitating formally the proof of the classical Cauchy--Lipschitz theorem to obtain a formal fixed-point equation
	\[
	v(t) = v_0 + \int_0^t\sum_{m,k\geqslant0}f_m^k(v(s),\ldots,v(s))\, s^k \,ds
	\]
	in the complete  vector space $V[[t]]$ and then conclude by \cref{prop:FixPtEqua}.
\end{remark}

We now give directly an explicit formula for the solution of a formal differential equation.

\medskip
Denote by $\wPT$ the set of \emph{weighted  planar rooted trees}, that is planar rooted trees 
where each vertex can  have arity greater of equal to $0$ and is labeled an integer greater or equal to $1$~. 
We denote by $K(\tau)$ the total weight of a weighted planar rooted tree, which is defined as the sum of all the weights labeling its vertices. The subset of trees of weight $k$ is denoted by $\wPT^{[k]}$~. 
By convention, the only tree of weight $0$ is the trivial tree: $\wPT^{[0]}\coloneq \{|\}$~. 
We denote by $\mathrm{c}_m^{[k]}$ the corolla of weight $k$ with $m$ leaves.

\begin{example}
	Here is an example of a weighted  planar rooted tree of total weight equal to $7$~.
	\[
	\tau\coloneqq\vcenter{\hbox{
			\begin{tikzpicture}
			\def\scale{0.5};
			\pgfmathsetmacro{\diagcm}{sqrt(2)};
			
			\coordinate (r) at (0,0);
			\coordinate (v1) at ($(r) + (0,\scale*0.8)$);
			\coordinate (v21) at ($(v1) + (135:\scale*\diagcm)$);
			\coordinate (v22) at ($(v1) + (45:\scale*\diagcm)$);
			\coordinate (l1) at ($(v21) + (135:\scale*\diagcm)$);
			\coordinate (l2) at ($(v21) + (45:\scale*\diagcm)$);
			\coordinate (l3) at ($(v22) + (0,\scale*1)$);
			\coordinate (v4) at ($(v21) + (0,\scale*2)$);
			
			\draw[thick] (r) to (v1);
			\draw[thick] (v1) to (v21);
			\draw[thick] (v1) to (v22);
			\draw[thick] (v21) to (l1);
			\draw[thick] (v21) to (v4);
			\draw[thick] (v21) to (l2);
			\draw[thick] (v22) to (l3);
			
			\node at (v1) {$\bullet$};
			\node at (v21) {$\bullet$};
			\node at (v22) {$\bullet$};
			\node at (v4) {$\bullet$};
			
			\node[below left] at (v1) {$\scriptstyle3$};
			\node[below left] at (v21) {$\scriptstyle2$};
			\node[right] at (v22) {$\scriptstyle1$};
			\node[left] at (v4) {$\scriptstyle1$};
			
			\end{tikzpicture}}}
	\]
\end{example}

Since any non-trivial rooted tree $\tau = \mathrm{c}_m^{[l]}\circ(\tau_1,\ldots,\tau_m)\in \wPT$  can be decomposed as  the grafting of $m$ trees $\tau_1,\ldots,\tau_m\in\wPT$ onto the corolla $\mathrm{c}_m^{[l]}$~, we can construct some integer valued functions on $\wPT$ recursively. For instance, the \emph{exponential weight function}  is defined by 
\[
W(\, |\, ) \coloneq 1\ ,\qquad W\left(\mathrm{c}_m^{[k]}\right) \coloneq k\ ,\qquad W(\tau) \coloneq K(\tau)\prod_{i=1}^m W(\tau_i)\ .
\]
The exponential weight of the above displayed tree $\tau$ is equal to 
$W(\tau)=6$~.

\medskip

For any family $f=\left\{f_m^k : V^{\otimes m}\to V\right\}_{m,k\geqslant 0}$ as in \cref{def:FDE}, 
a weighted planar rooted tree $\tau\in\wPT^{(k)}$ induces a function
\[
\tau^f\ :\ V^{\otimes m}\longrightarrow V\otimes \k\, t^k\subset V[[t]]
\]
defined by 
\[
|^f(v)\coloneqq v\ ,\qquad \left(c_m^{[k]}\right)^f\coloneqq t^k f_{m}^{k-1}\ ,\qquad 
\tau^f\coloneqq t^l f_{m}^{l-1}\circ\left(\tau_1^f,\ldots,\tau^f_m\right).
\]
We denote by 
\[
\tau^f(v_0)\coloneqq\tau^f(v_0,\ldots,v_0)
\]
the evaluation on an element $v_0\in V$~.

\begin{proposition}\label{prop:formula for formal ODE}
	The element of $V[[t]]$ given by
	\[
	v(t) \coloneq  \sum_{\tau\in \wPT}{\textstyle \frac{1}{W(\tau)}}\tau^f(v_0)
	\]
	is the unique solution for the formal differential equation (\ref{eq:formal differential equation}) with initial value $v(0) = v_0$~.
\end{proposition}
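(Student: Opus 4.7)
The plan is to verify directly that the proposed series satisfies the initial condition and the differential equation; uniqueness is already guaranteed by the lemma preceding the proposition, so there is nothing more to do on that side.

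First I would check the initial value. Every weighted planar rooted tree $\tau\neq |$ has total weight $K(\tau)\geqslant 1$, hence $\tau^f(v_0)$ is a multiple of $t^{K(\tau)}$. Thus evaluating at $t=0$ leaves only the contribution of the trivial tree, which is $v_0$ by definition.

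Next I would match the two sides of \eqref{eq:formal differential equation}. For any $\tau\in\wPT$ write $\tau^f(v_0)=t^{K(\tau)}g_\tau$ with $g_\tau\in V$ independent of $t$. Differentiating term-by-term,
\[
\tfrac{d}{dt}v(t)=\sum_{\tau\in\wPT,\,\tau\neq|}\tfrac{K(\tau)}{W(\tau)}t^{K(\tau)-1}g_\tau=\sum_{\tau\neq|}\tfrac{1}{\prod_i W(\tau_i)}t^{K(\tau)-1}g_\tau,
\]
where in the second equality I used the recursive definition $W(\tau)=K(\tau)\prod_i W(\tau_i)$ for $\tau=\mathrm{c}_m^{[l]}\circ(\tau_1,\dots,\tau_m)$. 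On the other hand, by $\k[[t]]$-multilinearity,
\[
\sum_{m,k\geqslant 0}t^k f_m^k(v(t),\dots,v(t))=\sum_{m,k\geqslant 0}\sum_{\tau_1,\dots,\tau_m\in\wPT}\tfrac{t^{k+K(\tau_1)+\cdots+K(\tau_m)}}{W(\tau_1)\cdots W(\tau_m)}f_m^k(g_{\tau_1},\dots,g_{\tau_m}).
\]
Now every non-trivial $\tau\in\wPT$ admits a unique decomposition $\tau=\mathrm{c}_m^{[l]}\circ(\tau_1,\dots,\tau_m)$ with $l\geqslant 1$, and under this bijection (setting $l=k+1$) one has $K(\tau)-1=k+K(\tau_1)+\cdots+K(\tau_m)$ and $g_\tau=f_m^{l-1}(g_{\tau_1},\dots,g_{\tau_m})$. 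Substituting, the right-hand side becomes exactly $\sum_{\tau\neq|}\tfrac{1}{\prod_i W(\tau_i)}t^{K(\tau)-1}g_\tau$, matching $\tfrac{d}{dt}v(t)$.

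The only possible snag is bookkeeping: making sure the combinatorial identification between weighted planar rooted trees and the iterated choices $(m,k,\tau_1,\dots,\tau_m)$ coming from expanding $f_m^k(v(t),\dots,v(t))$ is a bijection that preserves the relevant coefficients. Since the trees are planar, the inputs are ordered and there are no nontrivial symmetry factors to worry about, so this identification is straightforward; the weight function $W$ was tailored precisely so that the factor $K(\tau)$ coming from differentiation cancels the weight at the root. The statement and formula then follow at once, and combining with the previous lemma completes the proof.
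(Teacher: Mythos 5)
Your proof is correct and follows essentially the same route as the paper: term-by-term differentiation, the unique decomposition $\tau=\mathrm{c}_m^{[l]}\circ(\tau_1,\dots,\tau_m)$, and the cancellation $K(\tau)/W(\tau)=1/\prod_i W(\tau_i)$ built into the definition of $W$. The only thing the paper says that you omit is the one-line observation that the coefficient of each $t^k$ is a finite sum (by the finiteness hypothesis on the $f_m^k$), so that $v(t)$ is well defined before you start differentiating it.
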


\begin{proof}
	As explained above, the summand of exponent $k$ of the formal variable $t$ in $v(t)$ is given the sum of weighted rooted planar trees of total weight $k$~:
	\[v(t)= \sum_{k=0}^\infty \sum_{\tau\in \wPT^{(k)}}{\textstyle\frac{1}{W(\tau)}}\tau^f(v_0)\ .\]
	Since there are finitely many non-zero $f_m^k$ for any fixed $k$~, the coefficient of $t^k$ is  a finite sum and thus the element $v(t)$ is well-defined. 
	Finally, decomposing all non-trivial rooted trees as $\tau = \mathrm{c}_m^{[l]}\circ(\tau_1,\ldots,\tau_m)$ we get: 
	\begin{align*}
	\frac{d}{dt}v(t) =&\ \frac{d}{dt}\left(\sum_{\tau\in \wPT}{\textstyle\frac{1}{W(\tau)}}\tau^f(v_0)\right)
	=\frac{d}{dt}\left(v_0 +\sum_{k=1}^\infty \sum_{\tau\in \wPT^{[k]}}{\textstyle\frac{1}{W(\tau)}}\tau^f(v_0)\right)\\
	=&\sum_{k=1}^\infty \sum_{\tau\in \wPT^{[k]}}{\textstyle\frac{k}{W(\tau)}}f_{m}^{l-1}\left(\tau^f_1(v_0),\ldots,\tau^f_m(v_0)\right)t^{l-1}
	\\
	=&\sum_{k=1}^\infty \sum_{\tau\in \wPT^{[k]}}{\textstyle\frac{1}{\prod_{i=1}^m F(\tau_i)}}f_{m}^{l-1}\left(\tau^f_1(v_0),\ldots,\tau^f_m(v_0)\right)t^{l-1}
	\\
	=&\sum_{k=1}^\infty \sum_{\tau\in \wPT^{[k]}} f_{m}^{l-1}\left({\textstyle\frac{1}{F(\tau_1)}} \tau^f_1(v_0),\ldots,{\textstyle\frac{1}{F(\tau_m)}}\tau^f_m(v_0)\right)t^{l-1}
	\\
	=&\ \sum_{m,k\geqslant0}f_{m}^{k}\left(\sum_{\tau_1\in \wPT}{\textstyle \frac{1}{W(\tau_1)}}\tau^f_1(v_0),\ldots,\sum_{\tau_n\in \wPT}{\textstyle \frac{1}{W(\tau_m)}}\tau^f_m(v_0)\right)t^k\\
	=&\ \sum_{m,k\geqslant0}f_{m}^{k}\big(v(t),\ldots,v(t)\big)t^k\ .
	\end{align*}
	The fact that we have the correct initial value is obvious from the fact that the only term which does not involve the formal variable $t$ is $|^f(v_0) = v_0$~.
\end{proof}

\begin{remark}
	The material presented here has close relations with the B-series studied in numerical analysis, cf. e.g. \cite[Section~38]{Butcher} and \cite[Chapter~III]{HLW10}.
\end{remark}


\bibliographystyle{alpha}
\bibliography{bib}

\end{document}